\documentclass[11pt,reqno]{amsart}

\usepackage{
    amsmath,
    amsfonts,
    amssymb,
    amsthm,
    amscd,
    comment,
    etoolbox,
    textcomp,
    gensymb,
    mathtools,
    mathdots,
    accents
}
\usepackage[dvipsnames]{xcolor}
\usepackage[all]{xy}
\usepackage{xparse}
\makeatletter
\@namedef{subjclassname@2020}{
  \textup{2020} Mathematics Subject Classification}
\makeatother

\usepackage[T1]{fontenc}
\usepackage{dsfont}
\usepackage[colorlinks=true, linkcolor=black, citecolor=black, urlcolor=blue, breaklinks=true]{hyperref}

\usepackage{enumitem, etoolbox}
\setlist[itemize]{leftmargin=1.8em}
\AtBeginEnvironment{theo}{
  \setlist[enumerate,1]{leftmargin=2.2em,topsep=0em,itemsep=.2em,label=\textnormal{(\roman*)}}
}
\AtBeginEnvironment{lem}{
  \setlist[enumerate,1]{leftmargin=2.2em,topsep=0em,itemsep=.2em,label=\textnormal{(\roman*)}}
}
\AtBeginEnvironment{cor}{
  \setlist[enumerate,1]{leftmargin=2.2em,topsep=0em,itemsep=.2em,label=\textnormal{(\roman*)}}
}
\AtBeginEnvironment{defin}{
  \setlist[enumerate,1]{leftmargin=2.2em,itemsep=.2em,topsep=0em}
}
\AtBeginEnvironment{eg}{
  \setlist[enumerate,1]{leftmargin=2.2em,topsep=0em,itemsep=.2em}
}

\usepackage{zref-clever}
\makeatletter
\let\orig@zcref\zcref
\renewcommand{\zcref}[1]{%
  \if\relax\detokenize{#1}\relax
    \textbf{??}
  \else
    \orig@zcref{#1}%
  \fi
}
\makeatother

\zcsetup{nameinlink=true,
countertype = {enumi=item, enumii=item, enumiii=item, enumiv=item},
counterresetby = {enumii=enumi, enumiii=enumii, enumiv=enumiii}}
    
\zcRefTypeSetup{item}{
    Name-sg = {} ,
    name-sg = {} ,
    Name-pl = {} ,
    name-pl = {} ,
}
\zcRefTypeSetup{cor}{
    Name-sg = Corollary ,
    name-sg = Corollary ,
    Name-pl = Corollaries ,
    name-pl = Corollaries ,
}
\zcRefTypeSetup{defin}{
    Name-sg = Definition ,
    name-sg = Definition ,
    Name-pl = Definitions ,
    name-pl = Definitions ,
}
\zcRefTypeSetup{eg}{
    Name-sg = Example ,
    name-sg = Example ,
    Name-pl = Examples ,
    name-pl = Examples ,
}
\zcRefTypeSetup{equation}{
    Name-sg = {} ,
    name-sg = {} ,
    Name-pl = {} ,
    name-pl = {} ,
}
\zcRefTypeSetup{lem}{
    Name-sg = Lemma ,
    name-sg = Lemma ,
    Name-pl = Lemmas ,
    name-pl = Lemmas ,
}
\zcRefTypeSetup{rem}{
    Name-sg = Remark ,
    name-sg = Remark ,
    Name-pl = Remarks ,
    name-pl = Remarks ,
}
\zcRefTypeSetup{conj}{
    Name-sg = Conjecture ,
    name-sg = Conjecture ,
    Name-pl = Conjectures ,
    name-pl = Conjectures ,
}
\zcsetup{countertype={subsection=subsection}}
\zcRefTypeSetup{subsection}{
    Name-sg = Subsection ,
    name-sg = subsection ,
    Name-pl = Subsections ,
    name-pl = subsections ,
}
\zcRefTypeSetup{table}{
    Name-sg = Table ,
    name-sg = Table ,
    Name-pl = Tables ,
    name-pl = Tables ,
}
\zcRefTypeSetup{theo}{
    Name-sg = Theorem ,
    name-sg = Theorem ,
    Name-pl = Theorems ,
    name-pl = Theorems ,
}
\zcsetup{countertype={alphatheorem=alphatheorem}}
\zcRefTypeSetup{alphatheorem}{
    Name-sg = Theorem ,
    name-sg = Theorem ,
    Name-pl = Theorems ,
    name-pl = Theorems ,
}

\newcommand{\cref}[1]{\zcref{#1}}
\newcommand{\Cref}[1]{\zcref[S]{#1}}

\AddToHook{env/conj/begin}{\zcsetup{countertype={theo=conj}}}
\AddToHook{env/cor/begin}{\zcsetup{countertype={theo=cor}}}
\AddToHook{env/defin/begin}{\zcsetup{countertype={theo=defin}}}
\AddToHook{env/eg/begin}{\zcsetup{countertype={theo=eg}}}
\AddToHook{env/lem/begin}{\zcsetup{countertype={theo=lem}}}
\AddToHook{env/rem/begin}{\zcsetup{countertype={theo=rem}}}
\AddToHook{env/table/begin}{\zcsetup{countertype={theo=table}}}

\newcommand\C{\mathbb{C}}
\newcommand\N{\mathbb{N}}

\newcommand\Q{\mathbb{Q}}
\newcommand\Z{\mathbb{Z}}
\newcommand\kk{\Bbbk}
\newcommand\one{\mathds{1}}
\newcommand{\tT}{\mathtt{T}}
\newcommand\tR{\mathtt{R}}

\newcommand{\p}{{\mathtt p}}
\newcommand\shape{\operatorname{S}}
\newcommand\basis{\operatorname{B}}
\newcommand\bk{\text{\boldmath{$k$}}}
\newcommand\bc{\text{\boldmath{$c$}}}
\newcommand\bd{\text{\boldmath{$d$}}}
\newcommand\bzero{\text{\boldmath{$0$}}}
\newcommand\bi{\text{\boldmath{$i$}}}
\newcommand\bI{\text{\boldmath{$I$}}}
\newcommand\bj{\text{\boldmath{$j$}}}
\newcommand\bJ{\text{\boldmath{$J$}}}
\newcommand\bK{\text{\boldmath{$K$}}}

\def\fg{{\operatorname{fg}}}
\def\Mod#1{{\operatorname{Mod}_{\operatorname{lfd}}(#1)}}

\newcommand\catB{\mathbf{B}}

\newcommand\Heis{\mathbf{Heis}}
\newcommand\catR{\mathbf{R}}

\newcommand{\G}{\Theta}
\newcommand{\gr}{\operatorname{gr}}
\newcommand\g{\Theta^+}
\newcommand{\Kar}{\operatorname{Kar}}

\newcommand\op{\mathrm{op}}

\newcommand{\End}{\operatorname{End}}
\newcommand{\Hom}{\operatorname{Hom}}

\newcommand\rev{\mathrm{rev}}

\newcommand\llbracket{[\![}
\newcommand\rrbracket{]\!]}
\newcommand\lround{(\!(}
\newcommand\rround{)\!)}
\def\0{{\bar 0}}
\def\1{{\bar 1}}

\def\del{\varsigma}

\newcommand{\U}{{\mathrm U}}
\newcommand\UU{\mathfrak{U}}
\newcommand\hatAB{\widehat{\AB}}
\newcommand\hatUU{\widehat{\mathfrak{U}}}
\newcommand\hatR{\widehat{R}}
\newcommand\hatH{\widehat{H}}
\newcommand\hatZ{\widehat{Z}}
\newcommand\cI{\mathbf{I}}

\newcommand\END{\mathbf{End}}
\newcommand\HOM{\mathbf{Hom}}

\newcommand{\AB}{\mathbf{A\hspace{-.4pt}B}}
\newcommand{\cB}{\mathbf{cB}}
\newcommand{\cC}{{\mathbf{cU}^\imath}}
\renewcommand{\Vec}{{\mathbf{V\hspace{-1.8pt}ec}_{\operatorname{fd}}}}
\newcommand{\SVec}{{\mathbf{S\hspace{-1.3pt}V\hspace{-1.8pt}ec}_{\operatorname{fd}}}}
\def\lo{{\operatorname{lo}}}
\def\hi{{\operatorname{hi}}}
\def\s{{\varepsilon}}

\def\t{{\delta}}
\def\iso{\stackrel{\smash{\raisebox{-1.2pt}{$\hspace{-2pt}\scriptstyle\sim$}}}{\rightarrow}}
\def\Iso{\stackrel{\smash{\raisebox{-1.2pt}{$\hspace{-2pt}\scriptstyle\sim$}}}{\Rightarrow}}
\newcommand{\Deltanabla}{{\mathchoice
{\begin{tikzpicture}[baseline=-1.2mm,scale=.5]
\draw[line width=1pt,line cap=butt] (-.249,0.02) to (.04,-0.37);
\draw[line width=1pt,line cap=butt] (.251,-0.02) to (-0.02,0.373);
\draw (0,-.4) to (.27,0) to (0,.4) to (-.27,0) to (0,-.4);
\end{tikzpicture}\hspace{-.08pt}}
{\begin{tikzpicture}[baseline=-1.2mm,scale=.5]
\draw[line width=1pt,line cap=butt] (-.249,0.02) to (.04,-0.37);
\draw[line width=1pt,line cap=butt] (.251,-0.02) to (-0.02,0.373);
\draw (0,-.4) to (.27,0) to (0,.4) to (-.27,0) to (0,-.4);
\end{tikzpicture}\hspace{-.08pt}}
{\begin{tikzpicture}[baseline=-1mm,scale=.4]
\draw[line cap=round] (0,-.4) to (.27,0);
\draw[line cap=round] (0,.4) to (-.27,0);
\draw[line width=1pt, line cap=butt] (-.2485,0.025) to (0.01,-0.37);
\draw[line width=1pt, line cap=butt] (.2485,-0.025) to (-.01,0.37);
\end{tikzpicture}\hspace{-.06pt}}
{\begin{tikzpicture}[baseline=-.8mm,scale=.33]
\draw[line cap=butt] (0,-.4) to (.27,0);
\draw[line cap=butt] (0,.4) to (-.27,0);
\draw[line width=1pt,line cap=butt] (-.24,0.03) to (0.04,-0.365);\draw[line width=1pt,line cap=butt] (.24,-.03) to (-.04,0.365);
\end{tikzpicture}\hspace{-.04pt}}}}

\DeclareMathOperator{\id}{id}
\DeclareMathOperator{\im}{im}

\DeclareMathOperator{\pr}{pr}
\DeclareMathOperator{\cf}{pr}

\DeclareMathOperator{\wt}{wt}
\def\ord{\operatorname{ord}}

\def\ibar{{\pmb{\overline{\phantom{\imath}}}\hspace{-3pt}\imath}}
\def\jbar{{\pmb{\overline{\phantom{\jmath}}}\hspace{-4pt}\jmath}}
\def\kbar{{\pmb{\overline{\phantom{k}}}\hspace{-5.3pt}k}}
\def\lbar{{\pmb{\overline{\phantom{l}}}\hspace{-3.3pt}l}}
\def\Ibar{{\pmb{\overline{\phantom{\imath}}}\hspace{-3.5pt}\imath}}
\def\Jbar{{\pmb{\overline{\phantom{\jmath}}}\hspace{-4.7pt}\jmath}}
\def\Kbar{{\pmb{\overline{\phantom{k}}}\hspace{-6pt}k}}
\NewDocumentCommand{\chiraised}{e{_^}}{\chi\IfNoValueTF{#2}{^{}}{^{#2}}\IfValueT{#1}{_{#1}}}

\usepackage{tikz}
\usetikzlibrary{arrows,patterns}
\usetikzlibrary{shapes.geometric}
\usetikzlibrary{decorations.markings,decorations.pathreplacing}
\usetikzlibrary{calc}
\usepackage{tikz-cd}

\tikzset{labl/.style={anchor=south, rotate=90, inner sep=.5mm}}

\newcommand*\getscale[1]{%
  \begingroup
    \pgfgettransformentries{\scaleA}{\scaleB}{\scaleC}{\scaleD}{\whatevs}{\whatevs}%
    \pgfmathsetmacro{#1}{sqrt(abs(\scaleA*\scaleD-\scaleB*\scaleC))}%
    \expandafter
  \endgroup
  \expandafter\def\expandafter#1\expandafter{#1}%
}

\tikzset{
    centerzero/.style={
        >=To,
        line cap = round, line join = round,
        baseline={([yshift=-0.5ex]#1)},
        },
    centerzero/.default={0,0},
    anchorbase/.style={centerzero=current bounding box.center},
}
\tikzset{wipe/.style={white,line width=4pt}}
\tikzset{rotateup/.style={anchor=south, rotate=90, inner sep=.8mm}}
\tikzset{rotatedown/.style={anchor=north, rotate=-90, inner sep=.8mm}}

\tikzset{lighty/.style=yellow!40!white}
\colorlet{lighty}{yellow!40!white}
\tikzset{lightr/.style=red!10!white}
\colorlet{lightr}{red!10!white}
\tikzset{darkr/.style=red!90!black}
\colorlet{darkr}{red!90!black}
\tikzset{lightb/.style=blue!10!white}
\colorlet{lightb}{blue!10!white}
\tikzset{darkg/.style=green!70!black}
\colorlet{darkg}{green!70!black}
\tikzset{lightg/.style=green!10!white}
\colorlet{lightg}{green!10!white}

\tikzset{pinhead/.style={thin,gray,fill=lighty}}
\tikzset{iQ/.style={thin,black}}
\tikzset{B/.style={semithick,magenta!80!black}}
\tikzset{bulb/.style={thin,fill}}
\tikzset{Bulb/.style={thin,fill=white}}
\tikzset{fakebubble/.style={semithick,darkr,fill=lightr}}

\newcommand\notch[2][0]{
    \draw[thin] (#2)++(#1:0.06) -- ++(#1:-0.12)
}

\newcommand\projcr[1]{
    \draw (#1) node[rotate=45] {$\scriptstyle{\Box}$}
}

\newcommand{\objpin}[3]{
    \path (#1) node[inner sep=1.6pt] (x) {} to (#2) node[rectangle,rounded corners,draw,thin,gray,fill=lightg,inner sep=2.5pt](y) {$\color{black}\scriptstyle#3$};
    \draw[Triangle Cap-,thick,gray] (x)--(y);
    \closeddot{#1}
}

\newcommand{\pin}[3]{
    \path (#1) node[inner sep=1.6pt] (x) {} to (#2) node[rectangle,rounded corners,draw,pinhead,inner sep=2.5pt](y) {$\color{black}\scriptstyle#3$};
    \draw[Triangle Cap-,thick,gray] (x)--(y);
    \closeddot{#1}
}

\newcommand{\pinpin}[4]{
\path (#1) node[inner sep=1.6pt] (x) {} to (#3) node[rectangle,rounded corners,draw,pinhead,inner sep=2.5pt](y) {$\color{black}\scriptstyle#4$};
    \draw[Triangle Cap-,thick,gray] (x)--(#2)--(y);
\closeddot{#2}; \closeddot{#1}
}

\newcommand{\pinpinpin}[5]{
\path (#1) node[inner sep=1.6pt] (x) {} to (#4) node[rectangle,rounded corners,draw,pinhead,inner sep=2.5pt](y) {$\color{black}\scriptstyle#5$};
    \draw[Triangle Cap-,thick,gray] (x)--(#2)--(#3)--(y);
\closeddot{#3}; \closeddot{#2}; \closeddot{#1}
}

\newcommand{\pinpinmid}[4]{
\path (#1) node[inner sep=1.6pt] (x) {} to (#2) node[rectangle,rounded corners,draw,pinhead,inner sep=2.5pt] (y) {$\color{black}\scriptstyle#4$} to (#3) node[inner sep=1.6pt] (z) {};
\draw[Triangle Cap-,thick,gray] (x)--(y);
\draw[Triangle Cap-,thick,gray] (z)--(y);
\closeddot{#3}; \closeddot{#1}
}

\newcommand{\pinpinpinmid}[5]{
\path (#1) node[inner sep=1.6pt] (x) {} to (#4) node[rectangle,rounded corners,draw,pinhead,inner sep=2.5pt] (w) {$\color{black}\scriptstyle#5$} to (#3) node[inner sep=1.6pt] (z) {} to (#2) node[inner sep=1.6pt] (y) {};
\draw[Triangle Cap-,thick,gray] (x)--(w);
\draw[Triangle Cap-,thick,gray] (y)--(w);
\draw[Triangle Cap-,thick,gray] (z)--(w);
\closeddot{#3}; \closeddot{#1}; \closeddot{#2}
}

\newcommand{\circledbar}[2]{
\getscale{\scalefactor}
\node[circle,draw,bulb,inner sep=.85pt] at (#1) {$\hspace{0.06pt}\color{white}\scriptscriptstyle \pmb#2$};\draw[line width=1.1pt,white] ($(#1)+({-.06/ \scalefactor},{.08 / \scalefactor})$) to ($(#1)+({.06/ \scalefactor},{.08 / \scalefactor})$)}

\newcommand{\circled}[2]{\node[circle,draw,bulb,inner sep=.85pt] at (#1) {$\hspace{0.06pt}\color{white}\scriptscriptstyle\pmb #2$}}

\newcommand\braidup{to[out=up,in=down]}
\newcommand\braiddown{to[out=down,in=up]}

\newcommand\dotlabel[1]{$\color{teal}\scriptstyle{#1}$}
\newcommand\regionlabel[1]{$\color{gray}\scriptstyle{#1}$}
\newcommand\strandlabel[1]{$\color{olive}\scriptstyle{#1}$}
\newcommand\stringlabel[2]{\node at (#1) {\strandlabel{#2}}} 
\newcommand\catlabel[2]{\node[inner sep=2pt,draw,darkg,fill=lightg,rounded corners]  at (#1) {$\color{black}\scriptstyle{#2}$}} 
\newcommand\botlabel[1]{node[inner sep=0.5pt,anchor=north] {\strandlabel{#1}}}
\newcommand\toplabel[1]{node[anchor=south,inner sep=0.5pt] {\strandlabel{#1}}}
\newcommand\dottybubblelabel[2]{\node at (#1) {$\color{teal}\scriptstyle{#2}$}} 
\newcommand\bubblelabel[3][black]{\node at (#2) {$\color{#1}\scriptstyle{#3}$}} 
\newcommand\region[2]{\node at (#1) {\regionlabel{#2}}} 
\newcommand\strand[2]{\node at (#1) {\strandlabel{#2}}} 

\newcommand\closeddot[1]{
    \node at (#1) {${\scriptstyle\bullet}$}
}

\newcommand\magdot[1]{
    \node[B] at (#1) {${\bullet}$}
}
\newcommand\magup[1]{
    \node[B] at (#1) {${\scriptstyle\blacktriangle}$}
}
\newcommand\magdown[1]{
    \node[B] at (#1) {${\scriptstyle\blacktriangledown}$}
}

\newcommand\magcr[1]{
    \draw[B] (#1) node {$\scriptstyle{\blacklozenge}$}
}

\newcommand\emptydot[1]{
    \node[circle,fill=white,draw=black,bulb,inner sep=.3pt] at (#1) {$\phantom{\scriptstyle o}$}
}
\newcommand\plusdot[1]{
    \node[circle,fill=white,draw=black,bulb,inner sep=.3pt] at (#1) {$\phantom{\scriptstyle o}$};
    \node at (#1) {{\boldmath$\scriptscriptstyle+$}}
}
\newcommand\minusdot[1]{
    \node[circle,fill=white,draw=black,bulb,inner sep=.3pt] at (#1) {$\phantom{\scriptstyle o}$};
    \node at (#1) {{\boldmath$\scriptscriptstyle-$}}
}

\newcommand\multcloseddot[3]{
    \closeddot{#1};
    \draw (#1) node[anchor=#2] {\dotlabel{#3}}
}

\tikzset{internalbubble/.style={fill=lightb}}

\newcommand\bubble[2][\phantom{B}]{
\node[rectangle,draw,darkg,fill=lightg,rounded corners,inner sep=2.5pt] at (#2)
{$\color{black}\scriptstyle #1$}
}

\NewDocumentCommand\westinternalbubbleplus{om}{
 \getscale{\scalefactor}
 \draw[-,internalbubble] ($(#2)+({-.2 / \scalefactor},0)$) arc(-180:180:{.2/\scalefactor});
 \bubblelabel{#2}{#1};
 \plusdot{$(#2)+({-.22 / \scalefactor},0)$}
}
\NewDocumentCommand\westinternalbubblePlus{om}{
 \getscale{\scalefactor}
 \draw[-,internalbubble] ($(#2)+({-.25 / \scalefactor},0)$) arc(-180:180:{.25/\scalefactor});
 \bubblelabel{#2}{#1};
 \plusdot{$(#2)+({-.27 / \scalefactor},0)$}
}

\NewDocumentCommand\westinternalbubbleminus{om}{
\getscale{\scalefactor}
 \draw[-,internalbubble] ($(#2)+({-.2 / \scalefactor},0)$) arc(-180:180:{.2/\scalefactor});
 \minusdot{$(#2)+({-.22 / \scalefactor},0)$};
 \bubblelabel{#2}{#1}
}
\NewDocumentCommand\westinternalbubbleMinus{om}{
\getscale{\scalefactor}
 \draw[-,internalbubble] ($(#2)+({-.25 / \scalefactor},0)$) arc(-180:180:{.25/\scalefactor});
 \minusdot{$(#2)+({-.27 / \scalefactor},0)$};
 \bubblelabel{#2}{#1}
}

\NewDocumentCommand\eastinternalbubbleplus{om}{
\getscale{\scalefactor}
 \draw[-,internalbubble] ($(#2)+({-.2 / \scalefactor},0)$) arc(-180:180:{.2/\scalefactor});
 \plusdot{$(#2)+({.22 / \scalefactor},0)$};
 \bubblelabel{#2}{#1}
}
\NewDocumentCommand\eastinternalbubblePlus{om}{
 \getscale{\scalefactor}
 \draw[-,internalbubble] ($(#2)+({-.25 / \scalefactor},0)$) arc(-180:180:{.25/\scalefactor});
 \plusdot{$(#2)+({.27 / \scalefactor},0)$};
 \bubblelabel{#2}{#1}
}

\NewDocumentCommand\eastinternalbubbleminus{om}{
 \getscale{\scalefactor}
 \draw[-,internalbubble] ($(#2)+({-.2 / \scalefactor},0)$) arc(-180:180:{.2/\scalefactor});
 \minusdot{$(#2)+({.22 / \scalefactor},0)$};
 \bubblelabel{#2}{#1}
}
\NewDocumentCommand\eastinternalbubbleMinus{om}{
 \getscale{\scalefactor}
 \draw[-,internalbubble] ($(#2)+({-.25 / \scalefactor},0)$) arc(-180:180:{.25/\scalefactor});
 \minusdot{$(#2)+({.27 / \scalefactor},0)$};
 \bubblelabel{#2}{#1}
}

\NewDocumentCommand\fatbubbleminus{m}{
 \getscale{\scalefactor}
\draw[internalbubble] 
($(#1)+({.2/\scalefactor},0)$) -- 
($(#1)+(0,{.24 /\scalefactor})$) --
($(#1)+({-.2/\scalefactor},0)$) --
($(#1)+(0,{-.24/\scalefactor})$) -- cycle;
 \node at (#1) {{\boldmath$\scriptscriptstyle-$}} 
}
\NewDocumentCommand\fatbubbleplus{m}{
 \getscale{\scalefactor}
\draw[internalbubble] 
($(#1)+({.2/\scalefactor},0)$) -- 
($(#1)+(0,{.24 /\scalefactor})$) --
($(#1)+({-.2/\scalefactor},0)$) --
($(#1)+(0,{-.24/\scalefactor})$) -- cycle;
 \node at (#1) {{\boldmath$\scriptscriptstyle+$}} 
}

\NewDocumentCommand\internalbubbleminus{O{0}m}{
 \getscale{\scalefactor}
\draw[internalbubble] 
($(#2)+({cos(#1)*.21/\scalefactor+sin(#1)*.13/\scalefactor},{-sin(#1)*.21/\scalefactor+cos(#1)*.13 /\scalefactor})$) -- 
($(#2)+({-cos(#1)*.21/\scalefactor+sin(#1)*.13/\scalefactor},{sin(#1)*.21/\scalefactor+cos(#1)*.13 /\scalefactor})$) --
($(#2)+({-sin(#1)*.21/\scalefactor},{-cos(#1)*.22 /\scalefactor})$) -- cycle;
 \node at (#2) {{\boldmath$\scriptscriptstyle-$}} 
}

\NewDocumentCommand\internalbubbleplus{O{0}m}{
 \getscale{\scalefactor}
\draw[internalbubble] 
($(#2)+({cos(#1)*.21/\scalefactor+sin(#1)*.13/\scalefactor},{-sin(#1)*.21/\scalefactor+cos(#1)*.13 /\scalefactor})$) -- 
($(#2)+({-cos(#1)*.21/\scalefactor+sin(#1)*.13/\scalefactor},{sin(#1)*.21/\scalefactor+cos(#1)*.13 /\scalefactor})$) --
($(#2)+({-sin(#1)*.21/\scalefactor},{-cos(#1)*.22 /\scalefactor})$) -- cycle;
 \node at (#2) {{\boldmath$\scriptscriptstyle+$}} 
}

\newtheorem{alphatheorem}{Theorem}

\newtheorem{theo}{Theorem}[section]

\newtheorem{lem}[theo]{Lemma}
\newtheorem{cor}[theo]{Corollary}

\theoremstyle{definition}
\newtheorem{defin}[theo]{Definition}
\newtheorem{rem}[theo]{Remark}
\newtheorem{eg}[theo]{Example}

\numberwithin{equation}{section}
\allowdisplaybreaks

\usepackage{comment}
\newif\ifdetails
\detailstrue
\ifdetails

\else
\excludecomment{details}
\fi

\begin{document}
\title{Brauer and iquantum categorification}
\author{Jonathan Brundan}
\address[J.B.]{Department of Mathematics, University of Oregon, Eugene, OR, USA}
\urladdr{\href{https://pages.uoregon.edu/brundan}{https://pages.uoregon.edu/brundan}, \textrm{\textit{ORCiD}:} \href{https://orcid.org/0009-0009-2793-216X}{0009-0009-2793-216X}}
\email{brundan@uoregon.edu}

\author{Alistair Savage}
\address[A.S.]{Department of Mathematics and Statistics, University of Ottawa, Ottawa, ON, Canada}
\urladdr{\href{https://alistairsavage.ca}{alistairsavage.ca}, \textrm{\textit{ORCiD}:} \href{https://orcid.org/0000-0002-2859-0239}{orcid.org/0000-0002-2859-0239}}
\email{alistair.savage@uottawa.ca}

\author{Ben Webster}
\address[B.W.]{Department of Pure Mathematics, University of Waterloo \& Perimeter Institute for Theoretical Physics,
Waterloo, ON, Canada}
\urladdr{\href{https://bwebste.github.io}{https://bwebste.github.io}, \textrm{\textit{ORCiD}:} \href{https://orcid.org/0000-0003-1896-5540}{0000-0003-1896-5540}}
\email{ben.webster@uwaterloo.ca}

\thanks{Research supported in part by NSF grant DMS-2348840 (B), NSERC grant RGPIN-2023-03842 (S) and NSERC grant RGPIN-2024-03760 (W)}

\keywords{Quantum symmetric pair, categorification, iquantum group, affine Brauer category}

\subjclass[2020]{17B37, 18M05, 18M30}

\begin{abstract}
We show that locally finite Abelian module categories over
the affine Brauer category
are the same thing as locally finite Abelian
2-representations of a certain 2-iquantum group satisfying some
additional nilpotency and finiteness properties.
Our results are valid over algebraically 
closed fields of characteristic other than 2 and for arbitrary spectra.
This extends recent work of Gao, Rui and Song, who treated characteristic 0 with half-integral spectrum.
The relevant Satake diagrams are
of quasi-split type AIII and of diagonal type A, including affine types in positive characteristic.
The key step in our approach identifies the affine Brauer category with a topologically dense monoidal subcategory of a completion of the 2-iquantum group.
As an application, we show that deformed cyclotomic quotients on the two sides are equivalent.
\end{abstract}

\maketitle

\tableofcontents
\setcounter{section}{0}

\section{Introduction}

The notion of a Kac--Moody categorification was introduced by Rouquier in \cite{Rou}, building on his earlier work with Chuang \cite{ChuangRouquier2008} treating the $\mathfrak{sl}_2$ case. It provides an elegant axiomatic framework encapsulating ideas of higher representation theory which emerged in the 1990s, extending the approach to representation theory of symmetric and general linear groups which can be traced back to work of Frobenius and Schur at the beginning of the 20th century and, later, Macdonald, Bernstein and Zelevinsky. A key ingredient in Rouquier's setup comes from the {\em quiver Hecke algebras} (a.k.a.\ Khovanov--Lauda--Rouquier algebras) discovered in \cite{Rou,KL1},
which are half of the {\em Kac--Moody 2-categories} $\UU(\mathfrak{g})$
introduced in \cite{Rou,KL3}. We also refer to Kac--Moody 2-categories as {\em 2-quantum groups} because $\UU(\mathfrak{g})$ categorifies Lusztig's modified integral form for the Drinfeld--Jimbo quantum group associated to the Kac--Moody algebra $\mathfrak{g}$. This is a theorem of Khovanov and Lauda \cite[Ths.~1.1--1.2]{KL3}, modulo the non-degeneracy of 2-quantum groups which was proved in full generality in \cite{Webster-2026}; see also \cite[Th.~5.6]{BWW}.

We work from now on over a fixed algebraically closed ground field $\kk$.
Let $\mathfrak{g}$ be a Kac-Moody algebra with weight lattice $X$.
A {\em Kac--Moody categorification} is a locally finite Abelian category
$$
\catR
= \bigoplus_{\lambda \in X} \catR_\lambda
$$ 
equipped with a categorical action of a 2-quantum group
$\UU(\mathfrak{g})$, i.e., it is a 2-representation of 
$\UU(\mathfrak{g})$,
such that for each $\lambda \in X$ and $V \in \catR_\lambda$
the set $\{i \in I\:|\:E_i V \neq 0\text{ or }F_i V \neq 0\}$
is finite.
We assume in addition that $\catR$ 
is {\em nilpotent}, which means that the natural transformations defined by dots act nilpotently on any object.
It follows that the Grothendieck group $K_0(\catR) = \bigoplus_{\lambda \in X} K_0(\catR_\lambda)$ is an integrable representation of the Kostant $\Z$-form of the universal enveloping algebra $U(\mathfrak{g})$. Moreover, the isomorphism classes of irreducible objects of $\catR$
equip $K_0(\catR)$ with a {\em perfect basis} in the sense of \cite[Def.~5.49]{BeK-2007}.
Aspects of the structure of the category $\catR$ can then be understood via the rich combinatorics of integrable representations, e.g., the action of the underlying braid group permuting weight spaces, and the theory of 
canonical/crystal bases of Lusztig and Kashiwara, both of which play an important role.

There are many naturally occurring examples of 
Kac--Moody categorifications arising in type A representation theory; see the first part of \cite{BSW-update} for a recent survey. In all of these examples, the underlying Kac--Moody algebra $\mathfrak{g}$ has simple roots indexed by the set $I \coloneq \kk$, and its Cartan matrix $(a_{i,j})_{i,j \in I}$ is
defined by $a_{i,j} \coloneq 2\delta_{i,j}-\delta_{i,j+1}-\delta_{i,j-1}$; the connected components are all of type A$_\infty$ if $p=0$ or A$_{p-1}^{(1)}$ if $p > 0$, where $p$ is the characteristic of $\kk$. 
To prove the existence of the categorical action of $\UU(\mathfrak{g})$ in the first place, one needs to construct explicit endofunctors and natural transformations satisfying the defining relations of $\UU(\mathfrak{g})$. The verification of Rouquier's {\em inversion relation} is particularly difficult.
In \cite[Th.~5.27]{Rou}, Rouquier developed a general approach to this which he called ``control by $K_0$''. 
In our work \cite{HKM}, we explained another approach which is more convenient in practice. Both approaches depend ultimately on the isomorphism between cyclotomic quotients of degenerate
affine Hecke algebras of type A and corresponding cyclotomic quiver Hecke algebras from \cite{BK-iso} or, equivalently, 
the isomorphism between completions of these algebras from \cite{Rou}.

\medskip

By a {\em Heisenberg categorification} we mean a locally finite Abelian category $\catR$ equipped with an action of a Heisenberg category $\Heis_\kappa$ for some central charge $\kappa \in \Z$, i.e., $\catR$ is a strict left $\Heis_\kappa$-module category.
By {\em Heisenberg category} here, we mean
the strict $\kk$-linear monoidal category
defined originally by Khovanov for central charge one
in \cite{Khovanov2014} and generalized to all central charges in \cite{BCNR,MS-2018,B-heis}.
The main idea in \cite{HKM} is that many of the Abelian categories of classical importance in type A representation theory are Heisenberg categorifications. Moreover, the action of $\Heis_\kappa$ is usually easy to construct.
Then we showed that any  
Heisenberg categorification is a nilpotent 
Kac--Moody categorification, and vice versa (assuming that the Cartan type of $\mathfrak{g}$ is as described above and the defining parameters of the 2-quantum group 
$\UU(\mathfrak{g})$ are chosen correctly).
Roughly speaking, we did this by upgrading the algebra isomorphisms between appropriate completions of degenerate affine Hecke algebras and quiver Hecke algebras arising from \cite{Rou,BK-iso} to
Heisenberg categories and 2-quantum groups, although we did not use this language explicitly.
In the setup of \cite{HKM},
the rightward cup and cap generators of $\Heis_\kappa$ 
map in an obvious way to the rightward cup and cap generators in the 2-quantum group.
However,
it is much harder to understand the relationship between the ``second adjunctions'' defined by the leftward cup and cap generators in these categories. We finally worked out how to do that explicitly in the second part of \cite{BSW-update}.

For later reference, we should also mention an application of the theory developed
in \cite{HKM}: it implies the existence of explicit
isomorphisms between {\em generalized cyclotomic quotients} (GCQs) of the Heisenberg category and GCQs of the corresponding 2-quantum group. These isomorphisms are a substantial generalization of 
the isomorphisms between ordinary cyclotomic quotients
from \cite{BK-iso}. For example, taking the ground field $\kk$ to be $\C$, 
the Deligne category $\underline{\operatorname{Re}}\!\operatorname{p}(GL_\delta)$, which by definition is the additive Karoubi envelope of the oriented Brauer category $\mathbf{OB}_\delta$, is a generalized cyclotomic quotient of level one.
This is a useful result:
for example, 
in \cite{BM-arc}, we use it to show for $\delta \in \Z$ that
$\underline{\operatorname{Re}}\!\operatorname{p}(GL_\delta)$
is Morita equivalent to a certain limiting version of Khovanov's arc algebra, as was originally conjectured in \cite{BS-brauer}.

\medskip

\begin{table}
$$
\begin{array}{|r|c|c|}
\hline
\text{A}_\infty&
p=0,\ k \not\sim 0, h\ &
\begin{tikzpicture}[anchorbase]
\node at (-2,-.3) {$\scriptstyle \phantom{k}$};
\node at (-2.8,0) {$\cdots$};
\node at (2.9,0) {$\cdots$};
\node at (-2,.3) {$\scriptstyle k-2$};
\node at (-1,.3) {$\scriptstyle k-1$};
\node at (0,.3) {$\scriptstyle k$};
\node at (1,.3) {$\scriptstyle k+1$};
\node at (2,.3) {$\scriptstyle k+2$};
\draw (-2.4,0) to (2.4,0);
\node at (-2,-1) {$\scriptstyle \phantom{k}$};
\node at (-2.8,-.7) {$\cdots$};
\node at (2.9,-0.7) {$\cdots$};
\node at (-2,-1) {$\scriptstyle -k+2$};
\node at (-1,-1) {$\scriptstyle -k+1$};
\node at (0,-1) {$\scriptstyle -k$};
\node at (1,-1) {$\scriptstyle -k-1$};
\node at (2,-1) {$\scriptstyle -k-2$};
\draw (-2.4,-.7) to (2.4,-.7);
\node at (1.5,0) {$\scriptstyle >$};
\node at (.5,0) {$\scriptstyle >$};
\node at (-.5,0) {$\scriptstyle >$};
\node at (-1.5,0) {$\scriptstyle >$};
\node at (1.5,-.7) {$\scriptstyle >$};
\node at (.5,-.7) {$\scriptstyle >$};
\node at (-.5,-.7) {$\scriptstyle >$};
\node at (-1.5,-.7) {$\scriptstyle >$};
\draw[densely dotted,red,<->] (2,-.55) to (2,-.15);
\draw[densely dotted,red,<->] (1,-.55) to (1,-.15);
\draw[densely dotted,red,<->] (0,-.55) to (0,-.15);
\draw[densely dotted,red,<->] (-1,-.55) to (-1,-.15);
\draw[densely dotted,red,<->] (-2,-.55) to (-2,-.15);
\minusdot{1,-0.7}; 
\plusdot{-2,0};
\plusdot{1,0};
\minusdot{-1,-.7};
\minusdot{2,-.7};
\plusdot{0,0};
\plusdot{2,0};
\minusdot{0,-.7};
\minusdot{-2,-.7};
\plusdot{-1,0};
\end{tikzpicture}
\\\hline
\text{AIII}_{2\infty+1}&
p=0,\ k \sim 0
&
\begin{tikzpicture}[anchorbase]
\node at (-1.8,0) {$\cdots$};
\node at (-1,.3) {$\scriptstyle 3$};
\node at (0,.3) {$\scriptstyle 2$};
\node at (1,.3) {$\scriptstyle 1$};
\draw (-1.4,0) to (1,0) to (1.8,-.35) to (1,-.7) to (-1.4,-.7);
\node at (-2,-1) {$\scriptstyle \phantom{k}$};
\node at (-1.8,-.7) {$\cdots$};
\node at (-1,-1) {$\scriptstyle -3$};
\node at (0,-1) {$\scriptstyle -2$};
\node at (1,-1) {$\scriptstyle -1$};
\node at (2.05,-.35) {$\scriptstyle 0$};
\node at (-.5,0) {$\scriptstyle >$};
\node at (.5,0) {$\scriptstyle >$};
\node at (-.5,-0.7) {$\scriptstyle <$};
\node at (.5,-0.7) {$\scriptstyle <$};
\node at (1.36,-.16) {\rotatebox[origin=c]{-24}{$\scriptstyle >$}};
\node at (1.36,-.54) {\rotatebox[origin=c]{24}{$\scriptstyle <\!\!<\!$}};
\draw[densely dotted,red,<->] (1,-.55) to (1,-.15);
\draw[densely dotted,red,<->] (0,-.55) to (0,-.15);
\draw[densely dotted,red,<->] (-1,-.55) to (-1,-.15);
\plusdot{-1,0};
\minusdot{0,0};
\plusdot{1,0};
\minusdot{-1,-.7};
\plusdot{0,-.7};
\minusdot{1,-.7};
\emptydot{1.8,-.35};
\end{tikzpicture}
\\\hline
\text{AIII}_{2\infty}&p=0,\ k \sim h&
\begin{tikzpicture}[anchorbase]
\node at (-2,-.35) {$\scriptstyle \phantom{k}$};
\node at (1.8,0) {$\cdots$};
\node at (0,.35) {$\scriptstyle -\frac{5}{2}$};
\node at (-2.23,.35) {$\scriptstyle -h\,=-\frac{1}{2}$};
\node at (-1,.35) {$\scriptstyle -\frac{3}{2}$};
\node at (1,.35) {$\scriptstyle -\frac{7}{2}$};
\node at (-2,-1) {$\scriptstyle \phantom{k}$};
\node at (1.8,-0.7) {$\cdots$};
\draw (-2,0) to (1.4,0);
\node at (-2.43,-1.05) {$\scriptstyle h\,=\frac{1}{2}$};
\node at (-1.1,-1.05) {$\scriptstyle \frac{3}{2}$};
\node at (-0.1,-1.05) {$\scriptstyle \frac{5}{2}$};
\node at (.9,-1.05) {$\scriptstyle \frac{7}{2}$};
\draw (-2,0) to [out=180,in=180,looseness=2] (-2,-.7) to (1.4,-.7);
\draw[densely dotted,red,<->] (1,-.55) to (1,-.15);
\draw[densely dotted,red,<->] (0,-.55) to (0,-.15);
\draw[densely dotted,red,<->] (-1,-.55) to (-1,-.15);
\draw[densely dotted,red,<->] (-2,-.55) to (-2,-.15);
\node at (-2.41,-.38) {\rotatebox[origin=c]{-90}{$\scriptstyle <$}};
\node at (-.5,0) {$\scriptstyle >$};
\node at (.5,0) {$\scriptstyle >$};
\node at (-1.5,0) {$\scriptstyle >$};
\node at (-1.5,-0.7) {$\scriptstyle <$};
\node at (-.5,-0.7) {$\scriptstyle <$};
\node at (.5,-0.7) {$\scriptstyle <$};
\minusdot{-2,0};
\plusdot{-1,0};
\minusdot{0,0};
\plusdot{1,0};
\plusdot{-2,-.7};
\minusdot{-1,-.7};
\plusdot{0,-.7};
\minusdot{1,-.7};
\end{tikzpicture}
\\\hline\hline
\text{A}_{p-1}^{(1)}&
p>2,\ k \not\sim 0\ &
\begin{tikzpicture}[anchorbase]
\node at (-1,1) {$\scriptstyle \phantom{\frac{p-3}{2}}$};
\node at (1.98,.7) {$\cdots$};
\draw (-1,.7) to (1.5,.7);
\draw (2.45,.7) to (3.95,.7);
\draw (-1,.7) to [out=60,in=120,looseness=.6] (3.95,.7);
\node at (-1.3,.7) {$\scriptstyle k$};
\node at (0,.95) {$\scriptstyle k+1$};
\node at (1,.95) {$\scriptstyle k+2$};
\node at (2.85,.95) {$\scriptstyle k+p-2$};
\node at (4.65,.7) {$\scriptstyle k+p-1$};
\node at (1.98,0) {$\cdots$};
\draw (-1,0) to (1.5,0);
\draw (2.45,0) to (3.95,0);
\draw (-1,0) to [out=-60,in=-120,looseness=.6] (3.95,0);
\node at (-1.4,0) {$\scriptstyle -k$};
\node at (-.1,-.25) {$\scriptstyle -k-1$};
\node at (.9,-.25) {$\scriptstyle -k-2$};
\node at (2.85,-.25) {$\scriptstyle -k-p+2$};
\node at (4.75,0) {$\scriptstyle -k-p+1$};
\draw[densely dotted,red,<->] (2.95,.55) to (2.95,0.15);
\draw[densely dotted,red,<->] (3.95,.55) to (3.95,0.15);
\draw[densely dotted,red,<->] (1,.55) to (1,0.15);
\draw[densely dotted,red,<->] (-1,.55) to (-1,0.15);
\draw[densely dotted,red,<->] (0,.55) to (0,0.15);
\node at (3.45,0) {$\scriptstyle >$};
\node at (.5,0) {$\scriptstyle >$};
\node at (-.5,0) {$\scriptstyle >$};
\node at (3.45,.7) {$\scriptstyle >$};
\node at (.5,.7) {$\scriptstyle >$};
\node at (-.5,.7) {$\scriptstyle >$};
\node at (1.4,1.45) {$\scriptstyle <$};
\node at (1.4,-.75) {$\scriptstyle <$};
\plusdot{-1,0.7};
\plusdot{0,0.7};
\plusdot{3.95,0.7};
\plusdot{2.95,0.7};
\plusdot{1,0.7};
\minusdot{-1,0};
\minusdot{0,0};
\minusdot{3.95,0};
\minusdot{2.95,0};
\minusdot{1,0};
\end{tikzpicture}
\\\hline
\text{AIII}^{(1)}_{p-1}&\ p \equiv 1\!\!\!\!\pmod{4},\ k \sim 0
\ &
\begin{tikzpicture}[anchorbase]
\node at (-2,-.35) {$\scriptstyle \phantom{k}$};
\node at (-2.23,.35) {$\scriptstyle \scriptstyle -h\,=\frac{p-1}{2}$};
\node at (-1,.35) {$\scriptstyle \frac{p-3}{2}$};
\node at (1,.3) {$\scriptstyle 2$};
\node at (2,.3) {$\scriptstyle 1$};
\draw (-2,0) to (-.5,0);
\draw (.5,0) to (2,0);
\node at (-2,-1.05) {$\scriptstyle \phantom{k}$};
\node at (-2.43,-1.01) {$\scriptstyle h\,=\frac{p+1}{2}$};
\node at (-1,-1.01) {$\scriptstyle \frac{p+3}{2}$};
\node at (1,-1.0) {$\scriptstyle p-2$};
\node at (2,-1.0) {$\scriptstyle p-1$};
\node at (3.05,-.35) {$\scriptstyle 0$};
\draw (-1.4,0) to (-.5,0);
\node at (0,0) {$\cdots$};
\node at (0,-.7) {$\cdots$};
\draw (.5,0) to (2,0) to (2.8,-.35) to (2,-.7) to (.5,-.7);
\draw (-2,0) to [out=180,in=180,looseness=2] (-2,-.7) to (-.5,-.7);
\draw[densely dotted,red,<->] (1,-.55) to (1,-.15);
\draw[densely dotted,red,<->] (2,-.55) to (2,-.15);
\draw[densely dotted,red,<->] (-1,-.55) to (-1,-.15);
\draw[densely dotted,red,<->] (-2,-.55) to (-2,-.15);
\node at (1.5,-.7) {$\scriptstyle <$};
\node at (-1.5,-.7) {$\scriptstyle <$};
\node at (1.5,0) {$\scriptstyle >$};
\node at (-1.5,0) {$\scriptstyle >$};
\node at (2.36,-0.16) {\rotatebox[origin=c]{-24}{$\scriptstyle >$}};
\node at (-2.41,-.38) {\rotatebox[origin=c]{-90}{$\scriptstyle <$}};
\node at (2.36,-.54) {\rotatebox[origin=c]{24}
{$\scriptstyle <\!\!<\!$}};
\minusdot{-2,0};
\plusdot{-1,0};
\minusdot{1,0};
\plusdot{2,0};
\plusdot{-2,-.7};
\minusdot{-1,-.7};
\plusdot{1,-.7};
\minusdot{2,-.7};
\emptydot{2.8,-.35};
\end{tikzpicture}
\\\hline
\text{AIII}^{(1)}_{p-1}&
\ p \equiv 3\!\!\!\!\pmod{4},\ k \sim 0
&
\begin{tikzpicture}[anchorbase]
\node at (-2,-.35) {$\scriptstyle \phantom{k}$};
\node at (-2.23,.35) {$\scriptstyle\scriptstyle -h\,= \frac{p-1}{2}$};
\node at (-1,.35) {$\scriptstyle \frac{p-3}{2}$};
\node at (1,.3) {$\scriptstyle 2$};
\node at (2,.3) {$\scriptstyle 1$};
\draw (-2,0) to (-.5,0);
\draw (.5,0) to (2,0);
\node at (-2,-1.05) {$\scriptstyle \phantom{k}$};
\node at (-2.43,-1.01) {$\scriptstyle \scriptstyle h\,=\frac{p+1}{2}$};
\node at (-1,-1.01) {$\scriptstyle \frac{p+3}{2}$};
\node at (1,-1.0) {$\scriptstyle p-2$};
\node at (2,-1.0) {$\scriptstyle p-1$};
\node at (3.05,-.35) {$\scriptstyle 0$};
\draw (-1.4,0) to (-.5,0);
\node at (0,0) {$\cdots$};
\node at (0,-.7) {$\cdots$};
\draw (.5,0) to (2,0) to (2.8,-.35) to (2,-.7) to (.5,-.7);
\draw (-2,0) to [out=180,in=180,looseness=2] (-2,-.7) to (-.5,-.7);
\draw[densely dotted,red,<->] (1,-.55) to (1,-.15);
\draw[densely dotted,red,<->] (2,-.55) to (2,-.15);
\draw[densely dotted,red,<->] (-1,-.55) to (-1,-.15);
\draw[densely dotted,red,<->] (-2,-.55) to (-2,-.15);
\node at (1.5,-.7) {$\scriptstyle <$};
\node at (-1.5,-.7) {$\scriptstyle <$};
\node at (1.5,0) {$\scriptstyle >$};
\node at (-1.5,0) {$\scriptstyle >$};
\node at (2.36,-0.16) {\rotatebox[origin=c]{-24}{$\scriptstyle>$}};
\node at (-2.41,-.38) {\rotatebox[origin=c]{-90}{$\scriptstyle <$}};
\node at (2.36,-.54) {\rotatebox[origin=c]{24}
{$\scriptstyle <\!\!<\!$}};
\minusdot{-2,0};
\plusdot{-1,0};
\plusdot{1,0};
\minusdot{2,0};
\plusdot{-2,-.7};
\minusdot{-1,-.7};
\minusdot{1,-.7};
\plusdot{2,-.7};
\emptydot{2.8,-.35};
\end{tikzpicture}\\
\hline
\end{array}
$$
\caption{Satake diagrams}\label{dynkintable}
\end{table}

In the last decade, there has been a great deal of work related to the Kolb--Letzter theory of quantum symmetric pairs (QSPs) \cite{letzter1,Kolb2014}.
We adopt here the terminology {\em iquantum group} introduced by 
Bao and Wang to refer to the coideal subalgebra in a QSP.
The influential papers \cite{BW18KL,ES} revealed that certain naturally occurring categories
in type B/C/D representation theory
are related from a combinatorial perspective to certain iquantum groups. 
These categories include blocks of the BGG category $\mathcal O$ for orthogonal and symplectic Lie algebras
(indeed, for orthosymplectic Lie superalgebras), the category of finite-dimensional representations of the orthosymplectic supergroup $OSp_{m|2n}(\C)$, 
and representations of the classical Brauer category.
The Satake diagrams of the relevant iquantum groups, displayed in \cref{dynkintable}, are of diagonal type A and of
{\em quasi-split type AIII}.
Wang refers to the latter as
``the new type A'' as there are analogs of many results, e.g., ones involving Schur--Weyl duality, which we used to think were peculiar to type A.

The existence of {\em icanonical bases} for iquantum groups established in \cite{BW18QSP} suggested that there might be a categorification of iquantum groups in the spirit of the Khovanov--Lauda--Rouquier categorification of quantum groups. For the Satake type AIII$_{2\infty}$,
the appropriate 2-category was constructed in \cite{BSWW}. Building on this, categorifications of all quasi-split iquantum groups whose underlying Cartan matrix is symmetric (and, conjecturally, most non-symmetric types too) were constructed in \cite{BWW}. These are denoted $\UU^\imath(\mathfrak{g})$, depending on an underlying Kac--Moody algebra together with an additional involution encoded combinatorially by the Satake diagram. For the ones in \cref{dynkintable}, this involution is indicated by the red arrows. Other decorations on the Satake diagrams there, such as the $+/-$ labels on the nodes, and the arrows on the edges, will be explained in \cref{iqg}---they are needed in order to specify the rather delicate 
parameter choices we use to define $\UU^\imath(\mathfrak{g})$.
We call $\UU^\imath(\mathfrak{g})$ a 
{\em 2-iquantum group}. This terminology is justified by the main result of \cite{BWW}, which shows that the graded 
Grothendieck ring of $\UU^\imath(\mathfrak{g})$ is isomorphic to the Bao--Wang modified integral form $\dot\U_\Z^\imath(\mathfrak{g})$
for $\U^\imath(\mathfrak{g})$.
Under the isomorphism, the class $[B_i \one_\lambda]$
of the generating 1-morphism $B_i \one_\lambda$ of the 2-iquantum group corresponds to the generator $b_i 1_\lambda$ of $\dot\U_\Z^\imath(\mathfrak{g})$.

In \cite{RS19},
Rui and Song introduced a strict monoidal category $\AB$
called the {\em affine Brauer category}, motivated by the observation that $\AB$ acts naturally on
many Abelian categories studied classically in type B/C/D representation theory.
We review the definition of $\AB$ 
in \cref{sabcat}. It is similar in spirit to the definition of $\Heis_\kappa$; actually, it is a little easier 
as there is only one generating object $B$---there is no orientation on the string diagrams used to represent morphisms in $\AB$---and there is no central charge $\kappa$ involved.
By a {\em Brauer categorification}, we mean a
locally finite Abelian category $\catR$ which is a strict left $\AB$-module category. By an {\em iquantum categorification}, we mean a locally finite Abelian category 
$$
\catR = \bigoplus_{\lambda \in X^\imath} \catR_\lambda
$$
which is a 2-representation of a 2-iquantum group $\UU^\imath(\mathfrak{g})$, where $X^\imath$ is the associated iweight lattice. We also require for each $\lambda \in X^\imath$
and $V \in \catR_\lambda$ that the set $\{i \in I\:|\:B_i V \neq 0\}$ is finite. We say that $\catR$ is {\em even} if $\catR_\lambda$ is trivial for all iweights of odd parity,
and {\em nilpotent} if the natural transformations defined by dots act nilpotently on any object.
The works \cite{BW18KL,ES} strongly suggested that there
should be a relationship similar to the one in \cite{HKM} connecting Brauer categorifications to iquantum categorifications for the appropriate Satake diagrams.

\medskip

In recent work \cite[Th.~A]{GRS-2026}, Gao, Rui and Song have constructed exactly such a bridge assuming that the ground field $\kk$ is of characteristic 0 and the spectrum of the Brauer categorification is $\frac{1}{2}+\Z$. This is the case that corresponds to the iquantum group of quasi-split type AIII$_{2\infty}$ in \cref{dynkintable}, exactly the one which was categorified in \cite{BSWW}. 
At the time that \cite{GRS-2026} appeared on the {\tt arXiv} (early September 2026), we had also been working on this problem for some time, and had independently developed a similar theory with no restrictions on the ground field (except $\operatorname{char}\ \kk \neq 2$) or on the spectrum. The relevant iquantum groups are all of the Satake types in \cref{dynkintable}. However, at that time, we had not yet checked all of the relations, so our proofs were incomplete. 
In fact, the relations interacting with the 0 node in these Satake diagrams turn out to be substantially more complicated.

In this article, we present our approach, which gives a complete solution to the problem in the general case.  
A key difference compared to Gao, Rui and Song's paper is that we have formulated the results using the unified general setup of \cite{BWW}, whereas Gao, Rui and Song used the older setup from 
\cite{BSWW} which only applies in type AIII$_{2\infty}$. This is essential in order to include all possible spectra 
in characteristic 0 and any case in positive characteristic. 
The principal difficulty in the Brauer/iquantum case 
compared to the Heisenberg/Kac--Moody case is that there is no counterpart of the
degenerate affine Hecke algebra or quiver Hecke algebra (``half''), so it is simply not possible to avoid the problem of relating (unoriented) cups and caps in the affine Brauer category to the ones in the 2-iquantum group.
Our formulae which do this explicitly involve natural transformations which we call {\em internal bubbles}; see \cref{bubbles}. They are closely related to the internal bubbles used to describe the relationship between leftward cups and caps in Heisenberg/Kac--Moody categorification which we worked out in \cite{BSW-update}.
In the type AIII$_{2\infty}$ case, Gao, Rui and Song discovered the same natural transformations, although their setup is somewhat different. Internal bubbles on strings labelled by $0$ 
of the Satake diagrams in \cref{dynkintable} are more difficult
to understand.

Our first main theorem, which is \cref{passportcor} in the main body of the text, is as follows:

\begin{alphatheorem}\label{thma}
Assuming that $\mathfrak{g}$ is of the Cartan type described above and
that the parameter choices for the 2-iquantum group 
$\UU^\imath(\mathfrak{g})$ are as 
specified in \cref{iqg},
any even 
nilpotent iquantum categorification $\catR = \bigoplus_{\lambda \in X^\imath} \catR_\lambda$
can be made into a Brauer categorification in a canonical way.
\end{alphatheorem}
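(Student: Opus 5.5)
The plan is to construct a strict monoidal functor from the affine Brauer category $\AB$ into a suitable completion $\hatUU^\imath$ of the 2-iquantum group $\UU^\imath(\mathfrak{g})$. We then observe that any even nilpotent iquantum categorification $\catR$ is automatically a module over this completion. Composing the two makes $\catR$ into a strict left $\AB$-module category.

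\textbf{Step 1: the completion.} Form $\hatUU^\imath$ by allowing infinite sums $\bigoplus_{i\in I}$ of the generating 1-morphisms $B_i\one_\lambda$ over all $\lambda \in X^\imath$ of even parity. Complete 2-morphism spaces with respect to the ideals generated by dots. The finiteness hypothesis (only finitely many $B_iV\neq 0$) and the nilpotency hypothesis ensure that the action of $\UU^\imath(\mathfrak{g})$ on $\catR$ extends to $\hatUU^\imath$. Evenness lets us ignore the odd iweights entirely.

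\textbf{Step 2: objects and local generators.}
\begin{itemize}
\item Send the generating object $B$ to $\bigoplus_{i\in I} B_i$, summed over all $\lambda$.
\item Send the affine dot $x$ on $B_i$ to $i+y$, where $y$ is the dot in $\UU^\imath$, possibly with a sign or shift dictated by the $\pm$ labels in \cref{dynkintable}. Then $x-i$ is topologically nilpotent on the $i$-summand.
\item Send the crossing to the usual Rouquier/Brundan--Kleshchev style formula. On $B_iB_j$ with $j\neq i,i\pm1$ (and $j\neq -i$ type coincidences), it is the KLR crossing divided by $x_1-x_2$ or a suitable invertible power series. When $i=j$ or $i=j\pm1$, it is corrected by idempotent/dot terms.
\item Send the cup and cap to the iquantum cups and caps multiplied by internal bubbles. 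These are invertible power series in dots (and in fake bubbles) on the relevant strands, to be determined.
\end{itemize}
The degenerate-Hecke-type relations among dots and crossings reduce, exactly as in \cite{HKM,Rou,BK-iso}, to known isomorphisms of completed affine Hecke and quiver Hecke algebras, applied to the adjacent pairs $i,j$.

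\textbf{Step 3: the Brauer relations.} The remaining relations of $\AB$ must be checked:
\begin{itemize}
\item zigzag and symmetry of cups and caps,
\item dot sliding through cups and caps ($x$ on one side equals $-x$ on the other),
\item crossing–cup compatibility,
\item the bubble relations fixing the spectrum.
\end{itemize}
The internal bubbles are determined by requiring the dot-slide and zigzag relations. Here the sign $-x$ is what pairs $B_i$ with $B_{-i}$ through the involution on the Satake diagram. The bubble relations then follow from the infinite Grassmannian and bubble relations in $\UU^\imath$ from \cite{BWW}.

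\textbf{The main obstacle} is the strings labelled by the fixed node $0$ (and $h$ in the AIII$_{2\infty}$ or affine cases). There the cup and cap of $\UU^\imath$ interact with the $B_0B_0$ relations of \cite{BWW}, and the internal bubble is not a simple rational function. I would first solve for it on $B_0$ as a formal power series, making it the unique one compatible with the dot-slide. I would then verify the crossing–cup relation on $B_0B_0$ by reducing it to the nil-Hecke/iquantum Serre-type relation. The verification would be done in the localized setting where $x_1+x_2$ is inverted, followed by a check that the result is polynomial.

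Canonicity is immediate, since the functor $\AB\to\hatUU^\imath$ is fixed independently of $\catR$.
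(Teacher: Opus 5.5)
Your architecture (a monoidal functor from $\AB$ into a completion of $\UU^\imath$, then restriction along it) is the paper's. However, the step you treat as routine is false.

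\textbf{The crossing and the Brauer relations.} You claim the dot--crossing relations reduce, as in \cite{HKM,Rou,BK-iso}, to known isomorphisms between completed degenerate affine Hecke and quiver Hecke algebras. That reduction is exactly what is unavailable here. The Demazure relation \cref{demazure} has a cup--cap term, so dots and crossings in $\AB$ generate affine Nazarov--Wenzl algebras, not degenerate affine Hecke algebras. Nor does $\UU^\imath$ have a quiver Hecke ``half'' to compare with.

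Concretely, the image of $\mathrm{cup}\circ\mathrm{cap}$ has non-zero components $B_iB_{-i}\Rightarrow B_lB_{-l}$ for all $i,l$. Take the $(i,-i)\to(l,-l)$ component of \cref{demazure}. There the operator $\theta\mapsto(\text{top dot})\,\theta-\theta\,(\text{bottom dot})$ equals $l+i$ plus something of positive degree. So the image of the crossing is forced to have a non-zero cup--cap component $B_iB_{-i}\Rightarrow B_lB_{-l}$, decorated by internal bubbles, for every $l\neq -i$; this is the middle term of \cref{crossingdef}. No Rouquier/Brundan--Kleshchev-type formula built from crossings, dots and idempotents on each $B_iB_j$ can produce this.

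Consequently the Demazure relation and all three Reidemeister relations must be verified directly in the completion (\cref{lem:demazure,lem:ri,lem:rii,lem:riii}). Reidemeister I and II appear on your list only through this false reduction, and Reidemeister III is not addressed at all. The paper handles Reidemeister III using \cref{RIIrelsa,RIIrelsb,RIIrelsc} together with the filtration by number of crossings.

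\textbf{The internal bubbles.} Dot-slide and zig-zag do not determine them. The dot slide over cups and caps holds automatically once the dot goes to $i+x$. Rescaling the cap glyphs by invertible series and the cup glyphs by the compensating ones preserves both zig-zag relations but changes the image of the closed bubble.

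What actually fixes them, and what the proofs of Reidemeister I and of the $B_iB_{-i}$ case of Reidemeister II consume, is \cref{badbunny}:
$\prod_{i\in I}O_{\lambda;i}(u-i)=\big(1-\tfrac{(-1)^{p(\lambda)}}{2u}\big)\id_{\one_\lambda}+\sum_{i\in I}Q_{\lambda;i}\big(\tfrac{\prod_{j\neq i}O_{\lambda;j}(i-j+x_i)}{u-i-x_i}\big)$.
This forces $\Deltanabla_{\lambda;i}(x)$ to be a scalar multiple of $\prod_{j\neq i}O_{\lambda;j}(i-j+x)$, split into $\Delta$ and $\nabla$ according to $I_\pm$ (and via $I_\hi,I_\lo$ when $i=0$). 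Its consequences \cref{worsebunny2,carnomore,bingleysleeping} are the identities those relation checks use. It is also where evenness is genuinely needed: $1-\tfrac{(-1)^{p(\lambda)}}{2u}$ must equal the $\tfrac{u-h}{u}$ of \cref{eatz}. The ``bubble relations'' you list are consequences, not defining relations, of $\AB$.

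\textbf{The completion.} Completing by the ideals generated by dots does not produce the objects you need. The internal bubbles are infinite products over $j\in I$. Moreover, $\Psi(\mathrm{cap})\circ\Psi(\mathrm{cup})$ is an infinite sum over colours whose degree-$2$ part contains infinitely many undotted bubbles $z^{(1)}_{\lambda;j}$. These converge only because the GC topology also kills bubbles of cofinitely many colours. Finally, you must collapse $\hatUU^\imath$ to the monoidal category $\hatAB$ for ``monoidal functor'' to make sense.
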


We have not developed enough of the setup to be able to state \cref{thma} more precisely in this introduction, but of course that is done later in the paper.
In particular, we define $\AB$ in \cref{sabcat}, the relevant 2-iquantum group $\UU^\imath(\mathfrak{g})$ in \cref{iqg}, and develop some of the basic notions of  Brauer and iquantum categorifications in \cref{bcat,iqcat}, respectively; these results are also discussed in \cite{GRS-2026}.
The endofunctor $B$ on the Brauer side is related to the functors $B_i\:(i \in I)$ on the iquantum side by $B = \bigoplus_{i \in I} B_i$; specifically, 
$B_i$ is the generalized $i$-eigenspace of the endomorphism of $B$ defined by the dot generator of the affine Brauer category.
The explicit formulae relating the action of the generating morphisms of $\AB$ to the action of the generating 2-morphisms of $\UU^\imath(\mathfrak{g})$
are recorded in \cref{dotdef,capdef,cupdef,crossingdef}.
The generating function for dotted bubbles in $\AB$ 
acts via
\begin{equation*}
\begin{tikzpicture}[B,centerzero,scale=.9]
\draw (-0.25,0) arc(180:-180:0.25);
\node at (.55,0) {\color{black}$(u)$};
\end{tikzpicture}
=\ 
\prod_{i \in I}
\begin{tikzpicture}[iQ,centerzero,scale=.9]
\draw (-0.25,0) arc(180:-180:0.25);
\node at (-0.37,0) {\strandlabel{\ibar}};
\node at (.88,0) {\color{black}$(u-i)$};
\end{tikzpicture}.
\end{equation*}
We expected that this would be the case from the outset, and
this expectation led 
us to a key intermediate result, \cref{badbunny} below,
the proof of which is similar to the proof of
\cite[Th.~7.1]{BSW-update}.
This is how we discovered  the
appropriate definition of internal bubbles in this setting.

Our second main theorem, and the one which is most important from the point of view of applications to type B/C/D representation theory, shows that the construction in \cref{thma} is invertible, so that the notions of  Brauer categorification and even nilpotent iquantum categorification are equivalent. This is \cref{btoq-cor} in the main text.

\begin{alphatheorem}\label{thmb}
Any Brauer categorification $\catR$ can be made into an even nilpotent iquantum categorification in a canonical way. Moreover, this construction and the construction in \cref{thma} are inverse to each other.
\end{alphatheorem}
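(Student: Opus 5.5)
The plan is to mirror the approach of \cite{HKM}: realize $\AB$ inside a completion of the 2-iquantum group, and conversely embed (a completion of) $\UU^\imath(\mathfrak{g})$ into a completion of $\AB$, so that actions can be transported in both directions.

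Let $\catR$ be a Brauer categorification. First I would show that the dot $x$ acting on the endofunctor $B$ is locally finite, i.e.\ it satisfies a polynomial on each object. This should follow from local finiteness of $\catR$: $\End(B V)$ is finite-dimensional for each $V$. Then $B$ decomposes as $\bigoplus_{i\in\kk} B_i$, where $B_i$ is the generalized $i$-eigenspace of $x$, and the dots $x-i$ act nilpotently on $B_i$. The weight decomposition $\catR=\bigoplus_{\lambda\in X^\imath}\catR_\lambda$ would come from the action of the bubble generating function $\mathrm{O}(u)$, a rational function in $u$ evaluated on the identity functor. Concretely, I take generalized eigenspaces for the central action of the bubbles, so that on a summand $\mathrm{O}(u)$ has a single rational function $\prod_i(\ldots)$ determined by $\lambda$. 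This forces the even parity and the iweight description, using the constraints on the spectrum that come from $\mathrm{O}(u)\mathrm{O}(-u)=1$-type relations.

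Next I would define the 2-morphisms of $\UU^\imath(\mathfrak{g})$ on $\catR$ by inverting the formulas \cref{dotdef,capdef,cupdef,crossingdef}:
\begin{itemize}
\item dots become $x-i$ restricted to $B_i$;
\item crossings between $B_i$ and $B_j$ become the projected Brauer crossing, renormalized by power series in the dots as in the Rouquier/Brundan--Kleshchev isomorphism;
\item cups and caps become the projected Brauer cups and caps, multiplied by the inverse of the internal bubbles.
\end{itemize}
The main point is that the internal bubbles are invertible power series in the nilpotent dots, so their inverses make sense in a completion. The cleanest way to organize this is to construct a monoidal functor from a completion of $\UU^\imath(\mathfrak{g})$ to the completion of $\AB$ taken with respect to the dot-adic topology in each eigenspace. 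One then checks that it is inverse to the functor from the earlier \cref{badbunny}/\cref{passportcor} construction. Given the density statement (the affine Brauer category sits densely in a completion of the 2-iquantum group), it should suffice to show that the functor of Theorem A induces an isomorphism on completions. That in turn reduces to comparing bases of morphism spaces, using a nondegeneracy/basis theorem for $\AB$ and for $\UU^\imath(\mathfrak{g})$ from \cite{BWW}. The inverse-to-each-other statement is then formal: both compositions act by identity on generators.

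I expect the main obstacle to be verifying the iquantum relations involving the $0$ node (and the $\pm h$ nodes), in particular the inversion relation and the relations for $B_0$ with its nonstandard parameters. There, internal bubbles on $0$-strings do not factor nicely. My first approach would be to avoid checking these relations directly. Instead I would deduce them from the completion isomorphism together with faithfulness: show that the proposed inverse functor is well defined on the completed $\AB$ side by composing with Theorem A's functor and using injectivity, which follows from the basis theorems. The inversion relation would then follow from invertibility of the corresponding Brauer morphism (the Brauer-side adjunction/inversion) transported through the eigenspace decomposition.
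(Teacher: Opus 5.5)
Your overall architecture is the right one: view $\AB$ inside a completion built from $\UU^\imath$, obtain the iquantum structure by continuous extension, and so avoid checking any $\UU^\imath$ relations. The gap is that the two steps carrying the proof are assumed rather than supplied.

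\textbf{Density.} You write ``given the density statement'' and say that an isomorphism on completions ``reduces to comparing bases''. It does not. The basis theorems, together with a leading-term computation in the crossing filtration and the injectivity of $\psi_0\colon Z\to\hatZ_0$ (\cref{leading,psi0in}), give faithfulness of $\Psi$. Density of $\Psi(\AB)$ in $\hatAB$ (\cref{btoq-thm}) is a separate matter. Its core is that $Z$, a polynomial algebra in the $z_{2r}$ (one generator in each even degree), is dense in $\hatZ=\prod_{\lambda\in X^\imath_\0}\hatZ_\lambda$. Each $\hatZ_\lambda$ is topologically generated by the $z^{(r)}_{\lambda;i}$ for every $i\in I_\hi$, together with the $z^{(2r-1)}_{\lambda;0}$, so there is no basis-to-basis correspondence. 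The paper proves density in \cref{zdense} from $\psi_\lambda(O(u))=\chi_\lambda(u)\prod_{i}z_{\lambda;i}(u-i)$, in two steps:
\begin{itemize}
\item a partial-fractions independence argument shows that the coefficients span $\mathfrak m/\mathfrak m^2$ in each finite quotient of $\hatZ_\lambda$;
\item the Chinese Remainder Theorem glues finitely many $\lambda$, using that the $\chi_\lambda$ are distinct.
\end{itemize}
Only after this does one get the idempotents $e_{\lambda;i}$ as limits of polynomial pins. Cups, caps and crossings then follow from invertibility of the glyphs and of $q_{i,j}$ modulo crossing-free terms.

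\textbf{Continuity.} You also need every Brauer action to be continuous for the topology that $\Psi$ induces on $\AB$. Since $\AB$ contains no block or eigen-idempotents, there is no intrinsic ``dot-adic completion in each eigenspace''; the paper topologizes $\AB$ by pulling back along $\Psi$. Nilpotency of $x-i$ on $B_i$ handles only the dot direction. The central direction needs, for each indecomposable $V$ in block $\lambda$, a continuous homomorphism $\omega\colon\hatZ_\lambda\to Z_V$ that
\begin{itemize}
\item sends $O_{\lambda;i}(u)$ to the local factor of $O_V(u)$ at $u=i$ (via \cref{hensel}, with the infinite Grassmannian relation deduced from \cref{curlsup1} by uniqueness of local factors), and
\item agrees with $z\mapsto z_V$ on $Z$ (\cref{subsist}).
\end{itemize}
After that, a downward induction on the number of crossings, using the leading-term triangularity, controls the coefficient maps $\pr_D$ (\cref{newattempt,evcontinuous}).

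This is also a prerequisite for your proposed inverse formulas. Internal bubbles are built from the individual bubbles $O_{\lambda;j}$, which on the Brauer side exist only as these local factors.

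Once density and continuity are established, nothing about the $0$ node needs checking here; those difficulties belong to Theorem A. The inverse statement then follows from uniqueness of the continuous extension to $\hatAB$. It is not ``formal agreement on generators'', since the $\UU^\imath$ generators do not lie in $\AB$.
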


\cref{thmb} is deduced from a stronger result, \cref{btoq-thm},
which identifies $\AB$ with a topologically dense monoidal subcategory
of a topological 
strict monoidal category denoted $\hatAB$.
The latter is constructed 
by first passing from $\UU^\imath$ to
a completion $\hatUU^\imath$,
then collapsing $\hatUU^\imath$ to a monoidal category
by replacing its morphism categories with products over certain pairs of objects.
This construction is explained in \cref{completediq}.
Our approach to 
\cref{thma} also involves a
stronger result, \cref{passport}, which establishes the existence of a
strict $\kk$-linear 
monoidal functor $\Psi\colon\AB \rightarrow \hatAB$.
Then, to prove \cref{btoq-thm}, we show that 
$\Psi$ is faithful and topologically dense. Finally, the proof of \cref{thmb} is completed by showing that
the action of $\AB$ on any Brauer categorification
extends by continuity
to an action of $\hatAB$, hence, to $\UU^\imath$
from which $\hatAB$ was constructed in the first place.
This general strategy 
cuts down considerably on the number of relations that need to be checked explicitly compared to the approach taken in \cite{GRS-2026}:
we only need to make such calculations in one direction in order to prove \cref{passport}.

In the main body of the article, we refer to the categorifications discussed above as {\em Abelian} Brauer and iquantum categorifications. There are also analogous notions of
{\em Karoubian} Brauer and iquantum categorifications, 
in which the underlying category is merely required to be a
{\em locally finite Karoubian category}, that is, 
an additive idempotent-complete $\kk$-linear category whose morphism spaces are finite-dimensional.
On both the Brauer side and on the iquantum side,
there are universal families of examples of Karoubian categorifications:
for a monic polynomial $f(u) \in \kk[u]$ of degree $\beta \geq 0$, 
there is a corresponding
{\em cyclotomic quotient} of $\AB$ denoted $\cB(f)$
(\cref{brauercyclotomic});
for monomials $f_i(u) \coloneq u^{\beta_i}$ for $\beta_i \geq 0$ with $\beta_i=0$ for all but finitely many $i$,
there is a corresponding {\em cyclotomic quotient}
of $\UU^\imath(\mathfrak{g})$ denoted $\cC(f_i\,|\,i \in I)$
(\cref{iqcyclotomic}).
There are analogs of Theorems A and B in the Karoubian case.
In the last section of the article, we apply these
to prove that $\cB(f)$ and $\cC(f_i\,|\,i \in I)$ are equivalent categories when $f(u) = \prod_{i \in I} f_i(u-i)$, i.e., $\beta_i$ is the multiplicity of $i$ as a root of $f(u)$.
This result is the Brauer/iquantum counterpart of the isomorphism theorem between GCQs in \cite{HKM}; see \cref{cyciso}. 
Gao, Rui and Song also prove such a theorem in the AIII$_{2\infty}$ case in \cite[Th.~B]{GRS-2026}.

For example, this result implies the existence of an equivalence between the most important cyclotomic quotient of the affine Brauer category, the Deligne category $\underline{\operatorname{Re}}\!\operatorname{p} O_\delta$
at parameter $\delta \in \C$, 
and an iquantum cyclotomic quotient.
Indeed, 
$\underline{\operatorname{Re}}\!\operatorname{p} O_\delta$
is the additive Karoubi envelope of 
the classical Brauer category $\catB_\delta$ over $\C$ with 
$\begin{tikzpicture}[B,centerzero,scale=.9]
\draw (-0.25,0) arc(180:-180:0.25);
\end{tikzpicture} = \delta \id_\one$.
By Rui and Song's theory, the additive Karoubi envelope of $\catB_\delta$
is isomorphic to the level one cyclotomic quotient 
$\cB\big(u-(1-\delta)/2\big)$. It follows for any $\delta \in \C$ that
$\underline{\operatorname{Re}}\!\operatorname{p} O_\delta$
is equivalent to 
$\cC\big(u^{\delta_{i,(1-\delta)/2}}\,\big|\,i \in I\big)$, with the result for $\delta \in 2\Z$ following already from \cite{GRS-2026}.
These results solve a problem which has been asked by several different groups since the time of \cite{BK-iso}, e.g., see \cite{Li-2014}. The interesting $\Z$-grading on the 
Brauer category revealed by the equivalence was constructed 
in \cite{ES-2016} by a different approach.

We hope there will be further applications to the general theory of cyclotomic quotients of $\AB$ which has been developed in other work of Gao, Rui and Song, e.g., see \cite{GRS-2023,GRS-2023b}. The explicit connection to iquantum groups and their combinatorics which this paper and \cite{GRS-2026} establish should allow for a more conceptual interpretation of these earlier results, and it provides further impetus for the development of the theory of iquantum groups and their categorifications.

\medskip
\noindent
{\em Preliminaries.}
Throughout the paper we work over an algebraically closed field $\kk$ of characteristic $p \neq 2$. 
We fix a choice of a square root of $-1$, denoting it by
$\sqrt{-1}$.
As usual, ``$\kk$-linear'' indicates enrichment 
in the monoidal category of vector spaces. We use
the word ``graded'' to indicate enrichment 
in the monoidal category of $\Z$-graded vector spaces.

We let $I \coloneq \kk$. This set will also be the labelling set for simple roots in a certain root system as in \cref{dynkintable}; this will be explained at the start of \cref{iqg}. We reserve the letter
$h$ for $\frac{1}{2} \in \kk$. 
In string diagrams, we will often label strings by elements $i,j,k,\dots \in I$, and will use $\Ibar, \Jbar, \Kbar,\dots$ instead of $-i,-j,-k,\dots$ for clarity in diagrams.

We will often work with formal Laurent series $f(u)
= \sum_{n \in \Z} a_n u^n \in A\lround u^{-1}\rround$ 
with coefficients in some algebra $A$.
We use the notation $[f(u)]_{u:n}$ to denote the $u^n$-coefficient $a_n$, $[f(u)]_{u:\geq 0}$ for 
the polynomial part $\sum_{n \geq 0} a_n u^n$, and so on.
The formal power series
$$
\tfrac{1}{u-x} = 
u^{-1} + x u^{-2}+x^2 u^{-3}+\cdots \in A[x]\lround u^{-1}\rround
$$
will be particularly important.
Some basic tricks:
for any polynomial $f(x) \in A[x]$, we have that
\begin{align}\label{trick}
\frac{f(x)}{u-x} &= \left[\frac{f(u)}{u-x}\right]_{u:<0},&
f(x) &= \left[\frac{f(u)}{u-x}\right]_{u:-1}.
\end{align}

Let $\partial_{x,y}\colon A[x,y]\rightarrow A[x,y]$ be the
{\em Demazure operator}, the
linear map defined by
\begin{equation}\label{demazureoperator}
\partial_{x,y} f(x,y) \coloneq \frac{f(x,y)-f(y,x)}{x-y}.
\end{equation}
We also use the notation $\partial_x$ to denote the one-variable Demazure operator defined by
\begin{equation}
\partial_x f(x) \coloneq \frac{f(x)-f(-x)}{2x}
\end{equation}
for $f(x) \in A[x]$. 
Note that $\partial_{x,y} = -\partial_{y,x}$.
We extend $\partial_{x,y}$ to formal series in an auxiliary variable $u$ in the obvious way.
Using the second identity in \cref{trick} together with partial fractions, it follows that
\begin{align}\label{trick2}
\partial_{x,y}\Big(\frac{f(x)}{u-x}\Big)
&=\left[\frac{f(u)}{(u-x)(u-y)}\right]_{u:<0}\!\!\!\!,
&\partial_{x,y} f(x)
&=\left[\frac{f(u)}{(u-x)(u-y)}\right]_{u:-1}\!\!\!\!
\end{align}
in $A[x,y]\lround u^{-1}\rround$.
Later, we will also apply Demazure operators to formal power series; 
the extended versions are defined in the same way.

We will eventually need the following version of
Hensel's lemma:

\begin{lem}[{\cite[Cor.~2.4]{HKM}}]\label{hensel}
Let $A$ be a finite-dimensional commutative local $\kk$-algebra with unique 
maximal ideal $\mathfrak{m}$, so that $A/\mathfrak{m}=\kk$.
Let $f(u) \in A[u]$ be a monic polynomial 
and $\bar f(u) \in \kk[u]$ be its
reduction modulo $\mathfrak{m}$.
Let $\beta_i$ be the multiplicity of $i$ as a root 
of $\bar f(u)$.
Then there are unique
monic $f_i(u) \in A[u]$, all but finitely many of which are 1,
such that $\bar f_i(u) = u^{\beta_i}$ and
$f(u) = \prod_{i \in I} f_i(u-i)$.
\end{lem}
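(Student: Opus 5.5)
The plan is to reduce to the classical Hensel lemma by exploiting that $A$ is finite-dimensional local, so $\mathfrak{m}$ is nilpotent and $A$ is $\mathfrak{m}$-adically complete. Over $\kk$ algebraically closed, $\bar f(u)$ factors as $\prod_{i \in I}(u-i)^{\beta_i}$, and only finitely many $\beta_i$ are nonzero. The factors $(u-i)^{\beta_i}$ for distinct $i$ are pairwise coprime in $\kk[u]$. So the goal is to lift this coprime factorization of $\bar f$ to a factorization of $f$ into monic factors $g_i(u)$ with $\bar g_i(u) = (u-i)^{\beta_i}$. We then set $f_i(u) \coloneq g_i(u+i)$, which is monic with $\bar f_i(u) = u^{\beta_i}$; we take $f_i = 1$ whenever $\beta_i = 0$.

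For existence, I will induct on the number of distinct roots, so it suffices to lift a factorization $\bar f = \bar g \bar h$ with $\bar g,\bar h$ monic and coprime. Choose $a,b\in\kk[u]$ with $a\bar g + b\bar h = 1$. Then proceed by the standard Newton/Hensel iteration modulo $\mathfrak{m}^n$. Suppose $f \equiv g h \pmod{\mathfrak{m}^n A[u]}$ with $g,h$ monic lifts. Write $e \coloneq f - gh \in \mathfrak{m}^n A[u]$; this has degree $<\deg f$ because $f$ and $gh$ are monic of the same degree. Correct $g \mapsto g + \delta_g$ and $h \mapsto h+\delta_h$, where $\delta_h$ is the remainder of $a e$ upon division by the monic $g$ (or symmetrically), chosen so that $\deg \delta_g<\deg g$ and $\deg\delta_h<\deg h$. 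This keeps both factors monic and makes the error lie in $\mathfrak{m}^{n+1}$, using $\mathfrak{m}^{2n}\subseteq\mathfrak{m}^{n+1}$. Since $\mathfrak{m}^N=0$ for some $N$, the iteration terminates after finitely many steps.

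For uniqueness, I will argue via the structure of $A[u]/(f)$, which is a free $A$-module of rank $\deg f$. If $f = g h = g'h'$ are two such factorizations, then $g$ and $h'$ reduce to coprime polynomials. By Nakayama's lemma (applied to the finite $A$-module $A[u]/(g,h')$, whose reduction mod $\mathfrak{m}$ vanishes), $g$ and $h'$ are comaximal in $A[u]$. Since $g \mid g'h'$ and $(g,h')=A[u]$, it follows that $g \mid g'$. Both are monic of the same degree, so $g = g'$. Alternatively, one can observe that the idempotent decomposition of the finite $A$-algebra $A[u]/(f)$ lifts uniquely from that of $\kk[u]/(\bar f)$, and each $g_i$ is recovered as the characteristic polynomial of $u$ on the corresponding summand.

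The main technical point is the lifting step: keeping the corrections monic-preserving and degree-bounded, which is handled by division by monic polynomials. Everything else is routine, and in fact the statement is exactly \cite[Cor.~2.4]{HKM}, so one may simply cite it.
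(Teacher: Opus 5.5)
The paper gives no argument of its own here, simply quoting \cite[Cor.~2.4]{HKM}, and your proposal is the standard Hensel-lifting proof of that result (lift a coprime factorization through powers of the nilpotent ideal $\mathfrak{m}$, induct on the number of distinct roots, and get uniqueness from comaximality via Nakayama), so it is correct. One small slip in the lifting step: with $a\bar g + b\bar h = 1$, $\delta_h$ should be the remainder of $ae$ on division by $h$ (not $g$), and $\delta_g$ the remainder of $be$ on division by $g$; then $\deg\delta_h<\deg h$, $\deg\delta_g<\deg g$, and a degree comparison against the monic $gh$ kills the quotient terms, giving $g\delta_h + h\delta_g \equiv e \pmod{\mathfrak{m}^{n+1}A[u]}$.
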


We call the polynomial $f_i(u)$ defined by \cref{hensel}
the {\em local factor} of $f(u)$ at $u=i$.
Similarly, we call the rational function $R_i(u)$ appearing in the following corollary
the {\em local factor} of $R(u)$ at $u=i$.

\begin{cor}\label{localfactors}
For $A$ and $\mathfrak{m}$ as in the preceding lemma,
let $R(u) = \frac{f(u)}{g(u)}\in A(u)$ be a rational function with $f(u), g(u) \in A[u]$ monic,
and 
$\bar R(u) \in \kk(u)$ be its reduction modulo $\mathfrak{m}$.
Letting $\ord_i(R) \in \Z$ be the order of $\bar R(u)$ at $u=i$ (the multiplicity of $i$ as a zero, or minus its multiplicity as a pole), 
there are unique $R_i(u) \in u^{\ord_i(R)} + u^{\ord_i(R)-1}\mathfrak{m}[u^{-1}]$ such that
$R_i(u) = 1$ for all but finitely many $i$ and
$R(u) = \prod_{i \in I} R_i(u-i)$.
\end{cor}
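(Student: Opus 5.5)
Apply \cref{hensel} to the numerator and the denominator separately. Write $R(u)=f(u)/g(u)$ with $f,g\in A[u]$ monic, and let $f_i(u)$, $g_i(u)$ be their local factors at $u=i$. Then $\bar f_i(u)=u^{\alpha_i}$ and $\bar g_i(u)=u^{\gamma_i}$, where $\alpha_i$ and $\gamma_i$ are the multiplicities of $i$ as a root of $\bar f$ and $\bar g$.

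For existence, we want to set $R_i(u)\coloneq f_i(u)/g_i(u)$, but we must make sense of this inside $A\lround u^{-1}\rround$. Since $g_i$ is monic of degree $\gamma_i$ with lower coefficients in $\mathfrak{m}$, we can write
\[
g_i(u)=u^{\gamma_i}\bigl(1+m(u^{-1})\bigr), \qquad m\in u^{-1}\mathfrak{m}[u^{-1}].
\]
The ideal $\mathfrak{m}$ is nilpotent because $A$ is finite-dimensional and local. So the geometric series $\sum_{n\ge0}(-m)^n$ terminates, and $1+m$ is invertible in $1+u^{-1}\mathfrak{m}[u^{-1}]$. This set is a group under multiplication, again by nilpotence. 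The same argument applies to $f_i(u)=u^{\alpha_i}(1+m'(u^{-1}))$. Hence
\[
R_i(u)=u^{\alpha_i-\gamma_i}\,(1+m')(1+m)^{-1}\in u^{\ord_i(R)}\bigl(1+u^{-1}\mathfrak{m}[u^{-1}]\bigr)=u^{\ord_i(R)}+u^{\ord_i(R)-1}\mathfrak{m}[u^{-1}],
\]
because $\ord_i(R)=\alpha_i-\gamma_i$. All but finitely many $R_i$ equal $1$. Finally, $\prod_i R_i(u-i)=\prod_i f_i(u-i)\big/\prod_i g_i(u-i)=f/g=R$, where the identity is interpreted in $A(u)$.

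For uniqueness, suppose $\{R_i\}$ and $\{R'_i\}$ are two such families. Put $S_i\coloneq R_i/R'_i$, so that $S_i\in 1+u^{-1}\mathfrak{m}[u^{-1}]$, almost all $S_i=1$, and $\prod_i S_i(u-i)=1$. Clearing denominators turns this into a statement about polynomials. Choose $N$ with $u^N S_i(u)\in A[u]$ monic of degree $N$ for every $i$ in the finite support. Write $S_i=P_i/Q_i$ with $P_i\coloneq u^NS_i$ and $Q_i\coloneq u^N$; both are monic with reduction $u^N$. Then
\[
\prod_i P_i(u-i)=\prod_i (u-i)^N,
\]
and both sides are monic polynomials in $A[u]$. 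The uniqueness statement in \cref{hensel}, applied to this polynomial, forces $P_i(u)=u^N$ for every $i$. Hence $S_i=1$, and so $R_i=R'_i$.

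The main obstacle I expect is bookkeeping: making precise in which ring the identity $R=\prod_i R_i(u-i)$ holds. The factors $R_i(u-i)$ for different $i$ live naturally in expansions at different points, so a common ring is needed. I would handle this by observing that each $R_i$ is actually rational, being a quotient of polynomials with monic denominator $u^{k}g_i$. All identities can therefore be checked in the localization of $A[u]$ at monic polynomials, which embeds into $A\lround u^{-1}\rround$. This embedding lets the uniqueness argument reduce cleanly to \cref{hensel}.
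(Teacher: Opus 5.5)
Your proposal is correct and follows the paper's proof. Existence works exactly as in the paper: apply \cref{hensel} to $f$ and $g$, then invert $g_i(u)=u^{\gamma_i}(1+m)$ by a terminating geometric series. Uniqueness reduces to the uniqueness part of \cref{hensel} by clearing denominators, and your version via $S_i=R_i/R'_i$ and $P_i=u^NS_i$ spells out what the paper only sketches. One small fix: the identity $\prod_i P_i(u-i)=\prod_i(u-i)^N$ should be taken over a finite set containing the support of the $S_i$, with $P_i\coloneq 1$ outside it, rather than over all of $I$.
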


\begin{proof}
For existence, apply \cref{hensel} to $f(u)$ and $g(u)$
to obtain $f_i(u)$ and $g_i(u)$, then set
$R_i(u) \coloneq f_i(u) / g_i(u)$, which is in 
$u^{\ord_i(R)} + u^{\ord_i(R)-1}\mathfrak{m}[u^{-1}]$
as $g_i(u) = u^n(1 - k(u))$ for $n \coloneq \deg g_i(u)$ and $k(u) \in u^{-1}\mathfrak{m}[u^{-1}]$,
and $(1-k(u))^{-1} = 1 + k(u) + \cdots + k(u)^N$ for some $N$ as $\mathfrak{m}$ is nilpotent.
Uniqueness follows from the uniqueness assertion in \cref{hensel} by an argument which involves cross multiplying to clear denominators.
\end{proof}

\medskip
\noindent
{\em AI Statement.} The foundational material
about Brauer and iquantum categorifications
in \cref{bcat,iqcat} was worked out using BI; the first author gave a talk about this at a
\href{https://drive.google.com/file/d/1-iHMk7RxpZtZIJrqqRaxV7XSn1BgBEz_/view}{conference in Uppsala}
in August 2026. The definition of the internal bubbles in \cref{bubbles}, the formulae in \cref{passport}, and the choice of parameters for the 2-iquantum group in \cref{iqg} were 
also in place at that time, and we had almost finished checking the relations needed to prove \cref{passport}.
However, we were struggling to check the Reidemeister III relation in \cref{lem:riii}, especially 
when the 0 node was involved. We started using AI systematically in September, at which point ChatGPT suggested the amazingly short proof given 
below. We also used both ChatGPT and Anthropic's Claude extensively for the difficult convergence arguments in \cref{bccomplete}, although their suggestions were rewritten and expanded by the authors. 
All of the other writing was done by the authors, 
but ChatGPT and Claude were used for proofreading purposes.

\setcounter{section}{1}

\section{The affine Brauer category}\label{sabcat}

We begin by recalling Rui and Song's 
definition of the affine Brauer category.
We use string calculus with the standard conventions from the recent categorification literature.

\begin{defin}[{\cite[Def.~1.2]{RS19}}] \label{ABdef}
The \emph{affine Brauer} category $\AB$ is the strict $\kk$-linear monoidal category generated by an object $B$ with identity endomorphism 
$\begin{tikzpicture}[B,anchorbase]\draw(0,0) to (0,.4);\end{tikzpicture}$
and the morphisms listed in \cref{ABgens}.
\begin{table}
$$
\begin{array}{|rl|}
\hline
&\text{Generator}\\
\hline
\raisebox{3mm}{$\phantom{X}$}
\begin{tikzpicture}[B,anchorbase,scale=1.5]
\draw[-] (-0.15,-0.15) -- (-0.15,0) arc(180:0:0.15) -- (0.15,-0.15);
\end{tikzpicture}
&\hspace{-3mm}\colon B \otimes B \to \one\\
\raisebox{3.5mm}{$\phantom{X}$}
\begin{tikzpicture}[B,anchorbase,scale=1.5]
\draw[-] (-0.15,0.15) -- (-0.15,0) arc(180:360:0.15) -- (0.15,0.15);
\end{tikzpicture}
&\hspace{-3mm}\colon \one \to B \otimes B\\
\raisebox{3mm}{$\phantom{X}$}
\begin{tikzpicture}[B,centerzero,scale=1.3]
\draw[-] (0.2,-0.2) -- (-0.2,0.2);
\draw[-] (-0.2,-0.2) -- (0.2,0.2);
\end{tikzpicture} 
&\hspace{-3mm}\colon B \otimes B \to B \otimes B\ \ \\
\raisebox{-3mm}{$\phantom{X}$}\!\!\!\raisebox{3mm}{$\phantom{X}$}
\begin{tikzpicture}[B,centerzero,scale=1.5]
\draw (0,-0.2) -- (0,0.2);
\closeddot{0,0};
\end{tikzpicture} 
&\hspace{-3mm}\colon B \to B\\
\hline
\end{array}
$$
\caption{Generating morphisms for the affine Brauer category}\label{ABgens}
\end{table}
The generating morphisms are 
subject to the following relations:
\begin{align}
\label{zigzag}
\begin{tikzpicture}[B,centerzero,scale=1.4]
\draw (-0.3,0.4) -- (-0.3,0) arc(180:360:0.15) arc(180:0:0.15) -- (0.3,-0.4);
\end{tikzpicture}\ 
&=\ 
\begin{tikzpicture}[B,centerzero,scale=1.4]
\draw (0,-0.4) -- (0,0.4);
\end{tikzpicture}\ ,&
\begin{tikzpicture}[B,centerzero,scale=1.4]
\draw (-0.3,-0.4) -- (-0.3,0) arc(180:0:0.15) arc(180:360:0.15) -- (0.3,0.4);
\end{tikzpicture}&=\ \begin{tikzpicture}[B,centerzero,scale=1.4]
\draw (0,-0.4) -- (0,0.4);
\end{tikzpicture}\ ,\\
\label{dotslide}
\begin{tikzpicture}[B,anchorbase,scale=1.5]
\draw (-0.2,-0.3) -- (-0.2,-0.1) arc(180:0:0.2) -- (0.2,-0.3);
\closeddot{-0.2,-0.1};
\end{tikzpicture}\ 
&=\  -\ \begin{tikzpicture}[B,anchorbase,scale=1.5]
\draw (-0.2,-0.3) -- (-0.2,-0.1) arc(180:0:0.2) -- (0.2,-0.3);
\closeddot{0.2,-0.1};
\end{tikzpicture}\ ,&\begin{tikzpicture}[B,anchorbase,scale=1.5]
\draw (-0.2,0.3) -- (-0.2,0.1) arc(-180:0:0.2) -- (0.2,0.3);
\closeddot{-0.2,0.1};
\end{tikzpicture}\ 
&=\  -\ \begin{tikzpicture}[B,anchorbase,scale=1.5]
\draw (-0.2,0.3) -- (-0.2,0.1) arc(-180:0:0.2) -- (0.2,0.3);
\closeddot{0.2,0.1};
\end{tikzpicture}\ ,\\
\label{pitchfork}
\begin{tikzpicture}[B,baseline=-2.9mm,scale=1.5]
\draw (-0.2,-0.3) -- (-0.2,-0.1) arc(180:0:0.2) -- (0.2,-0.3);
\draw (-0.3,0.3) \braiddown (0,-0.3);
\end{tikzpicture}\ 
&=\ 
\begin{tikzpicture}[B,baseline=-2.9mm,scale=1.5]
\draw (-0.2,-0.3) -- (-0.2,-0.1) arc(180:0:0.2) -- (0.2,-0.3);
\draw (0.3,0.3) \braiddown (0,-0.3);
\end{tikzpicture}, &
\begin{tikzpicture}[B,baseline=1mm,scale=1.5]
\draw (-0.2,0.3) -- (-0.2,0.1) arc(-180:0:0.2) -- (0.2,0.3);
\draw (-0.3,-0.3) \braidup (0,0.3);
\end{tikzpicture}\ 
&=\ 
\begin{tikzpicture}[B,baseline=1mm,scale=1.5]
\draw (-0.2,0.3) -- (-0.2,0.1) arc(-180:0:0.2) -- (0.2,0.3);
\draw (0.3,-0.3) \braidup (0,0.3);
\end{tikzpicture}\ ,\\
\label{demazure}
\hspace{-25mm}\raisebox{3mm}{$\phantom{X}$}
\begin{tikzpicture}[B,centerzero,scale=1.2]
\draw (-0.35,-0.35) -- (0.35,0.35);
\draw (0.35,-0.35) -- (-0.35,0.35);
\closeddot{-0.17,0.17};
\end{tikzpicture}
-
\begin{tikzpicture}[B,centerzero,scale=1.2]
\draw (-0.35,-0.35) -- (0.35,0.35);
\draw (0.35,-0.35) -- (-0.35,0.35);
\closeddot{0.17,-0.17};
\end{tikzpicture}
&=\ 
\begin{tikzpicture}[B,centerzero,scale=1.2]
\draw (-0.2,-0.35) -- (-0.2,0.35);
\draw (0.2,-0.35) -- (0.2,0.35);
\end{tikzpicture}\ 
-\ \begin{tikzpicture}[B,centerzero,scale=1.2]
\draw (-0.2,-0.35) -- (-0.2,-0.3) arc(180:0:0.2) -- (0.2,-0.35);
\draw (-0.2,0.35) -- (-0.2,0.3) arc(180:360:0.2) -- (0.2,0.35);
\end{tikzpicture}\ ,&
\begin{tikzpicture}[B,centerzero,scale=1.2]
\draw (-0.35,0.35) -- (0.35,-0.35);
\draw (0.35,0.35) -- (-0.35,-0.35);
\closeddot{-0.17,-0.17};
\end{tikzpicture}
-
\begin{tikzpicture}[B,centerzero,scale=1.2]
\draw (-0.35,0.35) -- (0.35,-0.35);
\draw (0.35,0.35) -- (-0.35,-0.35);
\closeddot{0.17,0.17};
\end{tikzpicture}&=\ \begin{tikzpicture}[B,centerzero,scale=1.2]
\draw (-0.2,-0.35) -- (-0.2,0.35);
\draw (0.2,-0.35) -- (0.2,0.35);
\end{tikzpicture}\ 
-\ \begin{tikzpicture}[B,centerzero,scale=1.2]
\draw (-0.2,-0.35) -- (-0.2,-0.3) arc(180:0:0.2) -- (0.2,-0.35);
\draw (-0.2,0.35) -- (-0.2,0.3) arc(180:360:0.2) -- (0.2,0.35);
\end{tikzpicture}\ ,\\
\label{ri}
\raisebox{3mm}{\phantom{X}}\!\!
\begin{tikzpicture}[B,baseline=-3.9mm,scale=1.5]
\draw (-0.15,-0.4) to[out=60,in=-90] (0.15,0) arc(0:180:0.15) to[out=-90,in=120] (0.15,-0.4);
\end{tikzpicture}\ 
&\ =\  \begin{tikzpicture}[B,anchorbase,scale=1.8]
\draw[-] (-0.15,-0.15) -- (-0.15,0) arc(180:0:0.15) -- (0.15,-0.15);
\end{tikzpicture}\ ,&
\begin{tikzpicture}[B,baseline=2.2mm,scale=1.5]
\draw (-0.15,0.4) to[out=-60,in=90] (0.15,0) arc(0:-180:0.15) to[out=90,in=-120] (0.15,0.4);
\end{tikzpicture}\ 
&\ =\ \begin{tikzpicture}[B,anchorbase,scale=1.8]
\draw[-] (-0.15,0.15) -- (-0.15,0) arc(-180:0:0.15) -- (0.15,0.15);
\end{tikzpicture}\ ,\\
\label{rii-iii}
\begin{tikzpicture}[B,centerzero,scale=1.4]
\draw (0.2,-0.4) to[out=135,in=down] (-0.15,0) to[out=up,in=225] (0.2,0.4);
\draw (-0.2,-0.4) to[out=45,in=down] (0.15,0) to[out=up,in=-45] (-0.2,0.4);
\end{tikzpicture}\ 
&=\ 
\begin{tikzpicture}[B,centerzero,scale=1.4]
\draw (-0.2,-0.4) -- (-0.2,0.4);
\draw (0.2,-0.4) -- (0.2,0.4);
\end{tikzpicture}\ ,&
\begin{tikzpicture}[B,centerzero,scale=1.4]
\draw (0.4,-0.4) -- (-0.4,0.4);
\draw (0,-0.4) to[out=135,in=down] (-0.32,0) to[out=up,in=225] (0,0.4);
\draw (-0.4,-0.4) -- (0.4,0.4);
\end{tikzpicture}
&= \begin{tikzpicture}[B,centerzero,scale=1.4]
\draw (0.4,-0.4) -- (-0.4,0.4);
\draw (0,-0.4) to[out=45,in=down] (0.32,0) to[out=up,in=-45] (0,0.4);
\draw (-0.4,-0.4) -- (0.4,0.4);
\end{tikzpicture}\ .
\end{align}
\end{defin}

\begin{rem}
Our formulation of \cref{ABdef} is convenient but not minimal:
in \cref{dotslide,pitchfork,demazure,ri},
the second relation follows from the first one and the earlier relations.
Sketch proof for \cref{dotslide,pitchfork,ri}: attach a pair of cups at the bottom of the first relation, then simplify using the zig-zag relation; we refer to this sort of argument as rotation by 180$^\circ$.
Sketch proof for \cref{demazure}: attach a cup at the bottom and a cap at the top of one of the strings in the first relation, then use the zig-zag relation; this is rotation by 90$^\circ$.
See \cite{RS19} for more details, though note that our definition differs from \cite[Def.~1.2]{RS19} by
the sign of the dot.
\end{rem}

We call the morphism $\begin{tikzpicture}[B,centerzero,scale=1.2]\draw (0,-0.2) -- (0,0.2);\closeddot{0,0};\end{tikzpicture}$ a \emph{dot}.
It is clear from the defining relations that
$\AB$ has symmetries
$\tT\colon\AB^\op \rightarrow \AB$ defined by reflecting string diagrams in a horizontal axis, and
$\tR\colon\AB^\rev \rightarrow \AB$ defined by reflecting string diagrams in a vertical axis then multiplying by $(-1)^d$ where $d$ is the total number of dots in the diagram.

For a polynomial $f(x) = \sum_{m=0}^n c_m x^m \in \kk[x]$, 
we use the notation
\begin{align*}
\begin{tikzpicture}[B,centerzero,scale=1.5]
\draw (0,-0.2) -- (0,0.2);
\multcloseddot{0,0}{east}{m};
\end{tikzpicture}
&\coloneq\left( 
\begin{tikzpicture}[B,centerzero,scale=1.5]
\draw (0,-0.2) -- (0,0.2);
\closeddot{0,0};
\end{tikzpicture}
\right)^{\circ m},&
\begin{tikzpicture}[B,centerzero,scale=1.5]
\draw (0,-0.2) -- (0,0.2);
\pin{0,0}{.5,0}{f(x)};
\end{tikzpicture}
=\begin{tikzpicture}[B,centerzero,scale=1.5]
\draw (0,-0.2) -- (0,0.2);
\pin{0,0}{-.5,0}{f(x)};
\end{tikzpicture}
&\coloneq
\sum_{m=0}^n c_m
\begin{tikzpicture}[B,centerzero,scale=1.5]
\draw (0,-0.2) -- (0,0.2);
\multcloseddot{0,0}{west}{m};
\end{tikzpicture}\ ,
\end{align*}
referring to the latter as {\em pinning} the polynomial $f(x)$ to the marked point on the string.
More generally, one can consider pins involving
polynomials $f(x) = \sum_{m=0}^n c_m x^m$ whose coefficients are elements of $\End_{\AB}(\one)$. These endomorphisms become bubbles placed in the 2-cell in which the pinhead is placed. 
For example,
\begin{align*}
\begin{tikzpicture}[B,centerzero,scale=1.5]
\draw (0,-0.2) -- (0,0.2);
\pin{0,0}{-.5,0}{f(x)};
\end{tikzpicture}
&\coloneq
\sum_{m=0}^n
\begin{tikzpicture}[B,centerzero,scale=1.5]
\draw (0,-0.2) -- (0,0.2);
\multcloseddot{0,0}{west}{m};
\node[rectangle,rounded corners,draw,pinhead,inner sep=2.5pt] at (-0.4,0) {$\color{black}c_m$};
\end{tikzpicture}\ ,&
\begin{tikzpicture}[B,centerzero,scale=1.5]
\draw (0,-0.2) -- (0,0.2);
\pin{0,0}{.4,0}{f(x)};
\end{tikzpicture}
&\coloneq
\sum_{m=0}^n
\begin{tikzpicture}[B,centerzero,scale=1.5]
\draw (0,-0.2) -- (0,0.2);
\multcloseddot{0,0}{east}{m};
\node[rectangle,rounded corners,draw,pinhead,inner sep=2.5pt] at (0.4,0) {$\color{black}c_m$};
\end{tikzpicture}\ .
\end{align*}
Now, the endomorphisms defined by the left pin and the right pin could be different!

Often, we will use power series in $\kk[x]\lround u^{-1} \rround$
pinned to dots in string diagrams to denote generating functions of morphisms in $\AB$.
We introduce the shorthand notation
\begin{align}\label{dotgenfuncs}
\begin{tikzpicture}[B,centerzero,scale=1.5]
\draw (0,-0.3) -- (0,0.3);
\circled{0,0}{u};
\end{tikzpicture}
&\coloneq
\begin{tikzpicture}[B,centerzero,scale=1.5]
\draw (0,-0.3) -- (0,0.3);
\pin{0,0}{0.5,0}{\frac{1}{u-x}};
\end{tikzpicture}\ ,&
\begin{tikzpicture}[B,centerzero,scale=1.5]
\draw (0,-0.3) -- (0,0.3);
\circledbar{0,0}{u};
\end{tikzpicture}
&\coloneq
\begin{tikzpicture}[B,centerzero,scale=1.5]
\draw (0,-0.3) -- (0,0.3);
\pin{0,0}{0.5,0}{\frac{1}{u+x}};
\end{tikzpicture}\ .
\end{align}
These are fixed by $\tT$ and interchanged by $\tR$.
Also replacing $u$ by $-u$ in one of them produces {\em minus} the other one.
We have that 
\begin{align}\label{whatfunB}
\begin{tikzpicture}[B,centerzero,scale=1]
\draw[-] (-.3,-.25)  to [out=90,in=90,looseness=3] (.3,-.25);
\circled{-.27,-.03}{u};
\end{tikzpicture}\ &=\ 
\begin{tikzpicture}[B,centerzero,scale=1]
\draw[-] (-.3,-.25)  to [out=90,in=90,looseness=3] (.3,-.25);
\circledbar{.27,-.03}{u};
\end{tikzpicture}\ ,&
\begin{tikzpicture}[B,centerzero,scale=1]
\draw[-] (-.3,-.25)  to [out=90,in=90,looseness=3] (.3,-.25);
\circledbar{-.27,-.03}{u};
\end{tikzpicture}\ &=\ 
\begin{tikzpicture}[B,centerzero,scale=1]
\draw[-] (-.3,-.25)  to [out=90,in=90,looseness=3] (.3,-.25);
\circled{.27,-.03}{u};
\end{tikzpicture}\ ,\\\label{gendotslideB}  
\begin{tikzpicture}[B,centerzero,scale=1.3]
\draw[-] (-0.3,-0.3)  -- (0.3,0.3);
\draw[-] (0.3,-0.3)  -- (-0.3,0.3);
\circled{-0.15,0.15}{u};
\end{tikzpicture}
\ -
\begin{tikzpicture}[B,centerzero,scale=1.3]
\draw[-] (-0.3,-0.3) -- (0.3,0.3);
\draw[-] (0.3,-0.3)  -- (-0.3,0.3);
\circled{0.15,-0.15}{u};
\end{tikzpicture}\ &=
 \ 
\begin{tikzpicture}[B,centerzero,scale=1.3]
\draw[-] (-0.2,-0.3)  -- (-0.2,0.3);
\draw[-] (0.2,-0.3)  -- (0.2,0.3);
\circled{-0.2,0}{u};
\circled{0.2,0}{u};
\end{tikzpicture} -
\ 
\begin{tikzpicture}[B,centerzero,scale=1.3]
\draw[-] (-0.2,-0.3)  to [looseness=2.3,out=90,in=90] (0.2,-0.3);
\draw[-] (0.2,0.3)  to [looseness=2.3,out=-90,in=-90] (-0.2,0.3);
\circled{0.16,-0.14}{u};
\circled{-0.16,0.14}{u};
\end{tikzpicture}\ ,&
\begin{tikzpicture}[B,centerzero,scale=1.3]
\draw[-] (-0.3,-0.3) -- (0.3,0.3);
\draw[-] (0.3,-0.3) -- (-0.3,0.3);
\circledbar{-0.15,0.15}{u};
\end{tikzpicture}
-
\begin{tikzpicture}[B,centerzero,scale=1.3]
\draw[-] (-0.3,-0.3) -- (0.3,0.3);
\draw[-] (0.3,-0.3)  -- (-0.3,0.3);
\circledbar{0.15,-0.15}{u};
\end{tikzpicture}\ &=
\ 
\begin{tikzpicture}[B,centerzero,scale=1.3]
\draw[-] (-0.2,-0.3)  to [looseness=2.3,out=90,in=90] (0.2,-0.3);
\draw[-] (0.2,0.3)  to [looseness=2.3,out=-90,in=-90] (-0.2,0.3);
\circledbar{0.16,-0.14}{u};
\circledbar{-0.16,0.14}{u};
\end{tikzpicture}\ -
\ 
\begin{tikzpicture}[B,centerzero,scale=1.3]
\draw[-] (-0.2,-0.3) -- (-0.2,0.3);
\draw[-] (0.2,-0.3) -- (0.2,0.3);
\circledbar{-0.2,0}{u};
\circledbar{0.2,0}{u};
\end{tikzpicture}\ .
\end{align}
Applying $\tT$ shows that
the mirror images of these relations in a horizontal axis also hold.

Recall that $h = \frac{1}{2}\in \kk$.
We will work with the following generating function for dotted bubbles:
\begin{align} \label{eatz}
\begin{tikzpicture}[B,centerzero]
\draw (-0.25,0) arc(180:-180:0.25);
\node at (.52,0) {\color{black}$(u)$};
\end{tikzpicture} &\coloneq \frac{u-h}{u}
\id_\one+\ 
\begin{tikzpicture}[B,centerzero,scale=1.35]
\draw (0,0)++(0,0.2) arc(90:-270:0.2);
\circled{0.2,0}{u};
\end{tikzpicture}
 \in \id_\one + u^{-1} \End_{\AB}(\one)\llbracket u^{-1}\rrbracket\ .
\end{align}
It is the generating function used by Gao, Rui and Song in \cite{GRS-2023b} multiplied by $u^{-1}$, and it is the generating function denoted 
$\begin{tikzpicture}[B,centerzero,scale=1.35]
\draw (0,0)++(0,0.2) arc(90:-270:0.2);
\node at (0,0) {\dotlabel{u}};
\filldraw (0,0)++(0.2,0) circle (1.5pt);
\end{tikzpicture}$ in \cite{SW2026} multiplied by $\frac{u-h}{u}$.
Since we have merely rescaled, the relations from 
\cite[Lem.~4.1--4.2]{GRS-2023b} and
\cite[Prop.~2.5]{SW2026} are easily translated to obtain the following relations:
\begin{align}\label{curlsup1}
\begin{tikzpicture}[B,centerzero]
\draw (-0.25,0) arc(180:-180:0.25);
\node at (.52,0) {\color{black}$(u)$};
\end{tikzpicture}
\begin{tikzpicture}[B,centerzero]
\draw (-0.25,0) arc(180:-180:0.25);
\node at (.67,0) {\color{black}$(-u)$};
\end{tikzpicture}&=\ 
\frac{(u+h)(u-h)}{u^2}\id_\one,\\\label{curlsup2}
\begin{tikzpicture}[B,centerzero]
\draw (-0.25,0) arc(180:-180:0.25);
\node at (.52,0) {\color{black}$(u)$};
\draw (1,-0.5) -- (1,0.5);
\pin{1,0}{2.6,0}{\frac{(u-x+1)(u-x-1)}{(u-x)^2}};
\end{tikzpicture}
\ &=
\begin{tikzpicture}[B,centerzero,scale=1]
\draw (0.5,-0.5) -- (0.5,0.5);
\pin{0.5,0}{-1.1,0}{\frac{(u+x+1)(u+x-1)}{(u+x)^2}};
\draw (0.8,0) arc(180:-180:0.25);
\node at (1.56,0) {\color{black}$(u)$};
\end{tikzpicture},\\\label{curlsup3}
\begin{tikzpicture}[B,anchorbase,scale=1.1]
\draw (0,-0.5) to[out=up,in=180] (0.3,0.2) to[out=0,in=up] (0.45,0) to[out=down,in=0] (0.3,-0.2) to[out=180,in=down] (0,0.5);
\circled{0.45,0}{u};
\end{tikzpicture}
&= 
\ \begin{tikzpicture}[B,centerzero,scale=1.1]
\draw (0,-0.5) -- (0,0.5);
\circledbar{0,0}{u};
\draw (0.25,0) arc(180:-180:0.2);
\node at (.89,0) {\color{black}$(u)$};
\end{tikzpicture}
- \frac{1}{2u}
\begin{tikzpicture}[B,centerzero,scale=1.1]
\draw (0,-0.5) -- (0,0.5);
\circled{0,0}{u};
\end{tikzpicture}
\ .
\end{align}
Note also that the
bubble generating function is invariant under both 
of the symmetries 
$\tT$ and $\tR$.

The fundamental theorem about $\AB$ proved by Rui and Song in \cite{RS19}
gives explicit bases for its morphism spaces.
To state this briefly,
let $Z \coloneq \End_{\AB}(\one)$. This is a commutative
algebra generated by the dotted bubbles 
\begin{equation}\label{brauerbubbles}
z_r \coloneq 
\begin{tikzpicture}[B,centerzero,scale=1.35]
\draw (0,0)++(0,0.2) arc(90:-270:0.2);
\multcloseddot{.2,0}{west}{r};
\end{tikzpicture}
\end{equation}
for all $r \geq 0$.
Now take $m,n \geq 0$.
We make $\Hom_{\AB}(B^{\otimes n}, B^{\otimes m})$
into a right $Z$-module so that $z \in Z$ acts on 
a morphism $\theta$ by 
$\theta z \coloneq \theta \otimes z$.
An {\em $m \times n$-shape} means a 
(magenta) string diagram $D$ for a morphism
in $\Hom_{\AB}(B^{\otimes n}, B^{\otimes m})$
with no dots and no bubbles. Such shapes exist if and only if
$m \equiv n \pmod{2}$.
We say that a shape is {\em reduced} if there are no zig-zags, bigons or loops, and
two shapes are {\em equivalent} if they define the same combinatorial matching between their boundary points. 
Let $\shape(m\times n)$ be a fixed choice of representatives for the equivalence classes of reduced $m \times n$ shapes.
Also, for $D \in \shape(m\times n)$ and $l\coloneq\frac{m+n}{2}$, the number of strings in $D$, we fix a choice of normally ordered points $p_1,\dots,p_l$ away from crossings and critical points, one on each string.
For 
$f \in \kk[x_1,\dots,x_l]$, we let
$D(f)\colon B^{\otimes n}\rightarrow B^{\otimes m}$
be the morphism
obtained from $D$ by pinning $f$ to the points $p_1,\dots,p_l$.

\begin{theo}[{\cite[Th.~B]{RS19}}]\label{RSbasis}
The algebra $Z$ is freely generated by
its even generators $z_{2r}\:(r \geq 0)$.
Moreover,
for $m, n \geq 0$,
the morphism space $\Hom_{\AB}(B^{\otimes n},
B^{\otimes m})$ is 0 unless
$m \equiv n \pmod{2}$, in which case it
is free as a right $Z$-module
with basis
\begin{equation}\label{RSexpansion}
\basis(m\times n)
\coloneq 
\left\{D(x_1^{e_1} \cdots x_l^{e_l})\ \:\big|\:D \in \shape(m \times n), 
e_1,\dots,e_l \geq 0
\right\}.
\end{equation}
\end{theo}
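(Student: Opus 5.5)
The proof has two halves: spanning and linear independence.

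\emph{Spanning.} Define a filtration on $\Hom_{\AB}(B^{\otimes n},B^{\otimes m})$ by the total number of dots. By the dot-sliding relations \cref{demazure} and their rotations, moving a dot through a crossing costs only lower-order terms, and those terms have fewer crossings or replace the crossing by a cup-cap pair. By \cref{dotslide}, moving a dot around a cup or cap only introduces a sign. Hence, modulo lower-dot terms, all dots can be moved to the chosen points $p_1,\dots,p_l$, and any closed components become dotted bubbles in $Z$.

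Next, forgetting dots, the relations \cref{zigzag,pitchfork,ri,rii-iii} are exactly the Brauer category relations. So any dot-free diagram equals a reduced representative of the same matching, up to loops. This uses the standard argument that the unoriented Brauer category has the Brauer diagrams as a basis. Loops carrying dots become bubbles $z_r$. To see that $Z$ is generated by the $z_{2r}$, I would use \cref{curlsup1}: comparing coefficients of $u^{-2k}$ expresses each odd $z_{2k-1}$ (suitably shifted) polynomially in lower bubbles. Induction on the number of dots together with the number of crossings then shows that $\basis(m\times n)$ spans over $Z$.

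\emph{Linear independence.} This is the main obstacle. I would construct a representation that detects the basis. Concretely, I would use $\AB$ acting on $\mathfrak{g}$-modules for $\mathfrak{g}=\mathfrak{so}_N$ or $\mathfrak{sp}_N$: the object $B$ acts by tensoring with the natural module $V$, the dot acts by a shifted Casimir $\Omega$ acting on $M\otimes V$, the cup and cap are given by the bilinear form, and the crossing is the flip. After checking that the defining relations hold (the Demazure relation corresponds to the standard commutation of $\Omega$ with the flip), take $M$ to be a Verma module with generic highest weight, or better a universal family over $S(\mathfrak h)$. The bubbles $z_{2r}$ then act by algebraically independent central elements as $N\to\infty$, which gives the freeness of $Z$.

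For the hom-spaces, first reduce to $\End(B^{\otimes n})$ with $m=n$ using the adjunctions from \cref{zigzag}. Then show that the images of $D(x^{\mathbf e})$ are independent by a leading-term argument. The top-degree part of the dot action on $M\otimes V^{\otimes n}$ is a matrix whose leading symbols, in the associated graded with respect to the PBW filtration of $U(\mathfrak g)$, are distinct monomials in the $x_i$ tensored with the Brauer diagram action on $V^{\otimes n}$. Brauer diagrams act independently for large $N$ by classical Brauer–Schur–Weyl duality.

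The delicate point is arranging the degree bookkeeping so that the symbols of distinct $(D,\mathbf e)$ remain independent over the image of $Z$. I would handle this by choosing the cyclotomic-free generic setting and letting $N$ grow relative to $n$ and the degree bound, as in Rui–Song's argument and the analogous oriented affine Brauer proof of Brundan–Comes–Nash–Reynolds.
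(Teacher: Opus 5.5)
Your spanning argument and your plan for independence (let $\AB$ act on $\mathfrak{so}_N$- or $\mathfrak{sp}_N$-modules by $V\otimes-$ with the dot a shifted Casimir, pass to PBW leading symbols, use Brauer--Schur--Weyl duality with $N\gg0$) are the Rui--Song route that the paper cites. In characteristic $0$ this is fine, apart from the leading-symbol bookkeeping you defer to the literature.

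The gap is characteristic. The paper needs the theorem over $\kk$ of any characteristic $p\neq2$, and positive characteristic is essential here, but your independence argument cannot work when $p>0$.

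\emph{The bubble $z_0$.} The undotted bubble $z_0$ acts on every module in your family by the scalar $\pm N$. So freeness in $z_0$ can only come from letting $N\cdot 1_\kk$ run through infinitely many values. When $p>0$ all these values lie in $\mathbb{F}_p$, so the element $z_0^p-z_0$, which the theorem asserts is nonzero, acts by zero for every choice of $N$ and $M$. No module of this type can detect it.

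\emph{The leading-term step.} This fails as well. The symbol of $z_{2r}$ is, up to a nonzero scalar, the invariant $\xi\mapsto\operatorname{tr}(\xi^{2r})$ on $\mathfrak g$. In characteristic $p$ one has $\operatorname{tr}(\xi^{2p})=\operatorname{tr}(\xi^2)^p$, so $z_{2p}$ and $z_2^p$ have the same symbol and the bookkeeping no longer separates them.

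What is missing is a reduction to characteristic zero.

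\begin{enumerate}
\item The defining relations of $\AB$ have coefficients in $\Z[\frac12]$. Your straightening argument runs verbatim over $\Z[\frac12]$, since the only division is by $2$ when solving \cref{curlsup1} for the odd bubbles. Hence the elements of $\basis(m\times n)$, multiplied by monomials in the $z_{2r}$, span the $\Z[\frac12]$-form of each morphism space.
\item Morphism spaces of a linear monoidal category presented by generators and relations commute with base change, by right exactness of $-\otimes_{\Z[1/2]}\kk$. So the $\Z[\frac12]$-form tensored with $\C$ is the morphism space over $\C$.
\item Your Lie-theoretic argument over $\C$ therefore gives $\Z[\frac12]$-linear independence. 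This makes the $\Z[\frac12]$-form free with the stated basis.
\item Tensoring with $\kk$ then proves the theorem for every $p\neq2$.
\end{enumerate}

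Alternatively, as noted in \cref{easypart}, once spanning is known the arguments of \cref{psi0in,faithful} prove independence directly in every characteristic $\neq2$. That route uses the non-degeneracy of $\UU^\imath$ in place of Lie theory.
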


\begin{rem}\label{easypart}
The first assertion of \cref{RSbasis} implies that
a full set of relations for the algebra $Z$ 
may be obtained by equating coefficients in \cref{curlsup1}. The proof of the theorem has two parts. The easy part is a straightening algorithm
showing that $Z$ is spanned by monomials
in $z_{2r}\:(r \geq 0)$ and that
$\Hom_{\AB}(B^{\otimes n}, B^{\otimes m})$ is generated as a $Z$-module 
by $\basis(m\times n)$. The hard part is to prove linear independence.
Linear independence could also be reproved using the monoidal functor $\Psi\colon\AB \rightarrow \hatAB$ from \cref{passport}, whose target is built from the completed 2-iquantum group $\hatUU^\imath$:
given the easy part, the arguments in the proofs of \cref{psi0in,faithful} below show both that $Z$ is freely generated by the $z_{2r}$ and that $\Psi$ maps any non-trivial $Z$-linear combination of the elements of $\basis(m\times n)$ to a non-zero morphism.
However, that is a much more complicated argument than Rui and Song's approach, which exploits some natural actions of $\AB$ on the categories $\mathcal O$ for classical Lie algebras.
\end{rem}

\begin{rem}\label{grs-trick}
In \cite[Sec.~2]{GRS-2026}, Gao, Rui and Song also consider some variants of $\AB$ in which some of the relations are dropped. 
Let $\AB_{\operatorname{I}}$ be the strict $\kk$-linear monoidal category defined in the same way as $\AB$ above but omitting the Reidemeister I, II and III relations (we mean the relations in \cref{ri,rii-iii}).
Similarly, let $\AB_{\operatorname{II}}$ be defined in the same way but omitting Reidemeister II and III, and let
$\AB_{\operatorname{III}}$ be defined in the same way but only omitting Reidemeister III.
The following relations hold in $\AB_{\operatorname{I}}$:
\begin{align}
\begin{tikzpicture}[B,baseline=-2.9mm,scale=1.5]
\draw (-0.15,-0.4) to[out=60,in=-90] (0.15,0) arc(0:180:0.15) to[out=-90,in=120] (0.15,-0.4);
\closeddot{-.07,-.3};
\end{tikzpicture}\ 
&\ =\ -\ \begin{tikzpicture}[B,baseline=-2.9mm,scale=1.5]
\draw (-0.15,-0.4) to[out=60,in=-90] (0.15,0) arc(0:180:0.15) to[out=-90,in=120] (0.15,-0.4);
\closeddot{.07,-.3};
\end{tikzpicture}\ ,&
\begin{tikzpicture}[B,baseline=1.2mm,scale=1.5]
\draw (-0.15,0.4) to[out=-60,in=90] (0.15,0) arc(0:-180:0.15) to[out=90,in=-120] (0.15,0.4);
\closeddot{-.07,.3};
\end{tikzpicture}\ 
&\ =\ -\ \begin{tikzpicture}[B,baseline=1.2mm,scale=1.5]
\draw (-0.15,0.4) to[out=-60,in=90] (0.15,0) arc(0:-180:0.15) to[out=90,in=-120] (0.15,0.4);
\closeddot{.07,.3};\end{tikzpicture}\ .
\end{align}
We leave the easy proof of this as an exercise for the reader.
Less trivially, in \cite[Cor.~2.4]{GRS-2026}, it is shown that the following relations hold in $\AB_{\operatorname{II}}$:
\begin{align}\label{RIrels}
\begin{tikzpicture}[B,centerzero,scale=1.4]
\draw (0.2,-0.4) to[out=135,in=down] (-0.15,0) to[out=up,in=225] (0.2,0.4);
\draw (-0.2,-0.4) to[out=45,in=down] (0.15,0) to[out=up,in=-45] (-0.2,0.4);
\closeddot{-.08,.3};
\end{tikzpicture}\ 
&=\ \begin{tikzpicture}[B,centerzero,scale=1.4]
\draw (0.2,-0.4) to[out=135,in=down] (-0.15,0) to[out=up,in=225] (0.2,0.4);
\draw (-0.2,-0.4) to[out=45,in=down] (0.15,0) to[out=up,in=-45] (-0.2,0.4);
\closeddot{-.08,-.3};
\end{tikzpicture}\ ,&
\begin{tikzpicture}[B,centerzero,scale=1.4]
\draw (0.2,-0.4) to[out=135,in=down] (-0.15,0) to[out=up,in=225] (0.2,0.4);
\draw (-0.2,-0.4) to[out=45,in=down] (0.15,0) to[out=up,in=-45] (-0.2,0.4);
\closeddot{.08,.3};
\end{tikzpicture}\ 
&=\ \begin{tikzpicture}[B,centerzero,scale=1.4]
\draw (0.2,-0.4) to[out=135,in=down] (-0.15,0) to[out=up,in=225] (0.2,0.4);
\draw (-0.2,-0.4) to[out=45,in=down] (0.15,0) to[out=up,in=-45] (-0.2,0.4);
\closeddot{.08,-.3};
\end{tikzpicture}\ .
\end{align}
Also, in \cite[Cor.~2.7]{GRS-2026}, it is shown that the following relations hold in $\AB_{\operatorname{III}}$:
\begin{align}\label{RIIrelsa}
\begin{tikzpicture}[B,centerzero,scale=1.2]
\draw (0.4,-0.4) -- (-0.4,0.4);
\draw (0,-0.4) to[out=135,in=down] (-0.32,0) to[out=up,in=225] (0,0.4);
\draw (-0.4,-0.4) -- (0.4,0.4);
\closeddot{-.3,.3};
\end{tikzpicture}
-
\begin{tikzpicture}[B,centerzero,scale=1.2]
\draw (0.4,-0.4) -- (-0.4,0.4);
\draw (0,-0.4) to[out=135,in=down] (-0.32,0) to[out=up,in=225] (0,0.4);
\draw (-0.4,-0.4) -- (0.4,0.4);
\closeddot{.3,-.3};
\end{tikzpicture}
&= \begin{tikzpicture}[B,centerzero,scale=1.2]
\draw (0.4,-0.4) -- (-0.4,0.4);
\draw (0,-0.4) to[out=45,in=down] (0.32,0) to[out=up,in=-45] (0,0.4);
\draw (-0.4,-0.4) -- (0.4,0.4);
\closeddot{-.3,.3};
\end{tikzpicture}
-
\begin{tikzpicture}[B,centerzero,scale=1.2]
\draw (0.4,-0.4) -- (-0.4,0.4);
\draw (0,-0.4) to[out=45,in=down] (0.32,0) to[out=up,in=-45] (0,0.4);
\draw (-0.4,-0.4) -- (0.4,0.4);
\closeddot{.3,-.3};
\end{tikzpicture}\ ,\\\label{RIIrelsb}
\begin{tikzpicture}[B,centerzero,scale=1.2]
\draw (0.4,-0.4) -- (-0.4,0.4);
\draw (0,-0.4) to[out=135,in=down] (-0.32,0) to[out=up,in=225] (0,0.4);
\draw (-0.4,-0.4) -- (0.4,0.4);
\closeddot{-.12,.3};
\end{tikzpicture}
-
\begin{tikzpicture}[B,centerzero,scale=1.2]
\draw (0.4,-0.4) -- (-0.4,0.4);
\draw (0,-0.4) to[out=135,in=down] (-0.32,0) to[out=up,in=225] (0,0.4);
\draw (-0.4,-0.4) -- (0.4,0.4);
\closeddot{-.12,-.3};
\end{tikzpicture}
&= \begin{tikzpicture}[B,centerzero,scale=1.2]
\draw (0.4,-0.4) -- (-0.4,0.4);
\draw (0,-0.4) to[out=45,in=down] (0.32,0) to[out=up,in=-45] (0,0.4);
\draw (-0.4,-0.4) -- (0.4,0.4);
\closeddot{.12,.3};
\end{tikzpicture}
-
\begin{tikzpicture}[B,centerzero,scale=1.2]
\draw (0.4,-0.4) -- (-0.4,0.4);
\draw (0,-0.4) to[out=45,in=down] (0.32,0) to[out=up,in=-45] (0,0.4);
\draw (-0.4,-0.4) -- (0.4,0.4);
\closeddot{.12,-.3};
\end{tikzpicture}\ ,\\
\begin{tikzpicture}[B,centerzero,scale=1.2]
\draw (0.4,-0.4) -- (-0.4,0.4);
\draw (0,-0.4) to[out=135,in=down] (-0.32,0) to[out=up,in=225] (0,0.4);
\draw (-0.4,-0.4) -- (0.4,0.4);
\closeddot{.3,.3};
\end{tikzpicture}
-
\begin{tikzpicture}[B,centerzero,scale=1.2]
\draw (0.4,-0.4) -- (-0.4,0.4);
\draw (0,-0.4) to[out=135,in=down] (-0.32,0) to[out=up,in=225] (0,0.4);
\draw (-0.4,-0.4) -- (0.4,0.4);
\closeddot{-.3,-.3};
\end{tikzpicture}
&= \begin{tikzpicture}[B,centerzero,scale=1.2]
\draw (0.4,-0.4) -- (-0.4,0.4);
\draw (0,-0.4) to[out=45,in=down] (0.32,0) to[out=up,in=-45] (0,0.4);
\draw (-0.4,-0.4) -- (0.4,0.4);
\closeddot{.3,.3};
\end{tikzpicture}
-
\begin{tikzpicture}[B,centerzero,scale=1.2]
\draw (0.4,-0.4) -- (-0.4,0.4);
\draw (0,-0.4) to[out=45,in=down] (0.32,0) to[out=up,in=-45] (0,0.4);
\draw (-0.4,-0.4) -- (0.4,0.4);
\closeddot{-.3,-.3};
\end{tikzpicture}\ .\label{RIIrelsc}
\end{align}
The relations \cref{RIrels,RIIrelsa,RIIrelsb,RIIrelsc} were used to simplify the proofs of the
Reidemeister II and III relations in
\cite{GRS-2026}, and we will use them in exactly the same way in the proofs of \cref{lem:rii,lem:riii} below.
\end{rem}

\setcounter{section}{2}

\section{\texorpdfstring{The 2-$\mathrm i$quantum group}{The 2-iquantum group}}\label{iqg}

Next, we introduce the 2-iquantum group relevant for the constructions in this paper. It is a special case of the general definition in \cite{BWW}.
Let $I \coloneq \kk$ and $\tau\colon I \rightarrow I$ be the involution
with $\tau i=-i$. We usually prefer to write $-i$ instead 
of $\tau i$ but the latter is the notation used in the general setup of iquantum groups which we are following. 

Let $X$ be the free Abelian group $\bigoplus_{i \in I} \Z \varpi_i$ and
$h_i\colon X \rightarrow \Z$ be the homomorphism of Abelian groups 
with $h_i(\varpi_j) \coloneq \delta_{i,j}$. 
Let $\alpha_i \coloneq 2 \varpi_i - \varpi_{i-1}-\varpi_{i+1}$
so that
\begin{equation}\label{cartanmatrix}
a_{i,j} \coloneq h_i(\alpha_j) = \begin{dcases}
2&\text{if $i=j$}\\
-1&\text{if $i = j \pm 1$}\\
0&\text{otherwise}
\end{dcases}
\end{equation}
is a symmetric Cartan matrix
whose connected components are all of type $A_\infty$ if $p=0$
or $A_{p-1}^{(1)}$ if $p > 0$.
For $i \in I$, let
\begin{align}\label{firstchoice}
\del_i &\coloneq
\begin{dcases}
1&\text{if $i = h$}\\
-1&\text{if $i=0$}\\
0&\text{otherwise},
\end{dcases}
\end{align}
so that $\del_i + \del_{-i} = -a_{i,-i}$.
Let $t\colon X \rightarrow \Z/2$ and $p\colon X \rightarrow \Z/2$ be the homomorphisms of Abelian groups mapping $\lambda$ to 
\begin{align}\label{ambiguity}
t(\lambda) &\coloneq 
h_0(\lambda)\text{ mod 2},&
p(\lambda) &\coloneq 
\sum_{i \in I} h_i(\lambda)\text{ mod 2}.
\end{align}
We call $t(\lambda)$ the {\em type} and $p(\lambda)$
the {\em parity} of $\lambda$. Simple roots are even,
so $p(\lambda-\alpha_i)=p(\lambda)$. However,
$t(\lambda-\alpha_i)$ is different from $t(\lambda)$ 
when $i = \pm 1$.

Let $\tau\colon X \rightarrow X$ be the homomorphism with $\tau \varpi_i \coloneq \varpi_{-i}$.
Let $X^\imath$ be the quotient
of $X$ by the subgroup spanned by 
$\varpi_i+\varpi_{-i}\:(i \in I)$,
denoting the quotient map by
$\pi\colon X\twoheadrightarrow X^\imath$.
Elements of $X^\imath$ are called {\em iweights}.
For $\lambda \in X^\imath$ and $\mu \in X$, we allow ourselves to write expressions like 
$\lambda+\mu$ instead of
$\lambda+\pi(\mu)$.
The homomorphisms $t$ and $p$ factor through the quotient to induce
homomorphisms we also denote by $t, p\colon X^\imath \rightarrow \Z/2$.
The set $X^\imath$ decomposes as
\begin{equation}\label{evenodd}
X^\imath = X^\imath_{\0} \sqcup X^\imath_{\1},
\end{equation}
where $X^\imath_{\0}$ consists of the iweights with even parity, and $X^\imath_{\1}$ consists of the odd ones.
One more important notation is
\begin{align}
\label{sneaky}
\lambda_i &\coloneq 
h_i\big((\id-\tau)(\hat\lambda)\big)
=h_i(\hat\lambda)-h_{-i}(\hat\lambda)
\end{align}
for $\lambda \in X^\imath$ with pre-image $\hat\lambda \in X$.
This is well defined independent of the choice of 
$\hat\lambda$.
Note also that $p(\lambda) \equiv t(\lambda) + \sum_{i \in I'} \lambda_i\pmod{2}$ for any $I' \subset I$ chosen so that
$I = I' \sqcup \{0\}\sqcup (-I')$.

\begin{lem}\label{reallydumblemma}
For weights $\mu,\nu \in X$ of the same parity, 
we have that
$(\id-\tau)(\mu)=(\id-\tau)(\nu)$ in $X$
if and only if $\pi(\mu) = \pi(\nu)$ in $X^\imath$.
\end{lem}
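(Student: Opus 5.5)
Both implications reduce to a computation with the subgroup $K \coloneq \ker \pi = \operatorname{span}\{\varpi_i+\varpi_{-i}\:|\:i\in I\}$ and the map $\id-\tau$. Put $\gamma \coloneq \mu-\nu$. Then $\pi(\mu)=\pi(\nu)$ means $\gamma\in K$, and $(\id-\tau)(\mu)=(\id-\tau)(\nu)$ means $\gamma\in\ker(\id-\tau)$. Since $\mu$ and $\nu$ have the same parity, $p(\gamma)=0$. So it is enough to show that, inside the sublattice of weights of even parity, $K\cap X_{\0}=\ker(\id-\tau)\cap X_{\0}$, where $X_{\0}\coloneq\{\gamma \in X\:|\:p(\gamma)=0\}$.

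For ``if'', take $\gamma\in K$. Each generator $\varpi_i+\varpi_{-i}$ is $\tau$-fixed, so $(\id-\tau)(\gamma)=0$. This direction does not use the parity hypothesis.

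For ``only if'', write $\gamma=\sum_i c_i\varpi_i$ with finitely many $c_i\neq 0$. The condition $\tau\gamma=\gamma$ says $c_i=c_{-i}$ for all $i$. Choose $I'$ with $I=I'\sqcup\{0\}\sqcup(-I')$, which is possible because $p\neq 2$ forces $i\neq -i$ for $i\neq 0$. Then
\[
\gamma=c_0\varpi_0+\sum_{i\in I'}c_i(\varpi_i+\varpi_{-i}).
\]
The sum lies in $K$, so it remains to show $c_0\varpi_0\in K$. Here the parity hypothesis enters:
\[
p(\gamma)\equiv c_0+2\sum_{i\in I'}c_i\equiv c_0 \pmod 2,
\]
so $c_0$ is even. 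Since $\tau 0=0$, the element $\varpi_0+\varpi_{-0}=2\varpi_0$ is one of the generators of $K$, and hence $c_0\varpi_0=\tfrac{c_0}{2}(2\varpi_0)\in K$. Therefore $\gamma\in K$.

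The only delicate point is the $\varpi_0$ coordinate. There $\tau$-invariance imposes no constraint, and the parity assumption is exactly what makes that coefficient even, so that it lies in $K$. For $i\neq 0$, the fact that $i$ and $-i$ are distinct (using $p\neq 2$) ensures that the pairs $\{i,-i\}$ partition $I\setminus\{0\}$, which is what the decomposition of $\gamma$ relies on.
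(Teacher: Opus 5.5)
Your proof is correct and follows the same route as the paper. The paper's one-line proof simply observes that $\mu-\nu$ is an even weight, and that an even weight is $\tau$-fixed if and only if it lies in $\ker\pi$. You have supplied the details of that observation, including the key point that parity forces the $\varpi_0$-coefficient to be even, so that it is a multiple of $2\varpi_0=\varpi_0+\varpi_{-0}$.
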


\begin{proof}
An even weight $\lambda \in X$
is a $\tau$-fixed point if and only if
$\pi(\lambda)=0$ in $X^\imath$.
\end{proof}

We now have the data needed to define a quasi-split
iquantum group $\dot\U^\imath$ as in \cite[Sec.~3.1]{BWW}.
Since this will not play a major role, we will not repeat its generators-and-relations definition here, but note that it is a locally unital $\Q(q)$-algebra with distinguished idempotents
$\{1_\lambda\:|\:\lambda \in X^\imath\}$
and generators $b_i 1_\lambda = 1_{\lambda-\alpha_i} b_i$
for $i \in I$ and $\lambda \in X^\imath$.

We proceed to the main goal of the section: the definition of 
a graded 2-category
$\UU^\imath$ which is a categorification of an integral form of $\dot\U^\imath$. For this, we need some additional parameters, which involve some choices. 
Recall that $I = \kk$.
Let $\sim$ be the equivalence relation on $I$ generated by
the relations $i \sim -i$ and $i \sim i+1$.
We make a henceforth fixed choice $K$ of $\sim$-equivalence class
representatives.
The equivalence classes $[k]$ for $k \in K$ 
are the vertex sets of the Satake diagrams
in \cref{dynkintable}.
If $p = 0$, there are three types:
$[0]$, $[h]$, and all the others.
If $p > 2$, there are only two types: 
$[0] = [h]$, and all others.
We define two partitions
\begin{equation}\label{partitions}
I = I_+ \sqcup \{0\} \sqcup I_- = I_\lo \sqcup\{0\} \sqcup I_\hi
\end{equation}
of the set $I$
by letting $I_+$ and $I_-$ be the sets
indicated by the $+$ and $-$ labels
in the nodes in \cref{dynkintable}, and $I_\lo$ and $I_\hi$
be the sets on the bottom and top rows of these diagrams.
Important properties of these partitions 
are that $I_- = -I_+, I_\hi=-I_\lo$ and $h \in I_+ \cap  I_\lo$.
Note also that $1 \in I_\hi$, but it could either be in $I_+$ or $I_-$
depending on the characteristic $p$ (it is in $I_-$ if and only if 
$p\equiv 3\pmod{4}$).

For $i \in I$, we let $c_i\colon (X,+)\rightarrow  \kk^\times$
be the homomorphism of Abelian groups defined by
\begin{align}
c_i(\lambda)&\coloneq 
\begin{dcases}
1&\text{if $i \in I_+ \cup \{0\}$}\\
(-1)^{h_i(\lambda)}&\text{if $i \in I_-$.}
\end{dcases}
\end{align}
In \cite[Hyp.~3.3]{BWW}, these functions are 
called {\em normalization functions}, and they are required to satisfy the properties
$c_i(\alpha_i)=1$ for all $i \in I$, and
\begin{align}\label{ducks}
c_{-i}(\tau \lambda) &= (-1)^{h_i(\lambda)} c_i(\lambda)
\end{align}
for $i \in I-\{0\}$ and $\lambda \in X$.
Our choice for these functions clearly satisfies
both of these properties.
In particular, we have that
\begin{align}\label{dogs}
c_{i,j} \coloneq c_i(\alpha_j) &= 
\begin{dcases}
1&\text{if $i \in I_+\cup\{0\}$}\\
(-1)^{a_{i,j}}&\text{if $i \in I_-$}
\end{dcases}
\end{align}
for $i,j \in I$, and $c_{i,i}=1, c_{i,-i}=(-1)^{\delta_{i,-h}}$. Let
\begin{align}
t_{i,j}&\coloneq
\begin{dcases}
-1\hspace{18mm}&\text{if there is a single arrow
$\begin{tikzpicture}[iQ,centerzero,scale=1.5]\draw (0,0) to (.5,0);
\node at (.25,0) {$\scriptstyle <$}; 
\node at (0,0) {${\color{white}\bullet}\hspace{-5.5pt}\circ$};
\node at (0.5,0) {${\color{white}\bullet}\hspace{-5.5pt}\circ$};
\node at (0,0.15) {$\scriptstyle i$};
\node at (.5,0.15) {$\scriptstyle j$};
\end{tikzpicture}$ in \cref{dynkintable}}\\
1\hspace{18mm}&\text{if there is a single arrow
$\begin{tikzpicture}[iQ,centerzero,scale=1.5]\draw (0,0) to (.5,0);
\node at (.25,0) {$\scriptstyle >$}; 
\node at (0,0) {${\color{white}\bullet}\hspace{-5.5pt}\circ$};
\node at (0.5,0) {${\color{white}\bullet}\hspace{-5.5pt}\circ$};
\node at (0,0.15) {$\scriptstyle i$};
\node at (.5,0.15) {$\scriptstyle j$};
\end{tikzpicture}$ in \cref{dynkintable}}\\
-\sqrt{-1}\hspace{18mm}&\text{if there is a double arrow
$\begin{tikzpicture}[iQ,centerzero,scale=1.5]\draw (0,0) to (.5,0);
\node at (.25,0) {$\scriptstyle <\!\!<$}; 
\node at (0,0) {${\color{white}\bullet}\hspace{-5.5pt}\circ$};
\node at (0.5,0) {${\color{white}\bullet}\hspace{-5.5pt}\circ$};
\node at (0,0.15) {$\scriptstyle i$};
\node at (.5,0.15) {$\scriptstyle j$};
\end{tikzpicture}$ in \cref{dynkintable}}\\
\sqrt{-1}\hspace{18mm}&\text{if there is a double arrow
$\begin{tikzpicture}[iQ,centerzero,scale=1.5]\draw (0,0) to (.5,0);
\node at (.25,0) {$\scriptstyle >\!\!>$}; 
\node at (0,0) {${\color{white}\bullet}\hspace{-5.5pt}\circ$};
\node at (0.5,0) {${\color{white}\bullet}\hspace{-5.5pt}\circ$};
\node at (0,0.15) {$\scriptstyle i$};
\node at (.5,0.15) {$\scriptstyle j$};
\end{tikzpicture}$ in \cref{dynkintable}}\\
1&\text{in all other cases,}
\end{dcases}\end{align}
\begin{align}
Q_{i,j}(x,y)&\coloneq
\begin{dcases}
t_{i,j}(x-y)^{-a_{i,j}}&\text{if $i \neq j$}\\
0&\text{if $i=j$,}
\end{dcases}&
Q^\imath_{i,j}(x,y) &\coloneq (-1)^{\delta_{i,-j}} Q_{i,j}(x,y).
\label{typohere}
\end{align}
The last parameter choice to be made is
\begin{align}\label{shape}
\zeta_i &\coloneq
\begin{dcases}
1&\text{if $i \in I_+$}\\
-\tfrac{1}{2} &\text{if $i=0$}\\
-2&\text{if $i = -h$}\\
-1&\text{for all other $i \in I_-$.}
\end{dcases}\end{align}
Because
$\begin{tikzpicture}[iQ,centerzero,scale=1.5]\draw (0,0) to (.5,0);
\node at (.25,0) {$\scriptstyle <$}; 
\node at (0,0) {${\color{white}\bullet}\hspace{-5.5pt}\circ$};
\node at (0.5,0) {${\color{white}\bullet}\hspace{-5.5pt}\circ$};
\node at (0,0.15) {$\scriptstyle -h$};
\node at (.5,0.15) {$\scriptstyle h$};
\end{tikzpicture}$, $\del_h = 1$ and $\del_{-h}=0$,
these satisfy
\begin{align}
\zeta_{0}&=-\tfrac{1}{2},&
Q_{-i,i}(x,-x)&=(-1)^{\del_i+1}\zeta_i \zeta_{-i} x^{-a_{i,-i}}
\end{align}
for all $i \in I - \{0\}$,
which is another requirement from \cite[Hyp.~3.3]{BWW}.

There are several further pieces of notation dependent on the choices made.
Let $\gamma_i\colon(X^\imath,+) \rightarrow \kk^\times$
be the homomorphism of Abelian groups 
defined by $\gamma_i(\lambda) \coloneq (-1)^{h_{-i}(\hat\lambda)}c_i(\hat\lambda) / c_{-i}(\hat\lambda)$
for $\lambda \in X^\imath$ with pre-image $\hat\lambda \in X$.
Explicitly, we have that
\begin{align}
\gamma_i(\lambda) &= 
\begin{dcases}
1&\text{if $i \in I_+$}\\
(-1)^{t(\lambda)}&\text{if $i =0$}\\
(-1)^{\lambda_i}&\text{if $i \in I_-$.}
\end{dcases}\end{align}
Also let
\begin{align}
\label{bone}
\gamma_{i,j} \coloneq
\gamma_i(\pi(\alpha_j)) =
(-1)^{a_{-i,j}}c_{i,j} / c_{-i,j}&=
\begin{dcases}
1&\text{if $i \in I_+$}\\
(-1)^{a_{0,j}}&\text{if $i=0$}\\
(-1)^{a_{i,j}+a_{i,-j}}&\text{if $i \in I_-$.}
\end{dcases}\end{align}
In particular, $\gamma_{i,i}=\gamma_{i,-i} = (-1)^{\delta_{i,-h}}$.
We use the shorthand
\begin{align}\label{eitherway}
\zeta_i(\lambda) \coloneq 
\zeta_i \gamma_i(\lambda) &= 
\begin{dcases}
1&\text{for $i \in I_+$}\\
(-1)^{t(\lambda)+1}\tfrac{1}{2} &\text{if $i=0$}\\
(-1)^{\lambda_i+1}2 &\text{if $i =-h$}\\
(-1)^{\lambda_i+1} &\text{for all other $i \in I_-$}
\end{dcases}\end{align}
for $\lambda \in X^\imath$. Let
\begin{align}
r_{i,j}&\coloneq c_{i,j} / t_{i,j}.
\end{align}
We have that $r_{i,i}=r_{-i,i}=1$ for all $i \in I$
as required by \cite[Hyp.~3.3]{BWW}.
To see this for $i \in \{\pm h\}$, note that
$c_{h,-h}=t_{h,-h}=1$
and
$c_{-h,h}=t_{-h,h}=-1$.
The following lemma checks the final constraint on
parameters from \cite[Hyp.~3.3]{BWW}:

\begin{lem}\label{nasty}
For $i,j \in I$, we have that
$Q_{-i,-j}(x,y) = r_{i,j} r_{j,i} r_{i, -j} r_{j,-i} Q_{i,j}(-x,-y)$.
\end{lem}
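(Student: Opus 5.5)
The identity is a finite sign check, so I would argue by cases on the pair $(i,j)$. Throughout I would use $Q_{i,j}(-x,-y)=(-1)^{a_{i,j}}Q_{i,j}(x,y)$ for $i\neq j$, which holds because $Q_{i,j}$ is homogeneous of degree $-a_{i,j}$. I would also use $a_{-i,-j}=a_{i,j}$.

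\emph{Step 1: reduction.} If $i=j$, then also $-i=-j$, and both sides vanish by \cref{typohere}. So assume $i\neq j$. Then both sides are scalar multiples of $(x-y)^{-a_{i,j}}$, and the statement becomes the scalar identity
\begin{equation*}
t_{-i,-j} = (-1)^{a_{i,j}}\, r_{i,j} r_{j,i} r_{i,-j} r_{j,-i}
= (-1)^{a_{i,j}}\,\frac{c_{i,j}c_{j,i}c_{i,-j}c_{j,-i}}{t_{i,j}t_{j,i}t_{i,-j}t_{j,-i}}.
\end{equation*}

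\emph{Step 2: evaluate the two ingredients.} First I would record the pairwise products $t_{k,l}t_{l,k}$. Reading the arrow conventions in \cref{dynkintable}, this product is $1$ if $k,l$ are not joined, $-1$ across a single arrow, and $(-\sqrt{-1})(\sqrt{-1})=1$ across a double arrow. Next, by \cref{dogs}, each factor $c_{k,l}$ equals $(-1)^{a_{k,l}}$ when $k\in I_-$ and equals $1$ otherwise. This makes the numerator a sign determined by which of $i,j$ lie in $I_-$ and by the adjacencies among $\{i,j,-i,-j\}$.

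\emph{Step 3: case split by adjacency.} Each of $a_{i,j}$ and $a_{i,-j}$ is $0$ or $-1$ (or $2$ when $i=-j$), so there are four main cases.
\begin{itemize}
\item Neither $j$ nor $-j$ is adjacent to $i$. Then every factor is $1$ and $t_{-i,-j}=1$, so the identity holds.
\item $j=i\pm1$ but $-j$ is not adjacent to $i$. The edge $\{-i,-j\}$ is the mirror image under the red involution of the edge $\{i,j\}$, so I would compare $t_{-i,-j}$ with $t_{i,j}$ via the $\pm$ and hi/lo labels in the table. The point is that the arrows on the two rows point oppositely relative to $\tau$, and the $c$-signs compensate when $i$ or $j\in I_-$.
\item $-j$ is adjacent to $i$ but $j$ is not. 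This is handled in the same way, with the roles of $j$ and $-j$ swapped.
\item Both $j$ and $-j$ are adjacent to $i$, or $i=-j$. This only happens near the folding points, namely $\{i,j\}=\{0,\pm1\}$ and $\{i,j\}=\{\pm h\}$, together with the wrap-around in $\text{AIII}^{(1)}_{p-1}$.
\end{itemize}

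\emph{Step 4: the special edges.} I would treat these last cases by direct inspection of each row of \cref{dynkintable}. This includes the two residues $p\bmod 4$, where the membership of $1$ in $I_\pm$ flips. It also uses the facts already noted before the statement: $c_{h,-h}=t_{h,-h}=1$, $c_{-h,h}=t_{-h,h}=-1$, and the double arrow at $0$.

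The main obstacle is keeping the conventions straight: which orientation gives $t=-1$ versus $t=1$, and how the $+/-$ labels alternate along each row. The genuinely delicate cases are the edges incident to $0$ with the double arrow $\pm\sqrt{-1}$, and the bent edge joining $\pm h$. I would check those edge by edge, and organize all remaining edges by the uniform observation that the top and bottom rows carry opposite orientations and opposite $\pm$ labels.
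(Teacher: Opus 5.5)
Your overall plan is the paper's: reduce to a scalar identity, then do a rank-two check on $\{\pm i,\pm j\}$. However, the reduction in Step~1 is wrong, and the identity you then set out to verify is false.

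\textbf{The dropped factor.} You correctly note $Q_{i,j}(-x,-y)=(-1)^{a_{i,j}}Q_{i,j}(x,y)$. But $Q_{i,j}(x,y)=t_{i,j}(x-y)^{-a_{i,j}}$, so the right-hand side of the lemma is $(-1)^{a_{i,j}}\,t_{i,j}\,r_{i,j}r_{j,i}r_{i,-j}r_{j,-i}\,(x-y)^{-a_{i,j}}$. You dropped the factor $t_{i,j}$. It cancels against the $t_{i,j}$ in the denominator of $r_{i,j}$, so the identity to be proved is
\[
t_{-i,-j}\,t_{j,i}\,t_{i,-j}\,t_{j,-i}=(-1)^{a_{i,j}}\,c_{i,j}c_{j,i}c_{i,-j}c_{j,-i},
\]
in which $t_{i,j}$ does not appear. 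Your displayed identity differs from this by a factor of $t_{i,j}$, so it fails whenever $t_{i,j}\neq 1$.

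For a concrete instance, take $p=0$, $k\not\sim 0,h$ and $(i,j)=(k+1,k)$.
\begin{itemize}
\item Since $k,k+1\in I_+$, all the $c$-factors equal $1$.
\item The pairs $\{k+1,-k\}$ and $\{k,-k-1\}$ are not adjacent.
\item The arrows are $k\to k+1$ and $-k\to -k-1$, so $t_{k+1,k}=t_{-k-1,-k}=-1$.
\end{itemize}
Your left-hand side is therefore $t_{-k-1,-k}=-1$, while your right-hand side is $(-1)\cdot 1/\big((-1)\cdot 1\cdot 1\cdot 1\big)=1$. A case check built on your Step~1 cannot close.

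\textbf{The organizing principle in Step~3.} As stated, it is also incorrect: the two rows do not always carry opposite orientations relative to $\tau$. In the diagonal types, the edge $k\to k+1$ is mirrored by $-k\to -k-1$, which is the same orientation. The correct rule, which the paper uses, is this: for an edge $\{a,b\}$ with $a,b\neq 0$ and $a\neq -b$, one has $a\to b\Leftrightarrow -a\to -b$ exactly when $a$ and $b$ lie in the same one of $I_+$, $I_-$.

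\textbf{How the corrected check closes.}
\begin{itemize}
\item When $i\in\{j\pm1\}$, apply the rule to $\{i,j\}$ versus $\{-i,-j\}$. The corrected identity reduces to $t_{-i,-j}t_{j,i}=-c_{i,j}c_{j,i}$, which then follows at once.
\item When $i\in\{-j\pm1\}$, apply the rule to $\{i,-j\}$ versus $\{-i,j\}$. The identity reduces to $t_{i,-j}t_{j,-i}=c_{i,-j}c_{j,-i}$, which again follows at once.
\item The paper halves the work using $t_{-j,-i}t_{i,j}=t_{-i,-j}t_{j,i}$, a consequence of $t_{i,j}=(-1)^{a_{i,j}}t_{j,i}$ (essentially your pairwise-product observation). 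This makes the cases $(i,j)$ and $(j,i)$ equivalent.
\item The remaining exceptional cases $\{i,j\}=\{0,\pm1\}$ and $\{h,-h\}$ are then checked by hand, as you intended.
\end{itemize}
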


\begin{proof}
It holds if $i=j$ as both sides are zero, so assume that $i \neq j$.
As $r_{i,j}=c_{i,j}/t_{i,j}$, $Q_{-i,-j}(x,y) = t_{-i,-j} (x-y)^{-a_{i,j}}$
and $Q_{i,j}(-x,-y) = t_{i,j}(-1)^{a_{i,j}}(x-y)^{-a_{i,j}}$,
the identity is equivalent to
$$
t_{-i,-j}t_{j,i}t_{i,-j}t_{j,-i}= (-1)^{a_{i,j}}c_{i,j}c_{j,i}c_{i,-j}c_{j,-i}.
$$
Now one needs to consider the various 
cases for $\{\pm i, \pm j\}$, i.e., it is a rank-two calculation.
Because $t_{i,j}=(-1)^{a_{i,j}}t_{j,i}$
and $(-1)^{a_{i,j}}t_{-j,-i}=t_{-i,-j}$,
we have that $t_{-j,-i}t_{i,j} = t_{-i,-j}t_{j,i}$.
From this, it follows that the identity holds for $(i,j)$ if and only if it holds for $(j,i)$, which cuts down the cases to be considered.
\begin{itemize}
\item 
If $i \notin \{- j, \pm j \pm 1\}$ the identity holds trivially. 
\item
If $i = -j$ it holds trivially except if
$i = \pm h$ and $j=\mp h$, and this case follows
by checking it for $i=h, j=-h$ using
$(-1)^{a_{h,-h}} c_{h,-h}c_{-h,h}=1$. 
\item
If $i = 0$ and $j = \pm 1$, or $j=0$ and $i = \pm 1$,
it suffices to check when $i=0$ and $j = \pm 1$.
The $j=-1$ case requires
$t_{0,1} t_{-1,0} t_{0,1} t_{-1,0} = - c_{0,-1}  c_{-1,0} c_{0,1}c_{-1,0}$
and the $j=1$ case requires
$t_{0,-1} t_{1,0} t_{0,-1} t_{1,0} = - c_{0,1}  c_{1,0}c_{0,-1} c_{1,0}$.
This is true---both sides of both equations equal $-1$.
\end{itemize}
We are now reduced to the situation that
$i,j \neq 0$, $i \neq -j$ and 
$i = \pm j \pm 1$.
\begin{itemize}
\item If $i,j \neq 0$, $i \neq -j$ and
$i \in \{-j \pm 1\}$ then $i \notin \{j \pm 1\}$, and
the identity to be checked reduces to
$t_{i,-j}t_{j,-i}= c_{i,-j}c_{j,-i}$.
If $i \in I_+$ and $j \in I_-$ then the right-hand side is $-1$,
as is the left-hand side because in this situation
the direction of the arrow connecting
$i$ to $-j$ is the same as the direction of the arrow connecting
$-i$ to $j$.
If $i, j \in I_+$ or $i,j \in I_-$ 
then the right-hand side is $1$, as is 
the left-hand side because in this situation
the direction of the arrow connecting
$i$ to $-j$ is the opposite of the direction of the arrow connecting
$-i$ to $j$.
\item
If $i,j \neq 0$, $i \neq -j$ and
$i \notin \{ -j \pm 1\}$ then $i \in \{j \pm 1\}$, and
the identity to be checked reduces to
$t_{-i,-j}t_{j,i}= -c_{i,j}c_{j,i}$.
If $i \in I_+$ and $j \in I_-$ then the right-hand side is $1$,
as is the left-hand side because in this situation
the direction of the arrow connecting
$i$ to $j$ is opposite to the direction of the arrow connecting
$-i$ to $-j$.
If $i, j \in I_+$ or $i,j \in I_-$ 
then the right-hand side is $-1$, as is 
the left-hand side because in this situation
the direction of the arrow connecting
$i$ to $j$ is the same as the direction of the arrow connecting
$-i$ to $-j$.\qedhere
\end{itemize}
\end{proof}

\begin{lem}\label{unexpectlytrue}
For $i,j \in I$, we have that $ r_{j,i}r_{-i,-j}r_{j,-i}r_{i,j}=\gamma_{i,j}\, t_{i,-j} / t_{i,j}$.
\end{lem}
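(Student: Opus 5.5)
The plan is to proceed exactly as in the proof of \cref{nasty}: first rewrite the identity purely in terms of the $c_{i,j}$ and $t_{i,j}$, and then do a rank-two case check on the set $\{\pm i,\pm j\}$.

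\emph{Reformulation.} Substitute $r_{k,l}=c_{k,l}/t_{k,l}$ and use \cref{bone} in the form $\gamma_{i,j}=(-1)^{a_{-i,j}}c_{i,j}/c_{-i,j}$. Since all $c_{k,l}$ are $\pm1$, the factor $c_{i,j}$ cancels and the factor $1/c_{-i,j}$ can be moved across as $c_{-i,j}$. Also $t_{i,j}$ is always invertible. After clearing denominators, the claim becomes
\begin{equation*}
c_{j,i}\,c_{-i,-j}\,c_{-i,j}\,c_{j,-i}=(-1)^{a_{-i,j}}\,t_{j,i}\,t_{-i,-j}\,t_{j,-i}\,t_{i,-j}.
\end{equation*}
From \cref{dogs}, the product $c_{-i,-j}c_{-i,j}$ equals $1$ if $-i\in I_+\cup\{0\}$, and $(-1)^{a_{i,j}+a_{i,-j}}$ if $-i\in I_-$.

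\emph{Symmetry reductions.} On the $t$-side I will use the symmetries already noted in the proof of \cref{nasty}:
\begin{equation*}
t_{k,l}=(-1)^{a_{k,l}}t_{l,k},\qquad t_{-k,-l}=(-1)^{a_{k,l}}t_{-l,-k}.
\end{equation*}
These let me rewrite $t_{j,-i}t_{i,-j}=(-1)^{a_{i,-j}}t_{j,-i}^{2}$ and $t_{j,i}t_{-i,-j}=(-1)^{a_{i,j}}t_{i,j}t_{-i,-j}$. So the right-hand side becomes a product of squares and of the product of the $t$'s on an edge and on its mirror edge. That is exactly the kind of quantity that depends only on whether the arrow from $i$ to $j$ and the arrow from $-i$ to $-j$ point the same way. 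I will use these symmetries to check that both sides behave identically under $i\leftrightarrow -i$ and under $(i,j)\leftrightarrow(j,i)$, which cuts down the cases.

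\emph{Case split.} The cases, in the same order as in \cref{nasty}, are as follows.
\begin{itemize}
\item $i=j$. Here everything reduces to $c_{i,-i}^2$ and $t_{i,-i}^2$. This is trivial unless $i=\pm h$, where the explicit values $c_{h,-h}=t_{h,-h}=1$ and $c_{-h,h}=t_{-h,h}=-1$ settle it.
\item $i=-j$. This is handled the same way.
\item $i\notin\{-j,\pm j\pm1\}$. This is trivial.
\item $\{i,j\}=\{0,\pm1\}$. This involves the double arrows with $t=\pm\sqrt{-1}$, and the parity of $h_0$ enters via $a_{0,j}$. Here one just reads off the values from \cref{dynkintable}, treating $1\in I_+$ and $1\in I_-$ (that is, $p\equiv1$ or $3\pmod 4$) separately.
\item The generic case $i,j\neq0$, $i\neq-j$, $i=\pm j\pm1$. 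Here exactly one of $a_{i,j}$, $a_{i,-j}$ is $-1$. One argues by the signs $\pm$ of $i,j$, using the principle from \cref{nasty}: mirror edges have the same arrow direction precisely when the endpoints carry opposite $\pm$ labels.
\end{itemize}

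I expect the main obstacle to be bookkeeping in the cases touching $0$ and $\pm h$. At $0$ the double-arrow factors $\pm\sqrt{-1}$ must combine correctly, and the sign $(-1)^{a_{-i,j}}$ on the right is not symmetric in the way the left side is. For those cases I would simply tabulate $c$ and $t$ on the small subdiagram from \cref{dynkintable} and check directly, rather than seek a uniform argument.
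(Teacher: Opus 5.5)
Your reduction of the lemma to
$c_{j,i}c_{-i,-j}c_{-i,j}c_{j,-i}=(-1)^{a_{-i,j}}t_{j,i}t_{-i,-j}t_{j,-i}t_{i,-j}$ is correct, and an exhaustive rank-two check of this identity would prove the lemma. However, the symmetry step you use to organize that check is false.

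The rewrite $t_{j,-i}t_{i,-j}=(-1)^{a_{i,-j}}t_{j,-i}^{2}$ presupposes $t_{-j,i}=t_{j,-i}$. That is, it assumes the edge joining $j$ and $-i$ has the same orientation as its mirror image. This fails in the quasi-split diagrams. In type AIII$_{2\infty+1}$ with $i=1$, $j=-2$, one has $t_{-2,-1}=t_{1,2}=-1$. So $t_{j,-i}t_{i,-j}=1$, whereas $(-1)^{a_{1,2}}t_{-2,-1}^2=-1$.

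What is true is $t_{j,-i}t_{i,-j}=(-1)^{a_{i,-j}}t_{j,-i}t_{-j,i}$. Hence the right-hand side equals $(-1)^{a_{i,j}}(t_{i,j}t_{-i,-j})(t_{j,-i}t_{-j,i})$, a product of two mirror pairs, not a square. The principle to use is: for an edge joining $k,l\neq0$, one has $t_{k,l}=t_{-k,-l}$ if and only if $k$ and $l$ carry the \emph{same} $\pm$ label. As you phrased it, this is reversed if ``endpoints'' means the endpoints of the edge. With this correction the generic case closes: both sides are $-1$ if $i,j$ carry the same label and $+1$ otherwise.

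Two smaller points. The invariance under $i\leftrightarrow-i$ and $(i,j)\leftrightarrow(j,i)$ does not follow from the $t$-symmetries alone; on the $c$-side it needs $c_{-i,-j}=(-1)^{a_{i,j}}c_{i,j}$ for $i\neq0$, which is \cref{ducks}. Also, for $i=j$ the $c$-side is $c_{-i,i}c_{i,-i}$, not $c_{i,-i}^2$.

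More importantly, the case check is unnecessary, and the paper avoids it. For $i=j$ both sides are $1$ directly, since $r_{i,i}=r_{i,-i}=1$, $t_{i,i}=1$ and $\gamma_{i,i}=(-1)^{\delta_{i,-h}}=t_{i,-i}$. This case has to be done by hand because \cref{nasty} is vacuous there (both sides are $0$).

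For $i\neq j$, use $Q_{-i,-j}(x,y)=t_{-i,-j}(x-y)^{-a_{i,j}}$ and $Q_{i,j}(-x,-y)=(-1)^{a_{i,j}}t_{i,j}(x-y)^{-a_{i,j}}$. Comparing coefficients in \cref{nasty} then gives $r_{i,j}r_{j,i}r_{i,-j}r_{j,-i}=(-1)^{a_{i,j}}t_{-i,-j}/t_{i,j}$. Multiplying by $r_{-i,-j}/r_{i,-j}=c_{-i,-j}t_{i,-j}/(c_{i,-j}t_{-i,-j})$ yields $(-1)^{a_{i,j}}(c_{-i,-j}/c_{i,-j})\,t_{i,-j}/t_{i,j}$. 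Finally, $(-1)^{a_{i,j}}c_{-i,-j}/c_{i,-j}=\gamma_{i,j}$ by \cref{ducks,bone}.

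So the paper does all the rank-two bookkeeping once, inside \cref{nasty}, and this lemma is a two-line corollary. Your route redoes that bookkeeping with a different sign pattern, which is exactly where the slip above entered.
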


\begin{proof}
Suppose first that $i=j$. Then the left-hand side is 1, as is the right-hand side because $\gamma_{i,i}=(-1)^{\delta_{i,-h}} = t_{i,-i}$
and $t_{i,i}=1$.

Now suppose that $i \neq j$. As $Q_{-i,-j}(x,y)=t_{-i,-j}(x-y)^{-a_{i,j}}$
and $Q_{i,j}(-x,-y)=(-1)^{a_{i,j}}t_{i,j}(x-y)^{-a_{i,j}}$, \cref{nasty} gives that
$r_{i,j}r_{j,i}r_{i,-j}r_{j,-i}=(-1)^{a_{i,j}}t_{-i,-j}/t_{i,j}$.
Multiplying by $r_{-i,-j}/r_{i,-j}=c_{-i,-j}t_{i,-j}/c_{i,-j}t_{-i,-j}$, we deduce that
$$
r_{j,i}r_{-i,-j}r_{j,-i}r_{i,j}=(-1)^{a_{i,j}}
\frac{c_{-i,-j}}{c_{i,-j}}\cdot\frac{t_{i,-j}}{t_{i,j}}.
$$
It remains to observe that $(-1)^{a_{i,j}}c_{-i,-j}/c_{i,-j}=\gamma_{i,j}$ by \cref{ducks,bone}.
\end{proof}

\begin{lem}\label{anothercrazyone}
For $i,j \in I$, we have that 
$r_{i,-j}r_{-i,j}r_{j,i}r_{-j,-i} = (-1)^{a_{i,j}}\gamma_{j,i}/\gamma_{i,j}$.
\end{lem}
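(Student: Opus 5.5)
The plan is to reduce the identity to the two facts already extracted in the proofs of \cref{nasty} and \cref{unexpectlytrue}, rather than run a fresh case analysis.

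First I apply \cref{unexpectlytrue} with $i$ and $j$ interchanged. This gives
$$r_{i,j}r_{-j,-i}r_{i,-j}r_{j,i}=\gamma_{j,i}\,t_{j,-i}/t_{j,i}.$$
The left-hand side contains the factors $r_{i,-j}$, $r_{j,i}$ and $r_{-j,-i}$ that appear in the target product. So
$$r_{i,-j}r_{-i,j}r_{j,i}r_{-j,-i}=\frac{r_{-i,j}}{r_{i,j}}\cdot\gamma_{j,i}\frac{t_{j,-i}}{t_{j,i}}.$$

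Next I expand $r_{-i,j}/r_{i,j}=c_{-i,j}t_{i,j}/(c_{i,j}t_{-i,j})$. By \cref{bone} we have $c_{i,j}/c_{-i,j}=(-1)^{a_{-i,j}}\gamma_{i,j}$, so $c_{-i,j}/c_{i,j}=(-1)^{a_{-i,j}}/\gamma_{i,j}$. The claim is therefore equivalent to the identity
$$(-1)^{a_{-i,j}}\,\frac{t_{i,j}\,t_{j,-i}}{t_{-i,j}\,t_{j,i}}=(-1)^{a_{i,j}}.$$

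The last step is to check this purely combinatorial identity in the $t$'s. I will use the antisymmetry $t_{i,j}=(-1)^{a_{i,j}}t_{j,i}$, which was noted in the proof of \cref{nasty}. It gives $t_{i,j}/t_{j,i}=(-1)^{a_{i,j}}$, and similarly $t_{j,-i}/t_{-i,j}=(-1)^{a_{-i,j}}$, using $t_{j,-i}=(-1)^{a_{j,-i}}t_{-i,j}$ and $a_{j,-i}=a_{-i,j}$. Substituting, the left side becomes $(-1)^{a_{-i,j}}(-1)^{a_{i,j}}(-1)^{a_{-i,j}}=(-1)^{a_{i,j}}$, as required.

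The main obstacle is confirming that the antisymmetry $t_{i,j}=(-1)^{a_{i,j}}t_{j,i}$ holds in every case, including the edge cases. For a single arrow, reversing the direction changes $1$ to $-1$ and $a_{i,j}=-1$, so it holds. For the double arrow between $0$ and $\pm1$, reversing changes $\pm\sqrt{-1}$ to $\mp\sqrt{-1}$ with $a_{i,j}=-1$, so it also holds. When $i=j$ both sides equal $1$. When $i=-j=\pm h$, there is a single arrow $-h\to h$ and $a_{h,-h}=-1$, which again fits the pattern.

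I also need to double-check the degenerate cases $i=\pm j$ or $i=0$, where some of the indices coincide. There the identity should follow from the same formulas, since \cref{unexpectlytrue} and \cref{bone} hold for all $i,j\in I$.
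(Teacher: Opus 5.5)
Your argument is correct and is essentially the paper's proof: apply \cref{unexpectlytrue} with $i$ and $j$ interchanged, multiply by $r_{-i,j}/r_{i,j}$, then finish using $\gamma_{i,j}=(-1)^{a_{-i,j}}c_{i,j}/c_{-i,j}$ from \cref{bone}, two applications of $t_{i,j}=(-1)^{a_{i,j}}t_{j,i}$, and $a_{j,-i}=a_{-i,j}$. Your separate check of the antisymmetry of $t$ is harmless, since the paper already notes it in the proof of \cref{nasty}. No degenerate cases need separate treatment, because every identity you use holds for all $i,j\in I$.
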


\begin{proof}
\cref{unexpectlytrue} with $i$ and $j$ interchanged gives that
$r_{i,j}r_{-j,-i}r_{i,-j}r_{j,i} = \gamma_{j,i} t_{j,-i}/t_{j,i}$. Hence,
$$
r_{i,-j}r_{-i,j}r_{j,i}r_{-j,-i}
=\gamma_{j,i}\frac{t_{j,-i}}{t_{j,i}}\cdot\frac{r_{-i,j}}{r_{i,j}}
=\gamma_{j,i}\frac{t_{i,j}}{t_{j,i}}\cdot\frac{t_{j,-i}}{t_{-i,j}}\cdot\frac{c_{-i,j}}{c_{i,j}}
=\gamma_{j,i}(-1)^{a_{i,j}}(-1)^{a_{j,-i}}\cdot(-1)^{a_{-i,j}}\gamma_{i,j}^{-1},
$$
using $t_{i,j}=(-1)^{a_{i,j}}t_{j,i}$ (twice) and $\gamma_{i,j} = (-1)^{a_{-i,j}}c_{i,j}/c_{-i,j}$ from \cref{bone}.
As $a_{j,-i}=a_{-i,j}$, this is $(-1)^{a_{i,j}}\gamma_{j,i}/\gamma_{i,j}$.
\end{proof}

Now we have all of the required data in hand to be able to define the 2-iquantum group:

\begin{defin}[{\cite[Def.~3.5]{BWW}}]\label{iQdef}
The {\em 2-iquantum group}
$\UU^\imath$ is the graded 2-category
with object set $X^\imath$, 
generating 1-morphisms 
$B_i \one_\lambda = \one_{\lambda -\alpha_i} B_i
\colon \lambda \rightarrow \lambda-\alpha_i$
for $\lambda \in X^\imath$ and $i \in I$,
with identity 2-endomorphisms
denoted by unoriented strings
$\begin{tikzpicture}[iQ,anchorbase]
\draw[-] (0,-0.2)\botlabel{i} -- (0,0.2);
\region{0.2,0}{\lambda};
\region{-0.4,0}{\lambda-\alpha_i};
\end{tikzpicture}$,
and generating 2-morphisms listed in \cref{iQgens} for all $i,j \in I$ and $\lambda \in X^\imath$.
\begin{table}
\begin{align*}
\begin{array}{|l|c|}
\hline
\hspace{13mm}\text{Generator}&\text{Degree}\\
\hline
\,\ \ \begin{tikzpicture}[iQ,centerzero]
\draw[-] (0,-0.3) \botlabel{i} -- (0,0.3) \toplabel{i};
\closeddot{0,0};
\region{0.2,0}{\lambda};
\end{tikzpicture}\ \, 
\colon B_i \one_\lambda \Rightarrow B_i \one_\lambda&2 \\
\:\begin{tikzpicture}[iQ,centerzero]
\draw[-] (-0.25,-0.15) \botlabel{\ibar} to [out=90,in=90,looseness=3](0.25,-0.15) \botlabel{i};
\region{0.45,0.1}{\lambda};
\node at (0,.3) {$\phantom.$};
\node at (0,-.4) {$\phantom.$};
\end{tikzpicture}\hspace{-.03pt}
\colon B_{-i} B_{i} \one_\lambda \Rightarrow \one_\lambda
&1-\lambda_i+\del_i\\
\:\begin{tikzpicture}[iQ,centerzero]
\draw[-] (-0.25,0.15) \toplabel{\ibar} to[out=-90,in=-90,looseness=3] (0.25,0.15) \toplabel{i};
\region{0.45,-.1}{\lambda};
\node at (0,.2) {$\phantom.$};\node at (0,-.3) {$\phantom.$};
\end{tikzpicture}
\colon \one_\lambda \Rightarrow B_{-i} B_{i} \one_\lambda
&1-\lambda_i+\del_i\\
\!\!\!\begin{tikzpicture}[baseline=-1mm,iQ]
\draw[-] (-0.25,0) arc(180:-180:0.25);
\node at (-0.36,0) {\strandlabel{\ibar}};
\region{0.45,0}{\lambda};
\dottybubblelabel{0,0}{r};
\node at (0,-.3) {$\phantom{.}$};
\end{tikzpicture}
\colon \one_\lambda \Rightarrow\one_\lambda
\text{ for }0 \leq r \leq \del_i-\lambda_i
&2 r\\
\;\begin{tikzpicture}[iQ,centerzero,scale=.9]
\draw[-] (-0.3,-0.3) \botlabel{i} -- (0.3,0.3) \toplabel{i};
\draw[-] (0.3,-0.3) \botlabel{j} -- (-0.3,0.3) \toplabel{j};
\region{0.35,0}{\lambda};
\end{tikzpicture}
\;\,\colon B_{i} B_{j} \one_\lambda \Rightarrow B_j B_i \one_\lambda
&- a_{i,j} \\
\hline
\end{array}
\end{align*}
\caption{Generating 2-morphisms for the 2-iquantum group}\label{iQgens}
\end{table}
The degrees of the generating 2-morphisms are also listed in this table.
The generating 2-morphisms are subject to certain relations.
Before writing these down, we explain some further conventions.
\begin{itemize}
\item 
We usually label strings just at one end, but note that the string label 
changes sign across critical points of cups and caps.
\item
We usually only label one of the 2-cells by the iweight $\lambda$, and if we omit all labels we are writing something that is true for all possible labels.
\item
We use pins defined in exactly the same way as in the previous section; the coefficients of a polynomial labelling a pinhead drawn in a 2-cell labelled $\lambda$ should be elements of $\End_{\UU^\imath}(\one_\lambda)$.
We adopt analogous shorthands to \cref{dotgenfuncs}:
\begin{align}\label{qdotgenfuncs}
\ .
\end{align}
We extend the pin notation to pins with two or three dots, using additional variables $y$ and $z$ as needed. 
The alphabetical ordering of the variables $x,y,z$ corresponds to the lexicographical ordering of the Cartesian coordinates of the dots.
\item
Let
\begin{align}\label{ifakebubblepolynomial}
%
\ 
u^{\del_i-\lambda_i-r} - \frac{\delta_{i,0}}{2u} \id_{\one_\lambda},
\\\intertext{When $i =0$, the summation in \cref{ifakebubblepolynomial} 
is empty (zero), so it is simply $-\frac{1}{2u} \id_{\one_\lambda}
\in u^{-1}\End_{\UU^\imath}(\one_\lambda)$.
When $i \neq 0$, $%
$ is a polynomial 
in $\End_{\UU^\imath}(\one_\lambda)[u]$ which we call the {\em fake bubble polynomial}; it is zero if $\lambda_i > \del_i$.
Then we define the {\em bubble generating function}:
}
\label{ibubblegeneratingfunction}
%
\right)\in \End_{\UU^\imath}(\one_\lambda)\lround u^{-1} \rround.
\end{align}
\end{itemize}
The defining relations are as follows: 
\begin{align}
\label{ibubblerelnew}
%
\ .
\end{align}
\end{defin}

As usual, there are some further relations which are consequences of the defining relations. The following are analogous to \cref{whatfunB,gendotslideB}:
\begin{align}\label{whatfun}
%
\ .
\end{align}
The mirror images of these relations in a horizontal axis also hold.
Recall the Demazure operator $\partial_{x,y}$ from \cref{demazureoperator}.
We refer to \cref{idemazure} (and the analogous relation \cref{demazure} in $\AB$)
as the {\em Demazure relation} because it implies
\begin{align}\label{demazureslide}
%
\ .\label{dumbazureslider}
\end{align}
One way to deduce these identities is to use the first relation from 
\cref{genidemazure} and its mirror image plus \cref{trick,trick2}.
Also, if $i \neq 0$ then
\begin{align}\label{halfslider}
%
\ .
\end{align}
This implies that
\begin{equation}\label{symmetricslider}
%
\end{equation}
if $i \neq 0$ and $f(x,y)=f(y,x)$.
Also a direct calculation shows that
\begin{equation}
%
\label{zeroslider}
\end{equation}
for any odd $n \geq 1$;
see \cite[Sec.~3.4]{BWWa1} for a more general treatment of central elements in the $i=0$ case.
Note also the following relation, which is \cite[(3.43)]{BWW}:
\begin{align}\label{fancyicurl}
%
$
is simply equal to $0$ if $t(\lambda)=\0$ or $\id_{\one_\lambda}$ if $t(\lambda)=\1$.
For all non-zero $i \in I$,
the relation
\cref{ibubblerelnew} implies that
$\!%
$
for $0 < r \leq \del_i-\lambda_i$
are called {\em fake bubbles}.
When $\del_i - \lambda_i \geq 1$, it is necessarily the case that $\del_{-i}-\lambda_{-i} \leq 0$, hence,
the relation \cref{iinfgrass} implies that any fake bubble $%
$
can be expressed as a polynomial in the
genuinely dotted bubbles
$%
$.
This is similar to the situation for 2-quantum groups. It seems worth pointing out that in \cite{BWW} more general 2-iquantum groups are defined in which it is possible that
$\del_i + \del_{\tau i} \geq 2$, in which case
it might not be 
possible to express all fake bubbles as polynomials in genuine dotted bubbles.

We use the notation
\begin{align}\label{zedisdead}
Z_\lambda &\coloneq \End_{\UU^\imath}(\one_\lambda).
\end{align}
It will sometimes be useful to have a more algebraic notation available to denote the renormalized dotted bubbles which generate $Z_\lambda$:
for any $i \in I$, we
define
\begin{align}\label{likey}
O_{\lambda;i}(u) = 
\sum_{r \geq 0} z_{\lambda;i}^{(r)} u^{\del_i-\lambda_i-r} \coloneq 
\begin{tikzpicture}[baseline=-1mm,iQ]
\draw[-] (-0.25,0) arc(180:-180:0.25);
\node at (-0.36,0) {\strandlabel{\ibar}};
\region{0.95,0}{\lambda};
\node at (.53,0) {$(u)$};
\end{tikzpicture}.
\end{align}
More explicitly, $z_{\lambda;i}^{(0)} = \id_{\one_\lambda}$, and for $r > 0$ we have that
\begin{align}\label{flambda2}
z_{\lambda;i}^{(r)}=
\begin{dcases}
\zeta_i(\lambda)^{-1}
\begin{tikzpicture}[baseline=-1mm,iQ]
\draw[-] (-0.25,0) arc(180:-180:0.25);
\node at (-.38,0) {\strandlabel{\ibar}};
\region{0.5,0}{\lambda};
\dottybubblelabel{0,0}{r};
\end{tikzpicture}&\text{if $r \leq \del_i-\lambda_i$}\\
\zeta_i(\lambda)^{-1}
\begin{tikzpicture}[baseline=-1mm,iQ]
\draw[-] (-0.25,0) arc(180:-180:0.25);
\node at (-.38,0) {\strandlabel{\ibar}};
\region{2.2,0}{\lambda};
\multcloseddot{0.25,0}{west}{r+\lambda_i-\del_i-1};
\end{tikzpicture}&\text{if $r > \del_i-\lambda_i$.}
\end{dcases}
\end{align}
Note that $z_{\lambda;i}^{(r)}$ is of degree $2r$.
Let
\begin{equation}\label{basering}
R \coloneq \kk\big[z_i^{(r)}\:\big|\:i \in I,\ r \geq 1\big] \ \big/ \ 
J,
\end{equation}
where $J$ is the ideal generated by
the relations that are the coefficients of powers of $u$ in the identities 
$z_{-i}(u) z_{i}(-u) = 1$ for all $i \in I$,
where 
\begin{equation}\label{hounds}
z_i(u) \coloneq \sum_{r \geq 0} z_i^{(r)} u^{-r} \in R\llbracket u^{-1}\rrbracket,
\end{equation}
and $z_i^{(0)} \coloneq 1$ by convention.
We view $R$ as a graded algebra so that each
$z_i^{(r)}$ is in degree $2r$.
Comparing with \cref{iinfgrass}, 
there is a graded algebra homomorphism
\begin{align}\label{flambda}
\varphi_\lambda\colon R &\rightarrow Z_\lambda,
&z_i^{(r)}&\mapsto z_{\lambda;i}^{(r)},
&z_i(u) &\mapsto 
u^{\lambda_i-\del_i}
O_{\lambda;i}(u).
\end{align}
We note also that the ring $R$ is freely 
generated by the elements $z_0^{(2r-1)}$ and $z_i^{(r)}$ for all $r \geq 1$ and $i \in I' \subset I$ chosen so that
$I = I' \sqcup \{0\}\sqcup (-I')$.

The fundamental theorem about $\UU^\imath$ proved in \cite{BWW} is its non-degeneracy. This gives an explicit diagrammatic basis for each of its 2-morphism spaces, as follows.
Let $\langle I \rangle$ be the set of words
$\bi = i_1 \cdots i_s\:(s \geq 0, i_1,\dots,i_s \in I)$ in the alphabet $I$.
For such a word $\bi$,
we use the shorthand
$B_\bi \coloneq B_{i_1} \cdots B_{i_s}$.
Thus, the 1-morphisms in $\Hom_{\UU^\imath}(\lambda,\mu)$
are the monomials $B_\bi \one_\lambda$ for $\bi \in \langle I \rangle$ such that $\lambda-\alpha_\bi=\mu$,
where $\alpha_\bi\coloneq\alpha_{i_1}+\cdots+\alpha_{i_s}\in X$.
For two words $\bi, \bj \in \langle I \rangle$, we denote their concatenation simply by $\bi \bj$.
We define an equivalence relation $\approx$ on $\langle I \rangle$
by declaring that $\bi \approx \bj$ if 
\begin{itemize}
\item $\alpha_\bi$ and $\alpha_\bj$ have the same image in $X^\imath$;
\item the lengths of the words $\bi$ and $\bj$ are of the same parity.
\end{itemize}
The first property implies for any $\lambda \in X^\imath$
that $B_\bi \one_\lambda$ and $B_\bj \one_\lambda$ are objects of the same morphism category of $\UU^\imath$. In that case, the 2-morphism space
$\Hom_{\UU^\imath}(B_\bj\one_\lambda, B_\bi\one_\lambda)$
is 0 unless the second property holds.
We use the homomorphism $\varphi_\lambda\colon R \rightarrow Z_\lambda$ to make the 2-morphism space
$\Hom_{\UU^\imath}(B_\bj\one_\lambda, B_\bi\one_\lambda)$
into a graded right $R$-module so that
$z \in R$ acts by horizontal composition on the right with
$\varphi_\lambda(z)$.

Assuming that $\bi \approx \bj$, 
an {\em $\bi \times \bj$-shape} is a (black) string diagram $D$ with
no dots and no bubbles such that
$D\ \text{\regionlabel{\lambda}}\colon  B_\bj\one_\lambda \Rightarrow B_\bi\one_\lambda$ is a 2-morphism in $\UU^\imath$ for any $\lambda \in X^\imath$. 
We say that a shape is {\em reduced} if there are no zig-zags, bigons or loops, and
two shapes are {\em equivalent} if they define the same combinatorial matching between their boundary points. 
Let $\shape(\bi\times\bj)$ be a fixed choice of representatives for the equivalence classes of reduced $\bi \times \bj$ shapes.
Also, for $D \in \shape(\bi\times\bj)$ with $l$ strings ($l$ is the average of the lengths of the words $\bi$ and $\bj$), we fix a choice of normally ordered points $p_1,\dots,p_l$ away from crossings and critical points, one on each string.
For $f \in \kk[x_1,\dots,x_l]$, we let
\begin{equation}\label{clorific}
D(f)\ \text{\regionlabel{\lambda}}\colon B_\bj\one_\lambda\Rightarrow B_\bi\one_\lambda
\end{equation}
be the 2-morphism in $\UU^\imath$ 
obtained from $D$ by pinning $f$ to the points $p_1,\dots,p_l$.

\begin{theo}[Non-degeneracy]\label{nondegeneracy}
For $\lambda \in X^\imath$ and $\bi,\bj \in \langle I \rangle$ such that $\bi \approx \bj$, let
$l$ be the average of the lengths of $\bi$ and $\bj$.
The 2-morphism space $\Hom_{\UU^\imath}(B_\bj\one_\lambda, B_\bi\one_\lambda)$ is free as a right $R$-module with basis 
\begin{equation}\label{ibasis}
\basis_\lambda(\bi\times\bj)
\coloneq 
\left\{D(x_1^{e_1} \cdots x_l^{e_l})\ \text{\regionlabel{\lambda}}\:\big|\:D \in \shape(\bi\times\bj), 
e_1,\dots,e_l \geq 0
\right\}.
\end{equation}
In particular, $\varphi_\lambda\colon R \rightarrow Z_\lambda$ 
is an isomorphism for every $\lambda \in X^\imath$.
\end{theo}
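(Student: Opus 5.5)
This is a special case of the general non-degeneracy theorem for 2-iquantum groups from \cite{BWW} (see also \cite{Webster-2026} for the 2-quantum group case). So the plan is to check that our parameter choices satisfy that theorem's hypotheses and then import its conclusion, rather than to reprove it.

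\textbf{Step 1: hypotheses.} The parameters $c_i$, $t_{i,j}$, $Q_{i,j}$, $\zeta_i$ and $\del_i$ were chosen above precisely to satisfy \cite[Hyp.~3.3]{BWW}. The text following the definitions records each requirement:
\begin{itemize}
\item $c_i(\alpha_i)=1$ and \cref{ducks};
\item $\del_i+\del_{-i}=-a_{i,-i}$;
\item $r_{i,i}=r_{-i,i}=1$;
\item the identity $Q_{-i,i}(x,-x)=(-1)^{\del_i+1}\zeta_i\zeta_{-i}x^{-a_{i,-i}}$;
\item the final constraint, which is \cref{nasty}.
\end{itemize}
I would also observe that the Cartan matrix is symmetric and the Satake datum is quasi-split. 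This puts us in the setting where \cite{BWW} proves non-degeneracy unconditionally.

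\textbf{Step 2: spanning.} The general theorem has two halves. The first is that the set $\basis_\lambda(\bi\times\bj)$ spans $\Hom_{\UU^\imath}(B_\bj\one_\lambda,B_\bi\one_\lambda)$ as a right $R$-module. This is a straightening argument. Dots are slid through crossings, cups and caps using the Demazure-type relations \cref{demazureslide}, \cref{halfslider} and \cref{zeroslider}. Bigons and curls are removed via the quadratic and curl relations, so every shape reduces to a reduced one. Any bubbles are moved to the far right, where they become elements of $Z_\lambda$. These in turn are generated by the images of the $z_i^{(r)}$ under $\varphi_\lambda$, since fake bubbles are polynomials in genuine dotted bubbles by \cref{iinfgrass}. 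Hence $\varphi_\lambda$ is surjective.

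\textbf{Step 3: linear independence.} This is the main obstacle. Following \cite{BWW}, I would build a faithful enough 2-representation. One option is to use the action of $\UU^\imath$ on a suitable family of module categories, for instance restricting from the 2-quantum group $\UU(\mathfrak{g})$ via the categorical analogue of the embedding of $\U^\imath$ into $\U$. Another is to work with generic cyclotomic quotients over a polynomial ring of deformation parameters. In either model, the images of the basis elements are linearly independent over $R$ when one takes the associated graded with respect to the number of crossings and cups/caps. One then compares leading terms with a polynomial representation, in which dots act by variables and $R$ acts by symmetric-function-like generators.

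\textbf{Step 4: the $\varphi_\lambda$ statement.} The case $\bi=\bj=\varnothing$ gives that $Z_\lambda$ is free of rank one over $R$, generated by $\id_{\one_\lambda}$, so $\varphi_\lambda$ is an isomorphism. The relation $z_{-i}(u)z_i(-u)=1$ is exactly \cref{iinfgrass} transported along $\varphi_\lambda$, which confirms the map is well defined and compatible with the stated free generators of $R$.
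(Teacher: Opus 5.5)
Your route is the paper's: its proof is just an appeal to \cite[Th.~5.10]{BWW}. The weak point is Step~1, which checks the wrong hypotheses. The conditions you list are those of \cite[Hyp.~3.3]{BWW}. They are needed for $\UU^\imath$ to be defined as in \cref{iQdef} at all, and the paper verifies them earlier in the section.

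What makes Th.~5.10 applicable, besides symmetry of the Cartan matrix, is that the parameters are of the required geometric type. This means $Q_{i,j}(x,y)=t_{i,j}(x-y)^{-a_{i,j}}$ for $i\neq j$, as in \cref{typohere}. You never check this. Your claim that \cite{BWW} proves non-degeneracy ``unconditionally'' for symmetric quasi-split data overstates the cited result, which is stated under this extra restriction on the parameters. The repair is one line, since that form is built into the definition of $Q_{i,j}$. But it is the one check the citation actually depends on.

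Drop Steps~2 and~3. Once you cite the theorem they add nothing, and Step~3 is not an argument: you specify no representation and show no independence. Both routes you float, a categorified restriction from $\UU(\mathfrak{g})$ and generic deformed cyclotomic quotients, would need faithfulness results of essentially the same depth as the theorem itself. Step~4, which deduces that $\varphi_\lambda$ is an isomorphism from the case $\bi=\bj=\varnothing$, is correct.
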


\begin{proof}
This is \cite[Th.~5.10]{BWW}, which is applicable in the present situation because the Cartan matrix is symmetric and our parameter choices from \cref{typohere} are of the required geometric type. 
\end{proof}

\setcounter{section}{3}

\section{Brauer categorifications}\label{bcat}

For a $\kk$-linear category $\catR$, we denote the strict $\kk$-linear monoidal category
of $\kk$-linear endofunctors and natural transformations
by $\END_\kk(\catR)$.
Then
$\catR$ is a {\em strict left $\AB$-module category} if and only if
there is a strict $\kk$-linear monoidal functor $\AB \rightarrow \END_\kk(\catR)$.
We use the special terminology
{\em Abelian Brauer categorification} to mean a strict left $\AB$-module category
which is also a locally finite Abelian category, that is, it is Abelian, all objects are of finite length, and all morphism spaces are finite-dimensional.
These are the main objects of study in the remainder of the paper.

We say that a $\kk$-linear category $\catR$ is 
{\em locally finite Karoubian} if it is a 
Karoubian category, i.e., it is additive and idempotent complete, its object set is small,
and its morphism spaces are finite-dimensional vector spaces.
We denote the additive 
Karoubi envelope of a $\kk$-linear category $\catR$ by $\Kar(\catR)$; if $\catR$ is small with finite-dimensional morphism spaces then $\Kar(\catR)$ 
is a locally finite Karoubian category.
By a {\em Karoubian Brauer categorification}, we mean
a strict left $\AB$-module category which is also a
locally finite Karoubian category.
Abelian Brauer categorifications are Karoubian, but not vice versa.

Of course, we prefer the Abelian case because we like to think in terms of irreducible objects, composition series, and so forth.
This can be salvaged to some 
extent in the Karoubian case by passing from $\catR$ to the Abelian category $\Mod{\catR}$ of contravariant\footnote{This word means we are working with {\em right} modules!}
$\kk$-linear functors $V:\catR \rightarrow \Vec$.
We also refer to objects of $\Mod{\catR}$ 
as {\em locally finite-dimensional $\catR$-modules}. Since the morphism spaces of $\catR$ are finite-dimensional, the category $\Mod{\catR}$ is well behaved---it is what the authors call a {\em Schurian category}. Basic facts about such categories can be found in the expository article \cite{BD-actions}. 
In particular, all finitely generated modules are locally finite-dimensional 
and have projective covers in $\Mod{\catR}$.
There also is a natural notion of blocks. 
By Yoneda, the category $\catR$ embeds into 
$\Mod{\catR}$, 
with objects of $\catR$ being identified with the 
{\em distinguished} projective $\catR$-modules.
The indecomposable objects of $\catR$ are indecomposable projective $\catR$-modules.
Some caution is needed, however: a locally finite-dimensional $\catR$-module need not be finitely generated, and finitely generated $\catR$-modules need not be of finite length. 

We assume for the remainder of the section that
\begin{center}
\fbox{
\begin{minipage}{0.55\textwidth}
$\catR$ is an Abelian or a Karoubian Brauer categorification. 
\end{minipage}
}
\end{center}
The generating object $B$ of $\AB$
acts as an endofunctor $B\colon\catR \rightarrow \catR$, and morphisms in $\AB$
act as natural transformations.
Since $B$ is a self-dual object of $\AB$, the endofunctor $B\colon\catR \rightarrow \catR$ is self-adjoint. 
Hence, in the Abelian case, $B$ is exact.

In the Karoubian case, 
the action of $\AB$ on $\catR$ extends in a canonical way to make
$\Mod{\catR}$ into a strict
left $\AB$-module category; e.g., this standard construction is explained in \cite[Sec.~2.5]{BV-partition}.
The extended endofunctor $B$ 
is again exact, and it takes finitely generated $\catR$-modules to finitely generated $\catR$-modules. 
To avoid too much duplication as we are discussing the Abelian and Karoubian cases simultaneously subsequently, 
we use the shorthand notation $\catR^\fg$
to denote $\catR$ in the Abelian case, or the full subcategory of $\Mod{\catR}$ consisting of all finitely generated $\catR$-modules 
in the Karoubian case.
In the Karoubian case, morphism spaces in $\catR^\fg$ are finite-dimensional, indeed, the space of $\catR$-module homomorphisms from a finitely generated module to a locally finite-dimensional module is finite-dimensional. By an irreducible object $L \in \catR^\fg$, we mean an object which is irreducible as an $\catR$-module in the usual sense, i.e., an irreducible object of the Abelian category $\Mod{\catR}$.

\begin{rem}
In the Karoubian case, we do not assume that 
the $\kk$-linear 
category $\catR$ is locally Noetherian. Consequently, submodules of finitely generated $\catR$-modules need not be finitely generated.
This means that 
the category $\catR^\fg$ may not itself be Abelian, and we will never assume that this is the case below. It seems reasonable to us
to conjecture that $\catR$ is locally 
Noetherian in the important examples $\cB(A,f)$ 
constructed in \cref{brauercyclotomic}. One small piece of evidence for this:
in \cite[Th.~5.9]{SamSnowden-2022}, 
Sam and Snowden proved that the Brauer category $\catB_\delta$ 
over the field $\C$ is locally Noetherian for any $\delta \in \C$.
Since $\Kar(\catB_\delta)$ is equivalent to
the level one cyclotomic quotient
$\cB\big(u-\frac{1-\delta}{2}\big)$, the latter is locally Noetherian too.
\end{rem}

In string diagrams, we add a green string labelled by an object $V \in \catR^\fg$ on the right-hand boundary of a string diagram of a morphism in $\AB$ to indicate the morphism in $\catR^\fg$ obtained by evaluating the natural 
transformation on $V$. Morphisms in $\catR^\fg$ can be indicated by coupons on the green string.
An element $z$ of the center $Z(\catR^\fg) = \End(\id_{\catR^\fg})$ can be represented by a bubble labelled $z$ in the 2-cell immediately to the left of the green string, so that
$$
\begin{tikzpicture}[B,anchorbase]
\bubble{-.5,0};\bubblelabel{-.5,0}{z};
\draw[darkg](0,-.4)\botlabel{V} to (0,.4);
\end{tikzpicture}
=
\begin{tikzpicture}[B,anchorbase]
\draw[darkg](0,-.4)\botlabel{V} to (0,.4);
\catlabel{0,0}{z_V};
\end{tikzpicture}\ ,
$$
where $z_V$ is the evaluation of $z$ on $V \in \catR^\fg$.
This is the usual extension of string calculus for (strict) monoidal categories to (strict) module categories.

\begin{rem}
In the previous paragraph,
we restricted $V$ to being an object of $\catR^\fg$.
However, in the Karoubian case, the same string diagrams also
make sense with the green string labelled by any locally finite-dimensional $\catR$-module. 
Since any $\catR$-module is a union of finitely generated $\catR$-modules, and the endofunctor $B$ is exact and preserves finite generation, 
naturality implies that 
the resulting morphism is determined uniquely by its restriction to all finitely generated submodules.
\end{rem}

We also use the notation $Z_V$ to denote the center of the endomorphism algebra $\End_{\catR^\fg}(V)$. This 
is a finite-dimensional commutative $\kk$-algebra.
Let $m_V(u) \in \kk[u]$ be the (monic) minimal polynomial
of 
$\begin{tikzpicture}[B,centerzero,scale=1.2]\draw (0,-0.2) -- (0,0.2);\closeddot{0,0};
\draw[darkg](.25,-.2)\botlabel{V} to (.25,.2);
\end{tikzpicture} \in \End_{\catR^\fg}(BV)$.
This definition makes sense 
because $\End_{\catR^\fg}(B V)$ is finite-dimensional and $\kk[x]$ is a PID;
$m_V(u)$ is the unique monic polynomial which generates
the non-zero kernel of
the algebra homomorphism $\kk[u]\rightarrow \End_{\catR^\fg}(BV), u\mapsto\begin{tikzpicture}[B,centerzero,scale=1.2]\draw (0,-0.2) -- (0,0.2);\closeddot{0,0};
\draw[darkg](.25,-.2)\botlabel{V} to (.25,.2);
\end{tikzpicture}$.
Then we define the {\em $\beta$-degree} of $V$ by
\begin{equation}
\beta(V)\coloneq \deg m_V(u).
\end{equation}

The image of the bubble generating function 
$\begin{tikzpicture}[B,centerzero,scale=.9]
\draw (-0.25,0) arc(180:-180:0.25);
\node at (.57,0) {$\color{black}(u)$};
\end{tikzpicture} \in \End_{\AB}(\one)\llbracket u^{-1} \rrbracket$ under the monoidal functor
$\AB \rightarrow \END_\kk(\catR^\fg)$
is an element of $Z(\catR^\fg)\lround u^{-1}\rround$.
On any $V \in \catR^\fg$, 
this evaluates to a formal power series, which we denote by 
$O_V(u) \in Z_V\llbracket u^{-1} \rrbracket$. 
This formal power series encodes useful
information about the central character of $V$.
To explain what we mean, for a monic polynomial $f(u)$, we use the notation
\begin{equation}
\widetilde{f}(u)\coloneq
(-1)^{\deg f(u)} f(-u).
\end{equation}
Let
\begin{equation}\label{chif}
\chi_f(u) \coloneq \begin{dcases}
\frac{u-h}{u} \cdot \frac{\widetilde{f}(u)}{f(u)}
&\text{if $\deg f(u)$ is even}\\
\frac{u+h}{u} \cdot \frac{\widetilde{f}(u)}{f(u)}&\text{if $\deg f(u)$ is odd.}
\end{dcases}
\end{equation}

\begin{theo}\label{itsreallywednesday}
For any $V \in \catR^\fg$, there exists
a monic polynomial $f(u) \in Z_V[u]$ such that
\begin{align}\label{thur}
\begin{tikzpicture}[B,baseline = -2mm,scale=1.2]
\draw[-] (-0.2,-.4) to (-0.2,.4);
\draw[-,darkg] (0.5,.4) to (0.5,-.4)\botlabel{V};
\objpin{-.2,0}{.5,0}{f(x)};
\end{tikzpicture}&=0,&
O_V(u) &= \chi_f(u).
\end{align}
If $V$ is a brick, that is, $\End_{\catR^\fg}(V) = \kk$,
then $f(u) \coloneq m_V(u)$ has 
these properties.
\end{theo}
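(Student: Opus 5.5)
The plan is to work inside the finite-dimensional algebra $A\coloneq\End_{\catR^\fg}(BV)$, or rather inside the commutative subalgebra $Z_V[x]\subseteq A$ generated by $Z_V$ (acting on the right) and the dot $x$. The key tool is the curl relation \cref{curlsup3} evaluated on $V$, combined with the bubble relation \cref{curlsup2}.

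First we construct $f$. Since $A$ is finite-dimensional, $x$ satisfies some monic polynomial. We need one with coefficients in $Z_V$ that also controls the bubble, so we take the characteristic-type polynomial of $x$ over the commutative algebra $Z_V$. Concretely, we apply Cayley--Hamilton to the action of $x$ on $BV$ regarded as a module over a suitable commutative subalgebra containing $Z_V$. For a brick, $Z_V=\kk$ and we simply use $m_V(u)$. This gives $f(x)=0$ on $BV$.

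Next we compute $O_V(u)$. Evaluate \cref{curlsup3} on $V$, then close the strand on the left with a cap/cup pair; equivalently, take the "partial trace" of the generating function $\frac{1}{u-x}$ around $V$. Write $g(u)\coloneq$ the right curl with $\frac1{u-x}$, viewed as an element of $\End(BV)\lround u^{-1}\rround$. On the one hand, \cref{curlsup3} expresses $g(u)$ as $\frac{1}{u+x}O_V(u)-\frac{1}{2u}\frac{1}{u-x}$. On the other hand, since $f(x)=0$, we can use the trick \cref{trick} to write $\frac{1}{u-x}=\frac{1}{f(u)}\cdot\frac{f(u)-f(x)}{u-x}$. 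The numerator is polynomial in $u$ with coefficients polynomial in $x$. Hence $g(u)$ is $f(u)^{-1}$ times a series whose coefficients are curls with finitely many dots, and these curls are polynomial in $u$.

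Now we close off both sides with a cap and use \cref{dotslide} and \cref{whatfunB} to move dots across. Comparing the two expressions then shows that $O_V(u)f(u)/\widetilde f(u)$ is a rational function with controlled behaviour. Equivalently, we use \cref{curlsup2} with the factorization $\frac{(u-x+1)(u-x-1)}{(u-x)^2}$ to show that $O_V(u)\widetilde f(u)^{-1}f(u)$ is a polynomial in $u$ of degree $0$ up to the factor $\frac{u\pm h}{u}$. The parity of $\deg f$ enters through $\widetilde f(u)=(-1)^{\deg f}f(-u)$ and the leading term $\frac{u-h}{u}$ of \cref{eatz}. Finally, \cref{curlsup1}, i.e.\ $O_V(u)O_V(-u)=\frac{u^2-h^2}{u^2}$, pins down the ambiguous constant and the $h$-factor.

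The main obstacle is the middle step: showing that the partial trace of $\frac{f(u)-f(x)}{u-x}$ produces exactly $\widetilde f(u)/f(u)$ with the correct $\frac{u\mp h}{u}$ correction, rather than merely some rational function. In the general (non-brick) case there is a further difficulty in choosing $f$ with coefficients in $Z_V$ so that the trace argument is valid. My first attempt would be an induction on $\deg f$ in the spirit of the Heisenberg argument in \cite{HKM}. Namely, I would factor $f$ by \cref{hensel} into local factors and verify the identity locally. I would use \cref{localfactors} to reduce to comparing orders at each $i$, and then use uniqueness of local factors to conclude.
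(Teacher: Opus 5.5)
There is a genuine gap, and it sits exactly in the step you flag as the main obstacle: the choice of $f$. The identity $O_V(u)=\chi_f(u)$ does not follow from $f$ annihilating the dot. So it cannot be proved for an $f$ produced by Cayley--Hamilton, or for any other unspecified annihilator, and neither a trace computation nor a local-factor comparison via \cref{hensel} can repair that. Indeed, if $f(x)$ kills the dot on $BV$ then so does $xf(x)$. But $\widetilde{uf}(u)=u\widetilde f(u)$ and the parity of the degree flips, so $\chi_{uf}(u)=\chi_f(u)\cdot\frac{u+(-1)^{\deg f}h}{u-(-1)^{\deg f}h}\neq\chi_f(u)$. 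Hence at most one of $f$ and $uf$ can work.

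A concrete counterexample: in the $\SVec$ example take $V=L_0\oplus L_0$. Then $Z_V=\kk$ and the dot is $0$ on the $2$-dimensional space $BV$, so your Cayley--Hamilton polynomial is $u^2$. But $\chi_{u^2}(u)=\frac{u-h}{u}$, whereas $O_V(u)=\frac{u+h}{u}$. Two further problems: \cref{hensel} needs $Z_V$ local, which is not assumed; and \cref{curlsup2} compares bubbles on the two sides of a $B$-string, so it does not constrain $O_V$ for a single $V$.

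The missing idea is to \emph{construct} $f$ from an arbitrary monic annihilator $g\in Z_V[u]$ via the operation $g\mapsto g^*(u)\coloneq uO_V(-u)\widetilde g(u)$.
\begin{itemize}
\item Closing $x^{r+1}g(x)$ into a bubble shows that all negative-degree coefficients of $g^*$ vanish, so $g^*$ is monic of degree $\deg g+1$.
\item Pinning $xg(x)$ into the curl, i.e.\ taking the $u^{-1}$-coefficient of \cref{curlsup3} times $ug(u)$ and using \cref{trick}, shows that $g^*(x)$ also kills the dot.
\item \cref{curlsup1} gives $g^{**}=(u-h)(u+h)g$.
\end{itemize}
This involutivity lets you average. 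Set $f\coloneq\frac12\big(g^*+(u+(-1)^{\deg g}h)g\big)$, using $p\neq 2$. Then $f$ is monic, kills the dot, and satisfies $f^*=(u+(-1)^{\deg f}h)f$. Multiplying by $O_V(u)/f(u)$ and applying \cref{curlsup1} again turns this into exactly $O_V=\chi_f$.

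For a brick, $m_V\mid m_V^*$ gives $m_V^*=(u-c)m_V$. Contrary to your last step, \cref{curlsup1} alone only yields $c^2=h^2$, so it does not decide whether the prefactor is $\frac{u-h}{u}$ or $\frac{u+h}{u}$. Fixing the sign needs a second use of \cref{curlsup3}, now with $m_V(x)$ pinned into the curl. The resulting polynomial must be divisible by $m_V$, which forces $c=(-1)^{\beta(V)+1}h$.
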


\begin{proof}
Suppose that $g(u)$ is a monic 
polynomial in $Z_V[u]$ such that
$$
\begin{tikzpicture}[B,baseline = -2mm,scale=1.2]
\draw[-] (-0.2,-.4) to (-0.2,.4);
\draw[-,darkg] (0.5,.4) to (0.5,-.4)\botlabel{V};
\objpin{-.2,0}{.5,0}{g(x)};
\end{tikzpicture}=0.
$$
For example, we could take $g(u) \coloneq m_V(u) \in \kk[u] \subseteq Z_V[u]$.
Let $g^*(u) \coloneq u O_V(-u) \widetilde{g}(u) \in Z_V \lround u^{-1}\rround$.
The following shows that $g^*(u)$ is a polynomial: for $r \geq 0$ we have that
\begin{align*}
\big[
g^*(u)\big]_{u:-r-1}&=
\big[ u O_{V}(-u) \widetilde{g}(u)\big]_{u:-r-1}
\stackrel{\cref{eatz}}{=}\left[
(u+h)\ 
\begin{tikzpicture}[B,anchorbase]
\draw[-,darkg] (1,.4) to (1,-.4)\botlabel{V};
\catlabel{1,0}{\widetilde{g}(u)};
\end{tikzpicture}\ -\ 
\begin{tikzpicture}[B,anchorbase]
\draw[-] (-0.15,0) arc(180:-180:0.25);
\circledbar{.35,0}{u};
\draw[-,darkg] (1.2,.4) to (1.2,-.4)\botlabel{V};
\catlabel{1.2,0}{u\widetilde{g}(u)};
\end{tikzpicture}
\right]_{u:-r-1}\\&=
-\left[
\begin{tikzpicture}[B,anchorbase]
\draw[-] (-0.15,0) arc(180:-180:0.25);
\circledbar{.35,0}{u};
\draw[-,darkg] (1.4,.4) to (1.4,-.4)\botlabel{V};
\catlabel{1.4,0}{u^{r+1}\widetilde{g}(u)};
\end{tikzpicture}
\right]_{u:-1}
\!\!\!\stackrel{\cref{trick}}{=}-\ 
\begin{tikzpicture}[B,anchorbase]
\draw[-] (-0.55,0) arc(180:-180:0.25);
\draw[-,darkg] (1.2,.4) to (1.2,-.4)\botlabel{V};
\objpin{-.05,0}{1.3,0}{(-x)^{r+1} \widetilde{g}(-x)};
\end{tikzpicture} = (-1)^{r+\deg g(u)}\ 
\begin{tikzpicture}[B,anchorbase]
\draw[-] (-0.25,0) arc(180:-180:0.25);
\draw[-,darkg] (1.2,.4) to (1.2,-.4)\botlabel{V};
\objpin{.25,0}{1.2,0}{x^{r+1} g(x)};
\end{tikzpicture}=0.
\end{align*}
It is clear that $g^*(u) \in Z_V[u]$ is monic, of degree one more than $g(u)$.
Also, we have that
$$
\begin{tikzpicture}[B,baseline = -2mm,scale=1.2]
\draw[-] (-0.2,-.4) to (-0.2,.4);
\draw[-,darkg] (0.5,.4) to (0.5,-.4)\botlabel{V};
\objpin{-.2,0}{.55,0}{g^*(x)};
\end{tikzpicture}=0.
$$
This follows from the following calculation:
\begin{align*}
0
&=
\begin{tikzpicture}[B,anchorbase,scale=1.3]
\draw[-] (0,-0.5) to[out=up,in=180] (0.3,0.2) to[out=0,in=up] (0.45,0) to[out=down,in=0] (0.3,-0.2) to[out=180,in=down] (0,0.5);
\draw[-,darkg] (1,.4) to (1,-.4) \botlabel{V};
\objpin{.45,0}{1,0}{xg(x)};
\end{tikzpicture}
\stackrel{\cref{curlsup3}}{=}
\left[
\ u\ 
\begin{tikzpicture}[anchorbase,scale=1.3,B]
\draw[-] (-0.7,-0.5) -- (-0.7,0.5);
\circledbar{-0.7,0}{u};
\draw (-0.4,0) arc(180:-180:0.2);
\node at (0.24,0) {$\color{black}(u)$};
\draw[-,darkg] (.9,.5) to (.9,-.5) \botlabel{V};
\catlabel{.9,0}{g(u)};
\end{tikzpicture}\ 
- h\ 
\begin{tikzpicture}[anchorbase,scale=1.3,B]
\draw[-] (0,-0.5) -- (0,0.5);
\circled{0,0}{u};
\draw[-,darkg] (.5,.5) to (.5,-.5) \botlabel{V};
\catlabel{.5,0}{g(u)};
\end{tikzpicture}
\right]_{u:-1}\\
&\stackrel{\cref{trick}}{=}
\left[(-1)^{\deg g(u)+1}\  
\begin{tikzpicture}[B,anchorbase,scale=1.3]
\draw[-] (0.3,-0.5) -- (0.3,0.5);
\circledbar{0.3,0}{u};
\draw[-,darkg] (1,.4) to (1,-.4) \botlabel{V};
\catlabel{1,0}{g^*(-u)};
\end{tikzpicture}\ \right]_{u:-1}\!\!\!\!
-h\ \begin{tikzpicture}[B,baseline = -2mm,scale=1.3]
\draw[-] (-0.1,-.4) to (-0.1,.4);
\draw[-,darkg] (0.5,.4) to (0.5,-.4)\botlabel{V};
\objpin{-.1,0}{.55,0}{g(x)};
\end{tikzpicture}
\stackrel{\cref{trick}}{=}
(-1)^{\deg g(u)+1}
\begin{tikzpicture}[B,baseline = -2mm,scale=1.3]
\draw[-] (-0.1,-.4) to (-0.1,.4);
\draw[-,darkg] (0.5,.4) to (0.5,-.4)\botlabel{V};
\objpin{-.1,0}{.55,0}{g^*(x)};
\end{tikzpicture}\ .
\end{align*}

Now we consider the linear combination
$$
f(u)\coloneq \frac{g^*(u) + \big(u +(-1)^{\deg g(u)} h\big) g(u)}{2}.
$$
This satisfies the first property from \cref{thur}.
To see that it satisfies the second property, note that
$$
g^{**}(u) = u^2 O_V(u) O_V(-u) g(u) = (u-h)(u+h) g(u)
$$ 
by \cref{curlsup1}. So
\begin{align*}
f^*(u) &= \frac{g^{**}(u) +\big(u-(-1)^{\deg g(u)} h\big) g^*(u)}{2}
=
\frac{(u-h)(u+h)g(u) +\big(u-(-1)^{\deg g(u)} h\big) g^*(u)}{2}\\
&= 
\big(u-(-1)^{\deg g(u)} h\big)
\frac{\big(u+(-1)^{\deg g(u)} h\big) g(u) + g^*(u)}{2}
=
\big(u+(-1)^{\deg f(u)} h\big) f(u).
\end{align*}
This
shows that $u O_V(-u) \widetilde{f}(u) = \big(u+(-1)^{\deg f(u)} h\big)f(u)$. 
Multiplying both sides by $O_V(u) / f(u)$ and using \cref{curlsup1},
it follows that 
$$
\frac{(u-h)(u+h)}{u}\cdot\frac{\widetilde{f}(u)}{f(u)} = \big(u+(-1)^{\deg f(u)} h\big)O_V(u),
$$
which implies the second property on cancelling a linear factor.

It remains to prove that $m_V(u)$ satisfies \cref{thur} 
in the case that $V$ is a brick. Of course, it satisfies the first property by definition. For the second property,
it is obvious if $BV = 0$ since then $m_V(u) = 1$ and
$O_V(u) = \frac{u-h}{u}$ by \cref{eatz}. So we may assume that $m_V(u)$ is of positive degree.
The construction in the 
first paragraph gives us a monic polynomial $m_V^*(u) \in \kk[u]$ of degree one more than $m_V(u)$ such that
$$
\begin{tikzpicture}[B,baseline = -2mm,scale=1.2]
\draw[-] (-0.2,-.4) to (-0.2,.4);
\draw[-,darkg] (0.5,.4) to (0.5,-.4)\botlabel{V};
\objpin{-.2,0}{.5,0}{m^*_V(x)};
\end{tikzpicture}=0.
$$
It follows that $m_V(u) \mid m^*_V(u)$, so we have that
$m^*_V(u) = m_V(u) (u-c)$ for some $c \in \kk$.
Also
\begin{align*}
0
&=
\begin{tikzpicture}[B,anchorbase,scale=1.3]
\draw[-] (0,-0.5) to[out=up,in=180] (0.3,0.2) to[out=0,in=up] (0.45,0) to[out=down,in=0] (0.3,-0.2) to[out=180,in=down] (0,0.5);
\draw[-,darkg] (1.1,.4) to (1.1,-.4) \botlabel{V};
\objpin{.45,0}{1.1,0}{m_V(x)};
\end{tikzpicture}
\stackrel{\cref{curlsup3}}{=}
\left[
\ 
\begin{tikzpicture}[anchorbase,scale=1.3,B]
\draw[-] (-0.7,-0.5) -- (-0.7,0.5);
\circledbar{-0.7,0}{u};
\draw (-0.4,0) arc(180:-180:0.2);
\node at (0.22,0) {$\color{black}(u)$};
\draw[-,darkg] (.9,.5) to (.9,-.5) \botlabel{V};
\catlabel{.9,0}{m_V(u)};
\end{tikzpicture}\ 
- \frac{h}{u}\ 
\begin{tikzpicture}[anchorbase,scale=1.3,B]
\draw[-] (0,-0.5) -- (0,0.5);
\circled{0,0}{u};
\draw[-,darkg] (.6,.5) to (.6,-.5) \botlabel{V};
\catlabel{.6,0}{m_V(u)};
\end{tikzpicture}
\right]_{u:-1}\\
&=
\left[\frac{(-1)^{\beta(V)+1}}{u}\  
\begin{tikzpicture}[B,anchorbase,scale=1.3]
\draw[-] (0.3,-0.5) -- (0.3,0.5);
\circledbar{0.3,0}{u};
\draw[-,darkg] (1,.4) to (1,-.4) \botlabel{V};
\catlabel{1,0}{m_V^*(-u)};
\end{tikzpicture}- \frac{h}{u}\ 
\begin{tikzpicture}[anchorbase,scale=1.3,B]
\draw[-] (0,-0.5) -- (0,0.5);
\circled{0,0}{u};
\draw[-,darkg] (.6,.5) to (.6,-.5) \botlabel{V};
\catlabel{.6,0}{m_V(u)};
\end{tikzpicture}\ \right]_{u:-1}
\stackrel{\cref{trick}}{=}
(-1)^{\beta(V)}\begin{tikzpicture}[B,baseline = -2mm,scale=1.3]
\draw[-] (-0.1,-.4) to (-0.1,.4);
\draw[-,darkg] (0.5,.4) to (0.5,-.4)\botlabel{V};
\objpin{-.1,0}{.55,0}{q(x)};
\end{tikzpicture}\ ,
\end{align*}
where $q(u) \in \kk[u]$ is the polynomial defined by
$$
q(u) \coloneq \left[\frac{m_V^*(u)-(-1)^{\beta(V)}hm_V(u)}{u}\right]_{u:\geq 0}
=
m_V(u)-
\left[\frac{\big(c+(-1)^{\beta(V)}h\big)m_V(u)}{u}\right]_{u:\geq 0}.
$$
We have that $m_V(u) \mid q(u)$. Since $m_V(u)$ is of positive degree,  
this implies that
$c +(-1)^{\beta(V)}h=0$.
Thus, we have shown that $m_V^*(u) = \big(u+(-1)^{\beta(V)} h\big) m_V(u)$.
From this, the second property from 
\cref{thur} for $f(u) \coloneq m_V(u)$ follows in the same way as at the end of 
the second paragraph of the proof.
\end{proof}

\begin{cor}\label{rationale}
For any $V \in \catR^\fg$, the formal Laurent series
$O_V(u) \in Z_V\lround u^{-1}\rround$
is a rational function.
\end{cor}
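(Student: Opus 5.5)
This is immediate from \cref{itsreallywednesday}. Given $V \in \catR^\fg$, that theorem provides a monic polynomial $f(u) \in Z_V[u]$ with $O_V(u) = \chi_f(u)$. By \cref{chif}, $\chi_f(u)$ is a ratio
$$
\chi_f(u) = \frac{(u \mp h)\,\widetilde{f}(u)}{u f(u)}
$$
of polynomials in $Z_V[u]$.

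The only point to check is what "rational function" means here, since $Z_V$ is a finite-dimensional commutative algebra rather than a field. The denominator $u f(u)$ is monic. A monic polynomial $g(u)$ of degree $d$ is invertible in $Z_V\lround u^{-1}\rround$: write $g(u) = u^d(1 - k(u))$ with $k(u) \in u^{-1}Z_V[u^{-1}]$ and expand $(1-k(u))^{-1} = \sum_{n \geq 0} k(u)^n$, which converges in the $u^{-1}$-adic topology. So $\chi_f(u)$ is a well-defined element of $Z_V\lround u^{-1}\rround$. The identity $O_V(u) = \chi_f(u)$ in \cref{itsreallywednesday} was itself established in this ring, by cross-multiplying: there we showed $u O_V(-u)\widetilde{f}(u) = \big(u + (-1)^{\deg f(u)}h\big) f(u)$ and then used \cref{curlsup1}.

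Hence $O_V(u)$ equals a quotient of a polynomial by a monic polynomial, as an element of $Z_V\lround u^{-1}\rround$. This is precisely the form to which \cref{localfactors} later applies, after passing to local summands of $Z_V$.

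There is no real obstacle. The only care needed is to note that the identity of \cref{itsreallywednesday} holds in the Laurent series ring, where monic polynomials are invertible, so no division by zero-divisors in $Z_V$ occurs.
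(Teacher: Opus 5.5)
Your proposal is correct and is the paper's argument: the paper states this corollary without a separate proof, as an immediate consequence of \cref{itsreallywednesday}. By that theorem, $O_V(u)=\chi_f(u)$ is a quotient of polynomials in $Z_V[u]$ with monic denominator $uf(u)$, and your observation that monic polynomials are invertible in $Z_V\lround u^{-1}\rround$ is the right check that this identity makes sense even though $Z_V$ is not a field.
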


\begin{cor}[{\cite[Th.~4.2]{SW2026}}]\label{spec}
If $L \in \catR^\fg$ is an irreducible object 
then
$O_L(u) = \chi_{m_L}(u) \in \kk(u)$, i.e.,
\begin{equation}\label{restatement}
O_L(u) = 
\begin{dcases}
\frac{u-h}{u} \cdot \frac{\widetilde{m}_L(u)}{m_L(u)}
&\text{if $\beta(L)$ is even}\\
\frac{u+h}{u} \cdot \frac{\widetilde{m}_L(u)}{m_L(u)}&\text{if $\beta(L)$ is odd.}
\end{dcases}
\end{equation}
\end{cor}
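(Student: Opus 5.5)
This is a direct consequence of \cref{itsreallywednesday}, applied to $V \coloneq L$. The only thing to arrange is that $L$ is a brick, so that the final assertion of that theorem applies with $f(u) = m_L(u)$.

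In the Abelian case, $L$ is an irreducible object of the locally finite Abelian category $\catR$ over the algebraically closed field $\kk$. By Schur's lemma, $\End_{\catR^\fg}(L)$ is a finite-dimensional division algebra over $\kk$, hence equal to $\kk$. In the Karoubian case, $L$ is an irreducible finitely generated module in $\Mod{\catR}$. Its endomorphism algebra is again a division algebra, and it is finite-dimensional because morphism spaces in $\catR^\fg$ are finite-dimensional, as noted earlier in the section. So again $\End_{\catR^\fg}(L)=\kk$. In both cases $L$ is a brick, and $Z_L = \kk$.

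Now \cref{itsreallywednesday} gives $O_L(u) = \chi_{m_L}(u)$. Since $m_L(u) \in \kk[u]$, this is a rational function in $\kk(u)$. Unwinding the definition \cref{chif}, and using $\deg m_L(u) = \beta(L)$, gives exactly the two cases of \cref{restatement} according to the parity of $\beta(L)$.

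No step here is a real obstacle. The only point needing care is the finite-dimensionality of $\End(L)$ in the Karoubian case, which is what lets Schur's lemma over an algebraically closed field apply; it is supplied by the remark that Hom spaces from finitely generated modules to locally finite-dimensional modules are finite-dimensional.
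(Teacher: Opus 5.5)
Your proposal is correct and is essentially the paper's own argument: the corollary is stated immediately after \cref{itsreallywednesday} with no separate proof, because an irreducible $L \in \catR^\fg$ is a brick. Schur's lemma applies since $\End_{\catR^\fg}(L)$ is finite-dimensional in both the Abelian and Karoubian cases and $\kk$ is algebraically closed, so the final assertion of that theorem gives $O_L(u)=\chi_{m_L}(u)$ with $\deg m_L(u)=\beta(L)$.
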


Let $\Xi$ be the set of all rational functions
$\chi(u) \in \kk(u)$ such that the expansion of $\chi(u)$
at $u=\infty$ is in $1 + u^{-1} \kk\llbracket u^{-1}\rrbracket$
and $\chi(u)\chi(-u)=\frac{(u-h)(u+h)}{u^2}$.
In view of \cref{curlsup1,spec}, 
the rational functions $O_L(u)$
for irreducible $L \in \catR^\fg$ are of this form.
Recall that $I = \kk$. Let 
$X = \bigoplus_{i \in I} \Z \varpi_i$ 
be the weight lattice and
$X^\imath = X / \im(\id+\tau)$ be the Abelian group of iweights introduced in the previous section.
For a monic polynomial $f(u) = \prod_{i \in I} (u-i)^{\beta_i} \in \kk[u]$, 
let 
\begin{equation}\label{ahaha}
\wt^\imath(f) \coloneq \pi(\wt(f))
\qquad\text{ where }\qquad
\wt(f) \coloneq 
\begin{dcases}
\sum_{i \in I} \beta_i \varpi_i&\text{if $\deg f(u)$ is even}\\
\varpi_h + \sum_{i \in I} \beta_i \varpi_i&\text{if $\deg f(u)$ is odd.}
\end{dcases} 
\end{equation}
Note that $\wt(f) \in X$ is even, hence, $\wt^\imath(f)\in X^\imath$ is even.
For any $\lambda \in X^\imath$, let
\begin{equation}\label{maxideals}
\chi_\lambda(u) \coloneq 
\prod_{i \in I} (u-i)^{\del_i-\lambda_i}
\in \Xi.
\end{equation}
The following lemma also uses the notation
$\chi_f(u)$ from \cref{chif}:

\begin{lem}\label{lasers}
The function
$X^\imath_\0 \rightarrow \Xi$, $\lambda \mapsto \chi_\lambda(u)$ is a bijection. Moreover:
\begin{enumerate}
\item 
Every iweight $\lambda \in X_{\0}^\imath$ is of the form
$\wt^\imath(f)$ for some monic $f(u) \in \kk[u]$.
\item
Every rational function $\chi(u) \in \Xi$ is of the form
$\chi_f(u)$ for some monic $f(u) \in \kk[u]$.
\item 
For monic $f(u) \in \kk[u]$
and $\lambda \in X_\0^\imath$,
$\wt^\imath(f) = \lambda$
if and only if $\chi_f(u) = \chi_\lambda(u)$.
\end{enumerate}
\end{lem}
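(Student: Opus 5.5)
The plan is to compute everything in terms of the exponents of linear factors. For $\lambda \in X^\imath$, the integers $\lambda_i$ from \cref{sneaky} satisfy $\lambda_{-i} = -\lambda_i$ and $\lambda_0 = 0$. Write $e_i \coloneq \del_i - \lambda_i$ for the exponent of $(u-i)$ in $\chi_\lambda(u)$.

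\emph{Step 1: $\chi_\lambda(u) \in \Xi$.} From \cref{firstchoice} we have $\sum_i \del_i = 1 - 1 = 0$. Combined with $\sum_i \lambda_i = 0$, this gives $\sum_i e_i = 0$. Since the product is monic of total degree $0$, it lies in $1 + u^{-1}\kk\llbracket u^{-1}\rrbracket$. Next,
$$\chi_\lambda(-u) = \prod_i (-u-i)^{e_i} = \prod_j (u-j)^{e_{-j}},$$
because the sign $(-1)^{\sum e_i}$ equals $1$. So $\chi_\lambda(u)\chi_\lambda(-u)$ has exponent $e_j + e_{-j} = \del_j + \del_{-j}$ at $u = j$. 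This exponent is $1$ at $j = \pm h$, is $-2$ at $j = 0$, and is $0$ otherwise. Hence the product equals $(u-h)(u+h)/u^2$.

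\emph{Step 2: bijectivity.} For injectivity, $\chi_\lambda$ determines all $\lambda_i$, hence $(\id-\tau)(\hat\lambda)$. Since both iweights are even, \cref{reallydumblemma} gives equality in $X^\imath$.

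For surjectivity, take $\chi \in \Xi$ and factor it as $\chi(u) = \prod_i (u-i)^{e_i}$. The normalization at $\infty$ forces $\sum_i e_i = 0$. The functional equation forces $e_i + e_{-i} = \del_i + \del_{-i}$, so $d_i \coloneq \del_i - e_i$ satisfies $d_{-i} = -d_i$, and in particular $d_0 = 0$. Choose $I'$ with $I = I' \sqcup \{0\} \sqcup (-I')$ and set $\hat\lambda \coloneq \sum_{i \in I'} d_i \varpi_i$. If this weight is odd, add $\varpi_0$; this does not change any $\lambda_i$. Then $\lambda \coloneq \pi(\hat\lambda)$ is even and has $\lambda_i = d_i$ for all $i$, so $\chi_\lambda = \chi$.

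\emph{Step 3: the key identity $\chi_f(u) = \chi_{\wt^\imath(f)}(u)$ for every monic $f(u) = \prod_i (u-i)^{\beta_i}$.} We have $\widetilde f(u) = \prod_i (u+i)^{\beta_i}$, so $\widetilde f/f$ has exponent $\beta_{-i} - \beta_i$ at $u = i$.

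If $\deg f$ is even, $\chi_f$ has exponent $\beta_{-i} - \beta_i + \delta_{i,h} - \delta_{i,0}$. For $\lambda = \wt^\imath(f)$ we have $\lambda_i = \beta_i - \beta_{-i}$, so $\del_i - \lambda_i$ gives the same exponents.

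If $\deg f$ is odd, $\chi_f$ has exponent $\beta_{-i} - \beta_i + \delta_{i,-h} - \delta_{i,0}$. In this case $\lambda_i$ acquires the extra term $\delta_{i,h} - \delta_{i,-h}$, and one again checks that $\del_i - \lambda_i$ agrees with the exponent of $\chi_f$.

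This bookkeeping with $\del_{\pm h}$ and $\del_0$ is the only place where care is needed. It is the main potential pitfall, since sign slips between the even and odd cases are easy to make.

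\emph{Step 4: deducing (i)--(iii).} Part (iii) follows immediately from Step 3 and the injectivity in Step 2.

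For (i), given an even $\lambda$, choose $\beta_i \coloneq \max(\lambda_i, 0)$ for $i \neq 0$, which gives $\beta_i - \beta_{-i} = \lambda_i$. Choose $\beta_0 \in \{0,1\}$ so that $\deg f$ is even. Then $\wt(f)$ is even with the same $\lambda_i$ as $\lambda$, so $\wt^\imath(f) = \lambda$ by \cref{reallydumblemma}.

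Finally, (ii) follows by combining surjectivity in Step 2 with (i) and Step 3.
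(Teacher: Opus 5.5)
Your proof is correct and is essentially the paper's argument. Both reduce everything to bookkeeping of orders of zeros and poles (your Step~3 is exactly the paper's computation of $\ord(\chi_f)$ and $\ord(\chi_\lambda)$ in terms of $(\id-\tau)(\wt(f))$ and $(\id-\tau)(\hat\lambda)$), combined with \cref{reallydumblemma}. The only difference is organizational: you prove the bijection directly, including an explicit check that $\chi_\lambda\in\Xi$ which the paper leaves implicit, and then obtain (ii) from surjectivity plus (i). The paper instead proves (ii) by writing $\chi(u)$ directly as $\frac{u\pm h}{u}\cdot\frac{\widetilde f(u)}{f(u)}$, and deduces the bijection from (i)--(iii).
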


\begin{proof}
The opening statement follows easily from the other three parts, which are proved as follows:

\vspace{2mm}
\noindent
(i) Let $\lambda \in X^\imath$ be an even iweight.
It is the image of 
$\sum_{i \in I,\ \lambda_i > 0} \lambda_i \varpi_i$
if $t(\lambda)=\0$ or
$\varpi_0+\sum_{i \in I,\ \lambda_i > 0} \lambda_i \varpi_i$
if $t(\lambda)= \1$.
This is $\wt(f)$ for
the monic polynomial of even degree 
$$
f(u)
\coloneq \begin{dcases}
\prod_{i \in I,\ \lambda_i > 0} (u-i)^{\lambda_i} &\text{if $t(\lambda)=\0$}\\
u 
\prod_{i \in I,\ \lambda_i > 0} (u-i)^{\lambda_i}
&\text{if $t(\lambda) = \1$.}
\end{dcases}
$$

\vspace{2mm}
\noindent
(ii) Take $\chi(u) \in \Xi$ and 
let $\ord(\chi) \coloneq \sum_{i \in I} m_i \varpi_i$,
where $m_i$ is the order of $i$ as a zero or a pole of $\chi(u)$.
The assumption that $\chi(u)\chi(-u) = \frac{(u-h)(u+h)}{u^2}$ implies that $m_i + m_{-i} = 0$ for $i \in I - \{0,\pm h\}$, $m_0 = -1$, and $m_h + m_{-h} = 1$.
It follows that
$$
\chi(u)=
\begin{dcases}
\frac{u+h}{u}\cdot 
 \prod_{i \in I-\{0\},\ m_i \leq 0}
\left(\frac{u+i}{u-i}\right)^{-m_i}&\text{if $m_h \leq 0$}\\
\frac{u-h}{u}\cdot \prod_{i \in I-\{0\},\ m_i \leq 0}
\left(\frac{u+i}{u-i}\right)^{-m_i}&\text{if $m_{-h} \leq 0$.}
\end{dcases}
$$
This shows that there is a monic $f(u) \in \kk[u]$
such that $\chi(u)=\frac{u\pm h}{u} \cdot \frac{\widetilde{f}(u)}{f(u)}$. Then we either have that $\chi(u) = \chi_f(u)$
or $\chi(u) = \chi_{g}(u)$ where $g(u) \coloneq u f(u)$.

\vspace{2mm}
\noindent
(iii)
Take a monic polynomial $f(u) \in \kk[u]$
and $\lambda \in X^\imath_\0$
with pre-image $\hat\lambda \in X$.
One checks from \cref{sneaky,firstchoice}
and the definitions \cref{chif,maxideals}
that 
\begin{align}\label{padraig}
\ord(\chi_f) &=
\varpi_h - \varpi_0 - (\id - \tau)(\wt(f)),
&
\ord(\chi_\lambda) = \varpi_h-\varpi_0-(\id-\tau)(\hat\lambda). 
\end{align}
Hence, $\chi_f(u) = \chi_\lambda(u)
\Leftrightarrow \ord(\chi_f) = \ord(\chi_\lambda)
\Leftrightarrow (\id-\tau)(\wt(f)) = (\id - \tau)(\hat\lambda)$.
Since $\wt(f)$ and $\hat\lambda$ are both even,
\cref{reallydumblemma} implies that this holds if and only if
$\wt^\imath(f) = \lambda$.
\end{proof}

For any object $V\in\catR^\fg$ and $i \in I$, 
let $\beta_i(V)$ be the multiplicity of $i$ as a root of $m_V(u)$, so that
\begin{align}
m_V(u) &= \prod_{i \in I} (u-i)^{\beta_i(V)}.
\end{align}
We call the numbers $(\beta_i(V))_{i \in I}$ the {\em $\beta$-numbers} of $V$; their sum is the $\beta$-degree defined earlier. We define the {\em iweight} $\wt^\imath(L)$
of 
an irreducible object $L \in \catR^\fg$ to
be $\wt^\imath(m_L)$ as defined above, 
that is, the image in $X_\0^\imath$ of the even weight
\begin{equation}\label{whyitiseven}
\wt(L)\coloneq 
\begin{dcases}
\sum_{i \in I} \beta_i(L) \varpi_i&\text{if $\beta(L)$ is even}\\
\varpi_h+\sum_{i \in I} \beta_i(L) \varpi_i&\text{if $\beta(L)$ is odd.}
\end{dcases}
\end{equation}
\cref{spec,lasers} imply:

\begin{cor}\label{blockdec}
For irreducibles $K, L \in \catR^\fg$, $O_K(u) = O_L(u)$
if and only if 
$\wt^\imath(K)=\wt^\imath(L)$.
\end{cor}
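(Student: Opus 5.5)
The plan is to chain \cref{spec} with \cref{lasers}, so no new diagrammatics is needed. Let $K, L \in \catR^\fg$ be irreducible. By \cref{spec}, the series $O_K(u)$ and $O_L(u)$ are rational functions in $\kk(u)$, namely
\begin{align*}
O_K(u) &= \chi_{m_K}(u), & O_L(u) &= \chi_{m_L}(u),
\end{align*}
where $m_K(u), m_L(u) \in \kk[u]$ are the monic minimal polynomials of the dot. As noted just before \cref{lasers}, these functions lie in $\Xi$: \cref{curlsup1} gives the functional equation, and \cref{eatz} gives the leading term at $u=\infty$.

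Next set $\lambda \coloneq \wt^\imath(K) = \wt^\imath(m_K)$, which lies in $X^\imath_\0$ because $\wt(m_K)$ is even. Applying \cref{lasers}(iii) with $f = m_K$ and this $\lambda$, the tautology $\wt^\imath(m_K)=\lambda$ gives $\chi_{m_K}(u) = \chi_\lambda(u)$, so $O_K(u) = \chi_{\wt^\imath(K)}(u)$. Likewise $O_L(u) = \chi_{\wt^\imath(L)}(u)$.

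The equivalence now follows from the opening assertion of \cref{lasers}, that $\lambda \mapsto \chi_\lambda(u)$ is a bijection $X^\imath_\0 \to \Xi$ and in particular injective. Thus $O_K(u)=O_L(u)$ if and only if $\chi_{\wt^\imath(K)}(u) = \chi_{\wt^\imath(L)}(u)$, which holds if and only if $\wt^\imath(K)=\wt^\imath(L)$. The reverse implication is immediate in any case.

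There is no real obstacle. The only point needing care is that \cref{lasers}(iii) requires its iweight to be even, which holds because $\wt(f)$ is defined to be even in both parity cases of \cref{ahaha}. One should also note that $O_L(u)$ for irreducible $L$ has coefficients in $\kk$ (since $Z_L=\kk$), so that comparing $O_K$ and $O_L$ means comparing elements of $\kk(u)$.
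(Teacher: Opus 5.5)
Your proposal is correct and is the paper's argument: the paper deduces this corollary directly from \cref{spec,lasers}, exactly as you do. You identify $O_K(u)=\chi_{m_K}(u)$, then pass via \cref{lasers}(iii) to $\chi_{\wt^\imath(K)}(u)$, and finish with injectivity of $\lambda\mapsto\chi_\lambda(u)$ on $X^\imath_\0$.
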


From \cref{blockdec}, it follows that irreducible objects of $\catR^\fg$
of different iweight have different central character.
In the Abelian case, letting
$\catR_\lambda$ be the Serre subcategory
of $\catR$
generated by
all irreducible objects $L \in \catR$ with $\wt^\imath(L) = \lambda$,
we obtain the decomposition
\begin{equation}\label{blocks}
\catR = \bigoplus_{\lambda \in X_\0^\imath} \catR_\lambda
\end{equation}
of $\catR$ into ``blocks''.
We mean that there are no non-zero homomorphisms between different ``blocks''. We are calling $\catR_\lambda$ a ``block'' rather than a block because it may decompose further, and it could also be zero. 

There is also a ``block'' 
decomposition in the Karoubian case, which warrants a bit more discussion.
Given $\lambda \in X^\imath_\0$, 
it still makes sense to consider the Serre subcategory $\Mod{\catR}_\lambda$
of $\Mod{\catR}$ consisting of the locally finite-dimensional
$\catR$-modules
whose irreducible subquotients $L$ satisfy $\wt^\imath(L)=\lambda$.
Then $\catR_\lambda$ and $\catR^\fg_\lambda$ may be defined to be the full subcategories with objects that are all of the objects
of $\catR$ or $\catR^\fg$, respectively, 
that belong to $\Mod{\catR}_\lambda$.
Equivalently, $\catR_\lambda$ is the additive
subcategory of $\catR$ 
generated by the indecomposable objects $P \in \catR$
which, when viewed as indecomposable projective modules,
have unique irreducible quotients $L$ satisfying
$\wt^\imath(L)=\lambda$.
The Krull-Schmidt theorem implies that
\begin{equation}\label{blocksagain}
\catR = \bigoplus_{\lambda \in X_\0^\imath} \catR_\lambda.
\end{equation}
Since finitely generated modules have projective covers,
there is a similar decomposition of $\catR^\fg$.

\begin{rem}\label{moredirect}
There is another more direct 
way to determine whether an indecomposable 
object $V \in \catR^\fg$ belongs to the ``block'' $\catR^\fg_\lambda$. 
Indecomposability implies that $Z_V$ is a
finite-dimensional commutative local $\kk$-algebra with unique 
maximal ideal $\mathfrak{m}$ such that
$Z_V / \mathfrak{m} = \kk$.
For $f(u) \in Z_V[u]$, we write $\bar f(u)$ for the
image of $f(u)$ in $\kk[u]$ under the canonical quotient map.
Similarly, for a rational function $R(u) \in Z_V(u)$,
we write $\bar R(u)$ for its image in $\kk(u)$.
By \cref{itsreallywednesday}, 
$O_V(u) = \chi_f(u)$ for some monic $f(u) \in Z_V[u]$.
It follows that $\bar O_V(u) = \chi_{\bar f}(u)$.
Then we have that $V \in \catR^\fg_\lambda$
where $\lambda \in X^\imath_\0$ is 
$\wt^\imath(\bar f)$;
equivalently, by \cref{lasers}(iii),
$\bar O_V(u) = \chi_\lambda(u)$.
This agrees with the description in the previous paragraph
because $\bar O_V(u) = O_L(u)$ for any irreducible quotient $L$ of $V$. We leave the proof of this statement to the reader.
\end{rem}

Now we are going to play the same  
game as in \cite[Sec.~4]{HKM} (see also \cite[Sec.~4]{BSW-update}).
String diagrams in the remainder of the section
will be string diagrams for natural transformations
between endofunctors of $\catR^\fg$.
The endofunctor $B$ decomposes as
\begin{align}\label{eigenfunctors}
B &= \bigoplus_{i \in I} B_i,
\end{align}
where $B_i$ is the subfunctor of $B$ such that $B_i V$ is the generalized $i$-eigenspace of the endomorphism
$\begin{tikzpicture}[B,centerzero]
\draw[-] (0,-0.3) -- (0,0.3);
\closeddot{0,0};
\draw[darkg] (.28,-0.3) -- (.28,0.3);
\end{tikzpicture}\ \colon BV\rightarrow BV$
for each $V \in \catR^\fg$.
The identity endomorphism of $B_i$
will be represented using the labelled string
$\begin{tikzpicture}[B,centerzero]
\draw[-] (0,-0.2) \botlabel{i} -- (0,0.2);
\end{tikzpicture}$.
We use the 
string diagrams
\begin{align}\label{projectors}
\begin{tikzpicture}[B,centerzero]
\draw (0.08,-.3) \botlabel{i} to (.08,0);
\draw[B,-] (.08,0) to (0.08,.3);
\notch{.08,0};
\end{tikzpicture}
\colon B_i &\Rightarrow B,
&
\begin{tikzpicture}[B,centerzero]
\draw[B] (0.08,-.3) to (0.08,0);
\draw[-] (.08,0) to (0.08,.3)\toplabel{i};
\notch{.08,0};
\end{tikzpicture}
\colon B &\Rightarrow B_i
\end{align}
to denote the inclusions and projections.
We then have that
\begin{align}\label{orthogonality}
\begin{tikzpicture}[B,centerzero]
\draw[-] (0.08,-.3) \botlabel{i} to (0.08,.3)\toplabel{j};
\draw[B] (0.08,-.1) to (0.08,.1);
\notch{.08,.1};
\notch{.08,-.1};
\end{tikzpicture}
&= \delta_{i,j}
\begin{tikzpicture}[B,centerzero]
\draw[-] (0.08,-.3) \botlabel{i} to (0.08,.3);
\end{tikzpicture}\ ,&
\begin{tikzpicture}[B,centerzero]
\draw[-] (0.08,-.3) to (0.08,.3);
\draw[B] (0.08,-.1) to (0.08,.1);
\end{tikzpicture}
&= \sum_{i \in I}\ 
\begin{tikzpicture}[B,centerzero]
\draw[-] (0.08,-.3) to (0.08,.3);
\draw[B] (0.08,-.1) to (0.08,.1);
\notch{.08,.1};
\notch{.08,-.1};
\stringlabel{0.3,0}{i};
\end{tikzpicture}\ .
\end{align}
Note that the infinite sum on the right-hand side of \cref{orthogonality} makes sense as a natural transformation because all but finitely many terms are zero
after evaluation on any
object $V \in\catR^\fg$ (in the Karoubian case, this uses that such modules $V$ are finitely generated).

The restriction of the dot gives an
endomorphism
$\begin{tikzpicture}[B,centerzero]
\draw[-] (0.08,-.2) \botlabel{i} to (.08,.2);
\closeddot{0.08,0};
\end{tikzpicture}$
of the functor $B_i$
such that
\begin{align}\label{pan}
\begin{tikzpicture}[B,centerzero]
\draw (0.08,-.3) \botlabel{i} to (.08,0.1);
\draw[B] (.08,0.1) to (0.08,.3);
\notch{.08,.1};
\closeddot{.08,-.1};
\end{tikzpicture}
&=
\begin{tikzpicture}[B,centerzero]
\draw[B] (0.08,-.3) \botlabel{i} to (.08,-0.1);
\draw[-] (.08,-0.1) to (0.08,.3);
\notch{.08,-.1};
\closeddot{.08,.1};
\end{tikzpicture}\ ,&
\begin{tikzpicture}[B,centerzero]
\draw (0.08,-.3)  to (.08,0.1);
\draw[B,-] (.08,0.1) to (0.08,.3) \toplabel{i};
\notch{.08,.1};
\closeddot{.08,-.1};
\end{tikzpicture}
&=
\begin{tikzpicture}[B,centerzero]
\draw[B] (0.08,-.3)  to (.08,-0.1);
\draw[-] (.08,-0.1) to (0.08,.3)\toplabel{i};
\notch{0.08,-.1};
\closeddot{.08,.1};
\end{tikzpicture}\ .
\end{align}
We refer to these natural transformations as {\em projected dots}.
In \cref{sabcat}, we introduced a diagrammatic shorthand 
``pinning'' a polynomial $f(x)\in \kk[x]$ to a dot in a Brauer string diagram. 
This makes sense for projected dots as well.
In \cref{iqg}, we went a bit further, extending the convention to allow polynomials
in two variables $x,y$ to be pinned to two dots---double pins---and polynomials in $x,y,z$ to be pinned to three dots---triple pins. 
This was in the iquantum setting, but double and triple pins make
sense in the Brauer setting too.

Composing the cup and cap with projectors 
gives natural transformations
\begin{align}\label{flimsy}
\begin{tikzpicture}[B,anchorbase,scale=1.1]
\draw[-] (0.3,0.4)\toplabel{j} to (0.3,.2);
\draw[-] (-0.1,.2) to (-0.1,0.4)\toplabel{i};
\draw[B] (.3,.2) to[out=-90,in=-90,looseness=1.5] (-.1,.2);
\notch{-.1,.2};
\notch{.3,.2};
\end{tikzpicture}
\colon&\id_{\catR^\fg} \Rightarrow B_i \circ B_j,&
\begin{tikzpicture}[B,anchorbase,scale=1.1]
\draw[-] (0.3,-0.4)\botlabel{j} to (0.3,-.2);
\draw[-] (-0.1,-.2) to (-0.1,-0.4)\botlabel{i};
\draw[B] (.3,-.2) to[out=90,in=90,looseness=1.5] (-.1,-.2);
\notch{-.1,-.2};
\notch{.3,-.2};
\end{tikzpicture}\colon&B_i \circ B_j \Rightarrow \id_{\catR^\fg}.
\end{align}
These are zero if $i \neq -j$, as may be proved by a similar argument to the proof of \cite[Lem.~4.1]{HKM} using the dot slide relations \cref{dotslide}. 
It follows that
\begin{align}\label{icecreamtruck}
\begin{tikzpicture}[B,centerzero,scale=1.3]
\draw[-,B] (-0.3,.1) arc(180:0:0.15);
\draw[-,B] (0,-.1) arc(180:360:0.15);
\draw (0,-.1) to (0,.1);
\draw (-.3,0.1) to (-.3,-.4)\botlabel{\!-i\;};
\draw[-] (.3,-0.1) to (.3,.4)\toplabel{\!-i\;};
\notch{0,-.1};
\notch{.3,-.1};
\notch{-.3,.1};
\notch{0,.1};
\stringlabel{0.11,0.02}{i};
\end{tikzpicture}
&=
\sum_{j \in I} \begin{tikzpicture}[B,centerzero,scale=1.3]
\draw[-,B] (-0.3,.1) arc(180:0:0.15);
\draw[-,B] (0,-.1) arc(180:360:0.15);
\draw (0,-.1) to (0,.1);
\draw (-.3,0.1) to (-.3,-.4)\botlabel{\!-i\;};
\draw[-] (.3,-0.1) to (.3,.4)\toplabel{\!-i\;};
\notch{0,-.1};
\notch{0,.1};
\notch{.3,-.1};
\notch{-.3,.1};
\stringlabel{0.11,0.02}{j};
\end{tikzpicture}
=
\begin{tikzpicture}[B,centerzero,scale=1.3]
\draw[-,B] (-0.3,.1) arc(180:0:0.15);
\draw[-,B] (0,-.1) arc(180:360:0.15);
\draw[B] (0,-.1) to (0,.1);
\draw (-.3,0.1) to (-.3,-.4)\botlabel{\!-i\;};
\draw[-] (.3,-0.1) to (.3,.4)\toplabel{\!-i\;};
\notch{.3,-.1};
\notch{-.3,.1};
\end{tikzpicture}
=\begin{tikzpicture}[B,centerzero,scale=1.3]
\draw[-] (0,-0.4)\botlabel{\!-i\;} -- (0,0.4)\toplabel{\!-i\;};
\draw[B] (0,.15) to (0,-.15);
\notch{0,.15};
\notch{0,-.15};
\end{tikzpicture}
=
\begin{tikzpicture}[B,centerzero,scale=1.3]
\draw[-] (0,-0.4)\botlabel{\!-i\;} -- (0,0.4);
\end{tikzpicture}
\ ,&
\begin{tikzpicture}[B,centerzero,scale=1.3]
\draw[-,B] (-0.3,-.1) arc(180:360:0.15);
\draw[-,B] (0,.1) arc(180:0:0.15);
\draw (0,-.1) to (0,.1);
\draw (-.3,-0.1) to (-.3,.4)\toplabel{i};
\draw[-] (.3,0.1) to (.3,-.4)\botlabel{i};
\notch{0,.1};
\notch{0,-.1};
\notch{.3,.1};
\notch{-0.3,-.1};
\stringlabel{0.14,0.02}{-i};
\end{tikzpicture}
&=
\sum_{j \in I} \begin{tikzpicture}[B,centerzero,scale=1.3]
\draw[-,B] (-0.3,-.1) arc(180:360:0.15);
\draw[-,B] (0,.1) arc(180:0:0.15);
\draw (0,-.1) to (0,.1);
\draw (-.3,-0.1) to (-.3,.4)\toplabel{i};
\draw[-] (.3,0.1) to (.3,-.4)\botlabel{i};
\notch{0,.1};
\notch{0,-.1};
\notch{.3,.1};
\notch{-.3,-.1};
\stringlabel{0.11,0.02}{j};
\end{tikzpicture}
=
\begin{tikzpicture}[B,centerzero,scale=1.3]
\draw[-,B] (-0.3,-.1) arc(180:360:0.15);
\draw[-,B] (0,.1) arc(180:0:0.15);
\draw[B] (0,-.1) to (0,.1);
\draw (-.3,-0.1) to (-.3,.4)\toplabel{i};
\draw[-] (.3,0.1) to (.3,-.4)\botlabel{i};
\notch{.3,.1};
\notch{-.3,-.1};
\end{tikzpicture}
=
\begin{tikzpicture}[B,centerzero,scale=1.3]
\draw[-] (0,-0.4)\botlabel{i} -- (0,0.4)\toplabel{i};
\draw[B] (0,.15) to (0,-.15);
\notch{0,.15};
\notch{0,-.15};
\end{tikzpicture}
=
\begin{tikzpicture}[B,centerzero,scale=1.3]
\draw[-] (0,-0.4)\botlabel{i} -- (0,0.4);
\end{tikzpicture}
\ .
\end{align}
This shows that the {\em projected cups and caps}
\begin{align}\label{spam}
\begin{tikzpicture}[B,anchorbase,scale=1.4]
\draw[-] (0.3,0.4)\toplabel{\!-i\;} to (0.3,.2);
\draw[-] (-0.1,.2) to (-0.1,0.4)\toplabel{i};
\draw[B] (.3,.2) to[out=-90,in=-90,looseness=1.5] (-.1,.2);
\end{tikzpicture}&\coloneq\begin{tikzpicture}[B,anchorbase,scale=1.4]
\draw[-] (0.3,0.4)\toplabel{\!-i\;} to (0.3,.2);
\draw[-] (-0.1,.2) to (-0.1,0.4)\toplabel{i};
\draw[B] (.3,.2) to[out=-90,in=-90,looseness=1.5] (-.1,.2);
\notch{-.1,.2};
\notch{.3,.2};
\end{tikzpicture},& 
\begin{tikzpicture}[B,anchorbase,scale=1.4]
\draw[-] (0.3,-0.4)\botlabel{i} to (0.3,-.2);
\draw[-] (-0.1,-.2) to (-0.1,-0.4)\botlabel{\!-i\;};
\draw[B] (.3,-.2) to[out=90,in=90,looseness=1.5] (-.1,-.2);
\end{tikzpicture}&\coloneq\begin{tikzpicture}[B,anchorbase,scale=1.4]
\draw[-] (0.3,-0.4)\botlabel{i} to (0.3,-.2);
\draw[-] (-0.1,-.2) to (-0.1,-0.4)\botlabel{\!-i\;};
\draw[B] (.3,-.2) to[out=90,in=90,looseness=1.5] (-.1,-.2);
\notch{-.1,-.2};
\notch{.3,-.2};
\end{tikzpicture}
\end{align}
are the unit and counit of an adjunction making
$(B_{-i}, B_i)$ into an adjoint pair for each $i \in I$.

When pinned to a string of color $i$, $x - i$ is 
{\em locally nilpotent}, that is,
it is nilpotent on evaluation at any object of $\catR^\fg$
(although the nilpotency degree may not be bounded as 
the object varies).
This means that the 
pin notation makes sense not just for polynomials $f(x) \in \kk[x]$
pinned to a string of color $i$ but also
for any power series $f(x) \in \kk\llbracket x-i\rrbracket$.
This observation extends in the obvious way to double and triple pins too.

We have already introduced projected dots, caps and cups. There are also {\em projected crossings}
\begin{align}\label{polly}
\begin{tikzpicture}[centerzero,B]
\draw[-] (0.3,-.4) \botlabel{j} to (-0.3,.4)\toplabel{l};
\draw[-] (-0.3,-.4) \botlabel{i} to (0.3,.4)\toplabel{k};
\projcr{0,0};
\end{tikzpicture}
&\coloneq
\begin{tikzpicture}[centerzero,B]
\draw[-] (0.3,-.4) \botlabel{j} to (-0.3,.4)\toplabel{l};
\draw[-] (-0.3,-.4) \botlabel{i} to (0.3,.4)\toplabel{k};
\notch[40]{.11,-.14};
\notch[40]{-.11,.14};
\notch[-40]{.11,.14};
\notch[-40]{-.11,-.14};
\end{tikzpicture}\ .
\end{align}
The same argument as used to prove
\cite[Lem.~4.1]{HKM} 
shows that the projected crossing is zero unless at least one of the following holds:
\begin{itemize}
\item $i=k$ and $j=l$; 
\item $i = l$ and $j=k$;
\item $i=-j$ and $k=-l$.
\end{itemize}
The dot slide relation \cref{demazure} implies that
\begin{align}\label{moredemazure}
\begin{tikzpicture}[B,centerzero,scale=1]
\draw[-] (-0.3,-0.4)\botlabel{i}  -- (0.3,0.4)\toplabel{k};
\draw[-] (0.3,-0.4)\botlabel{j}  -- (-0.3,0.4)\toplabel{l};
\projcr{0,0};
\closeddot{-0.18,0.24};
\end{tikzpicture}
-
\begin{tikzpicture}[B,centerzero,scale=1]
\draw[-] (-0.3,-0.4)\botlabel{i} -- (0.3,0.4)\toplabel{k};
\draw[-] (0.3,-0.4)\botlabel{j}  -- (-0.3,0.4)\toplabel{l};
\projcr{0,0};
\closeddot{0.18,-0.24};
\end{tikzpicture}\ &= \ 
\delta_{i,l} \delta_{j,k}
\begin{tikzpicture}[B,centerzero,scale=1]
\draw[-] (-0.3,-0.4)\botlabel{i}  -- (-0.3,0.4);
\draw[-] (0.3,-0.4)\botlabel{k}  -- (0.3,0.4);
\end{tikzpicture} -
\delta_{i,-j}\delta_{k,-l}
\ 
\begin{tikzpicture}[B,centerzero,scale=1]
\draw[-] (-0.3,-0.4)  to [looseness=1.7,out=90,in=90] (0.3,-0.4) \botlabel{j};
\draw[-] (0.3,0.4) to [looseness=1.7,out=-90,in=-90] (-0.3,0.4) \toplabel{l};
\end{tikzpicture}\ =\ 
\begin{tikzpicture}[B,centerzero,scale=1]
\draw[-] (-0.3,-0.4)\botlabel{i}  -- (0.3,0.4)\toplabel{k};
\draw[-] (0.3,-0.4)\botlabel{j}  -- (-0.3,0.4)\toplabel{l};
\projcr{0,0};
\closeddot{-0.18,-0.24};
\end{tikzpicture}
-
\begin{tikzpicture}[B,centerzero,scale=1]
\draw[-] (-0.3,-0.4)\botlabel{i} -- (0.3,0.4)\toplabel{k};
\draw[-] (0.3,-0.4)\botlabel{j}  -- (-0.3,0.4)\toplabel{l};
\projcr{0,0};
\closeddot{0.18,0.24};
\end{tikzpicture}\ .
\end{align}
The following is the Brauer analog of \cite[Lem.~4.2]{HKM} and is proved in the same way: if $i \neq k$ and $j \neq l$ (required for the pins in the equation below to make sense) then
\begin{align}
\begin{tikzpicture}[centerzero,B,scale=1]
\draw[-] (0.3,-.4) \botlabel{j} to (-0.3,.4)\toplabel{l};
\draw[-] (-0.3,-.4) \botlabel{i} to (0.3,.4)\toplabel{k};
\projcr{0,0};
\end{tikzpicture}
&=
\delta_{i,l} \delta_{j,k}\ 
\begin{tikzpicture}[centerzero,B,scale=1]
\draw[-] (0.3,-.4) \botlabel{k} to (0.3,.4);
\draw[-] (-0.3,-.4) \botlabel{i} to (-0.3,.4);
\pinpin{-.3,0}{.3,0}{1,0}{\frac{1}{x-y}};
\end{tikzpicture}\ -
\delta_{i,-j} \delta_{k,-l}\ 
\begin{tikzpicture}[centerzero,B,scale=1]
\draw[-] (-0.3,-0.4) to [looseness=1.7,out=90,in=90] (0.3,-0.4) \botlabel{j};
\draw[-] (0.3,0.4) to [looseness=1.7,out=-90,in=-90] (-0.3,0.4) \toplabel{l};
\pinpin{.16,-.14}{-.16,.14}{-.85,.14}{\frac{1}{x-y}};
\end{tikzpicture}\label{eggs}\ .
\end{align}
We will not use this subsequently, but it explains the first two terms on the right-hand side of the key definition \cref{crossingdef} appearing later.

\begin{theo}\label{actions}
If $K$ is an irreducible subquotient of $B_i L$ for some irreducible $L \in \catR^\fg$ and $i \in I$ then $\wt^\imath(K) = \wt^\imath(L)-\alpha_i$.
\end{theo}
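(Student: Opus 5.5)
Following \cite[Sec.~4]{HKM}, the plan is to compare central characters. By \cref{blockdec} and \cref{lasers}, it suffices to show
\[
O_K(u) = O_L(u)\cdot\frac{(u-i+1)(u-i-1)(u+i)^2}{(u-i)^2(u+i+1)(u+i-1)},
\]
since the right-hand side should be $\chi_{\lambda-\alpha_i}(u)$ when $O_L(u)=\chi_\lambda(u)$. Indeed, $(\lambda-\alpha_i)_j=\lambda_j-h_j(\alpha_i)+h_{-j}(\alpha_i)=\lambda_j - a_{j,i}+a_{-j,i}$. Substituting this into \cref{maxideals} multiplies $\chi_\lambda$ by $\prod_j (u-j)^{a_{j,i}-a_{-j,i}}$, which is exactly the displayed factor. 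We check this factor case by case, including the degenerate cases $i\in\{0,\pm h\}$, where the factors overlap.

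To compute $O_K$, apply \cref{curlsup2} to the object $L$. Place the bubble to the left of a string which is then projected onto $B_iL$, i.e. pre- and post-compose with the projectors \cref{projectors}. This gives, as endomorphisms of $B_iL$,
\[
O(u)\cdot\frac{(u-x+1)(u-x-1)}{(u-x)^2} = \frac{(u+x+1)(u+x-1)}{(u+x)^2}\cdot O_L(u),
\]
where $O(u)$ is the bubble acting on $B_iL$ (the central element evaluated at $B_iL$) and $x$ is the projected dot on $B_iL$. The key point is that $x-i$ is nilpotent on $B_iL$. Thus, in the finite-dimensional commutative algebra generated by $x$, $O_L(u)$ (a scalar, since $L$ is irreducible) and $O(u)$, we may pass to the quotient by the nilradical. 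Work in $\End(B_iL)\lround u^{-1}\rround$ after noting that $\frac{(u-x+1)(u-x-1)}{(u-x)^2}$ is invertible there. Then $O(u)$ equals $O_L(u)\frac{(u+x+1)(u+x-1)(u-x)^2}{(u+x)^2(u-x+1)(u-x-1)}$, which is a rational function of $u$ whose coefficients are polynomials in $x$.

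Next, the natural transformation given by the bubble is central, so its action on the subquotient $K$ is induced from its action on $B_iL$. Choose a finite filtration of $B_iL$ with $K$ as a subquotient. The endomorphism $x$ preserves the filtration by naturality, hence induces an endomorphism of $K$. On $K$ this induced endomorphism is $i$ plus a nilpotent, and the scalar $O_K(u)$ is therefore obtained by substituting $x=i$, after reducing modulo nilpotents (coefficientwise in $u^{-1}$). This yields the displayed formula for $O_K(u)$. In the Karoubian case one works in $\Mod{\catR}$, using that $B$ is exact there and that $B_iL$ is finitely generated and of finite length whenever $L$ is irreducible, since it has finite-dimensional image under each $\Hom$.

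The main obstacle is the reduction modulo nilpotents. One must justify passing from an identity in $\End(B_iL)\lround u^{-1}\rround$ to the induced scalars on $K$, since $O(u)$ restricted to a subquotient need not be a scalar until we quotient out nilpotents. The cleanest way is to take the Jordan--Hölder factor $K$, observe that every element of the commutative subalgebra generated by $x$ and the bubble coefficients acts on $K$ by a scalar plus nilpotent, and note that the character map to $\kk$ is an algebra homomorphism compatible with the identity. The remaining step is the routine verification that the resulting rational function equals $\chi_{\lambda-\alpha_i}$ in the cases $i=0,\pm h$.
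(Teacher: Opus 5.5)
Your overall strategy is the paper's. You use \cref{curlsup2} to get $O_K(u)=r_i(u)\,O_L(u)$, where
\[
r_i(u)=\frac{(u-i)^2(u+i+1)(u+i-1)}{(u-i+1)(u-i-1)(u+i)^2}=\prod_{j\in I}(u-j)^{a_{j,i}-a_{-j,i}}.
\]
You then combine $O_L=\chi_{\wt^\imath(L)}$ (\cref{spec,lasers}), $r_i\chi_\lambda=\chi_{\lambda-\alpha_i}$, and injectivity of $\lambda\mapsto\chi_\lambda$ on $X^\imath_\0$. The paper phrases this last step via $\ord$ and \cref{reallydumblemma}, which amounts to the same thing. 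No case analysis at $i\in\{0,\pm h\}$ is needed, since the product formula is uniform. However, two steps are wrong as written.

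\emph{The inverted factor.} Your opening display is $O_K=O_L\,r_i(u)^{-1}$. The claim that $\prod_j(u-j)^{a_{j,i}-a_{-j,i}}$ ``is exactly the displayed factor'' is false, because that product is $r_i(u)$. Taken literally, your display gives $O_K=\chi_{\lambda+\alpha_i}$, that is, $\wt^\imath(K)=\wt^\imath(L)+\alpha_i$. Your second paragraph correctly produces $r_i(u)$ after setting $x=i$, in agreement with \cref{helpful}. So the later sentence ``this yields the displayed formula'' is also false, and the display must be inverted.

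\emph{The passage to the subquotient.} You claim that $x$ preserves a filtration $0\subseteq M'\subseteq M\subseteq B_iL$ with $K=M/M'$ ``by naturality''. This is wrong. Naturality of the dot concerns morphisms $V\to W$ in $\catR$ and says nothing about arbitrary subobjects of the single object $B_iL$. So $x$ need not induce any endomorphism of $K$, and the character argument in your last paragraph inherits the same problem.

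The fix is never to let $x$ act on $K$. Write $p(x)=\frac{(u-x+1)(u-x-1)}{(u-x)^2}$ and $q(x)=\frac{(u+x+1)(u+x-1)}{(u+x)^2}$. Then in $\End(B_iL)\llbracket u^{-1}\rrbracket$ you have $O_{B_iL}(u)=q(x)p(x)^{-1}O_L(u)$, with $O_L(u)$ scalar. Every $u^{-k}$-coefficient of $q(x)p(x)^{-1}-r_i(u)$ is a polynomial in $x$ vanishing at $x=i$. Hence every coefficient of $O_{B_iL}(u)-r_i(u)O_L(u)$ has the form $(x-i)g_k(x)$ and is nilpotent. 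It is also the evaluation of a central element minus a scalar, so it stabilizes $M$ and $M'$. It therefore induces on $K$ the corresponding coefficient of $O_K(u)-r_i(u)O_L(u)$, which is a nilpotent endomorphism of an irreducible object and hence zero.

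This argument works verbatim in $\Mod{\catR}$ in the Karoubian case. It also needs no finite length of $B_iL$; your justification, finite-dimensional values under each $\Hom$, would not give finite length anyway. In the Abelian case you could instead refine the $x$-stable filtration by the images $(x-i)^kB_iL$ to a composition series and invoke Jordan--H\"older.
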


\begin{proof}
Recall the definition of 
$\ord(\chi)$ from the proof of \cref{lasers}(ii).
The bubble slide relation
\cref{curlsup2} implies that
\begin{equation}\label{helpful}
O_K(u) =  
\frac{(u-i)^2}{(u-i+1)(u-i-1)}
\cdot \frac{(u+i+1)(u+i-1)}{(u+i)^2}\cdot O_L(u).
\end{equation}
The order of the rational function appearing in front of $O_L(u)$ in this equation is
$(\id-\tau)(\alpha_i)$, so $\ord(O_K) = \ord(O_L)+(\id-\tau)(\alpha_i)$.
Since 
$\ord(O_L) = \varpi_h-\varpi_0-(\id - \tau) (\wt(L))$
by \cref{spec,padraig},
it follows that
$$
(\id-\tau)(\wt(K)) = (\id-\tau)(\wt(L))-(\id-\tau)(\alpha_i)
= (\id-\tau)(\wt(L)-\alpha_i).
$$
Since $\wt(K)$ and $\wt(L)-\alpha_i$ are even, 
\cref{reallydumblemma} implies finally that
$\wt^\imath(K)=\wt^\imath(L)-\alpha_i$.
\end{proof}

\begin{cor}\label{rightway}
The eigenfunctor 
$B_i\colon \catR^\fg \rightarrow \catR^\fg$ 
maps objects in $\catR^\fg_\lambda$ to objects in $\catR^\fg_{\lambda-\alpha_i}$.
\end{cor}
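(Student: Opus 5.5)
The plan is to reduce the statement to \cref{actions} via the description of the ``blocks'' in terms of irreducible subquotients. Take $V \in \catR^\fg_\lambda$. We must show that every irreducible subquotient $K$ of $B_i V$ satisfies $\wt^\imath(K) = \lambda - \alpha_i$. In the Karoubian case we also need that $B_iV$ is finitely generated, so that it lies in $\catR^\fg$ rather than merely in $\Mod{\catR}$.

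First I will settle membership in $\catR^\fg$. In the Abelian case this is automatic. In the Karoubian case, $B$ preserves finite generation and $B_i V$ is a direct summand of $BV$. A direct summand of a finitely generated module is finitely generated, being a quotient of it. Hence $B_iV \in \catR^\fg$.

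Next comes the weight statement. I will use the fact that $B$ is exact on $\catR$, respectively on $\Mod{\catR}$. The decomposition $B = \bigoplus_i B_i$ is natural, so each $B_i$ is a direct summand of an exact functor and is therefore exact.

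In the Abelian case, $V$ has finite length. Exactness then shows that the irreducible subquotients of $B_iV$ are among the irreducible subquotients of the objects $B_iL$, where $L$ runs over the composition factors of $V$. Each such $L$ has $\wt^\imath(L)=\lambda$, so \cref{actions} gives $\wt^\imath(K)=\lambda-\alpha_i$. Thus $B_iV \in \catR_{\lambda-\alpha_i}$.

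In the Karoubian case, $V$ need not have finite length, so I must argue with subquotients directly. Let $K \cong M/N$ with $N \subset M \subseteq B_iV$. Since $B_iV$ is a union of finitely generated submodules, I may shrink $M$ to a finitely generated submodule surjecting onto $K$. I then want a finite-length submodule $V' \subseteq V$ with $K$ a subquotient of $B_iV'$; this is the main obstacle.

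To overcome it, I will use the adjunction making $(B_{-i}, B_i)$ adjoint, which holds in $\Mod{\catR}$ as well. Because $B_i$ is exact, there is a filtration $V=\bigcup V_n$ by finitely generated submodules with $B_iV=\bigcup B_iV_n$. Then $M\subseteq B_iV_n$ for some $n$, which reduces to $V$ finitely generated.

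For $V$ finitely generated, I will take $K$ with $\Hom(P_K, B_iV)\neq 0$ via a projective cover $P_K$, and use the adjunction to obtain $\Hom(B_{-i}P_K, V)\neq 0$. Since $P_K$ is finitely generated, the image of $B_{-i}P_K$ is a finitely generated submodule. Its multiplicities are governed by the finite-dimensionality of the relevant Hom spaces. This lets me find an irreducible subquotient $L$ of $V$ with $[B_iL:K]\neq 0$; more precisely, $K$ appears in $B_i$ applied to some finite-length subquotient of $V$.

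Concretely, $\dim\Hom(P_K,B_iV)=\dim\Hom(B_{-i}P_K,V)<\infty$. Choosing a finite-length quotient $V/W$ detecting this nonzero Hom, I can replace $V$ by the finite-length $V/W$ and then run the Abelian argument. Applying \cref{actions} to the composition factors of $V/W$, which all have iweight $\lambda$, finishes the proof.
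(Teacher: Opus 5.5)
Your Abelian case and your check that $B_iV$ is finitely generated are fine. The Karoubian argument, however, breaks at the ``Concretely'' step. You need a submodule $W\subseteq V$ with $V/W$ of finite length and $\Hom(B_{-i}P_K,V/W)\neq 0$, and nothing guarantees one.

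A finitely generated locally finite-dimensional module in a Schurian category need not be residually of finite length. Worse, one of its composition factors may survive in none of its finite-length quotients. Here is an example. Let $\mathcal{C}$ have objects $a,z,b_1,b_2,\dots$, with morphisms generated by $p_n\colon b_n\to a$ and $q_n\colon z\to b_n$, subject to $p_n\circ q_n=p_1\circ q_1$ for all $n$. All Hom-spaces are finite-dimensional. Every non-zero submodule of $\Hom(-,a)$ contains $p_1\circ q_1$. Hence every proper quotient kills it, while the quotient by $0$ has infinite length. So the irreducible attached to $z$ is a composition factor of $\Hom(-,a)$ that occurs in no finite-length quotient.

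The paper makes no finiteness assumption that would rule this out; it does not even assume $\catR$ is locally Noetherian. Your sentence about multiplicities being ``governed by the finite-dimensionality of the relevant Hom spaces'' does not supply the missing argument. Separately, the reduction via $V=\bigcup V_n$ is vacuous, since $V\in\catR^\fg$ is already finitely generated.

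The intermediate claim you state just before that step is true, and it needs no finite length: you only need that every non-zero finitely generated module has an irreducible quotient.

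1. Take a non-zero $g\colon B_{-i}P_K\to V$. Since $B_{-i}P_K$ is finitely generated, $\im g$ is a non-zero finitely generated submodule of $V$. So it has an irreducible quotient $L$.
2. $L$ is a subquotient of $V$, so $\wt^\imath(L)=\lambda$.
3. The composite $B_{-i}P_K\twoheadrightarrow \im g\twoheadrightarrow L$ is non-zero. By adjunction, $\Hom(P_K,B_iL)\neq 0$.
4. $P_K$ is local with head $K$, so $K$ is a subquotient of $B_iL$. Now \cref{actions} gives $\wt^\imath(K)=\lambda-\alpha_i$.

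The paper runs the same idea with the adjunction the other way. It decomposes $B_iV$ along $\catR^\fg=\bigoplus_\mu\catR^\fg_\mu$ and takes an irreducible quotient $K$ of a non-zero component. It turns $\Hom(B_iV,K)\neq 0$ into $\Hom(V,B_{-i}K)\neq 0$. Then it applies \cref{actions} to an irreducible quotient $L$ of the image, which is both a quotient of $V$ and a subquotient of $B_{-i}K$. The paper's route uses only irreducible quotients once the block decomposition of $\catR^\fg$ is available. Your repaired route verifies membership in $\Mod{\catR}_{\lambda-\alpha_i}$ directly from its definition, at the cost of using projective covers of irreducibles.
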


\begin{proof}
The Abelian case follows from exactness and \cref{actions}. 
In the Karoubian case, take $V\in\catR^\fg_\lambda$.
Since $V$ is finitely generated, so is $B_i V$.
If $B_i V$ has a non-zero component in $\catR^\fg_\mu$
for some $\mu \in X^\imath_\0$,
that component is finitely generated, hence it has an irreducible quotient $K$.
By adjunction
$0\neq\Hom_{\catR^\fg}(B_iV,K)
\cong\Hom_{\catR^\fg}(V,B_{-i}K).$
The image of a non-zero morphism 
$f:V \rightarrow B_{-i}K$ has an irreducible quotient $L$. 
Since $L$ is a quotient of $V$, 
we have $\wt^\imath(L)=\lambda$.
Since $L$ is a subquotient of $B_{-i}K$, \cref{actions}
implies that
$\lambda=\mu-\alpha_{-i}=\mu+\alpha_i$.
Thus $\mu=\lambda-\alpha_i$, as required.
\end{proof}

\begin{rem}
\cref{blockdec,rightway} are analogs of the results \cite{HKM} about Heisenberg categorifications, and the proofs are only slightly more difficult.
Both results are also proved in \cite{GRS-2026}. 
We proved them independently before \cite{GRS-2026} became available.
\end{rem}

By the {\em spectrum} of the categorification $\catR$, we mean
the set $I_\catR$ of roots of the minimal polynomials $m_L(u)$
for all irreducible objects $L \in \catR^\fg$. In other words,
$$
I_\catR = \{i \in I\:|\:\beta_i(L) > 0\text{ for some irreducible }L \in \catR^\fg\}.
$$
This is an invariant of $\catR$ which is interesting to determine in examples. Recall from the previous section that we denote the $\sim$-equivalence class of $k \in I$ by $[k]$; it is the vertex set of one of the Satake diagrams in \cref{dynkintable}.

\begin{theo}\label{spectrum}
The spectrum $I_\catR$ is the union of 
sets chosen from the $\sim$-equivalence classes
$[k]$ for $k \in I$, the exceptional 
set $\{0\}$, and the exceptional set
$[h] - \{\pm h\}$.
\end{theo}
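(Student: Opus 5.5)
The plan is to show that $I_\catR$ is closed under the operations which generate the equivalence relation $\sim$, except for the specific failures that produce the exceptional sets $\{0\}$ and $[h]-\{\pm h\}$. Concretely, I would prove the following closure properties.
\begin{enumerate}
\item $I_\catR=-I_\catR$.
\item If $i\in I_\catR$ and $j=i\pm1$, then $j\in I_\catR$, except possibly when $i=0$, or when $\{i,j\}$ meets $\{\pm h\}$ in the pattern compatible with $[h]-\{\pm h\}$.
\item If $\pm1\in I_\catR$ then $0\in I_\catR$.
\end{enumerate}
A finite combinatorial check on each Satake component $[k]$ of \cref{dynkintable} then shows that a subset satisfying these properties is a union of the listed sets.

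The starting point is the observation that $i\in I_\catR$ if and only if $B_iL\neq0$ for some irreducible $L\in\catR^\fg$. Indeed, $B_iL$ is the generalized $i$-eigenspace of the dot on $BL$, so it is non-zero exactly when $\beta_i(L)>0$. In the Karoubian case I use finite generation to pass to irreducible quotients, as in the proof of \cref{rightway}. Given $B_iL\neq 0$, pick an irreducible quotient $K$ of $B_iL$. Then $\Hom(L,B_{-i}K)\cong\Hom(B_iL,K)\neq0$ by the adjunction \cref{spam}, so $B_{-i}K\neq0$ and $-i\in I_\catR$. This proves property (i).

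For properties (ii) and (iii), compare the iweights of $K$ and $L$. By \cref{actions}, $\wt^\imath(K)=\wt^\imath(L)-\alpha_i$. Consequently, for every $j$,
\[
\lambda_j(K)-\lambda_j(L)=-a_{j,i}+a_{-j,i}.
\]
On the other hand, by \cref{whyitiseven} and \cref{sneaky}, for an irreducible $M$ and $j\notin\{0,\pm h\}$ we have $\lambda_j(\wt^\imath M)=\beta_j(M)-\beta_{-j}(M)$. At $j=\pm h$ there is an extra correction $\pm1$ depending on the parity of $\beta(M)$. Also $t(\wt^\imath M)\equiv\beta_0(M)\pmod 2$.

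Take $j=i\pm1\notin\{0,\pm h\}$. The change $-a_{j,i}+a_{-j,i}=1+a_{-j,i}$ is non-zero unless $a_{-j,i}=-1$, and this only happens for the handful of $i$ near $\pm h$. When it is non-zero, $\beta_j$ or $\beta_{-j}$ is positive for $K$ or for $L$, so $j\in I_\catR$ by property (i).

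For property (iii), take $i=\pm1$. Then $t$ flips between $L$ and $K$, since $a_{0,\pm1}=-1$ is odd. Hence $\beta_0(K)\not\equiv\beta_0(L)\pmod2$, so one of them is positive and $0\in I_\catR$. When $i=0$, the iweight change gives no information about $\pm1$, because $\lambda_{\pm1}$ changes by $-a_{\pm1,0}+a_{\mp1,0}=0$. This is exactly why $\{0\}$ can occur alone.

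The main obstacle is the bookkeeping near $\pm h$. There, $\lambda_{\pm h}$ involves the parity of $\beta(M)$, and $a_{h,-h}=-1$ causes cancellations. I would handle this by a direct case analysis for $i\in\{\pm h,\pm 3h\}$, using the explicit order computation from \cref{helpful}. For example, for $i=h$ the factor in \cref{helpful} is $\frac{(u-h)^3(u+3h)}{(u-3h)(u+h)^3}$, whose pole at $3h$ forces $\pm3h\in I_\catR$.

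In the other direction, I would check that $3h\in I_\catR$ need not force $h\in I_\catR$. This is the case where $a_{-j,i}=-1$ kills the change in $\lambda_j$, and it produces the exceptional set $[h]-\{\pm h\}$. I would also check that $h\in I_\catR$ does force the whole class $[h]$. Finally, I would verify separately in characteristic $p>2$, where $[0]=[h]$, that the allowed unions are exactly those listed in the statement.
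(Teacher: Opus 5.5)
Your proposal is correct and is essentially the paper's proof. The paper likewise shows $I_\catR=-I_\catR$ by adjunction. It gets $i\in I_\catR\Rightarrow i+1\in I_\catR$ for $i\notin\{0,-1,-h,-h-1\}$ by comparing $O_K(u)$ with $O_L(u)$ for an irreducible quotient $K$ of $B_iL$, cross-multiplying \cref{helpful} with \cref{spec} and tracking the factor $u-i-1$; this carries the same information as your iweight comparison via \cref{actions}. It gets $1\in I_\catR\Rightarrow 0\in I_\catR$ from the change of type, after which the combinatorics is immediate.

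One correction to your commentary. For $j=i\pm1\notin\{0,\pm h\}$, the change $1+a_{-j,i}$ vanishes only when $i=0$, so no extra case analysis at $\pm h,\pm3h$ is needed. The set $[h]-\{\pm h\}$ arises because $j=\pm h$ must be excluded, since the parity term in $\lambda_{\pm h}$ can absorb the change; it is not because $a_{-j,i}=-1$. Finally, the ``other direction'' (realizability of each union) is not part of the statement and need not be proved.
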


\begin{proof}
The theorem follows from the following claims.

\vspace{2mm}
\noindent
\underline{Claim 1}:
$i \in I_\catR \Rightarrow -i \in I_\catR$.

\vspace{1mm}
\noindent
Proof:
There exists an irreducible $L \in \catR^\fg$ 
such that $\beta_i(L) >0$, hence, $B_i L \neq 0$.
Since $B_i L$ is finitely generated, it has an irreducible quotient $K$. Then
$$
0\neq\Hom_{\catR^\fg}(B_i L,K)
\cong\Hom_{\catR^\fg}(L,B_{-i}K),
$$
proving that $-i\in I_{\catR}$.

\vspace{2mm}
\noindent
\underline{Claim 2}: $i \in I_\catR, i \notin \{0,-1,-h,-h-1\} \Rightarrow i+1 \in I_\catR$.

\vspace{1mm}
\noindent
Proof:
Take an irreducible $L \in \catR^\fg$ with $\beta_i(L) > 0$.
This implies that $B_i L$ is non-zero.
Let $K$ be an irreducible quotient of $B_i L$.
The equality \cref{helpful}
plus the explicit formula from \cref{spec}
implies that
\begin{multline*}
\big(u-(-1)^{\beta(K)} h\big)(u-i+1)(u+i)^2(u-i-1)
\widetilde{m}_K(u) m_L(u)=\\
\big(u-(-1)^{\beta(L)} h\big)
(u+i+1)(u-i)^2(u+i-1)
\widetilde{m}_L(u) m_K(u).
\end{multline*}
Since $u-i-1$ divides the left-hand side,
and it does not equal $u \pm h$, $u+1+i$, $u-i$ or $u-1+i$
by the assumption on $i$,
it must divide 
$\widetilde{m}_L(u) m_K(u)$. If $u-i-1$ divides $\widetilde{m}_L(u)$ then $u+i+1$ divides $m_L(u)$, i.e.,
$\beta_{-i-1}(L) > 0$. Hence, $-i-1 \in I_\catR$ and $i+1 \in I_\catR$ by Claim 1. 
If $u-i-1$ divides $m_K(u)$ then $\beta_{i+1}(K) > 0$, so again $i+1 \in I_\catR$.

\vspace{2mm}
\noindent
\underline{Claim 3}: $1 \in I_\catR\Rightarrow 0 \in I_\catR$.

\vspace{1mm}
\noindent
Proof:
Let $L$ be an irreducible in $\catR^\fg$ with $B_1 L \neq 0$
and $K$ be an irreducible quotient of $B_1 L$.
\cref{actions} implies that $\wt^\imath(K) = \wt^\imath(L) - \alpha_1$. 
Recalling
\cref{ambiguity},
it follows that one of the weights $\wt^\imath(K)$ or $\wt^\imath(L)$ is of odd type, so either $\beta_0(K)$ or $\beta_0(L)$ is odd.
This shows that $0 \in I_\catR$.
\end{proof}

\begin{eg}\label{rseg}
As we mentioned already in the introduction, 
the main source of interesting 
examples of Abelian Brauer categorifications
comes from representations of 
the algebraic groups $\operatorname{Sp}_{2n}(\kk)\:(n \geq 1), \operatorname{SO}_n(\kk)\:(n \geq 2)$ or $\operatorname{Spin}_n(\kk)\:(n \geq 2)$, or their Lie algebras. In the Lie algebra case over the field $\C$, this is developed in detail in \cite{RS19}: the functor $B$ is defined by tensoring with the natural representation $V$, and the dot acts as the natural transformation $V \otimes - \Rightarrow V \otimes -$
defined by $\varepsilon\left(\Omega+\tfrac{\dim V-\varepsilon}{2}\right)$ where $\Omega$ is the Casimir tensor and 
$\varepsilon = -1$ in the symplectic case or $1$ in the orthogonal or spin cases. The spectra
for the categories of rational representations of these groups over $\C$ are as follows
(by ``spin representation'' we mean a rational representation of the spin group on which the non-trivial element of the kernel of the homomorphism to the special orthogonal group acts as $-1$):
$$
\begin{array}{|c|c|}
\hline
\text{Category}&\text{Spectrum}\\\hline
\text{Rational representations of
$\operatorname{Sp}_{2n}(\C)$}
&[h]-\{\pm h\}\\
\text{Rational representations of
$\operatorname{SO}_{2n}(\C)$}&[h]\\
\text{Rational representations of
$\operatorname{SO}_{2n+1}(\C)$}&[0]\\
\text{Spin representations 
of $\operatorname{Spin}_{2n}(\C)$}&[0]\\
\text{Spin representations of $\operatorname{Spin}_{2n+1}(\C)$}
&\{0\}\cup [h]-\{\pm h\}\\
\hline
\end{array}
$$
This follows from the results of \cite{RS19} using the classical lemma recalled in \cite[Lem.~4.8]{RS19}.
\end{eg}

\begin{eg}
A trivial example of a Brauer categorification:
the category $\SVec$ of finite-dimensional vector superspaces with morphisms that are even linear maps.
The endofunctor $B$ is the parity-switching functor, 
so $B^2 = \id_\SVec$.
The cup, the cap and the crossing act as identity natural transformations,
and $\begin{tikzpicture}[B,centerzero,scale=1.2]\draw (0,-0.2) -- (0,0.2);\closeddot{0,0};
\end{tikzpicture}\colon B\Rightarrow B$ is zero.
Up to isomorphism, there are only two irreducible objects 
$L_0 \coloneq \kk^{1|0}$ and $L_1 \coloneq \kk^{0|1}$, with minimal polynomials
$m_{L_0}(u) = m_{L_1}(u) = u$.
It follows that $\wt^\imath(L_0)=\wt^\imath(L_1) = \varpi_0+\varpi_h$, and
its spectrum is $\{0\}$.
\end{eg}

\begin{eg}
Another example in positive characteristic comes from 
the category $\operatorname{Ver}_p(\mathrm{SL}_2)$,
the semisimplification of the category of tilting modules for $\mathrm{Sp}_2(\kk) = \mathrm{SL}_2(\kk)$
in characteristic $p \geq 5$.
The endofunctor $B$ is induced by $V \otimes-$ for the natural module $V$,
and the dot acts as in \cref{rseg}.
The irreducible objects are
$L(0),\dots,L(p-2)$, and $V \otimes L(k) \cong L(k-1) 
\oplus L(k+1)$
with $L(-1) \coloneq 0$ and $L(p-1)$ being 
negligible, hence 0.  
The spectrum is $[h] - \{\pm h\}$.
\end{eg}

\setcounter{section}{4}

\section{\texorpdfstring{Iquantum categorifications}{Iquantum categorifications}}\label{iqcat}

Now we are going to look in the other direction for a while.
Let $\UU^\imath$ be the 2-iquantum group defined in \cref{iqg}.
We will usually ignore the grading, viewing it simply as a $\kk$-linear 2-category.
A {\em 2-representation} of $\UU^\imath$ is a strict $\kk$-linear 2-functor
from $\UU^\imath$ to the 2-category of $\kk$-linear categories.
Thus:
\begin{itemize}
\item For each $\lambda \in X^\imath$, there is given a $\kk$-linear category
$\catR_\lambda$.
\item For each $i \in I$ and $\lambda \in X^\imath$, there is given a $\kk$-linear functor
$B_i\one_\lambda = \one_{\lambda-\alpha_i} B_i\colon \catR_\lambda \rightarrow \catR_{\lambda-\alpha_i}$.
\item For each of the generating 2-morphisms of $\UU^\imath$ listed in \cref{iQgens}
there is given a natural transformation between these functors satisfying the defining relations from \cref{iQdef}.
\end{itemize}
We simply say that $(\catR_\lambda)_{\lambda \in X^\imath}$ is a 2-representation of $\UU^\imath$ when all of this data is in our hands.

We say that $(\catR_\lambda)_{\lambda \in X^\imath}$ is a {\em locally finite Abelian 2-representation} 
or a {\em locally finite Karoubian 2-representation}
of $\UU^\imath$ if in addition each of the categories
$\catR_\lambda$ is a locally finite Abelian or Karoubian category,
respectively.
Then it is convenient to set
\begin{equation}\label{hotsauce}
\catR \coloneq \bigoplus_{\lambda \in X^\imath} \catR_\lambda
\end{equation}
and extend the functors
arising from the action of $\UU^\imath$
so that the 1-morphism $\one_\lambda$ acts as the identity on $\catR_\lambda$ 
and as the zero functor on other $\catR_\mu$'s,
and $B_i \coloneq \bigoplus_{\lambda \in X^\imath} B_i \one_\lambda\colon\catR \rightarrow \catR$.

Since $p(\lambda -\alpha_i)=p(\lambda)$, the subcategories
\begin{align}
\catR_{\0} &\coloneq \bigoplus_{\substack{\lambda \in X^\imath
\\p(\lambda)=\0}} \catR_\lambda,&
\catR_{\1} &\coloneq \bigoplus_{\substack{\lambda \in X^\imath
\\p(\lambda)=\1}} \catR_\lambda
\end{align}
are stable under the action of the endofunctor
$B_i$. 
We say that $\catR$ is {\em even} 
if $\catR_{\1}$ is trivial
and {\em odd} if $\catR_{\0}$ is trivial.

\begin{rem}\label{whyeven}
There is a strict $\kk$-linear 2-isomorphism
$S^{-1}\circ \Omega^\imath\colon \UU^\imath \Iso \UU^\imath$
defined by composing the inverse of the symmetry $S$ from \cite[Lem.~3.7]{BWW} with the symmetry $\Omega^\imath$ from \cite[Sec.~3.3]{BWW}. 
This composition is a strict $\kk$-linear 2-functor taking the object
$\lambda \in X^\imath$ to $\varpi_h - \lambda$
and the generating 1-morphism $B_i \one_\lambda$ to
$B_{-i} \one_{\varpi_h-\lambda}$.
We omit the precise description of $S^{-1}\circ\Omega^\imath$
on 2-morphisms since it is not needed here.
The pull-back of an even (resp., odd) iquantum categorification
along this isomorphism is odd (resp., even). Thus, nothing is lost by restricting attention just to the study of even iquantum categorifications. We will do this later on, but there is no need to do it yet.
\end{rem}

\begin{defin}\label{betterwhopper}
We say that $\catR$ is an {\em Abelian iquantum categorification}
or a {\em Karoubian iquantum categorification}
if $\catR$ is as in \cref{hotsauce} for a locally finite Abelian or locally finite Karoubian 2-representation 
$(\catR_\lambda)_{\lambda \in X^\imath}$, respectively, 
such that in addition\footnote{This additional hypothesis was not present in \cite{HKM}. However, 
the correct formulation of
\cite[Th.~5.22]{HKM} requires it. We are grateful to 
Gao, Rui and Song whose paper \cite{GRS-2026} drew our attention to this oversight.}
$\big|\{i \in I\:|\:B_i V \neq 0\}\big|<\infty$ for all $\lambda \in X^\imath$ and $V \in \catR_\lambda$.
\end{defin}

Assume for the remainder of the section that
\begin{center}
\fbox{
\begin{minipage}{0.56\textwidth}
$\catR$ is an Abelian or Karoubian iquantum categorification. 
\end{minipage}
}
\end{center}
As the 1-morphism $B_i\one_\lambda$ is left and right dual to $\one_\lambda B_{-i}$,
the endofunctor $B_i$ is left and right adjoint to $B_{-i}$. In particular, in the Abelian case, 
all of the endofunctors $B_i$ are exact. 
In the Karoubian case,
we use the Yoneda embedding to 
identify each $\catR_\lambda$ 
with a full subcategory of the Schurian category 
$\Mod{\catR_\lambda}$,
hence, $\catR$ is identified with a full subcategory of
the Schurian category 
$\Mod{\catR}$.
The categorical action of $\UU^\imath$ extends in a canonical way to make $(\Mod{\catR_\lambda})_{\lambda \in X^\imath}$
into a 2-representation in its own right.
Each $B_i\colon\Mod{\catR_\lambda}\rightarrow \Mod{\catR_{\lambda-\alpha_i}}$ is exact, and it takes finitely generated modules to finitely generated. 

As in the previous section, to unify the exposition of the Abelian and Karoubian cases as much as possible from now on, 
we let $\catR^\fg=\bigoplus_{\lambda \in X^\imath} \catR^\fg_\lambda$ be defined so that $\catR^\fg=\catR$
and $\catR^\fg_\lambda=\catR_\lambda$ in the Abelian case,
while in the Karoubian case
we let $\catR^\fg$ and $\catR^\fg_\lambda$ be the full subcategories of $\Mod{\catR}$ and $\Mod{\catR_\lambda}$
consisting of all finitely generated modules.
The functor $B_i$ maps objects of $\catR^\fg_\lambda$
to objects of $\catR^\fg_{\lambda-\alpha_i}$.
Of course, we will also 
use string calculus to denote morphisms in $\catR^\fg$,
with green strings labelled by objects of $\catR^\fg$ on the right-hand boundary as in \cref{bcat}, and a label $\lambda$ in a 2-cell indicating the subcategory $\catR^\fg_\lambda$.

The next fundamental theorem is the iquantum counterpart of \cref{spec}. We prove it just for the case in hand, but there is a generalization to all 2-iquantum groups in the setup of \cite{BWW}. In diagonal type, it recovers the analogous result for 2-quantum groups, which was established already in \cite[Sec.~5.1]{HKM}.
The bubble generating function $\begin{tikzpicture}[baseline=-1mm,iQ,scale=.9]
\draw[-] (-0.25,0) arc(180:-180:0.25);
\node at (-0.36,0) {\strandlabel{\ibar}};
\region{0.95,0}{\lambda};
\node at (.56,0) {$(u)$};
\end{tikzpicture} \in \End_{\UU^\imath}(\one_\lambda)\lround u^{-1}\rround$
acts as a formal Laurent series
in $Z(\catR^\fg_\lambda)\lround u^{-1}\rround$.
In turn, the evaluation of this on any object
$V \in \catR^\fg_\lambda$ is a formal Laurent series denoted \begin{equation}
O_{V;i}(u) \in u^{\del_i-\lambda_i} \id_V + 
u^{\del_i - \lambda_i-1} Z_V\llbracket u^{-1}\rrbracket,
\end{equation}
where $Z_V$ denotes the center of $\End_{\catR^\fg_\lambda}(V)$ as before.
Also let $m_{V;i}(u) \in \kk[u]$ be the (monic) minimal polynomial of 
$\begin{tikzpicture}[iQ,baseline=-1mm]
\draw[-] (0,-0.2)\botlabel{i} -- (0,0.2);
\draw[darkg] (.3,-0.2)\botlabel{V} -- (.3,0.2);
\closeddot{0,0};
\end{tikzpicture}$.
We define the
{\em $\beta$-numbers} $(\beta_i(V))_{i \in I}$ by
$\beta_i(V) \coloneq \deg m_{V;i}(u)$.
The additional hypothesis in \cref{betterwhopper} ensures that
$\beta_i(V) = 0$ for all but finitely many $i \in I$, hence, it also makes sense to define
\begin{equation}\label{iqbetadegee}
\beta(V) \coloneq \sum_{i \in I} \beta_i(V).
\end{equation}
We call this the {\em $\beta$-degree} of $V$.
In the following lemma, we use an obvious variation of the pin notation
in which the pin label is a polynomial
in $Z_V[x]$, not merely $\kk[x]$.
We indicate this by drawing the pinhead as a coupon on the green string.

\begin{lem}\label{wednesday}
Suppose that $V \in \catR^\fg_\lambda$ 
for some $\lambda \in X^\imath$, and $0 \neq i \in I$.
Suppose that $f(u) \in Z_V[u]$ is a monic polynomial such that
$$
\begin{tikzpicture}[iQ,baseline = -2mm,scale=1.2]
\draw[-] (-0.2,-.4)\botlabel{i} to (-0.2,.4);
\draw[-,darkg] (0.5,.4) to (0.5,-.4)\botlabel{V};
\objpin{-.2,0}{.5,0}{f(x)};
\end{tikzpicture} = 0.
$$
Then $\deg f(u) \geq \lambda_i - \del_i$, and
$g(u) \coloneq f(u) O_{V;i}(u)$ is a monic polynomial in $Z_V[u]$
of degree $\deg f(u)+\del_i-\lambda_i$
such that
$$
O_{V;i}(u) = \frac{g(u)}{f(u)}\qquad\text{ and }\qquad
\begin{tikzpicture}[iQ,baseline = -1mm,scale=1.2]
\draw[-] (-0.3,-.4)\botlabel{\ibar} to (-.3,.4);
\draw[-,darkg] (0.5,.4) to (0.5,-.4)\botlabel{V};
\objpin{-.3,0}{.5,0}{g(-x)};
\end{tikzpicture} = 0.
$$
\end{lem}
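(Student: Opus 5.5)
This follows the template of the proof of \cref{itsreallywednesday}, now with iquantum curl and bubble relations. The relations needed are the infinite Grassmannian relation \cref{iinfgrass}, the curl relation \cref{fancyicurl}, the dot slide relations \cref{whatfun}, and \cref{trick}. The first step shows that $g(u)\coloneq f(u)O_{V;i}(u)$ has no negative powers of $u$. For $r\geq 0$ the coefficient $[f(u)O_{V;i}(u)]_{u:-r-1}$ equals $[u^{r}f(u)O_{V;i}(u)]_{u:-1}$. Using the definition \cref{ibubblegeneratingfunction} of the bubble generating function, together with \cref{trick}, this residue is a dotted clockwise bubble with $x^r f(x)$ pinned to it, up to the normalizing scalar $\zeta_i(\lambda)^{-1}$. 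The fake bubble part contributes only polynomial terms in $u$ when $i\neq 0$. Such a bubble factors through the $i$-string evaluated on $V$ with $f(x)$ pinned, and that composite is zero by hypothesis. So $g(u)\in Z_V[u]$.

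Since $O_{V;i}(u)\in u^{\del_i-\lambda_i}(\id_V+u^{-1}Z_V\llbracket u^{-1}\rrbracket)$, the product $g(u)$ is monic of degree $\deg f+\del_i-\lambda_i$. A polynomial that is monic must have nonnegative degree, which gives $\deg f\geq\lambda_i-\del_i$. The formula $O_{V;i}(u)=g(u)/f(u)$ is then immediate.

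The main step is to show that $g(-x)$ pinned to the $\ibar$-string on $V$ vanishes. I would copy the curl argument from the proof of \cref{itsreallywednesday}. Consider a curl on the $i$-string, that is, an $\ibar$-string with a loop carrying $f(x)$ pinned on the inner $i$-part. This diagram is zero because it factors through the hypothesis. Now expand it using the iquantum curl relation \cref{fancyicurl}. That relation expresses a curl with a dot generating function as
\[
\big[\text{(bubble generating function in the adjacent region)}\times\text{(dot generating function on the straight strand)}\big]_{u:-1}.
\]
The bubble factor here is $O_{V;i}(u)$, or its $\pm u$ variant; one has to track which region and which orientation are involved. Substituting $f(u)$ via \cref{trick}, the expression becomes the $u^{-1}$-coefficient of $f(u)O_{V;i}(u)$ multiplied by $1/(u+x)$ on the $\ibar$-string, possibly with a sign. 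By \cref{trick} again, this equals $g(-x)$ pinned to the $\ibar$-string, up to a unit $\pm\zeta_i(\lambda)$. Hence it vanishes.

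The obstacle is the bookkeeping in \cref{fancyicurl}. That means getting the correct sign $u\mapsto -u$, the scalars $\zeta_i(\lambda)$ and $\gamma_i$, and the correct 2-cell label $\lambda$ versus $\lambda-\alpha_i$. One also has to check that no extra correction terms appear; these arise only for $i=0$, which is excluded here. If the curl relation is stated only for one orientation, I would first rotate using the adjunction and the cyclicity of dots to bring it into the needed form.
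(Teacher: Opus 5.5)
Your proposal is correct and is essentially the paper's proof. You get polynomiality (hence monicity and $\deg f(u)\ge\lambda_i-\del_i$) by identifying $[g(u)]_{u:-r-1}$ with $\zeta_i(\lambda)^{-1}$ times the bubble with $x^rf(x)$ pinned, which vanishes; the fake-bubble part is polynomial because $i\neq0$. You then get $g(-x)=0$ on the $-i$-string by expanding the zero curl carrying $f(x)$ via \cref{fancyicurl} and applying \cref{trick} twice.
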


\begin{proof}
Let 
$g(u) \coloneq f(u) O_{V;i}(u)$,
so that $O_{V;i}(u) = \frac{g(u)}{f(u)}$. 
By \cref{ibubblerelnew}, we have that 
$$
g(u) \in u^{\deg f(u)+\del_i-\lambda_i}\id_V + 
u^{\deg f(u)+\del_i-\lambda_i-1} Z_V\llbracket u^{-1}\rrbracket.
$$
Using a version of \cref{trick}, we have for $r \geq 0$ that
$$
\big[
g(u)\big]_{u:-r-1}
=
\big[
 O_{V;i}(u) f(u)\big]_{u:-r-1}
=
\left[\!\!
\begin{tikzpicture}[iQ,anchorbase]
\draw[-] (-0.25,0) arc(180:-180:0.25);
\node at (-0.36,0) {\strandlabel{\ibar}};
\node at (.52,0) {$(u)$};
\draw[-,darkg] (1.3,.4) to (1.3,-.4)\botlabel{V};
\catlabel{1.3,0}{f(u)};
\end{tikzpicture}\ 
\right]_{u:-r-1}\!\!\!=
\zeta_i(\lambda)^{-1}
\begin{tikzpicture}[iQ,anchorbase]
\draw[-] (-0.25,0) arc(180:-180:0.25);
\node at (-0.36,0) {\strandlabel{\ibar}};
\draw[-,darkg] (1.1,.4) to (1.1,-.4)\botlabel{V};
\objpin{.25,0}{1.1,0}{x^r f(x)};
\end{tikzpicture}\ = 0.
$$
Since the leading term of $g(u)$ is 
$u^{\deg f(u)+\del_i-\lambda_i}\id_V$, 
it follows that
$\deg f(u) \geq \lambda_i-\del_i$, and $g(u)$ is a monic polynomial in $u$ of this degree.
It remains to show that
$\begin{tikzpicture}[iQ,baseline = -1mm,scale=1.2]
\draw[-] (-0.2,-.4)\botlabel{\ibar} to (-0.2,.4);
\draw[-,darkg] (0.5,.4) to (0.5,-.4)\botlabel{V};
\objpin{-.2,0}{.55,0}{g(-x)};
\end{tikzpicture}
=0$.
This follows because
\begin{align*}
0
&=
\zeta_i(\lambda)^{-1}\begin{tikzpicture}[iQ,anchorbase,scale=1.3]
\draw[-] (0,-0.5)\botlabel{\ibar} to[out=up,in=180] (0.3,0.2) to[out=0,in=up] (0.45,0) to[out=down,in=0] (0.3,-0.2) to[out=180,in=down] (0,0.5);
\draw[-,darkg] (1.1,.4) to (1.1,-.4) \botlabel{V};
\objpin{.45,0}{1.1,0}{f(x)};
\end{tikzpicture}
\stackrel{\cref{fancyicurl}}{=}
\left[\ 
\begin{tikzpicture}[anchorbase,scale=1.3,iQ]
\draw[-] (-0.9,-0.5)\botlabel{\ibar} -- (-0.9,0.5);
\circledbar{-0.9,0}{u};
\draw (-0.4,0) arc(180:-180:0.2);
\node at (0.25,0) {$(u)$};
\node at (-0.5,0) {\strandlabel{\ibar}};
\draw[-,darkg] (.9,.5) to (.9,-.5) \botlabel{V};
\catlabel{.9,0}{f(u)};
\end{tikzpicture}\ 
\right]_{u:-1}
=
\left[\ 
\begin{tikzpicture}[iQ,anchorbase,scale=1.2]
\draw[-] (-0.2,-.4)\botlabel{\ibar} to (-0.2,.4);
\circledbar{-0.2,0}{u};
\draw[-,darkg] (.6,.4) to (0.6,-.4) \botlabel{V};
\catlabel{.6,0}{g(u)};
\end{tikzpicture}\ 
\right]_{u:-1}
\stackrel{\cref{trick}}{=}
\begin{tikzpicture}[iQ,anchorbase,scale=1.2]
\draw[-] (-0.3,-.4)\botlabel{\ibar} to (-0.3,.4);
\draw[-,darkg] (0.5,.4) to (0.5,-.4)\botlabel{V};
\objpin{-0.3,0}{.5,0}{g(-x)};
\end{tikzpicture}\ .
\end{align*}
\end{proof}

Recall that we write $\widetilde{f}(u)$ for 
$(-1)^{\deg f(u)} f(-u)$.

\begin{theo}\label{ben}
For $\lambda \in X^\imath$, and
an irreducible $L \in \catR^\fg_\lambda$, the following hold:
\begin{enumerate}
\item $O_{L;0}(u)=u^{-1}\dfrac{\widetilde{m}_{L;0}(u)}{m_{L;0}(u)}$ and 
$t(\lambda) \equiv \beta_0(L) \pmod{2}$.
\item
If $i \in I -\{0,\pm h\}$ then
$O_{L;i}(u)=\dfrac{\widetilde{m}_{L;-i}(u)}{m_{L;i}(u)}$
and $\lambda_i=\beta_i(L)-\beta_{-i}(L)$.
\item 
{\em Either} 
\begin{enumerate}
\item[(a)] $O_{L;h}(u) = \dfrac{\widetilde{m}_{L;-h}(u)}{m_{L;h}(u)}$,
$O_{L;-h}(u) = u\dfrac{\widetilde{m}_{L;h}(u)}{m_{L;-h}(u)}$,
and $\lambda_h = \beta_h(L)-\beta_{-h}(L)+1$; {\em or} 
\item[(b)]
$O_{L;h}(u) = u\dfrac{\widetilde{m}_{L;-h}(u)}{m_{L;h}(u)}$,
$O_{L;-h}(u) = \dfrac{\widetilde{m}_{L;h}(u)}{m_{L;-h}(u)}$
and 
$\lambda_h=\beta_h(L)-\beta_{-h}(L)$.
\end{enumerate}
\end{enumerate}
\end{theo}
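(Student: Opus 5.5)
For $i \neq 0$, the plan is to apply \cref{wednesday} to $f(u) \coloneq m_{L;i}(u)$ and then pin down the resulting polynomial by a degree count based on the infinite Grassmannian relation \cref{iinfgrass}. Since $L$ is irreducible, $Z_L = \kk$, so all polynomials below lie in $\kk[u]$. \cref{wednesday} gives that $g_i(u) \coloneq m_{L;i}(u) O_{L;i}(u)$ is a monic polynomial of degree $\beta_i(L)+\del_i-\lambda_i$. Moreover, pinning $g_i(-x)$ to a string labelled $-i$ kills $L$. By minimality, $m_{L;-i}(u)$ divides $\widetilde{g}_i(u)$, and hence $\deg g_i \geq \beta_{-i}(L)$. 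The same holds with $i$ and $-i$ interchanged.

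Next I would multiply the two identities $O_{L;i}(u) = g_i(u)/m_{L;i}(u)$ and $O_{L;-i}(-u) = g_{-i}(-u)/m_{L;-i}(-u)$. The relation \cref{iinfgrass} (equivalently $z_{-i}(u)z_i(-u)=1$ transported by $\varphi_\lambda$ in \cref{flambda}) says that $O_{L;-i}(u)O_{L;i}(-u)$ is $\pm$ a power of $u$, namely $\pm u^{\del_i+\del_{-i}}$, using $\lambda_{-i}=-\lambda_i$. This yields
\[
\widetilde{g}_i(u)\,\widetilde{g}_{-i}(u) = u^{\del_i+\del_{-i}}\, m_{L;i}(u)\, m_{L;-i}(u)
\]
up to the obvious signs. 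Since $m_{L;-i} \mid \widetilde{g}_i$ and $m_{L;i} \mid \widetilde{g}_{-i}$, the two quotients multiply to $u^{\del_i+\del_{-i}}$.

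If $i \notin \{0,\pm h\}$, then $\del_i+\del_{-i}=0$. Both quotients are therefore $1$, so $\widetilde{g}_i = m_{L;-i}$, which gives $O_{L;i}(u) = \widetilde{m}_{L;-i}(u)/m_{L;i}(u)$. Comparing degrees gives $\lambda_i = \beta_i(L)-\beta_{-i}(L)$, proving (ii). If $i = h$, then $\del_h+\del_{-h} = 1$, so exactly one of the quotients is $u$ and the other is $1$. These are alternatives (b) and (a) respectively. The formula for $\lambda_h$ again comes from the degree of $g_h$, using $\del_h = 1$ and $\del_{-h}=0$. The sign bookkeeping (the $(-1)^{\deg}$ in $\widetilde{f}$ and the normalisation $z_i^{(0)}=1$) needs care, but it is routine.

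The main obstacle is $i=0$, where \cref{wednesday} is not available: the curl relation \cref{fancyicurl} has an extra term, and $O_{L;0}(u)$ contains the $-\frac{1}{2u}$ contribution coming from \cref{ifakebubblepolynomial}. I would imitate the proof of \cref{itsreallywednesday}. First set $g(u) \coloneq u\,m_{L;0}(u)\,O_{L;0}(u)$ and show that it is a monic polynomial of degree $\beta_0(L)$, using the same coefficient extraction as in \cref{wednesday} (here $\del_0-\lambda_0 = -1$). Then apply \cref{fancyicurl} to the curl carrying $m_{L;0}(x)$. The extra term there is $0$ or $\id$ according to $t(\lambda)$, and I expect it to give that $g(-x)$, possibly corrected by a multiple of $m_{L;0}(x)$, annihilates $L$ on the $0$-string. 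It follows that $m_{L;0}(u) \mid \widetilde{g}(u)$, and since both are monic of the same degree, $\widetilde{g} = m_{L;0}$. This gives the formula in (i).

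For the parity statement, I would compare the constant terms. The $u^{-1}$-coefficient of $u\,O_{L;0}$ relation, together with the known value of the fake contribution and of the \cref{fancyicurl} correction term, should force $(-1)^{\beta_0(L)} = (-1)^{t(\lambda)}$. Alternatively, I would use \cref{zeroslider}-type centrality together with $O_{L;0}(u)O_{L;0}(-u) = -u^{-2}$ from \cref{iinfgrass}. I expect this $i=0$ step to be the hardest part, since the sign conventions there are delicate.
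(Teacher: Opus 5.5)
\textbf{Parts (ii) and (iii).} Your argument is the paper's: apply \cref{wednesday} to $m_{L;i}$ and to $m_{L;-i}$, then combine \cref{iinfgrass} with divisibility and a degree count. Your displayed product has the tildes misplaced. From $O_{L;-i}(u)O_{L;i}(-u)=\pm u^{\del_i+\del_{-i}}$ one gets $g_{-i}(u)\widetilde g_i(u)=\pm u^{\del_i+\del_{-i}}m_{L;-i}(u)\widetilde m_{L;i}(u)$. This does not affect the conclusion that the two monic quotients multiply to $u^{\del_i+\del_{-i}}$.

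\textbf{Part (i).} The paper gives no argument of its own and simply quotes \cite[Cor.~3.11]{BWWa1}. A direct proof would therefore be a genuinely different route. However, yours has a gap at exactly the step you flag.

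\emph{What the curl actually gives.} Since $m_{L;0}(u)O_{L;0}(u)=g(u)/u$ is not a polynomial, taking the $u^{-1}$-coefficient in \cref{fancyicurl} against the pinned $\frac{1}{u+x}$ does not produce $g(-x)$. It produces the polynomial part of $g(u)/u$ evaluated at $u=-x$, namely $\big(g(0)-g(-x)\big)/x$. The remaining $i=0$ terms contribute a divided difference of $m_{L;0}$ of the same kind. So the curl only tells you that a polynomial of degree $<\beta_0(L)$ kills $B_0L$, and by minimality it vanishes identically. You obtain a polynomial identity
\[
g(-x)-g(0)=(-1)^{t(\lambda)}\big(m_{L;0}(x)-m_{L;0}(0)\big),
\]
not $m_{L;0}\mid\widetilde g$. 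This determines $\widetilde g$ only up to an additive constant. Adding multiples of $m_{L;0}(x)$, which acts as zero anyway, cannot fix that constant.

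\emph{The missing input is the constant term of $g$.} By \cref{elias} and $P_{\lambda;0}(u)=(-1)^{t(\lambda)}u^{-1}$, the $u^0$-coefficient of $u\,m_{L;0}(u)O_{L;0}(u)$ is $(-1)^{t(\lambda)}m_{L;0}(0)+Q_{\lambda;0}\big(m_{L;0}(x_0)\big)$. The last term is a $0$-bubble with $m_{L;0}(x)$ pinned on the segment acting on $B_0L$, so it is zero on $L$. Hence $g(-x)=(-1)^{t(\lambda)}m_{L;0}(x)$. Comparing leading coefficients then yields both $t(\lambda)\equiv\beta_0(L)\pmod 2$ and $g=\widetilde m_{L;0}$. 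This includes the case $\beta_0(L)=0$, where $g=1$ forces $t(\lambda)=\0$.

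\emph{Your parity arguments cannot replace this.} The constraint $g\widetilde g=m_{L;0}\widetilde m_{L;0}$ from \cref{iinfgrass}, together with the constant term, does not see the parity. For $m_{L;0}=u-a$ with $a\neq 0$, it is satisfied both by $(t(\lambda),g)=(\0,u-a)$ and by $(\1,u+a)$. Only the leading coefficient of the curl identity distinguishes them.
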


\begin{proof}
Part (i) is a restatement of \cite[Cor.~3.11]{BWWa1}.
To prove (ii) and (iii),
take $i \in I - \{0\}$.
Applying \cref{wednesday} once with $f(u) = m_{L;i}(u)$ and again with
$f(u) = m_{L;-i}(u)$,
we get monic polynomials $g(u)$ and $q(u)$ such that
\begin{itemize}
\item
$\deg g(u) = \beta_i(L) + \del_i-\lambda_i$,
$O_{L;i}(u) = \frac{g(u)}{m_{L;i}(u)}$ and $m_{L;-i}(-u) \mid g(u)$;
\item
$\deg q(u) = \beta_{-i}(L) + \del_{-i} + \lambda_i$,
$O_{L;-i}(u) = \frac{q(u)}{m_{L;-i}(u)}$ and
$m_{L;i}(-u) \mid q(u)$.
\end{itemize}

If $i \neq \pm h$ then $\del_i = \del_{-i}=0$, and
$O_{L;i}(-u) O_{L;-i}(u)=
(-1)^{\lambda_i}$ by \cref{iinfgrass}.
It follows that
$\frac{g(-u)q(u)}{m_{L;i}(-u) m_{L;-i}(u)} = (-1)^{\lambda_i}$,
so $\deg g(u) + \deg q(u) = \deg m_{L;-i}(u) + \deg m_{L;i}(u)$.
Since $\widetilde{m}_{L;-i}(u) \mid g(u)$
and $\widetilde{m}_{L;i}(u) \mid q(u)$ and all are monic, it must be that $g(u)=\widetilde{m}_{L;-i}(u)$ and $q(u)=\widetilde{m}_{L;i}(u)$.
So we have that 
$O_{L;i}(u) = \frac{\widetilde{m}_{L;-i}(u)}{m_{L;i}(u)}$, and comparing degrees gives that
$\beta_i(L)-\beta_{-i}(L) = \lambda_i$.

Instead, if $i = h$ then $\del_h = 1, \del_{-h}=0$, and
\cref{iinfgrass} implies that
$O_{L;h}(-u) O_{L;-h}(u) =(-1)^{\lambda_h+1}u$.
So $(-1)^{\lambda_{h}+1} u = \frac{g(-u)q(u)}{m_{L;h}(-u) m_{L;-h}(u)}$.
Now $\deg g(u) + \deg q(u)= 
1+\deg m_{L;-h}(u) + \deg m_{L;h}(u)$.
So {\em either} 
(a) $g(u)=\widetilde{m}_{L;-h}(u)$, {\em or} 
(b) $q(u)=\widetilde{m}_{L;h}(u)$.
In the former case, $q(u) = u \widetilde{m}_{L;h}(u)$,
$O_{L;h}(u)=\frac{\widetilde{m}_{L;-h}(u)}{m_{L;h}(u)}$, 
$O_{L;-h}(u)=\frac{q(u)}{m_{L;-h}(u)} = u \frac{\widetilde{m}_{L;h}(u)}{m_{L;-h}(u)}$,
and comparing degrees gives that $\beta_h(L)-\beta_{-h}(L)+1 = \lambda_{h}$. 
In the latter case,
$g(u)=u\widetilde{m}_{L;-h}(u)$,
$O_{L;h}(u)=u\frac{\widetilde{m}_{L;-h}(u)}{m_{L;h}(u)}$, 
$O_{L;-h}(u) = \tfrac{\widetilde{m}_{L;h}(u)}{m_{L;-h}(u)}$,
and comparing degrees gives that $\beta_{h}(L)-\beta_{-h}(L)=\lambda_{h}$.
Thus, (iii) is proved.
\end{proof}

\begin{cor}\label{ovens}
For an irreducible $L \in \catR^\fg_\lambda$,
the iweight $\lambda$ is equal to 
the canonical image in $X^\imath$ of
\begin{align*}
\hat\lambda \coloneq
&\begin{dcases}
\sum_{i \in I} \beta_i(L) \varpi_i&\text{if $\lambda_h \equiv \beta_{h}(L)-\beta_{-h}(L)\pmod{2}$}\\
\varpi_h+\sum_{i \in I} \beta_i(L) \varpi_i&\text{if $\lambda_h \not\equiv \beta_{h}(L)-\beta_{-h}(L)\pmod{2}$}.
\end{dcases}
\end{align*}
\end{cor}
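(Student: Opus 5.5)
The plan is to compare $\lambda$ and $\pi(\hat\lambda)$ through a complete set of invariants of $X^\imath$, instead of invoking \cref{reallydumblemma}. That lemma needs the two weights to have the same parity, and we do not know the parity of $\lambda$ in advance.

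First I describe $X^\imath$ explicitly. Choose $I'\subset I$ with $I = I'\sqcup\{0\}\sqcup(-I')$. Since $X^\imath = X/\langle \varpi_i+\varpi_{-i}\:|\:i\in I\rangle$, it decomposes as follows.
\begin{itemize}
\item For each pair $\{i,-i\}$ with $i\in I'$, the summand $\Z\varpi_i\oplus\Z\varpi_{-i}$ modulo $\Z(\varpi_i+\varpi_{-i})$ is identified with $\Z$ via $a\varpi_i+b\varpi_{-i}\mapsto a-b$.
\item The summand $\Z\varpi_0$ modulo $2\Z\varpi_0$ is identified with $\Z/2$.
\end{itemize}
Hence an iweight $\mu\in X^\imath$ is uniquely determined by the integers $\mu_i$ for $i\in I'$, as defined in \cref{sneaky}, together with its type $t(\mu)\in\Z/2$. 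It therefore suffices to check that $\lambda$ and $\pi(\hat\lambda)$ have the same $\mu_i$ for all $i\in I-\{0\}$ and the same type.

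Next I check the type. Because $p\neq 2$, we have $h\neq 0$, so $h_0(\varpi_h)=0$. Thus in both cases of the corollary $h_0(\hat\lambda)=\beta_0(L)$, and $t(\pi(\hat\lambda))\equiv\beta_0(L)\pmod 2$. This equals $t(\lambda)$ by \cref{ben}(i).

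Then I check the coordinates $\mu_i$ for $i\neq 0$. For $i\notin\{0,\pm h\}$, the extra term $\varpi_h$ contributes nothing to $h_i-h_{-i}$. So $(\pi(\hat\lambda))_i=\beta_i(L)-\beta_{-i}(L)$, which is $\lambda_i$ by \cref{ben}(ii).

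For $i=h$, we get $(\pi(\hat\lambda))_h=\beta_h(L)-\beta_{-h}(L)$ in the first case of the corollary and $\beta_h(L)-\beta_{-h}(L)+1$ in the second. By \cref{ben}(iii), $\lambda_h$ equals one of these two values. The two values differ by $1$, so they are distinguished by their parity. The case condition in the corollary, namely whether $\lambda_h\equiv\beta_h(L)-\beta_{-h}(L)\pmod 2$, therefore selects exactly the correct one, giving $(\pi(\hat\lambda))_h=\lambda_h$. The coordinate for $i=-h$ follows automatically, since $\mu_{-i}=-\mu_i$.

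There is no real obstacle once \cref{ben} is available. The only point needing care is the explicit description of $X^\imath$ by the invariants $(\mu_i)_{i\in I'}$ and $t(\mu)$, which is what lets us avoid the same-parity hypothesis in \cref{reallydumblemma}.
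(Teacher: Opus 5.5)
Your argument is correct and is essentially the paper's. The paper simply says the corollary follows from the relations between $\lambda$ and the $\beta$-numbers in \cref{ben}, and you have made this explicit: you observe that an iweight is determined by its type together with the integers $\mu_i$ for $i\in I'$, and then match these coordinates using parts (i)--(iii) of \cref{ben}, including the parity selection between cases (a) and (b).
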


\begin{proof}
This follows from the properties relating $\lambda$ to the $\beta$-numbers
established in \cref{ben}.
\end{proof}

\begin{defin}\label{nilpdef}
We say that the Abelian or Karoubian iquantum 
categorification $\catR$ is {\em nilpotent} if 
the endomorphism
$\begin{tikzpicture}[iQ,baseline = -2mm,scale=1.2]
\draw[-] (-0.2,-.25)\botlabel{i} to (-0.2,.25);
\closeddot{-.2,0};
\draw[-,darkg] (0.1,.25) to (0.1,-.25)\botlabel{V};
\end{tikzpicture}:B_i V \rightarrow B_i V$ is nilpotent
for all $i \in I$,
$\lambda \in X^\imath$ and all $V \in \catR_\lambda$.
In the Karoubian case, it is then automatic that
$\begin{tikzpicture}[iQ,baseline = -2mm,scale=1.2]
\draw[-] (-0.2,-.25)\botlabel{i} to (-0.2,.25);
\closeddot{-.2,0};
\draw[-,darkg] (0.1,.25) to (0.1,-.25)\botlabel{V};
\end{tikzpicture}$ is nilpotent 
also for $V \in \catR^\fg_\lambda$.
\end{defin}

Nilpotency implies the following for
$i \in I$, $\lambda \in X^\imath$
and an irreducible object
$L \in \catR^\fg_\lambda$:
\begin{align}
m_{L;i}(u) &= u^{\beta_i(L)},&
O_{L;i}(u) &= u^{\del_i-\lambda_i}\id_L.
\end{align}

\begin{lem}\label{noodslem}
If $\catR$ is nilpotent then, for every $\lambda \in X^\imath$,
the evaluation of any dotted bubble of positive degree on any object $V \in \catR^\fg_\lambda$ is a nilpotent element of $Z_V$. In the Abelian case, its nilpotency degree is bounded above by the composition length of $V$.
\end{lem}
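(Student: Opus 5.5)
The plan is to reduce to irreducible objects and then pass to arbitrary $V$ by filtering. Fix $\lambda$, $i$, and a dotted bubble $z \in Z_\lambda$ of positive degree. By \cref{nondegeneracy}, $\varphi_\lambda\colon R \to Z_\lambda$ is an isomorphism. Hence $z$ is a polynomial without constant term in the generators $z^{(r)}_{\lambda;j}$ with $r\geq 1$, since everything of positive degree in $R$ lies in the ideal generated by these. It is therefore enough to show that each $z_{\lambda;j}^{(r)}$ with $r\geq1$ acts nilpotently on $V$, because the positive-degree elements of $Z_V$ form an ideal generated by nilpotent commuting elements. Equivalently, we must show that $O_{V;j}(u)-u^{\del_j-\lambda_j}\id_V$ has nilpotent coefficients.

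Step one treats irreducible $L$. Nilpotency gives $m_{L;j}(u)=u^{\beta_j(L)}$. Then \cref{ben} yields $O_{L;j}(u)$ as a ratio of monomials, namely $\widetilde{m}/m$ up to a factor $u$ or $u^{-1}$, so $O_{L;j}(u)=u^{\del_j-\lambda_j}\id_L$. This is the displayed consequence already recorded after \cref{nilpdef}. In particular every $z^{(r)}_{\lambda;j}$ with $r\geq1$ acts as zero on $L$.

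Step two handles the Abelian case. Take a composition series $0=V_0\subset\cdots\subset V_n=V$. A central natural transformation $z$ preserves each $V_k$, by naturality applied to the inclusions, and it induces the action of $z$ on $V_k/V_{k-1}$, which is zero by step one. Hence $z$ maps $V_k$ into $V_{k-1}$, and $z^n=0$ on $V$ with $n$ the composition length. This gives both nilpotency and the stated bound, and the argument works for any positive-degree $z$ directly, without reducing to generators.

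Step three handles the Karoubian case, where $V\in\catR^\fg_\lambda$ need not have finite length; this is the main obstacle. My plan is to use finite-dimensionality of $Z_V\subseteq\End(V)$. Consider the image of $z$ in the finite-dimensional commutative algebra $Z_V$; it suffices to show it lies in every maximal ideal, i.e.\ in the radical. Any algebra homomorphism $Z_V\to\kk$ corresponds to a generalized eigenspace decomposition. For the image of $z$ to be non-nilpotent, $z_V$ must have a nonzero eigenvalue $c$. Then $V$ decomposes as $V'\oplus V''$, with $V'\neq0$ the generalized $c$-eigenspace summand; this uses idempotents in $Z_V$, and the summand is finitely generated. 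Being finitely generated and nonzero, $V'$ has an irreducible quotient $L$. By naturality, $z$ acts on $L$ as the scalar $c$ plus a nilpotent, so $z_L\neq0$, which contradicts step one. Hence $z_V$ is nilpotent.
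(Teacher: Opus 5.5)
Your proof is correct and follows the paper's argument. Nilpotency forces $O_{L;i}(u)=u^{\del_i-\lambda_i}\id_L$, so positive-degree bubbles act by zero on irreducibles; a composition series then handles the Abelian case, and an irreducible quotient handles the Karoubian case. The one difference is cosmetic: in the Karoubian case the paper reduces to indecomposable $V$ and uses that $\End_{\catR^\fg_\lambda}(V)$ is local, so the non-invertible $z_V$ lies in its nilpotent radical, whereas you split off the generalized eigenspace summand of $z_V$ for a putative nonzero eigenvalue, which amounts to the same thing.
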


\begin{proof}
Let $z$ be a dotted bubble of positive degree. 
Since $O_{L;i}(u) = u^{\del_i-\lambda_i}\id_L$,
we have that
$z_L = 0$ for every irreducible $L \in \catR^\fg_\lambda$.
In the Abelian case, naturality now implies that 
$z_V$ is nilpotent with nilpotency degree bounded by the length of $V$.
In the Karoubian case, 
we may assume that $V$ is indecomposable. 
Then 
$A \coloneq \End_{\catR^\fg_\lambda}(V)$ is a finite-dimensional local $\kk$-algebra.
Choose an irreducible 
quotient $\pi:V\twoheadrightarrow L$. 
Naturality gives $\pi \circ z_V=z_L\circ \pi=0$,
so $z_V$ is not invertible. 
Hence, $z_V$ 
is in the Jacobson radical of $A$, which is nilpotent.
\end{proof}

Nilpotency of dots implies that 
it makes sense to pin a power series
$f(x) \in \kk\llbracket x \rrbracket$,
not merely a polynomial, to a dot on a string. 
More generally, one can pin any
$f(x) \in Z(\catR^\fg_\lambda)\llbracket x \rrbracket$
to any dot provided that 
the pinhead is placed in a 2-cell labelled by $\lambda$.
In the Karoubian case, all such diagrams make sense on evaluation on any finitely generated $\catR$-module. 
The diagram could also be interpreted as a natural transformation when evaluated on an arbitrary
locally finite-dimensional $\catR$-module since any such module is a direct limit of finitely generated objects.
However, we will never need to use this observation.

We end the section with a lemma which gives a replacement for \cref{wednesday} in the case $i=0$.

\begin{lem}\label{zerotrick}
For  $\lambda \in X^\imath$, and 
$V \in \catR^\fg_\lambda$,
there exists a monic polynomial $f(u) \in Z_V[u]$
with $\deg f(u) \equiv t(\lambda)\pmod{2}$
such that
$$
O_{V;0}(u) = u^{-1}
\frac{\widetilde{f}(u)}{f(u)}
\qquad\text{ and }\qquad
\begin{tikzpicture}[iQ,baseline = -2mm,scale=1.2]
\draw[-] (-0.2,-.4)\botlabel{0} to (-0.2,.4);
\draw[-,darkg] (0.5,.4) to (0.5,-.4)\botlabel{V};
\objpin{-.2,0}{.5,0}{f(x)};
\end{tikzpicture}=0.
$$
If $\catR$ is nilpotent then
$f(u)$ can be chosen so that
all of its 
coefficients apart from the coefficient of the top degree term are nilpotent elements of $Z_V$.
\end{lem}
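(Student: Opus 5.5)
We adapt the proof of \cref{itsreallywednesday} to the $0$-strings of $\UU^\imath$. The $i=0$ relations play the role of \cref{curlsup1,curlsup3}. These are the infinite Grassmannian relation \cref{iinfgrass} for $i=0$, which in terms of $O_{V;0}$ should read $O_{V;0}(u)O_{V;0}(-u) = -u^{-2}$ (up to the sign conventions fixed there), and the curl relation \cref{fancyicurl} on a $0$-string. Since $\del_0=-1$ and $\lambda_0=0$, we have $O_{V;0}(u)\in u^{-1}\id_V+u^{-2}Z_V\llbracket u^{-1}\rrbracket$.

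First, we start from any monic $g(u)\in Z_V[u]$ annihilating the projected dot on $B_0V$; for example, the minimal polynomial $m_{V;0}(u)$. We define $g^*(u) \coloneq u\,O_{V;0}(-u)\,\widetilde{g}(u)$, up to a sign making it monic. The argument of \cref{wednesday} applies: its coefficient of $u^{-r-1}$ is a dotted $0$-bubble with $x^rg(x)$ pinned, hence zero. So $g^*(u)$ is a monic polynomial of degree $\deg g$. Next, using the curl relation \cref{fancyicurl} with $xg(x)$ (or $g(x)$) pinned inside the curl, and \cref{trick}, we show that $g^*(x)$ also annihilates the $0$-dot on $V$, exactly as in the second display of the proof of \cref{itsreallywednesday}. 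The $-\frac{1}{2u}$ correction term in the $0$-bubble generating function \cref{ifakebubblepolynomial} should play the role of the $-\frac{1}{2u}$ term in \cref{curlsup3}. Applying the Grassmannian relation twice gives $g^{**}=g$, so the map $g\mapsto g^*$ is an involution. We then set $f(u)\coloneq\frac12\big(g(u)+g^*(u)\big)$, which needs $p\neq2$. This $f$ is monic, annihilates the dot, and satisfies $f^*=f$. Unwinding $f^*=f$ gives $O_{V;0}(u)=u^{-1}\widetilde f(u)/f(u)$.

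The parity condition $\deg f\equiv t(\lambda)$ should come from the sign in the Grassmannian relation. The relation carries a factor $(-1)^{t(\lambda)}$ through $\zeta_0(\lambda)=(-1)^{t(\lambda)+1}\frac12$. So $f^*=f$ is only consistent with a monic $f$ whose degree has the right parity. If $\deg g$ has the wrong parity, we first replace $g$ by $xg(x)$; this costs nothing since it still annihilates the dot. Then $g^*$ has leading coefficient $+1$ rather than $-1$ and the averaging does not cancel the top term.

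For the nilpotent refinement, we take $g(u)=u^{N}$ with $N$ large, of the correct parity. This $g$ annihilates the dot by nilpotency. By \cref{noodslem}, $O_{V;0}(u)-u^{-1}$ has nilpotent coefficients, so $g^*(u)-u^N$ has nilpotent coefficients, and hence so does $f(u)-u^N$.

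The main obstacle is the sign bookkeeping at the $0$ node. The normalization $\zeta_0(\lambda)$ depends on the type $t(\lambda)$, the $\gamma_0$ factors appear in the curl relation, and we must check that the $-\frac{1}{2u}$ term cancels precisely. Our first attempt will be to verify $g^{**}=g$ and the monicity and parity of $g^*$ directly from \cref{iinfgrass} at $i=0$, before doing the curl computation. If the curl route is messy, an alternative is to use $O_{V;0}(u)O_{V;0}(-u)=\pm u^{-2}$ together with the annihilation of $g^*$ via the $i=0$ dot-slide \cref{zeroslider}.
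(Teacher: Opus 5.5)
Your argument is correct in substance, but it takes a different route from the paper. The paper simply invokes \cite[Th.~3.10]{BWWa1} with $m_{V;0}(u)$ as input. For the nilpotent refinement it observes that the output polynomial has lower coefficients that are combinations of positive-degree dotted bubbles, hence nilpotent by \cref{noodslem}.

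You instead transplant the involution-and-average argument of \cref{itsreallywednesday} to the $0$-node. For monic $g$ killing the $0$-dot on $B_0V$, the polynomial $g^*(u)\coloneq -u\,O_{V;0}(-u)\,\widetilde g(u)$ is monic of degree $\deg g$ and also kills the dot. Moreover $g^{**}=g$ because $O_{V;0}(u)O_{V;0}(-u)=-u^{-2}$, and $f=\frac12(g+g^*)$ does the job. With $g=u^N$ the nilpotent refinement is immediate.

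Your version is self-contained and shows the mechanism. The paper's citation is shorter and outsources the one delicate step, the curl computation at the $0$-node. Two things in your write-up need repair.

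First, you cannot quote \cref{fancyicurl} in the form used in \cref{wednesday}, which is the $i\neq 0$ case. For two $0$-strings the Demazure relation carries a cup--cap term (compare the computations in \cref{lem:demazure} and \cref{easycurl}(ii)). Whether the leftover correction involves $\frac1{u-x}$ (giving $g(x)$, which vanishes on $B_0V$) or $\frac1{u+x}$ (giving $g(-x)$, which need not vanish) decides whether your argument closes.

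It does close. Slide $\frac1{u-x}$ from the outer point of the loop through the cup and across the crossing; the undotted curl vanishes. The identity part of the error term gives $\zeta_0(\lambda)\big(O_{\lambda;0}(u)-(-1)^{t(\lambda)}u^{-1}\big)$ times $\frac1{u+x}$. The cup--cap part gives $-\frac{1}{(u-x)(u+x)}=-\frac1{2u}\big(\frac1{u-x}+\frac1{u+x}\big)$. Since $-\zeta_0(\lambda)(-1)^{t(\lambda)}u^{-1}=\frac1{2u}$, the $\frac1{u+x}$ terms cancel, leaving the exact analogue of \cref{curlsup3}:
\[
\text{(curl with }\tfrac1{u-x}\text{ on its loop)}=\zeta_0(\lambda)\,O_{\lambda;0}(u)\cdot\big(\text{string with }\tfrac1{u+x}\big)-\tfrac1{2u}\big(\text{string with }\tfrac1{u-x}\big).
\]
Pinning $xg(x)$ then gives $0=(-1)^{\deg g}\zeta_0(\lambda)\,g^*(x)-\frac12 g(x)$ on $B_0V$, as you want. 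A minor correction: the $u^{-r-1}$-coefficient of $g^*$ is a bubble with $x^{r+1}g(x)$ pinned, not $x^rg(x)$. The fake part $(-1)^{t(\lambda)}u^{-1}$ of $O_{V;0}$ contributes only the polynomial $(-1)^{t(\lambda)}\widetilde g(u)$.

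Second, your explanation of the parity is wrong, although your remedy works.
\begin{itemize}
\item The Grassmannian relation at $0$ contains no $t(\lambda)$: by the proof of \cref{unexpected} it is $(-1)^{\del_0+\lambda_0}u^{-a_{0,0}}=-u^{-2}$.
\item $g^*$ is monic for every $\deg g$, so there is no leading-coefficient cancellation to avoid.
\item $f^*=f$ forces no parity, since $f\mapsto uf$ preserves both properties in the lemma ($\widetilde{uf}=u\widetilde f$).
\end{itemize}
The type enters only through the constant term: $g^*(0)=(-1)^{\deg g+t(\lambda)}g(0)$, so with the wrong parity the average is merely divisible by $u$. Since $\deg f=\deg g$ always, taking $\deg g\equiv t(\lambda)$ gives the stated parity. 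In the nilpotent case this means $g=u^N$ with $N\equiv t(\lambda)$.
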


\begin{proof}
We apply \cite[Th.~3.10]{BWWa1} taking the polynomial $f(u)$
there to be the minimal polynomial $m_{V;0}(u) \in \kk[u]$.
The theorem produces another polynomial $g(u)$ which satisfies exactly the requirements for $f(u)$ in 
the present lemma.
For the final assertion, in the nilpotent case, the minimal polynomial $m_{V;0}(u)$ is a monomial, 
and the $g(u)$ constructed from $f(u) \coloneq m_{V;0}(u)$
in \cite[Th.~3.10]{BWWa1} is monic with lower degree coefficients that are linear combinations of dotted bubbles of positive degree. 
So, by \cref{noodslem}, all of the lower degree coefficients of $g(u)$, which is the $f(u)$ in the present lemma, are nilpotent.
\end{proof}

\setcounter{section}{5}

\section{\texorpdfstring{Completions}{Completions}}\label{completediq}

Next, we introduce a completion $\hatUU^\imath$ of $\UU^\imath$ whose topological
2-representations capture the finiteness properties of nilpotent iquantum categorifications. 
Recall the 
graded ring $R = R_0 \oplus R_1 \oplus \cdots$ 
from \cref{basering}.
Let $R_{\geq d} \coloneq \bigoplus_{d' \geq d} R_{d'}$.
Also for $J \subseteq I$, let $U_J$ be the ideal of $R$
generated by $\{z_j^{(n)}\:|\:j \in J, n \geq 1\}$.
We put a linear topology on $R$, which we call the {\em GC topology}
(``graded and cofinite''),
by declaring that the ideals
\begin{equation}\label{justforclaude}
U_{d,J} \coloneq 
R_{\geq d} + U_J
\end{equation}
for all $d \in \Z$ 
and all cofinite subsets $J \subseteq I$ are a system of neighborhoods of 0.
Let $\hatR = \varprojlim_{d,J} R / U_{d,J}$ be the completion of $R$ with respect to this topology. 
The GC topology on $R$ is obviously separated, so the canonical algebra
homomorphism $R \rightarrow \hatR$ is injective.
Also 
\begin{equation}\label{dontknow}
\hatR / \overline{U}_{d,J} \cong R / U_{d,J},
\end{equation}
where $\overline{U}_{d,J}$ is the closure of $U_{d,J}$ in $\hatR$.
Similarly, $\overline{U}_J$ denotes the closure of $U_J$.

\begin{lem}\label{verysmall}
The neighborhoods $U_{d,J}$ for $d \geq 0$ and cofinite
$J \subseteq I$ with $J = -J$ already form a system of neighborhoods of 0 in $R$.
\end{lem}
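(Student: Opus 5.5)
To prove the statement, I would use the general fact about linear topologies given by a family of ideals. If every member of the original system contains a member of the subfamily, then the subfamily is again a system of neighborhoods of $0$. The subfamily's members are themselves among the original $U_{d,J}$, so they are open. So, given an arbitrary $U_{d,J}$ with $d \in \Z$ and $J \subseteq I$ cofinite, it suffices to produce $d' \geq 0$ and a cofinite $J' \subseteq I$ with $J' = -J'$ and $U_{d',J'} \subseteq U_{d,J}$.

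The construction is direct. Set $d' \coloneq \max(d,0)$. Since $R_{\geq d'} \subseteq R_{\geq d}$, this already handles the degree part; if $d<0$ we simply note $R_{\geq d}=R=R_{\geq 0}$, because $R$ is non-negatively graded. Then set $J' \coloneq J \cap (-J)$. This set satisfies $J'=-J'$. It is cofinite because its complement is $(I - J)\cup -(I-J)$, a union of two finite sets. Since $J' \subseteq J$, the generators $z_j^{(n)}$ with $j \in J'$ lie among those generating $U_J$, so $U_{J'} \subseteq U_J$. Hence
$$U_{d',J'} = R_{\geq d'} + U_{J'} \subseteq R_{\geq d} + U_J = U_{d,J}.$$

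The subfamily is also still filtered downward, which is automatic: for two of its members, $U_{\max(d_1,d_2),\,J_1\cap J_2}$ is again in the subfamily and is contained in both. Nothing here is a real obstacle. The only point needing care is the observation that $R$ is non-negatively graded, so the cases $d<0$ impose nothing beyond $d=0$.
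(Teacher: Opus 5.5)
Your proof is correct. It differs from the paper's only in the direction in which you symmetrize $J$, but that changes which ingredient is needed.

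The paper \emph{enlarges} $J$ to $J\cup(-J)$. It uses the defining relation $z_{-j}(u)=z_j(-u)^{-1}$ of $R$ to see that the ideal $U_J$ already contains every $z_{-j}^{(n)}$ for $j\in J$, so $U_J=U_{J\cup(-J)}$. Together with $R_{\geq d}=R$ for $d\leq 0$, this shows that each $U_{d,J}$ \emph{coincides with} the symmetric neighborhood $U_{\max(d,0),\,J\cup(-J)}$.

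You instead \emph{shrink} $J$ to $J\cap(-J)$, which is still cofinal and symmetric. You then use only the trivial monotonicity $U_{J\cap(-J)}\subseteq U_J$. Your route uses nothing about the relations in $R$: it works for any family of ideals depending monotonically on $J$. It gives cofinality of the symmetric neighborhoods, which is exactly what the lemma asserts and all that is used later, e.g.\ in the proof of \cref{zdense}.

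The paper's route buys the slightly stronger structural fact that $U_J$ depends only on $J\cup(-J)$.
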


\begin{proof}
Since $z_{-j}(u) = 
z_j(-u)^{-1}$, the ideal $U_{J}\unlhd R$ contains $z_{-j}^{(n)}$ for all $j\in J$ and $n \geq 1$. This shows that $U_{J} = U_{J\cup(-J)}$. 
The lemma follows using also that $R$ is positively graded.
\end{proof}

Let $M\subset R$ be the set of monomials in the generators 
$\{z_0^{(2n-1)}, z_i^{(n)}\:|\:i \in I_\hi, n \geq 1\}$ 
of $R$.
As noted earlier, this set is a linear basis for $R$.
It is also a topological linear basis for $\hatR$ in the strongest possible sense: any
element $z \in \hatR$ can be written 
as $z=\sum_{m \in M} c_m m$
for unique $c_m \in \kk$, and any such infinite linear combination
converges.
This follows because all but finitely many monomials in $M$
are in $U_{d,J}$
for any given 
$d \in \Z$ and cofinite $J \subseteq I$,
and for every finite family of monomials there exists
$d \in \Z$ and cofinite $J \subseteq I$ such that the images of the monomials in the 
family are linearly independent in $R / U_{d,J}$.
Thus, the function
\begin{align}\label{dumbanddumber}
\prod_{m \in M} \kk &\rightarrow \hatR,
&
(c_m)_{m \in M} &\mapsto \sum_{m \in M} c_m m 
\end{align}
is a homeomorphism if
we give $\kk$ the discrete topology and 
$\prod_{m \in M} \kk$ the product topology.

Now take 
$\lambda \in X^\imath$ and $\bi,\bj \in \langle I \rangle$
such that $\alpha_\bi$ and $\alpha_\bj$ have the same images in $X^\imath$. We will use the shorthand
\begin{equation}
H_\lambda(\bj,\bi) \coloneq 
\Hom_{\UU^\imath}(B_\bj \one_\lambda,B_\bi \one_\lambda).
\end{equation}
This is a free graded right $R$-module
with degree $d$ component $H_\lambda(\bj,\bi)_d$.
It is 0 unless $\bi \approx \bj$, i.e., the lengths of the words $\bi$ and $\bj$ have the same parity.
In the homogeneous basis 
$\basis_\lambda(\bi\times\bj)$
for $H_\lambda(\bj,\bi)$ 
as a free right $R$-module from \cref{ibasis}, 
the degrees of the basis vectors are bounded below, and
there are only finitely many basis vectors in each degree.
Let $H_\lambda(\bj,\bi)_{\geq d} \coloneq \sum_{d' \geq d} H_\lambda(\bj,\bi)_{d'}$.

The {\em GC topology} on $H_\lambda(\bj,\bi)$ is the linear topology with a system of neighborhoods of 0 given by the subspaces
\begin{equation}\label{justforjon}
H_\lambda(\bj,\bi)_{d,J} \coloneq H_\lambda(\bj,\bi)_{\geq d} + H_\lambda(\bj,\bi) U_J
\end{equation}
for all $d \in \Z$ and all cofinite subsets $J \subseteq I$.
We denote the completion 
$\varprojlim_{d,J} H_\lambda(\bj,\bi) / H_\lambda(\bj,\bi)_{d,J}$
with respect to this topology by
$\hatH_\lambda(\bj,\bi)$.
The action of $R$ on $H_\lambda(\bj,\bi)$ extends by continuity to make $\hatH_\lambda(\bj,\bi)$
into a topological right $\hatR$-module. 
The following lemma shows that $\basis_\lambda(\bi\times\bj)$,
the linear basis for $H_\lambda(\bj,\bi)$ as a right $R$-module,
is also a topological basis
for $\hatH_\lambda(\bj,\bi)$ as a topological right $\hatR$-module.

\begin{lem}\label{arewestupid}
In the above notation,
the linear map
\begin{align}\label{productmap}
\prod_{\theta \in \basis_\lambda(\bi\times\bj)} \hatR
&\rightarrow \hatH_\lambda(\bj,\bi),&
(z_\theta)_{\theta \in \basis_\lambda(\bi\times\bj)}
&\mapsto \sum_{\theta \in \basis_\lambda(\bi\times\bj)} \theta z_\theta
\end{align}
is a homeomorphism for the linear topology on the domain 
that is the product of the GC topologies, and the GC topology on the codomain.
\end{lem}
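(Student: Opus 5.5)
The plan is to reduce everything to a statement about the neighbourhoods of $0$, using freeness from \cref{nondegeneracy}. Write $\basis \coloneq \basis_\lambda(\bi\times\bj)$ and $H \coloneq H_\lambda(\bj,\bi)$. By \cref{nondegeneracy}, the map $\bigoplus_{\theta\in\basis} R \to H$, $(z_\theta)\mapsto\sum_\theta \theta z_\theta$, is an isomorphism of right $R$-modules. I will show that it carries a cofinal family of neighbourhoods of $0$ on the left onto the defining neighbourhoods $H_{d,J}$ on the right. Here the left side carries the topology induced from $\prod_\theta \hatR$; note that $\bigoplus_\theta R$ is dense in $\prod_\theta\hatR$. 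Granting this, the map is a topological isomorphism onto $H$. Passing to completions then gives the homeomorphism \cref{productmap}, provided the completion of $\bigoplus_\theta R$ for the induced topology is exactly $\prod_\theta\hatR$.

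The key computation concerns a homogeneous element $\sum_\theta \theta z_\theta$ with $z_\theta$ decomposed into homogeneous pieces. Because $\basis$ is a homogeneous basis over the graded ring $R$, we have
$$H_{\geq d}=\bigoplus_\theta \theta R_{\geq d-\deg\theta}\qquad\text{and}\qquad HU_J=\bigoplus_\theta\theta U_J.$$
Moreover $U_J$ is a homogeneous ideal, since it is generated by homogeneous elements. Hence
$$H_{d,J}=\bigoplus_\theta \theta\,\big(R_{\geq d-\deg\theta}+U_J\big)=\bigoplus_\theta\theta\, U_{d-\deg\theta,J}.$$
This is the heart of the argument, and the only slightly delicate point is checking that the sum of the two graded pieces decomposes coordinatewise. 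That follows from homogeneity and freeness.

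Now compare this with the product neighbourhoods $\prod_\theta V_\theta$, where each $V_\theta$ is the closure of some $U_{d_\theta,J_\theta}$, all but finitely many $V_\theta$ equal $\hatR$, and we intersect with $\bigoplus_\theta R$. The two families are mutually cofinal, for the following reasons.

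First, each $H_{d,J}$ contains a product neighbourhood, and is in fact equal to one. The degrees $\deg\theta$ are bounded below, and there are only finitely many $\theta$ in each degree. So $d-\deg\theta\leq 0$ for all but finitely many $\theta$, and for those $\theta$ we have $U_{d-\deg\theta,J}=R$.

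Second, given a product neighbourhood with finitely many nontrivial factors indexed by $\theta_1,\dots,\theta_n$, choose $J\coloneq\bigcap_k J_{\theta_k}$, which is cofinite, and $d\coloneq\max_k(d_{\theta_k}+\deg\theta_k)$. Then $H_{d,J}$ lies inside the given product neighbourhood.

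Finally, I identify completions. By the formula above,
$$H/H_{d,J}\cong\bigoplus_\theta R/U_{d-\deg\theta,J}.$$
This is a finite direct sum, since all but finitely many summands vanish, and by \cref{dontknow} it equals $\prod_\theta\hatR/\overline U_{d-\deg\theta,J}$. Taking the inverse limit over $(d,J)$ and commuting limits with products, using cofinality as in \cref{verysmall}, gives
$$\varprojlim_{d,J}H/H_{d,J}\cong\prod_\theta\varprojlim_{d,J}R/U_{d-\deg\theta,J}=\prod_\theta\hatR.$$
The isomorphism is given by $(z_\theta)\mapsto\sum_\theta\theta z_\theta$, and this sum converges because only finitely many terms survive modulo each $H_{d,J}$. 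Continuity in both directions follows from the cofinality shown above, so \cref{productmap} is a homeomorphism.

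I expect the main obstacle to be the bookkeeping in this last step, namely commuting the inverse limit with the infinite product. It is handled by the observation that each quotient $H/H_{d,J}$ involves only finitely many $\theta$.
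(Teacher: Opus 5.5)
Your proposal is correct and follows essentially the same route as the paper. Both arguments rest on the coordinatewise identity $H_\lambda(\bj,\bi)_{d,J}=\bigoplus_{\theta}\theta\,U_{d-\deg\theta,J}$ together with mutual cofinality against the basis-dependent product neighbourhoods. You additionally spell out the identification of the completion with $\prod_\theta\hatR$, which the paper simply calls clear.
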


\begin{proof}
This follows from the following equivalent description of the GC topology on $H_\lambda(\bj,\bi)$: it is the linear topology with a system of neighborhoods of 0 given 
by the subspaces $\bigoplus_{\theta \in \basis_\lambda(\bi\times\bj)}
\theta U_{d(\theta),J(\theta)}$
for $d(\theta) \in \Z$ and cofinite subsets $J(\theta) \subseteq I$
such that $d(\theta) \leq 0$
(hence, $U_{d(\theta),J(\theta)}=R$)
for all but finitely many $\theta$.
For this new basis-dependent way of defining the GC topology on 
$H_\lambda(\bj,\bi)$,
it is clear that \cref{productmap} is a homeomorphism.

To prove that the new description of the GC topology is equivalent to the old one, we observe that
$H_\lambda(\bj,\bi)_{d, J} = \bigoplus_{\theta \in \basis_\lambda(\bi\times\bj)}
\theta U_{d-\deg(\theta),J}$. Only finitely many $\theta$
have $\deg(\theta) < d$, so this is one of the new neighborhoods.
Conversely, given one of the new neighborhoods
$N \coloneq \bigoplus_{\theta \in \basis_\lambda(\bi\times\bj)}
\theta U_{d(\theta),J(\theta)}$,
let
$F \coloneq \{\theta \in 
\basis_\lambda(\bi\times\bj)\:|\:d(\theta) > 0\}$, $d \coloneq \max(0, \deg(\theta)+d(\theta)\:|\:\theta \in F)$ and
$J \coloneq I \cap \bigcap_{\theta\in F} J(\theta)$.
Then $H_\lambda(\bj,\bi)_{d,J} \subseteq N$.
\end{proof}

The GC topology on $H_\lambda(\bj,\bi)$ is separated, so the canonical map $H_\lambda(\bj,\bi) \rightarrow \hatH_\lambda(\bj,\bi)$ 
is injective. This is also clear from \cref{arewestupid,nondegeneracy}.

\begin{defin}\label{completed2iq}
The {\em completed 2-iquantum group} $\hatUU^\imath$ is 
the $\kk$-linear 2-category with the following objects, 1-morphisms and 2-morphisms:
\begin{itemize}
\item Its object set is $X^\imath$, the same as $\UU^\imath$.
\item
For $\lambda,\mu \in X^\imath$, the objects of the morphism category
$\HOM_{\hatUU^\imath}(\lambda,\mu)$ 
are the formal direct sums
\begin{equation}\label{me}
B_\bI \one_\lambda = \one_\mu B_\bI
\coloneq 
\bigoplus_{\bi \in \bI} B_\bi \one_\lambda
\end{equation}
for (possibly infinite) sets $\bI$ such that $\bI \subseteq I^n$ for some $n \in \N$ and $\lambda-\alpha_\bi = \mu$ for all $\bi \in \bI$.
If $\bI = \{\varnothing\}$ (hence, $\lambda=\mu$) then
we denote $B_\bI \one_\lambda$ simply by $\one_\lambda$,
and if $\bI = \{i\}$ (hence, $\lambda-\alpha_i=\mu$) then we denote $B_\bI \one_\lambda$ simply by $B_i \one_\lambda$.
\item For $\bI, \bJ$ of this form,
the vector space
$\hatH_\lambda(\bJ, \bI) \coloneq \Hom_{\hatUU^\imath}(B_\bJ \one_\lambda, B_\bI \one_\lambda)$
consists of matrices 
$\theta = (\theta_{\bi,\bj})_{\bi\in \bI,\bj\in \bJ}$
such that
$\theta_{\bi,\bj} \in \hatH_\lambda(\bj,\bi)$ 
for each $\bi\in \bI$, $\bj\in \bJ$.
This space is 0 unless the lengths of 
the words in $\bI$ and the words in $\bJ$ are of the same parity.
\end{itemize}
Vertical composition is by matrix multiplication:
$$
\big(\theta_{\bi,\bj}\big)_{\bi \in \bI, \bj \in \bJ}
\circ \big(\theta_{\bj,\bk}\big)_{\bj \in \bJ,\bk \in \bK}
=\Big(\sum_{\bj \in \bJ} \theta_{\bi,\bj} \circ \theta_{\bj,\bk}\Big)_{\bi \in \bI, \bk \in \bK}.
$$
The infinite sums that are the matrix entries here make sense thanks to 
\cref{conv1} below. The horizontal composition of 1-morphisms
$B_{\bI'} \one_\mu\colon \mu\rightarrow \nu$ and 
$B_{\bI''} \one_\lambda\colon \lambda\rightarrow \mu$
is $B_{\bI} \one_\lambda\colon \lambda\rightarrow \nu$
where $\bI \coloneq \{\bi'\bi''\:|\:\bi' \in \bI', 
\bi'' \in \bI''\}$.
The horizontal composition of 2-morphisms represented by the matrices
$(\theta_{\bi', \bj'})_{\bi' \in \bI',\bj' \in \bJ'}$ and $(\theta_{\bi'',\bj''})_{\bi'' \in \bI'',\bj'' \in \bJ''}$
is the matrix with $(\bi'\bi'',\bj'\bj'')$-entry
$\theta_{\bi',\bj'}\theta_{\bi'',\bj''}$
as defined in \cref{conv2} below.
Finally, we equip each 2-morphism space 
\begin{equation}
\hatH_\lambda(\bJ,\bI)
= \prod_{\bi \in \bI, \bj \in \bJ}
\hatH_\lambda(\bj,\bi)
\end{equation}
with the topology that is the product of the GC topologies on each hom space, thereby making
$\hatUU^\imath$ into a 2-category enriched in topological vector spaces.
\end{defin}

We will not go through the checks of the 2-category axioms in detail. The only one that is slightly delicate is the associativity of vertical composition. This follows because the infinite sums defining vertical composition reduce to finite sums on projection to the quotient modulo a GC neighborhood of 0 like in the proof of \cref{conv1}(ii), and for those associativity is clear.

\begin{lem}\label{conv1}
Suppose given $\lambda \in X^\imath$,
$\bi,\bj \in \langle I\rangle$ 
and
$\bK \subseteq I^n$
such that $\mu\coloneq\lambda-\alpha_\bi =
\lambda-\alpha_\bj=\lambda-\alpha_\bk$ for all $\bk \in \bK$. 
\begin{enumerate}
\item For any $\bk \in \bK$, 
the vertical composition in $\UU^\imath$ extends by continuity 
to define a bilinear map
$$
\hatH_\lambda(\bk,\bi)
\times 
\hatH_\lambda(\bj,\bk)
\rightarrow
\hatH_\lambda(\bj,\bi),
\qquad
(f,g) \mapsto f \circ g.
$$
\item
Given
2-morphisms
$\theta_{\bi,\bk} \in \hatH_\lambda(\bk,\bi)$
and 
$\theta_{\bk,\bj} \in \hatH_\lambda(\bj,\bk)$
for $\bk \in \bK$, the infinite sum
$\sum_{\bk \in \bK} \theta_{\bi,\bk} \circ \theta_{\bk,\bj}$
converges in
$\hatH_\lambda(\bj,\bi)$.
\end{enumerate}
\end{lem}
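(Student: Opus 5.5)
The plan is to prove both parts through one estimate: a continuity bound for vertical composition with respect to the GC neighborhoods $H_\lambda(\bj,\bi)_{d,J}$ of \cref{justforjon}.

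\emph{Key estimate.} Let $N$ be the common length of the words in $\bK$, and let $d_0 \in \Z$ be a lower bound for the degrees of all 2-morphisms in $H_\lambda(\bj,\bk)$ and in $H_\lambda(\bk,\bi)$, uniformly in $\bk \in \bK$. Such a bound exists. By \cref{nondegeneracy}, the basis $\basis_\lambda(\bk\times\bj)$ consists of reduced shapes with finitely many crossings, cups and caps, decorated with dots. For a fixed pair of word lengths, the number of crossings, cups and caps in a reduced shape is bounded. The degree of a crossing is at least $-2$. The degree of a cup or cap is $1-\lambda_i+\del_i$, where $\lambda_i$ is taken at the relevant 2-cell. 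Every 2-cell weight is $\lambda$ minus a sum of at most $N+|\bj|$ simple roots, so only boundedly many $\lambda_i$ can be large, and each such $\lambda_i$ is bounded. This gives a uniform lower bound, and the dotted bubble generators in $R$ have nonnegative degree. With $d_0$ in hand, the claim is
\[
H_\lambda(\bk,\bi)_{d,J}\circ H_\lambda(\bj,\bk) + H_\lambda(\bk,\bi)\circ H_\lambda(\bj,\bk)_{d,J} \subseteq H_\lambda(\bj,\bi)_{d+d_0,J}.
\]
The degree part holds because composition is additive in degree. The $U_J$ part holds because composition is $R$-bilinear: the right action of $z\in R$ is horizontal composition on the right with $\varphi_\lambda(z)$, which commutes with vertical composition, and bubbles in the middle region can be moved to the far right. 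This gives continuity, and part (i) follows by the usual extension of continuous bilinear maps to completions. I will use \cref{arewestupid} to identify the completions concretely.

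\emph{Part (ii).} It suffices to show that, for each neighborhood $(d,J)$ with $J=-J$ cofinite as in \cref{verysmall}, all but finitely many terms $\theta_{\bi,\bk}\circ\theta_{\bk,\bj}$ lie in the closure of $H_\lambda(\bj,\bi)_{d,J}$. If $\bk$ contains a letter $k\in J$, I claim $\hatH_\lambda(\bk,\bi)\circ\hatH_\lambda(\bj,\bk)\subseteq \overline{H_\lambda(\bj,\bi)_{d',J'}}$ for suitably shrunk $J'$. Shrinking $J$ by finitely many elements is harmless. Arrange $J$ to avoid all letters of $\bi$ and $\bj$, together with their negatives and their $\pm1$ neighbours. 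Then a string labelled $k$ in the composite diagram cannot reach the boundary. Using the basis, every such composite is a closed component containing $k$-strands. Straightening via the nondegeneracy relations, it expands in terms of bubbles whose labels are determined by the closed $k$-loops.

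\emph{Main obstacle.} Verifying this straightening is the hard step. The label $k$ changes only to $-k$ at cups and caps, and crossings preserve labels. So the closed loop components carry labels in $\{\pm k\}\subseteq J$. Resolving crossings between the internal loops and the through-strands generates only terms whose loops remain labelled $\pm k$, or that carry higher dot degree. I would try to show that evaluating a closed $\pm k$ loop produces an element of $U_J$ plus terms of high degree, using the bubble relations.

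There is a subtlety here. Fake bubbles and degree-zero bubbles equal the identity rather than lying in $U_J$. This is why the degree filtration must be combined with the $U_J$ filtration. The weights $\lambda_k$ for $k$ far from the supports of $\lambda$, $\bi$ and $\bj$ vanish, so the cups and caps there have degree $1+\del_k>0$. Infinitely many $\bk$ with letters in $J$ therefore force either degree growth or $U_J$ membership.

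The remaining $\bk$ use only letters from the finite set $I\setminus J$, and there are finitely many such $\bk$ of length $N$. So only finitely many terms survive modulo the neighborhood, and the sum converges.
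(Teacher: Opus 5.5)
Your part (i) is fine and is the paper's argument. A lower bound on degrees in each factor, together with the interchange law (so the right $R$-action commutes with vertical composition), shows that composition factors through the quotients by GC neighbourhoods. The uniform bound $d_0$ is not needed.

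The gap is in part (ii). There you claim that every $\bk$ containing a letter $k\in J$ gives composites in $\overline{H_\lambda(\bj,\bi)_{d,J}}$, with $J$ shrunk only to avoid the boundary letters, their negatives and their neighbours. This is false, because you do not control the other letters of $\bk$. Letters adjacent to $k$ that lie in the finite set $I\setminus J$ shift the weights of interior $2$-cells. A $\pm k$-loop can then have cups and caps of degree $0$, evaluate to a non-zero scalar, and leave behind bubbles labelled outside $J$.

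For example, take $\bi=\bj=\varnothing$, a generic $k$ with $\pm k\in J$ but $\pm(k+1)\notin J$, and $\bk=(-(k+1))(-k)\,k\,(k+1)$. Compose nested cups with nested caps.
\begin{itemize}
\item The inner loop lies in the region $\lambda-\alpha_{k+1}$, where $(\lambda-\alpha_{k+1})_k=1$. So it has degree $0$ and equals the non-zero scalar $\zeta_k(\lambda-\alpha_{k+1})$.
\item The composite is then a non-zero multiple of the undotted $(k+1)$-bubble in region $\lambda$, i.e.\ of $z^{(1)}_{\lambda;k+1}$.
\item This element has degree $2$ and is not in $U_{\lambda;J}$, so the term is not in the neighbourhood once $d\geq 3$.
\end{itemize}
The same example shows that your closing heuristic fails. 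The cup and cap degree $1+\del_k$ is only valid in the outer region, and the degrees of the composites need not grow.

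The missing idea is the paper's separation-by-a-gap argument. Take a finite $\pm$-closed set $S$ containing $I\setminus J$, the letters of $\bi$ and $\bj$, and every $i$ with $\del_i\neq\lambda_i$. Let $\overline{J}$ be the finite set of $i$ at distance at most $n$ from $S$ in the Cartan graph. If $\bk\notin\overline{J}^n$, then at most $n-1$ letters of $\bk$ lie within distance $n$ of $S$. By pigeonhole, some $1\leq t\leq n$ is not the distance from $S$ of any letter.

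Let $A$ be the set of $k$ such that $\pm k$ is a letter of $\bk$ at distance greater than $t$ from $S$. Then $A$ is non-adjacent to every other label occurring in $\theta\circ\phi$. Hence the union of all $A$-labelled components (not just those through one chosen letter) is a non-empty closed diagram that can be commuted to the right-hand boundary. There, since $\del_k=\lambda_k$ for $k\in A$, it straightens into non-trivial monomials in positive-degree bubbles labelled by $A\subseteq J$.

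This gives $\theta\circ\phi\in H_\lambda(\bj,\bi)U_J$ with no degree condition at all. The exceptional words are those in the finite set $\overline{J}^n$, rather than $(I\setminus J)^n$; the word in the example above is one of them.
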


\begin{proof}
(i) 
The result is trivial if
$H_\lambda(\bk,\bi)=0$ or $H_\lambda(\bj,\bk)=0$.
Assuming both are non-zero,
let $a$ be minimal such that $H_\lambda(\bj,\bk)_a \neq 0$
and $b$ be minimal such that $H_\lambda(\bk,\bi)_b \neq 0$.
For any $d \in \Z$ and cofinite $J \subseteq I$, 
we have that
\begin{align*}
H_\lambda(\bk,\bi)_{d-a,J} \circ H_\lambda(\bj,\bk)
&\subseteq H_\lambda(\bj,\bi)_{d,J},&
H_\lambda(\bk,\bi) \circ H_\lambda(\bj,\bk)_{d-b,J}
&\subseteq H_\lambda(\bj,\bi)_{d,J}.
\end{align*}
So the linear map $H_\lambda(\bk,\bi) \otimes H_\lambda(\bj,\bk) \rightarrow H_\lambda(\bj,\bi) / H_\lambda(\bj,\bi)_{d,J}$
induced by composition factors through
the quotient
$(H_\lambda(\bk,\bi) / H_\lambda(\bk,\bi)_{d-a,J}) \otimes 
(H_\lambda(\bj,\bk) / H_\lambda(\bj,\bk)_{d-b,J})$.
Taking the inverse limit over target neighborhoods 
then gives the required 
continuous extension.

\vspace{2mm}
\noindent
(ii) 
By (i), the composition
$\theta_{\bi,\bk} \circ \theta_{\bk,\bj}$ is a well-defined
element of $\hatH_\lambda(\bj,\bi)$ for each $\bk \in \bK$.
To prove that the sum $\sum_{\bk \in \bK} 
\theta_{\bi,\bk} \circ \theta_{\bk,\bj}$ converges, it
suffices to show for each GC neighborhood
$H_\lambda(\bj,\bi)_{d,J}$ 
that the canonical projection of 
$\theta_{\bi,\bk} \circ \theta_{\bk,\bj}$ to
$H_\lambda(\bj,\bi) / H_\lambda(\bj,\bi)_{d,J}$ 
is zero for all but finitely many 
$\bk \in \bK$.
Since $\bK \subseteq I^n$, this is a consequence of
the following:

\vspace{2mm}
\noindent
{\underline{Claim}:}
{\em For every cofinite $J \subseteq I$, 
there is a finite
subset $\overline{J} \subset I$ such that
$H_\lambda(\bk,\bi) \circ H_\lambda(\bj,\bk) \subseteq H_\lambda(\bj,\bi) U_J$
for every $\bk \in I^n - \overline{J}^n$ with $\lambda-\alpha_\bk=\mu$.}

\vspace{1mm}
\noindent
{Proof:}
Consider the Cartan graph $G$ 
with vertex set $I$ and edges
$i\ \text{---}\ i\!+\!1$ for each $i \in I$.
Let $S$ be a finite subset of $I$ closed under $\pm$
which contains
$I - J$, all letters of the words $\bi$ and $\bj$,
and all $i \in I$ such that $\del_i-\lambda_i \neq 0$.
Then define 
$\overline{J} \coloneq \{i \in I\:|\:d_G(i,S) \leq n\}$, where $d_G(i,S)$ is the distance in the graph $G$. Since $S$ is closed under $\pm$, $d_G(i,S) = d_G(-i,S)$, so $\overline{J}$ is also closed under $\pm$.

Now we take $\bk \in I^n - \overline{J}^n$ with $\lambda-\alpha_\bk=\mu$,
$\theta \in H_\lambda(\bk,\bi)$ and $\phi \in H_\lambda(\bj,\bk)$
and show that $\theta\circ \phi \in 
H_\lambda(\bj,\bi) U_J$.
We may assume that
$\theta$ and $\phi$ 
are string diagrams of reduced shapes 
multiplied on the right by some monomials $m_\theta$ and $m_\phi$ in dotted bubbles, so
$$
\theta \circ \phi = 
\begin{tikzpicture}[iQ,centerzero]
\draw (-.7,-.74) to (-.7,-.9)\botlabel{j_1};
\node at (0,1) {$\cdots$};
\draw (.7,-.74) to (.7,-.9)\botlabel{j_k};
\draw (-.7,.74) to (-.7,.9)\toplabel{i_1};
\node at (0,-1.05) {$\cdots$};
\draw (.7,.74) to (.7,.9)\toplabel{i_l};
\draw (-.5,.27) to (-.5,-.27);
\node at (0,0) {$\cdots$};
\draw (.5,.27) to (.5,-.27);
\strand{-.66,0}{k_1};
\strand{.68,0}{k_n};
\node[draw,fill=white,inner sep=1.3pt] at (0,0.5) {$\hspace{7mm}\theta\hspace{7mm}$};
\node[draw,fill=white,inner sep=1.3pt] at (0,-.5) {$\hspace{7mm}\phi\hspace{7mm}$};
\node[rectangle,rounded corners,draw,pinhead,inner sep=2.5pt]
at (1.5,.5) {$\color{black}m_\theta$};
\node[rectangle,rounded corners,draw,pinhead,inner sep=2.5pt]
at (1.55,-.5) {$\color{black}m_\phi$};
\region{1.5,0}{\lambda};
\end{tikzpicture}\ .
$$
At least one of the $n$ letters 
of the word $\bk$ is in $I - \overline{J}$. 
So at most $n-1$ 
of the letters of $\bk$ are of distance $\leq n$ from $S$.
Consequently,
there exists $1 \leq t\leq n$ such that no letter of $\bk$ is of distance exactly $t$ from $S$. 
Then we let $A$ (resp., $B$) 
be the set of $k \in I$ such that $\pm k$
is a letter of $\bk$ and $d_G(k,S) > t$ (resp., $d_G(k,S) < t$).
The union $D$ 
of connected components of $\theta \circ \phi$ labelled by elements of $A$ is non-empty because some letter of $\bk$ is in $A$, and closed because the letters of $\bi$ and $\bj$ are not in $A$.
All elements of $A$ are separated
from $S \cup B$ 
in the sense that they are not connected by an edge in the graph $G$. Hence, by the defining relations of $\UU^\imath$,
$D$ can be commuted past all other strings to move it to 
the right-hand boundary.

It remains to observe that any element of $H_\lambda(\bj,\bi)$ multiplied on the right by a non-empty
closed string diagram $D$ 
all of whose strings are labelled by elements of the set $A$ lies in $H_\lambda(\bj,\bi) U_J$. This follows
because $D$, which is 
located in a 2-cell of iweight $\lambda$,
can be straightened using the relations to obtain a linear combination of non-trivial monomials in non-zero dotted bubbles labelled by elements of $A$. Since $\del_k-\lambda_k = 0$ for $k \in A$, any dotted bubble of this form is of strictly positive degree, so it is a non-zero multiple of $z_{\lambda;k}^{(r)}$
for some $k \in A$ and $r \geq 1$.
Since $A \subseteq J$,
any linear combination of non-trivial 
monomials in such elements lies in $\varphi_\lambda(U_J)$
where $\varphi_\lambda$ is the isomorphism from \cref{flambda}.
\end{proof}

\begin{lem}\label{conv2}
Suppose given
$\bi = \bi'\bi''$ and $\bj = \bj'\bj''$
for $\bi' \in I^{m'}, \bi'' \in I^{m''}$ and
$\bj' \in I^{n'}, \bj'' \in I^{n''}$
such that $\mu = \lambda-\alpha_{\bi''} = \lambda-\alpha_{\bj''}$ and $\nu = \mu - \alpha_{\bi'}=\mu-\alpha_{\bj'}$ for some $\lambda,\mu,\nu \in X^\imath$.
The horizontal composition in $\UU^\imath$ extends by continuity to define a bilinear map
$$
\hatH_\mu(\bj',\bi')
\times \hatH_\lambda(\bj'',\bi'')
\rightarrow
\hatH_\lambda(\bj,\bi),\qquad
(f',f'') \mapsto f' f''.
$$
\end{lem}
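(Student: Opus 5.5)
The strategy follows \cref{conv1}(i): show that horizontal composition $H_\mu(\bj',\bi')\otimes H_\lambda(\bj'',\bi'')\rightarrow H_\lambda(\bj,\bi)$ is continuous for the GC topologies, and then pass to inverse limits. Horizontal composition is homogeneous, since the degree of $f'f''$ is the sum of the degrees. So, as in \cref{conv1}(i), let $a$ and $b$ be the minimal degrees in which $H_\lambda(\bj'',\bi'')$ and $H_\mu(\bj',\bi')$ are non-zero, and assume both spaces are non-zero (otherwise there is nothing to prove). The grading contribution is then immediate:
$$
H_\mu(\bj',\bi')_{\geq d-a}\,H_\lambda(\bj'',\bi'')\subseteq H_\lambda(\bj,\bi)_{\geq d},
\qquad
H_\mu(\bj',\bi')\,H_\lambda(\bj'',\bi'')_{\geq d-b}\subseteq H_\lambda(\bj,\bi)_{\geq d}.
$$

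The cofinite part needs more care, because horizontal composition acts differently on the two factors.

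\textbf{Right factor.} The right $R$-module structures are by horizontal composition on the right with $\varphi_\lambda$. Hence $f'(f''z)=(f'f'')z$ for $z\in R$, and therefore $H_\mu(\bj',\bi')\,H_\lambda(\bj'',\bi'')U_J\subseteq H_\lambda(\bj,\bi)U_J$ with no change to $J$.

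\textbf{Left factor.} Here $f'\varphi_\mu(z)$ places bubbles of iweight $\mu$ in the 2-cell between the two factors, and these must be moved to the right-hand boundary across $f''$, landing in iweight $\lambda$. Given a cofinite $J$, I would choose a finite set $S$, closed under $\pm$, containing $I-J$, all letters of $\bi''$ and $\bj''$, and all $i$ with $\del_i-\lambda_i\neq0$ or $\del_i-\mu_i\neq0$. Then set $J'\coloneq I-\{i\:|\:d_G(i,S)\leq 1\}$, which is cofinite and symmetric.

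\textbf{Key claim.} For $k\in J'$, a dotted bubble labelled $k$ commutes with every string of $f''$, because $k$ is not adjacent in the Cartan graph $G$ to any letter of $\bi''$ or $\bj''$ (up to sign). Moreover, since $\del_k-\mu_k=\del_k-\lambda_k=0$, the normalized bubbles $z^{(r)}_{\mu;k}$ and $z^{(r)}_{\lambda;k}$ agree up to the scalars $\zeta_k(\mu)^{\pm1}$ and $\zeta_k(\lambda)^{\pm1}$. The bubble slide relations (the iquantum analogues of \cref{curlsup2}) would then give $\varphi_\mu(z)f''=f''\varphi_\lambda(z')$ with $z'\in U_{J'}\subseteq U_J$. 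It follows that
$$
H_\mu(\bj',\bi')U_{J'}\,H_\lambda(\bj'',\bi'')\subseteq H_\lambda(\bj,\bi)U_J.
$$
Alternatively, one can argue as in the proof of the Claim in \cref{conv1}: any closed diagram labelled by elements far from $S$ can be slid to the right boundary, where it straightens into non-trivial monomials in positive-degree bubbles lying in $\varphi_\lambda(U_J)$. The bounded-distance trick used there adapts directly. This compatibility of bubble ideals across the middle 2-cell is where I expect the main obstacle to be. In particular, one must justify that bubbles labelled $k\in J'$ genuinely pass through crossings and caps and cups of $f''$ with no correction terms; this is what the distance condition is designed to guarantee.

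\textbf{Conclusion.} Combining the three inclusions, the composite map to $H_\lambda(\bj,\bi)/H_\lambda(\bj,\bi)_{d,J}$ factors through
$$
H_\mu(\bj',\bi')/H_\mu(\bj',\bi')_{d-a,J'}\otimes H_\lambda(\bj'',\bi'')/H_\lambda(\bj'',\bi'')_{d-b,J}.
$$
Taking inverse limits over target neighborhoods, using \cref{dontknow} and \cref{arewestupid}, yields the continuous bilinear extension, and it is unique by the density of $H$ in $\hatH$.
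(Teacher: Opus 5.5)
Your proposal is correct and is essentially the paper's proof. It uses the same degree bounds $a,b$ and treats the right factor directly. For the left factor it shrinks $J$ to a cofinite $J'$ of labels $k$ with $a_{k,j}=a_{k,-j}=0$ for every letter $j$ of $\bi'',\bj''$, so that bubbles labelled by $J'$ slide freely across $f''$. The paper simply takes $J'=J\cap K$ with $K$ the set of such $k$; your distance-$2$ set and the extra conditions on $\del_k-\lambda_k$ are harmless but unnecessary, since $\mu_k=\lambda_k$ already follows from $a_{k,j}=a_{k,-j}=0$.

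One thing to tidy: you should only slide the generators $z^{(n)}_{\mu;k}$ for $k\in J'$, not a general $\varphi_\mu(z)$ with $z\in U_{J'}$, which need not slide to a pure bubble. In other words, state the key step as $\varphi_\mu(U_{J'})H_\lambda(\bj'',\bi'')\subseteq H_\lambda(\bj'',\bi'')U_{J'}$, which is exactly the inclusion the paper uses.
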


\begin{proof}
If $H_\mu(\bj',\bi')$ or $H_\lambda(\bj'',\bi'')$ is 0 then the result is trivial, so assume that they are both non-zero.
Let $a$ be minimal such that $H_\lambda(\bj'',\bi'')_a \neq 0$
and $b$ be minimal such that $H_\mu(\bj',\bi')_b \neq 0$.
Also let $K \subseteq I$ be the set of 
all $i \in I$ 
such that $a_{i,j} =a_{i,-j} = 0$ 
for all letters $j$ appearing in $\bi''$ or $\bj''$.
The bubble slide relation \cref{ibubslide} 
implies that any dotted bubble 
labelled by $i \in K$ 
commutes freely past any string colored by one of the letters appearing in the words $\bi''$ and $\bj''$.

Now suppose that $J \subseteq I$ is a cofinite subset.
Let $J' \coloneq J \cap K$. It is a cofinite subset of $I$
contained in $J$.
Then we have that
$\varphi_\mu(U_{J'}) H_\lambda(\bj'',\bi'') \subseteq H_\lambda(\bj'',\bi'') U_{J'} \subseteq H_\lambda(\bj'',\bi'') U_{J}$.
From this, it follows that
\begin{align*}
H_\mu(\bj',\bi')_{d-a,J'}\  H_\lambda(\bj'',\bi'')
&\subseteq H_\lambda(\bj,\bi)_{d,J},&
H_\mu(\bj',\bi')\  H_\lambda(\bj'',\bi'')_{d-b,J}
&\subseteq H_\lambda(\bj,\bi)_{d,J}.
\end{align*}
The lemma follows from this like at the end of
the proof of \cref{conv1}(i).
\end{proof}

The following 1-morphisms in $\hatUU^\imath$ will be particularly important: 
for $\lambda,\mu \in X^\imath$ and $n \geq 0$, let
\begin{equation}\label{usefulB}
\one_\mu B^n \one_\lambda \coloneq
\bigoplus_{\substack{\bi \in I^n\\\lambda-\alpha_\bi=\mu}}
B_\bi \one_\lambda.
\end{equation}
This formal direct sum 
is the object $B_\bI \one_\lambda = \one_\mu B_\bI$ 
of the morphism category $\HOM_{\hatUU^\imath}(\lambda,\mu)$
from \cref{me} with $\bI\coloneq \{\bi \in I^n\:|\:\lambda-\alpha_\bi=\mu\}$.

\begin{defin}\label{iqmonoidal}
Let $\hatAB$
be the strict $\kk$-linear monoidal category
with objects $B^{\otimes n}\:(n \in \N)$ and morphisms
\begin{equation}
\Hom_{\hatAB}(B^{\otimes n}, B^{\otimes m})
\coloneq
\prod_{\lambda,\mu \in X_\0^\imath} \Hom_{\hatUU^\imath}(\one_\mu B^n\one_\lambda, \one_\mu B^m \one_\lambda).
\end{equation}
We denote a morphism in this space
by 
$\theta = (\theta^{\mu,\lambda})_{\lambda,\mu \in X_\0^\imath}$
for $\theta^{\mu,\lambda} \in \Hom_{\hatUU^\imath}(\one_\mu B^n \one_\lambda, \one_\mu B^m \one_\lambda)$.
The vertical composition of $\theta\colon B^{\otimes m}\rightarrow B^{\otimes l}$ and $\phi\colon B^{\otimes n} \rightarrow B^{\otimes m}$ is defined 
in terms of vertical composition in $\hatUU^\imath$ 
by $(\theta \circ \phi)^{\mu,\lambda} \coloneq \theta^{\mu,\lambda}\circ \phi^{\mu,\lambda}$.
Tensor product of objects is defined by $B^{\otimes n'} \otimes B^{\otimes n''} \coloneq B^{\otimes (n'+n'')}$.
The tensor product
$\theta \otimes \phi\colon B^{\otimes (n'+n'')}\rightarrow B^{\otimes(m'+m'')}$ 
of morphisms 
$\theta\colon B^{\otimes n'} \rightarrow B^{\otimes m'}$ and $\phi\colon B^{\otimes n''} \rightarrow B^{\otimes m''}$ is defined in terms of horizontal composition of 2-morphisms in $\hatUU^\imath$ by 
\begin{equation}
(\theta\otimes\phi)^{\mu,\lambda}_{\bi,\bj}
\coloneq
\begin{dcases}
\theta^{\mu,\nu}_{\bi',\bj'} \phi^{\nu,\lambda}_{\bi'',\bj''}
&\text{if $\nu \coloneq \lambda-\alpha_{\bi''} = \lambda-\alpha_{\bj''}$}\\
0&\text{otherwise,}
\end{dcases}
\end{equation}
for $\bi, \bj$ with
$\lambda-\alpha_\bi = \lambda-\alpha_\bj = \mu$
such that
$\bi = \bi'\bi''$ for $\bi' \in I^{m'}$,
$\bi'' \in I^{m''}$
and $\bj = \bj'\bj''$ 
for $\bj' \in I^{n'}$, $\bj'' \in I^{n''}$.
We equip each morphism space in $\hatAB$
with the topology that is the product of the GC topologies on each hom space. Then
$\hatAB$ is a strict monoidal category enriched in topological vector spaces.
Its identity object $\one$ is $B^{\otimes 0}$.
\end{defin}

\begin{rem}\label{tricky}
Unravelling definitions,
$\Hom_{\hatAB}(B^{\otimes n}, B^{\otimes m})$
is identified with
the completion of 
$$
\bigoplus_{\substack{\lambda \in X^\imath_\0,\ \bi \in I^m, \,\bj \in I^n\\\lambda-\alpha_\bi = \lambda-\alpha_\bj}}
H_\lambda(\bj,\bi)
$$
with respect to the linear topology
which has a system of neighborhoods of 0 given by
the subspaces 
\begin{equation*}
\bigoplus_{\substack{\lambda \in X^\imath_\0,\ \bi \in I^m,\, \bj \in I^n\\\lambda-\alpha_\bi = \lambda-\alpha_\bj}}
H_\lambda(\bj,\bi)_{d(\lambda;\bi,\bj),J(\lambda;\bi,\bj)}
\end{equation*}
for $d(\lambda;\bi,\bj) \in \Z$
and cofinite subsets $J(\lambda;\bi,\bj) \subseteq I$
such that
$H_\lambda(\bj,\bi)_{d(\lambda;\bi,\bj),J(\lambda;\bi,\bj)} = 
H_\lambda(\bj,\bi)$
for all but finitely many of the triples $(\lambda;\bi,\bj)$.
\end{rem}

Recall that $Z_\lambda = \End_{\UU^\imath}(\one_\lambda)$.
We denote the algebra $\End_{\hatUU^\imath}(\one_\lambda)
=\hatH_\lambda(\varnothing,\varnothing)$, which is the completion of $Z_\lambda$, by $\hatZ_\lambda$.
For each $\lambda$, 
the isomorphism $\varphi_\lambda$ from \cref{flambda}
induces an isomorphism
\begin{equation}
\hat\varphi_\lambda\colon\hatR\iso \hatZ_\lambda.
\end{equation}
We described the GC topology on $\hatR$ rather explicitly at the beginning of the section. Using the isomorphisms
$\varphi_\lambda$ and $\hat\varphi_\lambda$, 
this description can be transferred to $Z_\lambda$ and $\hatZ_\lambda$.
In particular, we will denote 
$\varphi_\lambda(U_J)$, 
$\varphi_\lambda(U_{d,J})$,
$\hat\varphi_\lambda(\overline{U}_J)$
and 
$\hat\varphi_\lambda(\overline{U}_{d,J})$
by 
$U_{\lambda;J}$, 
$U_{\lambda;d,J}$, 
$\overline{U}_{\lambda;J}$ and 
$\overline{U}_{\lambda;d,J}$, respectively.

We denote $\End_{\hatAB}(\one)$ by $\hatZ$.
By a special case of \cref{tricky}, the complete topological algebra $\hatZ$ is identified with the completion of
$\bigoplus_{\lambda \in X^\imath_\0}
Z_\lambda$
with respect to the linear topology defined by neighborhoods of 0 given by 
$\bigoplus_{\lambda \in X^\imath_\0} U_{\lambda;d(\lambda),J(\lambda)}$
for
$d(\lambda) \in \Z$ and cofinite $J(\lambda) \subseteq I$
such that 
$U_{\lambda;d(\lambda),J(\lambda)} = \End_{\UU^\imath}(\one_\lambda)$
for all but finitely many $\lambda$.
So we have that 
\begin{equation}\label{hatZ}
\hatZ=\prod_{\lambda \in X^\imath_\0} \hatZ_\lambda.
\end{equation}
The infinite product
$\prod_{i \in I} O_{\lambda;i}(u-i)$
makes sense in $\hatZ_\lambda\lround u^{-1}\rround$.
This expression will play an important role in the next section.

Also of particular interest are the algebras $\End_{\hatUU^\imath}(B_i \one_\lambda)$ and $\End_{\hatAB}(B)$. 
By a special case of \cref{tricky}, $\End_{\hatAB}(B)$ is the completion of
$\bigoplus_{i \in I,\lambda \in X^\imath_\0}
\End_{\UU^\imath}(B_i \one_\lambda)$
with respect to the topology defined by neighborhoods of 0
given by 
$\bigoplus_{i \in I,\lambda \in X^\imath_\0}
H_\lambda(i,i)_{d(\lambda;i),J(\lambda;i)}$
for $d(\lambda;i) \in \Z$ 
and cofinite $J(\lambda;i) \subseteq I$
such that 
$H_\lambda(i,i)_{d(\lambda;i),J(\lambda;i)}
=
\End_{\UU^\imath}(B_i \one_\lambda)$
for all but finitely many $(\lambda;i)$.
So we have that 
\begin{equation}\label{hatIZ}
\End_{\hatAB}(B)
=\prod_{\substack{i \in I\\\lambda \in X^\imath_\0}} \End_{\hatUU^\imath}(B_i \one_\lambda).
\end{equation}

\begin{theo}\label{extending}
The action of $\UU^\imath$ on any
nilpotent Abelian or Karoubian 
iquantum categorification $\catR$ 
extends by continuity 
to make $\catR$ into a 2-representation of $\hatUU^\imath$.
\end{theo}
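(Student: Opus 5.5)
The plan is to show that for each $V \in \catR^\fg_\lambda$ the action of a 2-morphism $\theta \in H_\lambda(\bj,\bi)$ on $V$, as a map $B_\bj V \to B_\bi V$, factors through the quotient $H_\lambda(\bj,\bi)/H_\lambda(\bj,\bi)_{d,J}$ for suitable $d$ and cofinite $J$ depending on $V$ (and on $\bi,\bj$). Taking the inverse limit, $\hatH_\lambda(\bj,\bi)$ then acts on $V$, compatibly with morphisms $V \to V'$ by naturality (a common $d,J$ works for both). For a 1-morphism $B_\bI\one_\lambda$ with $\bI$ possibly infinite we let it act by $V \mapsto \bigoplus_{\bi \in \bI} B_\bi V$. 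Here we must show that only finitely many summands are non-zero. This follows from the finiteness hypothesis in \cref{betterwhopper}: applying it iteratively, the set of $\bi \in I^n$ with $B_\bi V \neq 0$ is finite, since at each step only finitely many letters act non-trivially on each of the finitely many objects $B_{\bi'}V$. Matrices of 2-morphisms then act as finite matrices on each $V$. In the Karoubian case we work with $\catR^\fg$ and finitely generated modules throughout, and use that everything is determined there.

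The key estimate has two parts. First, the degree part: by \cref{noodslem} and nilpotency of dots, there is $N$ (depending on $V$ and $\bi,\bj$) such that every dot on any string of $B_\bj V$ and every positive-degree dotted bubble in the region $\lambda$ acts nilpotently of order $\le N$. Using the basis $\basis_\lambda(\bi\times\bj)$ of \cref{nondegeneracy}, any element of $H_\lambda(\bj,\bi)_{\ge d}$ with $d$ large is an $R$-combination of basis diagrams that carry either many dots or a high-degree bubble monomial. The number of shapes is finite and the degree of a shape is bounded, so large degree forces total dot exponent or bubble degree $\ge N$ times the number of strings plus generators. Each dot can be slid to the bottom boundary modulo terms with fewer crossings (\cref{demazureslide}), so by induction on crossings such an element acts by zero on $V$. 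Alternatively, and more cleanly, we expand in the basis with pins placed near the bottom: a high power pinned to a string becomes, after sliding, a high power of a dot acting on $B_\bj V$ times lower terms. Here I would choose the pin points $p_1,\dots,p_l$ adjacent to the boundary to make this immediate.

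Second, the cofinite part: we need $H_\lambda(\bj,\bi)U_J$ to act by zero on $V$ for some cofinite $J$. Take $J$ to consist of all $j$ with $B_jW = 0$ and $B_{-j}W = 0$ for all the finitely many objects $W$ of $\catR^\fg_\lambda$ relevant here (namely $V$ itself). For $j\in J$, the dotted bubble $z^{(r)}_{\lambda;j}$ with $r \ge 1$ is $\zeta_j(\lambda)^{-1}$ times a closed loop through $B_{-j}B_jV$ or $B_jB_{-j}V$, or, if it is a fake bubble, a polynomial in genuine bubbles of the opposite color via \cref{iinfgrass}. Either way it acts by zero on $V$. Since $U_J$ is generated by these elements, $\theta z$ acts by zero for all $z \in U_J$. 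Note that $\del_j-\lambda_j$ may be positive for finitely many $j$, and we exclude these from $J$ as well.

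Finally, the 2-functor axioms: compatibility of vertical and horizontal compositions follows by continuity from \cref{conv1,conv2}, since the actions are continuous and $\UU^\imath$ is dense. The infinite sums in vertical composition reduce on $V$ to finite sums by the finiteness of the set of $\bk$ with $B_\bk V \ne 0$. I expect the main obstacle to be the degree estimate, that is, ensuring uniformly that high degree forces zero action. This needs the careful basis/dot-sliding bookkeeping, together with the Karoubian subtlety that nilpotency must hold on finitely generated modules (granted in \cref{nilpdef}).
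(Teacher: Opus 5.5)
Your architecture is the paper's. You show that a GC neighbourhood of $0$ in $Z_\lambda$ acts by zero on $V$ (the paper's Claim~1). You then use the basis $\basis_\lambda(\bi\times\bj)$ and nilpotency of dots to show that evaluation on $V$ factors through some quotient $H_\lambda(\bj,\bi)/H_\lambda(\bj,\bi)_{d,J}$ (Claim~2). Finally, iterated finiteness shows that only finitely many $B_\bi V$ are non-zero. Your set $J$, which avoids the finitely many $j$ with $B_{\pm j}V\neq 0$ or $\del_j-\lambda_j\neq 0$, is the complement of the paper's set $K$. Your argument that the bubbles of those colours act by zero is fine.

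There is, however, a hole in the degree estimate for bubbles. Knowing that each positive-degree bubble acts through the radical of $Z_V$, with nilpotency order at most $N$, only kills monomials with at least $N$ factors. After discarding the colours in $J$, each of the finitely many remaining colours $k$ still contributes infinitely many generators $z^{(r)}_{\lambda;k}$ ($r\geq 1$). So a monomial of arbitrarily large degree can be a single generator, and your count ``$\geq N$ times the number of generators'' has no finite set to count over.

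The missing input is a bound $M$ with $(z^{(r)}_{\lambda;k})_V=0$ for all $k\notin J$ and all $r>M$. This holds because, for $r>\del_k-\lambda_k$, the generator is a scalar multiple of a bubble carrying $r+\lambda_k-\del_k-1$ dots on the string acting on $B_kV$. That bubble vanishes on $V$ once the number of dots exceeds the nilpotency degree of the dot on $B_kV$. With both bounds in hand, every monomial of degree at least $2M(N-1)+1$ in the surviving generators acts by zero. This is exactly how the paper proves its Claim~1.

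A smaller point: your dot-sliding induction on crossings is unnecessary, and it would require controlling non-reduced correction terms. The hypothesis of \cref{nilpdef} applies to every object of $\catR$, in particular to $B_{\bk''}V$. So a dot at any interior position, which acts as $B_{\bk'}$ applied to the dot on $B_{\bk''}V$, is already nilpotent. Hence a basis diagram with a large exponent at any distinguished point acts by zero on $V$, wherever the distinguished points are placed; this is the paper's one-line argument. It also covers your alternative of moving distinguished points to the boundary, which as written is incomplete: for cups you would need nilpotency on strings of $B_\bi V$, not just $B_\bj V$.
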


\begin{proof}
We first prove two claims:

\vspace{2mm}
\noindent{\underline{Claim 1}:} 
{\em
For each object $V \in \catR_\lambda$,
the 
kernel of the homomorphism $Z_\lambda \rightarrow \End_{\catR}(V),
z \mapsto z_V$
contains a GC neighborhood of 0 in $Z_\lambda$.}

\vspace{1mm}
\noindent
{Proof:}
Let $K$ be a finite subset of $I$
closed under $\pm$ which contains all 
$i \in I$ with $B_i V \neq 0$ or $\del_i-\lambda_i \neq 0$.
Then $\big(z_{\lambda;j}^{(n)}\big)_V = 0$ for all $j \in J \coloneq I - K$ and $n \geq 1$.

For each $i$ and $n\geq 1$, \cref{noodslem} implies that
$\big(z_{\lambda;i}^{(n)}\big)_V$ is in the Jacobson radical 
$J(Z_V)$ of the finite-dimensional commutative algebra $Z_V$. 
Choose
$N\geq 1$ such that $J(Z_V)^N = 0$.
Also choose $M\geq 1$ so that $\big(z_{\lambda;k}^{(n)}\big)_V=0$ whenever $k\in K$ and $n>M$. Any monomial that can act nontrivially therefore has fewer than $N$ factors, each of degree at most $2M$. Consequently every monomial of degree at least
$d=2M(N-1)+1$
acts by zero, and $U_{\lambda;d,J}$ lies in the kernel
of the homomorphism
$Z_\lambda \rightarrow \End_\catR(V)$, as required.

\vspace{2mm}
\noindent{\underline{Claim 2}:} {\em
For each object $V \in \catR_\lambda$
and $\bi,\bj \in \langle I \rangle$
with $\lambda-\alpha_\bi = \lambda-\alpha_\bj$,
the evaluation map
$H_\lambda(\bj,\bi)
\rightarrow \Hom_\catR(B_\bj V, B_\bi V)$ is continuous 
for the GC topology on the first hom space and the discrete topology on the second one.}

\vspace{1mm}
\noindent
{Proof:}
Let $d$ and $J$ be as in Claim 1.
Consider
the basis $\basis_\lambda(\bi\times\bj)$ for 
$H_\lambda(\bj,\bi)$.
At each of the chosen dot positions on the finitely many
shapes in $\shape(\bi\times\bj)$,
the dot acts nilpotently on the relevant object obtained from $V$ by applying the functors $B_i$ for the strings to the right of that position.
Hence, a basis diagram acts by zero on $V$ 
providing it is of sufficiently high degree, say $\geq d'$.
Then any element of $H_\lambda(\bj,\bi)_{d+d',J}$
acts as zero on $V$.

\vspace{2mm}
Claim 2 implies that the evaluation map
$H_\lambda(\bj,\bi)
\rightarrow \Hom_\catR(B_\bj V, B_\bi V)$
extends to the completion
$\hatH_\lambda(\bj,\bi)$.
To complete the proof of the theorem, 
it remains to observe that
for a fixed $V \in \catR$ and a fixed length $n \geq 0$,
there are only finitely many words $\bi \in I^n$
such that $B_\bi V \neq 0$.
This follows by induction on $n$ because there are only finitely many $i \in I$ such that $B_i V \neq 0$.
Consequently,
the formal sum defining the 1-morphism $B_\bI$
of $\hatUU^\imath$ acts on $V$ as a finite direct sum,
and the $\bI \times \bJ$ matrices that are 2-morphisms in $\hatUU^\imath$ evaluate to finitely supported matrices.
\end{proof}

\begin{cor}\label{wish}
Let $\catR$ be an even nilpotent Abelian or Karoubian
iquantum categorification
viewed as a 2-representation of $\hatUU^\imath$ as in \cref{extending}.
Then $\catR$ can be made into a strict left $\hatAB$-module
category as follows:
\begin{enumerate}
\item 
The object $B^{\otimes n}$ of $\hatAB$, i.e.,
the formal direct sum \cref{usefulB} of 1-morphisms
of $\hatUU^\imath$, acts so that the summand $\one_\mu B^n \one_\lambda$ acts as the 
functor $\catR_\lambda\rightarrow\catR_\mu$ defining the action of this 1-morphism of $\hatUU^\imath$, and as 0 
on all other iweight subcategories.
\item
A morphism $\theta = (\theta^{\mu,\lambda})_{\mu,\lambda \in X^\imath_\0}$ in $\hatAB$, which by definition is a matrix of 2-morphisms in $\hatUU^\imath$, acts 
as the obvious matrix of natural transformations.
\end{enumerate}
\end{cor}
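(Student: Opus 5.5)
The plan is to build the strict $\kk$-linear monoidal functor $\hatAB \rightarrow \END_\kk(\catR)$ by hand and check that it respects composition and tensor products. These properties come from the 2-functoriality of the $\hatUU^\imath$-action in \cref{extending}, plus the fact that $\catR$ is even.

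\emph{Objects.} For $V \in \catR_\lambda$, evenness gives $\lambda \in X^\imath_\0$. Simple roots are even, so $\mu = \lambda-\alpha_\bi$ also lies in $X^\imath_\0$. Set
$$B^{\otimes n} V \coloneq \bigoplus_{\mu \in X^\imath_\0} \one_\mu B^n \one_\lambda V = \bigoplus_{\bi \in I^n} B_\bi V.$$
As in the last paragraph of the proof of \cref{extending}, only finitely many $\bi \in I^n$ have $B_\bi V \neq 0$. Hence this is a finite direct sum and defines a functor on $\catR$, which is additive in general and extends by additivity to $\catR = \bigoplus_\lambda \catR_\lambda$. Monoidality on objects is clear: $B^{\otimes n'}B^{\otimes n''}V = \bigoplus_{\bi',\bi''} B_{\bi'}B_{\bi''}V = B^{\otimes(n'+n'')}V$, since concatenation $\bi'\bi''$ realises exactly the index set of \cref{usefulB}.

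\emph{Morphisms.} A morphism $\theta = (\theta^{\mu,\lambda})$ acts on $V \in \catR_\lambda$ through the components $\theta^{\mu,\lambda}$ for the various $\mu$. Each entry $\theta^{\mu,\lambda}_{\bi,\bj}$ acts via the continuous extension from \cref{extending}. The resulting matrix is finitely supported, so it gives a natural transformation.

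\emph{Compatibility with composition.} We have $(\theta\circ\phi)^{\mu,\lambda} = \theta^{\mu,\lambda}\circ\phi^{\mu,\lambda}$, and vertical composition in $\hatUU^\imath$ is matrix multiplication. So it suffices that the action of $\hatUU^\imath$ respects composition, including the infinite sums of \cref{conv1}(ii). On $V$, only finitely many intermediate words $\bk$ have $B_\bk V \neq 0$. Every other term acts by zero, and continuity lets us pass to these finite sums.

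\emph{Compatibility with tensor products.} Fix $\bi = \bi'\bi''$ and $\bj = \bj'\bj''$. If $\lambda-\alpha_{\bi''} \neq \lambda-\alpha_{\bj''}$, the entry of $\theta\otimes\phi$ is zero by definition. The product of natural transformations $\theta_{\bi',\bj'}\circ_h \phi_{\bi'',\bj''}$ is also zero in that case, because source and target lie in different iweight summands and so the corresponding matrix block of $\phi$ vanishes. Otherwise, both sides are the horizontal composite $\theta^{\mu,\nu}_{\bi',\bj'}\phi^{\nu,\lambda}_{\bi'',\bj''}$. Its action agrees with the horizontal composition of the natural transformations, by 2-functoriality of the $\hatUU^\imath$-action.

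The main point needing care is that the continuous extension of the action respects the continuously extended horizontal and vertical compositions of \cref{conv1,conv2}. I would argue by density. The identities hold on $\UU^\imath$. Both sides are continuous in each argument, because the evaluation maps of Claim 2 in \cref{extending} are continuous into the discrete topology and the compositions are separately continuous. So the identities extend to the completions. Finally, units are preserved: $\one = B^{\otimes 0}$ acts as $\bigoplus_\lambda \one_\lambda = \id_\catR$.
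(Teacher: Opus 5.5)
Your proposal is correct and follows the paper's approach. The paper's proof just says the result follows from the definitions, the only point being that for a fixed $V$ only finitely many summands of \cref{usefulB} act non-trivially, so $\theta$ evaluates to a finitely supported matrix. Your density argument, showing that the continuous extension respects the completed compositions, is sound but largely redundant, since \cref{extending} already asserts that $\catR$ is a 2-representation of $\hatUU^\imath$.
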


\begin{proof}
This follows from the definitions. The only point to note is that
only finitely many terms of \cref{usefulB} act in a non-zero way on a given object $V \in \catR$, so the evaluation of the matrix
$\theta = (\theta^{\mu,\lambda})_{\mu,\lambda \in X^\imath_\0}$ on $V$ has finite support.
\end{proof}

\setcounter{section}{6}

\section{Internal bubbles}\label{bubbles}

In this section, rather than working directly with a nilpotent iquantum categorification, we will work in the completed 2-iquantum group $\hatUU^\imath$ which was 
just defined. All convergence of formal Laurent series in $u^{-1}$
with coefficients that are 2-morphisms in $\hatUU^\imath$
is pointwise, that is, the coefficients of each 
power of $u$ converge in the GC topology.

For $\lambda \in X^\imath$, recall that $\hatZ_\lambda \coloneq 
\End_{\hatUU^\imath}(\one_\lambda)$
and 
$O_{\lambda;i}(u) =
\begin{tikzpicture}[baseline=-1mm,iQ,scale=.9]
\draw[-] (-0.25,0) arc(180:-180:0.25);
\node at (-0.36,0) {\strandlabel{\ibar}};
\region{1.05,0}{\lambda};
\node at (.56,0) {$(u)$};
\end{tikzpicture} \in \hatZ_\lambda \lround u^{-1} \rround$.
In the formal Laurent series $O_{\lambda;i}(u)$, we can replace $u$ by $u-i$ to obtain
$O_{\lambda;i}(u-i) \in \hatZ_\lambda\lround u^{-1}\rround$.
For fixed $\lambda$, 
we have that $O_{\lambda;i}\big(u-i\big) \in \id_{\one_\lambda} + u^{-1} \hatZ_\lambda\llbracket u^{-1}\rrbracket$ for all
but finitely many $i \in I$. 

\begin{lem}\label{unexpected}
For any $\lambda \in X^\imath$ and $i \in I$, we have that
\begin{align}
\prod_{j \in I - \{i\}}O_{\lambda;j}(u-j)O_{\lambda;-j}(j-u)
&=\begin{dcases}
\frac{1-4u^2}{4} \id_{\one_\lambda}&\text{if $i=0$}\\
(-1)^{\lambda_h}\frac{1+2u}{2u^2} \id_{\one_\lambda}&\text{if $i=h$}\\
(-1)^{\lambda_h}\frac{1-2u}{2u^2}\id_{\one_\lambda}&\text{if $i=-h$}\\
(-1)^{\lambda_i}\frac{4u^2-1}{4u^2}
\id_{\one_\lambda}
&\text{otherwise.}
\end{dcases}\label{munch}
\end{align}
Also, 
recalling the partition $I = I_\lo\cup\{0\}\cup I_\hi$ from \cref{partitions},
we have that
\begin{align} \label{kins}
\prod_{j \in I_\lo} O_{\lambda;j}(u-j)O_{\lambda;-j}(j-u)
&=
(-1)^{p(\lambda)} \zeta_0(\lambda)(1-2u) \id_{\one_\lambda}.
\end{align}
\end{lem}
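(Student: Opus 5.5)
The plan is to reduce everything to the infinite Grassmannian relation \cref{iinfgrass}, in the form already used in the proof of \cref{ben}:
\begin{itemize}
\item $O_{\lambda;i}(-u)O_{\lambda;-i}(u)=(-1)^{\lambda_i}\id_{\one_\lambda}$ for $i\notin\{0,\pm h\}$;
\item $O_{\lambda;h}(-u)O_{\lambda;-h}(u)=(-1)^{\lambda_h+1}u\,\id_{\one_\lambda}$;
\item the corresponding identity for $i=0$, which I expect to read $O_{\lambda;0}(u)O_{\lambda;0}(-u)=-u^{-2}\id_{\one_\lambda}$.
\end{itemize}
The $i=0$ form is what \cref{ben}(i) gives on irreducibles: $u^{-1}\widetilde f(u)/f(u)$ times its image under $u\mapsto -u$ is $-u^{-2}$. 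I would check it directly against \cref{iinfgrass}, including the effect of the normalization $\zeta_0(\lambda)$. The key observation is that each factor $O_{\lambda;j}(u-j)O_{\lambda;-j}(j-u)$ is obtained from one of these identities by substituting $v=u-j$. Hence each factor is an explicit scalar (or linear or quadratic) multiple of $\id_{\one_\lambda}$, not merely an element of $\hatZ_\lambda\lround u^{-1}\rround$.

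Concretely, write $F_j(u)\coloneq O_{\lambda;j}(u-j)O_{\lambda;-j}(j-u)$. For $j\notin\{0,\pm h\}$, apply the generic identity with $-j$ in place of $i$ and $u-j$ in place of $u$. Since $\lambda_{-j}=-\lambda_j$, this gives $F_j(u)=(-1)^{\lambda_j}\id_{\one_\lambda}$. For $j=h$, substitute $u\mapsto -(u-h)$ in the $h$-identity to get $F_h(u)=(-1)^{\lambda_h}(u-h)\id$. For $j=-h$, substitute $u\mapsto u+h$ directly to get $F_{-h}(u)=(-1)^{\lambda_h+1}(u+h)\id$. Finally $F_0(u)=-u^{-2}\id$.

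Because $\lambda_j=0$ for all but finitely many $j$, all but finitely many $F_j$ are exactly $\id_{\one_\lambda}$. So the infinite product is really a finite product and there is no convergence issue. The generic signs cancel in pairs, since $(-1)^{\lambda_j}(-1)^{\lambda_{-j}}=1$. The three special factors multiply to
\[
F_0F_hF_{-h}=-u^{-2}\cdot\bigl(-(u-h)(u+h)\bigr)=\tfrac{4u^2-1}{4u^2}.
\]
Thus the full product over $I$ is $\frac{4u^2-1}{4u^2}\id$, and \cref{munch} follows by dividing out the single omitted factor $F_i$. For generic $i$ that factor is $(-1)^{\lambda_i}=(-1)^{-\lambda_i}$. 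For $i=0$, dividing by $-u^{-2}$ gives $\frac{1-4u^2}{4}$. For $i=\pm h$, dividing by $F_{\pm h}$ gives $(-1)^{\lambda_h}\frac{1\pm 2u}{2u^2}$.

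For \cref{kins}, note that $h\in I_\lo$ while $0,-h\notin I_\lo$. So the product over $I_\lo$ is $(-1)^{\sum_{j\in I_\lo}\lambda_j}(u-h)\id$. Since $I=I_\lo\sqcup\{0\}\sqcup(-I_\lo)$, the parity identity after \cref{sneaky} gives $\sum_{j\in I_\lo}\lambda_j\equiv p(\lambda)+t(\lambda)$. Comparing with \cref{eitherway}, we have $\zeta_0(\lambda)(1-2u)=(-1)^{t(\lambda)}(u-h)$, so both sides of \cref{kins} agree.

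The main obstacle is purely bookkeeping of signs: extracting the precise $i=0$ and $i=\pm h$ cases of \cref{iinfgrass}, including the $\zeta_0(\lambda)$ normalization and the shift by $\del_{\pm h}$. I would first verify these on the $i=0$ and $i=h$ cases against \cref{ben}, which was derived from the same relation, before assembling the products.
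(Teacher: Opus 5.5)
Your proof is correct and is essentially the paper's argument. The paper writes the infinite Grassmannian relation uniformly as $O_{\lambda;-j}(u)O_{\lambda;j}(-u)=(-1)^{\del_j+\lambda_j}u^{-a_{j,-j}}\id_{\one_\lambda}$, which is the relation $z_{-j}(u)z_j(-u)=1$ transported by $\varphi_\lambda$ from \cref{flambda}; it then substitutes $u\mapsto j-u$ and observes that only $j\in\{0,\pm h\}$ contribute nontrivially, exactly as you do (in particular your expected $j=0$ case $-u^{-2}\id_{\one_\lambda}$ is right, since $\del_0=-1$ and $a_{0,0}=2$, and your sign bookkeeping for \cref{kins} matches the paper's).
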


\begin{proof}
For each $j \in I$, we have by \cref{iinfgrass} 
that $O_{\lambda;-j}(u)O_{\lambda;j}(-u)
= (-1)^{\del_j+\lambda_j} u^{-a_{j,-j}}\id_{\one_\lambda}$.
Equivalently, replacing $u$ by $j-u$, we have that
$O_{\lambda;j}(u-j) O_{\lambda;-j}(j-u) = 
(-1)^{\del_j+\lambda_j} (j-u)^{-a_{j,-j}}\id_{\one_\lambda}$.
Also, the only $j$ such that either $a_{j,-j}$ or $\del_j$ are non-zero are $j=0,\pm h$.
The identity \cref{munch} follows easily from these facts.
For \cref{kins}, the product in the statement of the lemma
simplifies to $(-1)^{1+\sum_{j \in I_\lo} \lambda_j} (h-u)\id_{\one_\lambda}
=(-1)^{p(\lambda)+t(\lambda)+1}(h-u)\id_{\one_\lambda}$,
then we use that
$\zeta_0(\lambda) =\frac{(-1)^{t(\lambda)+1}}{2}$.
\end{proof}

In order to state and prove \cref{badbunny},
we also need something we call {\em fancy notation}, which we used originally in the proof of \cite[Th.~7.1]{BSW-update}.
First, we introduce the following algebraic notation encoding the fake bubbles
from \cref{ifakebubblepolynomial}: let
\begin{align}\label{thorsten0}
P_{\lambda;i}(u) &\coloneq
\begin{dcases}
(-1)^{t(\lambda)}  u^{-1}
\id_{\one_\lambda}&\text{if $i=0$}\\
\zeta_i(\lambda)^{-1}
\sum_{n = 0}^{\del_i-\lambda_i}
\begin{tikzpicture}[baseline=-1mm,iQ]
\draw[-] (-0.25,0) arc(180:-180:0.25);
\node at (-0.36,0) {\strandlabel{\ibar}};
\region{0.4,0}{\lambda};
\dottybubblelabel{0,0}{n};
\end{tikzpicture}\ 
(u-i)^{\del_i-\lambda_i-n}
&\text{otherwise.}
\end{dcases}
\end{align}
This is an element of $u^{-1}\hatZ_\lambda[u]$; actually, it is a polynomial in $\hatZ_\lambda[u]$ unless $i=0$.
More generally, for any (possibly infinite)
subset $J \subseteq I$, let
\begin{equation}
P_{\lambda;J}(u) \coloneq \prod_{j \in J} P_{\lambda;j}(u) \in u^{-1}\hatZ_\lambda[u].
\end{equation}
The infinite product here makes sense because
$P_{\lambda;j}(u) = \id_{\one_\lambda}$
for all but finitely many $j \in I$. This would not be true if we had not included the normalization factor 
$\zeta_i(\lambda)^{-1}$ (usually just a sign) in the definition \cref{thorsten0}!

Next, let $K$ be a finite subset of $I$, and
consider the algebra $\hatZ_\lambda\llbracket x_k\:|\:k \in K \rrbracket$. 
The fancy notation is the function
\begin{equation}
Q_{\lambda;K}\colon 
\hatZ_\lambda\llbracket x_k\:|\:k \in K \rrbracket
\rightarrow
\hatZ_\lambda
\end{equation}
mapping $f(x_k\:|\:k \in K)$ 
to the element of $\hatZ_\lambda$ obtained by
pinning $f(x_k\:|\:k \in K)$ to the string diagram
$\prod_{k \in K} \zeta_k(\lambda)^{-1}\!
\begin{tikzpicture}[iQ,baseline=-1mm,scale=.9]
\draw[-] (-0.25,0) arc(180:-180:0.25);
\node at (-.4,0) {\strandlabel{\kbar}};
\closeddot{.25,0};
\region{.6,0}{\lambda};
\end{tikzpicture}$ with the variable $x_k$ corresponding to the dot on the $k$th bubble in the product. 
This map is a $\hatZ_\lambda$-module homomorphism but it is {\em not} an algebra homomorphism.
For example, in the most important special case that $K = \{i\}$, when we denote
$Q_{\lambda;K}$ simply by $Q_{\lambda;i}$,
we have that
\begin{equation}\label{thorsten1}
Q_{\lambda;i}(f(x_i)) = \zeta_i(\lambda)^{-1}
\begin{tikzpicture}[iQ,baseline=-1mm,scale=.9]
\draw[-] (-0.25,0) arc(180:-180:0.25);
\pin{.25,0}{1,0}{f(x)};
\region{1.9,0}{\lambda};
\strand{-.4,0}{\ibar};
\end{tikzpicture}
\end{equation}
for $f(x_i) \in \hatZ_\lambda\llbracket x_i\rrbracket$.
Extending $P_{\lambda;i}$ and $Q_{\lambda;i}$ 
to generating functions in the formal variable $u$ 
in the usual way, we have by \cref{ibubblegeneratingfunction} that
\begin{equation}\label{elias}
O_{\lambda;i}(u) = 
P_{\lambda;i}(u+i) + 
Q_{\lambda;i}\big(\tfrac{1}{u-x_i}\big)
\in \hatZ_\lambda\lround u^{-1}\rround.
\end{equation}
Another example of fancy notation:
\begin{equation}\label{why} 
Q_{\lambda;\{i,j\}}\big(\tfrac{1}{i-j+x_i-x_j}\big) = 
\zeta_i(\lambda)^{-1} \zeta_j(\lambda)^{-1}
\begin{tikzpicture}[iQ,baseline=-1mm,scale=1]
\draw[-] (-0.25,.35) arc(180:-180:0.25);
\node at (-.35,.35) {\strandlabel{\jbar}};
\draw[-] (-.25,-.35) arc(180:-180:0.25);
\node at (-.35,-.35) {\strandlabel{\ibar}};
\pinpinmid{.25,.3}{1.7,0}{.25,-.3}{(i-j+x-y)^{-1}};
\region{3.05,0}{\lambda};
\end{tikzpicture}\qquad\text{ for $i \neq j$ in $I$.}
\end{equation}

Now we come to the most important observation of all:
for any $c \in \kk^\times$ and an indeterminate $x$, it makes
sense to evaluate $O_{\lambda;j}(u)$ at $u=c+x$ to obtain a formal power series
$O_{\lambda;j}(c+x) \in 
\hatZ_\lambda\llbracket x \rrbracket$.
To justify this, by \cref{elias}, we have that
\begin{equation}\label{convention}
O_{\lambda;j}(c+x) = P_{\lambda;j}(c+x+j)+Q_{\lambda;j}\big(\tfrac{1}{c+x-x_j}\big)
\in \hatZ_\lambda\llbracket x\rrbracket,
\end{equation}
but we still need to 
explain why the two terms on the right-hand side here make sense.
Since $P_{\lambda;j}(u+j)$ is a polynomial in $\hatZ_\lambda[u]$
for $j \neq 0$, $P_{\lambda;0}(u) \in u^{-1} \hatZ_\lambda[u]$,
and $c+x$ is invertible in $\hatZ_\lambda\llbracket x\rrbracket$, it
makes sense for any $j$ to evaluate $u$ in $P_{\lambda;j}(u+j)$ at $c+x$ to
obtain $P_{\lambda;j}(c+x+j)$.
To understand the second term in \cref{convention}, we have by definition that
$Q_{\lambda;j}\big(\tfrac{1}{u-x_j}\big)
= \zeta_j(\lambda)^{-1}
\sum_{n \geq 0} u^{-n-1}
\begin{tikzpicture}[iQ,baseline=-1mm,scale=.9]
\draw[-] (-0.25,0) arc(180:-180:0.25);
\multcloseddot{.25,0}{west}{n};
\strand{-.4,0}{\jbar};
\end{tikzpicture}$,
so 
$$
Q_{\lambda;j}\big(\tfrac{1}{c+x-x_j}\big)
= \zeta_j(\lambda)^{-1}
\sum_{n \geq 0} (c+x)^{-n-1}
\begin{tikzpicture}[iQ,baseline=-1mm,scale=.9]
\draw[-] (-0.25,0) arc(180:-180:0.25);
\multcloseddot{.25,0}{west}{n};
\strand{-.4,0}{\jbar};
\end{tikzpicture}.
$$
Its $x^m$-coefficient is
$\zeta_j(\lambda)^{-1}
\sum_{n \geq 0}
c^{-m-n-1}\binom{-n-1}{m}
\begin{tikzpicture}[iQ,baseline=-1mm,scale=.9]
\draw[-] (-0.25,0) arc(180:-180:0.25);
\multcloseddot{.25,0}{west}{n};
\strand{-.4,0}{\jbar};
\end{tikzpicture}$,
which is a convergent series in $\hatZ_\lambda$ as the degree of the bubble increases with $n$.
The expressions $O_{\lambda;j}(i-j+x_i)$ on the right-hand side of
the identity in the following theorem are of this form; they
are elements of the ring $\hatZ_\lambda\llbracket x_i\rrbracket$:

\begin{theo}\label{badbunny}
For $\lambda \in X^\imath$, we have that
$$
\prod_{i \in I} O_{\lambda;i}(u-i)
=
\Big(1-\tfrac{(-1)^{p(\lambda)}}{2u}\Big)\id_{\one_\lambda}+\sum_{i \in I}
Q_{\lambda;i}\Big(\tfrac{\prod_{j \in I - \{i\}}O_{\lambda;j}(i-j+x_i)}{u-i-x_i}\Big).
$$
\end{theo}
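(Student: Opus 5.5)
We will treat the identity as an equality of formal Laurent series in $u^{-1}$ with coefficients in $\hatZ_\lambda$, and prove it by a partial fraction expansion of the left-hand side $F(u)\coloneq\prod_{i\in I}O_{\lambda;i}(u-i)$, following the proof of \cite[Th.~7.1]{BSW-update}. The first step is to substitute \cref{elias} into each factor, so that $O_{\lambda;i}(u-i)=P_{\lambda;i}(u)+Q_{\lambda;i}\big(\tfrac{1}{u-i-x_i}\big)$. Since all but finitely many factors lie in $\id+u^{-1}\hatZ_\lambda\llbracket u^{-1}\rrbracket$ and coefficients converge in the GC topology, it suffices to work modulo a GC neighbourhood $\overline{U}_{\lambda;d,J}$. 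Modulo such a neighbourhood only finitely many $i$ contribute nontrivially, and for those only finitely many dotted bubbles survive. So we reduce to a finite product, which can be viewed as a rational function of $u$ after applying the fancy notation $Q_{\lambda;K}$ to rational functions in the variables $x_k$.

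Next, we view $F(u)$ as obtained by applying $Q_{\lambda;K}$ (extended multiplicatively over disjoint bubbles and combined with the polynomial parts $P_{\lambda;j}$) to the rational function
$$\prod_{k} \Big(P_{\lambda;k}(u)+\tfrac{1}{u-k-x_k}\Big),$$
with poles only at $u=k+x_k$ (and at $u=0$ from $P_{\lambda;0}$). We expand it in partial fractions in $u$. The residue at $u=i+x_i$ is $\prod_{j\neq i}O_{\lambda;j}(i-j+x_i)$. Here we interpret this via \cref{convention}: substituting $u=i+x_i$ into the other factors gives exactly the expressions $O_{\lambda;j}(i-j+x_i)\in\hatZ_\lambda\llbracket x_i\rrbracket$ appearing in the statement. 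This produces the sum $\sum_i Q_{\lambda;i}\big(\tfrac{\prod_{j\ne i}O_{\lambda;j}(i-j+x_i)}{u-i-x_i}\big)$. A subtlety is that $Q_{\lambda;K}$ is not multiplicative, so the substitution must be justified for the bubbles indexed by $j\ne i$. We will handle this by keeping them as fancy notation throughout and noting that evaluating $Q_{\lambda;j}$ at $u=i+x_i$ commutes with applying $Q_{\lambda;i}$, because distinct bubbles are independent variables.

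It then remains to identify the polynomial part plus the contribution of the pole at $u=0$, that is, to show these equal $\big(1-\tfrac{(-1)^{p(\lambda)}}{2u}\big)\id$.
\begin{itemize}
\item \emph{Polynomial part.} Since $F(u)\in \id+u^{-1}\hatZ_\lambda\llbracket u^{-1}\rrbracket$, the polynomial part is $\id$. We will check this leading behaviour from the degree count $\sum_i(\del_i-\lambda_i)$, which is $0$ for even-type data after accounting for the $u^{-1}$ in $P_{\lambda;0}$. If needed, we will argue via \cref{munch} instead.
\item \emph{Pole at $u=0$.} The residue at $u=0$ comes from the factor $P_{\lambda;0}(u)=(-1)^{t(\lambda)}u^{-1}$ multiplied by $\prod_{j\ne0}O_{\lambda;j}(-j)$. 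This residue must be evaluated in closed form, and we expect this to be the main obstacle.
\end{itemize}
Our plan for the residue at $u=0$ is to pair $j$ with $-j$ and use \cref{iinfgrass} in the form $O_{\lambda;j}(u-j)O_{\lambda;-j}(j-u)=(-1)^{\del_j+\lambda_j}(j-u)^{-a_{j,-j}}$, as in \cref{kins}. The difficulty is that the residue involves the product $\prod_{j\ne0}O_{\lambda;j}(u-j)$ at $u=0$, not a paired product at $\pm u$. To handle this, we will restrict to $j\in I_\lo\sqcup I_\hi$ with $I_\hi=-I_\lo$ and write the product as $\prod_{j\in I_\lo}O_{\lambda;j}(u-j)O_{\lambda;-j}(u+j)$. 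At $u=0$ this agrees with the paired expression in \cref{kins} evaluated at $u=0$, giving $(-1)^{p(\lambda)}\zeta_0(\lambda)$. Multiplying by $(-1)^{t(\lambda)}$ and using $\zeta_0(\lambda)=\tfrac{(-1)^{t(\lambda)+1}}{2}$ then yields the coefficient $-\tfrac{(-1)^{p(\lambda)}}{2}$ of $u^{-1}$, as required.
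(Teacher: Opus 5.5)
Your proposal is correct and follows essentially the same route as the paper: the constant term comes from $\sum_i(\del_i-\lambda_i)=0$ (which holds for every $\lambda$, not only even ones), each factor is expanded via \cref{elias}, partial fractions in $u$ are regrouped in fancy notation into $Q_{\lambda;i}\big(\prod_{j\neq i}O_{\lambda;j}(i-j+x_i)/(u-i-x_i)\big)$, and the residue at $u=0$ is computed as $(-1)^{t(\lambda)}\prod_{j\neq 0}O_{\lambda;j}(-j)=(-1)^{t(\lambda)+p(\lambda)}\zeta_0(\lambda)$ using \cref{kins} at $x=0$. The only differences are presentational: the paper makes your ``single rational function'' step precise by summing over decompositions $I=J\sqcup K$ with $|K|<\infty$ and applying $Q_{\lambda;K}$ to $[P_{\lambda;J}(u)/\prod_{k\in K}(u-k-x_k)]_{u:<0}$, which is needed because $Q_{\lambda;k}$ does not fix constants, and it works directly with the convergent infinite sums rather than first truncating modulo a GC neighbourhood.
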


\begin{proof}
We first observe that
$\prod_{i \in I}O_{\lambda;i}(u-i)$
is a power series in $\id_{\one_\lambda} + u^{-1} \hatZ_\lambda\llbracket u^{-1}\rrbracket$,
so the constant terms agree on both sides of the identity.
This follows because
\begin{align*}
O_{\lambda;i}(u-i)&\in u^{\del_i-\lambda_i}\id_{\one_\lambda} +
u^{\del_i-\lambda_i-1}\hatZ_\lambda\llbracket u^{-1}\rrbracket
\end{align*}
for each $i \in I$ by \cref{ibubblerelnew},
and $\sum_{i \in I} (\del_i-\lambda_i) = 0$ by \cref{firstchoice,sneaky}.
The proof is now reduced to showing that
\begin{equation}\label{pedro}
\left[\prod_{i \in I} O_{\lambda;i}(u-i)\right]_{u:<0}\!\!\!
=
\frac{(-1)^{p(\lambda)+1}}{2u}\id_{\one_\lambda}
+\sum_{i \in I}
Q_{\lambda;i}\Big(\tfrac{\prod_{j \in I - \{i\}}O_{\lambda;j}(i-j+x_i)}{u-i-x_i}\Big).
\end{equation}
The left-hand side of \cref{pedro}
expands to
\begin{align*}
\Bigg[\sum_{\substack{J, K\text{ with }|K|<\infty\\I=J \sqcup K}}
\Big(\prod_{j \in J} P_{\lambda;j}(u)\Big)
\Big(\prod_{k \in K} Q_{\lambda;k}\big(\tfrac{1}{u-k-x_k}\big)\Big)\Bigg]_{u:<0}.
\end{align*}
Using fancy notation, this can be rewritten more simply as
\begin{align}
\sum_{\substack{J, K\text{ with }|K|<\infty\\I=J \sqcup K}}Q_{\lambda;K} \left(\left[\frac{P_{\lambda;J}(u)}{\prod_{k \in K}(u\!-\!k\!-\!x_k)}\right]_{\!u:<0}\right).\label{night}
\end{align}
Let $y_i \coloneq i + x_i$.
By partial fractions, for any polynomial $f(u)$ in $u$ and 
finite $K \subseteq I$, we have that
\begin{align*}
\left[
\frac{f(u)}{\prod_{k \in K}(u-y_k)}
\right]_{u:<0} &\!=
\left[
\sum_{i \in K} \frac{f(u)}{(u-y_i)\prod_{k \in K - \{i\}}(y_i-y_k)}
\right]_{u:<0}
\!\!\!\!\stackrel{\cref{trick}}{=}\ 
\sum_{i \in K} \frac{f(y_i)}{(u-y_i)\prod_{k \in K - \{i\}}(y_i-y_k)},\\
\left[
\frac{f(u)}{u\prod_{k \in K}(u-y_k)}
\right]_{u:<0} &\!=\cdots=
\sum_{i \in K} \frac{f(y_i)}{y_i(u-y_i)\prod_{k\in K-\{i\}}(y_i-y_k)}
+\frac{f(0)}{u\prod_{k \in K}(-y_k)}.
\end{align*}
In \cref{night}, $P_{\lambda;J}(u)$ is a polynomial (so the first partial fractions identity can be applied taking $f(u)\coloneq P_{\lambda;J}(u)$) unless $0 \in J$, in which case
$P_{\lambda;J}(u) = \frac{(-1)^{t(\lambda)}P_{\lambda;J-\{0\}}(u)}{u}$
with $P_{\lambda;J-\{0\}}(u)$ being a polynomial (so the second partial fractions identity makes sense taking $f(u) \coloneq (-1)^{t(\lambda)}P_{\lambda;J-\{0\}}(u)$).
Using both partial fractions identities, we expand \cref{night}, simplifying slightly to obtain
\begin{equation}\label{todo2}
\sum_{\substack{J, K\text{ with }|K|<\infty\\
I=J \sqcup K}}\!\!\!\!
Q_{\lambda;K}
\bigg(\sum_{i \in K} 
\frac{P_{\lambda;J}(i+x_i)}{(u\!-\!i\!-\!x_i) \prod_{k \in K-\{i\}}(i\!-\!k\!+\!x_i\!-\!x_k)}
\bigg)
+\!\!
\!\!\!\sum_{\substack{J, K\text{ with }|K|<\infty\\
I-\{0\}=J \sqcup K}}
\!\!\!\!\!Q_{\lambda;K}
\bigg(
\frac{(-1)^{t(\lambda)}P_{\lambda;J}(0)}{u\prod_{k \in K}(-k\!-\!x_k)}
\bigg).
\end{equation}
Both terms in this expression make sense as formal power series in $u^{-1}$.
Rewriting the first term by moving the summation over $i \in K$ to the front produces
\begin{align*}
\sum_{i \in I}
\sum_{\substack{J, K\text{ with }|K|<\infty\\
I-\{i\}=J \sqcup K}}
&Q_{\lambda;\{i\}\cup K}
\bigg(
\frac{P_{\lambda;J}(i+x_i)}{(u\!-\!i\!-\!x_i) \prod_{k \in K}(i\!-\!k\!+\!x_i\!-\!x_k)}
\bigg)\\
&=
\sum_{i \in I}
Q_{\lambda;i}\Bigg(\frac{1}{u\!-\!i\!-\!x_i}
\sum_{\substack{J, K\text{ with }|K|<\infty\\
I-\{i\}=J \sqcup K}}
Q_{\lambda;K}\bigg(
\frac{P_{\lambda;J}(i+x_i)}{\prod_{k \in K}(i\!-\!k\!+\!x_i\!-\!x_k)}
\bigg)\Bigg)\\
&=
\sum_{i \in I}
Q_{\lambda;i}\Bigg(\frac{1}{u\!-\!i\!-\!x_i}
\prod_{j \in I - \{i\}}
\bigg(P_{\lambda;j}(i+x_i)+Q_{\lambda;j}\big(\tfrac{1}{i-j+x_i-x_j}\big)
\bigg)\Bigg)\\
&=\sum_{i \in I}
Q_{\lambda;i}\Big(\tfrac{\prod_{j \in I - \{i\}}O_{\lambda;j}(i-j+x_i)}{u-i-x_i}\Big),
\end{align*}
which is the main term on the right-hand side of \cref{pedro}.

It just remains to show
that the second term in \cref{todo2} is equal to 
$-\frac{(-1)^{p(\lambda)}}{2u}\id_{\one_\lambda}$.
It is
\begin{multline}
\frac{(-1)^{t(\lambda)}}{u}\!\!
\sum_{\substack{J, K\text{ with }|K|<\infty\\I - \{0\}=J \sqcup K }}\!\!
\Big({\textstyle\prod_{j \in J}}
P_{\lambda;j}(0)\Big)
\Big({\textstyle\prod_{k \in K}}
Q_{\lambda;k}\big(\tfrac{1}{-k-x_k}\big)
\Big)\\
=
\frac{(-1)^{t(\lambda)}}{u}\!\prod_{i \in I-\{0\}}\!\!
\Big(P_{\lambda;i}(0)+Q_{\lambda;i}\big(\tfrac{1}{-i-x_i}\big)\Big)
=\frac{(-1)^{t(\lambda)}}{u}\!\prod_{i \in I-\{0\}}
O_{\lambda;i}(-i).\label{firstday}
\end{multline}
The identity \cref{kins} implies that
$$
\prod_{i \in I_\lo} O_{\lambda;i}(x-i)O_{\lambda;-i}(i-x)
=
(-1)^{p(\lambda)} \zeta_0(\lambda)(1-2x) \id_{\one_\lambda}
$$
in $\hatZ_\lambda\llbracket x \rrbracket$.
Evaluating at $x=0$
gives that
$\prod_{i \in I-\{0\}} O_{\lambda;i}(-i)
=(-1)^{p(\lambda)} \zeta_0(\lambda) \id_{\one_\lambda}$, which is exactly what is needed to
deduce that \cref{firstday}
equals
$-\frac{(-1)^{p(\lambda)}}{2u}\id_{\one_\lambda}$.
\end{proof}

\begin{cor}\label{worsebunny}
For $\lambda \in X^\imath$ and $i \in I$, we have that
\begin{multline*}
P_{\lambda;i}(u)\!\!\!\! \prod_{j \in I - \{i\}} \!\!\!\!O_{\lambda;j}(u-j)
+Q_{\lambda;i}\bigg(\frac{\prod_{j \in I - \{i\}} O_{\lambda;j}(u-j) -\prod_{j \in I - \{i\}} O_{\lambda;j}(i-j+x_i)}{u-i-x_i}\bigg)\\
=\Big(1-\frac{(-1)^{p(\lambda)}}{2u}\Big)\id_{\one_\lambda}+ \sum_{j \in I-\{i\}}
Q_{\lambda;j}\bigg(\frac{\prod_{k \in I - \{j\}}O_{\lambda;k}(j-k+x_j)}{u-j-x_j}\bigg).
\end{multline*}
\end{cor}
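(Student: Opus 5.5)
This follows from \cref{badbunny} by isolating the $i$th factor of the product on its left-hand side. By \cref{elias}, $O_{\lambda;i}(u-i) = P_{\lambda;i}(u) + Q_{\lambda;i}\big(\tfrac{1}{u-i-x_i}\big)$. Multiplying by $\Pi(u)\coloneq\prod_{j \in I-\{i\}} O_{\lambda;j}(u-j)$ gives
$$
\prod_{j\in I} O_{\lambda;j}(u-j) = P_{\lambda;i}(u)\,\Pi(u) + Q_{\lambda;i}\Big(\tfrac{\Pi(u)}{u-i-x_i}\Big).
$$
Here we use that $Q_{\lambda;i}$ is $\hatZ_\lambda$-linear and that $\Pi(u)$ has coefficients in $\hatZ_\lambda$, so it can be moved inside the fancy notation; this is legitimate coefficientwise in $u^{-1}$.

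Next I add and subtract $Q_{\lambda;i}\big(\tfrac{\prod_{j\neq i}O_{\lambda;j}(i-j+x_i)}{u-i-x_i}\big)$. By \cref{badbunny}, the full left-hand side equals $\big(1-\tfrac{(-1)^{p(\lambda)}}{2u}\big)\id_{\one_\lambda}$ plus the sum over all $j\in I$ of the terms $Q_{\lambda;j}(\cdots)$. The $j=i$ summand of that sum is exactly the term just subtracted. Moving it to the other side leaves the sum over $j\in I-\{i\}$ on the right, and the difference $\Pi(u)-\prod_{j\neq i}O_{\lambda;j}(i-j+x_i)$ appears inside $Q_{\lambda;i}$ on the left, which is the stated identity.

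The only point needing care is that the split term $Q_{\lambda;i}\big(\tfrac{\prod_{j\neq i} O_{\lambda;j}(i-j+x_i)}{u-i-x_i}\big)$ makes sense on its own. It does, since it already appears as a summand in \cref{badbunny}. The combined numerator on the left is divisible by $u-i-x_i$ as a formal expression, so it is in fact a series in $\hatZ_\lambda\llbracket x_i\rrbracket[u]\lround u^{-1}\rround$. However, we do not need this divisibility: the identity holds termwise after expanding $\tfrac{1}{u-i-x_i}$ in powers of $u^{-1}$, so the argument is just a rearrangement of convergent series.
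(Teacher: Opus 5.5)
Your proof is correct and is the paper's argument: rewrite $P_{\lambda;i}(u)$ as $O_{\lambda;i}(u-i)-Q_{\lambda;i}\big(\tfrac{1}{u-i-x_i}\big)$ using \cref{elias}, then cancel the $j=i$ summand on the right-hand side of \cref{badbunny}. You can drop your closing divisibility remark, since the identity does not need it. It would also need care as stated, because $\prod_{j\neq i}O_{\lambda;j}(i-j+x_i)$ is defined factor by factor rather than by substituting $u=i+x_i$ into the $u^{-1}$-expansion of $\Pi(u)$.
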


\begin{proof}
We have that $P_{\lambda;i}(u) = O_{\lambda;i}(u-i)-Q_{\lambda;i}\big(\tfrac{1}{u-i-x_i}\big)$.
Hence,
$$
P_{\lambda;i}(u)\!\!\!\! \prod_{j \in I - \{i\}} \!\!\!\!O_{\lambda;j}(u-j)
= 
\prod_{j \in I} O_{\lambda;j}(u-j)
-
Q_{\lambda;i}\bigg(\frac{\prod_{j \in I - \{i\}} O_{\lambda;j}(u-j)}{u-i-x_i}\bigg).
$$
Using this, the left-hand side of the identity we are trying to prove becomes
$$
\prod_{j \in I} O_{\lambda;j}(u-j)
-Q_{\lambda;i}\bigg(\frac{\prod_{j \in I - \{i\}} O_{\lambda;j}(i-j+x_i)}{u-i-x_i}\bigg).
$$
Now we use \cref{badbunny} to see that this equals
$$
\Big(1-\frac{(-1)^{p(\lambda)}}{2u}\Big)\id_{\one_\lambda}+\sum_{j \in I}
Q_{\lambda;j}\Big(\tfrac{\prod_{k \in I - \{j\}}O_{\lambda;k}(j-k+x_j)}{u-j-x_j}\Big)
-Q_{\lambda;i}\bigg(\frac{\prod_{j \in I - \{i\}} O_{\lambda;j}(i-j+x_i)}{u-i-x_i}\bigg).
$$
The last term cancels with the $j=i$ term in the summation, and the corollary follows.
\end{proof}

To further simplify notation, we adopt the convention of omitting the subscript $\lambda$ when any of $O_{\lambda;i}, 
P_{\lambda;i}$ or $Q_{\lambda;i}$ are drawn in a pinhead in a string diagram, 
since the value of $\lambda$ can be inferred from the label of the ambient 2-cell.
Recall also that we omit 2-cell labels altogether when we are writing something that is true for all possible iweights.
\cref{badbunny} suggests considering 
the following for $i \neq j$:
\begin{align}
\begin{tikzpicture}[iQ,centerzero]
\draw[-] (0,-.35)\botlabel{i} to (0,.35);
\pin{0,0}{1.1,0}{O_{j}(i-j+x)};
\end{tikzpicture}\ ,
\qquad
\begin{tikzpicture}[iQ,centerzero]
\draw[-] (0,-.35)\botlabel{i} to (0,.35);
\pin{0,0}{1.2,0}{O_{-j}(j-i-x)};
\end{tikzpicture}\ ,\qquad
\begin{tikzpicture}[iQ,centerzero]
\draw[-] (0,-.35)\botlabel{i} to (0,.35);
\pin{0,0}{-1.2,0}{O_{j}(i-j+x)};
\end{tikzpicture}\ ,\qquad
\begin{tikzpicture}[iQ,centerzero]
\draw[-] (0,-.35)\botlabel{i} to (0,.35);
\pin{0,0}{-1.3,0}{O_{-j}(j-i-x)};
\end{tikzpicture}\ .\label{coughs}
\end{align}
When the right-hand 2-cell is labelled $\lambda$,
the first of these is the series
$O_{\lambda;j}(i-j+x) \in \hatZ_\lambda\llbracket x\rrbracket$ from \cref{convention}
pinned to the $i$-string, and
the others have an analogous meaning.
We refer to these elements 
of $\End_{\hatUU^\imath}(B_i \one_\lambda)$ collectively as {\em internal bubbles}. 

When pinned to the same string, it is obvious that all internal bubbles commute with each other and with dots.
From \cref{iinfgrass}, we get that
\begin{align}\label{inverses}
\begin{tikzpicture}[iQ,centerzero]
\draw[-] (0,-.6)\botlabel{i} to (0,.6);
\pin{0,.3}{1.05,.3}{O_{j}(i-j+x)};
\pin{0,-.3}{1.15,-.3}{O_{-j}(j-i-x)};
\region{2.3,0}{\lambda};
\end{tikzpicture}&=\begin{tikzpicture}[iQ,centerzero]
\draw[-] (0,-.6)\botlabel{i} to (0,.6);
\pin{0,0}{-1.35,0}{\frac{(-1)^{\del_{j}+\lambda_j}}{(j-i-x)^{a_{j,-j}}}};
\region{.3,0}{\lambda};
\end{tikzpicture}\ ,&
\begin{tikzpicture}[iQ,centerzero]
\draw[-] (0,-.6)\botlabel{i} to (0,.6);
\pin{0,.3}{-1.2,.3}{O_{j}(i-j+x)};
\pin{0,-.3}{-1.3,-.3}{O_{-j}(j-i-x)};
\region{.3,0}{\lambda};
\end{tikzpicture}&=\begin{tikzpicture}[iQ,centerzero]
\draw[-] (0,-.6)\botlabel{i} to (0,.6);
\pin{0,0}{-1.85,0}{\frac{(-1)^{\del_{j}+\lambda_j+a_{i,j}+a_{i,-j}}}{(j-i-x)^{a_{j,-j}}}};
\region{.3,0}{\lambda};
\end{tikzpicture}\ .
\end{align}
This is a place in which we cannot omit the 2-cell label $\lambda$ since the coefficients on the right-hand side depend on it!
Internal bubbles can be moved
across cups and caps, simply negating the sign of the variable $x$. For example:
\begin{align}\label{pivoting}
\begin{tikzpicture}[iQ,centerzero]
\draw[-] (-.5,.3) to (-.5,-.1) to [out=-90,in=-90,looseness=2] (0,-.1) to (0,.3)\toplabel{i};
\pin{0,0}{1.1,0}{O_{j}(i-j+x)};
\end{tikzpicture}&=
\begin{tikzpicture}[iQ,centerzero]
\draw[-] (-.5,.3)\toplabel{\ibar} to (-.5,-.1) to [out=-90,in=-90,looseness=2] (0,-.1) to (0,.3);
\pin{-0.5,0}{-1.75,0}{O_{j}(i-j-x)};
\end{tikzpicture}\ .
\end{align}
The next two
lemmas give {\em internal bubble slides}.

\begin{lem}
For $i, j, k \in I$ with $i \neq k$, we have that
\begin{align}\label{slideright1}
\begin{tikzpicture}[iQ,centerzero,scale=.6]
\draw(-.85,-1)\botlabel{i} to (-.85,1);
\draw(3.2,-1)\botlabel{j} to (3.2,1);
\pin{-.85,0}{1.3,0}{O_{k}(i-k+x)};
\end{tikzpicture}
&=
\begin{tikzpicture}[iQ,centerzero,scale=.6]
\draw(-.3,-1)\botlabel{i} to (-.3,1);
\draw(.6,-1)\botlabel{j} to (.6,1);
\pin{-.3,.76}{2.7,.76}{O_{k}(i-k+x)};
\pinpin{-.3,-0.45}{.6,-0.45}{3.15,-.45}{\frac{(i-k+x-y)^{a_{k,j}}}{(i-k+x+y)^{a_{k,-j}}}};
\end{tikzpicture}\ ,\\\label{slideright2}
\begin{tikzpicture}[iQ,centerzero,scale=.6]
\draw(-1,-1)\botlabel{i} to (-1,1);
\draw(3.4,-1)\botlabel{j} to (3.4,1);
\pin{-1,0}{1.35,0}{O_{-k}(k-i-x)};
\end{tikzpicture}
&=
\begin{tikzpicture}[iQ,centerzero,scale=.6]
\draw(-.3,-1)\botlabel{i} to (-.3,1);
\draw(.6,-1)\botlabel{j} to (.6,1);
\pin{-.3,.76}{2.7,.76}{O_{-k}(k-i-x)};
\pinpin{-.3,-0.45}{.6,-0.45}{3.15,-.45}{\frac{(k-i-x-y)^{a_{k,-j}}}{(k-i-x+y)^{a_{k,j}}}};
\end{tikzpicture}\ ,\\
\begin{tikzpicture}[iQ,centerzero,scale=.6]
\draw(.6,-1)\botlabel{i} to (.6,1);
\draw(-3.2,-1)\botlabel{j} to (-3.2,1);
\pin{.6,0}{-1.4,0}{O_{k}(i-k+x)};
\end{tikzpicture}
&=
\begin{tikzpicture}[iQ,centerzero,scale=.6]
\draw(.2,-1)\botlabel{i} to (.2,1);
\draw(-.7,-1)\botlabel{j} to (-.7,1);
\pin{.2,.76}{-2.6,.76}{O_{k}(i-k+x)};
\pinpin{.2,-0.45}{-.7,-0.45}{-3.1,-.45}{\frac{(i-k+x+y)^{a_{k,-j}}}{(i-k-x+y)^{a_{k,j}}}};
\end{tikzpicture}\ ,\\
\begin{tikzpicture}[iQ,centerzero,scale=.6]
\draw(.95,-1)\botlabel{i} to (.95,1);
\draw(-3.3,-1)\botlabel{j} to (-3.3,1);
\pin{.95,0}{-1.3,0}{O_{-k}(k-i-x)};
\end{tikzpicture}
&=
\begin{tikzpicture}[iQ,centerzero,scale=.6]
\draw(.1,-1)\botlabel{i} to (.1,1);
\draw(-.8,-1)\botlabel{j} to (-.8,1);
\pin{.1,.76}{-3,.76}{O_{-k}(k-i-x)};
\pinpin{.1,-0.45}{-.8,-0.45}{-3.15,-.45}{\frac{(k-i+x-y)^{a_{k,j}}}{(k-i-x-y)^{a_{k,-j}}}};
\end{tikzpicture}\ .
\end{align}
\end{lem}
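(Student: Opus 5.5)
The plan is to reduce each internal bubble slide to the ordinary bubble slide relation \cref{ibubslide} of $\UU^\imath$, applied termwise to the expansion \cref{convention}, and then resum. I treat \cref{slideright1} in detail. The other three identities follow either by the same computation with $x$ replaced by $-x$ (for the $O_{-k}(k-i-x)$ versions), or by using the symmetry of $\UU^\imath$ that reflects diagrams in a vertical axis, which sends right pins to left pins and changes the sign conventions on the variables in the way visible in the statements.

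First I work at the level of generating functions with a formal variable $u$. The bubble slide relation \cref{ibubslide} moves the $(-k)$-bubble generating function $O_{\mu;k}(u)$ from the 2-cell between the $i$- and $j$-strings to the far right 2-cell. I expect it to give a rational factor pinned to the $j$-string of the form
\[
\frac{(u-y)^{a_{k,j}}}{(u+y)^{a_{k,-j}}},
\]
up to sign and normalization factors $\zeta_k(\cdot)$ and $\gamma_k(\cdot)$ that compare the iweights on the two sides. Since $O_{\lambda;k}(u)$ was normalized using $\zeta_k(\lambda)^{-1}$ precisely so that these factors disappear, I will check that the sign of this identity is exactly $1$, using \cref{bone}; this is the same kind of sign bookkeeping as in \cref{nasty} and \cref{unexpectlytrue}. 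The relation holds in $\UU^\imath$, and it also holds in $\hatUU^\imath$ by continuity (\cref{conv2}).

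Next I specialize $u$ to $i-k+x$, where $x$ is the dot on the $i$-string. Because $i\neq k$, the constant $c\coloneq i-k$ is nonzero, so every factor is a well-defined power series in $x$, $y$ with coefficients in $\hatZ_\lambda$, exactly as in the justification of \cref{convention}. Here $y$ is nilpotent on the $j$-string, so $(c+x\pm y)^{-1}$ also makes sense. The key point is that substitution commutes with the bubble slide. To see this, write both sides as series in $u^{-1}$ in the form $\sum_n u^{-n-1}(\cdots)$, after separating off the polynomial part $P_{\lambda;k}$. Pinning $(c+x)^{-n-1}$ to the $i$-string then gives coefficientwise identities, since the dot on the $i$-string commutes with everything happening to the right of it, in particular with the bubble sliding past the $j$-string.

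I expect the main obstacle to be convergence and the legitimacy of this substitution. One needs the $x^m$-coefficients on both sides to be convergent sums in the GC topology, and one needs the rational factor in $u$ to be expanded in $u^{-1}$ compatibly with its later expansion in $x$ and $y$ around $c$. I plan to handle this by multiplying through by the polynomial $(u+y)^{a_{k,-j}}$ when $a_{k,-j}>0$, so that both sides of the identity become polynomial in the fixed-degree factor. Substitution is then a continuous ring homomorphism on the relevant completed algebras, by the degree arguments of \cref{conv1} and \cref{conv2}. A secondary obstacle is the $k=0$ case, where $P_{\lambda;0}(u)=\pm u^{-1}$; this is harmless because $c+x$ is invertible.
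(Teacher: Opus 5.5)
Your proposal is correct and is essentially the paper's argument: apply the bubble slide relation of $\UU^\imath$ to the normalized generating function $O_k(u)$, which already carries exactly the factor $\frac{(u-y)^{a_{k,j}}}{(u+y)^{a_{k,-j}}}$ with no extra sign to check, then substitute $u=i-k+x$ with $x$ the dot on the $i$-string. The other three identities go the same way, with the left-pin versions using the slide in the opposite direction, i.e.\ the inverse factor. One small correction: in $\hatUU^\imath$ the dot $y$ is not nilpotent, so $(c+x\pm y)^{-1}$ makes sense because dots have positive degree and power series in them converge in the GC topology, not because of nilpotency.
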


\begin{proof}
These are all proved in a similar way, so we just explain the first identity. By \cref{ibubslide}, we have that
$$
\begin{tikzpicture}[anchorbase,iQ]
\draw[-] (-0.25,0) arc(180:-180:0.25);
\node at (-.36,0) {\strandlabel{\kbar}};
\node at (.51,0) {$(u)$};
\draw[-] (-.7,-0.5)\botlabel{i} to (-.7,0.5);
\draw[-] (1,-0.5)\botlabel{j} to (1,0.5);
\end{tikzpicture}=
\begin{tikzpicture}[anchorbase,iQ]
\draw[-] (-.8,-0.5)\botlabel{j} to (-.8,0.5);
\draw[-] (-3.4,-0.5)\botlabel{i} to (-3.4,0.5);
\draw[-] (-0.25,0) arc(180:-180:0.25);
\node at (-0.36,0) {\strandlabel{\kbar}};
\pin{-.8,0}{-2,0}{\frac{(u-x)^{a_{k,j}}}{(u+x)^{a_{k, -j}}}};
\node at (.51,0) {$(u)$};
\end{tikzpicture}\ .
$$
Then we replace $u$ by $i-k+x$ pinned to the string of color $i$, also replacing the $x$ pinned to the string of color $j$ by $y$ to take account of the pin changing to a double pin.
\end{proof}

\begin{lem}\label{naughtykitty}
For any $i \neq j$, we have that
\begin{align}\label{goal}
\begin{tikzpicture}[iQ,centerzero]
\draw[-] (0.2,-.5)\botlabel{i} to (0.2,.5);
\pin{0.2,0}{1.3,0}{O_{j}(i-j+x)};
\end{tikzpicture}
&=\begin{tikzpicture}[iQ,centerzero]
\draw (.9,-.5)\botlabel{i} to (.9,.5);
\pin{.9,-.35}{-.55,-.35}{\frac{(i-j+2x)^{a_{i,-j}}}{(i-j)^{a_{i,j}}}};
\pin{.9,.35}{-.4,.35}{O_{j}(i-j+x)};
\end{tikzpicture}\ ,&
\begin{tikzpicture}[iQ,centerzero]
\draw[-] (0,-.5)\botlabel{i} to (0,.5);
\pin{0,0}{-1.35,0}{O_{-j}(j-i-x)};
\end{tikzpicture}
&=
\begin{tikzpicture}[iQ,centerzero]
\draw[-] (0,-.5)\botlabel{i} to (0,.5);
\pin{0,.35}{1.26,.35}{O_{-j}(j-i-x)};
\pin{0,-.35}{1.4,-.35}{\frac{(j-i-2x)^{a_{i,-j}}}{(j-i)^{a_{i,j}}}};
\end{tikzpicture}
\ .
\end{align}
\end{lem}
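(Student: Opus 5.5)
The plan is to deduce both identities from the bubble slide relation \cref{ibubslide}, in its generating-function form as already used in the proof of the preceding lemma. There, a bubble generating function for $O_{k}(u)$ sitting to the right of a $j$-string equals the same generating function to the left, times the pin $\frac{(u-x)^{a_{k,j}}}{(u+x)^{a_{k,-j}}}$ on the $j$-string. Applying this with the string colored $i$ and the bubble labelled by $j$ gives the following identity of Laurent series in $u^{-1}$ with coefficients in $\End_{\hatUU^\imath}(B_i\one_\lambda)$. The series $O_{\lambda;j}(u)$ pinned on the right of the $i$-string equals $O_{\lambda-\alpha_i;j}(u)$ pinned on the left, multiplied by $\frac{(u-x)^{a_{j,i}}}{(u+x)^{a_{j,-i}}}$ pinned to the $i$-string. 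Here $x$ is the dot on that same $i$-string. Since this is an equality of full generating functions, the fake bubble parts $P_{\lambda;j}$ are handled automatically, even though the 2-cell label changes from $\lambda$ to $\lambda-\alpha_i$ across the string.

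Next I would substitute $u=i-j+x$, with $x$ the dot variable of the $i$-string, exactly as in the definition \cref{convention} of internal bubbles. All of the following live in the commutative completed algebra generated over $\hatZ$ (on either side) by the dot on the $i$-string: the dot itself, the bubbles on either side, and pins on the string. Moreover $i-j\neq 0$, so $i-j+x$ is invertible in the relevant power series ring. Hence evaluation at $u=i-j+x$ is a well-defined ring homomorphism, and it can be applied coefficientwise to both sides of the slide relation.

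Under this substitution the pinned factor becomes
\[
\frac{(i-j+x-x)^{a_{j,i}}}{(i-j+x+x)^{a_{j,-i}}}=\frac{(i-j)^{a_{i,j}}}{(i-j+2x)^{a_{i,-j}}},
\]
using the symmetry of the Cartan matrix. Both $(i-j)^{a_{i,j}}$ and $i-j+2x$ are invertible. Multiplying through by the inverse factor therefore yields exactly the first identity in \cref{goal}: the internal bubble on the right equals the internal bubble on the left times $\frac{(i-j+2x)^{a_{i,-j}}}{(i-j)^{a_{i,j}}}$.

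The second identity is proved the same way. Start from the slide relation for the bubble labelled $-j$, moving it this time from left to right across the $i$-string, and substitute $u=j-i-x$. This is again legitimate because $j-i\neq0$. The factor becomes $\frac{(j-i-2x)^{a_{i,-j}}}{(j-i)^{a_{i,j}}}$, after using $a_{-j,i}=a_{j,-i}$ and $a_{-j,-i}=a_{j,i}$.

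The main obstacle is the justification of the substitution when the variable being substituted involves the very dot appearing in the slide relation's pin. The bubble slide relation is a priori an identity of series whose $u^{-n}$-coefficients are combinations of dotted strings and bubbles. To make the substitution rigorous, I would expand both sides fully in powers of $u^{-1}$ with coefficients of the form (polynomial in $x$)$\cdot$(bubble). I would then check that the substituted series converge in the GC topology, by the degree argument following \cref{convention}: the degrees of the bubbles increase with $n$. Finally, the rearrangement is valid because everything involved commutes on a single string, which is noted just after \cref{coughs}.
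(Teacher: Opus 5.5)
Your method works, but you have stated the bubble slide relation the wrong way round. As displayed in the proof of the two-string slide \cref{slideright1}, a bubble $O_k(u)$ immediately to the \emph{left} of a $j$-string equals the pin $\frac{(u-x)^{a_{k,j}}}{(u+x)^{a_{k,-j}}}$ on that string times the same bubble immediately to the \emph{right}.

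With your orientation, substituting $u=i-j+x$ says that the internal bubble with pinhead on the right equals the one with pinhead on the left times $\frac{(i-j)^{a_{i,j}}}{(i-j+2x)^{a_{i,-j}}}$. Multiplying by the inverse factor then gives the \emph{inverse} of the first identity in \cref{goal}, not the identity itself. These genuinely differ in general (e.g. when $j=-i$), and your last step silently flips them.

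With the correct orientation, the substitution says that $O_j(i-j+x)$ with pinhead on the left equals $\frac{(i-j)^{a_{i,j}}}{(i-j+2x)^{a_{i,-j}}}$ times $O_j(i-j+x)$ with pinhead on the right. Inverting that factor gives exactly the first identity. For the second identity, the correctly oriented relation for the $-j$ bubble with $u=j-i-x$ gives it directly, with no inversion.

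Once this is fixed, your proof is a valid route that differs from the paper's. The paper writes the $i$-string as a zigzag and uses \cref{slideright1} to push the pinhead across the middle segment (colored $-i$). This produces a double pin in $x,y$; the paper then slides the $y$-dot around the cup so that $y$ becomes $-x$, and straightens the zigzag. It deduces the second identity from the first via \cref{inverses} (or by rotation). That route only ever substitutes $u$ by a dot on a \emph{different} string from the one carrying the slide pin, so the collision of variables is resolved geometrically by pivotality.

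Your route resolves the collision algebraically. The fact doing the work is that $u\mapsto c\pm x$ (with $c\neq 0$) is a continuous homomorphism of $C$-algebras. Its domain is the Laurent series in $u^{-1}$ of bounded degree (with $\deg u=2$) over the commutative algebra $C$ generated by the dot and the bubbles on both sides, and its target is the completion of $C$. Since $x\in C$, the slide pin simply evaluates to $F(c+x,x)$; for instance $\sum_{m\ge 0}x^m(c+x)^{-m-1}=c^{-1}$. This homomorphism property, rather than convergence of each side separately, is what your final paragraph should rest on. In exchange for this justification, you avoid the zigzag and treat both identities uniformly.
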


\begin{proof}
It suffices to prove the first equality. Then the second can be deduced using \cref{inverses,bone} (or using \cref{izigzag,pivoting}). Here is the proof of the first equality:
\begin{align*}
\begin{tikzpicture}[iQ,centerzero]
\draw[-] (0,-.5)\botlabel{i} to (0,.5);
\pin{0,0}{1.05,0}{O_{j}(i-j+x)};
\end{tikzpicture}
&=
\begin{tikzpicture}[iQ,centerzero]
\draw[-] (0.2,.5) to (0.2,0) to [out=-90,in=-90,looseness=.7] (2.4,0) to [out=90,in=90,looseness=2] (2.9,0) to (2.9,-.5) \botlabel{i};
\pin{0.2,0}{1.25,0}{O_{j}(i-j+x)};
\end{tikzpicture}
\stackrel{\cref{slideright1}}{=}
\begin{tikzpicture}[iQ,centerzero]
\draw[-] (0,.5) to (0,0) to [out=-90,in=-90,looseness=2] (.5,0) to [out=90,in=90,looseness=.7] (2.8,0) to (2.8,-.5) \botlabel{i};
\pin{0,0}{1.7,0}{O_{j}(i-j+x)};
\pinpin{.44,-.2}{0.06,-.2}{-1.4,-.2}{\frac{(i-j+x-y)^{a_{j,-i}}}{(i-j+x+y)^{a_{j,i}}}};
\end{tikzpicture}\\
&=\begin{tikzpicture}[iQ,centerzero]
\draw[-] (0,.5) to (0,0) to [out=-90,in=-90,looseness=2] (.5,0) to [out=90,in=90,looseness=2] (1,0) to (1,-.5) \botlabel{i};
\pin{0,.4}{-1.2,0.4}{O_{j}(i-j+x)};
\pin{0.06,-.2}{-1.4,-.3}{\frac{(i-j+x+x)^{a_{i,-j}}}{(i-j+x-x)^{a_{i,j}}}};
\end{tikzpicture}=\begin{tikzpicture}[iQ,centerzero]
\draw[-] (0,.5) to (0,-.5)  \botlabel{i};
\pin{0,.35}{-1.2,0.35}{O_{j}(i-j+x)};
\pin{0,-.35}{-1.4,-.35}{\frac{(i-j+2x)^{a_{i,-j}}}{(i-j)^{a_{i,j}}}};
\end{tikzpicture}\ .\qedhere
\end{align*}
\end{proof}

From \cref{inverses,naughtykitty}, it follows that
\begin{align}\label{sleepydog}
\begin{tikzpicture}[iQ,centerzero]
\draw[-] (0,-.6)\botlabel{i} to (0,.6);
\pin{0,.2}{1.1,.2}{O_{j}(i-j+x)};
\pin{0,-.2}{-1.25,-.2}{O_{-j}(j-i-x)};
\region{.7,-.3}{\lambda};
\end{tikzpicture}&=(-1)^{\lambda_j}\ \begin{tikzpicture}[iQ,centerzero]
\draw[-] (0,-.6)\botlabel{i} to (0,.6);
\pin{0,0}{-.85,0}{s_{i,j}(x)};
\region{.3,0}{\lambda};
\end{tikzpicture}
\end{align}
for any $i \neq j$ in $I$, where
\begin{align}
s_{i,j}(x) &\coloneq
(-1)^{\del_j}
\frac{(j-i-2x)^{a_{i,-j}}}{(j-i)^{a_{i,j}}(j-i-x)^{a_{j,-j}}} \in \kk\llbracket x \rrbracket.
\end{align}
As $\del_j+\del_{-j} = -a_{j,-j}$,
we have that 
\begin{equation}\label{baldy}
s_{i,j}(x) = (-1)^{a_{i,j}+a_{i,-j}} s_{-i,-j}(-x).
\end{equation}

\begin{lem}\label{deadly}
For any $i \in I$, we have that 
$\displaystyle\prod_{j \in I-\{i\}} s_{i,j}(x) = \zeta_i \zeta_{-i} 
\gamma_{-i,i}$.
\end{lem}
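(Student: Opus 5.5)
The plan is a direct computation. The key point is that the product over $j \in I-\{i\}$ is really a finite product. The factor
$$
s_{i,j}(x)=(-1)^{\del_j}\frac{(j-i-2x)^{a_{i,-j}}}{(j-i)^{a_{i,j}}(j-i-x)^{a_{j,-j}}}
$$
equals $1$ unless one of the following is nonzero: $\del_j$, $a_{i,j}$ (with $j\neq i$), $a_{i,-j}$, or $a_{j,-j}$. By \cref{cartanmatrix,firstchoice}, this happens only for
$$
j \in \{i\pm 1\}\cup\{-i,\,-i\pm 1\}\cup\{0,\pm h\}.
$$
So I will first record, for each $j$ in this finite set $S_i$, the exponents $a_{i,j}$, $a_{i,-j}$, $a_{j,-j}$ and the sign $(-1)^{\del_j}$. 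I will then multiply the resulting factors as rational functions of $x$ (they are invertible in $\kk\llbracket x\rrbracket$ since $j\neq i$). I expect all $x$-dependence to cancel, leaving a constant.

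I will organise the computation as a rational-function identity in the variable $y\coloneq i+x$. In the generic case (all elements of $S_i$ distinct and $i\notin\{0,\pm h\}$), the contributions are as follows.
\begin{itemize}
\item $j=i\pm1$ gives the pure constant $\big((1)(-1)\big)^{-1}$.
\item $j=-i$ gives $(-2y)^2$.
\item $j=-i\pm1$ gives $\big((-2y-1)(-2y+1)\big)^{-1}$.
\item $j=\pm h$ give $(h-y)(-h-y)=\frac{(2y)^2-1}{4}$, using $\del_h=1$ and $a_{h,-h}=-1$.
\item $j=0$ gives $(-1)^{\del_0}(-y)^{-2}$.
\end{itemize}
Multiplying these out, the $y$-dependence visibly cancels. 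This leaves a sign, which I will then compare with $\zeta_i\zeta_{-i}\gamma_{-i,i}$ computed from \cref{shape,bone}. In particular, $\gamma_{-i,i}$ depends on whether $-i\in I_+$, $-i=0$ or $-i\in I_-$.

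The real work is the degenerate cases, where elements of $S_i$ coincide or $i$ itself lies in $\{0,\pm h\}$. These are $i\in\{0,\pm h,\pm 1,\pm h\pm 1\}$, the case $2i=\pm1$, and, in positive characteristic, small $p$ where $i\pm1=-i\mp1$ and so on. In each such case the exponent $a_{i,j}$ (or $a_{i,-j}$) is no longer a single $\pm1$ or $2$, and some $j$ are excluded because $j=i$. For example:
\begin{itemize}
\item For $i=0$, the $j=-i$ factor disappears, and $j=\pm1$ contribute both through $a_{0,j}$ and through $a_{0,-j}$.
\item For $i=h$, the value $j=-h$ plays simultaneously the roles $-i$, $i-1$ and $\pm h$.
\item For $i=-h$, the normalisation $\zeta_{-h}=-2$ must reappear as the constant $2$ coming from $(j-i-x)^{a_{j,-j}}$ at $j=h$.
\end{itemize}
I will handle these cases one at a time, checking in each that the $x$-dependence still cancels and that the constant matches. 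I will use the symmetry \cref{baldy}, together with $\zeta_i\zeta_{-i}\gamma_{-i,i}$ versus $\zeta_{-i}\zeta_i\gamma_{i,-i}$ and $\gamma_{i,-i}=\gamma_{i,i}$, to reduce to $i$ in (say) $I_+\cup\{0\}$. Under $x\mapsto -x$, \cref{baldy} multiplies the product by $\prod_j(-1)^{a_{i,j}+a_{i,-j}}$, which is computable.

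I expect the main obstacle to be the sign bookkeeping in these coincident cases, rather than the cancellation of $x$. The signs come from $(-1)^{\del_j}$, from the orientation $j-i$ versus $i-j$, and from the parity conventions for $I_\pm$ (for instance $1\in I_-$ exactly when $p\equiv 3\pmod 4$). They must be matched against the case-by-case values of $\zeta$ and $\gamma$. Since I am not certain of the overall sign in the generic case, I will check it first against a small explicit example: $p=0$ with $i$ generic in $I_+$. This will pin down whether the generic constant is $-1$, and so fix conventions, before I attempt the exceptional cases.
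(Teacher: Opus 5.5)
Your plan is essentially the paper's proof: a direct computation over the finite support of $j$, where the generic case gives $-1=\zeta_i\zeta_{-i}\gamma_{-i,i}$ and the cases $i\in\{0,\pm h\}$ are checked separately. The paper groups the factors by type into four separate products, and this shows that role-coincidences such as $i=\pm1$ or small $p$ are harmless, so only $i\in\{0,\pm h\}$ is genuinely exceptional.

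One slip to fix: the $j=h$ factor carries the sign $(-1)^{\del_h}=-1$, which is missing from your displayed contribution $(h-y)(-h-y)$. Without it your generic factors multiply to $+1$; with it they give the correct value $-1$.
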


\begin{proof}
This is a direct calculation.
The product $\prod_{j \in I-\{i\}} s_{i,j}(x)$ is the product of the following four terms:
\begin{align*}
A&\coloneq\!\!\!\prod_{j \in I-\{i\}}\!\!(-1)^{\del_j},&
B&\coloneq\!\!\!\prod_{j \in I-\{i\}}\!\!(j-i)^{-a_{i,j}},&
C&\coloneq\!\!\!\prod_{j \in I-\{i\}}\!\!(j-i-2x)^{a_{i,-j}},&
D&\coloneq\!\!\!\prod_{j \in I-\{i\}}\!\!(j-i-x)^{-a_{j,-j}}.
\end{align*}
Only $j = i \pm 1$ contribute to $B$, so $B = -1$ always. For the other three products,
only $j= 0,h$ contribute to $A$, only $j =-i,-i\pm 1$ contribute to $C$, and only $j =0,\pm h$ contribute to $D$.
When $i \notin \{0,\pm h\}$, we have that
$A=1,B=-1,C=\frac{4(i+x)^2}{4(i+x)^2-1}$
and $D=\frac{(i+x)^2-h^2}{(i+x)^2}$
so $ABCD=-1$, which equals 
$\zeta_i\zeta_{-i}\gamma_{-i,i}=-1$ in this case.
We leave the check of the exceptional cases
$i = 0$ and $i=\pm h$ to the reader.
\end{proof}

In the case $i=0$, we need the following more refined statement:

\begin{lem}\label{sin}
$\displaystyle\prod_{j \in I_\hi} s_{0,j}(x) = \zeta_0 (h+x)/(h-x)$
and
$\displaystyle\prod_{j \in I_\lo} s_{0,j}(x) = \zeta_0(h-x)/(h+x)$.
\end{lem}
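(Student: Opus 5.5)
The plan is a direct computation from the definition
$s_{0,j}(x)=(-1)^{\del_j}(j-2x)^{a_{0,-j}}\big/\big(j^{a_{0,j}}(j-x)^{a_{j,-j}}\big)$, in the same spirit as \cref{deadly}. The refinement is only in tracking which factors land in $I_\hi$ and which in $I_\lo$. For $j\in I-\{0\}$, all of the exponents $\del_j$, $a_{0,j}$, $a_{0,-j}$, $a_{j,-j}$ are local data, so I first list the $j$ for which $s_{0,j}(x)\neq 1$. By \cref{cartanmatrix}, $a_{0,j}$ and $a_{0,-j}$ are non-zero (equal to $-1$) only for $j=\pm1$. The exponent $a_{j,-j}$ is non-zero only for $j=0$, which is excluded, and for $j=\pm h$, where it equals $-1$ because $h-(-h)=1$. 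By \cref{firstchoice}, $\del_j\neq 0$ only for $j=h$ (and $j=0$, again excluded). So only $j\in\{\pm1,\pm h\}$ contribute.

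Next I compute the contributions. Since the exponents simply add when several conditions hold for the same $j$, the formula for $s_{0,j}$ is multiplicative in these elementary factors. For $j=\pm1$ the factor is $j/(j-2x)$, giving $1/(1-2x)$ for $j=1$ and $1/(1+2x)$ for $j=-1$. For $j=\pm h$ the factor is $(-1)^{\del_j}(j-x)$, giving $-(h-x)$ for $j=h$ and $-h-x=-(h+x)$ for $j=-h$. From \cref{dynkintable} and the properties listed after \cref{partitions} we have $h\in I_\lo$ and $1\in I_\hi$, hence $-h\in I_\hi$ and $-1\in I_\lo$.

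It follows that $\prod_{j\in I_\hi}s_{0,j}(x)=-(h+x)/(1-2x)=-(h+x)/\big(2(h-x)\big)=\zeta_0(h+x)/(h-x)$, using $\zeta_0=-\tfrac12$ from \cref{shape}. Symmetrically, $\prod_{j\in I_\lo}s_{0,j}(x)=-(h-x)/(1+2x)=\zeta_0(h-x)/(h+x)$.

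The only point needing care is coincidences among $\{\pm1,\pm h\}$. In characteristic $p=3$ we have $-h=1$ and $h=-1$, so a single $j$ carries two of the factors above. For example, $s_{0,1}(x)$ then has both exponent $a_{0,-1}=-1$ and exponent $a_{1,-1}=-1$. Since $\del_1=0$ in that case, it equals $(1-x)/(1-2x)$, which is exactly the product of the two separately computed factors. Because every such coincidence places the merged factors in the same part $I_\hi$ or $I_\lo$ (here $1=-h\in I_\hi$), the displayed products are unchanged. The same check covers $j=-1=h\in I_\lo$.
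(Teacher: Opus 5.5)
Your proposal is correct and follows essentially the same route as the paper: a direct computation showing that only $j\in\{\pm1,\pm h\}$ contribute, with $h,-1\in I_\lo$ and $-h,1\in I_\hi$. Your explicit check of the $p=3$ coincidences $-h=1$ and $h=-1$, where the merged factors still fall in the same part $I_\hi$ or $I_\lo$, covers a detail the paper's one-line proof leaves implicit.
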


\begin{proof}
This is a similar calculation to the one in \cref{deadly}. In the notation from that proof, when $i=0$, the only $j \neq 0$ that contribute are
$j=h \in I_\lo$ (to $A$ and $D$), $j=-h \in I_\hi$ (to $D$), and $j=\pm 1$ (to $B$ and $C$), remembering that
$1 \in I_\hi$ and $-1 \in I_\lo$.
\end{proof}

Now we are going to consider the following infinite products for $i \in I$ 
and $\lambda \in X^\imath$:
\begin{align}\label{quicka}
\Delta_{\lambda;i}(x)
&\coloneq
\begin{dcases}
(-1)^{p(\lambda)}\prod_{j \in I-\{i\}}O_{\lambda;j}(i-j+x)
&\text{if $i \in I_+$}\\
\frac{1}{h+x}
\prod_{j \in I_\hi} O_{\lambda;j}(-j+x)
&\text{if $i = 0$}\\
\id_{\one_\lambda}
\hspace{60mm}
&\text{if $i \in I_-$,}
\end{dcases}\\
\label{quickb}
\nabla_{\lambda;i}(x)
&\coloneq
\begin{dcases}
\id_{\one_\lambda}
&\text{if $i \in I_+$}\\
(-1)^{p(\lambda)}\zeta_0(\lambda)^{-1}
(h+x)\prod_{j \in I_\lo}O_{\lambda;j}(-j+x)&\text{if $i = 0$}\\
(-1)^{p(\lambda)}\zeta_i(\lambda)^{-1}
\prod_{j \in I-\{i\}}O_{\lambda;j}(i-j+x)
&\text{if $i \in I_-$.}
\end{dcases}
\end{align}
These are all elements of $\hatZ_\lambda\llbracket x\rrbracket$.
Also let
\begin{equation}\label{quickc}
\Deltanabla_{\lambda;i}(x) \coloneq
\Delta_{\lambda;i}(x)\nabla_{\lambda;i}(x) = (-1)^{p(\lambda)}\zeta_i(\lambda)^{-1}\prod_{j \in I - \{i\}}O_{\lambda;j}(i-j+x).
\end{equation}
From the definitions, \cref{munch,kins,eitherway}, we have that
\begin{align}\label{bibble}
\Delta_{\lambda;-i}(-x)\nabla_{\lambda;i}(x) &= 
\begin{dcases}
1 &\text{if $i \in I_+$}\\
2x+1&\text{if $i=0$}\\
\frac{x-1}{2(x-h)^2}&\text{if $i=-h$}\\
\frac{1}{4(i+x)^2}-1&\text{if $i \in I_--\{-h\}$.}
\end{dcases}
\end{align}
Hence, $\Delta_{\lambda;i}(x)$, $\nabla_{\lambda;i}(x)$ 
and $\Deltanabla_{\lambda;i}(x)$
are all invertible in 
$\hatZ_\lambda\llbracket x \rrbracket$. 
Moreover, setting $x=0$ in the $i=0$ case of \cref{bibble}, we have that
\begin{equation}\label{betterargument}
\Deltanabla_{\lambda;0}(0) = 1.
\end{equation}

Again we use the convention 
of 
omitting the subscript $\lambda$
when $\Delta_{\lambda;i}(x)$, $\nabla_{\lambda;i}(x)$ 
or $\Deltanabla_{\lambda;i}(x)$ is pinned 
to a dot in a string diagram, 
since the intended value of $\lambda$ 
can be inferred from the label of the 2-cell containing the pinhead.
We use the following additional shorthands, which we refer to as {\em glyphs}:
\begin{align}\label{longyear1a}
\begin{tikzpicture}[iQ,centerzero]
\draw[-] (0,-.5)\botlabel{i} to (0,.5);
\internalbubbleplus[180]{0,0};
\end{tikzpicture}
&\coloneq
\begin{tikzpicture}[iQ,centerzero]
\draw[-] (0,-.5)\botlabel{i} to (0,.5);
\pin{0,0}{.75,0}{\Delta_i(x)};
\end{tikzpicture}\ ,&
\begin{tikzpicture}[iQ,centerzero]
\draw[-] (0,-.5)\botlabel{i} to (0,.5);
\internalbubbleplus[0]{0,0};
\end{tikzpicture}
&\coloneq
\begin{tikzpicture}[iQ,centerzero]
\draw[-] (0,-.5)\botlabel{i} to (0,.5);
\pin{0,0}{.75,0}{\nabla_i(x)};
\end{tikzpicture}\ ,\\
\begin{tikzpicture}[iQ,centerzero]
\draw[-] (0,-.5)\botlabel{i} to (0,.5);
\internalbubbleminus[180]{0,0};
\end{tikzpicture}
&\coloneq
\begin{tikzpicture}[iQ,centerzero]
\draw[-] (0,-.5)\botlabel{i} to (0,.5);
\pin{0,0}{-1,0}{\Delta_{-i}(-x)};
\end{tikzpicture}\ ,
\label{longyear2a}&
\begin{tikzpicture}[iQ,centerzero]
\draw[-] (0,-.5)\botlabel{i} to (0,.5);
\internalbubbleminus[0]{0,0};
\end{tikzpicture}
&\coloneq 
\begin{tikzpicture}[iQ,centerzero]
\draw[-] (0,-.5)\botlabel{i} to (0,.5);
\pin{0,0}{-1,0}{\nabla_{-i}(-x)};
\end{tikzpicture}\ .
\end{align}
The convention here is that the glyphs labelled $+$ are shorthand for a pin in the 2-cell immediately to the right of the glyph, while the glyphs labelled $-$ are pins to the left.
There are also the {\em double glyphs}:
\begin{align}\label{doggynose}
\begin{tikzpicture}[iQ,centerzero]
\draw[-] (0,-.5)\botlabel{i} to (0,.5);
\fatbubbleplus{0,0};
\end{tikzpicture}
&\coloneq
\begin{tikzpicture}[iQ,centerzero]
\draw[-] (0,-.5)\botlabel{i} to (0,.5);
\internalbubbleplus[0]{0,-.18};
\internalbubbleplus[180]{0,.18};
\end{tikzpicture}
=\begin{tikzpicture}[iQ,centerzero]
\draw[-] (0,-.5)\botlabel{i} to (0,.5);
\pin{0,0}{.8,0}{\Deltanabla_i(x)};
\end{tikzpicture}\ ,&
\begin{tikzpicture}[iQ,centerzero]
\draw[-] (0,-.5)\botlabel{i} to (0,.5);
\fatbubbleminus{0,0};
\end{tikzpicture}
&\coloneq
\begin{tikzpicture}[iQ,centerzero]
\draw[-] (0,-.5)\botlabel{i} to (0,.5);
\internalbubbleminus[0]{0,-.18};
\internalbubbleminus[180]{0,.18};
\end{tikzpicture}
=\begin{tikzpicture}[iQ,centerzero]
\draw[-] (0,-.5)\botlabel{i} to (0,.5);
\pin{0,0}{-1.1,0}{\Deltanabla_{-i}(-x)};
\end{tikzpicture}\ .
\end{align} 
The following
is a restatement of \cref{badbunny} using the new glyph notation:

\begin{theo}\label{badbunny2}
For any $\lambda \in X^\imath$, we have that
\begin{equation}\label{whydidittakesolong}
\prod_{i \in I}
\begin{tikzpicture}[iQ,centerzero,scale=.9]
\draw (-0.25,0) arc(180:-180:0.25);
\node at (-0.37,0) {\strandlabel{\ibar}};
\node at (.88,0) {\color{black}$(u-i)$};
\region{1.7,0}{\lambda};
\end{tikzpicture}
=
\big(1-\tfrac{(-1)^{p(\lambda)}}{2u}\big)\id_{\one_\lambda}+
(-1)^{p(\lambda)}
\sum_{i \in I}
\begin{tikzpicture}[iQ,anchorbase,scale=2]
\draw (0.3,-.05) to (0.3,.05) to [out=90,in=90,looseness=2] (-.1,.05) to (-.1,-.05) to [out=-90,in=-90,looseness=2] (.3,-.05);
\fatbubbleplus{.3,0.1};
\region{.7,.15}{\lambda};
\pin{.3,-.1}{.7,-.1}{\frac{1}{u-i-x}};
\stringlabel{-.18,0}{\ibar};
\end{tikzpicture}\ .
\end{equation}
\end{theo}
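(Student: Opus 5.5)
The plan is to derive this directly from \cref{badbunny}, since the left-hand sides agree by definition: $O_{\lambda;i}(u-i)$ is the bubble generating function of color $-i$ evaluated at $u-i$. So it suffices to identify, for each $i\in I$, the $i$th summand on the right of \cref{badbunny},
$$
Q_{\lambda;i}\Big(\tfrac{\prod_{j \in I - \{i\}}O_{\lambda;j}(i-j+x_i)}{u-i-x_i}\Big),
$$
with $(-1)^{p(\lambda)}$ times the $i$th closed diagram in \cref{whydidittakesolong}.

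Next unpack that diagram. It is a bubble of color $-i$ (labelled $\ibar$ on the left) whose right-hand $i$-strand carries the double glyph, namely $\Deltanabla_{\lambda;i}(x)$ pinned to the right, together with $\frac{1}{u-i-x}$ pinned to the same dot. Since all pins on one string commute and combine into a single pin, the diagram equals the $\ibar$-bubble with the single series $\Deltanabla_{\lambda;i}(x)/(u-i-x)$ pinned to its right side, with coefficients in $\hatZ_\lambda$ placed in the outer 2-cell. By \cref{quickc},
$$
\Deltanabla_{\lambda;i}(x)=(-1)^{p(\lambda)}\zeta_i(\lambda)^{-1}\prod_{j\in I-\{i\}}O_{\lambda;j}(i-j+x).
$$
By \cref{thorsten1}, $Q_{\lambda;i}(f(x_i))$ is exactly $\zeta_i(\lambda)^{-1}$ times the $\ibar$-bubble with $f$ pinned to its dot. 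Multiplying the diagram by $(-1)^{p(\lambda)}$ cancels the sign $(-1)^{p(\lambda)}$ in $\Deltanabla$, and the factor $\zeta_i(\lambda)^{-1}$ matches the normalization in $Q_{\lambda;i}$. So the two summands agree termwise, and summing over $i$ gives the theorem.

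The only point needing care, and the expected obstacle, is bookkeeping of conventions: that the 2-cell where the pinhead sits is the outer region labelled $\lambda$, so the coefficients $O_{\lambda;j}$ are the ones for $\lambda$; that the orientation and color convention of the glyph ($+$ glyphs pin to the right, on the $i$-colored side of the bubble) matches the one in \cref{thorsten1}; and that the pinned power series make sense in $\hatZ_\lambda\llbracket x\rrbracket$, as justified by \cref{convention}. Convergence of the sum over $i$ is inherited from \cref{badbunny}.
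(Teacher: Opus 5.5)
Your proposal is correct and takes the same route as the paper, which presents this theorem simply as a restatement of \cref{badbunny} in glyph notation. Your unpacking supplies exactly the bookkeeping behind that restatement: you combine the double glyph and the pin $\frac{1}{u-i-x}$ into the single pin $\Deltanabla_{\lambda;i}(x)/(u-i-x)$ on the $i$-strand of the $\ibar$-bubble, in the outer region $\lambda$, and then use \cref{quickc} and \cref{thorsten1} so that the factors $(-1)^{p(\lambda)}$ and $\zeta_i(\lambda)^{\pm 1}$ cancel.
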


\begin{cor}\label{worsebunny2}
If $i \neq 0$ then
\begin{equation}\label{phewy}
\zeta_i(\lambda)\ \begin{tikzpicture}[iQ,baseline=-3.9mm,scale=1.9]
\draw (-0.15,-0.4)\botlabel{i} to (-.15,0);
\pin{-.15,-.1}{-1.4,-.1}{P_i(i+x)\Deltanabla_i(x)+
Q_i\big(\partial_{x,x_i}\Deltanabla_i(x)\big)};
\region{-1.6,-.4}{\lambda};
\end{tikzpicture}=
(-1)^{p(\lambda)}\!\begin{tikzpicture}[iQ,baseline=-3.9mm,scale=1.9]
\draw (-0.15,-0.4)\botlabel{i} to (-.15,0);
\region{-.25,-.2}{\lambda};
\end{tikzpicture}
\ - \ 
\begin{tikzpicture}[iQ,baseline=-3.9mm,scale=1.9]
\draw (-0.15,-0.4)\botlabel{i} to (-.15,0);
\pin{-.15,-.1}{-.6,-.1}{\frac{1}{2(i+x)}};
\region{-.6,-.4}{\lambda};
\end{tikzpicture}+\!
\sum_{j \in I - \{i\}}\!\zeta_j(\lambda)\ 
\begin{tikzpicture}[iQ,baseline=-3.9mm,scale=1.9]
\draw (-0.15,-0.4)\botlabel{i} to (-.15,0);
\pin{-.15,-.1}{-.9,-.1}{Q_j\Big(\frac{\Deltanabla_j(x_j)}{i+x-j-x_j}\Big)};
\region{-.9,-.4}{\lambda};
\end{tikzpicture}\ .
\end{equation}
Also
\begin{equation}\label{louis}
\begin{tikzpicture}[iQ,baseline=-3.9mm,scale=1.9]
\draw (-0.15,-0.4)\botlabel{0} to (-.15,0);
\pin{-.15,-.1}{-.65,-.1}{
\frac{1-\Deltanabla_0(x)}{2x}};
\region{-.65,-.4}{\lambda};
\end{tikzpicture}+
\zeta_0(\lambda)\ \begin{tikzpicture}[iQ,baseline=-3.9mm,scale=1.9]
\draw (-0.15,-0.4)\botlabel{0} to (-.15,0);
\pin{-.15,-.1}{-.9,-.1}{
Q_0\big(\partial_{x,x_0}\Deltanabla_0(x)\big)};
\region{-.9,-.4}{\lambda};
\end{tikzpicture}=
(-1)^{p(\lambda)}\!\begin{tikzpicture}[iQ,baseline=-3.9mm,scale=1.9]
\draw (-0.15,-0.4)\botlabel{0} to (-.15,0);
\region{-.25,-.2}{\lambda};
\end{tikzpicture}+\!
\sum_{j \in I - \{0\}}\!\zeta_j(\lambda)\ 
\begin{tikzpicture}[iQ,baseline=-3.9mm,scale=1.9]
\draw (-0.15,-0.4)\botlabel{0} to (-.15,0);
\pin{-.15,-.1}{-.85,-.1}{Q_j\Big(\frac{\Deltanabla_j(x_j)}{x-j-x_j}\Big)};
\region{-.85,-.4}{\lambda};
\end{tikzpicture}\ .
\end{equation}
\end{cor}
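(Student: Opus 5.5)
The plan is to deduce both identities from \cref{worsebunny} by substituting $u \mapsto i+x$, where $x$ is the variable of a pin on an $i$-string, and then multiplying through by $(-1)^{p(\lambda)}$. Throughout, set
$$
F_i(u) \coloneq \prod_{j \in I-\{i\}} O_{\lambda;j}(u-j).
$$
By \cref{quickc}, its substituted value is
$$
F_i(i+x) = \prod_{j\neq i} O_{\lambda;j}(i-j+x) = (-1)^{p(\lambda)}\zeta_i(\lambda)\Deltanabla_{\lambda;i}(x).
$$
The same formula with $i$ replaced by $j$ and $x$ by $x_j$ gives $\prod_{k \neq j} O_{\lambda;k}(j-k+x_j) = (-1)^{p(\lambda)}\zeta_j(\lambda)\Deltanabla_{\lambda;j}(x_j)$.

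\emph{Case $i\neq 0$.} Here every term in \cref{worsebunny} can be evaluated at $u=i+x$. Indeed, $P_{\lambda;i}(u)$ is a polynomial. Each $O_{\lambda;j}(u-j)$ becomes $O_{\lambda;j}(i-j+x)$, which makes sense by the discussion around \cref{convention}. The expression $\frac{1}{2u}$ becomes $\frac{1}{2(i+x)}$, which is invertible in $\kk\llbracket x\rrbracket$ because $i\neq0$. Finally, $\frac{1}{u-j-x_j}$ becomes $\frac{1}{i-j+x-x_j}$, which is a power series in $x,x_j$ because $i\neq j$.

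The difference quotient on the left-hand side of \cref{worsebunny} becomes
$$
\frac{F_i(i+x)-F_i(i+x_i)}{x-x_i} = (-1)^{p(\lambda)}\zeta_i(\lambda)\,\partial_{x,x_i}\Deltanabla_{\lambda;i}(x).
$$
So after multiplying by $(-1)^{p(\lambda)}$, the left-hand side is exactly the pinned expression on the left of \cref{phewy}. On the right-hand side, the terms $(-1)^{p(\lambda)}$ and $-\frac{1}{2(i+x)}$ appear directly. Each summand becomes $\zeta_j(\lambda)\,Q_j\big(\Deltanabla_j(x_j)/(i+x-j-x_j)\big)$, and this gives \cref{phewy}.

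\emph{Case $i=0$.} Direct substitution fails because $P_{\lambda;0}(u)=(-1)^{t(\lambda)}u^{-1}$ and the term $\frac{1}{2u}$ both have poles at $u=0$. I would first combine these two pole terms. Setting $x=0$ in $F_0(x) = (-1)^{p(\lambda)}\zeta_0(\lambda)\Deltanabla_{\lambda;0}(x)$ and using \cref{betterargument}, which gives $\Deltanabla_{\lambda;0}(0)=1$, together with $\zeta_0(\lambda)=\frac{(-1)^{t(\lambda)+1}}{2}$, we get
$$
(-1)^{t(\lambda)}F_0(0) = -\tfrac{(-1)^{p(\lambda)}}{2}.
$$
Now move the term $-\frac{(-1)^{p(\lambda)}}{2u}$ to the left-hand side. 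The two pole terms combine to
$$
(-1)^{t(\lambda)}\,\frac{F_0(u)-F_0(0)}{u},
$$
which is regular at $u=0$. Substituting $u=x$ turns this into $\frac{(-1)^{p(\lambda)}\,(1-\Deltanabla_0(x))}{2x}$. The remaining terms are handled exactly as in the case $i\neq0$. Multiplying through by $(-1)^{p(\lambda)}$ then gives \cref{louis}.

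\emph{Main obstacle.} The step needing care is justifying that ``substitute $u=i+x$ pinned to the string'' is a legitimate operation on these identities. The identities live in $\hatZ_\lambda\lround u^{-1}\rround$, and the infinite products and sums converge only pointwise in the GC topology. I would handle this as follows. First, view each side as an element of $\hatZ_\lambda\llbracket x\rrbracket$, where $x$ is the variable of a pin on the $i$-string. Second, show that the substitution is continuous and multiplicative on the relevant subring of $\hatZ_\lambda\lround u^{-1}\rround$ that is regular at $u=i+x$. In the case $i=0$, I would first rewrite the identity so that all terms lie in that subring. Third, check that it commutes with the infinite products and sums, coefficient by coefficient modulo each GC neighbourhood; after reduction modulo such a neighbourhood only finitely many factors are non-trivial, so this commutation is routine. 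This is the same reasoning already used to make sense of \cref{convention}.
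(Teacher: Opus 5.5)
Your proposal is correct and follows the paper's proof. You multiply the identity of \cref{worsebunny} by $(-1)^{p(\lambda)}$, evaluate $u$ at $i+x$ pinned to the $i$-string, and use \cref{quickc} to recognize $\Deltanabla$. In the case $i=0$, you first combine the pole term $P_{\lambda;0}(u)\prod_{j\neq 0}O_{\lambda;j}(u-j)$ with the $\frac{1}{2u}$ term, and use \cref{betterargument} to see that the combined numerator vanishes at $u=0$, so that evaluating at the dot makes sense. This is exactly the paper's rearrangement.

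Your closing remarks on justifying the substitution in the GC topology go a little further than the paper, which says only ``evaluate''. They are consistent with how the paper makes sense of \cref{convention}.
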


\begin{proof}
The formula \cref{phewy} when $i\neq 0$ follows by taking the identity from \cref{worsebunny}, multiplying
by $(-1)^{p(\lambda)}$, then evaluating $u$ at $\begin{tikzpicture}[centerzero,iQ]
\draw(0,-.2)\botlabel{i} to (0,.2);
\pin{0,0}{-.65,0}{i+x};
\region{-1.2,0}{\lambda};
\end{tikzpicture}$.
If $i=0$, it does not make sense to evaluate $u$ at $x$ directly, so first we rearrange the identity from \cref{worsebunny} using 
that $P_{\lambda;0}(u) = \frac{(-1)^{t(\lambda)}}{u}
= -\frac{\zeta_0(\lambda)^{-1}}{2u}$
to obtain
\begin{multline*}
(-1)^{p(\lambda)}
\frac{\id_{\one_\lambda}-
(-1)^{p(\lambda)}\zeta_0(\lambda)^{-1}\prod_{j \in I - \{0\}} O_{\lambda;j}(u-j)}{2u}\\
+
Q_{\lambda;0}\bigg(\frac{\prod_{j \in I - \{0\}} O_{\lambda;j}(u-j) -\prod_{j \in I - \{0\}} O_{\lambda;j}(-j+x_0)}{u-x_0}\bigg)\\
=\id_{\one_\lambda}+ \sum_{j \in I-\{0\}}
Q_{\lambda;j}\bigg(\frac{\prod_{k \in I - \{j\}}O_{\lambda;k}(j-k+x_j)}{u-j-x_j}\bigg).
\end{multline*}
Then we multiply by $(-1)^{p(\lambda)}$ and evaluate $u$ at
$\begin{tikzpicture}[centerzero,iQ]
\draw(0,-.2)\botlabel{0} to (0,.2);
\closeddot{0,0};
\region{-.25,0}{\lambda};
\end{tikzpicture}$ as before. For the first term, this evaluation makes sense because
\cref{betterargument} implies that the numerator is divisible by the denominator (after evaluation). This proves \cref{louis}.
\end{proof}

In the rest of the section, we derive some further properties of the glyphs.
Using \cref{pivoting} and its variations, it follows that
\begin{align}\label{riveting}
\begin{tikzpicture}[iQ,centerzero]
\draw[-] (-.5,.3)\toplabel{i} to (-.5,-.1) to [out=-90,in=-90,looseness=2] (0,-.1) to (0,.3);
\internalbubbleplus[0]{0,.05};
\end{tikzpicture}&=
\begin{tikzpicture}[iQ,centerzero]
\draw[-] (-.5,.3)\toplabel{i} to (-.5,-.1) to [out=-90,in=-90,looseness=2] (0,-.1) to (0,.3);
\internalbubbleminus[0]{-.5,.05};
\end{tikzpicture}\ 
,&
\begin{tikzpicture}[iQ,centerzero]
\draw[-] (-.5,-.3)\botlabel{i} to (-.5,.1) to [out=90,in=90,looseness=2] (0,.1) to (0,-.3);
\internalbubbleplus[180]{0,0.0};
\end{tikzpicture}&=
\begin{tikzpicture}[iQ,centerzero]
\draw[-] (-.5,-.3)\botlabel{i} to (-.5,.1) to [out=90,in=90,looseness=2] (0,.1) to (0,-.3);
\internalbubbleminus[180]{-.5,0.0};
\end{tikzpicture}\ ,&
\begin{tikzpicture}[iQ,centerzero]
\draw[-] (-.5,.3)\toplabel{i} to (-.5,-.1) to [out=-90,in=-90,looseness=2] (0,-.1) to (0,.3);
\internalbubbleplus[180]{0,-0.05};
\end{tikzpicture}&=
\begin{tikzpicture}[iQ,centerzero]
\draw[-] (-.5,.3)\toplabel{i} to (-.5,-.1) to [out=-90,in=-90,looseness=2] (0,-.1) to (0,.3);
\internalbubbleminus[180]{-.5,-.05};
\end{tikzpicture}\ ,&
\begin{tikzpicture}[iQ,centerzero]
\draw[-] (-.5,-.3)\botlabel{i} to (-.5,.1) to [out=90,in=90,looseness=2] (0,.1) to (0,-.3);
\internalbubbleplus[0]{0,.05};
\end{tikzpicture}&=
\begin{tikzpicture}[iQ,centerzero]
\draw[-] (-.5,-.3)\botlabel{i} to (-.5,.1) to [out=90,in=90,looseness=2] (0,.1) to (0,-.3);
\internalbubbleminus[0]{-.5,0.05};
\end{tikzpicture}\ ,\\\label{riveting1}
\begin{tikzpicture}[iQ,centerzero]
\draw[-] (-.5,.3)\toplabel{i} to (-.5,-.1) to [out=-90,in=-90,looseness=2] (0,-.1) to (0,.3);
\internalbubbleminus[0]{0,.05};
\end{tikzpicture}&=
\begin{tikzpicture}[iQ,centerzero]
\draw[-] (-.5,.3)\toplabel{i} to (-.5,-.1) to [out=-90,in=-90,looseness=2] (0,-.1) to (0,.3);
\internalbubbleplus[0]{-.5,.05};
\end{tikzpicture}\ 
,&
\begin{tikzpicture}[iQ,centerzero]
\draw[-] (-.5,-.3)\botlabel{i} to (-.5,.1) to [out=90,in=90,looseness=2] (0,.1) to (0,-.3);
\internalbubbleminus[180]{0,0.0};
\end{tikzpicture}&=
\begin{tikzpicture}[iQ,centerzero]
\draw[-] (-.5,-.3)\botlabel{i} to (-.5,.1) to [out=90,in=90,looseness=2] (0,.1) to (0,-.3);
\internalbubbleplus[180]{-.5,0.0};
\end{tikzpicture}\ ,&
\begin{tikzpicture}[iQ,centerzero]
\draw[-] (-.5,.3)\toplabel{i} to (-.5,-.1) to [out=-90,in=-90,looseness=2] (0,-.1) to (0,.3);
\internalbubbleminus[180]{0,-0.05};
\end{tikzpicture}&=
\begin{tikzpicture}[iQ,centerzero]
\draw[-] (-.5,.3)\toplabel{i} to (-.5,-.1) to [out=-90,in=-90,looseness=2] (0,-.1) to (0,.3);
\internalbubbleplus[180]{-.5,-.05};
\end{tikzpicture}\ ,&
\begin{tikzpicture}[iQ,centerzero]
\draw[-] (-.5,-.3)\botlabel{i} to (-.5,.1) to [out=90,in=90,looseness=2] (0,.1) to (0,-.3);
\internalbubbleminus[0]{0,.05};
\end{tikzpicture}&=
\begin{tikzpicture}[iQ,centerzero]
\draw[-] (-.5,-.3)\botlabel{i} to (-.5,.1) to [out=90,in=90,looseness=2] (0,.1) to (0,-.3);
\internalbubbleplus[0]{-.5,0.05};
\end{tikzpicture}\ ,\\
\label{riveting2}
\begin{tikzpicture}[iQ,centerzero]
\draw[-] (-.5,.3)\toplabel{i} to (-.5,-.1) to [out=-90,in=-90,looseness=2] (0,-.1) to (0,.3);
\fatbubbleplus{0,-.05};
\end{tikzpicture}&=
\begin{tikzpicture}[iQ,centerzero]
\draw[-] (-.5,.3)\toplabel{i} to (-.5,-.1) to [out=-90,in=-90,looseness=2] (0,-.1) to (0,.3);
\fatbubbleminus{-.5,-.05};
\end{tikzpicture}\ 
,&
\begin{tikzpicture}[iQ,centerzero]
\draw[-] (-.5,-.3)\botlabel{i} to (-.5,.1) to [out=90,in=90,looseness=2] (0,.1) to (0,-.3);
\fatbubbleplus{0,0.05};
\end{tikzpicture}&=
\begin{tikzpicture}[iQ,centerzero]
\draw[-] (-.5,-.3)\botlabel{i} to (-.5,.1) to [out=90,in=90,looseness=2] (0,.1) to (0,-.3);
\fatbubbleminus{-.5,0.05};
\end{tikzpicture}\ ,&
\begin{tikzpicture}[iQ,centerzero]
\draw[-] (-.5,.3)\toplabel{i} to (-.5,-.1) to [out=-90,in=-90,looseness=2] (0,-.1) to (0,.3);
\fatbubbleminus{0,-0.05};
\end{tikzpicture}&=
\begin{tikzpicture}[iQ,centerzero]
\draw[-] (-.5,.3)\toplabel{i} to (-.5,-.1) to [out=-90,in=-90,looseness=2] (0,-.1) to (0,.3);
\fatbubbleplus{-.5,-.05};
\end{tikzpicture}\ ,&
\begin{tikzpicture}[iQ,centerzero]
\draw[-] (-.5,-.3)\botlabel{i} to (-.5,.1) to [out=90,in=90,looseness=2] (0,.1) to (0,-.3);
\fatbubbleminus{0,.05};
\end{tikzpicture}&=
\begin{tikzpicture}[iQ,centerzero]
\draw[-] (-.5,-.3)\botlabel{i} to (-.5,.1) to [out=90,in=90,looseness=2] (0,.1) to (0,-.3);
\fatbubbleplus{-.5,0.05};
\end{tikzpicture}\ .
\end{align}

\begin{lem}\label{bubblyinverses}
We have that
\begin{align}
\bigg(\ \begin{tikzpicture}[iQ,centerzero]
\draw[-] (0,-.5)\botlabel{i} to (0,.5);
\internalbubbleplus[0]{0,0};
\end{tikzpicture}\ \bigg)^{-1}&=
\begin{tikzpicture}[iQ,centerzero]
\draw[-] (0,-.5)\botlabel{i} to (0,.5);
\internalbubbleminus[180]{0,0};
\end{tikzpicture}\ ,& 
\bigg(\ \begin{tikzpicture}[iQ,centerzero]
\draw[-] (0,-.5)\botlabel{i} to (0,.5);
\internalbubbleplus[180]{0,0};
\end{tikzpicture}\ \bigg)^{-1}&=
\begin{tikzpicture}[iQ,centerzero]
\draw[-] (0,-.5)\botlabel{i} to (0,.5);
\internalbubbleminus[0]{0,0};
\end{tikzpicture}\ ,&
\bigg(\ \begin{tikzpicture}[iQ,centerzero]
\draw[-] (0,-.5)\botlabel{i} to (0,.5);
\fatbubbleplus{0,0};
\end{tikzpicture}\ \bigg)^{-1}&=
\begin{tikzpicture}[iQ,centerzero]
\draw[-] (0,-.5)\botlabel{i} to (0,.5);
\fatbubbleminus{0,0};
\end{tikzpicture}\ .\label{scorching}
\end{align}
\end{lem}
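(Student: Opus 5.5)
The plan is to reduce all three identities to the pointwise product formula \cref{sleepydog}, combined with the evaluations of $\prod_j s_{i,j}(x)$ in \cref{deadly,sin}. The third identity in \cref{scorching} follows from the first two. By \cref{doggynose}, the double glyph with $+$ is the product of the two single $+$ glyphs, and the double glyph with $-$ is the product of the two single $-$ glyphs. All of these are pins on the same string, so they commute, and taking inverses factor by factor gives the third identity. Moreover, the second identity is the first with $i$ replaced by $-i$, after rotating through a cup/cap using \cref{riveting,riveting1} (or directly by the same computation). So the real work is the first identity.

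Fix the right-hand 2-cell label $\lambda$, so that the left 2-cell is $\lambda-\alpha_i$. Note that $p(\lambda-\alpha_i)=p(\lambda)$. We must show that $\nabla_{\lambda;i}(x)$, pinned on the right, composed with $\Delta_{\lambda-\alpha_i;-i}(-x)$, pinned on the left, is the identity. I would split into three cases according to \cref{quicka,quickb}.

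If $i\in I_+$, then $-i\in I_-$, and both factors are $\id$ by definition, so there is nothing to prove.

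If $i\in I_-$, then $-i\in I_+$. After reindexing $j\mapsto -j$, the left factor is $(-1)^{p(\lambda)}\prod_{j\ne i}O_{-j}(j-i-x)$ pinned on the left. The right factor is $(-1)^{p(\lambda)}\zeta_i(\lambda)^{-1}\prod_{j\ne i}O_j(i-j+x)$ pinned on the right. I would pair the factors indexed by $j$ and apply \cref{sleepydog} to each pair. This is legitimate since all but finitely many factors are $1+(\text{topologically nilpotent})$, so the infinite products converge. The result is
$$\zeta_i(\lambda)^{-1}\prod_{j\ne i}(-1)^{\lambda_j}s_{i,j}(x).$$
Now $\sum_{j\in I}\lambda_j=0$, so $\prod_{j\ne i}(-1)^{\lambda_j}=(-1)^{\lambda_i}$. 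By \cref{deadly}, $\prod_{j\ne i}s_{i,j}(x)=\zeta_i\zeta_{-i}\gamma_{-i,i}$. Finally, $\zeta_i(\lambda)=\zeta_i\gamma_i(\lambda)=\zeta_i(-1)^{\lambda_i}$ by \cref{eitherway}. So everything reduces to the scalar identity $\zeta_{-i}\gamma_{-i,i}=1$. This holds since $-i\in I_+$, where $\zeta_{-i}=1$ and $\gamma_{-i,i}=1$ by \cref{bone}.

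If $i=0$, then the right factor is $(-1)^{p(\lambda)}\zeta_0(\lambda)^{-1}(h+x)\prod_{j\in I_\lo}O_j(-j+x)$. The left factor is $\frac{1}{h-x}\prod_{j\in I_\hi}O_j(-j-x)$, which after reindexing $j\mapsto-j$ (using $I_\hi=-I_\lo$) becomes $\frac{1}{h-x}\prod_{j\in I_\lo}O_{-j}(j-x)$. Applying \cref{sleepydog} factorwise over $j\in I_\lo$ then gives
$$(-1)^{p(\lambda)}\zeta_0(\lambda)^{-1}\,\frac{h+x}{h-x}\,\prod_{j\in I_\lo}(-1)^{\lambda_j}\,s_{0,j}(x).$$
By \cref{sin}, the product of the $s_{0,j}(x)$ equals $\zeta_0(h-x)/(h+x)$, which cancels the rational factor. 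The identity $p(\lambda)\equiv t(\lambda)+\sum_{j\in I_\lo}\lambda_j$ together with $\zeta_0(\lambda)=\zeta_0(-1)^{t(\lambda)}$ kills the remaining signs.

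The main obstacle I expect is this sign and normalization bookkeeping. One has to make sure that the pins sitting in the left 2-cell are evaluated at $\lambda-\alpha_i$, while \cref{sleepydog} is phrased with the signs $(-1)^{\lambda_j}$ in terms of the right-hand label. One also has to track correctly which region's parity enters each factor. I would handle this by carefully rederiving \cref{sleepydog} from \cref{inverses,naughtykitty} in the form needed, and by checking the $i=0$ signs against \cref{bibble}. As a consistency check, \cref{bibble} already computes the closely related product $\Delta_{\lambda;-i}(-x)\nabla_{\lambda;i}(x)$ within a single region.
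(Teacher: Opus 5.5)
Your proposal is correct and follows essentially the same route as the paper. You reduce to the first identity via \cref{riveting}, with the third being the product of the first two. You then treat $i\in I_+$, $i=0$ and $i\in I_-$ separately, combining \cref{sleepydog} with \cref{sin} and \cref{deadly} respectively. Your sign bookkeeping matches the paper's: it uses $p(\lambda-\alpha_i)=p(\lambda)$, $\sum_j\lambda_j=0$ and $p(\lambda)\equiv t(\lambda)+\sum_{j\in I_\lo}\lambda_j$. The extra care you plan about which 2-cell each pin lives in is already built into \cref{sleepydog}, so there is no need to rederive it.
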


\begin{proof}
In view of \cref{riveting}, it suffices to prove the first equality, so we must show that
$$
\begin{tikzpicture}[iQ,centerzero]
\draw[-] (0,-.7)\botlabel{i} to (0,.7);
\internalbubbleplus[0]{0,.3};
\internalbubbleminus[180]{0,-0.3};
\region{.4,0}{\lambda};
\end{tikzpicture}
=
\ 
\begin{tikzpicture}[iQ,centerzero]
\draw[-] (0,-.7)\botlabel{i} to (0,.7);
\region{.3,0}{\lambda};
\end{tikzpicture}
$$
for every $\lambda \in X^\imath$.
This is immediate from the definitions 
if $i \in I_+$.
If $i=0$, the definitions and \cref{sleepydog} imply that
$$
\begin{tikzpicture}[iQ,centerzero]
\draw[-] (0,-.7)\botlabel{0} to (0,.7);
\internalbubbleplus[0]{0,.3};
\internalbubbleminus[180]{0,-0.3};
\region{.4,0}{\lambda};
\end{tikzpicture}=
\zeta_0^{-1} (-1)^{p(\lambda)+t(\lambda)+\sum_{j\in I_\lo} \lambda_j}
\begin{tikzpicture}[iQ,centerzero]
\draw[-] (0,-.7)\botlabel{0} to (0,.7);
\pin{0,.3}{0.7,.3}{\frac{h+x}{h-x}};
\pin{0,-.3}{1.3,-.3}{\prod_{j \in I_\lo} s_{0,j}(x)};
\region{1.4,.3}{\lambda};
\end{tikzpicture}\ ,
$$
which equals $\id_{B_0 \one_\lambda}$ thanks to \cref{sin}.
If $i \in I_-$, we instead get that
$$
\begin{tikzpicture}[iQ,centerzero]
\draw[-] (0,-.7)\botlabel{i} to (0,.7);
\internalbubbleplus[0]{0,.3};
\internalbubbleminus[180]{0,-0.3};
\region{.4,0}{\lambda};
\end{tikzpicture}=
\zeta_i^{-1} (-1)^{\lambda_i + \sum_{j\in I-\{i\}} \lambda_j}
\begin{tikzpicture}[iQ,centerzero]
\draw[-] (0,-.7)\botlabel{i} to (0,.7);
\pin{0,-.2}{1.5,-.2}{\prod_{j \in I-\{i\}} s_{i,j}(x)};
\region{1.4,.3}{\lambda};
\end{tikzpicture}\ .
$$
Since $\gamma_{-i,i}=\zeta_{-i}=1$, $\lambda_0 = 0$
and $\lambda_j + \lambda_{-j}=0$ for all $j \in I$,
\cref{deadly} implies that this 
equals $\id_{B_i\one_\lambda}$.
\end{proof}

\begin{lem}\label{premiracle}
For $i,j \in I$, we have that
$$
\frac{\gamma_{j,i}}{\gamma_{i,j}}
\bigg(\prod_{k \in I - \{i\}}\!\!\frac{(i-k+x+y)^{a_{k,-j}}}{(i-k-x+y)^{a_{k,j}}}\bigg)\bigg(\prod_{k \in I - \{j\}}\!\!\frac{(j-k+x-y)^{a_{k,i}}}{(j-k+x+y)^{a_{k,-i}}}\bigg)=r_{i,-j}r_{-i,j}r_{j,i}r_{-j,-i}.
$$
\end{lem}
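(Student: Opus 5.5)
The plan is to reduce the identity to a statement about signs using \cref{anothercrazyone}. That lemma says the right-hand side equals $(-1)^{a_{i,j}}\gamma_{j,i}/\gamma_{i,j}$, so the prefactor $\gamma_{j,i}/\gamma_{i,j}$ cancels. It then suffices to show that the product of the two rational functions in $x,y$ on the left-hand side is the constant $(-1)^{a_{i,j}}$. Each product has only finitely many non-trivial factors. In the first product, $a_{k,-j}\neq 0$ only for $k\in\{-j,-j\pm1\}$ and $a_{k,j}\neq0$ only for $k\in\{j,j\pm1\}$. In the second product, $a_{k,i}\neq0$ only for $k\in\{i,i\pm1\}$ and $a_{k,-i}\neq0$ only for $k\in\{-i,-i\pm1\}$. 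So the claim is a finite computation.

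The key step is to pair the factors off by reindexing.
\begin{itemize}
\item \emph{Sum-type factors.} In the numerator of the first product put $k=-j+m$. The factor becomes $(i+j-m+x+y)^{a_{-j+m,-j}}$, and $a_{-j+m,-j}$ depends only on $m$ (it is $2,-1,-1$ for $m=0,\pm1$). In the denominator of the second product put $k=-i+m$. The factor becomes $(i+j-m+x+y)^{a_{-i+m,-i}}$ with the same exponent. These cancel term by term.
\item \emph{Difference-type factors.} In the denominator of the first product put $k=j+m$, giving $(i-j-m-x+y)^{a_{j+m,j}}$. In the numerator of the second product put $k=i-m$, giving $(j-i+m+x-y)^{a_{i-m,i}}=\bigl(-(i-j-m-x+y)\bigr)^{a_{i-m,i}}$. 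These cancel up to the sign $(-1)^{\sum_m a_{j+m,j}}=(-1)^{2-1-1}=1$ when all three values $m=0,\pm1$ are present.
\end{itemize}
If no exclusion intervenes, the whole left-hand side is therefore $1$.

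The remaining work is the bookkeeping of the exclusions $k\neq i$ in the first product and $k\neq j$ in the second, since these remove a factor from one side of a pair but not the other. I would organise this as a case split on $j-i$ and $i+j$. Only the cases $j\in\{i,i\pm1\}$ or $j\in\{-i,-i\pm1\}$ can leave unmatched factors. In every other case, neither excluded index lies in the relevant supports, the exclusions remove nothing, and $a_{i,j}=0$, so the answer is $1=(-1)^{a_{i,j}}$ as required.
\begin{itemize}
\item If $j=i\pm1$, the excluded terms are $k=i$ in the denominator of the first product and $k=j$ in the numerator of the second. Both are constants: the first is $(\mp1-x+y)$ evaluated away from the $x-y$ pairing. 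One checks that the unmatched leftovers are $(i-j-x+y)^{-a_{i,j}}$ from one side and $(j-i+x-y)^{a_{j,i}}$ from the other. Their product is $(-1)^{a_{i,j}}$, which is exactly the required sign.
\item If $j=i$, the excluded terms are the $m=0$ factors of both difference-type pairs, namely $(y-x)^{-2}$ and $(x-y)^{2}$. They are dropped simultaneously, so the remaining pairs $m=\pm1$ contribute $(-1)^{-1-1}=1=(-1)^{2}$.
\item If $j=-i$ or $j=-i\pm1$, the exclusions hit the sum-type pairs instead. For instance, when $j=-i$ the terms $k=i=-j$ and $k=j=-i$ are both the $m=0$ member of their pair, so they drop out together. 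Again one must check that the leftover sign combines to $(-1)^{a_{i,j}}$.
\end{itemize}

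I expect the main obstacle to be these degenerate cases, especially the overlapping ones. When $j=\pm i$ or $j=-i\pm1$ both pairings are affected at once, and for $i=0$ or $i=\pm h$ the two support sets coincide or become adjacent. There a factor like $(x+y)$ or $(x-y)$ could survive unless it is cancelled precisely. In those cases I would simply write out the finitely many surviving factors explicitly, using $a_{k,j}=a_{-k,-j}$ to compare the two products and to confirm that no genuine $x,y$ dependence remains. The sign then follows as before.
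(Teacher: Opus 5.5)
Your reduction via \cref{anothercrazyone} is exactly the paper's: it reduces the lemma to showing that the two products multiply to $(-1)^{a_{i,j}}$. Your sum-type and difference-type pairings are also the same cancellation the paper uses. The paper evaluates each numerator and denominator over all $k$ as $\phi(i+j+x+y)$ or $\phi(i-j-x+y)$, with $\phi(z)=z^2/(z^2-1)$ and using that $\phi$ is even; these cancel between the two products.

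You differ only in handling the exclusions, and the paper's route is cleaner. It divides out the one excluded factor from each numerator and denominator. This gives $\frac{\phi(i+j+x+y)}{\phi(i-j-x+y)}\cdot\frac{(y-x)^{a_{i,j}}}{(x+y)^{a_{i,-j}}}$ for the first product and $\frac{\phi(i-j-x+y)}{\phi(i+j+x+y)}\cdot\frac{(x+y)^{a_{j,-i}}}{(x-y)^{a_{j,i}}}$ for the second. Their product is $(-1)^{a_{i,j}}$ for all $i,j$ at once, so no case split and no separate treatment of $i\in\{0,\pm h\}$ or $j=\pm i$ is needed.

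Your pairing gets the same uniformity once you notice that the two excluded indices are always partners. Both $k=i$ in the first product and $k=j$ in the second correspond to $m=i+j$ in the sum-type pairing, and to $m=i-j$ in the difference-type pairing. So exclusions only ever delete complete pairs and nothing is left unmatched. The surviving sign is $\prod_{m\in\{0,\pm1\},\,m\neq i-j}(-1)^{a_{j+m,j}}=(-1)^{-a_{i,j}}=(-1)^{a_{i,j}}$, with the overlapping degenerate cases covered automatically.

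This also shows that your description of the case $j=i\pm1$ is off. The excluded factors there are $(y-x)^{a_{i,j}}$ and $(x-y)^{a_{j,i}}$, which are not constants. The expressions $(i-j-x+y)^{-a_{i,j}}$ and $(j-i+x-y)^{a_{j,i}}$ that you call unmatched leftovers are not what gets removed. The sign you arrive at is correct, but only for the reason just given.
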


\begin{proof}
Let $\phi(z) \coloneq \frac{z^2}{z^2-1}$. 
Some elementary calculation gives that
\begin{align*}
\prod_{k \in I - \{i\}}\!\!\frac{(i-k+x+y)^{a_{k,-j}}}{(i-k-x+y)^{a_{k,j}}}
&=\frac{\phi(i+j+x+y)}{\phi(i-j-x+y)}\cdot\frac{(y-x)^{a_{i,j}}}{(x+y)^{a_{i,-j}}},\\
\prod_{k \in I - \{j\}}\!\!\frac{(j-k+x-y)^{a_{k,i}}}{(j-k+x+y)^{a_{k,-i}}}
&=\frac{\phi(i-j-x+y)}{\phi(i+j+x+y)}\cdot\frac{(x+y)^{a_{j,-i}}}{(x-y)^{a_{j,i}}}.
\end{align*}
The product of these is $(-1)^{a_{i,j}}$.
The lemma now follows using \cref{anothercrazyone}.
\end{proof}

\begin{lem}\label{preglyphslide}
We have that
\begin{align}
\begin{tikzpicture}[centerzero,iQ,scale=1.4]
\draw[-] (0,-.3) \botlabel{j} to (0,.3);
\draw[-] (-0.3,-.3) \botlabel{i} to (-0.3,.3);
\pin{-.3,0}{.5,0}{\Deltanabla_i(x)};
\end{tikzpicture}
&=\gamma_{i,j}\ 
\begin{tikzpicture}[centerzero,iQ,scale=1.4]
\draw[-] (0,-.3) \botlabel{j} to (0,.3);
\draw[-] (-0.3,-.3) \botlabel{i} to (-0.3,.3);
\pinpin{-.3,.2}{0,.2}{1.5,.2}{\prod_{k \in I - \{i\}}\!\!\frac{(i+x-k+y)^{a_{k,-j}}}{(i-k+x-y)^{a_{k,j}}}};
\fatbubbleplus{-.3,-.1};
\end{tikzpicture}\ ,\label{glyphy1}\\
\begin{tikzpicture}[centerzero,iQ,scale=1.4]
\draw[-] (0,-.3) \botlabel{i} to (0,.3);
\draw[-] (-0.3,-.3) \botlabel{0} to (-0.3,.3);
\pin{-.3,0}{.5,0}{\Delta_0(x)};
\end{tikzpicture}
&=
\begin{tikzpicture}[centerzero,iQ,scale=1.4]
\draw[-] (0,-.3) \botlabel{i} to (0,.3);
\draw[-] (-0.3,-.3) \botlabel{0} to (-0.3,.3);
\pinpin{-.3,.2}{0,.2}{1.3,.2}{\prod_{j \in I_\hi}\!\!\frac{(-j+x+y)^{a_{j,-i}}}{(-j+x-y)^{a_{j,i}}}};
\internalbubbleplus[180]{-.3,-.1};
\end{tikzpicture}\ .\label{glyphy2}
\end{align}
\end{lem}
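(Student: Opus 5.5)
Both identities come from the internal bubble slides \cref{slideright1}, applied factor by factor to the infinite products defining the glyphs. For \cref{glyphy1}, recall from \cref{quickc} that $\Deltanabla_{\lambda;i}(x)=(-1)^{p(\lambda)}\zeta_i(\lambda)^{-1}\prod_{k\in I-\{i\}}O_{\lambda;k}(i-k+x)$. Here $\lambda$ labels the 2-cell to the right of the $j$-string. In the left-hand side, the pinhead sits in the 2-cell between the $i$- and $j$-strings, which has label $\lambda-\alpha_j$. So the left-hand side equals
$$(-1)^{p(\lambda-\alpha_j)}\zeta_i(\lambda-\alpha_j)^{-1}\prod_{k\in I-\{i\}}O_{\lambda-\alpha_j;k}(i-k+x)$$
pinned to the $i$-string, with the pin pointing into the middle 2-cell.

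To move everything to the far right 2-cell, I apply \cref{slideright1} to each factor $O_k(i-k+x)$ with $k\neq i$. This gives one double pin per factor, with coefficient
$$\frac{(i-k+x-y)^{a_{k,j}}}{(i-k+x+y)^{a_{k,-j}}}.$$
Two points need checking here.
\begin{itemize}
\item \emph{The infinite product converges.} For all but finitely many $k$ we have $a_{k,j}=a_{k,-j}=0$, so the double-pin factor is $1$. For the remaining finitely many $k$ the factors are genuine power series in $x,y$; this is legitimate because $i-k\neq 0$ and the dots are topologically nilpotent in $\hatUU^\imath$.
\item \emph{The orientation of the fraction.} After sliding, the product of double pins is the reciprocal of the expression displayed in \cref{glyphy1}. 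I expect the convention is that the left-hand side is moved to the right, so the displayed form arises upon inverting. I would recheck this against \cref{slideright1}, possibly rewriting it by solving for the pin on the left.
\end{itemize}

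The scalar prefactors change from $\lambda-\alpha_j$ to $\lambda$. The parity term is unchanged, since $p(\lambda-\alpha_j)=p(\lambda)$. For the other term, $\zeta_i(\lambda-\alpha_j)^{-1}=\zeta_i(\lambda)^{-1}\gamma_i(\alpha_j)$, and $\gamma_i(\pi(\alpha_j))=\gamma_{i,j}=\pm1$ by \cref{bone}. After this, the remaining pin on the $i$-string is exactly $\Deltanabla_{\lambda;i}(x)$, i.e.\ the double glyph, and the double pin is the displayed rational function. Together these produce the prefactor $\gamma_{i,j}$ in \cref{glyphy1}.

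The argument for \cref{glyphy2} is the same. Now $\Delta_{\lambda;0}(x)=(h+x)^{-1}\prod_{j\in I_\hi}O_{\lambda;j}(-j+x)$, and the prefactor $(h+x)^{-1}$ is a scalar power series, so it slides freely. The parity sign and $\zeta$ are absent, so no $\gamma$ appears. Applying \cref{slideright1} with $i=0$ to each factor, with the $j$-string renamed $i$, gives the product over $j\in I_\hi$ as displayed.

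The main obstacle is sign and orientation bookkeeping: the $\gamma_{i,j}$ versus $\gamma_{i,j}^{-1}$ (these agree since it is a sign), and the exact placement of the numerator and denominator relative to \cref{slideright1}. I would verify these in a rank-one test case such as $k=i\pm1$.
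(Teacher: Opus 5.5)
Your strategy is the paper's: expand the glyph as a product of internal bubbles, push each factor $O_k(i-k+x)$ across the $j$-string using \cref{slideright1}, and absorb the change of normalization via $\zeta_i(\lambda)/\zeta_i(\lambda-\alpha_j)=\gamma_{i,j}=\pm1$. Your parity and $\zeta$ bookkeeping is correct. However, you have misread which 2-cells the pinheads occupy. On the left-hand side of \cref{glyphy1} the pinhead is drawn to the right of the $j$-string. So it is $\Deltanabla_{\lambda;i}(x)$, with coefficients in the rightmost 2-cell (label $\lambda$). On the right-hand side, the double glyph labelled $+$ is, by the glyph convention, a pin into the 2-cell immediately to its right, i.e.\ $\Deltanabla_{\lambda-\alpha_j;i}(x)$ in the middle 2-cell. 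Your sentence identifying the leftover $\Deltanabla_{\lambda;i}(x)$ with the double glyph is therefore wrong. This reversal is why you obtained the reciprocal of the displayed fraction; it is not a hidden convention in \cref{slideright1}, which is correct as stated.

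The fix is immediate. Write a subscript for the 2-cell holding the pinhead. What you actually derived is
\[
\Deltanabla_{\lambda-\alpha_j;i}(x)\big|_{\mathrm{mid}}=\gamma_{i,j}\,\Deltanabla_{\lambda;i}(x)\big|_{\mathrm{right}}\prod_{k\in I-\{i\}}\frac{(i-k+x-y)^{a_{k,j}}}{(i-k+x+y)^{a_{k,-j}}},
\]
which is a true identity. The double pin has scalar coefficients, so its pinhead can go anywhere. Each of its factors is a unit in $\kk\llbracket x,y\rrbracket$ because $i\neq k$. Multiplying through by its inverse and using $\gamma_{i,j}^2=1$ therefore gives exactly \cref{glyphy1}. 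Equivalently, as the paper does, start from the rightmost 2-cell and read \cref{slideright1} from right to left.

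The same reversal affects your treatment of \cref{glyphy2}. Sliding from the middle to the right produces $\prod_{j\in I_\hi}\frac{(-j+x-y)^{a_{j,i}}}{(-j+x+y)^{a_{j,-i}}}$, the reciprocal of the displayed product. So ``as displayed'' only holds after the same inversion, with no sign arising since $\Delta_{\lambda;0}$ carries no $\zeta$-normalization.
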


\begin{proof}
To prove the first identity,
we use \cref{slideright1} for each of the pins in the product of internal bubbles
from the definition \cref{doggynose}:
$$
\begin{tikzpicture}[centerzero,iQ,scale=1.4]
\draw[-] (0,-.3) \botlabel{j} to (0,.3);
\draw[-] (-0.3,-.3) \botlabel{i} to (-0.3,.3);
\pin{-.3,.1}{.5,.1}{\Deltanabla_i(x)};
\region{.5,-.25}{\lambda};
\end{tikzpicture}
=
\zeta_i(\lambda)\zeta_i(\lambda-\alpha_j)^{-1}
\begin{tikzpicture}[centerzero,iQ,scale=1.4]
\draw[-] (0,-.3) \botlabel{j} to (0,.3);
\draw[-] (-1,-.3) \botlabel{i} to (-1,.3);
\pinpin{-1,.2}{0,.2}{1.5,.2}{\prod_{k \in I - \{i\}}\!\!\frac{(i-k+x+y)^{a_{k,-j}}}{(i-k+x-y)^{a_{k,j}}}};
\pin{-1,-.2}{-.5,-.2}{\Deltanabla_i(x)};
\region{1.5,-.25}{\lambda};
\end{tikzpicture}
=
\gamma_{i,j}\ 
\begin{tikzpicture}[centerzero,iQ,scale=1.4]
\draw[-] (0,-.3) \botlabel{j} to (0,.3);
\draw[-] (-0.3,-.3) \botlabel{i} to (-0.3,.3);
\pinpin{-.3,.2}{0,.2}{1.5,.2}{\prod_{k \in I - \{i\}}\!\!\frac{(i-k+x+y)^{a_{k,-j}}}{(i-k+x-y)^{a_{k,j}}}};
\fatbubbleplus{-.3,-.1};
\region{1.5,-.25}{\lambda};
\end{tikzpicture}\ .
$$
The proof of the second identity is similar.
\end{proof}

\begin{lem}\label{glyphslide}
\begin{enumerate}
\item
For $i \neq \pm j$, we have that
\begin{align} 
\begin{tikzpicture}[centerzero,iQ,scale=1.4]
\draw[-] (0.3,-.4) \botlabel{j} to (-0.3,.4);
\draw[-] (-0.3,-.4) \botlabel{i} to (0.3,.4);
\fatbubbleplus{-.15,.2};
\end{tikzpicture}
&=\gamma_{j,i}^{-1}
\begin{tikzpicture}[centerzero,iQ,scale=1.4]
\draw[-] (0.3,-.4) \botlabel{j} to (-0.3,.4);
\draw[-] (-0.3,-.4) \botlabel{i} to (0.3,.4);
\pinpin{-.2,.25}{.2,.25}{1.7,.25}{\prod_{k \in I - \{j\}}\!\!\frac{(j-k+x-y)^{a_{k,i}}}{(j-k+x+y)^{a_{k,-i}}}};
\fatbubbleplus{.15,-.2};
\end{tikzpicture}\ ,&
\begin{tikzpicture}[centerzero,iQ,scale=1.4]
\draw[-] (0.3,-.4) \botlabel{j} to (-0.3,.4);
\draw[-] (-0.3,-.4) \botlabel{i} to (0.3,.4);
\fatbubbleplus{.2,.26};
\end{tikzpicture}
&=\gamma_{i,j}
\begin{tikzpicture}[centerzero,iQ,scale=1.4]
\draw[-] (0.3,-.4) \botlabel{j} to (-0.3,.4);
\draw[-] (-0.3,-.4) \botlabel{i} to (0.3,.4);
\pinpin{-.2,.25}{.2,.25}{1.7,.25}{\prod_{k \in I - \{i\}}\!\!\frac{(i-k+x+y)^{a_{k,-j}}}{(i-k-x+y)^{a_{k,j}}}};
\fatbubbleplus{-.15,-.2};
\end{tikzpicture}\ .
\end{align}
\item
If $i \neq 0$ then
\begin{align}
\begin{tikzpicture}[centerzero,iQ,scale=1.4]
\draw[-] (0.3,-.4) \botlabel{0} to (-0.3,.4);
\draw[-] (-0.3,-.4) \botlabel{i} to (0.3,.4);
\internalbubbleplus[180]{-.15,.2};
\end{tikzpicture}
&=
\begin{tikzpicture}[centerzero,iQ,scale=1.4]
\draw[-] (0.3,-.4) \botlabel{0} to (-0.3,.4);
\draw[-] (-0.3,-.4) \botlabel{i} to (0.3,.4);
\pinpin{-.2,.25}{.2,.25}{1.5,.25}{\prod_{j \in I_\hi}\!\!\frac{(-j+x-y)^{a_{j,i}}}{(-j+x+y)^{a_{j,-i}}}};
\internalbubbleplus[180]{.15,-.2};
\end{tikzpicture}\ ,&
\begin{tikzpicture}[centerzero,iQ,scale=1.4]
\draw[-] (0.3,-.4) \botlabel{i} to (-0.3,.4);
\draw[-] (-0.3,-.4) \botlabel{0} to (0.3,.4);
\internalbubbleplus[180]{.2,.26};
\end{tikzpicture}&=
\begin{tikzpicture}[centerzero,iQ,scale=1.4]
\draw[-] (0.3,-.4) \botlabel{i} to (-0.3,.4);
\draw[-] (-0.3,-.4) \botlabel{0} to (0.3,.4);
\pinpin{-.2,.25}{.2,.25}{1.5,.25}{\prod_{j \in I_\hi}\!\!\frac{(-j+x+y)^{a_{j,-i}}}{(-j-x+y)^{a_{j,i}}}};\internalbubbleplus[180]{-.15,-.2};
\end{tikzpicture}
\ .
\end{align}
\end{enumerate}
\end{lem}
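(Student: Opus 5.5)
The idea is to reduce each glyph slide past a crossing to the corresponding slide past a pair of parallel strings, which is \cref{preglyphslide}. The step doing the real work is the Demazure-type relation for crossings of distinct colors with $i \neq \pm j$. In that case the relations of $\UU^\imath$ (the dot slide \cref{idemazure}, or equivalently \cref{demazureslide}) say that a dot on the $i$-string slides through the $ij$-crossing with no correction term. The correction terms only arise when $i = j$ (bigon terms) or $i=-j$ (cup-cap terms). Consequently any power series pinned to the $i$-string below the crossing can be moved to the $i$-string above it, and the same holds for double pins on the pair of strings.

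For (i), I start from the left-hand side, with $\Deltanabla_i(x)$ on the $i$-string above the crossing, the $j$-string to its left. I apply the mirror version of \cref{glyphy1}, i.e. sliding the internal bubble leftward across the neighbouring $j$-string. This is obtained exactly as in \cref{preglyphslide}, using the third and fourth internal bubble slides from the lemma preceding \cref{naughtykitty} in place of \cref{slideright1}. The result is $\Deltanabla_j$ moved to the other string together with a double pin. The normalization is not obvious. Each factor $O_{\lambda;k}$ in $\Deltanabla$ comes with a factor $\zeta_i(\lambda)^{-1}$, and changing the iweight of the 2-cell produces a ratio $\zeta_i(\lambda)/\zeta_i(\lambda\pm\alpha_j)$, which is $\gamma_{i,j}^{\pm1}$ by \cref{eitherway,bone}.

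More cleanly, I would argue as follows. Pull the glyph $\Deltanabla_i(x)$ down through the crossing as a dot series; this is legitimate since $i\ne\pm j$. The glyph now sits on the $i$-string at the bottom-left, in a 2-cell whose label differs by $\alpha_j$. Then use \cref{glyphy1} in reverse, or its analogue, to express it in terms of the $\lambda$ on the right. The resulting double pin can be slid back up through the crossing, again freely, with the variables $x,y$ interchanged because the strings swap position. The product formula in the statement then appears directly. The second identity in (i) is the same computation with the roles of left and right exchanged, using \cref{glyphy1} as stated.

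Equivalently, the two identities in (i) should be consistent via \cref{premiracle}. I would use that lemma as a check on the signs $\gamma_{i,j}$ versus $\gamma_{j,i}^{-1}$: composing both slides must return the identity, up to the crossing factor $r_{i,-j}r_{-i,j}r_{j,i}r_{-j,-i}$ coming from the crossing relations.

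Part (ii) is identical, using \cref{glyphy2} for $\Delta_0$ in place of \cref{glyphy1}. It requires $i\neq 0 = -0$ so that the dot slide through the $0i$-crossing is free. Only the factors $O_{\lambda;j}$ for $j\in I_\hi$ enter, and the prefactor $\frac{1}{h+x}$ is unaffected. No $\gamma$ appears because $\Delta_0$ carries no $\zeta_0(\lambda)^{-1}$ normalization.

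The main obstacle is the bookkeeping of the 2-cell labels: which iweight the pins live in after the slide, and the resulting $\zeta$ ratios and signs. I would carefully track the ratio $\zeta_i(\lambda)/\zeta_i(\lambda-\alpha_j)=\gamma_{i,j}$ in each of the four configurations.
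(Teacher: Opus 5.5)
Your route is the paper's: combine \cref{preglyphslide} with the fact that, for $i\neq\pm j$, a pin whose pinhead sits in the rightmost 2-cell slides freely through the crossing. Your ``slide first, then apply \cref{glyphy1}, then slide the double pin back up'' argument is right for the second identity in (i), where the glyph on the upper-right $i$-string already has its pinhead in the region labelled $\lambda$.

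Be careful with the first identity, though. There the glyph is on the upper-left $j$-string, not the $i$-string, and its pinhead lies in the top-middle 2-cell. So you cannot pull it down directly. You must first use \cref{glyphy1} (with $i$ and $j$ interchanged, read from right to left) to move the coefficients into the rightmost 2-cell, and only then slide; this is what the paper does. Relatedly, sliding a pin through a crossing moves only the dot, so the pinhead never lands ``in a 2-cell whose label differs by $\alpha_j$''.
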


\begin{proof}
(i) By \cref{preglyphslide}, we have that
\begin{align*}
\begin{tikzpicture}[centerzero,iQ,scale=1.6]
\draw[-] (0.3,-.4) \botlabel{j} to (-0.3,.4);
\draw[-] (-0.3,-.4) \botlabel{i} to (0.3,.4);
\fatbubbleplus{-.15,.2};
\end{tikzpicture}
&=\gamma_{j,i}^{-1}\begin{tikzpicture}[centerzero,iQ,scale=1.6]
\draw[-] (0.3,-.4) \botlabel{j} to (-0.3,.4);
\draw[-] (-0.3,-.4) \botlabel{i} to (0.3,.4);
\pinpin{-.16,.22}{.2,.25}{1.5,.35}{\prod_{k \in I - \{j\}}\!\!\frac{(j-k+x-y)^{a_{k,i}}}{(j-k+x+y)^{a_{k,-i}}}};
\pin{-.1,.14}{.7,-.2}{\Deltanabla_j(x)};
\end{tikzpicture}\\
&=
\gamma_{j,i}^{-1}\begin{tikzpicture}[centerzero,iQ,scale=1.6]
\draw[-] (0.3,-.4) \botlabel{j} to (-0.3,.4);
\draw[-] (-0.3,-.4) \botlabel{i} to (0.3,.4);
\pinpin{-.2,.25}{.2,.25}{1.6,.25}{\prod_{k \in I - \{j\}}\!\!\frac{(j-k+x-y)^{a_{k,i}}}{(j-k+x+y)^{a_{k,-i}}}};
\pin{.15,-.2}{.8,-.3}{\Deltanabla_j(x)};
\end{tikzpicture}=
\gamma_{j,i}^{-1}
\begin{tikzpicture}[centerzero,iQ,scale=1.6]
\draw[-] (0.3,-.4) \botlabel{j} to (-0.3,.4);
\draw[-] (-0.3,-.4) \botlabel{i} to (0.3,.4);
\pinpin{-.2,.25}{.2,.25}{1.6,.25}{\prod_{k \in I - \{j\}}\!\!\frac{(j-k+x-y)^{a_{k,i}}}{(j-k+x+y)^{a_{k,-i}}}};
\fatbubbleplus{.15,-.2};
\end{tikzpicture}\ .
\end{align*}
This proves the first equality.
The proof of the second equality is similar.

\vspace{2mm}
\noindent
(ii) Similar.
\end{proof}

\begin{lem}\label{anothermiracle}
For $i,j \in I$ with $(i,j) \neq (0,0)$, we have that
$$
\begin{tikzpicture}[centerzero,iQ,scale=1.6]
\draw[-] (0.3,-.4) \botlabel{j} to (-0.3,.4);
\draw[-] (-0.3,-.4) \botlabel{i} to (0.3,.4);
\fatbubbleplus{-.15,.2};
\fatbubbleplus{.15,.2};
\end{tikzpicture}
=
r_{i,-j}r_{-i,j}r_{j,i}r_{-j,-i}
\begin{tikzpicture}[centerzero,iQ,scale=1.6]
\draw[-] (0.3,-.4) \botlabel{j} to (-0.3,.4);
\draw[-] (-0.3,-.4) \botlabel{i} to (0.3,.4);
\fatbubbleplus{.15,-.2};
\fatbubbleplus{-.15,-.2};
\end{tikzpicture}\ .
$$
\end{lem}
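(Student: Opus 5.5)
For $i \neq \pm j$ I would use \cref{glyphslide}(i) twice. Apply the first identity to move the double glyph on the top-left end of the crossing (the string labelled $j$) through the crossing. This gives a factor $\gamma_{j,i}^{-1}$, a double pin above the crossing, and the double glyph at the bottom-right. The double pin and the remaining double glyph on the top-right string are both pins, so they commute. Apply the second identity of \cref{glyphslide}(i) to move the other double glyph down, picking up $\gamma_{i,j}$ and a second double pin above the crossing.

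Above the crossing there are now two double pins in the same variables: $x$ on the top-left string and $y$ on the top-right string. Their product is the rational function
$$\frac{\gamma_{i,j}}{\gamma_{j,i}}\prod_{k\in I-\{i\}}\frac{(i-k+x+y)^{a_{k,-j}}}{(i-k-x+y)^{a_{k,j}}}\prod_{k\in I-\{j\}}\frac{(j-k+x-y)^{a_{k,i}}}{(j-k+x+y)^{a_{k,-i}}}.$$
By \cref{premiracle} this is the constant $r_{i,-j}r_{-i,j}r_{j,i}r_{-j,-i}$, which proves the lemma in this case. The one thing to check is that the variable conventions in the two applications of \cref{glyphslide} match those in \cref{premiracle}. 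Note that this case includes the situations where exactly one of $i,j$ is $0$.

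For $i=j\neq 0$, the right-hand side has coefficient $r_{i,-i}r_{-i,i}r_{i,i}r_{-i,-i}=1$. The pair of double glyphs is the double pin $\Deltanabla_i(x)\Deltanabla_i(y)$, which is symmetric in $x$ and $y$. So it should slide through the crossing by \cref{symmetricslider}. The only subtlety is that these are power series with coefficients in $\hatZ_\lambda$, not polynomials. I would handle this by truncating modulo a GC neighbourhood of $0$. The truncations can be taken symmetric, polynomial in the dots, and with coefficients in $Z_\lambda$, which commute with the $i$-strings up to bubble slides that are symmetric in $x$ and $y$. Then apply the polynomial relation and pass to the limit using \cref{conv1,conv2}.

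The case $i=-j\neq 0$ is where I expect the main obstacle. Here the Demazure relation \cref{idemazure} for a crossing of $i$ and $-i$ has an extra cup--cap term, and the double pin is no longer symmetric. First, I would compute the coefficient from \cref{anothercrazyone}: it equals $(-1)^{a_{i,-i}}\gamma_{-i,i}/\gamma_{i,-i}=(-1)^{a_{i,-i}}$. This is $1$ except when $i=\pm h$, where it is $-1$. I would then apply the general Demazure slide \cref{demazureslide} (again via polynomial truncations) to $f(x,y)=\Deltanabla_{-i}(x)\Deltanabla_i(y)$. This gives the bottom configuration plus an identity-strands term and a cup--cap term, each involving a Demazure difference quotient of $f$.

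In the cup--cap term I would move the double glyphs across the cup and cap using \cref{riveting2}. This turns $\Deltanabla_i$ on one side into the minus double glyph on the other, which is inverse to it by \cref{bubblyinverses}, so the corresponding contributions should cancel. The delicate part is bookkeeping the signs from the bubble slides and the $2$-cell labels, which is exactly where the sign $(-1)^{a_{h,-h}}=-1$ for $i=\pm h$ must emerge. If the direct cancellation is messy, I would instead try to reduce to the $i\neq\pm j$ argument by writing $\Deltanabla$ via the product formula \cref{quickc} and sliding the internal bubbles $O_k$ individually with \cref{slideright1}. For $k\neq\pm i$ this works without correction terms, leaving only the finitely many factors $k=\pm i$ to be handled by hand.
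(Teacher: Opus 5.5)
Your case $i\neq\pm j$, which also covers the situations where exactly one of $i,j$ is $0$, is the paper's argument: apply \cref{glyphslide}(i) twice, then \cref{premiracle}. The order in $\gamma_{i,j}/\gamma_{j,i}$ does not matter since these are signs. The other two cases have genuine problems.

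\textbf{The case $i=-j\neq 0$.} Here your constant is wrong. As noted after \cref{bone}, $\gamma_{i,-i}=(-1)^{\delta_{i,-h}}$, so $\gamma_{-i,i}/\gamma_{i,-i}=(-1)^{\delta_{i,h}+\delta_{i,-h}}=(-1)^{a_{i,-i}}$. This cancels the factor $(-1)^{a_{i,-i}}$, so the constant is $1$. This is also immediate from $r_{i,i}=r_{-i,i}=1$, which gives $r_{i,-j}r_{-i,j}r_{j,i}r_{-j,-i}=r_{i,i}r_{-i,-i}r_{-i,i}r_{i,-i}=1$. Both sides of the lemma are non-zero (a crossing composed with invertible double glyphs) and $p\neq 2$. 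So the version with constant $-1$ for $i=\pm h$, which your sign bookkeeping is meant to produce, is false.

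Apart from this, the Demazure-slide-and-cancellation step is only asserted. Your fallback via \cref{slideright1} also misplaces the difficulty: the cup--cap correction terms come from dots passing through the $(i,-i)$ crossing, so they arise for every factor $O_{\lambda;k}$, not just $k=\pm i$.

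The missing idea is that no new computation is needed. Bend the $-i$ strand with a cup on one side and a cap on the other, so that the crossing becomes an $i$--$i$ crossing. Use \cref{riveting2} to convert the $-$ double glyph on an outer leg into a $+$ double glyph on the bent part. Apply the already proved $i=j$ case, then straighten again. This shows that the $+$ double glyph at the top right, then the crossing, then the $-$ double glyph at the bottom right, equals the $-$ double glyph at the top left, the crossing, and the $+$ double glyph at the bottom left. \cref{bubblyinverses} then turns this into the lemma with constant $1$.

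\textbf{The case $i=j\neq 0$.} Your conclusion is right, but a step is missing. The two $+$ double glyphs have their pinheads in different 2-cells. The top-left one sits in the middle 2-cell, of iweight $\lambda-\alpha_i$; the top-right one sits in the rightmost 2-cell. So the pair is not the double pin $\Deltanabla_{\lambda;i}(x)\Deltanabla_{\lambda;i}(y)$.

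You must first move the left pinhead across the top-right string using \cref{preglyphslide}. This rewrites the pair as $\gamma_{i,i}^{-1}$ times the double pin $\Deltanabla_{\lambda;i}(x)\Deltanabla_{\lambda;i}(y)\prod_{k\in I-\{i\}}(i-k+x-y)^{a_{k,i}}(i-k+x+y)^{-a_{k,-i}}$ in the rightmost cell. You then have to check that this extra product is also symmetric. It is: only $k=i\pm 1$ contribute to the first factor, giving $((x-y)^2-1)^{-1}$, and the second factor is visibly symmetric. Only then does \cref{symmetricslider} move the whole double pin below the crossing, where \cref{preglyphslide} applied to the bottom strands reabsorbs the same factor.

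Your truncation remark is harmless but aimed at the wrong issue. Coefficients placed in the rightmost 2-cell commute with the crossing by the interchange law, with no bubble slides involved.
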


\begin{proof}
We first prove this in the case $i \neq \pm j$.
By \cref{glyphslide}(i), we have that
$$
\begin{tikzpicture}[centerzero,iQ,scale=1.6]
\draw[-] (0.3,-.4) \botlabel{j} to (-0.3,.4);
\draw[-] (-0.3,-.4) \botlabel{i} to (0.3,.4);
\fatbubbleplus{-.15,.2};
\fatbubbleplus{.15,.2};
\end{tikzpicture}=
\frac{\gamma_{j,i}}{\gamma_{i,j}}\begin{tikzpicture}[centerzero,iQ,scale=1.6]
\draw[-] (0.3,-.4) \botlabel{j} to (-0.3,.4);
\draw[-] (-0.3,-.4) \botlabel{i} to (0.3,.4);
\fatbubbleplus{-.15,-.2};
\fatbubbleplus{.15,-.2};
\pinpin{-.15,.2}{.15,.2}{2.9,.2}{\left(\prod_{k \in I - \{i\}}\!\!\frac{(i-k+x+y)^{a_{k,-j}}}{(i-k-x+y)^{a_{k,j}}}\right)\left(\prod_{k \in I - \{j\}}\!\!\frac{(j-k+x-y)^{a_{k,i}}}{(j-k+x+y)^{a_{k,-i}}}\right)};
\end{tikzpicture}\ .
$$
It remains to apply \cref{premiracle}.

Next, we prove the lemma in the case $i=j \neq 0$. Expanding the definitions and using \cref{preglyphslide}
gives that
$$
\begin{tikzpicture}[centerzero,iQ,scale=1.6]
\draw[-] (0.3,-.4) \botlabel{i} to (-0.3,.4);
\draw[-] (-0.3,-.4) \botlabel{i} to (0.3,.4);
\fatbubbleplus{-.15,.2};
\fatbubbleplus{.15,.2};
\region{.3,0}{\lambda};
\end{tikzpicture}
=
\gamma_{i,i}^{-1}
\begin{tikzpicture}[centerzero,iQ,scale=1.6]
\draw[-] (0.3,-.4) \botlabel{i} to (-0.3,.4);
\draw[-] (-0.3,-.4) \botlabel{i} to (0.3,.4);
\pinpin{-.16,.22}{.2,.25}{1.49,.35}{\prod_{j \in I - \{i\}}\!\!\frac{(i-j+x-y)^{a_{j,i}}}{(i-j+x+y)^{a_{j,-i}}}};
\pinpin{-.1,.14}{.08,.1}{1,-.18}{\Deltanabla_i(x)\Deltanabla_i(y)};
\region{.2,-.05}{\lambda};
\end{tikzpicture}\ .
$$
Now we observe that both $\Deltanabla_{\lambda;i}(x)\Deltanabla_{\lambda;i}(y)$
and $\prod_{j \in I - \{i\}}\!\!\frac{(i-j+x-y)^{a_{j,i}}}{(i-j+x+y)^{a_{j,-i}}}$
are symmetric in $x$ and $y$.
To see this for the latter expression, the denominator is evidently symmetric. The numerator is symmetric because 
the only non-trivial factors arise from $j=i\pm 1$, these
being $$
(i-(i+1)+x-y)^{-1} (i-(i-1)+x-y)^{-1}
= (x-y-1)^{-1} (x-y+1)^{-1} = ((x-y)^2-1)^{-1},$$ 
which is symmetric in $x$ and $y$. In view of this symmetry, 
\cref{symmetricslider} implies 
that both of the double pins can be commuted past the crossing to obtain
$$
\gamma_{i,i}^{-1}
\begin{tikzpicture}[centerzero,iQ,scale=1.6]
\draw[-] (0.3,-.4) \botlabel{i} to (-0.3,.4);
\draw[-] (-0.3,-.4) \botlabel{i} to (0.3,.4);
\pinpin{-.16,-.22}{.2,-.25}{1.5,-.35}{\prod_{j \in I - \{i\}}\!\!\frac{(i-j+x-y)^{a_{j,i}}}{(i-j+x+y)^{a_{j,-i}}}};
\pinpin{-.1,-.14}{.08,-.1}{.9,.12}{\Deltanabla_i(x)\Deltanabla_i(y)};
\region{.2,.05}{\lambda};
\end{tikzpicture}
=\begin{tikzpicture}[centerzero,iQ,scale=1.6]
\draw[-] (0.3,-.4) \botlabel{i} to (-0.3,.4);
\draw[-] (-0.3,-.4) \botlabel{i} to (0.3,.4);
\fatbubbleplus{-.15,-.2};
\fatbubbleplus{.15,-.2};
\region{.3,0}{\lambda};
\end{tikzpicture}\ .
$$
Since $r_{i,-i}r_{-i,i}r_{i,i}r_{-i,-i}=1$, this completes the proof in this case.

Finally, to prove it when $i=-j \neq 0$, we take the $i=j \neq 0$ case just proved and rotate to get that
$$
\begin{tikzpicture}[centerzero,iQ,scale=1.6]
\draw[-] (0.3,-.4) \botlabel{\ibar} to (-0.3,.4);
\draw[-] (-0.3,-.4) \botlabel{i} to (0.3,.4);
\fatbubbleplus{.15,.2};
\fatbubbleminus{.15,-.2};
\end{tikzpicture}=
\begin{tikzpicture}[iQ,centerzero,scale=1.9]
\draw (-.5,.4) to (-.5,0) to [out=-90,in=-120,looseness=2] (0,0)
to [out=60,in=90,looseness=2] (.5,0) to (.5,-.4)\botlabel{\ibar};
\draw (-0.15,0.4) \braiddown (0.15,-0.4)\botlabel{i};
\fatbubbleplus{.12,.15};
\fatbubbleplus{-.12,.15};
\end{tikzpicture}
=
\begin{tikzpicture}[iQ,centerzero,scale=1.9]
\draw (-.5,.4) to (-.5,0) to [out=-90,in=-120,looseness=2] (0,0)
to [out=60,in=90,looseness=2] (.5,0) to (.5,-.4)\botlabel{\ibar};
\draw (-0.15,0.4) \braiddown (0.15,-0.4)\botlabel{i};
\fatbubbleplus{.12,-.15};
\fatbubbleplus{-.12,-.15};
\end{tikzpicture}
=\begin{tikzpicture}[anchorbase,iQ,scale=1.6]
\draw[-] (0.3,-.4) \botlabel{\ibar} to (-0.3,.4);
\draw[-] (-0.3,-.4) \botlabel{i} to (0.3,.4);
\fatbubbleplus{-.15,-.2};
\fatbubbleminus{-.15,.2};
\end{tikzpicture}\ .
$$
The desired relation follows easily from this using \cref{bubblyinverses}.
\end{proof}

To prove further results about the glyphs on a 0-string, 
we let
\begin{align}\label{DfromGPT}
D_\lambda(x,y) &\coloneq \partial_{x,y} \Delta_{\lambda;0}(x)
= \frac{\Delta_{\lambda;0}(x)-\Delta_{\lambda;0}(y)}{x-y},\\
\G_\lambda(x,y)&\coloneq 1+D_\lambda(x,y)\nabla_{\lambda;0}(-y).
\label{malaysia}
\end{align}
Both are elements of $\hatZ_\lambda\llbracket x,y\rrbracket$,
and $D_{\lambda}(x,y)=D_\lambda(y,x)$.
Again, we drop the subscript $\lambda$ when these are pinned into a string diagram.

\begin{lem}\label{scruffylem}
For $\lambda \in X^\imath$, we have that
\begin{align}\label{scruffyeq}
\ .
\label{scruffiereq}
\end{align}
\end{lem}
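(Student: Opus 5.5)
The statement's display did not survive, so I am reconstructing the intended identity from the definitions \cref{DfromGPT,malaysia} just before it. I expect \cref{scruffyeq,scruffiereq} to be the $i=j=0$ case excluded from \cref{anothermiracle}. That is, they should describe how the single glyph $\Delta_0$ (and then the double glyph $\Deltanabla_0$) on a $0$-string passes through a crossing of two $0$-strings. The correction terms should be the Demazure residue $D(x,y)$ on a pair of vertical strings and the factor $\G(x,y)$ pinned to a cup--cap pair.

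\emph{Step 1: slide $\Delta_0$ through the crossing.} I would take $\Delta_{\lambda;0}(x)\in\hatZ_\lambda\llbracket x\rrbracket$ and move its coefficients bubble by bubble. These are dotted bubbles labelled by $j\in I_\hi$. Dotted bubbles commute past strings only up to the bubble slides \cref{slideright1}, so I would first use \cref{glyphy2} and its variants with $i=0$ to pull the bubble part to one side. Then I would apply the generating-function form \cref{demazureslide} of the Demazure relation to the resulting power series of dots on the $0$--$0$ crossing. For $i=j=0$ this relation has both the identity term and the cup--cap term ($0=-0$). Using \cref{trick2}, the dot-sliding error becomes $\partial_{x,y}\Delta_0$ on two vertical strings, which is $D(x,y)$. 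The cup--cap term carries $\Delta_0(x)$ on one leg of the cup. I would move it around the cup using \cref{pivoting,riveting}, which turns it into $\nabla$-type glyphs evaluated at $-y$ on the other leg. Combined with the identity term, this should produce exactly $1+D(x,y)\nabla_0(-y)=\G(x,y)$.

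\emph{Step 2: pass to double glyphs.} To get from the $\Delta$ identity to the statement for $\Deltanabla_0=\Delta_0\nabla_0$, I would run the same argument for $\nabla_0$, using the second identity of \cref{glyphslide}(ii) and the $I_\lo$ half of \cref{sin}. Then I would combine the two using \cref{bubblyinverses} and the $i=0$ case of \cref{bibble}, $\Delta_{\lambda;0}(-x)\nabla_{\lambda;0}(x)=2x+1$. Normalising at $x=0$ via \cref{betterargument} should fix the constants and the signs $(-1)^{p(\lambda)}\zeta_0(\lambda)$.

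\emph{Main obstacle.} The hard part is the cup--cap bookkeeping in Step 1. A glyph pinned at the cup acquires the shifted weight and a sign via $\gamma_{0,0}=1$ and $\zeta_0(\lambda\pm\alpha_0)$. One also has to check that the dotted $0$-bubble relation \cref{zeroslider} (only odd powers central) does not leave a spurious even-degree term. I would first verify the identity modulo $\overline{U}_{\lambda;d,J}$ for small $d$, and then confirm the $u$-coefficients of $\G$ by a direct partial-fractions computation like the one in \cref{badbunny}.
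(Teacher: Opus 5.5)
Your proposal proves a different identity from the one stated: you have reconstructed the lemma incorrectly, and the lemma contains no crossing at all.

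\emph{Why the reconstruction is wrong.} Look at how \cref{scruffiereq} is used in the proof of \cref{lem:demazure}. There it rewrites the crossing-free cap--cup diagram carrying the double pin $1-x+y+(x+y)\Theta(x,-y)$ (with $x$ on the right leg of the cap and $y$ on the right leg of the cup). It becomes the same cap--cup diagram with a $-$ glyph at the top left and a $+$ glyph at the bottom right. The paper obtains \cref{scruffiereq} from \cref{scruffyeq} by attaching a cap at the northwest and a cup at the southeast. This rotation turns the cap--cup diagram into two parallel strings and sends crossings to crossings. So \cref{scruffyeq} is the corresponding crossing-free statement for two parallel $0$-strings, with a $+$ glyph on the left string and a $-$ glyph on the right string.

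Your Step 1 uses bubble slides via \cref{glyphy2}, the Demazure relation with its cup--cap term, and glyphs moved round a cup. Your Step 2 passes to double glyphs. Both address a glyph-through-crossing relation of the kind collected in \cref{whoosh}, not this lemma. Even for that statement, the decisive claim that the terms ``should produce exactly'' $\Theta(x,y)$ is only asserted.

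\emph{What the lemma actually needs.} It is one line of algebra plus the definitions of the glyphs. From \cref{DfromGPT} and \cref{malaysia},
\[
(x-y)\Theta_\lambda(x,y)=x-y+\Delta_{\lambda;0}(x)\nabla_{\lambda;0}(-y)-\Delta_{\lambda;0}(y)\nabla_{\lambda;0}(-y).
\]
The $i=0$ case of \cref{bibble} is $\Delta_{\lambda;0}(-x)\nabla_{\lambda;0}(x)=1+2x$; you quote it, but only in Step 2. Evaluated at $x=-y$ it gives $\Delta_{\lambda;0}(y)\nabla_{\lambda;0}(-y)=1-2y$. Hence, for every $\lambda$,
\[
\Delta_{\lambda;0}(x)\nabla_{\lambda;0}(-y)=1-x-y+(x-y)\Theta_\lambda(x,y).
\]
By \cref{longyear1a} and \cref{longyear2a}, the $+$ glyph on the left $0$-string and the $-$ glyph on the right $0$-string both have their pinheads in the middle 2-cell. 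Together they are exactly the double pin $\Delta_0(x)\nabla_0(-y)$ there, and this gives \cref{scruffyeq}.

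Rotating through $90^\circ$ moves the second point of the double pin to the left leg of the cup. Sliding it to the right leg replaces $y$ by $-y$. This gives the pin $1-x+y+(x+y)\Theta(x,-y)$, which is \cref{scruffiereq} in the form used in \cref{lem:demazure}.

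None of the following enter the proof: the Demazure relation, bubble slides, the signs $\zeta_0(\lambda\pm\alpha_0)$, \cref{zeroslider}, or checks modulo $\overline{U}_{\lambda;d,J}$.
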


\begin{proof}
The identity \cref{scruffiereq} follows from \cref{scruffyeq}
by attaching a cap to the northwest of the diagram and a cup to the southeast then simplifying in an obvious way.
To prove \cref{scruffyeq}, 
rearranging \cref{DfromGPT} and using \cref{bibble} gives
\begin{equation}\label{scruffy}
\Delta_{\lambda;0}(x) \nabla_{\lambda;0}(-y) =1-x-y + (x-y) \G_\lambda(x,y).
\end{equation}
Since
$$
%
\ ,
$$
the identity \cref{scruffyeq} follows from \cref{scruffy}.
\end{proof}

Some more relations for the glyphs on a $0$-string:
\begin{align}\label{whoosh}
%
\ .
\end{align}
For the first of these, we applied \cref{bubblyinverses}.
The second follows from the definitions and 
\cref{slideright1}, noting that
the only $i \in I_\hi$ with $a_{i,0} \neq 0$ is $i=1$.
The third follows by first applying \cref{stomp} then using \cref{demazureslide}.
The fourth is proved in a similar way to the third.
These relations are needed to prove the following:

\begin{lem}\label{maybe}
For any $\lambda \in X^\imath$, we have that
\begin{equation}\label{stunning}
%
\ .
\end{equation}
\end{lem}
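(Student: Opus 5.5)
We would treat \cref{stunning} as the missing $(i,j)=(0,0)$ case of \cref{anothermiracle}: the relation says how the glyphs on a pair of $0$-strings pass through the crossing, with extra correction terms coming from the fact that on a $0$-string only the ``half'' glyphs $\Delta_0$ and $\nabla_0$ are available. In that case the factorisation $\Deltanabla_{\lambda;0}=\Delta_{\lambda;0}\nabla_{\lambda;0}$ involves the factor $h+x$, which is not symmetric under $x\mapsto -x$. So the symmetric-slider argument used for $i=j\neq 0$ in \cref{anothermiracle} cannot be applied directly. Our plan is to reduce everything to statements about double pins on two parallel $0$-strings. These can be moved through the crossing with the Demazure slide \cref{demazureslide} and with \cref{zeroslider} for the odd central parts.

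\textbf{Step 1: rewrite the glyphs as double pins.} We expand the glyphs on the two $0$-strings on one side of the crossing using \cref{quicka,quickb} and the second identity of \cref{preglyphslide}, namely \cref{glyphy2}. This turns the left-hand side of \cref{stunning} into the crossing composed with a double pin. The coefficient of that pin is built from $\Delta_{\lambda;0}(x)$, $\nabla_{\lambda;0}(-y)$ and the rational factor from \cref{glyphy2}. By the remark after \cref{whoosh}, that factor only sees $j=1\in I_\hi$. The first relations in \cref{whoosh} (using \cref{bubblyinverses}) let us trade glyphs labelled $-$ for inverses of glyphs labelled $+$, so that only $\Delta_0$, $\nabla_0$ and their inverses appear.

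\textbf{Step 2: split into symmetric and antisymmetric parts.} We rewrite the resulting two-variable coefficient using \cref{scruffy}:
\[
\Delta_{\lambda;0}(x)\nabla_{\lambda;0}(-y)=1-x-y+(x-y)\G_\lambda(x,y).
\]
Pinned below the crossing, this has three pieces. The constant $1$ passes through the crossing trivially. The piece $-x-y$ is symmetric, so it is a sum of a dot on each string and can be handled by \cref{demazureslide}. The piece $(x-y)\G_\lambda$ is handled by \cref{scruffylem}, that is, \cref{scruffyeq}, which converts $(x-y)$ times a crossing into identity-strand and cup–cap terms.

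\textbf{Step 3: reassemble and match.} After the slides, we collect the resulting terms: a crossing term, a term with two identity strands, and a cup–cap term. Each should carry a pin of $\Delta$, $\nabla$ or $\G$ type. We then match them against the right-hand side of \cref{stunning} using \cref{scruffiereq} together with the third and fourth relations in \cref{whoosh}. Any stray constant would be fixed using $\Deltanabla_{\lambda;0}(0)=1$, i.e.\ \cref{betterargument}. Parity signs $(-1)^{t(\lambda)}$ and $(-1)^{p(\lambda)}$ change across the cup–cap region, and we would track these with \cref{eitherway}.

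\textbf{Main obstacle.} We expect the hardest part to be the cup–cap (Brauer-type) term in Step 3. There the glyphs must be pushed around a cup or cap via \cref{riveting,riveting1,riveting2}, which interchange $+$ and $-$ glyphs and send $x\mapsto -x$. The result then has to reproduce exactly the $\G_\lambda(x,-y)$-type coefficient, with the correct sign and the correct value of $\zeta_0(\lambda)$ in the adjacent iweight. We would attack this first by specialising to a single iweight and comparing coefficients of $x^ay^b$ at low order, and only then write the general identity.
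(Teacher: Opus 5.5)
Your outline leaves the substance of \cref{stunning} unproved, and the tool you assign to the decisive step does something else.

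\cref{scruffyeq} involves no crossing. It is a statement about two parallel $0$-strings: the $\Delta_{\lambda;0}$ glyph on one and the $\nabla_{\lambda;0}(-y)$ glyph on the other, both pinned into the middle 2-cell, combine to the double pin $1-x-y+(x-y)\G_\lambda(x,y)$. This is just \cref{scruffy} drawn as a diagram, and \cref{scruffiereq} is its cup--cap rotation. Converting $(x-y)$ times a crossing into identity and cup--cap terms is the job of the Demazure relation, and on $0$-strings that relation comes with cup--cap corrections.

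This also affects your Step 1. Unless the glyphs already sit on the same side of the crossing, you cannot gather them into ``the crossing composed with a double pin'' without producing such corrections. Moreover, carrying glyph coefficients across a $0$-string picks up a rational factor from $O_{\pm1}$; this is the second relation in \cref{whoosh}, and your Step 1 does mention it. So after Step 1 the coefficient is not simply $\Delta_{\lambda;0}(x)\nabla_{\lambda;0}(-y)$, and the three-way split of Step 2 does not apply as written.

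Step 3 then only asserts that the pieces ``should'' match. It proposes to settle the cup--cap term by comparing low-order coefficients of $x^ay^b$, which cannot prove an identity in $\hatZ_\lambda\llbracket x,y\rrbracket$: the coefficients there are power series in infinitely many independent bubble generators. Separately, your worry about iweights and parities changing across the cup--cap region is misplaced. Since $\pi(\alpha_0)=0$, every 2-cell adjacent to a $0$-string has iweight $\lambda$, so $t(\lambda)$, $p(\lambda)$ and $\zeta_0(\lambda)$ do not change.

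The missing idea is to treat the $\G$-term on its own, and this is what the paper does. It first proves the auxiliary identity \cref{mainpoint}, starting from
$\G_\lambda(x,y)=\frac{\Delta_{\lambda;0}(x)\nabla_{\lambda;0}(-y)-(1-x-y)}{x-y}$, which follows from \cref{bibble}. It reads this as a Demazure-type difference quotient and realizes it diagrammatically with the glyph relations in \cref{whoosh}, the third and fourth of which come from the Demazure slide \cref{demazureslide}. This yields two equations, which are then composed. Only after that is the first term on the left-hand side of \cref{stunning} computed directly, and the two results are added. In your outline the $\G$-contribution is supposed to fall out of the split in Step 2 and be matched at the end, but that is exactly the part you would have to supply, and the proposal gives no mechanism for it.
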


\begin{proof}
We first show that
\begin{align}\label{mainpoint}
%
\ .
\end{align}
To see this, we have by its definition that
$$
\G_\lambda(x,y)=1+\frac{\Delta_{\lambda;0}(x)\nabla_{\lambda;0}(-y)-\Delta_{\lambda;0}(y)\nabla_{\lambda;0}(-y)}{x-y}
\stackrel{\cref{bibble}}{=}\frac{\Delta_{\lambda;0}(x)\nabla_{\lambda;0}(-y)-(1-x-y)}{x-y}.
$$
Using this, \cref{whoosh,stomp},
it follows that
$$
%
\ .
$$
Composing these two equations gives
$$
%
\ ,
$$
which is \cref{mainpoint}.

Now consider the first term on the left-hand side of \cref{stunning}:
\begin{align*}
%
\ .
\end{align*}
Adding this equation to \cref{mainpoint} completes the proof of the lemma.
\end{proof}

\begin{lem}\label{easycurl}
Suppose that $\lambda \in X^\imath$ and $f(x) \in \hatZ_\lambda\llbracket x\rrbracket$.
\begin{enumerate}
\item For any $i \in I-\{0\}$, we have that
\begin{equation}\label{curlgen1}
%
\ .
\end{equation}
\item We have that
\begin{equation}\label{curlgen2}
%
\ .
\end{equation}
\end{enumerate}
\end{lem}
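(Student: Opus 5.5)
The plan is to derive both identities from the curl relation \cref{fancyicurl}, which expresses a curl carrying the dot generating function $\frac{1}{u-x}$ in terms of a bubble generating function and an undotted string with a dot generating function. Since $f(x)\in\hatZ_\lambda\llbracket x\rrbracket$ is pinned to the curl, I would first reduce to the case where $f(x)$ is a polynomial. This is legitimate because all expressions are continuous in the GC topology, and the coefficients of $f$ can be moved into the ambient 2-cell. I would then write the pinned curl as $\big[\text{curl with }\tfrac{1}{u-x}\big]\,f(u)$, taking the $u^{-1}$-coefficient as in \cref{trick}.

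Substituting \cref{fancyicurl} turns the left-hand side into $\big[O_{\lambda;-i}(u)\cdot(\text{string with }\tfrac{1}{u\pm x})\cdot f(u)\big]_{u:-1}$, with suitable sign conventions and $\zeta$-normalizations. Next I would split $O_{\lambda;-i}(u)=P_{\lambda;-i}(u-i)+Q_{\lambda;-i}\big(\tfrac{1}{u+i-x_{-i}}\big)$ using \cref{elias}. For the $P$-part, \cref{trick} evaluates $u$ at the dot, giving $P_{-i}(\cdot)f(\cdot)$ pinned to the string. For the $Q$-part, partial fractions $\frac{1}{(u-a)(u-b)}$ in the fancy notation produce a divided difference, so \cref{trick2} yields a term $Q_{-i}\big(\partial\,(\ldots f)\big)$ pinned to the string.

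Recombining $P$ and $Q$ as in \cref{convention} should then package the answer as an internal bubble $O_{-i}(\ldots+x)$ multiplied by $f$, plus the Demazure-type correction term. This is the form I expect \cref{curlgen1} to take, for $i\neq 0$. For \cref{curlgen2} with $i=0$, the same computation applies, except that $P_{\lambda;0}(u)=(-1)^{t(\lambda)}u^{-1}$ and \cref{fancyicurl} contains the extra $-\frac{1}{2u}$ term. Here I would use the one-variable Demazure operator $\partial_x$ rather than evaluating at $u=x$ directly, following the trick from the proof of \cref{louis} of clearing the $u^{-1}$ before evaluating.

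I expect the $i=0$ case to be the main obstacle. The difficulty is keeping track of the $\zeta_0(\lambda)$ and $(-1)^{t(\lambda)}$ signs, and making sure the $u^{-1}$-pole terms cancel rather than leaving spurious evaluations at $x=0$. If a clean identity fails to emerge, I would first check the bookkeeping against \cref{zeroslider} for odd powers of $x$, and against the $f=1$ case given by \cref{betterargument}.
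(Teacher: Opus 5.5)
Your route is sound, but it is not the one the paper takes. The paper never uses \cref{fancyicurl} here. Instead it slides the dot generating function across the self-crossing of the curl using (the mirror image of) \cref{genidemazure}. Resolving the crossing into two parallel strands gives the string beside a closed bubble carrying the double pin $\frac{1}{(u-x)(u-y)}$. After multiplying by $f(u)$ and taking $u^{-1}$-coefficients, \cref{trick2} turns this into the $Q_{\lambda;i}(\partial_{x,x_i}f(x))$ term. For $i=0$, the curl left after the slide is undotted, hence zero by \cref{icurl}. Because $0=-0$, there is also an extra error term from resolving the crossing into a cap and a cup. It straightens to a string carrying $\frac{1}{(u-x)(u+x)}=\frac{1}{2u}\big(\frac{1}{u-x}+\frac{1}{u+x}\big)$, whose $u^{-1}$-coefficient against $f(u)$ is $\partial_{x,-x}f(x)=\partial_x f(x)$.

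Your version hides the slide inside \cref{fancyicurl} and lets \cref{elias} do the bookkeeping. At $i=0$ the key observation is $\zeta_0(\lambda)P_{\lambda;0}(u)=-\frac{1}{2u}$. This $P$-part and the extra $-\frac{1}{2u}$ term of \cref{fancyicurl} must add up to $-\frac{1}{(u-x)(u+x)}$; that is what the paper's cup--cap term shows. So \cref{trick2} with $y=-x$ gives $-\partial_x f(x)$ at once, and no stray $f(0)$ can appear. Your approach is shorter once \cref{fancyicurl} is accepted as a black box, as the paper itself does in \cref{wednesday}. The paper's approach needs only the dot slide and the vanishing of the undotted $0$-curl, and it shows where the $\partial_x f$ term comes from.

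The one step you should not carry out is the final recombination into an internal bubble. The evaluation $O_{\lambda;j}(c+x)$ in \cref{convention}, and hence the internal bubbles in \cref{coughs}, requires $c\in\kk^\times$, i.e.\ a bubble colour different from the string colour. In a curl the bubble has the same colour as the string, so $c=0$, and $Q_{\lambda;i}\big(\frac{1}{x-x_i}\big)$ is not a power series. Keeping the $Q$-part as a divided difference is exactly what makes the answer well defined.

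So for $i\neq 0$ you should stop at $\zeta_i(\lambda)\big(P_{\lambda;i}(i+x)f(x)+Q_{\lambda;i}(\partial_{x,x_i}f(x))\big)$ pinned to the string. For $i=0$ you should stop at $\zeta_0(\lambda)Q_{\lambda;0}(\partial_{x,x_0}f(x))-\partial_x f(x)$. Both hold up to relabelling $i\leftrightarrow -i$ and $x\mapsto -x$, depending on where the pin and the region $\lambda$ sit. This is the form that combines with \cref{phewy,louis} in \cref{carnomore}.

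Two small slips. First, by \cref{elias}, $O_{\lambda;-i}(u)=P_{\lambda;-i}(u-i)+Q_{\lambda;-i}\big(\frac{1}{u-x_{-i}}\big)$, not with $u+i-x_{-i}$. Second, the $f=1$ sanity check is the undotted curl relation \cref{icurl}, not \cref{betterargument}, which is about $\Deltanabla_{\lambda;0}(0)$.
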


\begin{proof}
(i)
By \cref{genidemazure} (actually, its mirror image), we have that
$$
%
\ .
$$
Now we multiply this by $f(u)$
then take the $u^{-1}$-coefficients using \cref{trick,trick2}, extended continuously to formal power series.

\vspace{2mm}
\noindent
(ii)
We again start by sliding the dot generating function past the crossing. Now  
the undotted curl is 0 due to \cref{icurl}, but we get an extra error term:
\begin{align*}
%
\ .
\end{align*}
The final equality here used $\frac{1}{(u-x)(u+x)} = \frac{1}{2u}\big(\frac{1}{u-x}+\frac{1}{u+x}\big)$.
Then once again we multiply by $f(u)$ and take $u^{-1}$-coefficients.
\end{proof}

\begin{cor}\label{lastthingforrii}
For any $\lambda \in X^\imath$ and $p(x,y) \in \hatZ_\lambda\llbracket x,y\rrbracket$, we have that
\begin{align}\label{lastthing1}
%
\ .
\end{align}
\end{cor}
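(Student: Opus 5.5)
The plan is to reduce \cref{lastthing1} to the single-variable statements \cref{curlgen1,curlgen2} of \cref{easycurl}, using linearity and continuity in the pinned series. Both sides of \cref{lastthing1} are continuous $\hatZ_\lambda$-linear functions of $p(x,y)\in\hatZ_\lambda\llbracket x,y\rrbracket$ for the GC topology. Indeed, pins of power series converge pointwise because dots of sufficiently high degree lie in arbitrarily small GC neighbourhoods, by the same reasoning as in the proof of \cref{conv1}. It therefore suffices to prove the identity for $p(x,y)=f(x)\,y^b$, where $f(x)\in\hatZ_\lambda\llbracket x\rrbracket$ and $b\geq 0$; in fact monomials $x^ay^b$ would do. Coefficients in $\hatZ_\lambda$ can be pulled out to the right-hand 2-cell, since they are bubbles placed there.

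For such a product, I would treat the $y$-factor first. The factor $y^b$ sits on the strand of the curl that does not pass through the loop. I would move it off the curl onto the through-strand, or onto the outer boundary string, by sliding dots around the cup or cap. Each passage introduces a sign $(-1)^b$, exactly as in \cref{pivoting}, because dots change $x\mapsto -x$ across critical points. If the dot must also pass the crossing, I would use \cref{demazureslide} (or \cref{halfslider} when $i\neq 0$). The error terms from the crossing are diagrams in which the crossing is smoothed, so the curl becomes a bubble or an uncrossed zig-zag. These are evaluated directly with \cref{izigzag} and \cref{elias}.

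After this step, only a single pin $f(x)$ remains on the curl. Then \cref{curlgen1} applies when $i\neq 0$, and \cref{curlgen2} applies when $i=0$, where the extra term carries $\frac{1}{2u}$ together with the undotted curl relation \cref{icurl}.

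The last step is to recombine. The right-hand side of \cref{curlgen1,curlgen2} is expressed through $u^{-1}$-coefficients, that is, through Demazure operators via \cref{trick,trick2}. Multiplying by $(\pm y)^b$ and summing should reproduce the two-variable expression in \cref{lastthing1}. I expect the main obstacle to be this bookkeeping: the commutator terms from sliding $y^b$ across the crossing must combine with the partial-fraction terms into exactly $\partial_{x,y}$ (respectively $\partial_x$ in the $0$-string case) applied to $p$. I would first verify the formula on $p=x^ay^b$ with small $a,b$, to fix signs and confirm the shape. After that, the general identity is formal algebra in $\hatZ_\lambda\llbracket x,y\rrbracket$, extended by the continuity noted above.
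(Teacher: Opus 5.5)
There is a gap in your proposal. Reducing, by $\hatZ_\lambda$-linearity and continuity, to the one-variable curl identity \cref{curlgen2} is the right starting point, but the step that makes \cref{curlgen2} applicable is missing. In \cref{curlgen2} the curl is a kink on a through-string, with the pin on its loop. The curls in \cref{lastthing1} instead have the form in which they occur in \cref{giraffe}: a $0$-labelled cup (or, for the second equality, a cap) whose two legs cross, with one variable of $p$ on the arc between the two passages through the crossing.

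The paper's proof uses \cref{ipivotal} to rotate the crossing through the cup. This turns the crossed cup into an ordinary cup with a kink on one leg, and the arc between the two passages becomes the loop of the kink. Then \cref{curlgen2} applies, and the second equality is just the $180^\circ$ rotation of the first. Your plan only slides dots around cups and caps. That changes the signs of variables but never moves the crossing, so as written you never reach a diagram to which \cref{curlgen2} can be applied.

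Once the crossing has been rotated, the obstacle you anticipate does not arise. The other variable of $p$ lies away from the crossing, outside the kink, so it is just a spectator. Expand $p$ in powers of that variable and apply \cref{curlgen2} coefficientwise to the loop variable, keeping track only of the sign change $x\mapsto -x$ when a dot crosses a cup or cap. No \cref{demazureslide} error terms appear, and there is no commutator bookkeeping to reassemble. The Demazure operators on the right-hand side act only in the loop variable, not as a $\partial_{x,y}$ mixing the two variables of $p$.

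Also, the corollary concerns $0$-strings only; it is what is needed for the $i=0$ case of \cref{lem:rii}. So \cref{curlgen1} plays no role.
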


\begin{proof}
The first equality follows from \cref{easycurl}(ii) and \cref{ipivotal}.
The second follows by rotating the first through 180$^\circ$.
\end{proof}

\begin{cor}\label{carnomore}
Suppose that $\lambda \in X^\imath$.
\begin{enumerate}
\item 
For $i \in I- \{0\}$, we have that
\begin{equation}\label{bryan1}
%
\ .
\end{equation}
\item We have that
\begin{equation}\label{bryan2}
%
\ .
\end{equation}
\end{enumerate}
\end{cor}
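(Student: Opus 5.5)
The statement to prove is \cref{carnomore}, whose displayed identities \cref{bryan1,bryan2} were lost in extraction. From context, they are curl identities obtained by specializing \cref{easycurl} to the glyph series. Part (i) should evaluate a curl on an $i$-string decorated by a double glyph (or by $\Delta_i$/$\nabla_i$ pins), and part (ii) is the analogous statement on a $0$-string involving $\Delta_0$, $\nabla_0$ and the correction term. My plan is to derive both as direct applications of \cref{easycurl} with a carefully chosen $f(x)$, and then to simplify using \cref{worsebunny2}.

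For (i), take $i\neq 0$ and apply \cref{curlgen1} with $f(x)\coloneq\Deltanabla_{\lambda;i}(x)$, or the relevant glyph product, pinned at the dot inside the curl. This is legitimate because the glyphs are elements of $\hatZ_\lambda\llbracket x\rrbracket$. The right-hand side of \cref{curlgen1} will produce terms of the form $P_i(i+x)\Deltanabla_i(x)$ and $Q_i(\partial_{x,x_i}\Deltanabla_i(x))$ pinned to the through-string. This is precisely the combination on the left of \cref{phewy}. Substituting \cref{phewy} then rewrites the curl as $(-1)^{p(\lambda)}$ times the identity, minus the pin $\frac{1}{2(i+x)}$, plus the sum of $Q_j$-terms. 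Here I would take care with the prefactor $\zeta_i(\lambda)$ and with the shift of the iweight label across the curl. I would manage these using \cref{eitherway} and \cref{inverses}, moving glyphs across the cup and cap by \cref{riveting,riveting2} where needed.

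For (ii), I apply \cref{curlgen2} with $f$ chosen from $\Delta_{\lambda;0}$ and $\nabla_{\lambda;0}$. The extra error term in \cref{curlgen2}, which involves $\frac{1}{2u}(\frac{1}{u-x}+\frac{1}{u+x})$, contributes $\frac{f(x)-f(-x)}{2x}$-type pins, that is, $\partial_x$ of the glyph. Using \cref{bibble}, \cref{betterargument} and \cref{scruffy}, this should combine into the first term $\frac{1-\Deltanabla_0(x)}{2x}$ on the left of \cref{louis}. I would then invoke \cref{louis} to finish. If the stated form of \cref{bryan2} instead involves $\G_\lambda$ or $D_\lambda$, I would first apply \cref{maybe} or \cref{scruffylem} to convert to that form.

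I expect the main obstacle to be the $i=0$ bookkeeping in (ii). The difficulties there are matching the $\partial_x$ error term against $\frac{1-\Deltanabla_0(x)}{2x}$, checking that the division by $x$ is legitimate via \cref{betterargument}, and tracking the signs $(-1)^{t(\lambda)}$ and $\zeta_0(\lambda)=\frac{(-1)^{t(\lambda)+1}}{2}$. I would verify these by comparing the constant terms in $x$ first.
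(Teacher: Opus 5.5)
Yes, this is essentially the paper's argument: (i) is \cref{curlgen1} applied with $f(x)=\Deltanabla_{\lambda;i}(x)$, which produces exactly the left-hand side of \cref{phewy}, and (ii) is \cref{curlgen2} with $f(x)=\Deltanabla_{\lambda;0}(x)$ followed by \cref{louis}, with \cref{betterargument} making the intermediate rewriting legitimate. One refinement: in (ii) the $\partial_x\Deltanabla_0$ error term is not absorbed into $\frac{1-\Deltanabla_0(x)}{2x}$; it splits as the difference of that term (which is what feeds into \cref{louis}) and a $\frac{1-\Deltanabla_0(-x)}{2x}$ cap term that appears explicitly in \cref{bryan2}, so \cref{bibble}, \cref{scruffy}, \cref{maybe} and the glyph-moving relations are not needed.
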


\begin{proof}
(i)
We have that
\begin{align*}
%
\ .
\end{align*}

\vspace{1mm}
\noindent
(ii) We have that
\begin{align*}
%
\ .
\end{align*}
The second equality here makes sense because of \cref{betterargument}.
\end{proof}

\begin{lem}\label{easycross}
For $i \in I-\{0\}$, $\lambda \in X^\imath$ and $f(x) \in \hatZ_\lambda\llbracket x\rrbracket$, we have that
\begin{equation}\label{easiercross}
%
\ .
\end{equation}
\end{lem}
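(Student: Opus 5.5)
The identity \cref{easiercross} is the crossing analogue of \cref{curlgen1}: it records how a pinned power series $f(x)$ passes through a crossing with an $i$-string, $i \neq 0$, up to a Demazure-type correction. I would prove it the way \cref{easycurl}(i) was proved, using generating functions. Concretely, I first establish the identity for the single generating function $f(x)=\frac{1}{u-x}$, that is, for the circled-$u$ dot. I then multiply by $f(u)$ and extract the $u^{-1}$-coefficient.

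\emph{Step 1: the generating-function identity.} Take the relevant crossing with the $i$-string, place the dot generating function on one side of the crossing, and apply \cref{genidemazure} (or its mirror image, depending on which side of the crossing the pin starts). Because $i \neq 0$, no curl or $\frac{1}{2u}$ correction arises of the kind that appeared in \cref{easycurl}(ii). The only correction terms are then the straight-through strings, present only when the two colours agree, and the cup--cap term, present only when the colours are negatives of each other. In each of these the dot generating function appears on both strings, so the correction is
\[
\frac{1}{(u-x)(u-y)} \quad\text{or}\quad \frac{1}{(u-x)(u+y)}
\]
as a double pin. Where needed, I move pins across cups and caps using \cref{pivoting}, which changes the sign of the variable.

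\emph{Step 2: passing from $\frac{1}{u-x}$ to $f(x)$.} I first assume $f(x)$ is a polynomial in $\hatZ_\lambda[x]$ with coefficients placed in the 2-cell labelled $\lambda$. I multiply the generating-function identity by $f(u)$ and take the $u^{-1}$-coefficient. By the second identity in \cref{trick}, $\big[\frac{f(u)}{u-x}\big]_{u:-1}$ turns each single pin into $f(x)$. By the second identity in \cref{trick2}, each double pin $\frac{f(u)}{(u-x)(u-y)}$ becomes $\partial_{x,y}f$, with $y$ replaced by $-y$ in the cup--cap term. This gives \cref{easiercross} for polynomials.

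\emph{Step 3: extending to power series.} Both sides of \cref{easiercross} depend continuously on $f$ in the GC topology, since vertical composition and pinning are continuous by \cref{conv1,conv2}. The truncations of $f$ converge to $f$ in that topology. Hence the identity extends by continuity to all $f(x) \in \hatZ_\lambda\llbracket x\rrbracket$.

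\emph{Main obstacle.} I expect the hardest part to be bookkeeping rather than substance: getting the signs right for the cup--cap correction, and determining the direction in which the Demazure operator acts. This involves the $\gamma$- and $\zeta$-normalisations when pins cross cups and caps, and the convention $\partial_{x,y}=-\partial_{y,x}$. I would fix these by checking the formula for $f=1$ and for $f=x$ directly against \cref{idemazure}.
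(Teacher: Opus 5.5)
Your argument proves a different identity. Sliding the dot generating function through a single crossing with \cref{genidemazure}, then extracting $u^{-1}$-coefficients via \cref{trick,trick2}, is how the paper already obtains the pinned Demazure slides \cref{demazureslide} just after defining $\UU^\imath$. That is not the content of \cref{easycross}.

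As far as can be reconstructed from the paper's proof and from how \cref{bingleysleeping} feeds the case $i=-j$ of \cref{lem:rii}, this lemma concerns the Reidemeister~II configuration $B_iB_{-i}\Rightarrow B_{-i}B_i\Rightarrow B_iB_{-i}$, with $f$ pinned to the $i$-strand between the two crossings. The right-hand side has a term in which the double crossing has been removed. It also has a cap--cup term decorated, up to normalization, by $\partial_{x,y}\big(f(x)P_{\lambda;i}(i+x)\big)+\partial_{x,y}Q_{\lambda;i}\big(\partial_{x,x_i}f(x)\big)$. These fake-bubble and dotted-bubble contributions can never come out of a one-crossing Demazure computation, whose only corrections are straight strands or a cup--cap carrying dots.

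Your reading of the hypothesis $i\neq 0$ is also backwards. A curl \emph{does} arise, and $i\neq 0$ is needed because that curl is evaluated by \cref{easycurl}(i).

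Your reduction to polynomial $f$ by continuity is correct and matches the paper. After it, the missing steps are these:
\begin{enumerate}
\item Use the quadratic relation \cref{iquadratic} for the pair $(i,-i)$, rewritten with \cref{trick2}. Besides the $Q^\imath_{i,-i}(x,y)$ term, it carries a cap--cup term with $\partial_{x,y}$ of the fake bubble polynomial.
\item Slide $\frac{1}{u-x}$ from the middle strand through one crossing by the Demazure relation. The resulting cup--cap correction meets the other crossing and produces a curl with pins inside its loop.
\item Multiply by $f(u)$ and take $u^{-1}$-coefficients. The quadratic-relation term then gives $f(y)\,\partial_{x,y}P_{\lambda;i}(i+x)$. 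The curl, evaluated by \cref{curlgen1}, gives $(\partial_{x,y}f(x))P_{\lambda;i}(i+x)+Q_{\lambda;i}(\partial_{x,x_i}\partial_{x,y}f(x))$.
\item Assemble these with the twisted Leibniz rule $f(y)\,\partial_{x,y}g(x)+(\partial_{x,y}f(x))\,g(x)=\partial_{x,y}\big(f(x)g(x)\big)$ and the commutation $Q_{\lambda;i}(\partial_{x,x_i}\partial_{x,y}f(x))=\partial_{x,y}Q_{\lambda;i}(\partial_{x,x_i}f(x))$.
\end{enumerate}
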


\begin{proof}
It suffices to prove this in the case that $f(x) \in 
\hatZ_\lambda[x]$, since it then extends by continuity to all power series.
We note to start with that
$$
%
\ ,
$$
as follows from \cref{iquadratic} using \cref{trick2}.
Using this for the second equality, it follows that
\begin{align*}
%
\ .
\end{align*}
Now we multiply this by $f(u)$ and take the $u^{-1}$-coefficients using 
\cref{trick}, \cref{trick2,easycurl} to deduce that
$$
%
\ .
$$
It remains to observe that 
$$
f(y) \big(\partial_{x,y} P_{\lambda;i}(i+x)\big) +\big(\partial_{x,y} f(x)\big) P_{\lambda;i}(i+x) = \partial_{x,y} \big(f(x) P_{\lambda;i}(i+x)\big)
$$
by the twisted Leibniz rule for Demazure operators, and
$Q_{\lambda;i}(\partial_{x,x_i} \partial_{x,y} f(x))
=
\partial_{x,y} Q_{\lambda;i}(\partial_{x,x_i}f(x))$.
\end{proof}

\begin{cor}\label{bingleysleeping}
For $i \in I - \{0\}$ and any $\lambda$,
we have that
\begin{equation}
%
\!.
\end{equation}
\end{cor}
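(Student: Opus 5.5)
The plan is to obtain this as a specialization of \cref{easycross}, taking the pinned power series to be the double glyph, $f(x) \coloneq \Deltanabla_{\lambda;i}(x) \in \hatZ_\lambda\llbracket x\rrbracket$. This is a legitimate choice of $f$ because \cref{easycross} holds for arbitrary power series with coefficients in $\hatZ_\lambda$.

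First, substitute this $f$ into \cref{easiercross}. The error term on the right-hand side then involves the Demazure expression
$$
\partial_{x,y}\Big(P_{\lambda;i}(i+x)\Deltanabla_{\lambda;i}(x)+Q_{\lambda;i}\big(\partial_{x,x_i}\Deltanabla_{\lambda;i}(x)\big)\Big).
$$
Before applying $\partial_{x,y}$, the expression inside the bracket is exactly the pin appearing on the left-hand side of \cref{phewy} in \cref{worsebunny2}, up to the factor $\zeta_i(\lambda)$. This matching is the key step: I would rewrite that pin using \cref{phewy}, which replaces it by three terms:
\begin{itemize}
\item the constant $(-1)^{p(\lambda)}$;
\item the term $-\frac{1}{2(i+x)}$;
\item the sum $\sum_{j\neq i}\zeta_j(\lambda)\,Q_{\lambda;j}\big(\Deltanabla_{\lambda;j}(x_j)/(i+x-j-x_j)\big)$.
\end{itemize}
Applying $\partial_{x,y}$ to these, I expect the following.
\begin{itemize}
\item The constant term is killed by $\partial_{x,y}$.
\item The term $-\frac{1}{2(i+x)}$ becomes $\frac{1}{2(i+x)(i+y)}$.
\item Each term of the $Q_j$-sum becomes $Q_{\lambda;j}\big(\Deltanabla_{\lambda;j}(x_j)/((i+x-j-x_j)(i+y-j-x_j))\big)$, using \cref{trick2} extended continuously.
\end{itemize}
These are exactly the kind of symmetric double pins that \cref{symmetricslider} allows to pass through crossings. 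I would also use \cref{carnomore}(i) to identify the curl-type terms produced by \cref{easycurl} with glyph expressions.

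Next, I would reassemble the resulting double pins diagrammatically. The idea is to use \cref{badbunny2} (equivalently \cref{worsebunny}) to recognise the combination $\sum_j$ of $Q_j$-terms, together with the $\frac{1}{2(i+x)(i+y)}$ term, as a pair of cup/cap diagrams carrying double glyphs. This should give the clean form stated in the corollary, presumably a crossing, plus or minus the identity, plus cup-cap terms decorated by $\Deltanabla$ glyphs. Finally, I would move glyphs across caps and cups using \cref{riveting2} and cancel them with their inverses via \cref{bubblyinverses}, which normalises the signs $\gamma_{i,i}$ and $\zeta_i(\lambda)$.

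I expect the main obstacle to be the bookkeeping of the scalar factors $\zeta_i(\lambda)$, $\zeta_j(\lambda)$ and $(-1)^{p(\lambda)}$. Their iweight-dependence shifts when a pin moves across a string, so I would check these carefully using \cref{eitherway}. The $i=\pm h$ cases also need extra care, since there $P_{\lambda;i}$ has a nontrivial fake-bubble part. If a direct match with \cref{phewy} fails, I would instead run the argument at the level of generating functions: multiply by $\frac{1}{u-x}$, use \cref{badbunny2}, and extract the $u^{-1}$-coefficients via \cref{trick}.
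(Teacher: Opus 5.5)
Your core argument is exactly the paper's proof: apply \cref{easycross} with $f=\Deltanabla_{\lambda;i}$, rewrite the resulting cup--cap pin $\zeta_i(\lambda)\big(P_{\lambda;i}(i+x)\Deltanabla_{\lambda;i}(x)+Q_{\lambda;i}(\partial_{x,x_i}\Deltanabla_{\lambda;i}(x))\big)$ using \cref{phewy}, and apply $\partial_{x,y}$ termwise. The constant dies, $-\frac{1}{2(i+x)}$ gives $\frac{1}{2(i+x)(i+y)}$, and the $Q_{\lambda;j}$-terms acquire a minus sign, since $\partial_{x,y}\frac{1}{c+x}=-\frac{1}{(c+x)(c+y)}$.

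The reassembly via \cref{badbunny2}, \cref{carnomore} and \cref{symmetricslider} is unnecessary, and so is the extra care for $i=\pm h$. The corollary keeps these cup--cap terms in exactly this explicit form, and \cref{phewy} holds uniformly for all $i\neq 0$. The only glyph manipulation needed is on the identity term, whose $\Deltanabla_{\lambda;i}$ pin reaches across the $(-i)$-string. There \cref{preglyphslide} with $j=-i$, not \cref{riveting2} or \cref{bubblyinverses}, turns it into a double glyph times $\gamma_{i,-i}$ and a Cartan product, which is the form used later in the Reidemeister II argument.
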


\begin{proof}
We have that
\begin{align*}
%
.
\end{align*}
It remains to note that
\begin{align*}
\partial_{x,y}\bigg( \frac{1}{2(i+x)}\bigg)
&=
-\frac{1}{2(i+x)(i+y)},\\
\partial_{x,y}\bigg(
\frac{\Deltanabla_j(x_j)}{i-j+x-x_j}\bigg)
&=
-\frac{\Deltanabla_j(x_j)}{(i-j+x-x_j)(i-j+y-x_j)}.
\qedhere
\end{align*}
\end{proof}

\setcounter{section}{7}

\section{\texorpdfstring{Iquantum to Brauer}{Iquantum to Brauer}}\label{iQtoB}

In the rest of the article, we are going to show that the notions of ``Brauer categorification'' and ``even nilpotent iquantum categorification'' are equivalent in such a way that the following hold:
\begin{itemize}
\item 
The canonical 
decomposition $\catR = \bigoplus_{\lambda \in X^\imath} \catR_\lambda$ of an iquantum categorification 
is the same as the ``block'' decomposition of $\catR$ viewed as a Brauer categorification
from \cref{blocks,blocksagain}, 
interpreting $\catR_\lambda$ as zero for
$\lambda \in X^\imath_\1$.
\item
The action of the functor $B_i$ arising from the $\UU^\imath$-action (whose identity endomorphism is a black string labelled by $i$) is the same as the eigenfunctor
$B_i$ arising from the $\AB$-action 
(whose identity endomorphism is a magenta string labelled by $i$). 
\item 
The natural transformations defined by the dots on the two sides are related by
\begin{align}\label{veryeasy}
\begin{tikzpicture}[iQ,centerzero]
\draw[-] (0,-0.3)\botlabel{i} -- (0,0.3);
\closeddot{0,0};
\end{tikzpicture}=\ 
\begin{tikzpicture}[B,centerzero]
\draw[-] (0,-0.3)\botlabel{i} -- (0,0.3);
\pin{0,0}{-.7,0}{x-i};
\end{tikzpicture}
\qquad&\Leftrightarrow\qquad
\begin{tikzpicture}[B,centerzero]
\draw[-] (0,-0.3)\botlabel{i} -- (0,0.3);
\closeddot{0,0};
\end{tikzpicture}=\ \begin{tikzpicture}[iQ,centerzero]
\draw[-] (0,-0.3)\botlabel{i} -- (0,0.3);
\pin{0,0}{-.7,0}{x+i};
\end{tikzpicture}\ .
\end{align}
\end{itemize}
From this, it follows that the $\beta$-numbers and $\beta$-degree of an irreducible object of $\catR^\fg$
defined via the Brauer action
are the same as its $\beta$-numbers and $\beta$-degree defined via the iquantum action.

In this section, we start from the iquantum side, proving \cref{thma} from the introduction. 
In fact, we prove a stronger result formulated in terms of 
the topological monoidal category 
$\hatAB$ from \cref{iqmonoidal}:
we will show that there is a strict $\kk$-linear monoidal functor
$\Psi\colon\AB \rightarrow \hatAB$
which sends the generating object $B$ of $\AB$ to the object $B$ of
$\hatAB$.
Any even nilpotent iquantum categorification can be made into
a strict left $\hatAB$-module category in an obvious way, 
so \cref{thma} will then follow simply
by restricting the action of $\hatAB$ to $\AB$ along $\Psi$.

To prepare for the statement of the main theorem, we first introduce some notation 
which will be used to denote
the images of the generating morphisms of $\AB$ under the functor $\Psi$.
Recall $\G_\lambda(x,y) \in \hatZ_\lambda \llbracket x,y\rrbracket$ from \cref{malaysia} (which is denoted simply $\G(x,y)$ when drawn in a 2-cell labelled $\lambda$ in some string diagram).
For $i,j \in I$ and $\s,\t \in \{+,-\}$
such that ($\s i \in I_+\cup\{0\}$ and $\t j \in I_+$)
or $\s i = \t j = 0$, let
\begin{align}\label{trustfunds}
G^{\s\t}_{i,j}(x,y)&\coloneq
\begin{dcases}
1+\frac{1}{\s(i+x)-\t(j+y)}&\text{if $\s i \notin \{0, \t j, \t j-1\}$}\\
\frac{t^{-1}_{\s i,\t j}}{\s(i+x)-\t(j+y)}&\text{if $\s i=\t j-1 \neq 0$}\\
1+\s(i+x)-\t(j+y)&\text{if $\s i=\t j$}\\
\prod_{k \in \t I_\lo}
\big(\sqrt{-1}(\s x-\t(k+y))\big)^{-a_{k,j}}&\text{if $\s i=0 \neq \t j$,}
\end{dcases}
\end{align}
a power series in $\kk\llbracket x,y\rrbracket^\times$.
For $\lambda \in X^\imath_\0$ and $i,j,k,l \in I$
such that $\lambda-\alpha_i-\alpha_j=\lambda-\alpha_k-\alpha_l$, 
let
\begin{align}
\begin{tikzpicture}[iQ,centerzero,scale=1.2]
\draw[-] (0,-0.3) \botlabel{i} -- (0,0.3);
\magdot{0,0};
\region{.3,0}{\lambda};
\end{tikzpicture}&\coloneq
\begin{tikzpicture}[iQ,centerzero]
\draw[-] (0,-0.3) \botlabel{i} -- (0,0.3);
\pin{0,0}{-.7,0}{i+x};
\region{.3,0}{\lambda};
\end{tikzpicture},\label{dotdef}\\
\label{capdef}
\begin{tikzpicture}[iQ,anchorbase,scale=1.6]
\draw (0.3,-0.4) to (0.3,-.2);
\draw[-] (-0.1,-.2) to (-0.1,-0.4) \botlabel{i};
\draw (.3,-.2) to[out=90,in=90,looseness=1.5] (-.1,-.2);
\magup{0.1,-0.01};
\region{.5,-.2}{\lambda};
\end{tikzpicture}
&\coloneq
\begin{tikzpicture}[iQ,anchorbase,scale=1.6]
\draw (0.3,-0.4)  to (0.3,-.2);
\draw[-] (-0.1,-.2) to (-0.1,-0.4)\botlabel{i};
\draw (.3,-.2) to[out=90,in=90,looseness=2] (-.1,-.2);
\internalbubbleplus[180]{.3,-.2};
\region{.6,-.2}{\lambda};
\end{tikzpicture}=
\begin{tikzpicture}[iQ,anchorbase,scale=1.6]
\draw (0.3,-0.4)  to (0.3,-.2);
\draw[-] (-0.1,-.2) to (-0.1,-0.4)\botlabel{i};
\draw (.3,-.2) to[out=90,in=90,looseness=2] (-.1,-.2);
\internalbubbleminus[180]{-0.09,-.2};
\region{.5,-.2}{\lambda};
\end{tikzpicture}\ ,
\\\label{cupdef}
\begin{tikzpicture}[iQ,anchorbase,scale=1.6]
\draw[-] (0.3,0.4) to (0.3,.2);
\draw[-] (-0.1,.2) to (-0.1,0.4) \toplabel{i};
\draw (.3,.2) to[out=-90,in=-90,looseness=1.5] (-.1,.2);
\magdown{0.1,.02};
\region{.5,.2}{\lambda};
\end{tikzpicture}
&\coloneq 
\begin{tikzpicture}[iQ,anchorbase,scale=1.6]
\draw[-] (0.3,0.4)  to (0.3,.2);
\draw[-] (-0.1,.2) to (-0.1,0.4) \toplabel{i};
\draw (.3,.2) to[out=-90,in=-90,looseness=2] (-.1,.2);
\internalbubbleplus[0]{.3,0.2};
\region{.6,.2}{\lambda};
\end{tikzpicture}=
\begin{tikzpicture}[iQ,anchorbase,scale=1.6]
\draw[-] (0.3,0.4)  to (0.3,.2);
\draw[-] (-0.1,.2) to (-0.1,0.4) \toplabel{i};
\draw (.3,.2) to[out=-90,in=-90,looseness=2] (-.1,.2);
\internalbubbleminus[0]{-.09,0.2};
\region{.5,.2}{\lambda};
\end{tikzpicture}\ ,\\
\label{crossingdef}
\begin{tikzpicture}[centerzero,iQ,scale=1.6]
\draw[-] (0.3,-.4) \botlabel{j} to (-0.3,.4)\toplabel{l};
\draw[-] (-0.3,-.4) \botlabel{i} to (0.3,.4)\toplabel{k};
\magcr{0,0};
\region{.3,0}{\lambda};
\end{tikzpicture}
&\coloneq\ 
\underbrace{
\begin{tikzpicture}[centerzero,iQ,scale=1.6]
\draw[-] (0.2,-.4) \botlabel{j} to (0.2,.4);
\draw[-] (-0.2,-.4) \botlabel{i} to (-0.2,.4);
\pinpin{.2,0}{-.2,0}{-.8,0}{\frac{1}{i-j+x-y}};
\region{.45,0}{\lambda};
\end{tikzpicture}}_{\text{if }i = l,\,j = k\text{ and }i \neq j}
\ -\ \underbrace{\begin{tikzpicture}[iQ,anchorbase,scale=1.6]
\draw (.25,-.4)\botlabel{j} to[out=90,in=90,looseness=2.4] (-.25,-.4);
\draw (.25,.4)  to[out=-90,in=-90,looseness=2.4] (-.25,.4)\toplabel{l};
\internalbubbleminus[180]{-.2,-.25};
\internalbubbleplus[0]{0.2,0.25};
\pinpin{.18,-.15}{-.18,0.15}{-.91,.2}{\frac{1}{l-j+x-y}};
\region{.4,0}{\lambda};
\end{tikzpicture}}_{\text{if }i=-j,\,k=-l\text{ and }j\neq l}
\ +\underbrace{\ \begin{tikzpicture}[iQ,centerzero,scale=1.5]
\draw (-0.3,0.4)\toplabel{j} to(.3,-0.4)\botlabel{j};
\draw (0.3,0.4)\toplabel{i} to (-.3,-0.4)\botlabel{i};
\node[draw,fill=white,inner sep=1.3pt] at (0,0) {$\scriptstyle X_{i,j}$};
\region{.45,0}{\lambda};
\end{tikzpicture}\ }_{\text{if }i=k\text{ and }j=l}
\end{align}
omitting the terms on the right-hand side of \cref{crossingdef} if the indicated conditions are not satisfied, where
\begin{align}\label{hardercrossings}
\begin{tikzpicture}[iQ,centerzero,scale=1.5]
\draw (-0.3,0.4)\toplabel{j} to(.3,-0.4)\botlabel{j};
\draw (0.3,0.4)\toplabel{i} to (-.3,-0.4)\botlabel{i};
\node[draw,fill=white,inner sep=1.3pt] at (0,0) {$\scriptstyle X_{i,j}$};
\end{tikzpicture} \coloneq
\begin{dcases}
\begin{tikzpicture}[centerzero,iQ,scale=1.4]
\draw[-] (0.3,-.4) \botlabel{j} to (-0.3,.4);
\draw[-] (-0.3,-.4) \botlabel{i} to (0.3,.4);
\pinpinmid{.15,.2}{.9,0}{.15,-.2}{G^{++}_{i,j}(y,x)};
\end{tikzpicture}
+
\delta_{i,0}\delta_{j,0}\begin{tikzpicture}[iQ,centerzero,scale=1.4]
\draw[-] (0,.4)\toplabel{0} to[out=-90,in=-90,looseness=3] (.4,.4);
\draw[-] (0,-.4)\botlabel{0} to[out=90,in=90,looseness=3] (.4,-.4);
\pinpinmid{.37,.2}{1,0}{.37,-.2}{\G(x,-y)};
\end{tikzpicture}
&\text{if ($i \in I_+\cup\{0\}$ and $j \in I_+$) or $i=j=0$}\\
r_{j,i}\ 
\begin{tikzpicture}[centerzero,iQ,scale=1.6]
\draw[-] (0.3,-.4) \botlabel{j} to (-0.3,.4);
\draw[-] (-0.3,-.4) \botlabel{i} to (0.3,.4);
\internalbubbleminus[0]{-.15,.2};
\internalbubbleplus[180]{.19,-.27};
\pinpin{.07,-.1}{-.07,-.1}{-.8,-.1}{G^{-+}_{j,i}(y,x)};
\end{tikzpicture}&\text{if $i\in I_+$ and $j \in I_-\cup\{0\}$}\\
r_{i,-j}r_{j,i}\ 
\begin{tikzpicture}[centerzero,iQ,scale=1.6]
\draw[-] (0.3,-.4) \botlabel{j} to (-0.3,.4);
\draw[-] (-0.3,-.4) \botlabel{i} to (0.3,.4);
\internalbubbleminus[0]{.19,.27};
\internalbubbleplus[180]{-.19,-.27};
\pinpinmid{-.07,-.1}{-.8,0}{-.07,.1}{G^{--}_{i,j}(x,y)};
\end{tikzpicture}&\text{if $i \in I_-\cup\{0\}$ and $j \in I_-$}\\
r_{-j,-i}r_{i,-j}r_{j,i}
\begin{tikzpicture}[centerzero,iQ,scale=1.6]
\draw[-] (0.3,-.4) \botlabel{j} to (-0.3,.4);
\draw[-] (-0.3,-.4) \botlabel{i} to (0.3,.4);
\pinpin{-.07,.1}{.07,.1}{.8,.1}{G^{+-}_{j,i}(x,y)};
\fatbubbleminus{-.19,.27};
\fatbubbleplus{.15,-.2};
\end{tikzpicture}&\text{if $i\in I_-$ and $j \in I_+\cup\{0\}$.}
\end{dcases}
\end{align}
These are all 2-morphisms in $\hatUU^\imath$.
In \cref{dotdef,capdef,cupdef,crossingdef}, we have been careful to include the required iweight $\lambda$ in the rightmost 2-cell. We have not done that in \cref{hardercrossings}, but we are following our usual convention that when no 2-cell label is present we are writing an equation which is true for all possible labels.

\begin{rem}
We have written the definition of $X_{i,j}$ in the most case-free way that we can. 
When $i$ and $j$ are non-zero, some of the glyphs in the last three cases in \cref{hardercrossings} can be omitted since they are identities according to the definitions \cref{longyear1a,longyear2a}: we have that
\begin{align}
\begin{tikzpicture}[iQ,centerzero,scale=1.5]
\draw (-0.3,0.4)\toplabel{j} to(.3,-0.4)\botlabel{j};
\draw (0.3,0.4)\toplabel{i} to (-.3,-0.4)\botlabel{i};
\node[draw,fill=white,inner sep=1.3pt] at (0,0) {$\scriptstyle X_{i,j}$};
\end{tikzpicture} =
\begin{dcases}
r_{j,i}\ \begin{tikzpicture}[centerzero,iQ,scale=1.6]
\draw[-] (0.3,-.4) \botlabel{j} to (-0.3,.4);
\draw[-] (-0.3,-.4) \botlabel{i} to (0.3,.4);
\pinpin{.15,-.2}{-.15,-.2}{-.9,-.2}{G^{-+}_{j,i}(y,x)};
\end{tikzpicture}&\text{if $i\in I_+$ and $j \in I_-$}\\
r_{i,-j}r_{j,i}\ 
\begin{tikzpicture}[centerzero,iQ,scale=1.4]
\draw[-] (0.3,-.4) \botlabel{j} to (-0.3,.4);
\draw[-] (-0.3,-.4) \botlabel{i} to (0.3,.4);
\pinpinmid{-.15,-.2}{-.9,0}{-.15,.2}{G^{--}_{i,j}(x,y)};
\end{tikzpicture}&\text{if $i \in I_-$ and $j \in I_-$}\\
r_{-j,-i}r_{i,-j}r_{j,i}
\begin{tikzpicture}[centerzero,iQ,scale=1.6]
\draw[-] (0.3,-.4) \botlabel{j} to (-0.3,.4);
\draw[-] (-0.3,-.4) \botlabel{i} to (0.3,.4);
\pinpin{-.07,.1}{.07,.1}{.8,.1}{G^{+-}_{j,i}(x,y)};
\internalbubbleminus[0]{-.19,.27};
\internalbubbleplus[180]{.15,-.2};
\end{tikzpicture}&\text{if $i\in I_-$ and $j \in I_+$}.
\end{dcases}\label{simplercrossings}
\end{align}
\end{rem}

Recall from \cref{iqmonoidal} that a morphism
$\theta\colon B^{\otimes n} \rightarrow B^{\otimes m}$ in
$\hatAB$ is a matrix
$\theta=(\theta^{\mu,\lambda})_{\mu,\lambda\in X^\imath_\0}$
with $(\mu,\lambda)$-entry
$\theta^{\mu,\lambda}\in
\Hom_{\hatUU^\imath}(B_\bJ \one_\lambda,B_\bI \one_\lambda)$
for $\bI \coloneq \{\bi \in I^m\:|\: \lambda-\alpha_\bi = \mu\}$
and $\bJ\coloneq \{\bj \in I^n\:|\:\lambda-\alpha_\bj=\mu\}$.
Each $\theta^{\mu,\lambda}$ is itself a matrix
$\theta^{\mu,\lambda}=(\theta^{\mu,\lambda}_{\bi,\bj})_{\bi\in \bI, \bj \in \bJ}$
with $\theta^{\mu,\lambda}_{\bi, \bj}
\in \hatH_\lambda(\bj,\bi)$. In the following theorem, we will
denote such a matrix also by the formal sum
$\sum_{\bi \in \bI, \bj \in \bJ} \theta^{\mu,\lambda}_{\bi,\bj}$.

\begin{theo}\label{passport}
There is a strict $\kk$-linear monoidal
functor $\Psi\colon\AB \rightarrow \hatAB$
mapping the generating object $B$ of $\AB$ to the object $B$ of $\hatAB$, and defined on generating morphisms of $\AB$ so that
\begin{align}\label{gasp1}
\Psi\Big(\begin{tikzpicture}[B,centerzero]
\draw[-] (0,-0.3)-- (0,0.3);
\closeddot{0,0};
\end{tikzpicture}\Big)^{\mu,\lambda}
&\coloneq
\begin{dcases}
\begin{tikzpicture}[iQ,centerzero]
\draw[-] (0,-0.3)\botlabel{i} -- (0,0.3);
\magdot{0,0};
\region{0.4,0}{\lambda};
\end{tikzpicture}&\text{if $\mu = \lambda-\alpha_i$ for some $i \in I$}\\
0&\text{otherwise,}
\end{dcases}\\\label{gasp2}
\Psi\Big(\ \begin{tikzpicture}[B,anchorbase,scale=1.4]
\draw (0.3,-0.4) to (0.3,-.2);
\draw[-] (-0.1,-.2) to (-0.1,-0.4);
\draw (.3,-.2) to[out=90,in=90,looseness=1.5] (-.1,-.2);
\end{tikzpicture}\ \Big)^{\mu,\lambda}
&\coloneq
\delta_{\lambda,\mu}
\sum_{i\in I}
\begin{tikzpicture}[iQ,anchorbase,scale=1.4]
\draw (0.3,-0.4) to (0.3,-.2);
\draw[-] (-0.1,-.2) to (-0.1,-0.4) \botlabel{i};
\draw (.3,-.2) to[out=90,in=90,looseness=1.5] (-.1,-.2);
\magup{0.1,-.01};
\region{.5,-.2}{\lambda};
\end{tikzpicture},\\\label{gasp3}
\Psi\Big(\ 
\begin{tikzpicture}[B,anchorbase,scale=1.4]
\draw[-] (0.3,0.4) to (0.3,.2);
\draw[-] (-0.1,.2) to (-0.1,0.4);
\draw (.3,.2) to[out=-90,in=-90,looseness=1.4] (-.1,.2);
\end{tikzpicture}\ \Big)^{\mu,\lambda}
&\coloneq
\delta_{\lambda,\mu}
\sum_{i\in I}
\begin{tikzpicture}[iQ,anchorbase,scale=1.4]
\draw[-] (0.3,0.4) to (0.3,.2);
\draw[-] (-0.1,.2) to (-0.1,0.4) \toplabel{i};
\draw (.3,.2) to[out=-90,in=-90,looseness=1.5] (-.1,.2);
\magdown{0.1,0.02};
\region{.5,.2}{\lambda};
\end{tikzpicture},\\\label{gasp4}
\Psi\Big(\ 
\begin{tikzpicture}[centerzero,B,scale=1]
\draw[-] (0.3,-.4) to (-0.3,.4);
\draw[-] (-0.3,-.4) to (0.3,.4);
\end{tikzpicture}\ \Big)^{\mu,\lambda}
&\coloneq\!\!
\sum_{\substack{i,j,k,l \in I\\
\lambda-\alpha_i-\alpha_j=\mu\\\lambda-\alpha_k-\alpha_l=\mu}}
\!\!
\begin{tikzpicture}[centerzero,iQ,scale=1]
\draw[-] (0.3,-.4) \botlabel{j} to (-0.3,.4)\toplabel{l};
\draw[-] (-0.3,-.4) \botlabel{i} to (0.3,.4)\toplabel{k};
\magcr{0,0};
\region{.3,0}{\lambda};
\end{tikzpicture}
\end{align}
for $\lambda,\mu \in X^\imath_\0$.
Moreover,
\begin{equation}\label{firstthis}
\Psi\Big(\ 
\begin{tikzpicture}[B,centerzero,scale=.9]
\draw (-0.25,0) arc(180:-180:0.25);
\node at (.55,0) {\color{black}$(u)$};
\end{tikzpicture}\Big)^{\mu,\lambda}
=
\delta_{\lambda,\mu}
\prod_{i \in I}
\begin{tikzpicture}[iQ,centerzero,scale=.9]
\draw (-0.25,0) arc(180:-180:0.25);
\node at (-0.37,0) {\strandlabel{\ibar}};
\node at (.88,0) {\color{black}$(u-i)$};
\region{1.7,0}{\lambda};
\end{tikzpicture}.
\end{equation}
\end{theo}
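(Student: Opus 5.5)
Since $\AB$ is given by generators and relations (\cref{ABdef}), the functor $\Psi$ exists as soon as the proposed images \cref{gasp1,gasp2,gasp3,gasp4} satisfy the defining relations \cref{zigzag,dotslide,pitchfork,demazure,ri,rii-iii} in $\hatAB$. The plan is to check these one relation at a time, componentwise in $\lambda$ and in the colours of the boundary strings.

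Before doing so, one has to confirm that the images are well-defined morphisms of $\hatAB$. For the cap and cup, the sums over $i$ are infinite and must converge. For the crossing, only finitely many $(i,j,k,l)$ occur for each boundary, and the internal bubbles lie in $\hatZ_\lambda\llbracket x\rrbracket$ by \cref{convention}. Convergence is the content of \cref{conv1,conv2}. The relations then reduce, by the remark following \cref{ABdef}, to the first relation of each pair together with \cref{zigzag} and \cref{rii-iii}.

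\textbf{Local relations.} The relations involving only dots, cups and caps are easy.
\begin{itemize}
\item The zig-zag relations follow from \cref{bubblyinverses} together with \cref{riveting}: the $+$ glyph on the cap cancels the $-$ glyph on the cup after rotation, leaving the zig-zag relation of $\UU^\imath$.
\item The dot slide \cref{dotslide} is immediate from \cref{dotdef}, because the colour flips from $i$ to $-i$ across a cap and $x$ becomes $-x$.
\item The pitchfork and Reidemeister I relations follow, case by case, from the definition \cref{crossingdef}, the glyph slides \cref{glyphslide,anothermiracle}, and \cref{lastthingforrii,carnomore}. The GRS simplification \cref{RIrels} lets one reduce Reidemeister II to its undotted form.
\end{itemize}

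\textbf{Demazure relation.} The Demazure relation \cref{demazure} is checked by splitting according to which of the three terms of \cref{crossingdef} survive. Using \cref{demazureslide}, the first term contributes the identity when $i=l$ and $j=k$. The second term contributes the cup--cap. For the $X_{i,j}$ terms, the factor $G$ is designed precisely so that the dot-slide error terms cancel, with \cref{scruffylem} handling the extra $\G$ term when $i=j=0$.

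\textbf{Bubble formula and Reidemeister II.} The bubble formula \cref{firstthis} comes next. One closes a dotted string using \cref{capdef,cupdef}; this gives a double glyph $\Deltanabla$ inside an $i$-bubble. Summing over $i$ and comparing with \cref{badbunny2}, after multiplying by the prefactor $\frac{u-h}{u}$ from \cref{eatz}, yields \cref{firstthis}. This formula is then used to verify Reidemeister II via \cref{bingleysleeping,easycross}, where the internal bubble expansions of \cref{worsebunny2} (namely \cref{phewy} and \cref{louis}) produce exactly the cancellations needed in the $i=-j$ cases. The $0$-string cases are handled by \cref{maybe}.

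\textbf{Main obstacle: Reidemeister III.} The hardest step should be the braid relation \cref{rii-iii}, especially when a $0$-string is involved. A direct expansion with three colours produces a large number of terms. Instead, we would use \cref{RIIrelsa,RIIrelsb,RIIrelsc}, as Gao--Rui--Song do. By Reidemeister II, the difference of the two sides commutes with dots up to terms already known to agree. So it suffices to show that this difference vanishes after it has been reduced to a leading crossing term. That leading term can be compared using the $\UU^\imath$ braid relation together with the non-degeneracy statement \cref{nondegeneracy}, applied in the completion. This is the part I would expect to be the main obstacle, and I would attempt this reduction first.
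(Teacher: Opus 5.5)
Your roadmap is the paper's own: verify the defining relations of $\AB$ one at a time on components, read \cref{firstthis} off \cref{badbunny2} by closing up the glyph-decorated cup and cap, and handle Reidemeister II and III with the Gao--Rui--Song relations \cref{RIrels} and \cref{RIIrelsa,RIIrelsb,RIIrelsc}, which become available once the earlier relations are checked. In \cref{firstthis}, note that $\frac{u-h}{u}\id_\one$ in \cref{eatz} is an added term, not a multiplied prefactor, and it matches $1-\frac{(-1)^{p(\lambda)}}{2u}$ only because $\lambda$ is even. The one step you leave implicit, why commuting with dots lets you discard everything below the leading crossing term in Reidemeister III, is done in the paper with the crossing filtration \cref{dotfiltration}: once the braid relation and \cref{modlower} put $\Theta_{i,j,k}$ in $F_2$, a leading shape with at most two crossings has some $r$ whose $r$th bottom and $(4-r)$th top points lie on different strings, so $\gr_d(b-t)$ acts on that component as multiplication by $\pm x_s\pm x_{s'}$, which is injective and contradicts $(b-t)(\Theta_{i,j,k})=0$.
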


\begin{proof}
We first explain how to deduce \cref{firstthis}.
By the definition \cref{eatz}, we have that
$\begin{tikzpicture}[B,centerzero,scale=.9]
\draw (-0.25,0) arc(180:-180:0.25);
\node at (.55,0) {\color{black}$(u)$};
\end{tikzpicture}=
\big(1-\tfrac{1}{2u}\big)\id_\one+\ 
\begin{tikzpicture}[B,anchorbase,scale=1.2]
\draw (-0.25,0) arc(180:-180:0.25);
\pin{.25,0}{.8,0}{\frac{1}{u-x}};
\end{tikzpicture}\ $,
so by \cref{dotdef,capdef,cupdef} we have that
$$
\Psi\Big(\ \begin{tikzpicture}[B,centerzero,scale=.9]
\draw (-0.25,0) arc(180:-180:0.25);
\node at (.55,0) {\color{black}$(u)$};
\end{tikzpicture}\Big)^{\mu,\lambda}
=
\delta_{\lambda,\mu} 
\big(1-\tfrac{1}{2u}\big)\id_{\one_\lambda}+
\delta_{\lambda,\mu}\ 
\sum_{i \in I}
\begin{tikzpicture}[iQ,anchorbase,scale=2]
\draw (0.3,-.05) to (0.3,.05) to [out=90,in=90,looseness=2] (-.1,.05) to (-.1,-.05) to [out=-90,in=-90,looseness=2] (.3,-.05);
\fatbubbleplus{.3,0.1};
\region{.7,.15}{\lambda};
\pin{.3,-.1}{.7,-.1}{\frac{1}{u-i-x}};
\stringlabel{-.18,0}{\ibar};
\end{tikzpicture}\ .
$$
Since $\lambda$ is even, the right-hand side here is the same as the right-hand side of
\cref{whydidittakesolong} multiplied by $\delta_{\lambda,\mu}$. So 
\cref{firstthis} follows by \cref{badbunny2}.
To complete the proof of \cref{passport},
it remains to show that all of the affine Brauer relations are satisfied.
This will be done in a series of lemmas in the remainder of the section.
\end{proof}

\begin{cor}\label{passportcor}
Any even nilpotent Abelian (resp., Karoubian) iquantum categorification $\catR$ can be made into an Abelian (resp., Karoubian) Brauer categorification in a canonical way.
\end{cor}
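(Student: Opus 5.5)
We deduce \cref{passportcor} formally from \cref{passport}, \cref{extending} and \cref{wish}, working one step at a time.

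\textbf{Step 1: extend to $\hatUU^\imath$.} Let $\catR$ be an even nilpotent Abelian or Karoubian iquantum categorification. By \cref{extending}, the action of $\UU^\imath$ extends by continuity to a 2-representation of $\hatUU^\imath$. Evenness means $\catR_\lambda=0$ for $\lambda\in X^\imath_\1$, so $\catR=\bigoplus_{\lambda\in X^\imath_\0}\catR_\lambda$.

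\textbf{Step 2: pass to $\hatAB$.} \cref{wish} makes $\catR$ into a strict left $\hatAB$-module category, that is, it gives a strict $\kk$-linear monoidal functor $\hatAB\to\END_\kk(\catR)$. Here $B$ acts as $\bigoplus_{i\in I}B_i$. This sum is locally finite on each object by the finiteness hypothesis in \cref{betterwhopper}.

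\textbf{Step 3: pull back to $\AB$.} Compose the functor from Step 2 with the strict monoidal functor $\Psi\colon\AB\to\hatAB$ of \cref{passport}. This gives a strict $\kk$-linear monoidal functor $\AB\to\END_\kk(\catR)$, so $\catR$ becomes a strict left $\AB$-module category. The underlying category is unchanged, so it is still locally finite Abelian (resp.\ Karoubian). Hence $\catR$ is an Abelian (resp.\ Karoubian) Brauer categorification.

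\textbf{Canonicity.} The construction involves no choices beyond the fixed parameters of \cref{iqg}:
\begin{enumerate}
\item[(a)] $\Psi$ is fixed;
\item[(b)] the extension in \cref{extending} is unique, since it is by continuity and $H_\lambda(\bj,\bi)$ is dense in $\hatH_\lambda(\bj,\bi)$.
\end{enumerate}
From \cref{dotdef}, the resulting Brauer dot on $B_i$ equals $i$ plus the iquantum dot. So $B_i$ is the generalized $i$-eigenspace of the Brauer dot, which matches \cref{veryeasy}.

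\textbf{Main obstacle.} All the real work lies in \cref{passport}, namely checking that the images of the generators satisfy the affine Brauer relations; this result is assumed. Inside the corollary, the only delicate point is the Karoubian case, where the action must be evaluated on objects of $\catR$. Claim 2 in the proof of \cref{extending} gives continuity for these objects directly, so nothing further is needed.
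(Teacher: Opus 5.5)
Your proposal is correct and follows the paper's proof: you extend the action to $\hatUU^\imath$ via \cref{extending}, make $\catR$ a strict left $\hatAB$-module category via \cref{wish}, and then restrict along the monoidal functor $\Psi$ of \cref{passport}. Your remarks on canonicity and on the dot relation \cref{veryeasy} agree with the paper, but the corollary itself does not need them.
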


\begin{proof}
First apply the construction of \cref{wish}
to make $\catR$ into a 
strict left $\hatAB$-module category, then restrict the categorical action from 
$\hatAB$ to $\AB$ using the monoidal functor $\Psi$ from \cref{passport}.
\end{proof}

The remainder of the section will consist of a sequence of lemmas required to complete the proof of \cref{passport}.
We begin by establishing some properties of the power series
$G^{\s\t}_{i,j}(x,y)$.
Note that $G^{\s\t}_{i,j}(x,y)$ depends only on $\s i, \s x, \t j$ and $\t y$, hence, simultaneously
replacing $(\s,i,x)$ by $(-\s,-i,-x)$ or $(\t,j,y)$ by $(-\t,-j,-y)$
leaves $G^{\s\t}_{i,j}(x,y)$ unchanged. Also, if $\s \neq \t$ then $G^{\s\t}_{i,j}(x,y) = G^{\s\t}_{i,j}(y,x)$.

\begin{lem}\label{badone}
\begin{enumerate}
\item
For $i,j \in I_+$ with $i \neq j$, we have that
\begin{align*}
Q_{i,j}^\imath(x,y)G^{++}_{i,j}(x,y)G_{j,i}^{++}(y,x)&=1-\frac{1}{(i-j+x-y)^2}.
\end{align*}
\item 
For $i \in I_+$ and $j \in I_-$ with $i \neq -j$, we have that
\begin{align*}
r_{j,i}r_{-i,-j}r_{j,-i}r_{i,j}Q_{i,j}^\imath(x,y)
G_{i,j}^{+-}(x,y) G_{j,i}^{-+}(y,x) 
\!\!\!\prod_{k \in I-\{i\}}\!\!\frac{(i-k+x+y)^{a_{k,-j}}}{(i-k+x-y)^{a_{k,j}}}
&=1- \frac{1}{(i-j+x-y)^2}.
\end{align*}
\item For $i \in I_+$, we have that
\begin{align*}
Q_{i,-i}^\imath(x,y)
G^{+-}_{i,-i}(x,y)G^{-+}_{-i,i}(y,x)
\prod_{j \in I-\{i\}}\frac{(i-j+x+y)^{a_{j,i}}}{(i-j+x-y)^{a_{j,-i}}}=1- \frac{1}{(2i+x-y)^2}.
\end{align*}
\item
For $j \in I_+$, we have that
\begin{align*}
r_{0,j}Q_{0,j}^\imath(x,y)
G_{0,j}^{++}(x,y) G_{0,j}^{-+}(y,x)
\prod_{k \in I_\hi}\frac{(-k+x+y)^{a_{k,-j}}}{(-k+x-y)^{a_{k,j}}}
&=1- \frac{1}{(-j+x-y)^2}.
\end{align*}
\end{enumerate}
\end{lem}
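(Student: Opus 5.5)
This is a finite case analysis of explicit rational functions: no diagrammatics enter, only the definitions \cref{typohere} of $Q_{i,j}$ and $Q^\imath_{i,j}$, the definition \cref{trustfunds} of $G^{\s\t}_{i,j}$, the signs $t_{i,j}$ read off from the arrows in \cref{dynkintable}, and the values $c_{i,j}$, $r_{i,j}=c_{i,j}/t_{i,j}$. The plan is to split each part according to the relative position of the indices. The first $G$-factor has three cases: generic, $\s i=\t j-1$, and $\s i=0$. The reversed factor has the mirror cases, since $\t j=\s i-1$ corresponds to $\s i = \t j+1$. Then multiply out.

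For (i), with $i,j\in I_+$ distinct, put $z\coloneq i-j+x-y$.
\begin{itemize}
\item If $j\notin\{i\pm1\}$, then $a_{i,j}=0$, $Q^\imath_{i,j}=1$, and both $G$'s are in the generic case, equal to $1+1/z$ and $1-1/z$. The product is $1-1/z^2$.
\item If $j=i+1$, then $G^{++}_{i,j}(x,y)=t_{i,j}^{-1}/z$, while $G^{++}_{j,i}(y,x)=1-1/z$ (generic, since $j\neq i-1$). Also $Q^\imath_{i,j}=t_{i,j}(x-y)$. This looks wrong: the product $(x-y)(1-1/z)/z$ is not of the required form. So the intended normalization of the variables must be absorbed. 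The factor $(x-y)$ should really be read as $z=(i+x)-(j+y)$ up to the shift built into the definition \cref{dotdef}, where the magenta dot is $i+x$. I would recheck which variables $Q_{i,j}$ is evaluated at here. Presumably the lemma is applied after the substitution that makes $Q_{i,j}(x,y)$ read $t_{i,j}(i+x-j-y)^{-a_{i,j}}$, in which case the product is $z\cdot(1-1/z)/z\cdot(\ldots)$, giving $(z-1)/z\cdot(1+1/z)=1-1/z^2$ when combined with the other ordering.
\end{itemize}
So the first concrete task is to fix this convention by comparing with the $j=i\pm1$ case and checking that $t_{i,j}=(-1)^{a_{i,j}}t_{j,i}$ cancels the $t^{-1}$.

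For (ii)–(iv), I would first simplify the product over $k$. Only $k$ adjacent to $\pm j$ contribute, and the computation is the one done in \cref{premiracle}, with $\phi(z)=z^2/(z^2-1)$ giving ratios of quadratic factors. The $r$-products reduce to signs via \cref{unexpectlytrue} and \cref{anothercrazyone}; for example, in (ii) the factor $r_{j,i}r_{-i,-j}r_{j,-i}r_{i,j}=\gamma_{i,j}t_{i,-j}/t_{i,j}$. Then I check case by case, according to whether $-j$ equals $i$, $i\pm1$, or is generic, that the $G$'s cancel the $\phi$-factors and leave $1-1/z^2$. Part (iii) is the $j=-i$ degenerate case, with $G$'s of type $1+\ldots$ and $Q_{i,-i}$ possibly nonzero when $i=h$. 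Part (iv) uses the $\sqrt{-1}$ products from the $\s i=0$ case together with the double arrow signs $t=\pm\sqrt{-1}$ at the $0$ node.

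The main obstacle is the sign and $\sqrt{-1}$ bookkeeping in the exceptional cases: $i$ or $j$ in $\{\pm h\}$, neighbours of $0$, and whether $1\in I_\pm$ depending on $p \bmod 4$. As in \cref{nasty}, I would reduce to rank-two configurations and tabulate arrow directions, using the symmetry that $G^{\s\t}_{i,j}$ depends only on $\s i,\s x,\t j,\t y$ to halve the cases.
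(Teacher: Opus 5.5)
Your route (expand \cref{typohere} and \cref{trustfunds}, split by adjacency of the indices) is the paper's. However, the plan contains a genuine error in part~(i).

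\textbf{Part (i).} For $j=i+1$ you correctly obtain $Q^\imath_{i,j}(x,y)G^{++}_{i,j}(x,y)G^{++}_{j,i}(y,x)=t_{i,j}(x-y)\cdot t_{i,j}^{-1}z^{-1}\cdot(1-z^{-1})=(x-y)(z-1)/z^2$, with $z=i-j+x-y$. You then declare this not of the required form, but it is. Because $i-j=-1$, you have $x-y=z+1$, so the product is $(z+1)(z-1)/z^2=1-1/z^2$ exactly.

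The case $j=i-1$ works the same way. Here $G^{++}_{i,j}(x,y)=1+1/z$, $G^{++}_{j,i}(y,x)=-t_{j,i}^{-1}/z$, $Q_{i,j}(x,y)=t_{i,j}(x-y)=-t_{j,i}(x-y)$ and $x-y=z-1$, which again gives $1-1/z^2$.

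So the statement holds as written. The change of convention you propose is not merely unnecessary but wrong.
\begin{itemize}
\item Evaluating $Q_{i,j}$ at $(i+x,j+y)$ turns the $j=i+1$ product into $t_{i,j}z\cdot t_{i,j}^{-1}z^{-1}(1-z^{-1})=1-1/z$. Your recovery of $1-1/z^2$ relies on a factor $1+1/z$ that is not present.
\item It is also inconsistent with how the lemma is used in \cref{lem:rii}. There $Q_{i,j}(x,y)$ comes from the quadratic relation with unshifted iquantum dots, while the shifts $i+x$, $j+y$ are already built into $G^{\s\t}_{i,j}$.
\end{itemize}

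\textbf{Parts (ii)--(iv).} Your outline points the right way but omits the actual computation. In the paper it is a product of three closed forms.

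In (ii):
\begin{itemize}
\item Deleting the $k=i$ factor from the full product over $k$ gives $\big(1-\frac{1}{(i-j+x-y)^2}\big)\big(1-\frac{1}{(i+j+x+y)^2}\big)^{-1}\frac{(x-y)^{a_{i,j}}}{(x+y)^{a_{i,-j}}}$. The denominators here are $i-k+x-y$, not the $i-k-x+y$ of \cref{premiracle}.
\item The two $G$'s multiply to $t_{i,-j}^{-1}\big(1-\frac{1}{(i+j+x+y)^2}\big)(x+y)^{a_{i,-j}}$ in each of the cases $i=-j-1$, $i=-j+1$ and generic. For $i=-j+1$ this uses $t_{-j,i}=-t_{i,-j}$.
\item \cref{unexpectlytrue} with $\gamma_{i,j}=1$ gives $t_{i,-j}(x-y)^{-a_{i,j}}$ for the four $r$'s times $Q^\imath_{i,j}$.
\end{itemize}

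In (iii) you need the case $\s i=\t j$ of \cref{trustfunds}, which your list of three cases omits.
\begin{itemize}
\item It gives $G^{+-}_{i,-i}(x,y)G^{-+}_{-i,i}(y,x)=(1+x+y)(1-x-y)$.
\item This cancels the $k=i\pm1$ numerator factors only up to a sign $-1$.
\item That sign is removed by $Q^\imath_{i,-i}=-Q_{i,-i}=-(x-y)^{-a_{i,-i}}$. Note that $Q_{i,-i}$ is never zero; the case $i=h$ only changes the exponent, which is absorbed by the deleted $k=i$ factor.
\end{itemize}

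In (iv) the double-arrow values of $t_{0,j}$ play no role, since $r_{0,j}Q^\imath_{0,j}(x,y)=(x-y)^{-a_{0,j}}$. What matters:
\begin{itemize}
\item The factors $\sqrt{-1}$ in $G^{++}_{0,j}(x,y)G^{-+}_{0,j}(y,x)$ square to cancel the sign in $-k-x-y$.
\item Reindexing $k\mapsto -k$ turns the resulting $I_\lo$-product of $(k+x+y)^{-a_{k,j}}$ into an $I_\hi$-product, which cancels the numerator in the statement.
\item What remains is $\prod_{k\in I}(-k+x-y)^{-a_{k,j}}=1-\frac{1}{(-j+x-y)^2}$.
\end{itemize}
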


\begin{proof}
(i) This follows on expanding the definition \cref{trustfunds}. It is easiest to consider the three cases  $i = j-1, i = j+1$, or $i \notin \{j \pm 1\}$ separately.

\vspace{2mm}
\noindent
(ii) The Cartan integer $a_{k,j}$ is zero unless $k\in \{j,j\pm 1\}$ and $a_{k,-j}$ is zero unless
$k \in \{-j,-j \pm 1\}$. Using this, the product term can be rewritten 
by first including the terms for all of these $k$ then excluding ones in which $k=i$ by multiplying by its inverse,
to show that
\begin{align*}
\prod_{k\in I-\{i\}}
\frac{(i-k+x+y)^{a_{k,-j}}}{(i-k+x-y)^{a_{k,j}}}
&=\Big(1-\frac{1}{(i-j+x-y)^2}\Big)
\Big(1-\frac{1}{(i+j+x+y)^2}\Big)^{-1}
\frac{(x-y)^{a_{i,j}}}{(x+y)^{a_{i,-j}}}.\\\intertext{Also \cref{trustfunds} gives that}
G^{+-}_{i,j}(x,y) G^{-+}_{j,i}(y,x)&=
t_{i,-j}^{-1}
\Big(1-\frac{1}{(i+j+x+y)^2}\Big) (x+y)^{a_{i,-j}}.\\\intertext{
Finally, by \cref{unexpectlytrue} and the definition \cref{typohere}, we have that}
r_{j,i}r_{-i,-j}r_{j,-i}r_{i,j}Q^\imath_{i,j}(x,y)
&= t_{i,-j} (x-y)^{-a_{i,j}}.
\end{align*}
The result follows on multiplying these three equations together. 

\vspace{2mm}
\noindent
(iii) Arguing as in (ii), the product term is
\begin{align*}
\prod_{j\in I-\{i\}}\frac{(i-j+x+y)^{a_{j,i}}}{(i-j+x-y)^{a_{j,-i}}}
&=\Big(1-\frac{1}{(2i+x-y)^2}\Big)\Big(1-\frac{1}{(x+y)^2}\Big)^{-1}\frac{(x-y)^{a_{i,-i}}}{(x+y)^2}.\\\intertext{Also \cref{trustfunds} gives that}
G^{+-}_{i,-i}(x,y)G^{-+}_{-i,i}(y,x)&=(1+x+y)(1-x-y)=-(x+y)^2\Big(1-\frac{1}{(x+y)^2}\Big).
\end{align*}
Finally, $Q^\imath_{i,-i}(x,y)=-t_{i,-i}(x-y)^{-a_{i,-i}}=-(x-y)^{-a_{i,-i}}$, since $t_{i,-i}=1$ for $i \in I_+$
(the only case in which $i$ and $-i$ are joined by an edge with $i \in I_+$ is $i=h$, and $t_{h,-h}=1$).
The result follows on multiplying these three equations together.

\vspace{2mm}
\noindent
(iv)
Since $I_\hi = -I_\lo$, we have that 
\begin{align*}
G^{++}_{0,j}(x,y)G^{-+}_{0,j}(y,x) &= 
\prod_{k \in I_\lo} \big(\sqrt{-1}(-k+x-y)\big)^{-a_{k,j}} \big(\sqrt{-1}(-k-x-y)\big)^{-a_{k,j}}\\
&= \prod_{k \in I_\lo} (-k+x-y)^{-a_{k,j}}
(k+x+y)^{-a_{k,j}}\\
&= \prod_{k \in I_\lo} (-k+x-y)^{-a_{k,j}}
\times \prod_{k \in I_\hi}
(-k+x+y)^{-a_{k,-j}}.
\end{align*}
Also, 
$r_{0,j}Q_{0,j}^\imath(x,y) = (x-y)^{-a_{0,j}}$
and $I = I_\hi \sqcup \{0\} \sqcup I_\lo$, so
\begin{align*}
r_{0,j}Q_{0,j}^\imath(x,y) &G^{++}_{0,j}(x,y)G^{-+}_{0,j}(y,x)
\prod_{k \in I_\hi}\!\!\frac{(-k+x+y)^{a_{k,-j}}}{(-k+x-y)^{a_{k,j}}}
= \prod_{k \in I} (-k+x-y)^{-a_{k,j}}\\
&=\frac{(-(j+1)+x-y)(-(j-1)+x-y)}{(-j+x-y)^2}=
1- \frac{1}{(-j+x-y)^2}.
\qedhere
\end{align*}
\end{proof}

\begin{lem}\label{lem:Bzigzag}
In the setup of \cref{passport},
the zig-zag identities hold:
$$
\ .
$$
\end{lem}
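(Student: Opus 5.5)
Because both sides of each zig-zag relation in $\hatAB$ are matrices indexed by iweights and words, it suffices to verify them entry by entry. By \cref{gasp2,gasp3}, $\Psi$ of the cap and cup are block-diagonal in $\lambda$ and involve only the projected pieces $B_{-i}B_i\one_\lambda \Rightarrow \one_\lambda$ and $\one_\lambda\Rightarrow B_{-i}B_i\one_\lambda$. In the composite $(\text{cap}\otimes\id)\circ(\id\otimes\text{cup})$, the only surviving terms are the ones where the string colours match up. Concretely, on the summand $B_i\one_\lambda$ the left zig-zag reduces to a single iquantum zig-zag. That zig-zag has a cup labelled $-i$ coming from the right-hand region (the cup produces $B_{-i}$ with its upper-left string $i$ adjusted appropriately) and a cap closing off the strings. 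The cup carries a glyph determined by \cref{cupdef} and the cap carries a glyph determined by \cref{capdef}. The sum over colours collapses exactly as in \cref{icecreamtruck}, so no convergence issues arise beyond finite sums on each entry.

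The key step is to move the two glyphs onto the same straight strand and cancel them. I will use the alternative forms of \cref{capdef,cupdef} (the ones with a $-$ glyph on the left leg) together with the pivoting relations \cref{riveting,riveting1}. With these, the glyph on the cup can be transported across the critical point, where it becomes the opposite-sign glyph on the other leg. After applying the iquantum zig-zag relation \cref{izigzag}, the resulting straight $i$-string carries a composite of the form $\nabla_i(x)$-glyph followed by the $\Delta_{-i}(-x)$-type glyph, or, depending on which side the left leg lies, the other pairing from \cref{scorching}. By \cref{bubblyinverses}, these two are mutually inverse, so the composite is the identity string $B_i\one_\lambda$.

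The second zig-zag follows in the same way, or by applying the symmetry that rotates diagrams by $180^\circ$ and reusing the remaining identities in \cref{riveting,riveting1}.

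The main obstacle I expect is bookkeeping. I must confirm that the glyphs attached to the cup and to the cap really are the \emph{inverse} pair of \cref{scorching}, namely $\nabla$ against $\Delta$ with the correct sign of $x$ and the correct reflected colour $-i$, rather than two glyphs of the same type. I also need to check that the 2-cell labels, on which $\Delta_{\lambda;i}$ and $\nabla_{\lambda;i}$ depend, are the same after pivoting; since $\Psi$ is diagonal in $\lambda$, both glyphs end up pinned into the same region labelled $\lambda$. Finally, the $i=0$ case needs separate attention because its glyphs are not symmetric: $\Delta_{\lambda;0}$ and $\nabla_{\lambda;0}$ involve $I_\hi$ and $I_\lo$ respectively. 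There I will rely on \cref{sin}, which enters through \cref{bubblyinverses}, to see that the cancellation still holds.
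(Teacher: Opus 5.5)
Your proposal is correct and is the paper's argument: after using \cref{gasp2,gasp3} to reduce to one coloured iquantum zig-zag for each $i\in I$, the paper concludes directly from \cref{izigzag} and \cref{bubblyinverses}. Choosing the forms of \cref{capdef,cupdef} that put both glyphs on the middle strand makes your pivoting step unnecessary. One bookkeeping correction: the two glyphs are not pinned into the same 2-cell (one lies in the region labelled $\lambda$, the other in $\lambda-\alpha_i$), but this mixed configuration is exactly what \cref{scorching} cancels, and that lemma already covers $i=0$, so nothing else changes.
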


\begin{proof}
In view of the definitions \cref{gasp2,gasp3}, 
we must show that
$$
%
$$
for $i \in I$.
These identities are true by \cref{izigzag,bubblyinverses}.
\end{proof}

\begin{lem}
In the setup of \cref{passport},
the dot slide relations hold:
\begin{align*}
%
\ .
\end{align*}
\end{lem}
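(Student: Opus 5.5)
The plan is to reduce both dot slide relations, componentwise in $\lambda$, to the fact that in $\UU^\imath$ a polynomial pinned to a string can be carried across a cup or cap at the cost of replacing $x$ by $-x$. This is the same fact already used for internal bubbles in \cref{pivoting}.

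\textbf{Reduction.} Since $\Psi$ of the cap (resp.\ cup) is diagonal in $\lambda$ and is a sum over $i\in I$ by \cref{gasp2} (resp.\ \cref{gasp3}), it suffices to check the relation for each fixed $\lambda\in X^\imath_\0$ and each summand $i$. Composing with $\Psi$ of the dot, which is given by \cref{gasp1,dotdef}, picks out the matching $i$-component on each leg. So the cap relation becomes the following identity in $\hatUU^\imath$. Take the cap with bottom-left string labelled $i$ and bottom-right string labelled $-i$, carrying the glyph from \cref{capdef}. Then pinning $i+x$ to the left leg should equal minus pinning $-i+x$ to the right leg.

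\textbf{Commuting past the glyph.} I will use the first expression for the cap in \cref{capdef}, in which the glyph sits on the right leg. A pin on a single string commutes with any internal bubble on that same string, as noted after \cref{coughs}. Hence the pin $-i+x$ on the right leg can be moved above or below the glyph freely.

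\textbf{Sliding around the cap.} Next I slide the pin $-i+x$ around the cap from the right leg to the left leg. Polynomials in dots move across the cap with $x\mapsto -x$; this follows from the pivotality relations \cref{ipivotal} (with the sign conventions used in \cref{pivoting}), applied monomial by monomial. So $-i+x$ on the right leg becomes $-i-x=-(i+x)$ on the left leg, which is exactly the claimed relation.

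I must check that this step does not also move the glyph. There are two ways to handle this:
\begin{itemize}
\item pin the dot on the portion of the right leg below the glyph and slide along the cap only, leaving the glyph in place; or
\item use the alternative form of the cap in \cref{capdef}, where the glyph sits on the left leg, and slide in the other direction.
\end{itemize}
Either way the glyph is untouched, and the $\lambda$-label of the pinhead is unaffected because a dot pin carries no bubble coefficients.

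\textbf{The cup, and the main obstacle.} The cup relation follows identically from \cref{cupdef}, or by rotating the cap case through $180^\circ$ with \cref{izigzag}. The main obstacle is purely a sign check. I need to confirm that the pivot of a dot through a cup or cap in the parameter conventions of $\UU^\imath$ (the scalars $\zeta_i$, $c_i$, and the $i=0$ normalisation) really gives exactly $x\mapsto -x$ with no extra scalar. I would verify this directly from the relation in \cref{ipivotal} for a single dot, treating $i=0$ and $i=\pm h$ separately in case extra factors appear there.
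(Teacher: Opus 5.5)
Your proposal is correct and follows the paper's approach. The paper's proof is the single line that the relations follow from \cref{ipivotal}, and that is exactly your reduction: commute the pin past the glyph in \cref{capdef} or \cref{cupdef}, then carry it across the black cup or cap with $x\mapsto -x$.

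The sign check you flag is already built into \cref{ipivotal}: a dot crosses a cup or cap with a plain minus sign for every $i$, as \cref{pivoting} also records. So no separate treatment of $i=0$ or $i=\pm h$ is needed.
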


\begin{proof}
This follows using \cref{ipivotal}.
\end{proof}

\begin{lem}\label{lem:pitch}
In the setup of \cref{passport},
the pitchfork relations hold:
\begin{align*}
%
\ .
\end{align*}
\end{lem}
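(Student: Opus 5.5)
The plan is to check the first pitchfork relation in \cref{pitchfork} componentwise; the second follows from it by rotation through 180$^\circ$, using the zig-zag identities already verified in \cref{lem:Bzigzag}. Fix an even iweight $\lambda$ and colors for the three boundary strings. Both sides then become explicit 2-morphisms in $\hatUU^\imath$ built from \cref{capdef,crossingdef}. On each side the cap joins two strings of colors $j$ and $-j$, and the crossing acts on a pair of colors $(i,k)$. We expand the image of the crossing as the sum of the three types of terms in \cref{crossingdef}:
\begin{enumerate}
\item the double-pin identity term, present when $i=l$, $j=k$ and $i\neq j$;
\item the cup-cap term;
\item the $X_{i,j}$ term.
\end{enumerate}
We then compose each term with the projected cap (cap plus glyph) and compare the two sides.

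The cup-cap term composed with a cap yields a diagram with a bubble or zig-zag. We simplify it using \cref{izigzag}, the glyph-pivoting relations \cref{riveting,riveting1,riveting2}, and \cref{bubblyinverses}. The double-pin term passes through the cap via the pivoting of pins, since the cap negates the variable as in \cref{pivoting}.

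The substantive part is the $X_{i,j}$ term. Here we slide the cap through the genuine $\UU^\imath$ crossing using the iquantum pitchfork relation, namely the adjunction/pivotal structure \cref{ipivotal} of $\UU^\imath$. Under this move a crossing with colors $(i,j)$ rotates to one with colors $(-j,i)$ or similar. The sign case in \cref{hardercrossings} may change, since the memberships $\pm i\in I_\pm$ get swapped. The glyph attached to the cap must then be moved across the crossing using \cref{glyphslide,preglyphslide}. This produces exactly the rational double-pin factors $\prod_k(\cdots)^{a_{k,\pm j}}$. We must then show these cancel against the ratio of the $G^{\s\t}$ series in the two cases and the $r$-constants.

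The identities needed are of the same flavour as \cref{premiracle,unexpectlytrue,anothercrazyone}, together with the symmetry $G^{\s\t}_{i,j}$ depending only on $\s i,\s x,\t j,\t y$. The plan is to organize this by cases according to whether the colors lie in $I_+$, $I_-$ or $\{0\}$, and to use the symmetries $\tT,\tR$ to cut down the number of cases.

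The main obstacle will be the cases involving the color $0$. There the glyphs $\Delta_0$ and $\nabla_0$ are not simple products. In addition, the $\delta_{i,0}\delta_{j,0}$ correction term involving $\G(x,-y)$ must be handled by rotating it through the cap and matching it with \cref{scruffylem} and \cref{maybe}. I would first try to reduce the $0$ cases to \cref{scruffiereq} and the relations \cref{whoosh}, mirroring how \cref{anothermiracle} handles the generic cases.
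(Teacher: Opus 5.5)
Your plan follows the paper's proof. There too the second relation is deduced from the first via the zig-zag identities, and the first is recast (equivalently, given zig-zag) as invariance of the image of the crossing under a $90^\circ$ rotation, so that the double-pin and cup--cap terms of \cref{crossingdef} are simply interchanged with each other. What remains is the identity \cref{finse} for the $X_{i,j}$ terms, checked case by case through \cref{hardercrossings} using the pitchfork relation of $\UU^\imath$, the sign symmetry of $G^{\s\t}_{i,j}$, and \cref{bubblyinverses,anothermiracle}. So the rational factors produced by sliding glyphs cancel among themselves inside \cref{anothermiracle}, not against the $G$'s. Also, only $i=j=0$ needs separate treatment, not every case involving the color $0$.

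One correction: do not use $\tT$ or $\tR$ to cut down the cases. $\Psi$ is not known to intertwine them with any symmetry on the iquantum side: caps carry $\Delta$-glyphs while cups carry $\nabla$-glyphs, and $X_{i,j}$ depends on the non-symmetric splitting $I=I_+\sqcup\{0\}\sqcup I_-$. The cases must be checked directly, as the paper does.
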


\begin{proof}
Since we have already checked the zig-zag relations, the proof reduces just to checking the first one of these relations, and this is equivalent to showing that
$$
%
\ .
$$
Using the definitions \cref{gasp2,gasp3,gasp4}, to verify this relation, we must show that
$$
%
$$
for all $\lambda \in X^\imath_\0$
and $i,j,k,l \in I$ such that 
$\lambda+\alpha_k-\alpha_j=\lambda-\alpha_l+\alpha_i$.
The first two terms on the right-hand side of \cref{crossingdef}
have the necessary cyclic symmetry, so the proof reduces to checking that
\begin{equation}\label{finse}
%
\end{equation}
for all $i,j \in I$.
It remains to check \cref{finse} for the various cases in the definition \cref{hardercrossings}:
\begin{itemize}
\item 
When $i \in I_+\cup\{0\}$ and $j \in I_+$,
using the pitchfork relation \cref{ipitchfork},
we have that
$$
%
\ .
$$
\item If $i \in I_+$ and $j \in I_- \cup \{0\}$
then both glyphs in \cref{finse} are identities, so we get
$$
%
\ .
\end{align*}
\item If $i \in I_-$ and $j \in I_+\cup\{0\}$
then the glyphs in \cref{finse} can both be replaced by double glyphs (the other half being trivial), so we get
\begin{align*}
%
\ .
\end{align*}
For the penultimate equality here, we used the identity
$$
r_{-j,-i}r_{i,-j}r_{j,i}
r_{-i,j}
%
\ ,
$$
which is true thanks to \cref{bubblyinverses,anothermiracle}.
\item 
Finally, for the exceptional case $i=j=0$, we have that 
$G^{++}_{0,0}(y,x)=1-x+y$, so
\begin{align*}
%
.
\qedhere
\end{align*}
\end{itemize}
\end{proof}

\begin{rem}\label{aha}
In the case $i=j=0$, the identity \cref{finse} 
established in the proof of \cref{lem:pitch} implies that there are {\em four} different ways to 
write $X_{0,0}$: we have that
\begin{align}
%
\ .\notag
\end{align}
\end{rem}

\begin{lem}\label{lem:demazure}
In the setup of \cref{passport},
the Demazure relations hold:
$$
%
\ 
.
$$
\end{lem}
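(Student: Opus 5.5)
We need to verify the Demazure relations in $\AB$ under $\Psi$: the first relation in \cref{demazure} says that the crossing with a dot on its top-left end, minus the crossing with a dot on its bottom-right end, equals the identity of $B\otimes B$ minus cap-over-cup. The second relation follows from the first together with the zig-zag and pitchfork relations already checked (rotation by $90^\circ$, as in the remark after \cref{ABdef}). So it suffices to prove the first relation.

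Write $\Psi(\text{crossing})$ as the sum of the three kinds of terms in \cref{crossingdef}, and project onto components with bottom labels $(i,j)$ and top labels $(l,k)$. The magenta dot is $i+x$ pinned on an $i$-string by \cref{dotdef}. The identity and cap-cup on the right-hand side project to $\delta_{i,l}\delta_{j,k}$ times the identity and $\delta_{i,-j}\delta_{k,-l}$ times the projected cup/cap of \cref{capdef,cupdef}; the cap and cup carry the glyphs $\internalbubbleminus[180]{}$ and $\internalbubbleplus[0]{}$.

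The first two terms of \cref{crossingdef} are pins, so commuting the dot through them just multiplies by $(l+x_{\text{top}})-(j+y_{\text{bottom}})$. For the first term ($i=l$, $j=k$, $i\ne j$) this cancels $\frac{1}{i-j+x-y}$ and gives the identity. For the cup-cap term ($i=-j$, $k=-l$, $j\ne l$) it gives minus the cap-cup with its glyphs. So these two terms produce exactly the right-hand side in the off-diagonal cases. What remains is the $X_{i,j}$ term, with $i=k$ and $j=l$: one must show that the dot difference on $X_{i,j}$ supplies the identity when $i=j$, the cap-cup when $i=-j$, and zero otherwise.

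For this, apply the iquantum Demazure relation \cref{idemazure} in its slid form \cref{demazureslide} to the ordinary crossing inside each case of \cref{hardercrossings}. Glyphs and pins on the two strands move through using \cref{halfslider,symmetricslider} and \cref{glyphslide,anothermiracle}. The dot difference then becomes $(x-y+i-j)$ times the crossing plus correction terms. These are an identity term when $i=j$ and a cap-cup term when $i=-j$, coming from \cref{idemazure}. The factor $G^{\s\t}_{i,j}$ is designed so that:
\begin{itemize}
\item when $i\ne\pm j$, the product $(i-j+x-y)G$ combines with the crossing to give zero;
\item when $i=j$, $G^{++}_{i,i}=1+x-y$ gives exactly the identity;
\item when $i=-j$, the glyph factors combine via \cref{bubblyinverses} to give exactly the glyph-decorated cap-cup of \cref{capdef,cupdef}.
\end{itemize}

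I expect the $i=j=0$ case to be the main obstacle. There $X_{0,0}$ contains the extra $\G(x,-y)$ cap-cup term, and one needs \cref{scruffylem}, \cref{maybe} (in particular \cref{stunning}) and the four expressions for $X_{0,0}$ in \cref{aha}. The check reduces to showing that the dot difference on the crossing part, together with the dot difference on the $\G$ cap-cup, matches the identity minus the $0$-glyph cap-cup, using \cref{scruffy} and \cref{betterargument}.
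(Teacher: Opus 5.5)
Your reduction agrees with the paper's up to the $X_{i,j}$ term. The second relation follows from the first by the zig-zag and pitchfork relations, and the first two terms of \cref{crossingdef} give exactly the right-hand side when $i\neq k$ or $j\neq l$.

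\textbf{The $X_{i,j}$ step is misdescribed.} In that component ($k=i$, $l=j$), both Demazure dots lie on the strand running from the bottom-right endpoint, labelled $j$, to the top-left endpoint, labelled $l=j$. So by \cref{dotdef} both magenta dots are $j+x$, and the constant shifts cancel. No factor $i-j+x-y$ ever appears. In particular $G^{\varepsilon\delta}_{i,j}$ plays no role in the vanishing when $i\neq\pm j$; indeed, a difference equal to $(i-j+x-y)G^{\varepsilon\delta}_{i,j}$ times the crossing would not vanish.

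The correct mechanism is simpler. In every case of \cref{hardercrossings}, apart from the extra $\G$ cap-cup term when $i=j=0$, the glyphs and $G$-pins sit on the legs of the crossing, strictly above or below it. They therefore commute with the dot, and the difference is just the black Demazure difference on the $j$-strand, sandwiched between these decorations. The iquantum Demazure relation then gives $0$ if $i\neq\pm j$, and otherwise an identity and/or a cap-cup carrying the leftover decorations. Nothing ever passes through the crossing, so \cref{glyphslide,anothermiracle} and \cref{bubblyinverses} are not needed.

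\textbf{How the paper handles the cases.} The paper avoids the last three cases of \cref{hardercrossings} entirely. It uses the rotation property \cref{finse} of $X_{i,j}$ from the pitchfork lemma: a $90^\circ$ rotation swaps the two dot differences in \cref{notbad} and swaps the identity with the cap-cup. That is why it proves both equalities there, and only for $i,j\in I_+\cup\{0\}$, where $i\neq j$ forces $i\neq -j$.
\begin{enumerate}
\item For $i=j$, resolving the crossing puts both variables of the pin $G^{++}_{i,i}(y,x)=1-x+y$ on one strand of the identity, so the pin becomes $1$.
\item For $i=0$, the dot difference on the extra $\G$ term contributes minus a cap-cup with pin $(x+y)\G(x,-y)$. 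This combines with the cap-cup from the Demazure relation into one cap-cup with pin $1-x+y+(x+y)\G(x,-y)$, which \cref{scruffiereq} converts into the glyph-decorated cap-cup.
\end{enumerate}
Neither \cref{maybe}, \cref{aha} nor \cref{betterargument} is needed.

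\textbf{Your direct route also works} once the mechanism is corrected, at the cost of checking the extra cases by hand (all the scalars $r$ that occur equal $1$):
\begin{enumerate}
\item For $i\in I_+$ and $j=-i$, both glyphs are trivial. The pin $G^{-+}_{-i,i}=1-x-y$ lands on the cap, where $x+y=0$ by the dot slide, so it is $1$.
\item For $i\in I_-$ and $j=-i$, the pin $1+x+y$ lands on the cup and likewise becomes $1$. The single glyphs of \cref{simplercrossings} sit exactly where \cref{capdef,cupdef} place them.
\item For $i=j\in I_-$, the glyphs are trivial and the pin $1-x+y$ collapses to $1$ on the identity.
\end{enumerate}
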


\begin{proof}
The second equality follows from the first using the zig-zag and pitchfork relations already proved.
For the first, we must show that
$$
%
$$
for all $\lambda \in X^\imath_\0$ and $i,j,k,l \in I$
such that $\lambda-\alpha_i-\alpha_j=\lambda-\alpha_k-\alpha_l$.

If $i \neq k$ or $j \neq l$, by the definition of the dot and crossing, this reduces to showing that
$$
\underbrace{%
.
$$
This is clear. 

Now assume that $i=k$ and $j=l$.
Then we must show that
\begin{equation}\label{notbad}
%
.
\end{equation}
Actually, as already noted, we only need to prove the first equality for all $i,j\in I$, then the second follows by cyclic rotation. However, we will prove both equalities, preferring to use cyclic rotation to reduce to proving both equalities just in the case that 
both $i$ and $j$ are in $I_+ \cup\{0\}$. Assuming this from now on,
if $i \neq j$ then $i \neq -j$, and it is clear from the definition \cref{hardercrossings} and \cref{idemazure} that all three sides of \cref{notbad} are 0.

It remains to check \cref{notbad} when $i = j \in I_+\cup\{0\}$.
We have that
$G^{++}_{i,i}(y,x)=1-x+y$, so the left-hand side of \cref{notbad} is
\begin{align*}
\begin{tikzpicture}[centerzero,iQ,scale=1.4]
\draw[-] (-0.3,-.4) \botlabel{i} to (0.3,.4);
\draw[-] (0.3,-.4) \botlabel{i} to (-0.3,.4);
\pinpinmid{.06,.1}{.8,0}{.06,-.1}{1-x+y};
\closeddot{-.15,.2};
\end{tikzpicture}\ 
-
&\begin{tikzpicture}[centerzero,iQ,scale=1.4]
\draw[-] (-0.3,-.4) \botlabel{i} to (0.3,.4);
\draw[-] (0.3,-.4) \botlabel{i} to (-0.3,.4);
\pinpinmid{.06,.1}{.8,0}{.06,-.1}{1-x+y};
\closeddot{.15,-.2};
\end{tikzpicture}
-
\delta_{i,0} \begin{tikzpicture}[iQ,centerzero,scale=1.4]
\draw[-] (0,.4)\toplabel{0} to[out=-90,in=-90,looseness=3] (.4,.4);
\draw[-] (0,-.4)\botlabel{0} to[out=90,in=90,looseness=3] (.4,-.4);
\pinpinmid{.37,.2}{1.4,0}{.37,-.2}{(x+y)\G(x,-y)};
\end{tikzpicture}\\
&\stackrel{\cref{idemazure}}{=}
\begin{tikzpicture}[centerzero,iQ,scale=1.4]
\draw[-] (0.2,-.4) \botlabel{i} to (0.2,.4);
\draw[-] (-0.1,-.4) \botlabel{i} to (-0.1,.4);
\pinpinmid{.2,.15}{.9,0}{.2,-.15}{1-x+y};
\end{tikzpicture}
-
\delta_{i,0} \begin{tikzpicture}[iQ,centerzero,scale=1.4]
\draw[-] (0,.4)\toplabel{0} to[out=-90,in=-90,looseness=3] (.4,.4);
\draw[-] (0,-.4)\botlabel{0} to[out=90,in=90,looseness=3] (.4,-.4);
\pinpinmid{.37,.2}{1.8,0}{.37,-.2}{1-x+y+(x+y)\G(x,-y)};
\end{tikzpicture}
\stackrel{\cref{scruffiereq}}{=}
\begin{tikzpicture}[centerzero,iQ,scale=1.4]
\draw[-] (0.2,-.4) \botlabel{i} to (0.2,.4);
\draw[-] (-0.1,-.4) \botlabel{i} to (-0.1,.4);
\end{tikzpicture}-\delta_{i,0}\begin{tikzpicture}[iQ,centerzero,scale=1.2]
\draw[-] (0,.5)\toplabel{0} to[out=-90,in=-90,looseness=3] (.5,.5);
\draw[-] (0,-.5)\botlabel{0} to[out=90,in=90,looseness=3] (.5,-.5);
\internalbubbleminus[0]{.02,.28};
\internalbubbleplus[180]{.48,-.28};
\end{tikzpicture}.
\end{align*}
In view of \cref{riveting}, 
this proves the first equality in \cref{notbad}. The proof of the second equality is similar.
\end{proof}

\begin{lem}\label{lem:ri}
In the setup of \cref{passport},
the Reidemeister I relations hold:
\begin{align}
\begin{tikzpicture}[B,baseline=-3.9mm,scale=1.5]
\draw (-0.15,-0.4) to[out=60,in=-90] (0.15,0) arc(0:180:0.15) to[out=-90,in=120] (0.15,-0.4);
\end{tikzpicture}\ 
&\ =\  \begin{tikzpicture}[B,anchorbase,scale=1.8]
\draw[-] (-0.15,-0.15) -- (-0.15,0) arc(180:0:0.15) -- (0.15,-0.15);
\end{tikzpicture}\ ,
&
\raisebox{3mm}{\phantom{X}}\!\!
\begin{tikzpicture}[B,baseline=2.2mm,scale=1.5]
\draw (-0.15,0.4) to[out=-60,in=90] (0.15,0) arc(0:-180:0.15) to[out=90,in=-120] (0.15,0.4);
\end{tikzpicture}\ 
&\ =\ \begin{tikzpicture}[B,anchorbase,scale=1.8]
\draw[-] (-0.15,0.15) -- (-0.15,0) arc(-180:0:0.15) -- (0.15,0.15);
\end{tikzpicture}\ .
\end{align}
\end{lem}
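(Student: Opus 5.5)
Only the first Reidemeister I relation needs to be checked by hand. The second follows from it by rotation through $180^\circ$, using the zig-zag and pitchfork relations already established in \cref{lem:Bzigzag,lem:pitch}.

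For the first relation, I apply $\Psi$ and compare the $(\mu,\lambda)$-entries. The cap forces the two lower endpoints to carry colors $-i$ and $i$ in some region $\lambda$. So we must show, for each $i \in I$ and $\lambda \in X^\imath_\0$, that the composite of the image cap \cref{capdef} on top of the image crossing \cref{crossingdef} equals the image cap alone. Here the crossing has bottom colors $(-i,i)$, and the sum runs over top colors $(k,l)$ with $k=-l$.

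In \cref{crossingdef}, the three terms contribute as follows.
\begin{itemize}
\item The first term requires $l=-i$ and $k=i$. This gives the cap composed with two vertical strands carrying the double pin $\frac{1}{-2i+x-y}$. It survives only when $i\neq 0$.
\item The second term has colors $-i,i$ on the bottom and $-l,l$ on top with $l\neq i$. Capping it off produces a closed bubble of color $l$ with internal glyphs, pinned to the cap.
\item The third term is $X_{-i,i}$ capped off, i.e.\ a curl.
\end{itemize}

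I simplify each piece as follows.
\begin{itemize}
\item For the first term, I slide the pin around the cap using \cref{ipivotal}/\cref{pivoting}, so it becomes a single pin $\frac{1}{-2i-2x}$, i.e.\ $-\frac{1}{2(i+x)}$, on the $i$-string.
\item For the second term, I use \cref{riveting,riveting2} to move all glyphs onto the bubble, turning it into a double glyph. The result is $\zeta_l(\lambda)\,Q_l\big(\frac{\Deltanabla_l(x_l)}{i+x-l-x_l}\big)$ pinned to the $i$-string, up to the sign conventions in \cref{thorsten1}.
\item For the third term, I treat the cases in \cref{hardercrossings} separately according to whether $i \in I_+$, $I_-$ or $0$. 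After straightening the glyphs with \cref{riveting}, I evaluate the curl with a pinned power series using \cref{easycurl} and its consequence \cref{carnomore}. This should produce exactly the left-hand side of \cref{phewy} (times the glyph coming from the cap), i.e.\ $\zeta_i(\lambda)\big(P_i(i+x)\Deltanabla_i(x) + Q_i(\partial_{x,x_i}\Deltanabla_i(x))\big)$, together with correction terms coming from $G^{\s\t}_{-i,i}$.
\end{itemize}

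Summing the three contributions and invoking \cref{worsebunny2}, the bubble sum and the $-\frac{1}{2(i+x)}$ term cancel against the right-hand side of \cref{phewy}. What remains is $(-1)^{p(\lambda)}$ times the cap, and this equals the cap since $\lambda$ is even. The case $i=0$ runs the same way with \cref{louis}. There the first term is absent, and the extra $\G$-term in $X_{0,0}$ is handled with \cref{bryan2}, \cref{scruffiereq} and \cref{betterargument}.

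The main obstacle is the curl computation for the third term. One has to check, in each case of \cref{hardercrossings}, that the scalar factors $r_{\cdot,\cdot}$, the values of $G^{\s\t}_{-i,i}$ at the coinciding colours, and the glyph normalizations $\zeta_i(\lambda)$ combine to exactly the coefficient required by \cref{worsebunny2}. I would first do the generic case $i\in I_+$, where $X_{-i,i}$ has the simplest form in \cref{simplercrossings}, then obtain $i\in I_-$ by the symmetry $G^{\s\t}_{i,j}$ under $(\s,i,x)\mapsto(-\s,-i,-x)$, and finally treat $i=0$ and $i=\pm h$ separately.
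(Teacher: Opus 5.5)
Your proposal follows essentially the same route as the paper: reduce to the first relation, expand $\Psi$ into the $X$-curl, the cap carrying the pin $\pm\frac{1}{2(i+x)}$, and the bubble terms, then close with \cref{carnomore} (that is, \cref{bryan1}, or \cref{bryan2} when $i=0$, which is where \cref{easycurl} and \cref{phewy}, \cref{louis} enter) together with $(-1)^{p(\lambda)}=1$. The one thing you misjudge is the expected correction terms from $G$: at coinciding colours all the $r$-factors are $1$ and the $G$-factor is just $1\pm(x+y)$, which is symmetric, so it can be moved above the crossing and then vanishes on the cap by the dot slide relation; hence the $I_+$ and $I_-$ cases (including $\pm h$) reunify immediately with no symmetry argument, and for $i=0$ the paper instead rewrites $X_{0,0}$ via \cref{aha} and uses $\G(x,-x)=\frac{\Deltanabla_0(x)-1}{2x}$, which is \cref{scruffy} at $y=-x$.
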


\begin{proof}
It suffices to prove the first equality. The second one then follows using the zig-zag and pitchfork relations already established.
Applying \cref{gasp2,gasp4}, we need to show that
\begin{equation}\label{reduction}
\sum_{j \in I} 
\begin{tikzpicture}[iQ,baseline=-3.9mm,scale=2]
\draw (-0.15,-0.4)\botlabel{i} to[out=60,in=-90] (0.15,0) arc(0:180:0.15) to[out=-90,in=120] (0.15,-0.4);
\strand{.23,0}{j};
\magcr{0,-.24};
\magup{0,.16};
\end{tikzpicture}
\ =\  \begin{tikzpicture}[iQ,baseline=.8mm,scale=2]
\draw[-] (-0.15,-0.15)\botlabel{i} -- (-0.15,0) arc(180:0:0.15) -- (0.15,-0.15)\botlabel{\ibar};
\magup{0,.16};
\end{tikzpicture}
\end{equation}
for all $i \in I$. Now we consider two cases.

The following calculation proves \cref{reduction} when $i \neq 0$:
\begin{align*}
\sum_{j \in I} 
\begin{tikzpicture}[iQ,baseline=-3.9mm,scale=2]
\draw (-0.15,-0.4)\botlabel{i} to[out=60,in=-90] (0.15,0) arc(0:180:0.15) to[out=-90,in=120] (0.15,-0.4)\botlabel{\ibar};
\strand{.23,0}{j};
\magcr{0,-.24};
\magup{0,.16};
\end{tikzpicture}&=\begin{tikzpicture}[iQ,baseline=-3.9mm,scale=2]
\draw (-0.15,-0.4)\botlabel{i} to[out=60,in=-90] (0.15,0) arc(0:180:0.15) to[out=-90,in=120] (0.15,-0.4)\botlabel{\ibar};
\strand{.23,0}{i};
\magcr{0,-.24};
\magup{0,.16};
\end{tikzpicture}\!\!
+\!\!
\sum_{j \in I - \{i\}} 
\begin{tikzpicture}[iQ,baseline=-3.9mm,scale=2]
\draw (-0.15,-0.4)\botlabel{i} to[out=60,in=-90] (0.15,0) arc(0:180:0.15) to[out=-90,in=120] (0.15,-0.4)\botlabel{\ibar};
\strand{.23,0}{j};
\magcr{0,-.24};
\magup{0,.16};
\end{tikzpicture}
\stackrel{\cref{crossingdef}}{=}
\begin{tikzpicture}[iQ,baseline=-3.9mm,scale=2]
\draw (-0.15,-0.4)\botlabel{i} to[out=60,in=-90] (0.15,0) arc(0:180:0.15) to[out=-90,in=120] (0.15,-0.4)\botlabel{\ibar};
\strand{-.23,0}{\ibar};
\internalbubbleplus[180]{0.15,0.0};
\node[draw,fill=white,inner sep=1.3pt] at (0,-.24) {$\scriptstyle X_{i,-i}$};
\end{tikzpicture}
+
\begin{tikzpicture}[iQ,baseline=-3.9mm,scale=2]
\draw (-0.15,-0.4)\botlabel{i} to (-0.15,0) to [out=90,in=90,looseness=1.5]
(.15,0) to (.15,-.4);
\internalbubbleplus[180]{0.15,0.0};
\pinpin{.15,-.15}{-.15,-.15}{-.6,-.15}{\frac{1}{2i+x-y}};
\end{tikzpicture}
-
\sum_{j \in I - \{i\}} 
\begin{tikzpicture}[iQ,baseline=-3.9mm,scale=2]
\draw (-0.15,-0.4)\botlabel{i} to [out=90,in=90,looseness=2] (.15,-.4);
\strand{-.22,.05}{\jbar};
\pinpin{.12,-.29}{-.13,-.1}{-.7,-.1}{\frac{1}{i-j+x-y}};
\draw (0,0) circle (1.6mm);
\internalbubbleminus[180]{-.12,-.3};
\fatbubbleplus{0.14,0};
\end{tikzpicture}\\
&\!\!\!\stackrel{\cref{simplercrossings}}{=}
\begin{dcases}
\begin{tikzpicture}[iQ,baseline=-3.9mm,scale=2]
\draw (-0.15,-0.4)\botlabel{i} to[out=60,in=-90] (0.15,0) arc(0:180:0.15) to[out=-90,in=120] (0.15,-0.4);
\internalbubbleplus[180]{0.15,0.0};
\pinpin{.08,-.31}{-.08,-.31}{-.55,-.31}{1-x-y};
\end{tikzpicture}
+
\begin{tikzpicture}[iQ,baseline=-3.9mm,scale=2]
\draw (-0.15,-0.4)\botlabel{i} to (-0.15,0) to [out=90,in=90,looseness=1.5]
(.15,0) to (.15,-.4);
\internalbubbleplus[180]{0.15,-.15};
\pin{-.15,-.15}{-.6,-.15}{\frac{1}{2(i+x)}};
\end{tikzpicture}
-
\sum_{j \in I - \{i\}} 
\begin{tikzpicture}[iQ,baseline=-3.9mm,scale=2]
\draw (-0.15,-0.4)\botlabel{i} to [out=90,in=90,looseness=2] (.15,-.4);
\strand{-.23,0}{\jbar};
\draw (0,0) circle (1.6mm);
\pinpinmid{.13,-.1}{.7,-.22}{.13,-.32}{\frac{1}{i-j-x-y}};
\internalbubbleminus[180]{-.12,-.3};
\fatbubbleplus{0.12,0.1};
\end{tikzpicture}&\text{if $i \in I_+$}\\
\begin{tikzpicture}[iQ,baseline=-3.9mm,scale=2]
\draw (-0.15,-0.4)\botlabel{i} to[out=60,in=-90] (0.15,0) arc(0:180:0.15) to[out=-90,in=120] (0.15,-0.4);
\internalbubbleplus[180]{0.15,0.0};
\internalbubbleminus[0]{-0.15,0.0};
\internalbubbleplus[180]{0.11,-0.31};
\pinpin{.08,-.17}{-.08,-.17}{-.55,-.17}{1+x+y};
\end{tikzpicture} 
+
\begin{tikzpicture}[iQ,baseline=-3.9mm,scale=2]
\draw (-0.15,-0.4)\botlabel{i} to (-0.15,0) to [out=90,in=90,looseness=1.5]
(.15,0) to (.15,-.4);
\internalbubbleplus[180]{0.15,-.15};
\pin{-.15,-.15}{-.6,-.15}{\frac{1}{2(i+x)}};
\end{tikzpicture}
-
\sum_{j \in I - \{i\}} 
\begin{tikzpicture}[iQ,baseline=-3.9mm,scale=2]
\draw (-0.15,-0.4)\botlabel{i} to [out=90,in=90,looseness=2] (.15,-.4);
\strand{-.23,0}{\jbar};
\draw (0,0) circle (1.6mm);
\pinpinmid{.13,-.1}{.7,-.22}{.13,-.32}{\frac{1}{i-j-x-y}};
\internalbubbleminus[180]{-.12,-.3};
\fatbubbleplus{0.12,0.1};
\end{tikzpicture}&\text{if $i \in I_-$.}
\end{dcases}\\
&=\begin{tikzpicture}[iQ,baseline=-3.9mm,scale=2]
\draw (-0.15,-0.4)\botlabel{i} to[out=60,in=-90] (0.15,0) arc(0:180:0.15) to[out=-90,in=120] (0.15,-0.4);
\fatbubbleplus{0.15,0.0};
\internalbubbleplus[180]{0.11,-0.31};
\end{tikzpicture} 
+
\begin{tikzpicture}[iQ,baseline=-3.9mm,scale=2]
\draw (-0.15,-0.4)\botlabel{i} to (-0.15,0) to [out=90,in=90,looseness=1.5]
(.15,0) to (.15,-.4);
\internalbubbleplus[180]{0.15,-0.15};
\pin{-.15,-.15}{-.6,-.15}{\frac{1}{2(i+x)}};
\end{tikzpicture}
-
\sum_{j \in I - \{i\}} 
\begin{tikzpicture}[iQ,baseline=-3.9mm,scale=2]
\draw (-0.15,-0.4)\botlabel{i} to [out=90,in=90,looseness=2] (.15,-.4);
\strand{-.23,0}{\jbar};
\draw (0,0) circle (1.6mm);
\pinpin{.13,-.1}{-.13,-.32}{-.6,-.2}{\frac{1}{i-j+x-y}};
\internalbubbleplus[180]{.12,-.3};
\fatbubbleplus{0.12,0.1};
\end{tikzpicture} =
\begin{tikzpicture}[iQ,baseline=-3.9mm,scale=2]
\draw (-0.15,-0.4)\botlabel{i} to (-0.15,0) to [out=90,in=90,looseness=1.5]
(.15,0) to (.15,-.4);
\internalbubbleplus[180]{0.15,-0.15};
\end{tikzpicture}
=
\begin{tikzpicture}[iQ,baseline=.8mm,scale=2]
\draw[-] (-0.15,-0.15)\botlabel{i} -- (-0.15,0) arc(180:0:0.15) -- (0.15,-0.15);
\magup{0,.16};
\end{tikzpicture}\ .
\end{align*}
In the third equality here, we omitted some factors $r_{\pm i, \pm i}$ in the first term since they all equal 1. For the fourth equality, we simplified the first term, thereby reunifying the two cases: the factor $1-x-y$ commutes past the crossing, and both of $x+y$ and $-x-y$
are zero when at the top due to the dot slide relation, so the pins $1+x+y$ and $1-x-y$ can be removed. We also moved some glyphs using 
\cref{riveting}, and added a couple of extra glyphs which are identities.
For the penultimate equality, we applied the identity from \cref{bryan1} composed on the bottom right with a glyph labelled $+$, noting that $(-1)^{p(\lambda)}=1$ as $\lambda \in X^\imath_\0$, and the final equality is the definition \cref{capdef}.

In the $i=0$ case, we instead argue as follows:
\begin{align*}
\sum_{j \in I} 
\begin{tikzpicture}[iQ,baseline=-3.9mm,scale=2]
\draw (-0.15,-0.4)\botlabel{0} to[out=60,in=-90] (0.15,0) arc(0:180:0.15) to[out=-90,in=120] (0.15,-0.4)\botlabel{0};
\strand{.23,0}{j};
\magcr{0,-.24};
\magup{0,.16};
\end{tikzpicture}&=
\begin{tikzpicture}[iQ,baseline=-3.9mm,scale=2]
\draw (-0.15,-0.4)\botlabel{0} to[out=60,in=-90] (0.15,0) arc(0:180:0.15) to[out=-90,in=120] (0.15,-0.4)\botlabel{0};
\strand{.23,0}{0};
\magcr{0,-.24};
\magup{0,.16};
\end{tikzpicture}\!\!
+\!\!
\sum_{j \in I - \{0\}} 
\begin{tikzpicture}[iQ,baseline=-3.9mm,scale=2]
\draw (-0.15,-0.4)\botlabel{0} to[out=60,in=-90] (0.15,0) arc(0:180:0.15) to[out=-90,in=120] (0.15,-0.4)\botlabel{0};
\strand{.23,0}{j};
\magcr{0,-.24};
\magup{0,.16};
\end{tikzpicture}
\stackrel{\cref{crossingdef}}{=}
\begin{tikzpicture}[iQ,baseline=-3.9mm,scale=2]
\draw (-0.15,-0.4)\botlabel{0} to[out=60,in=-90] (0.15,0) arc(0:180:0.15) to[out=-90,in=120] (0.15,-0.4);
\strand{-.23,0}{0};
\internalbubbleplus[180]{0.15,0.0};
\node[draw,fill=white,inner sep=1.3pt] at (0,-.24) {$\scriptstyle X_{0,0}$};
\end{tikzpicture}
-
\sum_{j \in I - \{0\}} 
\begin{tikzpicture}[iQ,baseline=-3.9mm,scale=2]
\draw (-0.15,-0.4)\botlabel{0} to [out=90,in=90,looseness=2] (.15,-.4);
\strand{-.22,.05}{\jbar};
\pinpin{.12,-.29}{-.13,-.1}{-.7,-.1}{\frac{1}{-j+x-y}};
\draw (0,0) circle (1.6mm);
\internalbubbleminus[180]{-.12,-.3};
\fatbubbleplus{0.14,0};
\end{tikzpicture}\\
&\!\!\stackrel{\cref{twoways}}{=}
\begin{tikzpicture}[iQ,baseline=-3.9mm,scale=2]
\draw (-0.15,-0.4)\botlabel{0} to[out=60,in=-90] (0.15,0) arc(0:180:0.15) to[out=-90,in=120] (0.15,-0.4);
\internalbubbleplus[180]{0.15,0.0};
\internalbubbleminus[0]{-.15,0};
\internalbubbleplus[180]{.08,-.31};
\end{tikzpicture}
\ +\ 
\begin{tikzpicture}[iQ,centerzero,scale=1.4]
\draw[-] (0.5,-.4)\botlabel{0} to (0.5,.1) to [out=90,in=90,looseness=1] (1.5,.1) to (1.5,-.4);
\internalbubbleminus[0]{0.5,.23};
\internalbubbleplus[180]{1.5,-.28};
\internalbubbleplus[180]{1.5,.23};
\pinpinmid{0.5,-.02}{1,-.02}{1.5,-.02}{\G(x,y)};
\end{tikzpicture}
-
\sum_{j \in I - \{0\}} 
\begin{tikzpicture}[iQ,baseline=-3.9mm,scale=2]
\draw (-0.15,-0.4)\botlabel{0} to [out=90,in=90,looseness=2] (.15,-.4);
\strand{-.23,0}{\jbar};
\draw (0,0) circle (1.6mm);
\pinpinmid{.13,-.1}{.7,-.22}{.13,-.32}{\frac{1}{-j-x-y}};
\internalbubbleminus[180]{-.12,-.3};
\fatbubbleplus{0.12,0.1};
\end{tikzpicture}\\
&\!\!\substack{\cref{riveting}\\{\textstyle=}\\\cref{idemazure}}
\begin{tikzpicture}[iQ,baseline=-3.9mm,scale=2]
\draw (-0.15,-0.4)\botlabel{0} to[out=60,in=-90] (0.15,0) arc(0:180:0.15) to[out=-90,in=120] (0.15,-0.4);
\fatbubbleplus{0.15,0.0};
\internalbubbleplus[180]{.08,-.31};
\end{tikzpicture}
\ +\ 
\begin{tikzpicture}[iQ,centerzero,scale=1.4]
\draw[-] (0.5,-.4)\botlabel{0} to (0.5,.1) to [out=90,in=90,looseness=1] (1.8,.1) to (1.8,-.4);
\fatbubbleminus{0.5,.23};
\internalbubbleplus[180]{1.8,-.26};
\pinpinmid{0.5,-.22}{1.2,-.12}{.5,-.02}{\G(x,-y)};
\end{tikzpicture}
-
\sum_{j \in I - \{0\}} 
\begin{tikzpicture}[iQ,baseline=-3.9mm,scale=2]
\draw (-0.15,-0.4)\botlabel{0} to [out=90,in=90,looseness=2] (.15,-.4);
\strand{-.23,0}{\jbar};
\draw (0,0) circle (1.6mm);
\pinpin{.13,-.1}{-.13,-.32}{-.6,-.2}{\frac{1}{-j+x-y}};
\internalbubbleplus[180]{.12,-.3};
\fatbubbleplus{0.12,0.1};
\end{tikzpicture}\\&=
\begin{tikzpicture}[iQ,baseline=-3.9mm,scale=2]
\draw (-0.15,-0.4)\botlabel{0} to[out=60,in=-90] (0.15,0) arc(0:180:0.15) to[out=-90,in=120] (0.15,-0.4);
\fatbubbleplus{0.15,0.0};
\internalbubbleplus[180]{.08,-.31};
\end{tikzpicture}
\ +\ 
\begin{tikzpicture}[iQ,centerzero,scale=1.4]
\draw[-] (0.5,-.4)\botlabel{0} to (0.5,.1) to [out=90,in=90,looseness=1] (1.8,.1) to (1.8,-.4);
\pin{.5,.18}{-.2,.18}{\Deltanabla_0(-x)};
\internalbubbleplus[180]{1.8,-.26};
\pin{0.5,-.12}{1.2,-.12}{\G(x,-x)};
\end{tikzpicture}
-
\sum_{j \in I - \{0\}} 
\begin{tikzpicture}[iQ,baseline=-3.9mm,scale=2]
\draw (-0.15,-0.4)\botlabel{0} to [out=90,in=90,looseness=2] (.15,-.4);
\strand{-.23,0}{\jbar};
\draw (0,0) circle (1.6mm);
\pinpin{.13,-.1}{-.13,-.32}{-.6,-.2}{\frac{1}{-j+x-y}};
\internalbubbleplus[180]{.12,-.3};
\fatbubbleplus{0.12,0.1};
\end{tikzpicture}\ .
\end{align*}
Now we note that
$$
\G_\lambda(x,-x) = 1 + \frac{\Delta_{\lambda;0}(x)-\Delta_{\lambda;0}(-x)}{2x} \nabla_{\lambda;0}(x)
\stackrel{\cref{bibble}}{=}
1+\frac{\Deltanabla_{\lambda;0}(x)-1-2x}{2x}
=
\frac{\Deltanabla_{\lambda;0}(x)-1}{2x}
$$
for any $\lambda$.
As $\begin{tikzpicture}[iQ,centerzero]
\draw(0,-.25)\botlabel{0} to (0,.25);
\pin{0,.15}{-.9,.15}{\Deltanabla_0(-x)};
\pin{0,-.15}{.8,-.15}{\Deltanabla_0(x)};
\end{tikzpicture}=
\begin{tikzpicture}[iQ,centerzero]
\draw(0,-.25)\botlabel{0} to (0,.25);
\end{tikzpicture}$
by \cref{bubblyinverses}, 
it follows that
$$
\sum_{j \in I} 
\begin{tikzpicture}[iQ,baseline=-3.9mm,scale=2]
\draw (-0.15,-0.4)\botlabel{0} to[out=60,in=-90] (0.15,0) arc(0:180:0.15) to[out=-90,in=120] (0.15,-0.4)\botlabel{0};
\strand{.23,0}{j};
\magcr{0,-.24};
\magup{0,.16};
\end{tikzpicture}=\begin{tikzpicture}[iQ,baseline=-3.9mm,scale=2]
\draw (-0.15,-0.4)\botlabel{0} to[out=60,in=-90] (0.15,0) arc(0:180:0.15) to[out=-90,in=120] (0.15,-0.4);
\fatbubbleplus{0.15,0.0};
\internalbubbleplus[180]{.08,-.31};
\end{tikzpicture}
\ +\ 
\begin{tikzpicture}[iQ,baseline=-3.9mm,,scale=2]
\draw (-0.15,-0.4)\botlabel{0} to (-.15,-.2)  to[out=90,in=90,looseness=3] (0.15,-0.2) to (.15,-.4);
\pin{-0.15,-0.2}{-.7,-.2}{\frac{1-\Deltanabla_0(-x)}{2x}};
\internalbubbleplus[180]{.15,-0.2};
\end{tikzpicture}
-
\sum_{j \in I - \{0\}} 
\begin{tikzpicture}[iQ,baseline=-3.9mm,scale=2]
\draw (-0.15,-0.4)\botlabel{0} to [out=90,in=90,looseness=2] (.15,-.4);
\strand{-.23,0}{\jbar};
\draw (0,0) circle (1.6mm);
\pinpin{.13,-.1}{-.13,-.32}{-.6,-.2}{\frac{1}{-j+x-y}};
\internalbubbleplus[180]{.12,-.3};
\fatbubbleplus{0.12,0.1};
\end{tikzpicture}\ .
$$
It just remains to apply \cref{bryan2} like in the previous paragraph.
\end{proof}

\begin{lem}\label{lem:rii}
In the setup of \cref{passport},
the Reidemeister II relations hold:
$$
\begin{tikzpicture}[B,centerzero,scale=1.4]
\draw (0.2,-0.4) to[out=135,in=down] (-0.15,0) to[out=up,in=225] (0.2,0.4);
\draw (-0.2,-0.4) to[out=45,in=down] (0.15,0) to[out=up,in=-45] (-0.2,0.4);
\end{tikzpicture}\ 
=\ 
\begin{tikzpicture}[B,centerzero,scale=1.4]
\draw (-0.2,-0.4) -- (-0.2,0.4);
\draw (0.2,-0.4) -- (0.2,0.4);
\end{tikzpicture}\ .
$$
\end{lem}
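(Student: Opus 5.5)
The plan is to follow the strategy of \cref{grs-trick}. Since the Reidemeister I relations (\cref{lem:ri}), the zig-zag, pitchfork, dot slide and Demazure relations have all been checked already, the image of $\AB$ in $\hatAB$ satisfies the relations of $\AB_{\operatorname{II}}$. Hence \cref{RIrels} holds for $\Psi$ applied to the double crossing: dots commute past the double crossing on each strand. This lets us reduce the verification to a computation that only has to be done ``generically'' and then cleaned up.

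Concretely, I would apply \cref{gasp4} twice and compose. For fixed boundary colours $(i,j)$ at the bottom and $(k,l)$ at the top, we need the composite $\sum_{p,q}(\text{crossing }p,q\to k,l)\circ(\text{crossing }i,j\to p,q)$ to equal $\delta_{i,k}\delta_{j,l}\,\id$. Using the three-term decomposition of \cref{crossingdef}, the terms split as follows.
\begin{itemize}
\item[(a)] Products of the $X$-terms, giving $X_{j,i}\circ X_{i,j}$.
\item[(b)] Products of the diagonal ``$\frac{1}{i-j+x-y}$'' terms.
\item[(c)] Terms involving the cup-cap piece, which occur only when $i=-j$ or $k=-l$.
\end{itemize}

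The key step is (a) when $i\neq\pm j$. Here $X_{j,i}\circ X_{i,j}$ is a double crossing in $\UU^\imath$ decorated by pins and glyphs. I would slide all glyphs to the bottom using \cref{glyphslide,anothermiracle}, then apply the quadratic relation $\cref{iquadratic}$ to obtain $Q^\imath_{i,j}(x,y)$. The product of $G$'s, $r$'s and glyph-slide factors is exactly what \cref{badone}(i)--(iv) computes, namely $1-\frac{1}{(i-j+x-y)^2}$. The diagonal term (b) contributes $\frac{1}{(i-j+x-y)^2}$ (it squares and the $x,y$ swap), so the two sum to the identity. Cross terms between the diagonal term and $X$ vanish for colour reasons unless $\{k,l\}=\{i,j\}$, and I expect them to cancel against each other using \cref{idemazure}.

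The cases $i=j$ and $i=-j$ are where cup-cap terms enter. For these I would use \cref{RIrels} to reduce to checking equality after composing with suitable cups or caps, or after polynomial manipulation. The case $i=j\ne0$ uses $G^{++}_{i,i}=1-x+y$ together with $\cref{iquadratic}$ ($Q_{i,i}=0$) and the nil-Hecke relation. The case $i=-j$ requires combining the cup-cap terms with \cref{bingleysleeping} and \cref{lastthingforrii}, in the same style as the Reidemeister I proof.

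The main obstacle is $i=j=0$, where $X_{0,0}$ carries the extra $\G$ term. Here I would use the four alternative forms of $X_{0,0}$ from \cref{aha}, choosing different forms for the top and bottom crossings so that glyphs cancel via \cref{bubblyinverses}. The remaining $\G$-contributions should then be handled by \cref{maybe} and \cref{lastthingforrii}, whose purpose appears to be exactly this.
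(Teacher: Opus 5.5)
Your treatment of the diagonal blocks, where the top labels equal the bottom labels, matches the paper's. For $i\neq\pm j$ you balance $X_{j,i}\circ X_{i,j}$ against the square of the diagonal term via \cref{glyphslide}, the quadratic relation and \cref{badone}. For $i=j\neq0$ you use $G^{++}_{i,i}(y,x)=1-x+y$ with the Demazure and quadratic relations. For $i=-j\neq0$ you use \cref{bingleysleeping}.

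The gap is in the off-diagonal blocks, where the right-hand side is $0$. You only argue cancellation for the swap block $(i,j)\to(j,i)$ with $i\neq\pm j$, and there the two cross terms do cancel by a free dot slide. When $i=-j$ the situation is different. The component from $B_iB_{-i}\one_\lambda$ to $B_lB_{-l}\one_\lambda$ with $l\neq i$ (including $l=-i$) contains $X$-terms composed with diagonal and cup--cap terms. It also contains a sum over \emph{all} intermediate colours $m$ of cup--cap terms composed with cup--cap terms, i.e.\ closed $m$-coloured loops decorated by glyphs and pins. In addition, the dot slide across an $(i,-i)$ crossing carries a cup--cap correction. Nothing in the Demazure relation makes these infinite sums vanish. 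Your instruction to ``use \cref{RIrels} to reduce to checking equality after composing with suitable cups or caps'' is not a concrete argument: it does not say what identity would be checked or why it would force vanishing.

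The missing idea is how \cref{RIrels} is actually used. Let $\theta$ be the component of the image of the double crossing from $B_iB_j\one_\lambda$ to $B_lB_k\one_\lambda$, with $l$ at the top left. By \cref{dotdef} a magenta dot on an $i$-string is $i+x$, so the first relation in \cref{RIrels} says $\big((l-i)\id+t-b\big)(\theta)=0$. Here $b$ and $t$ denote composition with a black dot at the bottom left and top left. Since $t-b$ raises degree, $(l-i)\id+t-b$ is invertible on the complete space $\Hom_{\hatUU^\imath}(B_iB_j\one_\lambda,B_lB_k\one_\lambda)$ whenever $l\neq i$, which forces $\theta=0$. The second relation in \cref{RIrels} handles $j\neq k$ in the same way. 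This kills every off-diagonal block at once, with no cup--cap bookkeeping, and leaves exactly the diagonal blocks.

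One further point concerns the diagonal block $i=j=0$. There the intermediate colours $(-m,m)$ with $m\neq0$ contribute another infinite sum of cup--cap composites, which your appeal to \cref{aha} and \cref{maybe} does not address. The paper handles it as follows:
\begin{itemize}
\item it writes $X_{0,0}$ as in \cref{newx00} and squares it;
\item it removes the curls with \cref{lastthingforrii};
\item it evaluates the sum with \cref{louis}, the $i=0$ counterpart of \cref{bingleysleeping};
\item it checks two explicit power-series identities using $\Delta_0(-x)\nabla_0(x)=1+2x$.
\end{itemize}
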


\begin{proof}
Expanding the definition \cref{gasp4},
we need to prove that
\begin{equation}\label{todothis}
\sum_{\substack{m,n \in I\\ \lambda-\alpha_m-\alpha_n=\mu}}
\begin{tikzpicture}[iQ,centerzero]
\draw[-] (-0.2,-0.6) \botlabel{i} \braidup (0.2,0) \braidup (-0.2,0.6) \toplabel{l};
\draw[-] (0.2,-0.6) \botlabel{j} \braidup (-0.2,0) \braidup (0.2,0.6) \toplabel{k};
\magcr{0,-.31};\magcr{0,.31};
\strand{-.35,0}{m};\strand{.35,0}{n};
\region{.7,0}{\lambda};
\end{tikzpicture}
=\delta_{i,l}\delta_{j,k}
\begin{tikzpicture}[iQ,centerzero]
\draw[-] (-0.2,-0.6) \botlabel{i} -- (-0.2,0.6);
\draw[-] (0.2,-0.6) \botlabel{j} -- (0.2,0.6);
\region{.4,0}{\lambda};
\end{tikzpicture}
\end{equation}
for all $\lambda,\mu \in X^\imath_\0$
and 
$i,j,k,l \in I$
such that $\lambda-\alpha_i-\alpha_j=\mu=\lambda-\alpha_k-\alpha_l$.
Suppose first that $i \neq l$, 
so that the right-hand side of \cref{todothis} is 0.
We need to show that the left-hand side is 0 too. 
Since all of the defining relations of the category $\AB_{\operatorname{II}}$ from \cref{grs-trick}
have been verified, we can appeal to
the images in $\hatAB$ of the 
relations \cref{RIrels}. 
The first of these implies that
\begin{equation}
\sum_{\substack{m,n \in I\\ \lambda-\alpha_m-\alpha_n=\mu}}
\begin{tikzpicture}[iQ,centerzero]
\draw[-] (-0.2,-0.6) \botlabel{i} \braidup (0.2,0) \braidup (-0.2,0.6) \toplabel{l};
\draw[-] (0.2,-0.6) \botlabel{j} \braidup (-0.2,0) \braidup (0.2,0.6) \toplabel{k};
\magcr{0,-.31};\magcr{0,.31};
\strand{-.35,0}{m};\strand{.35,0}{n};
\region{.7,0}{\lambda};
\magdot{-.15,-.46};
\end{tikzpicture} 
=
\sum_{\substack{m,n \in I\\ \lambda-\alpha_m-\alpha_n=\mu}}
\begin{tikzpicture}[iQ,centerzero]
\draw[-] (-0.2,-0.6) \botlabel{i} \braidup (0.2,0) \braidup (-0.2,0.6) \toplabel{l};
\draw[-] (0.2,-0.6) \botlabel{j} \braidup (-0.2,0) \braidup (0.2,0.6) \toplabel{k};
\magcr{0,-.31};\magcr{0,.31};
\strand{-.35,0}{m};\strand{.35,0}{n};
\region{.7,0}{\lambda};
\magdot{-.15,.46};
\end{tikzpicture} 
\label{weirdargument}
\ .
\end{equation}
Hence, letting $b$
(resp., $t$)
be the endomorphism of 
$\Hom_{\hatUU^\imath}(B_i B_j \one_\lambda,
B_l B_k \one_\lambda)$
defined by 
vertical composition on the bottom with
$\begin{tikzpicture}[iQ,centerzero]
\draw[-] (0,-0.2)\botlabel{i} -- (0,0.2);
\draw[-] (0.3,-0.2)\botlabel{j} -- (0.3,0.2);
\closeddot{0,0};
\end{tikzpicture}$
(resp., vertical composition on top with 
$\begin{tikzpicture}[iQ,centerzero]
\draw[-] (0,-0.2)\botlabel{l} -- (0,0.2);
\draw[-] (0.3,-0.2)\botlabel{k} -- (0.3,0.2);
\closeddot{0,0};
\end{tikzpicture}$),
we have that $((l-i) \id + t-b)(\theta) = 0$
where
$$
\theta \coloneq
\sum_{\substack{m,n \in I\\ \lambda-\alpha_m-\alpha_n=\mu}}
\begin{tikzpicture}[iQ,centerzero]
\draw[-] (-0.2,-0.6) \botlabel{i} \braidup (0.2,0) \braidup (-0.2,0.6) \toplabel{l};
\draw[-] (0.2,-0.6) \botlabel{j} \braidup (-0.2,0) \braidup (0.2,0.6) \toplabel{k};
\magcr{0,-.31};\magcr{0,.31};
\strand{-.35,0}{m};\strand{.35,0}{n};
\region{.7,0}{\lambda};
\end{tikzpicture}.
$$
Since $t-b$ is of positive degree and $l \neq i$, 
$(l-i) \id + t-b$ 
is invertible, so this implies that $\theta = 0$.
This proves \cref{todothis} if $i \neq l$. A similar argument 
using the second relation from \cref{RIrels} proves it if $j \neq k$.
(The argument just explained is essentially the same as the argument 
in the proof of \cite[Lem.~7.22]{GRS-2026}.)

It remains to prove that
\begin{equation}\label{todothisreally}
\sum_{\substack{m,n \in I\\ \lambda-\alpha_m-\alpha_n=\mu}}
\begin{tikzpicture}[iQ,centerzero]
\draw[-] (-0.2,-0.6) \botlabel{i} \braidup (0.2,0) \braidup (-0.2,0.6) \toplabel{i};
\draw[-] (0.2,-0.6) \botlabel{j} \braidup (-0.2,0) \braidup (0.2,0.6) \toplabel{j};
\magcr{0,-.31};\magcr{0,.31};
\strand{-.35,0}{m};\strand{.35,0}{n};
\region{.7,0}{\lambda};
\end{tikzpicture} 
=
\begin{tikzpicture}[iQ,centerzero]
\draw[-] (-0.2,-0.6) \botlabel{i} -- (-0.2,0.6);
\draw[-] (0.2,-0.6) \botlabel{j} -- (0.2,0.6);
\region{.4,0}{\lambda};
\end{tikzpicture}
\end{equation}
for all $\lambda,\mu \in X^\imath_\0$
and 
$i,j \in I$ such that $\lambda-\alpha_i-\alpha_j=\mu$.
We prove this by considering various cases.
\begin{itemize}
\item
Suppose first that $i = j \neq 0$ (the easiest case).
The summation on the left-hand side of \cref{todothisreally}
has at most one non-zero term, the one in which $m=n=i$,
hence, the proof reduces to showing that
\begin{equation}\label{rii-b}
\begin{tikzpicture}[iQ,centerzero,scale=1.2]
\draw[-] (-0.2,-0.6) \botlabel{i} \braidup (0.2,0) \braidup (-0.2,0.6);
\draw[-] (0.2,-0.6) \botlabel{i} \braidup (-0.2,0) \braidup (0.2,0.6);
\node[draw,fill=white,inner sep=1.3pt] at (0,.3) {$\scriptstyle X_{i,i}$};
\node[draw,fill=white,inner sep=1.3pt] at (0,-.3) {$\scriptstyle X_{i,i}$};
\end{tikzpicture} =
\begin{tikzpicture}[iQ,centerzero,scale=1.2]
\draw[-] (-0.2,-0.6) \botlabel{i} to (-0.2,0.6);
\draw[-] (0.2,-0.6) \botlabel{i} to (0.2,0.6);
\end{tikzpicture}\ .
\end{equation}
Rotating through 180$^\circ$, using the relation \cref{finse} twice, shows that this is true for $i$ if and only if it is true for $-i$. 
So it suffices to prove \cref{rii-b} in the case that $i \in I_+$.
By the definitions \cref{hardercrossings,trustfunds}, we therefore need to show that
$$
\begin{tikzpicture}[iQ,centerzero,scale=1]
\draw[-] (-0.2,-0.6) \botlabel{i} \braidup (0.2,0) \braidup (-0.2,0.6);
\draw[-] (0.2,-0.6) \botlabel{i} \braidup (-0.2,0) \braidup (0.2,0.6);
\pinpinmid{.15,.17}{1,.3}{.15,.43}{1-x+y};
\pinpinmid{.15,-.17}{1,-.3}{.15,-.43}{1-x+y};
\end{tikzpicture} =
\begin{tikzpicture}[iQ,centerzero,scale=1.2]
\draw[-] (-0.2,-0.6) \botlabel{i} to (-0.2,0.6);
\draw[-] (0.2,-0.6) \botlabel{i} to (0.2,0.6);
\end{tikzpicture}
$$
for $i \in I_+$.
To prove this, note by \cref{idemazure} that
$$
\begin{tikzpicture}[iQ,centerzero,scale=1]
\draw[-] (-0.2,-0.3) \botlabel{i} -- (0.2,0.3);
\draw[-] (0.2,-0.3) \botlabel{i} -- (-0.2,0.3);
\pinpinmid{.12,.15}{1,0}{.12,-.15}{1-x+y};
\end{tikzpicture}=
\begin{tikzpicture}[iQ,centerzero,scale=1]
\draw[-] (-0.2,-0.3) \botlabel{i} -- (0.2,0.3);
\draw[-] (0.2,-0.3) \botlabel{i} -- (-0.2,0.3);
\pinpin{-.11,.17}{.11,.17}{1,.17}{1-x+y};
\end{tikzpicture}
+
\begin{tikzpicture}[iQ,centerzero,scale=1]
\draw[-] (-0.2,-0.3) \botlabel{i} -- (-0.2,0.3);
\draw[-] (0.2,-0.3) \botlabel{i} -- (0.2,0.3);
\end{tikzpicture}=
\begin{tikzpicture}[iQ,centerzero,scale=1]
\draw[-] (-0.2,-0.3) \botlabel{i} -- (0.2,0.3);
\draw[-] (0.2,-0.3) \botlabel{i} -- (-0.2,0.3);
\pinpin{-.11,-.12}{.11,-.12}{1,-.12}{1+x-y};
\end{tikzpicture}-
\begin{tikzpicture}[iQ,centerzero,scale=1]
\draw[-] (-0.2,-0.3) \botlabel{i} -- (-0.2,0.3);
\draw[-] (0.2,-0.3) \botlabel{i} -- (0.2,0.3);
\end{tikzpicture}\ .
$$
Since the square of a crossing of two strings of the same color 
is 0 by \cref{iquadratic}, it follows that
$$
\begin{tikzpicture}[iQ,centerzero,scale=1]
\draw[-] (-0.2,-0.6) \botlabel{i} \braidup (0.2,0) \braidup (-0.2,0.6);
\draw[-] (0.2,-0.6) \botlabel{i} \braidup (-0.2,0) \braidup (0.2,0.6);
\pinpinmid{.15,.17}{1,.3}{.15,.43}{1-x+y};
\pinpinmid{.15,-.17}{1,-.3}{.15,-.43}{1-x+y};
\end{tikzpicture} =\begin{tikzpicture}[iQ,centerzero,scale=1]
\draw[-] (-0.2,-0.6) \botlabel{i} to (0.2,0.6);
\draw[-] (0.2,-0.6) \botlabel{i} to (-0.2,0.6);
\pinpin{-.14,-.42}{.14,-.42}{1,-.42}{1+x-y};
\end{tikzpicture}-
\begin{tikzpicture}[iQ,centerzero,scale=1]
\draw[-] (-0.2,-0.6) \botlabel{i} to (0.2,0.6);
\draw[-] (0.2,-0.6) \botlabel{i} to (-0.2,0.6);
\pinpin{-.14,.42}{.14,.42}{1,.42}{1-x+y};
\end{tikzpicture}
-
\begin{tikzpicture}[iQ,centerzero,scale=1]
\draw[-] (-0.2,-0.6) \botlabel{i} to (-0.2,0.6);
\draw[-] (0.2,-0.6) \botlabel{i} to (0.2,0.6);
\end{tikzpicture} = \begin{tikzpicture}[iQ,centerzero,scale=1]
\draw[-] (-0.2,-0.6) \botlabel{i} to (-0.2,0.6);
\draw[-] (0.2,-0.6) \botlabel{i} to (0.2,0.6);
\end{tikzpicture}\ .
$$
\item
Suppose next that $i \neq \pm j$.
Then the left-hand side of \cref{todothisreally} has at most two non-zero terms, namely, the ones with $(m,n)=(i,j)$ or $(j,i)$. Expanding according to \cref{crossingdef}
and also moving the term with $(m,n)=(i,j)$ to the other side of the equality, we need to show that
\begin{equation}\label{rii-a}
\begin{tikzpicture}[iQ,centerzero,scale=1.2]
\draw[-] (-0.2,-0.6) \botlabel{i} \braidup (0.2,0) \braidup (-0.2,0.6);
\draw[-] (0.2,-0.6) \botlabel{j} \braidup (-0.2,0) \braidup (0.2,0.6);
\node[draw,fill=white,inner sep=1.3pt] at (0,.3) {$\scriptstyle X_{j,i}$};
\node[draw,fill=white,inner sep=1.3pt] at (0,-.3) {$\scriptstyle X_{i,j}$};
\end{tikzpicture}
=
\begin{tikzpicture}[iQ,centerzero,scale=1.2]
\draw[-] (-0.2,-0.6) \botlabel{i} to (-0.2,0.6);
\draw[-] (0.2,-0.6) \botlabel{j} to (0.2,0.6);
\pinpin{-.2,0}{.2,0}{1.4,0}{1-\frac{1}{(i-j+x-y)^2}};
\end{tikzpicture}\ .
\end{equation}
Rotating through 180$^\circ$ using \cref{finse} shows that this is true for $(i,j)$ if and only if it is true for $(-j,-i)$. Using this, the proof of \cref{rii-a} reduces to five sub-cases: $(i,j) \in I_+ \times I_+,
(i,j) \in I_+ \times I_-, (i,j) \in I_- \times I_+, 
(i,j) \in \{0\}\times I_+$ and $(i,j) \in I_+ \times \{0\}$. Now we go through these cases in turn, expanding the definitions of $X_{i,j}$ and $X_{j,i}$ 
using \cref{hardercrossings,simplercrossings} in each one:
\begin{itemize}
\item If $(i,j) \in I_+\times I_+$, we need to show that
$$
\begin{tikzpicture}[iQ,centerzero,scale=1.2]
\draw[-] (-0.2,-0.6) \botlabel{i} \braidup (0.2,0) \braidup (-0.2,0.6);
\draw[-] (0.2,-0.6) \botlabel{j} \braidup (-0.2,0) \braidup (0.2,0.6);
\pinpinmid{.15,.17}{1,.3}{.15,.43}{G^{++}_{j,i}(y,x)};
\pinpinmid{.15,-.17}{1,-.3}{.15,-.43}{G^{++}_{i,j}(y,x)};
\end{tikzpicture} =\begin{tikzpicture}[iQ,centerzero,scale=1.2]
\draw[-] (-0.2,-0.6) \botlabel{i} to (-0.2,0.6);
\draw[-] (0.2,-0.6) \botlabel{j} to (0.2,0.6);
\pinpin{-.2,0}{.2,0}{1.4,0}{1-\frac{1}{(i-j+x-y)^2}};
\end{tikzpicture}\ .
$$
Since $i \neq j$, dots commute past crossings, so we can
simplify the left-hand side by moving the pins to the bottom, then removing the double crossing using \cref{iquadratic}. In this way, the proof reduces to checking the algebraic identity
$$
Q_{i,j}(x,y)
G^{++}_{i,j}(x,y)
G^{++}_{j,i}(y,x) = 1- \frac{1}{(i-j+x-y)^2}.
$$
This is true by \cref{badone}(i).
\item 
If $(i,j) \in I_+ \times I_-$, we need to show
$$
r_{j,i}r_{-i,-j}r_{j,-i}r_{i,j}
\begin{tikzpicture}[iQ,centerzero,scale=1.2]
\draw[-] (-0.2,-0.6) \botlabel{i} \braidup (0.2,0) \braidup (-0.2,0.6);
\draw[-] (0.2,-0.6) \botlabel{j} \braidup (-0.2,0) \braidup (0.2,0.6);
\pinpin{-.18,-.5}{.18,-.5}{1.6,-.5}{G^{+-}_{i,j}(x,y)G^{-+}_{j,i}(y,x)};
\internalbubbleminus[0]{-.13,.45};
\internalbubbleplus[180]{.19,0.02};
\end{tikzpicture}=
\begin{tikzpicture}[iQ,centerzero,scale=1.2]
\draw[-] (-0.2,-0.6) \botlabel{i} to (-0.2,0.6);
\draw[-] (0.2,-0.6) \botlabel{j} to (0.2,0.6);
\pinpin{-.2,0}{.2,0}{1.4,0}{1-\frac{1}{(i-j+x-y)^2}};
\end{tikzpicture}\ .
$$
Here, we already moved the pins (which, in this case, are symmetric in $x$ and $y$) to the bottom.
Next we use \cref{glyphslide}(i) to slide the glyph labelled $+$ to the top, where it cancels with the glyph labelled $-$ thanks to \cref{bubblyinverses}, then expand the double crossing like in the previous sub-case. In this way, the proof reduces to showing that
$$
r_{j,i}r_{-i,-j}r_{j,-i}r_{i,j}Q_{i,j}(x,y)
G_{i,j}^{+-}(x,y)G_{j,i}^{-+}(y,x)  
\gamma_{i,j} \!\!
\prod_{k \in I-\{i\}}\!\frac{(i-k+x+y)^{a_{k,-j}}}{(i-k+x-y)^{a_{k,j}}} =1- \frac{1}{(i-j+x-y)^2}\ .
$$
Since $i \in I_+$, we have that $\gamma_{i,j}=1$, so this follows from \cref{badone}(ii).
\item 
The case $(i,j) \in I_- \times I_+$ is similar to the previous case. It reduces to checking the same identity but with $i$ and $j$ interchanged. Of course, this is also true by \cref{badone}(ii).
\item 
If $(i,j) \in \{0\}\times I_+$, we need to show
$$
r_{0,j}\ \begin{tikzpicture}[iQ,centerzero,scale=1.2]
\draw[-] (-0.2,-0.6) \botlabel{0} \braidup (0.2,0) \braidup (-0.2,0.6);
\draw[-] (0.2,-0.6) \botlabel{j} \braidup (-0.2,0) \braidup (0.2,0.6);
\pinpin{-.18,-.5}{.18,-.5}{1.6,-.5}{G_{0,j}^{++}(x,y) G_{0,j}^{-+}(y,x)};
\internalbubbleminus[0]{-.13,.43};
\internalbubbleplus[180]{.19,0.02};
\end{tikzpicture} =
\begin{tikzpicture}[iQ,centerzero,scale=1.2]
\draw[-] (-0.2,-0.6) \botlabel{0} to (-0.2,0.6);
\draw[-] (0.2,-0.6) \botlabel{j} to (0.2,0.6);
\pinpin{-.2,0}{.2,0}{1.3,0}{1-\frac{1}{(-j+x-y)^2}};
\end{tikzpicture}\ .
$$
Using \cref{glyphslide}(ii), this reduces to the identity
$$
r_{0,j}Q_{0,j}(x,y)
G_{0,j}^{++}(x,y) G_{0,j}^{-+}(y,x) 
\prod_{k \in I_\hi}\!\frac{(-k+x+y)^{a_{k,-j}}}{(-k+x-y)^{a_{k,j}}} =1- \frac{1}{(-j+x-y)^2}\ .
$$
This is \cref{badone}(iv).
\item 
The fifth sub-case 
$(i,j) \in I_+\times \{0\}$ is similar to the fourth sub-case.
\end{itemize}
\item
Finally suppose that $i=-j$ (possibly 0).
On the left-hand side of \cref{todothisreally}, we need to allow all terms in the summation 
with $m=-n$. Keeping the one with $n=i$ on the left-hand side and moving the others to the other side, we must show that
\begin{equation}
\begin{tikzpicture}[iQ,centerzero]
\draw[-] (-0.2,-0.6) \botlabel{i} \braidup (0.2,0) \braidup (-0.2,0.6) \toplabel{i};
\draw[-] (0.2,-0.6) \botlabel{\ibar} \braidup (-0.2,0) \braidup (0.2,0.6) \toplabel{\ibar};
\magcr{0,-0.3};
\magcr{0,0.3};
\strand{0.3,0}{i};
\strand{-0.3,0}{\ibar};
\end{tikzpicture}
=
\begin{tikzpicture}[iQ,centerzero]
\draw[-] (-0.2,-0.6) \botlabel{i} -- (-0.2,0.6);
\draw[-] (0.2,-0.6) \botlabel{\ibar} -- (0.2,0.6);
\end{tikzpicture}-
\sum_{j \in I - \{i\}}
\begin{tikzpicture}[iQ,centerzero]
\draw[-] (-0.2,-0.6) \botlabel{i} \braidup (0.2,0) \braidup (-0.2,0.6) \toplabel{i};
\draw[-] (0.2,-0.6) \botlabel{\ibar} \braidup (-0.2,0) \braidup (0.2,0.6) \toplabel{\ibar};
\magcr{0,-0.3};
\magcr{0,0.3};
\strand{0.3,0}{j};
\strand{-0.3,0}{\jbar};
\end{tikzpicture}
\ .\label{qual}
\end{equation}
We consider three sub-cases $i \in I_+$, $i \in I_-$ and $i = 0$:
\begin{itemize}
\item 
If $i \in I_+$ then, using \cref{crossingdef,simplercrossings} and rearranging slightly, 
the equation \cref{qual} expands to 
\begin{equation}
\begin{tikzpicture}[iQ,centerzero,scale=1.4]
\draw[-] (-0.2,-0.6) \botlabel{i} \braidup (0.2,0) \braidup (-0.2,0.6);
\draw[-] (0.2,-0.6) \botlabel{\ibar} \braidup (-0.2,0) \braidup (0.2,0.6);
\pinpin{-.18,-.5}{.18,-.5}{1.4,-.5}{G^{+-}_{i,-i}(x,y)G^{-+}_{-i,i}(y,x)};
\fatbubbleminus{-.13,.45};
\fatbubbleplus{.19,0.02};
\end{tikzpicture}\!\!\!
=
\begin{tikzpicture}[iQ,centerzero,scale=1.4]
\draw[-] (-0.2,-0.6) \botlabel{i} to (-0.2,0.6);
\draw[-] (0.2,-0.6) \botlabel{\ibar} to (0.2,0.6);
\pinpin{-.2,0}{.2,0}{1.1,0}{1-\frac{1}{(2i+x-y)^2}};
\end{tikzpicture}+
\begin{tikzpicture}[iQ,centerzero,scale=1.4]
\draw[-] (-0.2,-0.6) \botlabel{i} to (-.2,-.4) to[out=90,in=90,looseness=3] (.2,-.4) to (0.2,-0.6);
\draw[-] (-0.2,0.6) \toplabel{i}  to (-.2,.4) to[out=-90,in=-90,looseness=3] (.2,.4) to (0.2,0.6);
\fatbubbleminus{-.2,.4};
\pinpinmid{-.18,-.2}{-1.65,0}{-.17,.17}{\frac{1}{2i+x+y}\Big(\frac{1}{2i+2x}+\frac{1}{2i+2y}\Big)};
\end{tikzpicture}
\!-\! \sum_{j \in I - \{i\}}\!
\begin{tikzpicture}[iQ,centerzero,scale=1.4]
\draw[-] (-0.2,-0.6) \botlabel{i} [out=90,in=90,looseness=2.7] to (.2,-.6);
\draw[-] (-0.2,0.6) \toplabel{i} [out=-90,in=-90,looseness=2.7] to (.2,.6);
\draw (0,0) circle (.2);
\fatbubbleminus{-.2,.46};
\fatbubbleplus{.2,0};
\strand{-.31,0}{\jbar};
\pinpin{-.1,-.32}{.12,-.16}{.9,-.4}{\frac{1}{i-j+x-y}};
\pinpin{-.1,.32}{.12,.16}{.9,.4}{\frac{1}{i-j+x-y}};
\end{tikzpicture}\ .\label{quil}
\end{equation}
In this equation, we also added and removed some trivial glyphs so that we could draw double glyphs everywhere.
To prove it, we apply the identity 
from \cref{bingleysleeping} composed on the top left by a double glyph labelled $-$
and on the bottom by
$G^{+-}_{i,-i}(x,y)G^{-+}_{-i,i}(y,x)$, noting
that $\gamma_{i,-i}=1$ as $i \in I_+$. 
It follows for any $\lambda \in X^\imath_\0$ that
\begin{multline*}
\begin{tikzpicture}[iQ,centerzero,scale=1.4]
\draw[-] (-0.2,-0.6) \botlabel{i} \braidup (0.2,0) \braidup (-0.2,0.6);
\draw[-] (0.2,-0.6) \botlabel{\ibar} \braidup (-0.2,0) \braidup (0.2,0.6);
\pinpin{-.18,-.5}{.18,-.5}{1.4,-.5}{G^{+-}_{i,-i}(x,y)G^{-+}_{-i,i}(y,x)};
\fatbubbleminus{-.13,.45};
\fatbubbleplus{.19,0.02};
\region{1,0}{\lambda};
\end{tikzpicture}
=
\begin{tikzpicture}[iQ,centerzero,scale=1.4]
\draw[-] (-0.2,-0.6) \botlabel{i} to (-0.2,0.6);
\draw[-] (0.2,-0.6) \botlabel{\ibar} to (0.2,0.6);
\pinpin{-.2,-.05}{.2,-0.05}{.9,-0.05}{Q^\imath_{i,-i}(x,y)};
\pinpin{-.2,.45}{.2,.45}{1.6,.45}{\prod_{j \in I-\{i\}}\!\frac{(i-j+x+y)^{a_{j,i}}}{(i-j+x-y)^{a_{-j,i}}}};
\pinpin{-.18,-.5}{.18,-.5}{1.4,-.5}{G^{+-}_{i,-i}(x,y)G^{-+}_{-i,i}(y,x)};
\region{1.8,-.05}{\lambda};
\end{tikzpicture}
\ +\\
\begin{tikzpicture}[iQ,centerzero,scale=1.4]
\draw[-] (-0.2,-0.6) \botlabel{i} to (-.2,-.4) to[out=90,in=90,looseness=3] (.2,-.4) to (0.2,-0.6);
\draw[-] (-0.2,0.6) \toplabel{i}  to (-.2,.4) to[out=-90,in=-90,looseness=3] (.2,.4) to (0.2,0.6);
\fatbubbleminus{-.2,.4};
\pinpinmid{-.18,-.2}{-1,0}{-.16,.17}{\frac{1}{2(i+x)(i+y)}};
\region{.3,0}{\lambda};
\end{tikzpicture}
- \sum_{j \in I-\{i\}} \zeta_j(\lambda)\ \begin{tikzpicture}[anchorbase,scale=1.4,iQ]
\draw[-] (-0.2,-0.6) \botlabel{i} to (-.2,-.4) to[out=90,in=90,looseness=3] (.2,-.4) to (0.2,-0.6);
\draw[-] (-0.2,0.6) \toplabel{i}  to (-.2,.4) to[out=-90,in=-90,looseness=3] (.2,.4) to (0.2,0.6);
\fatbubbleminus{-.2,.4};
\pinpinmid{-.18,-.2}{-1.8,0}{-.16,.17}{Q_j\Big(\frac{\Deltanabla_j(x_j)}{(i-j+x-x_j)(i-j-x_j+y)}\Big)};
\region{.3,0}{\lambda};
\end{tikzpicture}.
\end{multline*}
In the second and third terms on the right-hand side here, we omitted 
the pin labelled $G^{+-}_{i,-i}(x,y)G^{-+}_{-i,i}(y,x)$
because this equals $1-(x+y)^2$, so it is the identity on the bottom cap due to the dot slide relation.
The first term is the first term on the right-hand side of \cref{quil} 
by \cref{badone}(iii),
the second term is easily seen to be the second term in \cref{quil} by adding fractions, and the third term is the desired third term too.
\item
The sub-case when $i \in I_-$ is similar.
\item In the final sub-case $i=0$, fix a choice of $\lambda \in X^\imath_\0$.
We begin by noting that 
\begin{equation}\label{newx00}
\begin{tikzpicture}[iQ,centerzero,scale=1.5]
\draw (-0.3,0.4)\toplabel{0} to(.3,-0.4)\botlabel{0};
\draw (0.3,0.4)\toplabel{0} to (-.3,-0.4)\botlabel{0};
\node[draw,fill=white,inner sep=1.3pt] at (0,0) {$\scriptstyle X_{0,0}$};
\region{.45,0}{\lambda};
\end{tikzpicture} =
\begin{tikzpicture}[centerzero,iQ,scale=1.4]
\draw[-] (0.3,-.4) \botlabel{0} to (-0.3,.4);
\draw[-] (-0.3,-.4) \botlabel{0} to (0.3,.4);
\pinpinmid{.15,-.2}{.7,0}{.15,.2}{1-x+y};
\region{.7,-.3}{\lambda};
\end{tikzpicture}
+
\begin{tikzpicture}[iQ,centerzero,scale=1.4]
\draw[-] (0,.4)\toplabel{0} to[out=-90,in=-90,looseness=3] (.4,.4);
\draw[-] (0,-.4)\botlabel{0} to[out=90,in=90,looseness=3] (.4,-.4);
\pinpinmid{.37,.2}{1,0}{.37,-.2}{\G(x,-y)};
\region{1,-.3}{\lambda};
\end{tikzpicture}
=
\Bigg(\ \begin{tikzpicture}[centerzero,iQ,scale=1.4]
\draw[-] (0.3,-.4) \botlabel{0} to (-0.3,.4);
\draw[-] (-0.3,-.4) \botlabel{0} to (0.3,.4);
\pinpinmid{-.15,.2}{.9,.2}{.15,.2}{1-x+y};
\region{.9,-.1}{\lambda};
\end{tikzpicture}
+\begin{tikzpicture}[centerzero,iQ,scale=1.4]
\draw[-] (0.1,-.4) \botlabel{0} to (0.1,.4);
\draw[-] (-0.3,-.4) \botlabel{0} to (-0.3,.4);
\region{.25,0}{\lambda};
\end{tikzpicture}\ \Bigg)
+
\begin{tikzpicture}[iQ,centerzero,scale=1.4]
\draw[-] (0,.4)\toplabel{0} to[out=-90,in=-90,looseness=3] (.4,.4);
\draw[-] (0,-.4)\botlabel{0} to[out=90,in=90,looseness=3] (.4,-.4);
\pinpinmid{.37,.2}{1.1,0}{.37,-.2}{\g(x,-y)};
\region{1,-.4}{\lambda};
\end{tikzpicture}\ ,
\end{equation}
where 
\begin{equation*}
\g(x,y) \coloneq \G(x,y)-1
\stackrel{\cref{bibble}}{=} \frac{\Delta_0(x)\nabla_0(-y)+2y-1}{x-y}.
\end{equation*}
According to \cref{qual}, we need to show that the square of this equals
\begin{align}\notag
\begin{tikzpicture}[iQ,centerzero,scale=1.2]
\draw[-] (-0.2,-0.6) \botlabel{0} to (-0.2,0.6);
\draw[-] (0.2,-0.6) \botlabel{0} to (0.2,0.6);
\region{.35,0}{\lambda};
\end{tikzpicture}
- \sum_{j \in I - \{0\}}
\begin{tikzpicture}[iQ,centerzero,scale=1.4]
\draw[-] (-0.2,-0.6) \botlabel{0} [out=90,in=90,looseness=2.7] to (.2,-.6);
\draw[-] (-0.2,0.6) \toplabel{0} [out=-90,in=-90,looseness=2.7] to (.2,.6);
\draw (0,0) circle (.2);
\internalbubbleminus[0]{-.2,.46};
\internalbubbleplus[180]{.2,-.46};
\fatbubbleplus{.2,0};
\strand{-.31,0}{\jbar};
\pinpin{-.1,-.32}{.12,-.16}{.9,-.33}{\frac{1}{-j+x-y}};
\pinpin{-.1,.32}{.12,.16}{.9,.33}{\frac{1}{-j+x-y}};
\region{.9,0}{\lambda};
\end{tikzpicture}\hspace{-10mm}&\hspace{10mm}=
\begin{tikzpicture}[iQ,centerzero,scale=1.2]
\draw[-] (-0.2,-0.6) \botlabel{0} to (-0.2,0.6);
\draw[-] (0.2,-0.6) \botlabel{0} to (0.2,0.6);
\region{.35,0}{\lambda};
\end{tikzpicture}
- \sum_{j \in I - \{0\}}
\zeta_j(\lambda)\ 
\begin{tikzpicture}[iQ,centerzero,scale=1.4]
\draw[-] (-0.2,-0.6) \botlabel{0} to (-.2,-.4) [out=90,in=90,looseness=2.7] to (.2,-.4) to (.2,-.6);
\draw[-] (-0.2,0.6) \toplabel{0} to (-.2,.4) [out=-90,in=-90,looseness=2.7] to (.2,.4) to (.2,.6);
\pinpinmid{-.19,.32}{-1.8,0}{-.19,-.33}{Q_j\Big(\frac{\Deltanabla_j(x_j)}{(-j+x-x_j)(-j-x_j+y)}\Big)};
\internalbubbleplus[0]{.2,.4};
\internalbubbleplus[180]{.2,-.4};
\region{.3,0}{\lambda};
\end{tikzpicture}\\
&=
\begin{tikzpicture}[iQ,centerzero,scale=1.2]
\draw[-] (-0.2,-0.6) \botlabel{0} to (-0.2,0.6);
\draw[-] (0.2,-0.6) \botlabel{0} to (0.2,0.6);
\region{.35,0}{\lambda};
\end{tikzpicture}
+ \sum_{j \in I - \{0\}}
\zeta_j(\lambda)\ 
\begin{tikzpicture}[iQ,centerzero,scale=1.4]
\draw[-] (-0.2,-0.6) \botlabel{0} to (-.2,-.4) [out=90,in=90,looseness=2.7] to (.2,-.4) to (.2,-.6);
\draw[-] (-0.2,0.6) \toplabel{0} to (-.2,.4) [out=-90,in=-90,looseness=2.7] to (.2,.4) to (.2,.6);
\pinpinmid{-.19,.32}{-1.6,0}{-.19,-.33}{ Q_j\Big(\partial_{x,y}\Big(\frac{\Deltanabla_j(x_j)}{-j+x-x_j}\Big)\Big)};
\internalbubbleplus[0]{.2,.4};
\internalbubbleplus[180]{.2,-.4};
\region{.3,0}{\lambda};
\end{tikzpicture}\notag\\
&\!\!\!\stackrel{\cref{louis}}{=}
\begin{tikzpicture}[iQ,centerzero,scale=1.2]
\draw[-] (-0.2,-0.6) \botlabel{0} to (-0.2,0.6);
\draw[-] (0.2,-0.6) \botlabel{0} to (0.2,0.6);
\region{.35,0}{\lambda};
\end{tikzpicture}
+
\begin{tikzpicture}[iQ,centerzero,scale=1.4]
\draw[-] (-0.2,-0.6) \botlabel{0} to (-.2,-.4) [out=90,in=90,looseness=2.7] to (.2,-.4) to (.2,-.6);
\draw[-] (-0.2,0.6) \toplabel{0} to (-.2,.4) [out=-90,in=-90,looseness=2.7] to (.2,.4) to (.2,.6);
\pinpinmid{-.19,.32}{-1.2,0}{-.19,-.33}{\partial_{x,y} \Big(\frac{1-\Deltanabla_0(x)}{2x}\Big)};
\internalbubbleplus[0]{.2,.4};
\internalbubbleplus[180]{.2,-.4};
\region{.3,0}{\lambda};
\end{tikzpicture}
\!\!+
\zeta_0(\lambda)\ 
\begin{tikzpicture}[iQ,centerzero,scale=1.4]
\draw[-] (-0.2,-0.6) \botlabel{0} to (-.2,-.4) [out=90,in=90,looseness=2.7] to (.2,-.4) to (.2,-.6);
\draw[-] (-0.2,0.6) \toplabel{0} to (-.2,.4) [out=-90,in=-90,looseness=2.7] to (.2,.4) to (.2,.6);
\pinpinmid{-.19,.32}{-1.4,0}{-.19,-.33}{ Q_0\left(\partial_{x,y}\left(\partial_{x,x_0}\Deltanabla_0(x)\right)\right)};
\internalbubbleplus[0]{.2,.4};
\internalbubbleplus[180]{.2,-.4};
\region{.3,0}{\lambda};
\end{tikzpicture}.\label{elephant}
\end{align}
Now we actually compute the square of \cref{newx00}.
It is easy to check that the bracketed term on its right-hand side squares to the identity, so we obtain
\begin{equation}\label{giraffe}
\begin{tikzpicture}[iQ,centerzero,scale=1.4]
\draw[-] (-0.2,-0.6) \botlabel{0} to (-0.2,0.6);
\draw[-] (0.2,-0.6) \botlabel{0} to (0.2,0.6);
\region{.35,0}{\lambda};
\end{tikzpicture}+
\begin{tikzpicture}[iQ,anchorbase,scale=1.4]
\draw(-.2,-.6)\botlabel{0} to [out=90,in=90,looseness=3] (.2,-.6);
\draw (-0.2,.6)\toplabel{0} to[out=-60,in=90] (0.15,0) arc(0:-180:0.15) to[out=90,in=-120] (0.2,.6);
\pinpinmid{.15,-.33}{.85,-.17}{.15,0}{\g(x,-y)};
\pinpin{-.1,.45}{.1,.45}{.7,.45}{1-x+y};
\region{.7,.15}{\lambda};
\end{tikzpicture}+
\begin{tikzpicture}[iQ,anchorbase,scale=1.4]
\draw(-.2,.6)\toplabel{0} to [out=-90,in=-90,looseness=3] (.2,.6);
\draw (-0.2,-.6)\botlabel{0} to[out=60,in=-90] (0.15,0) arc(0:180:0.15) to[out=-90,in=120] (0.2,-.6);
\pinpinmid{.15,.37}{.85,.2}{.15,0}{\g(x,-y)};
\pinpin{-.1,-.2}{.1,-.2}{.7,-.2}{1-x+y};
\region{.7,-.5}{\lambda};
\end{tikzpicture}
+2\begin{tikzpicture}[iQ,centerzero,scale=1.4]
\draw[-] (0,.6)\toplabel{0} to (0,.4) to[out=-90,in=-90,looseness=3] (.4,.4) to (.4,.6);
\draw[-] (0,-.6)\botlabel{0} to (0,-.4) to[out=90,in=90,looseness=3] (.4,-.4) to (.4,-.6);
\pinpinmid{.39,-.32}{1,0}{.39,.32}{\g(x,-y)};
\region{1,-.3}{\lambda};
\end{tikzpicture}\\+\ \begin{tikzpicture}[iQ,centerzero,scale=1.4]
\draw[-] (-0.2,-0.6) \botlabel{0} [out=90,in=90,looseness=2.7] to (.2,-.6);
\draw[-] (-0.2,0.6) \toplabel{0} [out=-90,in=-90,looseness=2.7] to (.2,.6);
\draw (0,0) circle (.2);
\strand{-.31,0}{0};
\pinpinmid{.15,.37}{.9,.22}{.15,.12}{\g(x,-y)};
\pinpinmid{.15,-.37}{.9,-.22}{.15,-.12}{\g(x,-y)};
\region{1.45,0}{\lambda};
\end{tikzpicture}\ .
\end{equation}
We need to show this is equal to \cref{elephant}.
The first terms are both identities. For the remaining four terms on the right-hand side of \cref{giraffe},
we use \cref{lastthing1,lastthing2} to eliminate the curls and simplify to see that the four terms are
\begin{align*}
\begin{tikzpicture}[iQ,anchorbase,scale=1.4]
\draw(-.2,-.6)\botlabel{0} to [out=90,in=90,looseness=3] (.2,-.6);
\draw (-0.2,.6)\toplabel{0} to[out=-60,in=90] (0.15,0) arc(0:-180:0.15) to[out=90,in=-120] (0.2,.6);
\pinpinmid{.15,-.33}{.9,-.17}{.15,0}{\g(x,-y)};
\pinpin{-.1,.45}{.1,.45}{.7,.45}{1-x+y};
\region{.7,.15}{\lambda};
\end{tikzpicture}&=\zeta_0(\lambda)\ 
\begin{tikzpicture}[iQ,centerzero,scale=1.4]
\draw[-] (-0.2,-0.6) \botlabel{0} to (-.2,-.4) [out=90,in=90,looseness=2.7] to (.2,-.4) to (.2,-.6);
\draw[-] (-0.2,0.6) \toplabel{0} to (-.2,.4) [out=-90,in=-90,looseness=2.7] to (.2,.4) to (.2,.6);
\pinpinmid{-.19,.32}{-1.75,0}{-.19,-.33}{Q_0\left((1-2y)\left(\partial_{y,x_0}\g(-x,-y)\right)\right)};
\region{.3,0}{\lambda};
\end{tikzpicture}-
\begin{tikzpicture}[iQ,centerzero,scale=1.4]
\draw[-] (-0.2,-0.6) \botlabel{0} to (-.2,-.4) [out=90,in=90,looseness=2.7] to (.2,-.4) to (.2,-.6);
\draw[-] (-0.2,0.6) \toplabel{0} to (-.2,.4) [out=-90,in=-90,looseness=2.7] to (.2,.4) to (.2,.6);
\pinpinmid{-.19,.32}{-1.6,0}{-.19,-.33}{(1-2y)\left(\partial_y
\g(-x,-y)\right)};
\region{.3,0}{\lambda};
\end{tikzpicture},\\
\begin{tikzpicture}[iQ,anchorbase,scale=1.4]
\draw(-.2,.6)\toplabel{0} to [out=-90,in=-90,looseness=3] (.2,.6);
\draw (-0.2,-.6)\botlabel{0} to[out=60,in=-90] (0.15,0) arc(0:180:0.15) to[out=-90,in=120] (0.2,-.6);
\pinpinmid{.15,.37}{.9,.2}{.15,0}{\g(x,-y)};
\pinpin{-.1,-.2}{.1,-.2}{.7,-.2}{1-x+y};
\region{.7,-.5}{\lambda};
\end{tikzpicture}&=\zeta_0(\lambda)\ 
\begin{tikzpicture}[iQ,centerzero,scale=1.4]
\draw[-] (-0.2,-0.6) \botlabel{0} to (-.2,-.4) [out=90,in=90,looseness=2.7] to (.2,-.4) to (.2,-.6);
\draw[-] (-0.2,0.6) \toplabel{0} to (-.2,.4) [out=-90,in=-90,looseness=2.7] to (.2,.4) to (.2,.6);
\pinpinmid{-.19,.32}{-1.7,0}{-.19,-.33}{
Q_0\left(\partial_{x,x_0}
\left((1+2x)\g(x,y)\right)\right)};
\region{.3,0}{\lambda};
\end{tikzpicture}-
\begin{tikzpicture}[iQ,centerzero,scale=1.4]
\draw[-] (-0.2,-0.6) \botlabel{0} to (-.2,-.4) [out=90,in=90,looseness=2.7] to (.2,-.4) to (.2,-.6);
\draw[-] (-0.2,0.6) \toplabel{0} to (-.2,.4) [out=-90,in=-90,looseness=2.7] to (.2,.4) to (.2,.6);
\pinpinmid{-.19,.32}{-1.4,0}{-.19,-.33}{\partial_x
\left((1+2x)\g(x,y)\right)};
\region{.3,0}{\lambda};
\end{tikzpicture},\\
2\begin{tikzpicture}[iQ,centerzero,scale=1.4]
\draw[-] (0,.6)\toplabel{0} to (0,.4) to[out=-90,in=-90,looseness=3] (.4,.4) to (.4,.6);
\draw[-] (0,-.6)\botlabel{0} to (0,-.4) to[out=90,in=90,looseness=3] (.4,-.4) to (.4,-.6);
\pinpinmid{.39,-.32}{.95,0}{.39,.32}{\g(x,-y)};
\region{.9,-.3}{\lambda};
\end{tikzpicture}&=2\ \begin{tikzpicture}[iQ,centerzero,scale=1.4]
\draw[-] (-0.2,-0.6) \botlabel{0} to (-.2,-.4) [out=90,in=90,looseness=2.7] to (.2,-.4) to (.2,-.6);
\draw[-] (-0.2,0.6) \toplabel{0} to (-.2,.4) [out=-90,in=-90,looseness=2.7] to (.2,.4) to (.2,.6);
\pinpinmid{-.19,.32}{-.8,0}{-.19,-.32}{\g(-x,y)};
\region{.3,0}{\lambda};
\end{tikzpicture},
\qquad\begin{tikzpicture}[iQ,centerzero,scale=1.4]
\draw[-] (-0.2,-0.6) \botlabel{0} [out=90,in=90,looseness=2.7] to (.2,-.6);
\draw[-] (-0.2,0.6) \toplabel{0} [out=-90,in=-90,looseness=2.7] to (.2,.6);
\draw (0,0) circle (.2);
\strand{-.31,0}{0};
\pinpinmid{.15,.37}{.9,.22}{.15,.12}{\g(x,-y)};
\pinpinmid{.15,-.37}{.9,-.22}{.15,-.12}{\g(x,-y)};
\region{1.45,0}{\lambda};
\end{tikzpicture}=
\zeta_0(\lambda)\ 
\begin{tikzpicture}[iQ,centerzero,scale=1.4]
\draw[-] (-0.2,-0.6) \botlabel{0} to (-.2,-.4) [out=90,in=90,looseness=2.7] to (.2,-.4) to (.2,-.6);
\draw[-] (-0.2,0.6) \toplabel{0} to (-.2,.4) [out=-90,in=-90,looseness=2.7] to (.2,.4) to (.2,.6);
\pinpinmid{-.19,.32}{-1.65,0}{-.19,-.33}{Q_0\big(\g(-x,-x_0)\g(x_0,y)\big)};
\region{.3,0}{\lambda};
\end{tikzpicture}.
\end{align*}
On the right-hand sides here, there are two types of term---ones with $Q_0$ and ones without. Comparing the two types with \cref{elephant}, 
it follows that the sum of these four terms is equal to the sum of the last two terms in \cref{elephant} providing
\begin{align*}
\Delta_0(-x)
\nabla_0(-y)\partial_{x,y} \Big(\tfrac{1-\Deltanabla_0(x)}{2x}\Big)&=-(1-2y)\left(\partial_y
\g(-x,-y)\right)-\partial_x
\left((1+2x)\g(x,y)\right)\\&\hspace{64mm}
+2\g(-x,y),\\
\Delta_0(-x)
\nabla_0(-y)
\partial_{x,y}\left(\partial_{x,x_0}
\Deltanabla_0(x)\right)&=
(1-2y)\left(\partial_{y,x_0}\g(-x,-y)\right)+\partial_{x,x_0}
\left((1+2x)\g(x,y)\right)\\&\hspace{64mm}
+\g(-x,-x_0)\g(x_0,y).
\end{align*}
Both of these equations 
may be checked by some explicit expansion using 
the identity $\Delta_0(-x)\nabla_0(x)=1+2x$ from \cref{bibble}.
\end{itemize}
\end{itemize}
\end{proof}

For the final Reidemeister III relation, we need one more preparation. Consider a morphism space $\hatH_\lambda(\bj,\bi) = \Hom_{\hatUU^\imath}(B_\bj \one_\lambda, B_\bi \one_\lambda)$ in the completed 2-iquantum group.
By \cref{arewestupid}, it has topological 
basis $\basis_\lambda(\bi\times\bj)$
as a topological right $\hatR$-module.
Let $n$ be the greatest number of crossings in any $D \in \shape(\bi\times\bj)$. Then there is a filtration
\begin{equation}\label{dotfiltration}
\{0\}=
F_{-1} \hatH_\lambda(\bj,\bi)
\subseteq 
F_0 \hatH_\lambda(\bj,\bi) \subseteq F_1 \hatH_\lambda(\bj,\bi)
\subseteq\cdots
\subseteq
F_n \hatH_\lambda(\bj,\bi) = \hatH_\lambda(\bj,\bi)
\end{equation}
such that $F_d \hatH_\lambda(\bj,\bi)$ is the closure of the $\hatR$-submodule spanned by the vectors in the basis for all reduced shapes $D \in \shape(\bi\times\bj)$ with at most $d$ crossings.
Let $\gr
\hatH_\lambda(\bj,\bi)$
be the associated graded $\hatR$-module with 
$\gr_d \hatH_\lambda(\bj,\bi)\coloneq
F_d \hatH_\lambda(\bj,\bi)/ F_{d-1} \hatH_\lambda(\bj,\bi)$.
We remind the reader that dots in string diagrams
can be commuted freely past crossings modulo terms with strictly fewer crossings. 
Moreover, as the proof of
the spanning part of \cref{nondegeneracy} 
in \cite[Lem.~5.9]{BWW}
proceeds by induction on the number of crossings,
any (not necessarily reduced) 
string diagram with at most $d$ crossings lies in $F_d H$.
In other words, 
in the associated graded, dots commute past crossings. 

\begin{lem}\label{modlower}
For any $i,j\in I$ and $\lambda \in X^\imath_\0$, we have that
\begin{equation}\label{riiilead}
\begin{tikzpicture}[iQ,centerzero,scale=1.2]
\draw (-.3,-.5)\botlabel{i} -- (.3,.5);
\draw (.3,-.5)\botlabel{j} -- (-.3,.5);
\node[draw,fill=white,inner sep=1.3pt] at (0,0)
{$\scriptstyle X_{i,j}$};
\region{.5,0}{\lambda};
\end{tikzpicture}
\ \equiv\
\begin{tikzpicture}[iQ,centerzero,scale=1.2]
\draw (-.3,-.5)\botlabel{i} -- (.3,.5)\toplabel{i};
\draw (.3,-.5)\botlabel{j} -- (-.3,.5)\toplabel{j};
\pinpin{-.2,-.35}{.2,-.35}{1.15,-.35}{q_{i,j}(x,y)};
\region{.4,0}{\lambda};
\end{tikzpicture}
\pmod{F_0 \hatH_\lambda(ij,ji)}
\end{equation}
for some $q_{i,j}(x,y) \in \kk\llbracket x,y \rrbracket^{\times}$.
\end{lem}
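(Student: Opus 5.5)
The plan is to go through the four cases in the definition \cref{hardercrossings} of $X_{i,j}$ and, in each, push every glyph and pin on the crossing down to the bottom endpoints. We work in the associated graded $\gr\hatH_\lambda(ij,ji)$, where dots commute freely past crossings. Since internal bubbles and glyphs are power series in a dot with coefficients in $\hatZ_\lambda$, they too commute past the crossing modulo $F_0\hatH_\lambda(ij,ji)$.

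The bubble-strip terms are handled as follows. The only term in $X_{i,j}$ not containing a crossing is the cup-cap term $\delta_{i,0}\delta_{j,0}\,(\ldots)\G(x,-y)$. That term is built from a reduced shape with zero crossings, so it lies in $F_0$ and can be discarded. In every remaining case $X_{i,j}$ is a single crossing decorated by pins and glyphs. Moving each glyph past the crossing gives a crossing with a double pin $f(x,y)$ at the bottom, where $f$ is a product of
\begin{itemize}
\item the scalar $r$-factors;
\item the series $G^{\s\t}_{\cdot,\cdot}$, which lie in $\kk\llbracket x,y\rrbracket^\times$ by \cref{trustfunds};
\item series such as $\Delta_{\lambda;i}(x)$, $\nabla_{\lambda;-j}(-y)$, etc., which lie in $\hatZ_\lambda\llbracket x,y\rrbracket^\times$ by \cref{bibble}.
\end{itemize}
For the $(i,j)$-case in $I_+\times I_+$ this already gives $q_{i,j}=G^{++}_{i,j}(x,y)$ after moving the pin, and no glyphs are involved.

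The main obstacle is that the statement requires $q_{i,j}$ to have coefficients in $\kk$, not in $\hatZ_\lambda$. So I must show that the internal-bubble factors reduce to scalar series modulo $F_0$. For this I will write the glyphs as products of $O_{\lambda;k}(\cdot)$ via \cref{quicka,quickb}. I will then decompose each $O_{\lambda;k}(c+x)$ using \cref{convention} into its constant-coefficient part plus terms involving positive-degree dotted bubbles.

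The key step is to keep the glyphs paired rather than collapsing each to a scalar. A glyph on one side of the crossing, slid to the other side by \cref{glyphslide} (or \cref{preglyphslide}), cancels against the corresponding opposite glyph by \cref{bubblyinverses}. The only residue of this cancellation is the explicit rational factors in \cref{glyphslide}, which have coefficients in $\kk$. In cases two to four of \cref{hardercrossings} there is exactly one $+$ glyph and one $-$ glyph (or double glyphs) on opposite ends of the two strands. I expect to match each one with its inverse $\mp$ partner from \cref{scorching} after sliding, just as in the Reidemeister II computations in \cref{lem:rii}.

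What is left is a crossing with a double pin built from $G$'s, $r$'s and the products from \cref{glyphslide}. These are invertible in $\kk\llbracket x,y\rrbracket$ because their constant terms are nonzero: \cref{premiracle} and the explicit factors of the form $(c+x\pm y)^{a}$ all have $c\neq0$ in the relevant cases. I would check the edge cases $i=0$ or $j=0$ separately using \cref{glyphslide}(ii), where half-glyphs occur.

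Finally, if some factor of the form $\Delta_{\lambda;i}$ with nontrivial $\hatZ_\lambda$-coefficients survives unpaired, the fallback is a weaker version of the claim. Since $\hatZ_\lambda$ acts on the right and is central in $\gr$, I would prove the statement with $q_{i,j}\in\hatZ_\lambda\llbracket x,y\rrbracket^\times$ and note that this weaker form is what the Reidemeister III argument needs.
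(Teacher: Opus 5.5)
Your proposal is correct and is essentially the paper's argument: in the last three cases of \cref{hardercrossings} you slide one glyph past the crossing using \cref{glyphslide} and cancel it against its partner on the same strand via \cref{bubblyinverses}. What remains is a product of nonzero scalars, the $G$'s and the rational factors from \cref{glyphslide}, all with coefficients in $\kk$; in the first case this gives $q_{i,j}=G^{++}_{i,j}$, and the cup--cap term lies in $F_0$.

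There are two small corrections. First, your opening claim that glyphs commute past the crossing modulo $F_0$ is not literally true. Moving a glyph to the other end of its strand drags its coefficient bubbles across the other strand, and this is exactly where the factors in \cref{preglyphslide,glyphslide} come from. Second, \cref{glyphslide}(i) is only stated for $i\neq\pm j$. In the fourth case with $i=-j$ you need its version modulo crossing-free terms, which follows from \cref{preglyphslide} together with dot--crossing commutation in $\gr$. Once these are in place, your fallback is unnecessary.
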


\begin{proof}
This follows from the definition \cref{hardercrossings}. In the last three cases, one needs to commute a glyph past the crossing, where it cancels with the other glyph. This may be done using the formulae in \cref{glyphslide}, which are valid also when $i=\pm j$ modulo a crossing-free term. Explicit formulae for $q_{i,j}(x,y)$ are not hard to work out, but we will not need to know them later. For example, if $i \in I_+ \cup \{0\}$ and 
$j \in I_+$ then
$q_{i,j}(x,y) = G_{i,j}^{++}(x,y)$, but it is more complicated
in other cases as there is an extra product term arising from the glyph slide.
\end{proof}

\begin{lem}\label{lem:riii}
In the setup of \cref{passport},
the Reidemeister III relations hold:
$$
\begin{tikzpicture}[B,centerzero,scale=1.4]
\draw (0.4,-0.4) -- (-0.4,0.4);
\draw (0,-0.4) to[out=135,in=down] (-0.32,0) to[out=up,in=225] (0,0.4);
\draw (-0.4,-0.4) -- (0.4,0.4);
\end{tikzpicture}
= \begin{tikzpicture}[B,centerzero,scale=1.4]
\draw (0.4,-0.4) -- (-0.4,0.4);
\draw (0,-0.4) to[out=45,in=down] (0.32,0) to[out=up,in=-45] (0,0.4);
\draw (-0.4,-0.4) -- (0.4,0.4);
\end{tikzpicture}\ .
$$
\end{lem}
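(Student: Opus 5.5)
The plan is to argue by a leading-term/uniqueness argument rather than by brute-force expansion. Let $\theta \coloneq \Psi(\text{LHS}) - \Psi(\text{RHS})$, a morphism $B^{\otimes 3}\to B^{\otimes 3}$ in $\hatAB$. We must show that each component $\theta_{\bi,\bj}\in\hatH_\lambda(\bj,\bi)$ vanishes, for $\bi,\bj\in I^3$ and $\lambda\in X^\imath_\0$.

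\emph{Step 1: dot equivariance.} All defining relations of $\AB_{\operatorname{III}}$ from \cref{grs-trick} have now been verified: the zig-zag, dot slide, pitchfork, Demazure, Reidemeister I and Reidemeister II relations. So the images under $\Psi$ of \cref{RIIrelsa,RIIrelsb,RIIrelsc} hold in $\hatAB$. These say that $\theta$ commutes with the dot on each of the three strands, in the appropriate sense: putting a dot at the top left minus a dot at the bottom right annihilates $\theta$, and similarly for the other two pairings. Arguing exactly as in the proof of \cref{lem:rii}, let $t_r$ and $b_r$ denote composition with black dots on top and bottom strands. Using \cref{dotdef}, we obtain
\[
\big((i_r-j_{r'})\id + t_r - b_{r'}\big)(\theta_{\bi,\bj})=0,
\]
where $(r,r')$ runs over the pairings $(1,3),(2,2),(3,1)$. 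Whenever $i_r\neq j_{r'}$ for some such pairing, the operator is invertible, since $t_r-b_{r'}$ raises degree and $\hatH_\lambda(\bj,\bi)$ is complete. Hence $\theta_{\bi,\bj}=0$ unless $\bi=\bj^{\rev}$, i.e. $\bi=(k,j,i)$ and $\bj=(i,j,k)$.

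\emph{Step 2: reduce to the leading term.} In the remaining case, equivariance gives $(t_1-b_3)\theta=(t_2-b_2)\theta=(t_3-b_1)\theta=0$. Expand $\theta$ in the topological basis $\basis_\lambda(\bi\times\bj)$ of \cref{arewestupid}, using the crossing filtration \cref{dotfiltration}. In $\gr$, dots commute past crossings. Now consider a basis element of a reduced shape $D$ that is not the longest braid shape realizing the permutation $w_0$. Sliding dots through $D$ matches top endpoint $r$ with a bottom endpoint other than $4-r$, or uses cups and caps where dots change sign. So the operators $t_r-b_{4-r}$ act injectively on the top nonzero filtration layer spanned by such shapes, by a triangularity argument: they act as multiplication by nonzero linear polynomials in the $x$'s on a free $\hatR[x]$-module.

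By descending induction on the filtration, $\theta$ therefore lies in the closed span of the basis elements of the $w_0$-shape, modulo $F_2$. Shapes with cups and caps and at most two crossings must be handled by the same injectivity argument. Thus $\theta=D_{w_0}(f)$ for some $f\in\hatR\llbracket x_1,x_2,x_3\rrbracket$, plus terms killed by the induction. Applying equivariance once more in $\gr_3$ forces the leading part to vanish only if $f$ is determined by its top-degree coefficient.

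\emph{Step 3: compute the leading coefficient.} By \cref{modlower}, each $X$-crossing is, modulo $F_0$, a crossing with an invertible pin $q(x,y)$. Hence both sides of the braid relation equal, in $\gr_3$, the $w_0$-crossing pinned with $q_{i,j}q_{i,k}q_{j,k}$ with variables permuted accordingly. The non-$X$ terms in \cref{crossingdef} have fewer crossings and contribute nothing in $\gr_3$. The pins agree on both sides, because dots commute with crossings in $\gr$. So $\theta\in F_2$, and the equivariance argument of Step 2 then kills $\theta$ entirely.

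\emph{Main obstacle.} The main obstacle is Step 2: proving that dot-equivariance with shifts forces vanishing on the lower filtration pieces. The difficulty lies in shapes containing cups and caps when some of $i,j,k$ are negatives of each other, especially the case $i=-j$ at the $0$ node. There the dot slides across a cap with a sign, so $t_r-b_{4-r}$ may act as $x+y$, which can vanish on the cup. I would try first to handle this by using the full family of three equivariance relations together: a shape with a cap connecting top strands $r,s$ fails equivariance for the pairing of $r$ unless $i_r=-i_s$. In the degenerate case I would fall back on rotating the relation using the pitchfork relations to move to a configuration with no cap.
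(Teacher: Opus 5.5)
Your strategy is the same as the paper's. You reduce to $\bi=\bj^{\rev}$ using the shifted dot-equivariance from \cref{RIIrelsa,RIIrelsb,RIIrelsc}. You show $\theta\in F_2$ by comparing leading terms via \cref{modlower}. You then kill the remainder using injectivity of $b-t$ on the top nonzero layer of the crossing filtration \cref{dotfiltration}.

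Two points need fixing. First, Step 2 has the order wrong. On the $w_0$-shape, top point $r$ and bottom point $4-r$ lie on one propagating string, so every operator $t_r-b_{4-r}$ acts by zero in $\gr_3$. Equivariance therefore says nothing about the $w_0$-component. You also cannot discard lower-filtration terms of $\theta$ before knowing that component vanishes, because $(t_r-b_{4-r})(D_{w_0}(f))$ is in general a non-zero element of $F_2$. The leading-term computation of Step 3 must come first, and only then does the injectivity argument apply, as your final sentence correctly does. Step 3 also silently uses that the two reduced diagrams for $w_0$ agree modulo $F_2$, i.e.\ the braid relation of $\UU^\imath$ up to lower terms; you should state this.

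Second, your ``main obstacle'' is not an obstacle, so neither a special treatment of cups, caps or the $0$ node nor the pitchfork fallback is needed. Once $\theta\in F_2$, take a reduced shape $D$ with the most crossings among those occurring in $\theta$. Since $\#(D)\leq 2$, $D$ is not the $w_0$-shape. Hence some $r$ has its $r$th bottom point and $(4-r)$th top point on different strings $s\neq s'$ of $D$. In $\gr_{\#(D)}$, $b-t$ preserves the $D$-component and acts there as multiplication by $\pm x_s\pm x_{s'}$. This is a non-zero-divisor on $\hatR\llbracket x_1,x_2,x_3\rrbracket$ whatever signs are picked up at critical points. So the $D$-component of $(b-t)(\theta)$ is non-zero, contradicting equivariance. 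Your worry that the operator acts as $x+y$ and vanishes only arises when both variables sit on the two ends of a single cup or cap. A top point and a bottom point never lie on the same cup or cap.
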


\begin{proof}
Fix a choice of $\lambda \in X^\imath_\0$.
Expanding the definition \cref{gasp4},
we need to 
show that
\begin{equation}\label{crazybraid}
\sum_{\substack{l,l',m \in I\\
\lambda-\alpha_k-\alpha_l-\alpha_m=\mu\\
\lambda-\alpha_{i'}-\alpha_{l'}-\alpha_m=\mu}}
\begin{tikzpicture}[iQ,centerzero,scale=1.4]
\draw (0.4,-0.4)\botlabel{k} -- (-0.4,0.4)\toplabel{k'};
\draw (0,-0.4)\botlabel{j} to[out=135,in=down] (-0.32,0) to[out=up,in=225] (0,0.4)\toplabel{j'};
\draw (-0.4,-0.4)\botlabel{i} -- (0.4,0.4)\toplabel{i'};
\node at (-0.07,-0.18) {\strandlabel{l}};
\node at (-0.05,0.21) {\strandlabel{l'}};
\node at (-0.44,0) {\strandlabel{m}};
\magcr{-.21,-.22};
\magcr{-.21,.22};
\region{.4,0}{\lambda};
\magcr{0,0};
\end{tikzpicture}
= 
\sum_{\substack{l,l',m \in I\\
\lambda-\alpha_m-\alpha_l-\alpha_i=\mu\\
\lambda-\alpha_{m}-\alpha_{l'}-\alpha_{k'}=\mu}}\begin{tikzpicture}[iQ,centerzero,scale=1.4]
\draw (0.4,-0.4)\botlabel{k} -- (-0.4,0.4)\toplabel{k'};
\draw (0,-0.4)\botlabel{j} to[out=45,in=down] (0.32,0) to[out=up,in=-45] (0,0.4)\toplabel{j'};
\draw (-0.4,-0.4)\botlabel{i} -- (0.4,0.4)\toplabel{i'};
\node at (0.07,-0.18) {\strandlabel{l}};
\node at (0.05,0.21) {\strandlabel{l'}};
\node at (0.44,0) {\strandlabel{m}};
\magcr{.21,-.22};
\magcr{.21,.22};
\region{.7,0}{\lambda};
\magcr{0,0};
\end{tikzpicture}
\end{equation}
for all $\lambda,\mu \in X^\imath_\0$ and
$i,j,k,i',k',j' \in I$ such that
$\lambda-\alpha_i-\alpha_j-\alpha_k=\mu=\lambda-\alpha_{i'}-\alpha_{j'}-\alpha_{k'}$.
We have proved already that all of the defining relations of $\AB_{\operatorname{III}}$ are satisfied. 
So, like at the start of \cref{lem:rii}, we can use 
\cref{RIIrelsa,RIIrelsb,RIIrelsc} to prove the equality 
if $i \neq i', j \neq j'$ or $k \neq k'$; see also
\cite[Lem.~7.23]{GRS-2026}. Thus, the problem is reduced to showing 
for $\lambda,\mu \in X^\imath_\0$ 
and $i,j,k \in I$ with $\lambda-\alpha_i-\alpha_j-\alpha_k=\mu$
that
$$
\Theta_{i,j,k} \coloneq
\sum_{\substack{l,l',m \in I\\
\lambda-\alpha_k-\alpha_l-\alpha_m=\mu\\
\lambda-\alpha_{i}-\alpha_{l'}-\alpha_m=\mu}}
\begin{tikzpicture}[iQ,centerzero,scale=1.4]
\draw (0.4,-0.4)\botlabel{k} -- (-0.4,0.4)\toplabel{k};
\draw (0,-0.4)\botlabel{j} to[out=135,in=down] (-0.32,0) to[out=up,in=225] (0,0.4)\toplabel{j};
\draw (-0.4,-0.4)\botlabel{i} -- (0.4,0.4)\toplabel{i};
\node at (-0.07,-0.18) {\strandlabel{l}};
\node at (-0.05,0.21) {\strandlabel{l'}};
\node at (-0.44,0) {\strandlabel{m}};
\magcr{-.21,-.22};
\magcr{-.21,.22};
\region{.4,0}{\lambda};
\magcr{0,0};
\end{tikzpicture}
- \sum_{\substack{l,l',m \in I\\
\lambda-\alpha_m-\alpha_l-\alpha_i=\mu\\
\lambda-\alpha_{m}-\alpha_{l'}-\alpha_{k}=\mu}}\begin{tikzpicture}[iQ,centerzero,scale=1.4]
\draw (0.4,-0.4)\botlabel{k} -- (-0.4,0.4)\toplabel{k};
\draw (0,-0.4)\botlabel{j} to[out=45,in=down] (0.32,0) to[out=up,in=-45] (0,0.4)\toplabel{j};
\draw (-0.4,-0.4)\botlabel{i} -- (0.4,0.4)\toplabel{i};
\node at (0.07,-0.18) {\strandlabel{l}};
\node at (0.05,0.21) {\strandlabel{l'}};
\node at (0.44,0) {\strandlabel{m}};
\magcr{.21,-.22};
\magcr{.21,.22};
\region{.7,0}{\lambda};
\magcr{0,0};
\end{tikzpicture}
$$
equals 0.

Suppose for a contradiction that $\Theta_{i,j,k} \neq 0$ in
$\hatH_\lambda(ijk,kji)$.
We filter $\hatH_\lambda(ijk,kji)$ by number of crossings according to \cref{dotfiltration}.
Since dots commute with crossings modulo terms with fewer crossings, the definition \cref{crossingdef} and \cref{modlower} imply that
$$
\Theta_{i,j,k} \equiv
\begin{tikzpicture}[iQ,centerzero,scale=1.4]
\draw (0.4,-0.4)\botlabel{k} -- (-0.4,0.4)\toplabel{k};
\draw (0,-0.4)\botlabel{j} to[out=135,in=down] (-0.32,0) to[out=up,in=225] (0,0.4)\toplabel{j};
\draw (-0.4,-0.4)\botlabel{i} -- (0.4,0.4)\toplabel{i};
\pinpinpin{.3,-.3}{-.1,-.3}{-.3,-.3}{-1.8,-.3}{q_{j,k}(y,z)q_{i,k}(x,z)q_{i,j}(x,y)};
\region{.4,0}{\lambda};
\end{tikzpicture}
- \begin{tikzpicture}[iQ,centerzero,scale=1.4]
\draw (0.4,-0.4)\botlabel{k} -- (-0.4,0.4)\toplabel{k};
\draw (0,-0.4)\botlabel{j} to[out=45,in=down] (0.32,0) to[out=up,in=-45] (0,0.4)\toplabel{j};
\draw (-0.4,-0.4)\botlabel{i} -- (0.4,0.4)\toplabel{i};
\pinpinpin{.3,-.3}{.1,-.3}{-.3,-.3}{-1.8,-.3}{q_{i,j}(x,y)q_{i,k}(x,z)q_{j,k}(y,z)};
\region{.6,0}{\lambda};
\end{tikzpicture}\pmod{F_2 \hatH_\lambda(ijk,kji)}.
$$
The relation \cref{ibraid} implies that the braid relation holds modulo $F_2 \hatH_\lambda(ijk,kji)$, so this is 0.
Thus, we see that 
$\Theta_{i,j,k} \in F_2 \hatH_\lambda(ijk,kji)$, so it is an infinite
$\hatR$-linear combination
$$
\Theta_{i,j,k} = 
\sum_{D, e_1,e_2,e_3 \geq 0}  c_{D;e_1,e_2,e_3}
D(x_1^{e_1} x_2^{e_2} x_3^{e_3}) 
\begin{tikzpicture}[centerzero]
\region{0,0}{\lambda};
\end{tikzpicture}
$$ 
summing over reduced shapes $D \in \shape((k,j,i)\times (i,j,k))$ with at most two crossings.
Let $D \in \shape((k,j,i)\times (i,j,k))$ be a reduced shape with the greatest number of crossings such that
$c_{D;e_1,e_2,e_3} \neq 0$ for some
$e_1,e_2,e_3 \geq 0$.
Let $d$ be the number of crossings in $D$.
Since $d < 3$,
we can find $r \in \{1,2,3\}$
such that the $r$th point on the bottom boundary lies on a different string of $D$ to the $(4-r)$th point on the top boundary. 
Let $b$ be multiplication by a black dot at the $r$th point on the bottom boundary and $t$ be multiplication by a black
dot at the $(4-r)$th point on the top boundary.
The linear map $b-t\colon \hatH_\lambda(ijk,kji) \rightarrow
\hatH_\lambda(ijk,kji)$
induces a linear map
$$
\gr_d(b-t)\colon \gr_d \hatH_\lambda(ijk,kji) \rightarrow \gr_d \hatH_\lambda(ijk,kji)
$$
which preserves each shape component.
For the shape $D$, the operators $b$ and $t$ act up to signs by multiplication by two different string variables
$x_s$ and $x_{s'}$.
So $\gr_d(b-t)$ is multiplication by $\pm x_s \pm x_{s'}$
for some signs, which is injective
on the completed polynomial algebra $\hatR\llbracket x_1,x_2,x_3\rrbracket$ in variables corresponding to the strings of the shape $D$.
Hence the non-zero
$D$-component of $\Theta_{i,j,k}$ remains non-zero.
So we have that 
$(b-t)(\Theta_{i,j,k}) \neq 0$.
However, the relations \cref{RIIrelsa,RIIrelsb,RIIrelsc} imply
that $(b-t)(\Theta_{i,j,k}) = 0$; the scalars from \cref{dotdef} cancel since the label at the $r$th point on the bottom boundary equals the label at the $(4-r)$th point on the top boundary.
This is a contradiction.
\end{proof}

\setcounter{section}{8}

\section{Brauer to iquantum}\label{bccomplete}

In \cref{passport}, we constructed a strict $\kk$-linear monoidal functor
\begin{equation}\label{Psi}
\Psi\colon\AB \rightarrow \hatAB.
\end{equation}
In particular, this defines linear maps
\begin{equation}\label{Psimn}
\psi_{m,n}\colon\Hom_{\AB}(B^{\otimes n}, B^{\otimes m})
\rightarrow \Hom_{\hatAB}(B^{\otimes n},B^{\otimes m})
\end{equation}
for each $m,n \geq 0$.
In this section, we study the functor 
$\Psi$ 
further, first showing that it is faithful.
Since $\hatAB$ is enriched in topological vector spaces, 
there is an induced topology on the morphism spaces of 
$\AB$ with respect to which $\Psi$ is continuous, i.e., each $\psi_{m,n}$ is continuous.
Then we show that $\Psi$ is topologically dense, i.e.,
each $\psi_{m,n}$ has image that is a dense subset of the topological space
$\Hom_{\hatAB}(B^{\otimes n},B^{\otimes m})$. 
Hence, $\hatAB$ can be interpreted as a completion of $\AB$, justifying our notation. Finally, we use this general framework to prove \cref{thmb} from the introduction.

We begin with some notation that will be used throughout the section.
Recall that
\begin{align*}
Z&=\End_{\AB}(\one),
&
\hatZ &= \End_{\hatAB}(\one),
&
Z_\lambda &= \End_{\UU^\imath}(\one_\lambda),&
\hatZ_\lambda &= \End_{\hatUU^\imath}(\one_\lambda).
\end{align*}
Also, by \cref{hatZ}, we have that 
$\hatZ=\prod_{\lambda \in X^\imath_\0}
\widehat{Z}_\lambda$ with
the $\lambda$th projection
of $z \in \hatZ$ being 
$z^{\lambda,\lambda} \in \hatZ_\lambda$.
For $\lambda \in X^\imath_\0$, let
\begin{align}\label{psidef}
\psi_\lambda
\colon Z &\rightarrow \hatZ_\lambda,&
z &\mapsto \psi_{0,0}(z)^{\lambda,\lambda}.
\end{align}
We denote the Brauer bubble generating function
$\begin{tikzpicture}[B,centerzero,scale=.9]
\draw (-0.25,0) arc(180:-180:0.25);
\node at (.55,0) {\color{black}$(u)$};
\end{tikzpicture}\!\!$ from \cref{eatz}
by $O(u) \in \id_\one +u^{-1}Z\llbracket u^{-1}\rrbracket$. As in \cref{likey}, the iquantum bubble generating function is
$O_{\lambda;i}(u) \in \hatZ_\lambda\lround u^{-1}\rround$.
In this section, it will also be helpful to have available the renormalized
generating function
\begin{equation}\label{thiscrazyfunction}
z_{\lambda;i}(u) \coloneq \varphi_\lambda(z_i(u)) = 
u^{\lambda_i-\del_i}
O_{\lambda;i}(u)
\end{equation}
for $\varphi$ as in \cref{flambda}.
This has the property that
\begin{equation}\label{anotherwaytoseeit}
\prod_{i \in I} O_{\lambda;i}(u-i) = \chi_\lambda(u)
\prod_{i \in I} z_{\lambda;i}(u-i)
\end{equation}
where $\chi_\lambda(u)$ is the rational function from \cref{maxideals}.
Hence, using also
\cref{firstthis}, 
we have that
\begin{equation}\label{psiO}
\psi_\lambda(O(u)) = \prod_{i \in I} O_{\lambda;i}(u-i)
= \chi_\lambda(u) \prod_{i \in I} z_{\lambda;i}(u-i)
\end{equation}
in $\hatZ_\lambda\llbracket u^{-1}\rrbracket$,
where $\chi_\lambda(u) \in \kk(u)$ is the rational function from \cref{maxideals}.

Now fix $m,n \geq 0$ with $m \equiv n\pmod{2}$ (required for
$\Hom_{\AB}(B^{\otimes n}, B^{\otimes m})$ to be non-zero),
and set $l \coloneq \frac{m+n}{2}$.
Recall the set $\shape(m\times n)$ of reduced shapes
fixed before the statement of \cref{RSbasis}.
Also there, for $D \in \shape(m\times n)$,
we introduced the notation $D(f)$
for the morphism in $\Hom_{\AB}(B^{\otimes n}, B^{\otimes m})$
obtained by pinning $f \in \kk[x_1,\dots,x_l]$ 
to the distinguished points
$p_1,\dots,p_l$ of $D$. 
We extend this notation to define $D(f)$ for any 
$f \in Z[x_1,\dots,x_l]$ by declaring that the pinhead 
labelled $f$ is located in the rightmost 2-cell.
By \cref{RSbasis}, any $\theta \in \Hom_{\AB}(B^{\otimes n}, B^{\otimes m})$ can be written as
\begin{equation}\label{exp1}
\theta = \sum_{D \in \shape(m\times n)}
D(\pr_D(\theta))
\end{equation}
for unique $\pr_D(\theta) \in Z[x_1,\dots,x_l]$.
This defines linear maps
\begin{equation}\label{pr1}
\pr_D:\Hom_{\AB}(B^{\otimes n}, B^{\otimes m}) \rightarrow
Z[x_1,\dots,x_l]
\end{equation}
for each $D \in \shape(m\times n)$.

Similarly, on the iquantum side, we upgrade the notation
\cref{clorific} to allow any
$f \in \hatZ_\lambda\llbracket x_1,\dots,x_l\rrbracket$: for
$D \in \shape(\bi\times \bj)$ and $\lambda \in X^\imath$
as there, $D(f)\ \text{\regionlabel{\lambda}}$ means
the 2-morphism in $\hatH_\lambda(\bj,\bi)$
obtained by pinning $f$ to the distinguished points 
of $D\ \text{\regionlabel{\lambda}}$ in such a way that the pinhead is located in the rightmost 2-cell.
By \cref{arewestupid}, any $\theta \in \hatH_\lambda(\bj,\bi)$
can be written as
\begin{equation}\label{exp2}
\theta = \sum_{D\in \shape(\bi\times \bj)} D(\pr_{\lambda;D}(\theta))\ \text{\regionlabel{\lambda}}
\end{equation}
for unique $\pr_{\lambda;D}(\theta)
\in \hatZ_\lambda\llbracket x_1,\dots,x_l\rrbracket$.
This defines right
$\hatZ_\lambda$-module homomorphisms
\begin{equation}\label{pr2}
\cf_{\lambda;D}\colon \hatH_\lambda\big(\bj, \bi\big) \rightarrow \hatZ_\lambda\llbracket x_1,\dots,x_l\rrbracket.
\end{equation}
Moreover, by \cref{arewestupid,dumbanddumber},
$\cf_{\lambda;D}$ is continuous with respect to the 
GC topology on the domain, and the topology
on $\hatZ_\lambda\llbracket x_1,\dots,x_l\rrbracket = \prod_{e_1,\dots,e_l \geq 0} x_1^{e_1} \cdots x_l^{e_l}\hatZ_\lambda$ that is the product of the GC topologies on each factor.

Returning to the situation that 
$D \in \shape(m\times n)$ and $l \coloneq \frac{m+n}{2}$,
a {\em coloring} of $D$ simply means a tuple $\bc \in I^l$.
We write $D_\bc$ for the string diagram obtained from $D$ by changing all of its magenta strings to black strings, then labelling the strings by $c_1,\dots,c_l$ so that the string label at the $k$th distinguished point $p_k$ is $c_k$,
using our usual convention that string labels change sign at critical points.
Let $\bi^{D_\bc} \in I^m$ and $\bj^{D_\bc} \in I^n$ be the words
labelling the top and bottom boundary points of the strings in $D_\bc$. 
For any $\lambda \in X^\imath_\0$,
we have that 
\begin{equation}\label{nastymu}
\mu \coloneq 
\lambda-\alpha_{\bi^{D_\bc}} = \lambda-\alpha_{\bj^{D_\bc}}.
\end{equation}
The diagram
$D_\bc$ is a reduced $\bi^{D_\bc} \times \bj^{D_\bc}$-shape
in the sense explained just before \cref{clorific}.
Since \cref{nondegeneracy} holds for any choice of 
$\shape(\bi^{D_\bc}\times \bj^{D_\bc})$ and
any distinguished points, we can assume that 
$D_\bc$ is one of the fixed choices of reduced shapes in $\shape\big(\bi^{D_\bc}\times \bj^{D_\bc}\big)$, and that the distinguished points $p_1,\dots,p_l$ of $D_\bc$ on the iquantum side
are at exactly the same positions as the distinguished points of $D$ on the Brauer side.
Let $\#(D)$ denote the number of crossings in $D$,
and recall the filtration of
$\hatH_\lambda(\bj^{D_\bc}, \bi^{D_\bc})$ 
by number of crossings from \cref{dotfiltration}.

\begin{lem}\label{leading}
Let $D \in \shape(m\times n)$ and $l \coloneq \frac{m+n}{2}$, 
$\bc \in I^l$ be a coloring, and $\lambda,\mu \in X_\0^\imath$
be as in \cref{nastymu}.
For any $f \in Z[x_1,\dots,x_l]$,
we have that $\psi_{m,n}(D(f))^{\mu,\lambda}_{\bi^{D_\bc},\bj^{D_\bc}}
\in F_{\#(D)}
\hatH_\lambda(\bj^{D_\bc}, \bi^{D_\bc})$, and
$$
\cf_{\lambda;D_\bc}\big(\psi_{m,n}(D(f))^{\mu,\lambda}_{\bi^{D_\bc},\bj^{D_\bc}}\big) =
u \psi_\lambda\big(f(c_1+x_1,\dots,c_l+x_l)\big)
$$
for a unit
$u \in \hatZ_\lambda\llbracket x_1,\dots,x_l\rrbracket^\times$ depending 
only on $D, \bc$ and $\lambda$,
where $\psi_\lambda(f(c_1+x_1,\dots,c_l+x_l))$
means the element of $\hatZ_\lambda\llbracket x_1,\dots,x_l\rrbracket$ obtained from 
the shifted polynomial
$f(c_1+x_1,\dots,c_l+x_l)$
by applying $\psi_\lambda\colon Z \rightarrow\hatZ_\lambda$ to each of its coefficients.
Moreover,
$$
\cf_{\lambda;D_\bc}\big(\psi_{m,n}(D'(f'))^{\mu,\lambda}_{\bi^{D_\bc},\bj^{D_\bc}}\big) =
0
$$
for $f' \in Z[x_1,\dots,x_l]$ and
$D' \in \shape(m\times n)$
with $D' \neq D$ and $\#(D') \leq \#(D)$.
\end{lem}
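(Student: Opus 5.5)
The plan is to compute $\psi_{m,n}(D(f))^{\mu,\lambda}_{\bi^{D_\bc},\bj^{D_\bc}}$ by composing the images under $\Psi$ of the generators making up $D$. Write $D(f)$ as a vertical and horizontal composite of cups, caps and crossings, with $f$ pinned at the distinguished points and the pinhead in the rightmost 2-cell. The $Z$-coefficients of $f$ are products of dotted bubbles in the rightmost 2-cell, and they map under $\Psi$ to $\psi_\lambda$ of themselves. By \cref{dotdef}, a monomial in the Brauer dots on the string through $p_k$ with label $c_k$ becomes the same monomial in $c_k+x_k$. Hence, for the coloring $\bc$, pinning contributes exactly $\psi_\lambda(f(c_1+x_1,\dots,c_l+x_l))$. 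The remaining work is to understand the image of the undecorated shape, restricted to the matrix entry $(\bi^{D_\bc},\bj^{D_\bc})$.

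Expand each crossing using \cref{crossingdef}. The entry indexed by the colouring $\bc$ is a sum over colourings of all internal segments of the generators, and three kinds of term appear.

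First, the term in which every crossing contributes its $X$-part with colours consistent with $\bc$. By \cref{modlower}, and because dots slide past crossings modulo fewer crossings, this term is congruent to $D_\bc$ with a pinned unit $q_{c_a,c_b}(x_a,x_b)$ at each crossing, modulo $F_{\#(D)-1}$. Each cup and cap contributes a glyph by \cref{capdef,cupdef}. These glyphs are invertible pinned series by \cref{bubblyinverses}, and they can be moved to the distinguished points modulo lower filtration. Collecting everything gives a unit $u$ that depends only on $D$, $\bc$ and $\lambda$.

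Second, terms in which some crossing contributes its first or second summand, that is, the resolutions into vertical strands or into a cup–cap pair. Such a term is a diagram with strictly fewer crossings. I expect it to lie in $F_{\#(D)-1}$ by the spanning argument of \cite[Lem.~5.9]{BWW}, and therefore to have zero $D_\bc$-coefficient.

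Third, terms from other internal colourings. These involve only the same shape $D_\bc$ with relabelled strings, and the endpoints $\bi^{D_\bc},\bj^{D_\bc}$ together with the fixed matching force the labels to agree with $\bc$. Some care is needed here, because the projected-crossing vanishing conditions allow the case $i=-j$, $k=-l$. That case only arises through the cup–cap term, which has already been discarded as a lower-crossing diagram.

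This establishes $\psi_{m,n}(D(f))\in F_{\#(D)}$ together with the formula for $\cf_{\lambda;D_\bc}$. One point to check is that the right $\hatZ_\lambda$-linearity of $\cf_{\lambda;D_\bc}$ allows the coefficients to be pulled out, so that $u$ multiplies $\psi_\lambda(f(c+x))$.

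For the final assertion, take $D'\neq D$ with $\#(D')\le\#(D)$. The same expansion shows that $\psi_{m,n}(D'(f'))$ is congruent, modulo $F_{\#(D')-1}$, to a pinned sum of diagrams of shapes $D'_{\bc'}$, and every such shape realises the matching of $D'$. In the associated graded $\gr_{\#(D)}$, a diagram with at most $\#(D)$ crossings whose matching differs from that of $D$ has zero component on the basis element $D_\bc$. This holds because the reduction algorithm preserves the matching at top crossing level, and a diagram with strictly fewer crossings lands in lower filtration.

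The main obstacle is making this associated-graded bookkeeping rigorous. Specifically, one must show that non-reduced intermediate diagrams, such as those containing glyph pins or bubbles, reduce with a leading term of the same matching, and that the infinite sums converge in the GC topology so that passing to $\gr$ commutes with these sums. My approach is to verify this first for finite truncations modulo each GC neighbourhood and then pass to the limit, using the continuity of $\cf_{\lambda;D}$.
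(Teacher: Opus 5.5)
Your proposal is correct and follows essentially the same route as the paper: expand $D(f)$ generator by generator, isolate the unique term in which the $X_{i,j}$-part is chosen at every crossing, and observe that this term is congruent modulo $F_{\#(D)-1}$ to $D_\bc$ decorated by $(c_k+x_k)^{e_k}$, the units $q_{i,j}$ and invertible glyphs. Every other term loses a crossing and so lies in the closed subspace $F_{\#(D)-1}$, and $D'$ is handled by the same expansion. The one step you gloss over is that the glyph coefficients sit in interior 2-cells, so before reading off $\cf_{\lambda;D_\bc}$ you must slide them to the rightmost 2-cell using the internal bubble slides \cref{slideright1,slideright2}; this slide is an exact identity (not merely one modulo lower filtration), and it produces additional pins of invertible power series that are absorbed into $u$.
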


\begin{proof}
By linearity, it 
suffices to consider $f = x_1^{e_1} \cdots x_l^{e_l} z$
for $e_1,\dots,e_l \geq 0$ and $z \in Z$.
Since $\Psi$ is monoidal, we have that $\psi_{m,n}\big(D(x_1^{e_1}\cdots x_l^{e_l} z)\big)= \psi_{m,n}\big(D(x_1^{e_1}\cdots x_l^{e_l})\big)\otimes 
\psi_{0,0}(z)$, so
$$
\psi_{m,n}\big(D(x_1^{e_1}\cdots x_l^{e_l} z)\big)^{\mu,\lambda}_{\bi^{D_\bc},\bj^{D_\bc}}
=
\psi_{m,n}\big(D(x_1^{e_1}\cdots x_l^{e_l} )\big)^{\mu,\lambda}_{\bi^{D_\bc},\bj^{D_\bc}} \psi_\lambda(z).
$$
In view of this, we may assume that $z=1$.

The morphism $D(x_1^{e_1}\cdots x_l^{e_l})$ can be written as a vertical composition of the generators---dots,  caps, cups and crossings---of $\AB$ tensored on the left and right by some identity morphisms.
To compute
$\psi_{m,n}\big(D(x_1^{e_1}\cdots x_l^{e_l})\big)^{\mu,\lambda}_{\bi^{D_\bc}, \bj^{D_\bc}}$,
we 
sum over all labellings of the segments of $D$ (the pieces of strings between consecutive generators) by elements of $I$ which are compatible with the boundary labellings $\bi^{D_\bc}$ at the top and $\bj^{D_\bc}$ at the bottom, with the generators themselves
replaced by their images under $\Psi$
from \cref{gasp1,gasp2,gasp3,gasp4} (see \cref{crazybraid} for an example of such an expansion). The secondary definitions \cref{gasp1,gasp2,gasp3,gasp4} are then further expanded using the primary definitions \cref{dotdef,capdef,cupdef,crossingdef}, the last producing a sum of at most two 
terms for each crossing.

Consider first the terms in this expansion in which the last term of \cref{crossingdef}, the one involving $X_{i,j}$, is chosen at every crossing.
The image of a cap or cup has the same label, up to sign, at its endpoints, and $X_{i,j}$ only appears when the labels at the two ends of each strand of the crossing agree.
So in such a term the labelling $\bc$ is constant along each string of $D$, up to sign changes at critical points; that is, 
the labelling of segments is determined completely by $\bc$, and there is exactly one such term.
In it, by \cref{dotdef}, 
the $e_k$ dots at the point $p_k$
have been replaced by the pin $(c_k+x)^{e_k}$; 
each cap and each cup carries the glyph from \cref{capdef} or \cref{cupdef}, which is a pin labelled by an invertible power series due to \cref{bibble}; 
and each crossing has been replaced by some $X_{i,j}$, 
which by \cref{modlower} is a black crossing with the invertible power series $q_{i,j}(x,y)$ pinned below the crossing, plus some terms without the crossing.

In all of the other terms, at least one crossing of $D$ has been removed, 
so these terms have at most $\#(D)-1$ crossings.
Since $F_{\#(D)-1}\hatH_\lambda\big(\bj^{D_\bc},\bi^{D_\bc}\big)$
is closed and vertical composition is continuous, we deduce that
$\psi_{m,n}\big(D(x_1^{e_1}\cdots x_l^{e_l})\big)^{\mu,\lambda}_{\bi^{D_\bc},\bj^{D_\bc}}$
lies in $F_{\#(D)}\hatH_\lambda\big(\bj^{D_\bc},\bi^{D_\bc}\big)$.
Moreover,
modulo lower terms in the crossing filtration, i.e., terms in
$F_{\#(D)-1}\hatH_\lambda\big(\bj^{D_\bc},\bi^{D_\bc}\big)$,
it is congruent to $D_\bc\ \text{\regionlabel{\lambda}}$
decorated with $(c_k+x)^{e_k}$ pinned at $p_k$, 
an invertible glyph near each cup and cap, and an invertible power series $q_{i,j}(x,y)$ near each crossing.

Now we move all glyph coefficients from interior 2-cells to the rightmost 2-cell using \cref{slideright1,slideright2}
and the definitions \cref{quicka,quickb}. 
This produces additional pins of invertible power series. 
Then we move the pointy ends of the pins 
to the distinguished points,
which produces some sign changes over critical points but, otherwise, does not change the diagram modulo lower terms
in the crossing filtration.
In this way, we obtain
$D_\bc\big(u (c_1+x_1)^{e_1}\cdots (c_l+x_l)^{e_l}\big)\ \text{\regionlabel{\lambda}}$ modulo lower terms in the crossing filtration
for some unit $u \in \hatZ_\lambda\llbracket x_1,\dots,x_l\rrbracket^\times$.
Applying $\cf_{\lambda;D_\bc}$ proves the first assertion.

For the second assertion, take $D' \in \shape(m\times n)$
with $D'\neq D$ and $\#(D') \leq \#(D)$
and $e'_1,\dots,e_l' \geq 0$.
Then we expand $\psi_{m,n}\big(D'(x_1^{e_1'}\cdots x_l^{e_l'})\big)^{\mu,\lambda}_{\bi^{D_\bc},\bj^{D_\bc}}$ 
in the same way as above.
All of the terms lie in $F_{\#(D)-1}\hatH_\lambda\big(\bj^{D_\bc}, \bi^{D_\bc}\big)$
except if $D'$ has exactly $\#(D)$ crossings, when there could be a term in which $X_{i,j}$ is chosen at every crossing.
This term is present only if there is a coloring $\bc'$ of $D'$
such that the boundary colors of $D'_{\bc'}$ agree with
$\bi^{D_\bc}$ at the top 
and $\bj^{D_\bc}$ at the bottom.
By the same sliding argument, if present,
this term is congruent to $D'_{\bc'}(g)\ \text{\regionlabel{\lambda}}$ modulo lower terms in the crossing filtration
for some $g \in \hatZ_\lambda\llbracket x_1,\dots,x_l\rrbracket$. The image of this 
under $\cf_{\lambda;D_\bc}$ is zero as $D'\neq D$.
\end{proof}

\begin{lem}\label{psi0in}
The homomorphism $\psi_0\colon Z \rightarrow \hatZ_0$ 
(that is, $\psi_\lambda\colon Z \rightarrow \hatZ_\lambda$
in the case $\lambda=0$) is injective.
\end{lem}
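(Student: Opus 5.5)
Write $O(u)=\id_\one+\sum_{r\ge1}o_r u^{-r}$. By Remark~\cref{easypart}, the straightening algorithm shows that $Z$ is spanned by monomials in the $o_r$ (equivalently in the $z_{2r}$). The plan is to show that the images of suitable generators under $\psi_0$ are algebraically independent in $\hatZ_0\cong\hatR$. By \cref{psiO} with $\lambda=0$ we have $\chi_0(u)=\prod_i(u-i)^{\del_i}=\frac{u-h}{u}$, so
$$\psi_0(O(u))=\tfrac{u-h}{u}\prod_{i\in I}z_{0;i}(u-i),\qquad z_{0;i}=\hat\varphi_0(z_i).$$
Hence $\psi_0$ factors as $Z\to\hatR\xrightarrow{\hat\varphi_0}\hatZ_0$, where the first map sends $O(u)$ to $\frac{u-h}{u}\prod_i z_i(u-i)$. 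Since $\hat\varphi_0$ is an isomorphism, it suffices to prove that the first map is injective.

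First I would find generators of $Z$. From \cref{curlsup1}, the series $O(u)$ satisfies $O(u)O(-u)=\frac{u^2-h^2}{u^2}$. Set $\tilde O(u)\coloneq\frac{u}{u-h}O(u)$; then $\tilde O(u)\tilde O(-u)=1$. This relation makes the odd part of $\log\tilde O$ free and the even part zero, so $Z$ is generated by the odd power-sum coefficients $p_{2r-1}$ of $\log\tilde O(u)$, $r\ge1$. By the spanning part of Remark~\cref{easypart}, $Z$ is a quotient of the polynomial ring $\kk[p_1,p_3,\dots]$.

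Next I compute the image. Under the first map, $\log\tilde O(u)\mapsto\sum_{i\in I}\log z_i(u-i)$, which is a sum converging in the GC topology. Using $z_{-i}(u)z_i(-u)=1$, I group $i$ with $-i$ over $I'\sqcup\{0\}$ with $I'\subset I$. In $R$, the free generators are $z_0^{(2r-1)}$ and $z_i^{(r)}$ for $i\in I'$. Write $\ell_i(u)\coloneq\log z_i(u)=\sum_{n\ge1}q_{i,n}u^{-n}$; for $i\in I'$ the $q_{i,n}$ are free polynomial generators, and $q_{0,n}$ vanishes for even $n$. The pair $i,-i$ contributes $\ell_i(u-i)-\ell_i(i-u)$ to the $u$-expansion. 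Expanding $(u-i)^{-n}=\sum_k\binom{n+k-1}{k}i^ku^{-n-k}$, the coefficient of $u^{-m}$ in the image of $\log\tilde O$ is
$$2\sum_{i\in I'}\ \sum_{n\text{ odd}}\tbinom{m-1}{m-n}i^{m-n}q_{i,n}+\cdots,$$
together with corresponding contributions from even $n$ and from $i=0$. In particular, the $u^{-(2r-1)}$ coefficient contains the term $2q_{0,2r-1}$. That term comes from the $i=0$ factor, where $\ell_0(u)$ is odd so the contribution is $2\ell_0(u)$.

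To conclude, project $\hatR$ onto the quotient by the closed ideal $\overline U_{I-\{0\}}$, that is, set $z_i=1$ for all $i\neq0$. This quotient is $\kk[z_0^{(1)},z_0^{(3)},\dots]$, and there the image of $p_{2r-1}$ is $2q_{0,2r-1}$. The elements $q_{0,2r-1}$ are algebraically independent: they are related to the free generators $z_0^{(2r-1)}$ by a triangular change of variables, since $\log$ of a series satisfying $z(u)z(-u)=1$ has odd leading terms equal to the $z_0^{(2r-1)}$ modulo products. The characteristic $p$ is an issue here, because logarithms are not available. In that case I would avoid $\log$ and argue directly with the generators $o_{2r-1}$, checking that under this projection $o_{2r-1}\mapsto \pm z_0^{(2r-1)}$ plus decomposables. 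This works because $\frac{u-h}{u}$ and the $i=0$ factor are the only survivors. So the composite $\kk[p_1,p_3,\dots]\to\kk[z_0^{(\mathrm{odd})}]$ is an isomorphism. It follows that the polynomial ring surjecting onto $Z$ injects into $\hatZ_0$, hence $\psi_0$ is injective, and incidentally $Z$ is free on those generators.

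The main obstacle is doing this triangularity check carefully in positive characteristic, where power sums fail. There I would work with the coefficients $\tilde o_{2r-1}$ of $\tilde O(u)$ directly and show that their images are $z_0^{(2r-1)}$ modulo the ideal generated by lower-degree elements.
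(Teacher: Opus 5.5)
Your argument is correct and is essentially the paper's. Both project $\hatZ_0$ onto the completion of $A=Z_0/\big(z_{0;i}^{(r)}\:\big|\:i\neq0,\ r\geq1\big)$, where $\psi_0(O(u))$ becomes $(1-hu^{-1})a(u)$, and then use triangularity against the free generators $a_1,a_3,\dots$ together with the spanning part of \cref{easypart}. The logarithm detour is unnecessary, and your factor $2$ in front of $q_{0,2r-1}$ should be $1$ since $i=0$ occurs only once in the product. With your normalization $\tilde O(u)=\frac{u}{u-h}O(u)$, the projection sends $\tilde O(u)$ exactly to $a(u)$, so $\tilde o_{2r-1}\mapsto a_{2r-1}$ on the nose; this settles every characteristic $p\neq2$ with no further triangularity check.
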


\begin{proof}
Let $A$ be the graded algebra that is the 
quotient of $Z_0$ by the homogeneous ideal generated by all $z_{0;i}^{(r)}$ for $i \neq 0$ and $r \geq 1$, and let
$a_r$ be the image of $z_{0;0}^{(r)}$ in $A$. Setting $a(u) \coloneq 1 + \sum_{r \geq 1} a_r u^{-r}$,
we have that $a(u)a(-u) = 1$.
Since $Z_0 \cong R$, $A$ is freely generated by $a_1,a_3,a_5,\dots$; cf. the sentence after \cref{flambda}.
Equating $u^{-2s}$-coefficients in $a(u)a(-u)=1$ gives
$2 a_{2s} = -\sum_{t=1}^{2s-1}(-1)^t a_t a_{2s-t}$. As $p \neq 2$, it follows by induction on $s$ that
$a_{2s} \in \kk[a_1,a_3,\dots,a_{2s-1}]$.

The quotient map $Z_0\twoheadrightarrow A$ maps the GC neighborhood $U_{0;d,I-\{0\}}$ into $A_{\geq d}$. 
Hence, it extends to a continuous homomorphism
$\pi\colon\hatZ_0\rightarrow
\widehat{A}$, where $\widehat{A}$ is the completion of $A$ with respect to the grading.
We have that
$\pi(O_{0;0}(u)) = u^{-1} a(u)$,
$\pi(O_{0;h}(u-h)) = u-h$,
and $\pi(O_{0;i}(u-i)) = 1$ for $i \in I - \{0,h\}$.
So \cref{psiO} gives that
$$
\pi\big(\psi_0(O(u))\big) = (1-hu^{-1})a(u)
= 1+ (a_1-h)u^{-1} + (a_2-h a_1) u^{-2}+\cdots.
$$
On the other hand, by \cref{eatz},
$$
O(u) = \frac{u-h}{u}+\sum_{r\geq 0}z_r u^{-r-1}
=1+ (z_0-h) u^{-1} + z_1 u^{-2}+z_2 u^{-3}+\cdots
$$
where
$z_r$ is the bubble with $r$ dots from \cref{brauerbubbles}.
Comparing coefficients, we deduce that
\begin{align*}
\pi(\psi_0(z_0)) &= a_1,&
\pi(\psi_0(z_r)) &= a_{r+1} - h a_r\quad(r \geq 1).
\end{align*}
Hence,
$\pi(\psi_0(z_{2s})) \in a_{2s+1} + \kk[a_1,a_3,\dots,a_{2s-1}]$
for all $s \geq 0$.
Since $Z$ is spanned by the monomials in $z_0,z_2,z_4,\dots$ (see \cref{easypart}),
this triangularity shows that $\pi\circ\psi_0$ maps these monomials to a basis of $A$. So it maps $Z$ isomorphically onto $A$. In particular, $\psi_0$ is injective.
\end{proof}

\begin{lem}\label{faithful}
The functor $\Psi$ is faithful.
\end{lem}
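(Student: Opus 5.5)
We show that each $\psi_{m,n}$ is injective by combining the Rui--Song basis from \cref{RSbasis} with the leading-term computation of \cref{leading}. By \cref{RSbasis} and \cref{exp1}, any non-zero $\theta\in\Hom_{\AB}(B^{\otimes n},B^{\otimes m})$ can be written as $\theta=\sum_{D}D(f_D)$ with $f_D=\pr_D(\theta)\in Z[x_1,\dots,x_l]$, not all zero. Among the shapes $D$ with $f_D\neq 0$, choose one with the maximal number of crossings $\#(D)$. It then suffices to find a coloring $\bc\in I^l$ and an iweight $\lambda\in X^\imath_\0$ such that
\[
\cf_{\lambda;D_\bc}\big(\psi_{m,n}(\theta)^{\mu,\lambda}_{\bi^{D_\bc},\bj^{D_\bc}}\big)\neq 0,
\]
where $\mu$ is as in \cref{nastymu}.

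We evaluate this projection term by term using \cref{leading}. Every other shape $D'$ with $f_{D'}\neq 0$ satisfies $\#(D')\le\#(D)$, so its contribution vanishes by the second assertion of \cref{leading}. The $D$-term contributes $u\,\psi_\lambda\big(f_D(c_1+x_1,\dots,c_l+x_l)\big)$, where $u$ is a unit in $\hatZ_\lambda\llbracket x_1,\dots,x_l\rrbracket$. Since $\cf_{\lambda;D_\bc}$ is continuous and additive, and $\theta$ is a finite sum, no convergence issue arises. The problem therefore reduces to showing that $\psi_\lambda\big(f_D(c+x)\big)\neq 0$ for a suitable choice of $\lambda$ and $\bc$.

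Take $\lambda=0$, which is even, and any coloring $\bc$; for instance $\bc=(0,\dots,0)$ is admissible because \cref{nastymu} only defines $\mu$. The substitution $x_k\mapsto c_k+x_k$ is an automorphism of $Z[x_1,\dots,x_l]$, so $f_D(c+x)$ is a non-zero polynomial with coefficients in $Z$. By \cref{psi0in}, the map $\psi_0$ is injective on $Z$. Applied coefficientwise, it therefore sends a non-zero polynomial to a non-zero element of $\hatZ_0[x_1,\dots,x_l]\subset\hatZ_0\llbracket x\rrbracket$. Multiplying by the unit $u$ preserves non-vanishing, so $\psi_{m,n}(\theta)\neq 0$ and $\Psi$ is faithful.

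The main point needing care is the bookkeeping in the cancellation step: \cref{leading} must apply uniformly, with all competing shapes of equal or fewer crossings killed by the projection $\cf_{\lambda;D_\bc}$. One must also check that the chosen coloring is compatible with $\lambda=0$. This only requires that the boundary words give equal iweights, and that holds automatically because $\bi^{D_\bc}$ and $\bj^{D_\bc}$ come from the same colored strings. Beyond that, the argument is formal once \cref{leading} and \cref{psi0in} are available.
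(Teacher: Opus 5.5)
Your proof is correct and is essentially the paper's argument: choose a shape $D$ with nonzero coefficient and the maximal number of crossings, apply \cref{leading} with $\lambda=0$ and $\bc=(0,\dots,0)$ (for which $\mu=0$, since $\pi(\alpha_0)=0$), and conclude using the injectivity of $\psi_0$ from \cref{psi0in}. The only small difference is that you use the full basis theorem (via \cref{exp1}), whereas the paper needs only the easy spanning half of \cref{RSbasis}. The paper's choice keeps open the possibility, noted in \cref{easypart}, of reproving linear independence by this same argument.
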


\begin{proof}
Take $m,n \geq 0$ with $m \equiv n \pmod{2}$
and set $l \coloneq \frac{m+n}{2}$.
Let $\theta \in \Hom_{\AB}(B^{\otimes n},B^{\otimes m})$ be non-zero. Using the easy part of \cref{RSbasis} (see \cref{easypart}),
we can write
$$
\theta = \sum_{D \in \shape(m\times n)} D(\theta_D(x_1,\dots,x_l))
$$ 
for some 
$\theta_D(x_1,\dots,x_l) \in Z[x_1,\dots,x_l]$.
Choose $D \in \shape(m\times n)$ with the greatest number of crossings 
among all shapes such that $\theta_D(x_1,\dots,x_l) \neq 0$.
Then we apply \cref{leading}, taking 
the coloring $\bc$ to be $\bzero \coloneq (0,\dots,0)$
and the iweight $\lambda$ to be 0.
As every $D' \neq D$ with $\theta_{D'}(x_1,\dots,x_l)\neq 0$ has at most $\#(D)$ crossings, we get that
$$
\cf_{0;D_\bzero}\Big(\psi_{m,n}(\theta)^{0,0}_{\bi^{D_\bzero},\bj^{D_\bzero}}\Big) =
u \psi_0\big(\theta_D(x_1,\dots,x_l)\big)
$$
for some unit $u \in \hatZ_0\llbracket x_1,\dots,x_l\rrbracket^\times$.
It remains to show that $\psi_0(\theta_D(x_1,\dots,x_l))\neq 0$.
As this 
is obtained from $\theta_D(x_1,\dots,x_l)$ 
simply by applying $\psi_0$ to its coefficients,
this follows from \cref{psi0in}.
\end{proof}

Henceforth, we identify
$\Hom_{\AB}(B^{\otimes n}, B^{\otimes m})$ with a subspace
of the complete topological vector space
$\Hom_{\hatAB}(B^{\otimes n},B^{\otimes m})$ using the injective 
map $\psi_{m,n}$.
In particular, this identifies $Z$ with a subalgebra of $\hatZ$.

\begin{lem}\label{zdense}
The subalgebra $Z$ is dense in the complete topological algebra $\widehat{Z}$.
\end{lem}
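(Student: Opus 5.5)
We must show that for every $z\in\hatZ=\prod_{\lambda\in X^\imath_\0}\hatZ_\lambda$ and every basic neighborhood $N$ of $0$ there is $y\in Z$ with $z-y\in N$. By the description of the topology on $\hatZ$ following \cref{tricky}, we may take $N=\prod_\lambda \overline{U}_{\lambda;d(\lambda),J(\lambda)}$ with $d(\lambda)\le 0$ (so the factor is everything) for all $\lambda$ outside a finite set $\Lambda\subset X^\imath_\0$. Using \cref{verysmall}, we may also take a common $d\ge 0$ and a common cofinite $J=-J$ for $\lambda\in\Lambda$. By \cref{dontknow}, it therefore suffices to show that the composite
$$
Z \xrightarrow{\ (\psi_\lambda)_{\lambda\in\Lambda}\ } \prod_{\lambda\in\Lambda}\hatZ_\lambda/\overline{U}_{\lambda;d,J}\cong\prod_{\lambda\in\Lambda} R/U_{d,J}
$$
is surjective. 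Enlarging $J^c$ if necessary, we may assume that $S\coloneq I-J$ is finite, closed under $\pm$, and contains every $i$ with $\del_i-\lambda_i\neq 0$ for some $\lambda\in\Lambda$. The target is then a finite product of finite-dimensional local algebras $A_\lambda\coloneq R/U_{d,J}$. Each $A_\lambda$ is generated by the images of $z_{\lambda;i}^{(r)}$ for $i\in S$ and $r\ge1$, and its maximal ideal is spanned by the positive-degree elements.

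The key input is \cref{psiO}: $\psi_\lambda(O(u))=\chi_\lambda(u)\prod_{i\in I}z_{\lambda;i}(u-i)$. Modulo $U_J$, only the factors with $i\in S$ survive, giving $\chi_\lambda(u)\prod_{i\in S}\bar z_{\lambda;i}(u-i)$. The plan is to recover each $\bar z_{\lambda;i}(u)$ for $i\in S$ from the image of $Z$, and to separate the different $\lambda$.

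\emph{Separating the $\lambda$'s.} The rational functions $\chi_\lambda$ for $\lambda\in\Lambda$ are pairwise distinct by \cref{lasers}. Moreover, the image of $O(u)$ in $A_\lambda$, reduced modulo the maximal ideal, is exactly $\chi_\lambda(u)$. Some coefficient of $O(u)$, that is, some scalar plus a bubble $z_r$, therefore has distinct residues in the distinct $A_\lambda$. Since the target is a finite product of local algebras, Chinese-remainder idempotents can be built as polynomials in such elements. Concretely, take a product of powers of $(c-\mu)$ with large exponents, as in a Lagrange/Hensel argument like \cref{hensel}. The nilpotent radicals make high powers vanish.

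\emph{Generating each $A_\lambda$.} This is the step I expect to be the main obstacle. Fix $\lambda$ and consider the series $F(u)=\bar\psi_\lambda(O(u))\in A_\lambda\lround u^{-1}\rround$, whose coefficients lie in the image of $Z$. This image is a closed subalgebra, being finite-dimensional. The image subalgebra $B\subseteq A_\lambda$ contains all coefficients of $F(u)$, and $F(u)$ is a rational function over $A_\lambda$ of the shape covered by \cref{localfactors}. The unique local factors $F_i(u)$ at $u=i$ are then given by $\chi$-local factor times $\bar z_{\lambda;i}(u)$ up to the shift. Their coefficients should be expressible polynomially in the coefficients of $F$ over the subalgebra $B$. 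The uniqueness in \cref{hensel}, applied over $B$ rather than $A_\lambda$, shows this: $B$ is itself a local finite-dimensional algebra, so the factorization can be performed in $B$, and by uniqueness it agrees with the one in $A_\lambda$. A subtlety is that $F$ is a product over $S$ of power series in $u^{-1}$ rather than a quotient of monic polynomials. I would handle this by truncating: modulo $R_{\ge d}$, each $\bar z_{\lambda;i}(u)$ is a polynomial in $u^{-1}$ of bounded degree. Multiplying by suitable powers of $(u-i)$ then produces monic polynomials, and \cref{hensel} applies.

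Hence each $\bar z_{\lambda;i}(u)$, $i\in S$, has coefficients in $B$, so $B=A_\lambda$. This uses that the distinct points $i\in S$ separate the local factors, the $i=0$ factor included, where $\chi_\lambda$ contributes $u^{-1}$. Combining with the idempotents gives surjectivity onto the product, and thus density.
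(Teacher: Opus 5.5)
Your proof is correct. For the key single-$\lambda$ step it takes a genuinely different route from the paper.

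The paper (Claim~1 in its proof) works on the tangent space. Set $A=\hatZ_\lambda/\overline{U}_{\lambda;d,J}$, with maximal ideal $\mathfrak{m}$, and let $S$ be the image of $Z$. The paper expands the image of $\psi_\lambda(O(u))$ modulo $\mathfrak{m}^2$ as $\chi_\lambda(u)\bigl(1+\sum_{z}zR_z(u)\bigr)$, for explicit rational functions $R_z(u)$ with distinct principal parts. Here it uses $p\neq 2$ to push $z^{(2r)}_{\lambda;0}$ into $\mathfrak{m}^2$. It then extracts finitely many coefficients through an invertible matrix to get $S+\mathfrak{m}^2\supseteq\mathfrak{m}$, and bootstraps using nilpotency of $\mathfrak{m}$.

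You instead run Hensel's lemma over the image subalgebra $B$. Uniqueness of local factors (\cref{localfactors}) over $A_\lambda$ then shows that each local factor $u^{\del_i-\lambda_i}\bar z_{\lambda;i}(u)$, $i\in S$, already has coefficients in $B$. This recovers all generators at once rather than only modulo $\mathfrak{m}^2$, and avoids the linear-independence computation. The cost is reliance on the Hensel machinery, which the paper also uses in \cref{subsist}; the paper's argument is more explicit and self-contained. Your separation of the different $\lambda$ by idempotents is equivalent to the paper's comaximality/CRT argument.

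When writing it up, make three points explicit:
\begin{enumerate}
\item Clear denominators with a polynomial in $\kk[u]$, such as the denominator of $\chi_\lambda$ times $\prod_{i\in S}(u-i)^N$. Then the numerator's coefficients are $\kk$-linear combinations of coefficients of $F(u)$, so it lies in $B[u]$ and Hensel over $B$ applies.
\item Note that $B$ is local with residue field $\kk$, because $B\cap\mathfrak{m}$ is a nilpotent ideal of codimension one.
\item In the separating step, a single coefficient of $O(u)$ need not distinguish all $\chi_\lambda$ ($\lambda\in\Lambda$) at once. Use pairwise-separating elements and their products, or a generic linear combination, instead.
\end{enumerate}
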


\begin{proof}
The topology on $\hatZ = \prod_{\lambda \in X^\imath_\0} \hatZ_\lambda$ is the product of the GC topologies on the factors.
Since $Z_\lambda \cong R$ as topological algebras, \cref{verysmall} implies that
the neighborhoods $U_{\lambda;d,J}$ with $J = -J$
form a system of neighborhoods of 0 in $Z_\lambda$.
In view of this, 
it suffices to show that the homomorphism
\begin{align}\label{wanttosurject}
\pi:Z &\rightarrow \prod_{\lambda\in \Lambda} \hatZ_\lambda/\overline{U}_{\lambda;d_\lambda,J_\lambda},&
z &\mapsto \big(\psi_\lambda(z) + \overline{U}_{\lambda;d_\lambda,J_\lambda}\big)_{\lambda\in \Lambda}
\end{align}
is surjective
for any finite set $\Lambda$ of even iweights, 
$d_\lambda\geq 1$ and 
cofinite subsets $J_\lambda = -J_\lambda \subseteq I$ for each $\lambda \in \Lambda$.
This follows by the two claims below:

\vspace{2mm}
\noindent
{\underline{Claim 1}:} {\em The homomorphism $\pi$ is surjective when $\Lambda = \{\lambda\}$ is a singleton.}

\vspace{1mm}
\noindent
Proof:
Write $d$ and $J$ for $d_\lambda$ and $J_\lambda$, and let $A \coloneq \hatZ_\lambda /\overline{U}_{\lambda;d,J}$. Let $K \coloneq I - J$, which is finite and closed under $\pm$, and choose $I' \subset I$ such that $I = I' \sqcup\{0\}\sqcup(-I')$. 
Let $K' \coloneq K \cap I'$.
As $Z_\lambda \cong R$, the description of $R$ after \cref{flambda} implies that $Z_\lambda / U_{\lambda;J}$
is a polynomial algebra 
freely generated by the images of $z_{\lambda;0}^{(2r-1)}$ (if $0 \in K$) and $z_{\lambda;i}^{(r)}$ ($i \in K'$) for $r \geq 1$, which are all of positive degree.
Hence, 
$A=\hatZ_\lambda/\overline{U}_{\lambda;d,J}
\cong Z_\lambda/U_{\lambda;d,J}$ 
 is finite-dimensional and local, its unique maximal ideal $\mathfrak{m}$ is the image of $Z_{\lambda;\geq 1}$, which is nilpotent,
and $\mathfrak{m}/\mathfrak{m}^2$ has a basis given by the images of the elements of the finite set
$$
\Omega \coloneq \big\{z_{\lambda;0}^{(2r-1)}\:\big|\:0 \in K, 2(2r-1)<d\big\}\cup
\big\{z_{\lambda;i}^{(r)}\:\big|\:i \in K', 2r<d\big\}.
$$
Let $S \subseteq A$ be the image of $Z$.
To show that $S = A$, it suffices to show that $S+\mathfrak{m}^2 \supseteq \mathfrak{m}$.
Indeed, assuming this, we get that $\mathfrak{m} \subseteq S + \mathfrak{m}^2 \subseteq S + (S\cap \mathfrak{m}+\mathfrak{m}^2)^2 \subseteq S + \mathfrak{m}^3 \subseteq\cdots$, and $\mathfrak{m}^r = 0$ for $r \gg 0$.

Let $z(u) \in A\lround u^{-1}\rround$ be the image of $\psi_\lambda(O(u))$.
Its coefficients belong to $S$.
By \cref{psiO}, $z(u)$ is the image of 
$\chi_\lambda(u)
\prod_{i\in K} z_{\lambda;i}(u-i)$, a finite product.
To compute this modulo $\mathfrak{m}^2$, note that $\prod_{i\in K}z_{\lambda;i}(u-i) \equiv 1 + \sum_{i\in K}\big(z_{\lambda;i}(u-i)-1\big)$, and
that $z_{\lambda;-i}(u) = z_{\lambda;i}(-u)^{-1} \equiv 1 - \big(z_{\lambda;i}(-u)-1\big)$; in particular, $2 z_{\lambda;0}^{(2r)} \in \mathfrak{m}^2$, hence $z_{\lambda;0}^{(2r)} \in \mathfrak{m}^2$ as $p\neq 2$.
Remembering that generators of degree at least $d$ are zero in $A$, we deduce that
$$
z(u) \equiv \chi_\lambda(u)\Big(1+\sum_{z \in \Omega} z\,R_z(u)\Big)\pmod{\mathfrak{m}^2\lround u^{-1}\rround},
$$
where $R_z(u) \coloneq u^{-r}$ if $z = z_{\lambda;0}^{(r)}$ and $R_z(u) \coloneq (u-i)^{-r} - (-1)^r(u+i)^{-r}$ if $z = z_{\lambda;i}^{(r)}$ for $i \in K'$.
The rational functions $R_z(u)$ for $z\in \Omega$ are linearly independent: as $K'$, $\{0\}$ and $(-K')$
are pairwise disjoint, the principal parts of 
$\sum_{z\in \Omega}c_z R_z(u)$ at $0$ and at $i\in K'$ are $\sum_{r} c_{z_{\lambda;0}^{(r)}}u^{-r}$ and $\sum_r c_{z_{\lambda;i}^{(r)}}(u-i)^{-r}$, respectively.
Hence the rational functions $\chi_\lambda(u)R_z(u)$ for $z \in \Omega$ are linearly independent too, and so are their expansions in $\kk\lround u^{-1}\rround$.
So there are integers $n_1,\dots,n_{|\Omega|}$ such that the matrix $\big([\chi_\lambda(u) R_z(u)]_{u:n_t}\big)_{z\in \Omega, 1 \leq t\leq|\Omega|}$ is invertible.
Since $[z(u)]_{u:n_t} - [\chi_\lambda(u)]_{u:n_t} \equiv \sum_{z\in \Omega}[\chi_\lambda(u)R_z(u)]_{u:n_t}\,z \pmod{\mathfrak{m}^2}$,
it follows that the elements $[z(u)]_{u:n_t}-[\chi_\lambda(u)]_{u:n_t}$ of $S$ span $\mathfrak{m}$ modulo $\mathfrak{m}^2$.

\vspace{2mm}
\noindent
{\underline{Claim 2}:} {\em The homomorphism $\pi$ is surjective in general.}

\vspace{1mm}
\noindent
Proof:
For $\lambda \in \Lambda$, let $K_\lambda$ be the kernel of the homomorphism $\pi_\lambda:Z \rightarrow A_\lambda \coloneq \hatZ_\lambda/\overline{U}_{\lambda;d_\lambda,J_\lambda}$ obtained by composing $\pi$ with the $\lambda$th projection. This
is surjective by Claim 1.
Let $M_\lambda \supseteq K_\lambda$ be the kernel of the composition of $\pi_\lambda$ with the projection of the local algebra $A_\lambda$ onto its residue field $\kk$.
As all $z_{\lambda;i}^{(n)}$ with $n\geq 1$ map to zero in $\kk$, \cref{psiO} shows that this composition maps $O(u)$ to $\chi_\lambda(u)$.
The rational functions $\chi_\lambda(u)$ for $\lambda\in\Lambda$ are distinct by \cref{lasers}, so the maximal ideals $M_\lambda$ ($\lambda \in \Lambda$) of $Z$ are distinct.
As the unique maximal ideal of $A_\lambda$ is nilpotent, we have that $M_\lambda^r \subseteq K_\lambda$ for $r\gg 0$,
so the ideals $K_\lambda$ are pairwise comaximal too.
The Chinese Remainder Theorem now gives that $Z \rightarrow \prod_{\lambda\in\Lambda} Z/K_\lambda \cong \prod_{\lambda\in\Lambda} A_\lambda$ is surjective.
\end{proof}

Since vertical and horizontal composition 
in $\hatAB$ are continuous (\cref{conv1,conv2}),
there is a monoidal subcategory
$\overline{\AB}$ of $\hatAB$
with objects $B^{\otimes n}$ for all $n \geq 0$,
and morphism spaces
$$
\Hom_{\overline{\AB}}(B^{\otimes n}, B^{\otimes m})
\coloneq
\overline{\Hom_{\AB}(B^{\otimes n},B^{\otimes m})}
$$
for $m,n \geq 0$, i.e., the closure
of $\Hom_{\AB}(B^{\otimes n},B^{\otimes m})$
in $\Hom_{\hatAB}(B^{\otimes n},B^{\otimes m})$.

\begin{theo}\label{btoq-thm}
$\overline{\AB}=\hatAB$.
\end{theo}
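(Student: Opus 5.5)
We must show that each $\psi_{m,n}$ has dense image in $\Hom_{\hatAB}(B^{\otimes n},B^{\otimes m})$. By \cref{tricky} and \cref{arewestupid}, a basic neighbourhood of $0$ constrains only finitely many triples $(\lambda;\bi,\bj)$, and within each such $\hatH_\lambda(\bj,\bi)$ only finitely many coordinates $\cf_{\lambda;D}$ of the topological basis, each taken modulo some $\overline{U}_{\lambda;d,J}$. So it suffices to show the following. Fix a finite set $\Lambda$ of even iweights, a finite set of pairs $(\bi,\bj)$, and a target $\theta\in\Hom_{\hatAB}(B^{\otimes n},B^{\otimes m})$. Then some element of $\overline{\AB}$ agrees with $\theta$ modulo the corresponding neighbourhood. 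Since $\overline{\AB}$ is closed, it is enough to show that every such target lies in $\overline{\AB}$ up to that neighbourhood.

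The first step is to show that $\overline{\AB}$ contains all the "idempotent-type" elements: by \cref{zdense}, $\overline{Z}=\hatZ$, so every $e_\lambda\in\hatZ$ (the unit of the factor $\hatZ_\lambda$) lies in $\overline{\AB}$. Next we show that the projection of $\id_B$ onto $B_i\one_\lambda$ lies in $\overline{\End_{\AB}(B)}$. For this we use the dot together with $\hatZ$: inside $\End_{\hatAB}(B)=\prod_{i,\lambda}\End_{\hatUU^\imath}(B_i\one_\lambda)$ the dot acts on the $(i,\lambda)$ factor as $i+x$ with $x$ topologically nilpotent, by \cref{dotdef}. Modulo any neighbourhood only finitely many $(i,\lambda)$ matter and $x$ is nilpotent there. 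Lagrange/Hensel interpolation (polynomials in the dot with coefficients in $\hatZ$, cf.\ \cref{hensel}) then produces the projector onto $B_i\one_\lambda$ modulo the neighbourhood. Tensoring these projectors gives the projectors onto each summand $B_\bi\one_\lambda$ of $B^{\otimes n}$ in $\overline{\AB}$. Combining with \cref{conv1,conv2}, it therefore suffices to prove that, for a single triple $(\lambda;\bi,\bj)$, the closure of the image of $\psi_{m,n}$ projected to $\hatH_\lambda(\bj,\bi)$ is everything.

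For that, we argue by downward induction on crossings using \cref{leading}. Take a reduced shape $D_\bc\in\shape(\bi\times\bj)$; it comes from a Brauer shape $D$ with a coloring $\bc$. \cref{leading} says that the $\cf_{\lambda;D_\bc}$-component of $\psi(D(f))$ is $u\,\psi_\lambda(f(c_1+x_1,\dots))$ with $u$ a unit. The other shapes with at most $\#(D)$ crossings contribute nothing in that component, and the remainder lies in $F_{\#(D)-1}$. Because $\psi_\lambda(Z)$ is dense in $\hatZ_\lambda$ (\cref{zdense}), and polynomials in the shifted variables are dense in $\hatZ_\lambda\llbracket x_1,\dots,x_l\rrbracket$, we can choose $f$ so that $u\,\psi_\lambda(f(\bc+\bx))$ approximates any prescribed coordinate modulo the neighbourhood. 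Here we multiply by $u^{-1}$ and use continuity. Different colorings $\bc$ of the same $D$ with the same boundary word can be separated by the projectors constructed above, since $\bc$ is determined on through-strings by the boundary labels and on caps/cups by the eigenvalue projectors. We treat shapes of maximal crossing number first, subtract, and descend through $F_d$. Since $F_{-1}=0$, this completes the argument. The main obstacle I expect is the separation step: closed components are absent from reduced shapes, but cup–cap strings of $D$ carry colorings not visible on the boundary, and one must check that inserting dot projectors along the string (allowed in $\overline{\AB}$) isolates the chosen coloring without spoiling the leading-term control of \cref{leading}.
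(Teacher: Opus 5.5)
\textbf{Verdict: your proposal is correct, but it takes a different route from the paper.}

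\textbf{The paper's route.} It reduces, via \cref{tricky} and the fact that $\overline{\AB}$ is a closed monoidal subcategory on which $\iota$ respects both compositions, to checking that $\iota$ of each \emph{generating} 2-morphism of $\UU^\imath$ lies in $\overline{\AB}$. The four cases are:
\begin{itemize}
\item the idempotents $e_{\lambda;i}$, obtained by the Chinese Remainder Theorem in the dot, exactly as you do;
\item dots and pins, obtained as $\Psi$ of the dot composed with $e_{\lambda;i}$;
\item caps and cups, obtained as $\Psi$ of the Brauer cap or cup composed with inverse glyph pins;
\item crossings, obtained as $X_{i,j}$ (a projected Brauer crossing) corrected by $q_{i,j}^{-1}$ modulo $F_0$; the space $F_0$ is already reached by caps, cups and dots.
\end{itemize}

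\textbf{Your route.} You instead run a global triangular approximation in the topological basis of \cref{arewestupid}, by downward induction on crossings using \cref{leading}. This is the same mechanism as in \cref{faithful} and \cref{newattempt}. It works. Fix $(\lambda;\bi,\bj)$ and $D_\bc\in\shape(\bi\times\bj)$, and sandwich $\psi_{m,n}(D(f))$ between the tensor products of boundary idempotents $e_{\nu;i}$. By \cref{leading}, the result has three properties:
\begin{itemize}
\item it lies in $F_{\#(D)}$;
\item its $\cf_{\lambda;D''_{\bc''}}$-coordinate vanishes for every other shape $D''\neq D$ with $\#(D'')\geq\#(D)$;
\item its $D_\bc$-coordinate is $u\,\psi_\lambda\big(f(c_1+x_1,\dots,c_l+x_l)\big)$ for a unit $u$.
\end{itemize}
Since $\psi_\lambda(Z)[x_1,\dots,x_l]$ is dense in $\hatZ_\lambda\llbracket x_1,\dots,x_l\rrbracket$ by \cref{zdense}, the induction goes through.

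\textbf{Comparison.} Your route avoids the generator-by-generator check, but it leans on the heavier \cref{leading}, which itself rests on \cref{modlower} and glyph sliding. The paper's route yields explicit approximations of $e_{\lambda;i}$ and of dots, which it reuses in Claim~2 of the proof of \cref{btoq-cor}.

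\textbf{The obstacle you flag is not real.} Shapes have no closed components, so each of the $l=\frac{m+n}{2}$ strings of $D$ has both endpoints on the boundary. A cup or cap string coloured $c$ carries the labels $c$ and $-c$ at its two endpoints. Hence, for a given $D$, the boundary words $(\bi,\bj)$ already determine the colouring $\bc$; this is the ``exactly one such term'' in the proof of \cref{leading}. The boundary idempotents are therefore all the separation you need, and no projectors along the interior of strings are required.
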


\begin{proof}
For $\lambda,\mu \in X^\imath_\0$, $\bi \in I^m, \bj \in I^n$ with $\mu\coloneq 
\lambda-\alpha_\bi = \lambda-\alpha_\bj$, and any $\theta \in \Hom_{\hatUU^\imath}(B_\bj\one_\lambda,B_\bi\one_\lambda)$,
let $\iota(\theta) \in \Hom_{\hatAB}(B^{\otimes n},B^{\otimes m})$ be the morphism with $\iota(\theta)^{\mu,\lambda}_{\bi,\bj}=\theta$,
all other components being 0.
It follows from \cref{iqmonoidal} that $\iota(\theta)\circ\iota(\phi) = \iota(\theta \circ \phi)$
when the vertical composition 
$\theta\circ\phi$ makes sense, and
$\iota(\theta)\otimes\iota(\phi) = \iota(\theta\phi)$ when 
the horizontal composition $\theta\phi$ makes sense.
We denote $\iota(\id_{\one_\lambda})\in\hatZ$ by $e_\lambda$, and $\iota(\id_{B_i\one_\lambda}) \in \End_{\hatAB}(B)$ by $e_{\lambda;i}$.
By \cref{tricky}, the subspace of $\Hom_{\hatAB}(B^{\otimes n},B^{\otimes m})$ spanned by $\iota(\theta)$ for all
$\theta \in \Hom_{\UU^\imath}(B_\bj\one_\lambda,B_\bi\one_\lambda)$
and all $\lambda,\mu, \bi,\bj$ is dense.
Also \cref{zdense} shows that
$\End_{\overline{\AB}}(\one) = \widehat{Z}$, 
hence, it contains 
$e_\lambda$ and $\iota(z)$ for all $z \in \hatZ_\lambda$.
These observations reduce the proof to showing that 
the images under $\iota$ of the generating 2-morphisms of $\UU^\imath$ belong to the appropriate morphism space of $\overline{\AB}$. We go through these shortly, after 
checking that $\overline{\AB}$ contains the idempotents
$e_{\lambda;i}$.

\vspace{2mm}
\noindent
{\underline{Idempotents}:} {\em 
For $i \in I$ and $\lambda \in X_\0^\imath$, 
$\End_{\overline{\AB}}(B)$ contains the idempotent
$e_{\lambda;i}$.}

\vspace{1mm}
\noindent
Proof:
By the definition of tensor product in \cref{iqmonoidal}, we have that $\id_B\otimes e_\lambda = \sum_{j\in I}e_{\lambda;j}$.
By \cref{hatIZ}, the closed subalgebra $(\id_B\otimes e_\lambda)\circ\End_{\hatAB}(B)\circ(\id_B\otimes e_\lambda)$
is identified with $\prod_{j\in I}\End_{\hatUU^\imath}(B_j\one_\lambda)$ with the product topology.
By \cref{gasp1,dotdef},
we have that 
\begin{equation}\label{prep}
\Psi\Big(\begin{tikzpicture}[B,centerzero,scale=.8]
\draw[-] (0,-0.3)-- (0,0.3);
\closeddot{0,0};
\end{tikzpicture}\Big)\circ(\id_B\otimes e_\lambda)=\Big(\;
\begin{tikzpicture}[iQ,centerzero,scale=.8]
\draw[-] (0,-0.3)\botlabel{j}-- (0,0.3);
\pin{0,0}{-.9,0}{j+x};
\region{.3,0}{\lambda};
\end{tikzpicture}
\Big)_{j \in I}.
\end{equation}
Now fix $i\in I$.
By the discussion before \cref{hatIZ},
any neighborhood of 0 in 
$\prod_{j\in I}\End_{\hatUU^\imath}(B_j\one_\lambda)$
contains
$$
N_{F,d} \coloneq
\prod_{j \in I} N_j
\qquad\text{where}\qquad 
N_j\coloneq \begin{cases}
\overline{\End_{\UU^\imath}(B_j \one_\lambda)_{\geq 2d}}&\text{if $j \in F$}\\
\End_{\hatUU^\imath}(B_j \one_\lambda)&\text{if $j \notin F$,}
\end{cases}
$$
for suitable finite $F \subset I$ and $d \geq 0$.
By the Chinese Remainder Theorem, there is a polynomial $f(x) \in \kk[x]$ with $f(x) \equiv \delta_{i,j}\pmod{(x-j)^d}$ 
for every $j \in F$.
Then $\Psi\Big(\begin{tikzpicture}[B,centerzero,scale=.8]
\draw[-] (0,-0.3)-- (0,0.3);
\pin{0,0}{.8,0}{f(x)};
\end{tikzpicture}\;\Big)\circ(\id_B\otimes e_\lambda)$
has $j$th component 
$\begin{tikzpicture}[iQ,centerzero,scale=.8]
\draw[-] (0,-0.3)\botlabel{j}-- (0,0.3);
\pin{0,0}{-1,0}{f(j+x)};
\region{.3,0}{\lambda};
\end{tikzpicture}
\equiv \delta_{i,j}\begin{tikzpicture}[iQ,centerzero,scale=.8]
\draw[-] (0,-0.3)\botlabel{j}-- (0,0.3);
\region{.3,0}{\lambda};
\end{tikzpicture} =\delta_{i,j} e_{\lambda;j} \pmod{N_j}$
for $j \in F$,
so
$\begin{tikzpicture}[iQ,centerzero,scale=.8]
\draw[-] (0,-0.3)\botlabel{j}-- (0,0.3);
\pin{0,0}{-1,0}{f(j+x)};
\region{.3,0}{\lambda};
\end{tikzpicture} \equiv e_{\lambda;i}\pmod{N_{F,d}}$.
As we have already established that
$e_\lambda \in \End_{\overline{\AB}}(\one)$,
this shows that $e_{\lambda;i}\in \End_{\overline{\AB}}(B)$. 

\vspace{2mm}
\noindent
{\underline{Dots}:} {\em 
For $i \in I$ and $\lambda \in X^\imath_\0$,
$\End_{\overline{\AB}}(B)$
contains
$\iota\Big(\,\begin{tikzpicture}[iQ,centerzero,scale=.8]
\draw[-] (0,-0.3)\botlabel{i}-- (0,0.3);
\closeddot{0,0};
\region{.35,0}{\lambda};
\end{tikzpicture}\!\Big)$.
More generally, it contains 
$\iota\Big(\ \begin{tikzpicture}[iQ,centerzero,scale=.8]
\draw[-] (0,-0.3)\botlabel{i}-- (0,0.3);
\pin{0,0}{.8,0}{f(x)};
\region{1.5,0}{\lambda};
\end{tikzpicture}\Big)$
and
$\iota\Big(\begin{tikzpicture}[iQ,centerzero,scale=.8]
\draw[-] (0,-0.3)\botlabel{i}-- (0,0.3);
\pin{0,0}{-.8,0}{f(x)};
\region{-1.5,0}{\lambda};
\end{tikzpicture}\;\Big)$
for any $f(x) \in \hatZ_\lambda\llbracket x \rrbracket$. Even more generally, the analogous statement is true for power series in more than one variable pinned to dots on several parallel strings.}

\vspace{1mm}
\noindent
Proof:
By \cref{prep}, we have that
\begin{equation}
\iota\Big(\ \begin{tikzpicture}[iQ,centerzero,scale=.8]
\draw[-] (0,-0.3)\botlabel{i}-- (0,0.3);
\closeddot{0,0};
\region{.35,0}{\lambda};
\end{tikzpicture}\Big) = 
\Psi\Big(\ \begin{tikzpicture}[B,centerzero,scale=.8]
\draw[-] (0,-0.3)-- (0,0.3);
\pin{0,0}{-.8,0}{x-i};
\end{tikzpicture}\ \Big)\circ e_{\lambda;i},
\end{equation}
which is in $\End_{\overline{\AB}}(B)$ as we have already shown
that $e_{\lambda;i}\in \End_{\overline{\AB}}(B)$.
The power series $f(x) = \sum_{r \geq 0} f_r x^r
\in \hatZ_\lambda\llbracket x\rrbracket$ pinned to the right of the string is in $\End_{\overline{\AB}}(B)$
because we have that
\begin{equation}
\iota\Big(\begin{tikzpicture}[iQ,centerzero,scale=.8]
\draw[-] (0,-0.3)\botlabel{i}-- (0,0.3);
\pin{0,0}{.8,0}{f(x)};
\region{1.5,0}{\lambda};
\end{tikzpicture}\Big) = \sum_{r\geq 0}\iota\Big(\begin{tikzpicture}[iQ,centerzero,scale=.8]
\draw[-] (0,-0.3)\botlabel{i}-- (0,0.3);
\multcloseddot{0,0}{east}{r};
\region{.35,0}{\lambda};
\end{tikzpicture}\Big)
\circ(\id_B\otimes\iota(f_r))\circ e_{\lambda;i},
\end{equation}
which converges because the dot with multiplicity $r$ is of degree $2r$. Pins with the pinhead on the left are treated similarly.

\vspace{2mm}
\noindent
{\underline{Caps and cups}:} {\em 
For $i \in I$ and $\lambda \in X^\imath_\0$,
$\Hom_{\overline{\AB}}(B^{\otimes 2}, \one)$
contains
$\iota\Big(\ \begin{tikzpicture}[iQ,anchorbase]
\draw[-] (0,0)\botlabel{i} to [out=90,in=90,looseness=3] (.6,0);
\region{.8,.3}{\lambda};
\end{tikzpicture}\Big)$ and
$\Hom_{\overline{\AB}}(\one,B^{\otimes 2})$ contains
$\iota\Big(\ \begin{tikzpicture}[iQ,anchorbase]
\draw[-] (0,0)\toplabel{i} to [out=-90,in=-90,looseness=3] (.6,0);
\region{.8,-.3}{\lambda};
\end{tikzpicture}\Big)$.}

\vspace{1mm}
\noindent
Proof:
By \cref{longyear1a,capdef,cupdef} and the invertibility established in \cref{bibble}, we have that
\begin{align}
\begin{tikzpicture}[iQ,baseline=3mm,scale=1.1]
\draw[-] (0,0)\botlabel{i} to [out=90,in=90,looseness=3] (.6,0);
\region{.8,.3}{\lambda};
\end{tikzpicture}
&=
\begin{tikzpicture}[iQ,anchorbase,scale=1.6]
\draw (0.3,-0.5) to (0.3,-.2);
\draw[-] (-0.1,-.2) to (-0.1,-0.5) \botlabel{i};
\draw (.3,-.2) to[out=90,in=90,looseness=1.5] (-.1,-.2);
\magup{0.1,-0.01};
\pin{.3,-.32}{1.1,-.32}{\Delta_{-i}(x)^{-1}};
\region{.5,-.1}{\lambda};
\end{tikzpicture}\ ,&
\begin{tikzpicture}[iQ,baseline=-4.5mm,scale=1.1]
\draw[-] (0,0)\toplabel{i} to [out=-90,in=-90,looseness=3] (.6,0);
\region{.75,-.2}{\lambda};
\end{tikzpicture}
&=
\begin{tikzpicture}[iQ,anchorbase,scale=1.6]
\draw[-] (0.3,0.5) to (0.3,.2);
\draw[-] (-0.1,.2) to (-0.1,0.5) \toplabel{i};
\draw (.3,.2) to[out=-90,in=-90,looseness=1.5] (-.1,.2);
\magdown{0.1,.02};
\pin{.3,.32}{1.1,.32}{\nabla_{-i}(x)^{-1}};
\region{.5,.13}{\lambda};
\end{tikzpicture}\ .
\end{align}
By \cref{gasp2}, $\iota\Big(\ \begin{tikzpicture}[iQ,anchorbase,scale=1.1]
\draw[-] (0,0)\botlabel{i} to [out=90,in=90,looseness=3] (.6,0);
\magup{.3,.53};
\region{.8,.35}{\lambda};
\end{tikzpicture}\Big)=e_\lambda\circ\Psi\Big(\ \begin{tikzpicture}[B,baseline=2mm,scale=1.1]
\draw[-] (0,0) to [out=90,in=90,looseness=3] (.6,0);
\end{tikzpicture}\ \Big)\circ\big(e_{\lambda-\alpha_{-i};i}\otimes e_{\lambda;-i}\big)$
which is in $\Hom_{\overline{\AB}}(B^{\otimes 2}, \one)$,
and
$\iota\Big(\ \begin{tikzpicture}[iQ,anchorbase,scale=1.6]
\draw (0.3,-0.5)\botlabel{\ibar} to (0.3,-.1);
\draw[-] (0,-.1) to (0,-0.5) \botlabel{i};
\pin{.3,-.32}{1,-.32}{\Delta_{-i}(x)^{-1}};
\region{1.6,-.3}{\lambda};
\end{tikzpicture}\Big)$ is in $\End_{\overline{\AB}}(B^{\otimes 2})$,
by the results 
established already.
The cup is treated in the same way using \cref{gasp3}.

\vspace{2mm}
\noindent
{\underline{Crossings}:} {\em 
For $i,j \in I$ and $\lambda \in X^\imath_\0$,
$\End_{\overline{\AB}}(B^{\otimes 2})$
contains
$\iota\Big(\ \begin{tikzpicture}[iQ,anchorbase,scale=1.7]
\draw[-] (0,0)\botlabel{i} to (.3,.4);
\draw[-] (.3,0)\botlabel{j} to (0,.4);
\region{.4,.2}{\lambda};
\end{tikzpicture}\Big)$.}

\vspace{1mm}
\noindent
Proof:
Take $i,j \in I$.
The only term on the right-hand side of \cref{crossingdef} which survives when the bottom labels are $i$ and $j$ and the top labels are $j$ and $i$
is the one involving $X_{i,j}$.
So, by \cref{gasp4}, 
\begin{equation}    
\iota\bigg(\ \begin{tikzpicture}[iQ,centerzero,scale=1.1]
\draw (-.3,-.4)\botlabel{i} -- (.3,.4);
\draw (.3,-.4)\botlabel{j} -- (-.3,.4);
\node[draw,fill=white,inner sep=1pt] at (0,0) {$\scriptscriptstyle X_{i,j}$};
\region{.5,0}{\lambda};
\end{tikzpicture}\bigg)
= \big(e_{\lambda-\alpha_i;j}\otimes e_{\lambda;i}\big)\circ
\Psi\Big(\ \begin{tikzpicture}[B,centerzero,scale=.8]
\draw[-] (0.3,-.4) to (-0.3,.4);
\draw[-] (-0.3,-.4) to (0.3,.4);
\end{tikzpicture}\ \Big)\circ\big(e_{\lambda-\alpha_j;i}\otimes e_{\lambda;j}\big),
\end{equation}
which is in $\End_{\overline{\AB}}(B^{\otimes 2})$ by the results established already.
By \cref{modlower}, we have that
$$
\begin{tikzpicture}[iQ,centerzero,scale=1.2]
\draw (-.3,-.4)\botlabel{i} -- (.3,.4);
\draw (.3,-.4)\botlabel{j} -- (-.3,.4);
\region{.45,0}{\lambda};
\end{tikzpicture}
\ \equiv\
\begin{tikzpicture}[iQ,centerzero,scale=1.2]
\draw (-.3,-.4)\botlabel{i} -- (.3,.4);
\draw (.3,-.4)\botlabel{j} -- (-.3,.4);
\node[draw,fill=white,inner sep=1.3pt] at (0,0)
{$\scriptstyle X_{i,j}$};
\pinpin{-.22,-.3}{.22,-.3}{1.25,-.3}{q_{i,j}(x,y)^{-1}};
\region{.5,0}{\lambda};
\end{tikzpicture}
\pmod{F_0 \Hom_{\hatUU^\imath}(B_i B_j \one_\lambda, B_j B_i \one_\lambda)}.
$$
The right-hand side is in $\End_{\overline{\AB}}(B^{\otimes 2})$ by the results established already.
Also $$
\iota\big(F_0\Hom_{\hatUU^\imath}(B_i B_j \one_\lambda, B_j B_i \one_\lambda)\big)\subseteq
\End_{\overline{\AB}}(B^{\otimes 2})
$$
because 
$F_0\Hom_{\hatUU^\imath}(B_i B_j \one_\lambda, B_j B_i \one_\lambda)$
is contained in 
the closure of the $\hatZ_\lambda$-span of crossing-free diagrams with dots, and 
the images under $\iota$ of all such diagrams are 
in the closed subspace $\End_{\overline{\AB}}(B^{\otimes 2})$.
\end{proof}

For the remainder of the section, we assume that
\begin{center}
\fbox{
\begin{minipage}{0.55\textwidth}
$\catR$ is an Abelian or Karoubian Brauer categorification. 
\end{minipage}
}
\end{center}
Recall the notation of \cref{bcat}.
In particular, for $V \in \catR$, we write $Z_V$ for the center of $\End_\catR(V)$, a finite-dimensional commutative algebra, and $z_V\in Z_V$ for the evaluation on $V$ of $z \in Z$. In order to prove \cref{thmb} from the 
introduction, the key additional ingredient is that
the action of $\AB$ on $\catR$ is continuous for the topology induced by $\Psi$ and the discrete topology on 
$\Hom_{\catR}(B^{\otimes n} V, B^{\otimes m} V)$
for each $V \in \catR$ and $m,n \geq 0$.
We prove this in the next few lemmas.

\begin{lem}\label{subsist}
Let $V \in \catR$ be an indecomposable object and
$\lambda \in X^\imath_\0$ be the unique even iweight such that
$V \in \catR_\lambda$.
Then there is a unique algebra homomorphism
$\omega\colon \hatZ_\lambda \rightarrow Z_V$ 
such that
\begin{enumerate}
\item 
$\omega(O_{\lambda;i}(u)) = O_{V;i}(u)$ for all $i \in I$,
where $O_{V;i}(u)$ is the local factor of $O_V(u)$ at $u=i$;
\item $\omega$ is continuous for the GC topology on $\hatZ_\lambda$
and the discrete topology on $Z_V$.
\end{enumerate}
Moreover:
\begin{enumerate}
\item[(iii)] $\omega(\psi_\lambda(z)) = z_V$ for any $z \in Z$.
\end{enumerate}
\end{lem}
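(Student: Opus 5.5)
We construct $\omega$ from the local factors of $O_V(u)$ and compare with the presentation of $R$. Since $V$ is indecomposable, $Z_V$ is a finite-dimensional commutative local algebra with maximal ideal $\mathfrak{m}$ and residue field $\kk$. By \cref{itsreallywednesday}, $O_V(u)=\chi_f(u)$ for some monic $f(u)\in Z_V[u]$, so $O_V(u)$ is a rational function of the form considered in \cref{localfactors}. That corollary gives unique local factors $O_{V;i}(u)\in u^{\ord_i}+u^{\ord_i-1}\mathfrak{m}[u^{-1}]$. By \cref{moredirect}, $\bar O_V(u)=\chi_\lambda(u)=\prod_i(u-i)^{\del_i-\lambda_i}$, so $\ord_i=\del_i-\lambda_i$.

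We then define the candidate images $w_i(u):=u^{\lambda_i-\del_i}O_{V;i}(u)\in 1+u^{-1}\mathfrak{m}[u^{-1}]$. To obtain an algebra homomorphism $R\to Z_V$ sending $z_i(u)\mapsto w_i(u)$, we must check the defining relations $w_{-i}(u)w_i(-u)=1$. Equivalently, we need
\[
O_{V;-i}(u)O_{V;i}(-u)=(-1)^{\del_i+\lambda_i}u^{-a_{i,-i}},
\]
matching \cref{iinfgrass}. For this we use \cref{curlsup1}, $O_V(u)O_V(-u)=\frac{(u-h)(u+h)}{u^2}$. Substituting the factorization $O_V(u)=\prod_i O_{V;i}(u-i)$ and regrouping gives $\prod_i O_{V;i}(u-i)O_{V;-i}(-u+i)=\frac{(u-h)(u+h)}{u^2}$. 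Each regrouped factor is a rational function with all zeros and poles reducing to a single point $i$, so by the uniqueness part of \cref{localfactors} each factor equals the corresponding local factor of the right-hand side. These local factors are $\pm$ explicit monomials at $0,\pm h$ and $1$ elsewhere.

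\emph{Main obstacle.} I expect the hardest part to be checking that the signs and the exact powers come out to $(-1)^{\del_i+\lambda_i}u^{-a_{i,-i}}$. The leading coefficients are fixed by $\ord_i=\del_i-\lambda_i$, which pins down the signs $(-1)^{\del_i-\lambda_i}$, and I will verify the exceptional nodes $0,\pm h$ by hand.

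Once the relations are checked, we get $R\to Z_V$, and composing with $\varphi_\lambda^{-1}$ gives $\omega_0\colon Z_\lambda\to Z_V$ satisfying (i). For continuity (ii), note that all but finitely many $w_i(u)$ equal $1$, since $O_{V;i}=1$ for all but finitely many $i$. Moreover, the coefficients of $w_i(u)$ lie in the nilpotent ideal $\mathfrak{m}$. Hence $\omega_0$ kills $U_{\lambda;d,J}$ for $J$ cofinite and $d$ large, exactly as in Claim 1 of \cref{extending}. So $\omega_0$ is continuous and extends uniquely to $\hatZ_\lambda$. Uniqueness of $\omega$ follows because the coefficients of the $O_{\lambda;i}(u)$ generate a dense subalgebra.

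For (iii), it suffices to check $z=O(u)$ coefficientwise, since these coefficients generate $Z$ by \cref{easypart}. By \cref{psiO},
\[
\omega(\psi_\lambda(O(u)))=\prod_i\omega(O_{\lambda;i}(u-i))=\prod_i O_{V;i}(u-i)=O_V(u).
\]
Here the infinite product is handled by the continuity of $\omega$, and only finitely many factors are nontrivial.
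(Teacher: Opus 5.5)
Your proposal is correct and follows the paper's proof essentially step for step. You verify the presentation relations of $Z_\lambda\cong R$ on the local factors using \cref{curlsup1} and the uniqueness in \cref{localfactors}, take continuity from the degree argument of Claim~1 in \cref{extending}, and deduce (iii) from \cref{psiO}; the one point to state precisely is that your regrouped factor $O_{V;i}(u)O_{V;-i}(-u)$ has leading coefficient $(-1)^{\del_{-i}+\lambda_i}$ rather than $1$, so uniqueness only applies after dividing by these signs (legitimate because their product is $1$, as $\sum_i(\del_i-\lambda_i)=0$), which the paper builds in from the start by working with $\widetilde O_{V;i}(u)=(-1)^{\del_i+\lambda_i}O_{V;i}(-u)$.
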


\begin{proof}
The commutative algebra $Z_\lambda$ is generated by the
elements 
$z_{\lambda;i}^{(r)}$ subject only to the relations
$O_{\lambda;-i}(u)O_{\lambda;i}(-u) = (-1)^{\del_i+\lambda_i} u^{\del_i+\del_{-i}} \id_{\one_\lambda}$.
Thus, to show that there is an algebra homomorphism
$Z_\lambda \rightarrow Z_V$ satisfying (i), it suffices
to show that
\begin{equation}\label{light}
O_{V;-i}(u)O_{V;i}(-u) = (-1)^{\del_i+\lambda_i} u^{\del_i+\del_{-i}} \id_V.
\end{equation}
As $V$ is indecomposable, 
$Z_V$ is a finite-dimensional local commutative algebra. Let $\mathfrak{m}$ be its unique maximal ideal.
As explained in \cref{moredirect},
the image 
$\bar O_V(u)$
of $O_V(u)$ in $\kk(u) = (Z_V / \mathfrak{m})(u)$
is the rational function $\chi_\lambda(u)$ from \cref{maxideals}.
Hence, the local factors $O_{V;i}(u)$ 
of $O_V(u)$ satisfy
$$
O_{V;i}(u) \in u^{\del_i-\lambda_i}\id_V +u^{\del_i-\lambda_i-1}\mathfrak{m}[u^{-1}].
$$
Since $\sum_{i\in I}(\del_i-\lambda_i) = 0$, we have that
$$
O_V(-u) = \prod_{i \in I} O_{V;i}(-u-i) = 
\prod_{i \in I} \widetilde O_{V;-i}(u-i)
$$ 
where $\widetilde O_{V;i}(u)\coloneq(-1)^{\del_i+\lambda_i}
O_{V;i}(-u)$.
Also, the local factor of $\frac{(u-h)(u+h)}{u^2}$ at $u=i$ is $u^{-a_{i,-i}} = u^{\del_i+\del_{-i}}$.
By \cref{curlsup1}, $O_V(u) O_V(-u) = \frac{(u-h)(u+h)}{u^2}$ so, using the uniqueness of local factors from \cref{localfactors}, we get that
$O_{V;i}(u) \widetilde O_{V;-i}(u) = u^{\del_i+\del_{-i}}$.
The identity \cref{light} follows from this on replacing $i$ with $-i$ and using the definition of $\widetilde O_{V;i}(u)$.

To construct $\omega$, 
it remains to show that the homomorphism just constructed is continuous for the topology on $Z_\lambda$ induced by the GC topology on $\hatZ_\lambda$ and the discrete topology on $Z_V$, for then it extends uniquely to the desired continuous homomorphism $\omega$.
To prove this, \cref{localfactors} implies that the renormalized local factor $u^{\lambda_i-\del_i}O_{V;i}(u)$
is a polynomial in $\id_V + u^{-1} \mathfrak{m}[u^{-1}]$,
and it equals $\id_V$ for all but finitely many $i$.
Consequently, only finitely many generators $z_{\lambda;i}^{(r)}$ have nonzero images, and those images lie in the nilpotent ideal $\mathfrak{m}$. 
The degree argument used to prove Claim 1 in the proof of \cref{extending}
then shows that $\ker\omega$ contains a GC neighborhood.

Finally, (iii) holds from
the local factorization $O_V(u) = \prod_{i \in I} O_{V;i}(u-i)$
and \cref{psiO}.
\end{proof}

\begin{lem}\label{newattempt}
Suppose that $m,n \geq 0$ with $m \equiv n \pmod{2}$
and $l \coloneq \frac{m+n}{2}$.
Recalling \cref{pr1}, 
for any $D \in \shape(m\times n),
\bc \in I^l$ and $\lambda \in X^\imath_\0$, 
the function
\begin{align*}
\Hom_{\AB}(B^{\otimes n},B^{\otimes m})&\rightarrow\hatZ_\lambda\llbracket x_1,\dots,x_l\rrbracket,& 
\theta&\mapsto \psi_\lambda\big(\pr_D(\theta)(c_1+x_1,\dots,c_l+x_l)\big)
\end{align*}
is continuous for the subspace topology 
on the domain $\Hom_{\AB}(B^{\otimes n},B^{\otimes m})$
induced by the GC topology on $\Hom_{\hatAB}(B^{\otimes n},B^{\otimes m})$,
and the topology on $\hatZ_\lambda\llbracket x_1,\dots,x_l\rrbracket = \prod_{e_1,\dots,e_l \geq 0} x_1^{e_1} \cdots x_l^{e_l}\hatZ_\lambda$ that is the product of the GC topologies on each factor.
\end{lem}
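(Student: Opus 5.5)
The plan is to realize the map $\theta \mapsto \psi_\lambda\big(\pr_D(\theta)(c_1+x_1,\dots,c_l+x_l)\big)$ as a composite of continuous maps, using \cref{leading} together with an induction on the number of crossings of $D$. By \cref{leading}, for the coloring $\bc$ and the resulting $\mu$ and boundary words $\bi^{D_\bc},\bj^{D_\bc}$, one has
$$
\cf_{\lambda;D_\bc}\big(\psi_{m,n}(D(f))^{\mu,\lambda}_{\bi^{D_\bc},\bj^{D_\bc}}\big) = u_{D,\bc,\lambda}\,\psi_\lambda\big(f(c_1+x_1,\dots,c_l+x_l)\big)
$$
with $u_{D,\bc,\lambda}$ a unit. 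The same lemma also shows that $D'(f')$ contributes nothing to this coefficient whenever $D'\neq D$ and $\#(D')\le \#(D)$.

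The composite I have in mind has three pieces: projection to the $(\mu,\lambda)$-entry and the $(\bi^{D_\bc},\bj^{D_\bc})$-entry, then $\cf_{\lambda;D_\bc}$, then multiplication by $u_{D,\bc,\lambda}^{-1}$. Each is continuous. The projections are continuous for the product topology. The map $\cf_{\lambda;D_\bc}$ is continuous by the remark after \cref{pr2}. Multiplication by a fixed element of $\hatZ_\lambda\llbracket x_1,\dots,x_l\rrbracket$ is continuous because each coefficient of the product is a finite sum of products in $\hatZ_\lambda$, and multiplication there is continuous.

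Applied to $\theta=\sum_{D'}D'(\pr_{D'}(\theta))$, this composite returns $\psi_\lambda(\pr_D(\theta)(\bc+x))$ plus error terms. These errors come only from shapes $D'$ with $\#(D')>\#(D)$. So I will argue by downward induction on $\#(D)$. For each such $D'$ and each of the finitely many colorings $\bc'$, the induction hypothesis already gives continuity of $\theta\mapsto\psi_\lambda(\pr_{D'}(\theta)(\bc'+x))$. What remains is to show that each error contribution $\cf_{\lambda;D_\bc}\big(\psi_{m,n}(D'(f'))^{\mu,\lambda}\big)$ is a continuous function of the data $\psi_\lambda(\pr_{D'}(\theta)(\bc'+x))$ over the colorings $\bc'$ of $D'$.

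The main obstacle is this last step. I would handle it by expanding $\psi_{m,n}(D'(f'))$ as in the proof of \cref{leading}: sum over segment labellings, where every strand whose labels stay constant carries the pinned shifted polynomial $f'(\bc'+x)$. The result is a fixed continuous $\hatZ_\lambda\llbracket x\rrbracket$-linear expression in the family $\{\psi_\lambda(f'(\bc'+x))\}_{\bc'}$, with coefficients independent of $f'$. To make this rigorous, I would show that $D'(f')=D'(1)\circ g$, where $g$ is the polynomial $f'$ placed on dots below the diagram (after moving the pins to the bottom boundary using the dot-slide relations). Since $\Psi$ is a functor, the image is then $\Psi(D'(1))$ composed with a diagonal matrix whose entries are the pinned series $\psi_\lambda(f'(\bc'+x))$, and composition is continuous by \cref{conv1}.

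The base case is $\#(D)$ maximal, where there are no error terms at all.
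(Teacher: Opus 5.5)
Your reduction matches the paper's: downward induction on $\#(D)$, apply $\cf_{\lambda;D_\bc}$ to the $(\bi^{D_\bc},\bj^{D_\bc})$-entry, use \cref{leading} to isolate $u\,\psi_\lambda\big(\pr_D(\theta)(c_1+x_1,\dots,c_l+x_l)\big)$, and reduce to continuity of $\theta\mapsto\psi_{m,n}\big(D'(\pr_{D'}(\theta))\big)^{\mu,\lambda}_{\bi^{D_\bc},\bj^{D_\bc}}$ for $\#(D')>\#(D)$.

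The gap is in this last step. It is not true that only finitely many colorings contribute. Expanding as in the proof of \cref{leading}, one sums over labellings $\kappa$ of the segments of $D'$ that are compatible with the fixed boundary words, and these are not determined by the boundary. The cap--cup term of \cref{crossingdef} lets the two labels leaving a crossing be $(a,-a)$ for any $a\in I$ (note $\pi(\alpha_a+\alpha_{-a})=0$, so iweights impose no constraint), and a later cap--cup term can absorb them again. For example, take $D'$ to be the Reidemeister III shape with both boundary words equal to $(c,-c,d)$, so $D$ could be the identity shape. Choose the cap--cup term at the two crossings on the middle strand and the uncrossed term at the third. This gives a valid labelling for all but finitely many $a\in I$, with $a$ carried around the inner triangle.

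So you face an infinite sum $\sum_\kappa E_\kappa\big(\psi_\lambda(f'(\kappa_1+x_1,\dots,\kappa_l+x_l))\big)$, where $E_\kappa$ pins a power series into the labelled diagram $D'_\kappa$. Continuity of each summand does not give continuity of the sum. What is missing is a tail bound that is uniform in the input: for each GC neighbourhood $N$ there must be a finite $K\subset I$ such that $E_\kappa(g)\in N$ for \emph{every} $g$ whenever $\kappa$ uses a label outside $K$. The paper gets this in three steps:
\begin{itemize}
\item Choose a cofinite $J$ with $\overline{H_\lambda(\bj^{D_\bc},\bi^{D_\bc})U_{\lambda;J}}\subseteq N$.
\item Apply the Claim from the proof of \cref{conv1}(ii) on finitely many horizontal slices of $D'$ that together meet every segment; the union of the resulting finite label sets is $K$.
\item Modulo $N$ the sum is now finite, and each surviving term is continuous by the induction hypothesis (for $D'$ with coloring $(\kappa_1,\dots,\kappa_l)$) together with continuity of $E_\kappa$.
\end{itemize}

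Your proposed way of making the expansion rigorous, writing $D'(f')=D'(1)\circ g$ with the pins moved to the bottom boundary, also fails, for three reasons.
\begin{itemize}
\item In $\AB$ a dot passes a crossing only up to the correction terms in \cref{demazure}. These have fewer crossings than $D'$, so they may re-expand into shapes with at most $\#(D)$ crossings, which your downward induction hypothesis does not cover.
\item Strings of $D'$ that are cups have no bottom endpoint.
\item $f'$ is a joint polynomial in all the string variables, so it cannot be split between top and bottom.
\end{itemize}
Instead, keep the pins at the distinguished points. By \cref{dotdef}, a dot at $p_k$ on a segment labelled $\kappa_k$ becomes $\kappa_k+x$, which is exactly how the expression $\sum_\kappa E_\kappa(\cdots)$ arises; then control the infinite sum with the uniform tail bound described above.
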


\begin{proof}
We proceed by downward induction on $\#(D)$.
Let $\mu$ be as in \cref{nastymu}.
We apply the linear map
$\cf_{\lambda;D_\bc}\big(\psi_{m,n}(-)^{\mu,\lambda}_{\bi^{D_\bc},\bj^{D_\bc}}\big)$
to 
$\theta = \sum_{D' \in \shape(m\times n)} D'(\pr_{D'}(\theta)(x_1,\dots,x_l))$ and use
\cref{leading} to deduce that there
is a unit
$u \in \hatZ_\lambda\llbracket x_1,\dots,x_l\rrbracket^\times$ 
such that
$$
\cf_{\lambda;D_\bc}\!\big(\psi_{m,n}(\theta)^{\mu,\lambda}_{\bi^{D_\bc},\bj^{D_\bc}}\big)
= u \psi_\lambda\big(\pr_D(\theta)(c_1+x_1,\dots,c_l+x_l)\big)
+ \sum_{D'}\cf_{\lambda;D_\bc}\!\Big(\psi_{m,n}\big(D'(\pr_{D'}(\theta))\big)^{\mu,\lambda}_{\bi^{D_\bc},\bj^{D_\bc}}
\Big),
$$
summing over $D' \in \shape(m\times n)$ with  $\#(D') > \#(D)$. 
As noted after \cref{pr2}, $\cf_{\lambda;D_\bc}$ is continuous,
hence, the function on the left-hand side is a continuous function of $\theta$. Also multiplication by $u^{-1}$ is continuous.
Thus, we are reduced to
showing 
that the function
\begin{align*}
\Hom_{\AB}(B^{\otimes n}, B^{\otimes m})
&\rightarrow \hatH_\lambda\big(\bj^{D_\bc},\bi^{D_\bc}),
&
\theta &\mapsto
\psi_{m,n}\big(D'(\pr_{D'}(\theta))\big)^{\mu,\lambda}_{\bi^{D_\bc},\bj^{D_\bc}}
\end{align*}
is continuous
for $D' \in \shape(m\times n)$
with $\#(D') > \#(D)$.

As in the proof of \cref{leading}, for $f\in Z[x_1,\dots,x_l]$ we have that
$$
\psi_{m,n}(D'(f))^{\mu,\lambda}_{\bi^{D_\bc}, \bj^{D_\bc}} 
= \sum_\kappa E_\kappa\big(\psi_\lambda(f(\kappa_1+x_1,\dots,\kappa_l+x_l))\big).
$$
Here, the
sum is over all 
labellings $\kappa$ of the segments of $D'$ by elements of $I$ which are compatible with the boundary labels $\bi^{D_\bc}$ at the top and $\bj^{D_\bc}$ at the bottom,
with $\kappa_1,\dots,\kappa_l$ being the labels at the distinguished points.
To define $E_\kappa$, let
$D'_\kappa$
be obtained from $D'$ by replacing magenta strings with black strings, 
labelling segments according to $\kappa$, and
replacing crossings, cups and caps by the decorations in \cref{gasp2,gasp3,gasp4}.
Then 
$E_\kappa\colon\hatZ_\lambda\llbracket x_1,\dots,x_l\rrbracket\rightarrow \hatH_\lambda\big(\bj^{D_\bc},\bi^{D_\bc}\big)$ 
is the linear map 
taking $g\in \hatZ_\lambda\llbracket x_1,\dots,x_l\rrbracket$ 
to the morphism obtained
by pinning $g$ to $D'_\kappa$
so that its pinhead is in the rightmost 2-cell, labelling this 2-cell by $\lambda$.
Each $E_\kappa$ is continuous, since horizontal and vertical composition are continuous, as is
pinning a formal power series at the specified distinguished point. 

Let $N$ be a GC neighbourhood of zero in $\hatH_\lambda(\bj^{D_\bc},\bi^{D_\bc})$. 
Choose a cofinite subset $J\subseteq I$ such that
$\overline{H_\lambda(\bj^{D^\bc},\bi^{D^\bc})U_{\lambda;J}}
\subseteq N$, and finitely many
horizontal slices of the string diagram $D'$ so that every segment meets one of them. At each slice, we apply the claim in the proof of \cref{conv1}(ii), and take the union of the resulting finite sets of labels. This produces a finite $K \subset I$ such that $E_\kappa(g)\in
\overline{H_\lambda(\mathbf j^{D^c},\mathbf i^{D^c})U_{\lambda;J}}
\subseteq N$
for every $g\in\hatZ_\lambda\llbracket x_1,\ldots,x_l\rrbracket$ whenever $\kappa$ involves a label 
outside $K$. Thus, modulo $N$, the expansion reduces to the finite sum over labellings taking values in $K$. For each such labelling, the map
$$
\theta\mapsto
E_\kappa\!\left(
\psi_\lambda\bigl(
\pr_{D'}(\theta)(\kappa_1+x_1,\ldots,\kappa_l+x_l)
\bigr)\right)
$$
is continuous by the induction hypothesis and the continuity of $E_\kappa$. Consequently, the required map is continuous modulo every GC neighbourhood of zero, and hence is continuous. This completes the induction.
\end{proof}

\begin{lem}\label{evcontinuous}
For $V \in \catR$ and $m,n \geq 0$ with $m \equiv n \pmod{2}$, 
the evaluation map
$$
\Hom_{\AB}(B^{\otimes n},B^{\otimes m}) \rightarrow \Hom_\catR(B^n V,B^m V),\qquad \theta\mapsto \theta_V
$$
is continuous with respect to 
the topology on the domain induced by $\psi_{m,n}$ and the discrete topology on the codomain.
\end{lem}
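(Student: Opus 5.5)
We reduce first to the case that $V$ is indecomposable. Since $\catR$ is Krull--Schmidt (in both the Abelian and the Karoubian case, morphism spaces being finite-dimensional), $V$ is a finite direct sum of indecomposables, and evaluation on $V$ is the direct sum of the evaluations on the summands. So assume $V$ is indecomposable, and let $\lambda\in X^\imath_\0$ be the even iweight with $V\in\catR_\lambda$.

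The plan is to write each $\theta$ in the basis \cref{exp1} as $\theta=\sum_{D\in\shape(m\times n)}D(\pr_D(\theta))$. Its evaluation on $V$ is then $\theta_V=\sum_D D(\pr_D(\theta))_V$, and the sum over $D$ is finite. It therefore suffices to show that, for each fixed $D$, the map $\theta\mapsto D(\pr_D(\theta))_V$ is continuous.

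To evaluate $D(f)_V$ for $f\in Z[x_1,\dots,x_l]$, we decompose every string of $D$ using the eigenfunctors $B=\bigoplus_i B_i$ via \cref{orthogonality}. Only finitely many colorings $\bc\in I^l$ contribute on $V$, because only finitely many words $\bi$ have $B_\bi V\neq 0$ (as in the proof of \cref{extending}). For a fixed coloring $\bc$, the dot on the $k$th string acts as $c_k+(\text{nilpotent})$. Writing $f(x)=f(c+(x-c))$, we have $x_k-c_k$ nilpotent on the relevant objects $B_{\bi}V$. Hence there is $N$ such that only the coefficients of monomials of total degree $<N$ in the shifted polynomial $f(c_1+x_1,\dots,c_l+x_l)$ matter. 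The coefficients of $f$ lie in $Z$ and act through the rightmost 2-cell, that is, through $z\mapsto z_V\in Z_V$. So $D(f)_V$ is determined by a finite list of elements $(\text{coeff of }x^e\text{ in }f(\bc+x))_V$ for $|e|<N$ and finitely many $\bc$, and it depends linearly on this list.

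By \cref{subsist}(iii), $z_V=\omega(\psi_\lambda(z))$, where $\omega:\hatZ_\lambda\to Z_V$ is continuous for the discrete topology on $Z_V$. Each needed coefficient is therefore $\omega$ applied to a coefficient of $\psi_\lambda(\pr_D(\theta)(c_1+x_1,\dots,c_l+x_l))$. By \cref{newattempt}, this coefficient depends continuously on $\theta$ in the product-of-GC topology. Composing the three maps, each finite-coefficient map $\theta\mapsto(\ldots)_V$ is continuous into a discrete space, and a finite linear combination of such maps is again continuous. This gives continuity of $\theta\mapsto\theta_V$.

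The main obstacle is making the first reduction rigorous. We must check that the nilpotency of $x_k-c_k$ on the generalized eigenspaces holds uniformly along each string of $D$, including across cups and caps where labels change sign; the dot slide \cref{dotslide} handles this sign change. We must also check that pins placed in the rightmost 2-cell really factor through $Z_V$ even though interior 2-cells carry other objects. We would justify the latter by placing the pinhead in the rightmost region, as in the conventions used for \cref{exp1}. In the Karoubian case, the same argument applies with $\catR$ replaced by $\catR^\fg$.
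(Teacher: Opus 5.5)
Your proposal is correct and follows essentially the same route as the paper: reduce to indecomposable $V$, expand $\theta$ in the Rui--Song basis, split each distinguished point by generalized eigenvalue (finitely many, with uniformly bounded nilpotency), and combine \cref{subsist}(iii) with \cref{newattempt} to see that only finitely many coefficients, each depending continuously on $\theta$, enter $\theta_V$. Your worry about the consistency of colorings along strings across cups and caps is unnecessary: as in the paper, it suffices to insert the eigenspace projections only at the distinguished points where the pins sit.
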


\begin{proof}
We may assume that $V$ is indecomposable. 
Let $\lambda$ and
$\omega\colon\hatZ_\lambda\rightarrow Z_V$
be as in \cref{subsist}.
Set $U\coloneq\ker\omega$, which is 
an open subspace of $\hatZ_\lambda$.
Assume that the shapes 
$D\in \shape(m\times n)$ have been chosen so that they are composites of caps, cups and crossings, with the distinguished points at distinct
horizontal levels avoiding any of the generators. 
There is a finite set $F\subset I$ and $M\geq 1$ such that
for each distinguished point
the morphism $x$ obtained by evaluating the dot at that point on the object obtained by applying the appropriate power
of $B$ (i.e., the number of strings to the right)
to $V$ has spectrum contained in $F$, and its restriction to the generalized $i$-eigenspace of $x$
is annihilated by $(x-i)^M$.

For $D\in \shape(m\times n)$, $\bc\in F^l$, and $\bd\in\mathbb N^l$, define
$T_{D,\bc,\bd} \in \Hom_{\catR}(B^n V,B^m V)$
by evaluating $D$ on $V$, inserting the projection onto $B_{c_k}$ at the distinguished point $p_k$, then placing $(x-c_k)^{d_k}$ there, for each $1 \leq k \leq l$. Then
$T_{D,\bc,\bd}=0$ if $d_k\ge M$ for some $k$.
Define a linear map
$\Theta_{D,\bc}:\hatZ_\lambda\llbracket x_1,\ldots,x_l\rrbracket
\rightarrow\Hom_{\catR}(B^n V,B^m V)$
by the finite sum
$$
\Theta_{D,\bc}\colon\sum_\bd g_\bd x_1^{d_1} \cdots x_l^{d_l}
\mapsto
\sum_{\substack{\bd\in\mathbb N^l\\d_k<M\text{ for all }k}}
T_{D,\bc,\bd}\circ B^n\!\left(\omega(g_\bd)\right).
$$
This vanishes on the open subspace
$\left\{\sum_\bd g_\bd x_1^{d_1} \cdots x_l^{d_l}
\:\big|\:g_\bd\in U\text{ whenever }d_1,\dots,d_l<M\right\}$.
So $\Theta_{D,\bc}$ is continuous for the discrete topology on its codomain.
For every $f\in Z[x_1,\ldots,x_l]$, we have that
$$
D(f)_V
=
\sum_{\bc\in F^l}
\Theta_{D,\bc}\!\left(
\psi_\lambda\bigl(f(c_1+x_1,\ldots,c_l+x_l)\bigr)
\right).
$$
Consequently,
$$
\theta_V=
\sum_{D\in S(m\times n)}\ \sum_{\bc\in F^l}
\Theta_{D,\bc}\!\left(
\psi_\lambda\bigl(\pr_D(\theta)(c_1+x_1,\ldots,c_l+x_l)\bigr)
\right).
$$
Each summand is continuous by \cref{newattempt} 
and the continuity of $\Theta_{D,\bc}$. 
Both indexing sets are finite, proving the lemma.
\end{proof}

\begin{theo}\label{btoq-cor}
Any Brauer categorification $\catR$ can be made into an even nilpotent iquantum categorification via a construction which is the inverse of the one in \cref{passportcor}.
\end{theo}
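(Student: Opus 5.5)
The plan is to extend the $\AB$-action on $\catR$ by continuity to an action of $\hatAB$, and then restrict along the 2-category structure of $\hatUU^\imath$. By \cref{evcontinuous}, for each $V\in\catR$ (in the Karoubian case, each finitely generated $V\in\catR^\fg$) and $m,n\ge0$, the evaluation $\theta\mapsto\theta_V$ is continuous for the topology on $\Hom_{\AB}(B^{\otimes n},B^{\otimes m})$ induced by $\psi_{m,n}$ and the discrete topology on $\Hom_\catR(B^nV,B^mV)$. \Cref{faithful,btoq-thm} identify $\Hom_{\AB}(B^{\otimes n},B^{\otimes m})$ with a dense subspace of the complete space $\Hom_{\hatAB}(B^{\otimes n},B^{\otimes m})$. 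Hence evaluation extends uniquely to a continuous map on $\Hom_{\hatAB}$.

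By density and the continuity of vertical and horizontal composition (\cref{conv1,conv2}), these extensions are compatible with composition and tensor product, and naturality in $V$ persists. This makes $\catR$ (respectively $\Mod{\catR}$) a strict left $\hatAB$-module category.

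Next I would produce the iquantum structure. Using the idempotents $e_\lambda\in\hatZ$ and $e_{\lambda;i}\in\End_{\hatAB}(B)$ from the proof of \cref{btoq-thm}, define $\catR_\lambda$ as the image of $e_\lambda$ and $B_i\one_\lambda$ as the summand of $B$ cut out by $e_{\lambda;i}$. Set $\catR_\lambda=0$ for odd $\lambda$.

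One must check three identifications. First, $e_\lambda$ acts as the projection onto the ``block'' $\catR_\lambda$ of \cref{blocks,blocksagain}: by \cref{subsist}, the character $\omega$ on an indecomposable $V\in\catR_\lambda$ kills $e_\mu$ for $\mu\ne\lambda$. Second, $e_{\lambda;i}$ acts as the projection onto the generalized $i$-eigenspace of the dot: use \cref{prep} and the Chinese remainder argument, together with the fact that $x-i$ is locally nilpotent. Third, the action of $\iota$ applied to the generating 2-morphisms of $\UU^\imath$ gives natural transformations between these functors. The relations of $\UU^\imath$ hold because they hold for the $\iota$-images in $\hatAB$, where $\iota$ is multiplicative.

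The finiteness condition $|\{i: B_iV\ne0\}|<\infty$ follows from the finiteness of the spectrum of the dot on $BV$. Evenness holds by construction. Nilpotency holds since by \cref{veryeasy} the iquantum dot is $x-i$ on the $i$-eigenspace, which is nilpotent there.

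For the inverse statement there are two directions. Starting from a Brauer categorification, the composite construction applies \cref{passportcor} to the resulting even nilpotent iquantum categorification. This produces the $\AB$-action obtained by restricting the same $\hatAB$-action along $\Psi$, which is the original action by construction. Conversely, start from an even nilpotent iquantum categorification, pass to Brauer via \cref{extending,wish}, and come back. The resulting $\hatAB$-action agrees with the original one on the dense subcategory $\AB$. Both actions are continuous (\cref{extending} for the original), so they agree on $\hatAB$, and hence on all $\iota$-images of generators of $\UU^\imath$.

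I expect the main obstacle to be the first identification. One must show that the decomposition via $e_\lambda$ and $e_{\lambda;i}$ coincides with the intrinsic Brauer ``block'' and eigenspace decompositions. In particular, the continuous extension must send $e_\lambda$ to the genuine central idempotent for $\catR_\lambda$, including in the Karoubian case where $\catR^\fg$ need not be abelian. I would handle this objectwise on indecomposables via \cref{subsist,moredirect}.
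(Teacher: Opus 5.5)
Your proposal follows essentially the same route as the paper. You extend the $\AB$-action continuously to $\hatAB$ using \cref{evcontinuous,btoq-thm}. You identify $e_\lambda$ and $e_{\lambda;i}$ with the block and eigenfunctor projections; for $e_\lambda$, the point is that $\hatZ\rightarrow\hatZ_\lambda\stackrel{\omega}{\rightarrow}Z_V$ is continuous and agrees with evaluation on the dense subalgebra $Z$ by \cref{subsist}(iii), so it equals the extended evaluation. You then prove inverse-ness by continuity plus density, as the paper does.

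The only step you pass off as ``by construction'' is the first inverse direction. There you need to note that the $\hatAB$-action coming from \cref{extending,wish} is continuous, since only finitely many matrix entries act on a given $V$, each continuously. It then agrees with your extended action because the two coincide on the dense span of the $\iota$-images (\cref{tricky}), exactly as in the paper.
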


\begin{proof}
Recall the ``block'' decomposition of $\catR$ from \cref{blocks,blocksagain}
and the eigenfunctor decomposition $B = \bigoplus_{i \in I} B_i$ of the endofunctor 
$B:\catR \rightarrow \catR$
from \cref{eigenfunctors}. We also define $\catR_\lambda$ to be a trivial (zero) $\kk$-linear category
in case $\lambda \in X^\imath_\1$,
so $\catR = \bigoplus_{\lambda \in X^\imath} \catR_\lambda$. The subcategories $\catR_\lambda$ are the required iweight categories in order to have 
an even iquantum categorification, and \cref{rightway} shows that  
the endofunctors $B_i$ take $\catR_\lambda$ to $\catR_{\lambda-\alpha_i}$.
To prove the theorem, the idea is to show that the action of $\AB$ on $\catR$ extends uniquely to a continuous action of $\hatAB$, then to pass from there to the action of $\UU^\imath$.
First, we establish two claims:

\vspace{2mm}
\noindent
{\underline{Claim 1}:} {\em The action of $\AB$ on $\catR$ extends uniquely to an action of $\hatAB$ which is continuous in the sense that the evaluation maps 
$\Hom_{\hatAB}(B^{\otimes n},B^{\otimes m})\rightarrow\Hom_\catR(B^n V,B^m V)$ are continuous with respect to 
the discrete topology on $\Hom_\catR(B^n V,B^m V)$.}

\vspace{1mm}
\noindent
Proof:
To extend the action of $\AB$ to $\hatAB$, we first need to define
$\theta_V:B^n V \rightarrow B^m V$ for any
$\theta\in\Hom_{\hatAB}(B^{\otimes n},B^{\otimes m})$ and $V\in\catR$,
then we need to check that $(\theta_V)_{V \in \catR}$ is a natural transformation, and that the assignments $B^{\otimes n}\mapsto B^n$ and
$\theta \mapsto (\theta_V)_{V \in \catR}$ define a strict $\kk$-linear monoidal functor $\hatAB \rightarrow \mathbf{End}_\kk(\catR)$.
To define $(\theta_V)_{V \in \catR}$,
take some object $V \in \catR$.
By \cref{evcontinuous}, there is an open subspace $N$ of $\Hom_{\hatAB}(B^{\otimes n},B^{\otimes m})$ such that all elements of 
$\Hom_{\AB}(B^{\otimes n},B^{\otimes m})\cap N$ evaluate to 0 on $V$.
By \cref{btoq-thm}, there is $\tilde\theta \in \Hom_{\AB}(B^{\otimes n},B^{\otimes m})$ with $\theta-\tilde\theta\in N$.
Then we let $\theta_V\coloneq \tilde\theta_V$ for this $\tilde\theta$;
this is well defined independent of the choice of $\tilde\theta$.
To prove naturality and compatibility with horizontal and vertical composition, any identity that needs to be checked
involves only finitely many objects of $\catR$. 
As horizontal and vertical composition 
in $\hatAB$ are continuous, the identity
follows from the corresponding identity for morphisms of $\AB$ approximating the morphisms of $\hatAB$ involved closely enough.
This makes $\catR$ into a strict left $\hatAB$-module category whose restriction along $\Psi$ is the given $\AB$-module structure.
Uniqueness is clear by density.

\vspace{2mm}
\noindent
{\underline{Claim 2}:} {\em The idempotents $e_\lambda=\iota(\id_{\one_\lambda})$ and $e_{\lambda;i}=\iota(\id_{B_i \one_\lambda})$ 
from the proof of \cref{btoq-thm} act as the projections onto the ``blocks'' $\catR_\lambda$ and the eigenfunctors $B_i$ from \cref{eigenfunctors}, respectively.}

\vspace{1mm}
\noindent
Proof:
For an indecomposable $V\in \catR_\mu$, consider the composition
$\hatZ\rightarrow \hatZ_\mu
\stackrel{\omega}{\rightarrow} Z_V$ for $\omega$ as in \cref{subsist}.
By \cref{subsist}(iii)
this continuous homomorphism maps $z \mapsto z_V$
for $z$ in the
dense subalgebra $Z$, hence, it is equal to the evaluation map from Claim 1. In particular,
$(e_\lambda)_V=\delta_{\lambda,\mu}\id_V$.
Taking finite direct sums proves that $e_\lambda$ acts as the projection onto $\catR_\lambda$.

Next, $\id_B = \sum_{i\in I}e_i$ where $e_i \coloneq \sum_{\lambda}e_{\lambda;i}$, and the discussion of idempotents in the proof of \cref{btoq-thm} shows that $e_i$ is the limit 
over all finite $F \subset I$ and $r \geq 0$
of $\psi_{1,1}\Big(\begin{tikzpicture}[B,centerzero,scale=.8]
\draw[-] (0,-0.3)-- (0,0.3);
\pin{0,0}{1,0}{f_{F,r}(x)};
\end{tikzpicture}\ \Big)$ for polynomials $f_{F,r}(x)$ chosen so that $f_{F,r}(x)\equiv\delta_{i,j}\pmod{(x-j)^r}$ for all $j \in F$.
Then $e_i$ acts on $BV$ as the projection onto $B_i V$.
Since $e_{\lambda;i} = e_i\circ(\id_B\otimes e_\lambda)$, it acts on $BV\:(V \in \catR_\lambda)$ as the projection onto $B_iV$, and it acts as 0 on $BV$ for $V\in\catR_\mu$ with $\mu\neq\lambda$.

\vspace{2mm}
Now we can complete the proof of the theorem.
For $\lambda\in X^\imath_\0$, let $B_i\one_\lambda$ act as the restriction $B_i|_{\catR_\lambda}\colon\catR_\lambda\rightarrow\catR_{\lambda-\alpha_i}$.
Any 2-morphism $\theta \in \Hom_{\UU^\imath}(B_\bj \one_\lambda,
B_\bi \one_\lambda)$ acts on $V\in\catR_\lambda$ as the restriction of $\iota(\theta)_V\colon B^n V\rightarrow B^m V$ to the summand $B_\bj V$ of $B^n V$; by Claim 2, this maps it into the summand $B_\bi V$ of $B^m V$.
As $\iota$ is compatible with vertical and horizontal composition and the action of $\hatAB$ on $\catR$ is a strict monoidal action, all of the relations of $\UU^\imath$ are satisfied.
Also $B_i V \neq 0$ for only finitely many $i\in I$.
So 
this makes $\catR = \bigoplus_{\lambda \in X^\imath}\catR_\lambda$ into an even iquantum categorification.
By \cref{gasp1,dotdef}, 
$\begin{tikzpicture}[iQ,centerzero,scale=.8]
\draw[-] (0,-0.3)\botlabel{i}-- (0,0.3);
\closeddot{0,0};
\region{.3,0}{\lambda};
\end{tikzpicture}$ acts as
$\begin{tikzpicture}[B,centerzero,scale=.8]
\draw[-] (0,-0.3)\botlabel{i}-- (0,0.3);
\pin{0,0}{-.8,0}{x-i};
\end{tikzpicture}$, as it should by \cref{veryeasy}, and this is nilpotent on $B_i V$ since $B_i V$ is a generalized eigenspace.
Hence, $\catR$ is nilpotent.

Finally, we indicate why this construction and the one from \cref{passportcor} are inverse to each other.
If we start from a Brauer categorification, apply the construction just explained, and then apply the construction of \cref{passportcor}, we obtain an action of $\hatAB$ on $\catR$ via \cref{extending}.
It is continuous, because the action of a morphism of $\hatAB$ on $V\in\catR$ only involves the finitely many entries of the matrix indexed by the iweights of $V$ and the words $\bj$ with $B_\bj V\neq 0$, and each of these acts continuously by Claim 2 in the proof of \cref{extending}.
It agrees with the action from Claim 1 on all $\iota(\theta)$ by construction, so the two actions agree by \cref{tricky}, and their restrictions along $\Psi$ are both the original action of $\AB$.
Conversely, if we start from an even nilpotent iquantum categorification, apply \cref{passportcor} and then the construction just explained, the action of $\hatAB$ obtained in Claim 1 is the continuous one coming from \cref{passportcor}, by the uniqueness in Claim 1. So $e_\lambda$ and $e_{\lambda;i}$ act as the projections onto $\catR_\lambda$ and $B_i\one_\lambda$, and $\iota(\theta)$ acts as $\theta$, i.e., we do indeed
recover the original iquantum categorification.
\end{proof}

\setcounter{section}{9}

\section{Cyclotomic quotients}\label{cyclotomic}

In this section, we prove that cyclotomic quotients of $\AB$
and cyclotomic quotients of $\UU^\imath$ are equivalent categories. 
The former are Karoubian Brauer categorifications, 
and the latter are even Karoubian iquantum categorifications. In the case of $\AB$, the definition of these cyclotomic quotients appeared already in \cite{RS19}; in fact, it can be traced back to \cite{AMR}. In the case of $\UU^\imath$, the definition was written down already in \cite{GRS-2026} in the AIII$_{2\infty}$ case. 
Recall for a monic polynomial $f(u)$
that $\widetilde{f}(u) = (-1)^{\deg f(u)} f(-u)$.

\begin{defin}\label{brauercyclotomic}
Let $A$ be a finite-dimensional 
commutative $\kk$-algebra and
$f(u)\in A[u]$ be a monic polynomial of degree $\beta \geq 0$.
Let $\AB \otimes_\kk A$ be the strict $A$-linear monoidal category obtained from $\AB$ by base change and
$\cI(A,f)$ be the $A$-linear
left tensor ideal of $\AB\otimes_\kk A$ generated by the relations
\begin{align}\label{cycrels1}
\begin{tikzpicture}[B,baseline = -2mm,scale=1.2]
\draw[-] (-0.2,-.4) to (-0.2,.4);
\pin{-.2,0}{.5,0}{f(x)};
\end{tikzpicture} &= 0,&
\begin{tikzpicture}[B,centerzero,scale=.9]
\draw (-0.25,0) arc(180:-180:0.25);
\node at (.57,0) {$\color{black}(u)$};
\end{tikzpicture}&= 
\begin{dcases}
\frac{u-h}{u} \cdot \frac{\widetilde{f}(u)}{f(u)} \id_\one
&\text{if $\beta$ is even}\\
\frac{u+h}{u} \cdot \frac{\widetilde{f}(u)}{f(u)} \id_\one&\text{if $\beta$ is odd.}
\end{dcases}
\end{align}
The {\em deformed cyclotomic Brauer category} of level $\beta$
associated to $A$ and $f$ is
$$
\cB(A,f) \coloneq \Kar\big(\AB \otimes_\kk A\ \big/\ \cI(A,f)\big).
$$
By definition, $\cB(A,f)$ 
is a strict $A$-linear left $\AB$-module category.
\cref{rsfd} below shows that it is a Karoubian Brauer categorification.
In the case that $A = \kk$, we denote $\cB(A,f)$ simply by
$\cB(f)$, calling it 
the {\em cyclotomic Brauer category} of level $\beta$ associated to $f$.
\end{defin}

\begin{lem}[Rui-Song]\label{rsfd}
Morphism spaces
in the deformed cyclotomic quotient
$\cB(A,f)$ are finite-dimensional for every
finite-dimensional commutative algebra $A$ and monic
$f \in A[x]$.
Hence, $\cB(A,f)$ is a Karoubian Brauer categorification.
\end{lem}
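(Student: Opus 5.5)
The plan is to reduce to a spanning statement and then bound the span. Since $A$ is finite-dimensional over $\kk$, it suffices to show that each morphism space $\Hom(B^{\otimes n},B^{\otimes m})$ in the quotient $\AB\otimes_\kk A/\cI(A,f)$ is a finitely generated $A$-module. Passing to the additive Karoubi envelope preserves finite-dimensionality of morphism spaces. Once that is known, $\cB(A,f)$ is a strict left $\AB$-module category which is locally finite Karoubian (its object set can be taken small), hence a Karoubian Brauer categorification.

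For the spanning argument, start from the easy half of \cref{RSbasis} (see \cref{easypart}): $\Hom_{\AB}(B^{\otimes n},B^{\otimes m})$ is spanned as a right $Z$-module by the elements $D(x_1^{e_1}\cdots x_l^{e_l})$ with $D\in\shape(m\times n)$. Base change to $A$ keeps this spanning property over $Z\otimes A$. There are two reductions to make in the quotient.

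\emph{Dot exponents.} In the quotient, the polynomial $f(x)$ pinned to the leftmost string is zero, and since $\cI(A,f)$ is a left tensor ideal, so is anything obtained from it by tensoring on the right. I would move the distinguished point on each string of a reduced shape to be the leftmost strand at some horizontal level, using the dot-slide relations \cref{demazure,dotslide}. These relations commute dots past crossings and cups/caps up to sign, modulo terms with fewer crossings. An induction on the number of crossings then lets me reduce each exponent $e_k$ below $\beta=\deg f$, with $f$ monic.

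\emph{Bubbles.} The second relation in \cref{cycrels1} forces the bubble generating function, and hence every $z_r$, to act by a scalar in $A$. So $Z\otimes A$ maps onto $A$, and the morphism space is spanned over $A$ by the finitely many $D(x^{\mathbf e})$ with $e_k<\beta$.

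The main obstacle is the dot-exponent reduction: a dot on a string that never reaches the leftmost position without passing through crossings. Following \cite{RS19}, I would first treat the case where the top boundary has the relevant string leftmost, then use the zig-zag relations to bend strings so that any string can be brought to the far left. The error terms from sliding past crossings have strictly fewer crossings and are handled by induction. Alternatively, one can simply cite Rui and Song's cyclotomic basis theorem, which gives exactly this bound on dot exponents together with scalar bubbles.
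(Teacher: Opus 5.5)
Your outline (spanning from the easy half of \cref{RSbasis}, bubbles collapsing to scalars in $A$, dot exponents bounded by $\beta=\deg f$) is the Rui--Song argument the paper invokes when it says the proof of \cite[Prop.~3.7(b)--(c)]{RS19} goes through over $A$. However, the dot-exponent step, which is the only substantive part, fails as you describe it.

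First, a convention slip. In this paper $\cI(A,f)$ is closed under $\id_X\otimes-$; that is what makes the quotient a left $\AB$-module category. So $f(x)=0$ holds on the \emph{rightmost} string, adjacent to the module object as in \cref{thur}, and a left tensor ideal is not closed under tensoring on the right. Your mirror-image version is harmless via $\tR$. It does matter for what follows, because zig-zag bending is only available on the side away from the cyclotomic relation.

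The real problem is that zig-zags are planar isotopies. They can never bring a string to the cyclotomic side when another string separates it from that side. Examples are the left strand of $\id_{B\otimes B}$, or a cup nested inside another cup. Such a string can only reach the boundary by passing through crossings. Write $s$ for the crossing, $e$ for the cap followed by the cup, and $x_1,x_2$ for dots on the left and right strands. Then \cref{demazure} and $s^2=\id$ give $x_1=s x_2 s+(\id-e)s$. This \emph{raises} the number of crossings, so an induction on crossings does not close.

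The missing idea is to induct on the number of dots instead. All defining relations of $\AB$ are compatible with the filtration by number of dots, so the straightening behind \cref{easypart} never increases it. Modulo fewer dots, dots pass freely through crossings, and through cups and caps up to sign. The bubble relation in \cref{cycrels1} only lowers the count.

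Given $D(x_1^{e_1}\cdots x_l^{e_l})$ with $e_k\ge\beta$, proceed as follows:
\begin{enumerate}
\item Replace it, up to sign and modulo fewer dots, by a non-reduced diagram with the same matching in which the $k$th string is rightmost at some height.
\item Move $x^{e_k}$ there.
\item Since $f$ is monic, $f(x)=0$ on that string lowers the number of dots.
\item Straighten again.
\end{enumerate}
For instance, $f(x_1)=s f(x_2) s+(\text{diagrams with fewer than }\beta\text{ dots})$, and $s f(x_2) s\in\cI(A,f)$. This yields the spanning set $\{D(x_1^{e_1}\cdots x_l^{e_l}) : D\in\shape(m\times n),\ e_k<\beta\}$ over $A$.

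Your fallback of citing Rui--Song is what the paper actually does. Note that the paper cites the \emph{proof} of their Prop.~3.7(b)--(c) as working verbatim in the deformed setting, rather than their basis theorem as stated.
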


\begin{proof}
The proof of \cite[Prop.~3.7(b)--(c)]{RS19}
extends unchanged to the deformed setting to show that
$\Hom_{\cB(A,f)}(B^{\otimes n}, B^{\otimes m})$ is finite-dimensional for every $m,n \geq 0$.
\end{proof}

\begin{rem}
There is
an explicit basis theorem for the morphism spaces 
$\Hom_{\cB(A,f)}(B^{\otimes n}, B^{\otimes m})$.
For the cyclotomic Brauer category
$\cB(f)$, 
this follows immediately 
from \cite[Th.~C]{RS19}. 
Rui and Song prove their basis theorem working over an arbitrary ground ring, not just a field, so that the basis theorem in 
our deformed setting can be deduced from their result by 
making a flat base change from $\kk$ to a larger ring containing the bubble parameters, then applying \cite[Th.~C]{RS19} there.
In particular, $\End_{\cB(A,f)}(\one) \cong A$,
hence, $\cB(f)$ is non-zero, something which is by no means obvious from the definition.
\end{rem}

The following lemma is the universal property
of deformed cyclotomic Brauer categories:

\begin{lem}\label{lemma10-1}
Let $V$ be an indecomposable (hence, non-zero)
object of an Abelian or Karoubian 
Brauer categorification $\catR$, and $Z_V$ be the center of $\End_\catR(V)$ as usual.
There exists a monic polynomial $f(u) \in Z_V[u]$
of degree $\beta \geq 0$
such that 
\begin{align}\label{notchif}
\begin{tikzpicture}[B,baseline = -2mm,scale=1.2]
\draw[-] (-0.2,-.4) to (-0.2,.4);
\draw[-,darkg] (0.5,.4) to (0.5,-.4)\botlabel{V};
\objpin{-.2,0}{.5,0}{f(x)};
\end{tikzpicture}&=0,&
O_V(u) &= \begin{dcases}
\frac{u-h}{u} \cdot \frac{\widetilde{f}(u)}{f(u)}
&\text{if $\beta$ is even}\\
\frac{u+h}{u} \cdot \frac{\widetilde{f}(u)}{f(u)}&\text{if $\beta$ is odd.}
\end{dcases}
\end{align}
Moreover, 
there is a canonical morphism of strict left $\AB$-module categories
$\Pi\colon\cB(Z_V,f) \rightarrow \catR$ mapping the generating object $\one$ of $\cB(Z_V,f)$ to $V$.
\end{lem}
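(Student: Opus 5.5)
The existence of $f(u)$ is exactly \cref{itsreallywednesday} applied to $V$ (viewing $V\in\catR^\fg$): that theorem produces a monic $f(u)\in Z_V[u]$ with the pin relation and $O_V(u)=\chi_f(u)$. By the definition \cref{chif} of $\chi_f(u)$, this is precisely \cref{notchif}, with $\beta\coloneq\deg f(u)$. No indecomposability is needed for this part. Indecomposability is used only to guarantee that $V\neq 0$; this matters for the later use of the lemma, and it also makes $Z_V$ a local algebra.

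For the functor $\Pi$, I would first build a strict $\AB$-module functor $\AB\rightarrow\catR$ sending $B^{\otimes n}\mapsto B^nV$ and $\theta\mapsto\theta_V$. This is the action of $\AB$ on $\catR$ evaluated at $V$, which is the usual ``orbit map'' of a module category. It is compatible with the left tensor action because $(\phi\otimes\theta)_V=\phi_{B^nV}\circ B^{n'}(\theta_V)$ by the interchange law in $\END_\kk(\catR)$.

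Next, I extend this scalarly to $Z_V$. Elements $a\in Z_V$ act on $B^nV$ as $B^n(a)$. These commute with every $\theta_V$ by naturality of $\theta$ and because $a$ is central in $\End_\catR(V)$. Hence we get an $A$-linear strict left $\AB$-module functor $\AB\otimes_\kk Z_V\rightarrow\catR$, in which $A\coloneq Z_V$ acts through the rightmost green string as in the pin convention of \cref{itsreallywednesday}.

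I then check that the generators of the left tensor ideal $\cI(Z_V,f)$ map to zero. The generators are the morphisms $f(x)$ pinned on $B$, and $O(u)-\chi_f(u)\id_\one$; both are endomorphisms at the rightmost position. Their images are exactly the two identities in \cref{notchif}. Since the ideal is a left tensor ideal, everything in it has the form $\phi\circ(\psi\otimes g)\circ\phi'$ with $g$ a generator, and such an element evaluates to $\phi_V\circ\psi_{V'}\circ B^k(g_V)\circ\phi'_V$. This vanishes because $g_V=0$. (The category $\catR$ here means $\catR^\fg$ in the Karoubian case, where the pin relations were established.)

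So the functor factors through $\AB\otimes_\kk Z_V/\cI(Z_V,f)$. Finally, since $\catR$ (respectively $\catR^\fg$ in the Karoubian case, which is idempotent complete as a full subcategory of $\Mod{\catR}$ closed under summands) is additive and idempotent complete, the functor extends uniquely up to canonical isomorphism to the additive Karoubi envelope $\cB(Z_V,f)$. This extension is still a morphism of strict left $\AB$-module categories, because the $\AB$-action on $\catR$ is additive and preserves images of idempotents. The main point requiring care is this last step. If strictness is demanded, one can fix a choice of images of idempotents in $\catR$ and verify that the module structure maps agree on the nose; otherwise one accepts canonical coherence isomorphisms. I expect this bookkeeping, rather than any calculation, to be the only real obstacle.
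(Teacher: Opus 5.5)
Your proposal is correct and follows the paper's proof. The existence of $f(u)$ comes directly from \cref{itsreallywednesday} together with the definition \cref{chif}. Then $\Pi$ is obtained in three steps: take the canonical $\AB$-module functor $\AB\otimes_\kk Z_V\rightarrow\catR$ sending $\one$ to $V$, observe that it kills the generators of $\cI(Z_V,f)$ by comparing \cref{thur} with \cref{cycrels1}, and extend to the Karoubi envelope because $\catR$ is additive and idempotent complete. Your additional detail on naturality and the strictness bookkeeping only fills in points the paper leaves implicit.
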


\begin{proof}
In view of \cref{chif}, the first assertion follows
immediately from \cref{itsreallywednesday}.
There is a
 canonical morphism of strict left $\AB$-module
categories $\AB\otimes_\kk Z_V 
\rightarrow \catR$ mapping $\one$ to $V$ 
and $\id_\one \otimes z \in \End_{\AB}(\one) \otimes_\kk Z_V$
to $z \in \End_{\catR}(V)$.
This sends morphisms in $\cI(Z_V,f)$ to $0$, as 
follows by comparing \cref{thur,cycrels1}, 
so it factors through the quotient to induce $\Pi$.
Finally, it extends to the additive Karoubi envelope 
because $\catR$ is additive and idempotent complete.
\end{proof}

Next, we formulate the definition of a cyclotomic quotient of $\UU^\imath$. 
For any $\lambda \in X^\imath$, the corresponding
{\em regular 2-representation}
of $\UU^\imath$ is
$(\one_\mu \UU^\imath \one_\lambda)_{\mu \in X^\imath}$
with $\mu$th iweight subcategory 
$$
\one_\mu \UU^\imath \one_\lambda \coloneq
\Hom_{\UU^\imath}(\lambda,\mu)
$$
for each $\mu \in X^\imath$, and categorical action defined by horizontal composition on the left.
It is an even 2-representation if $\lambda$ is even.

\begin{defin}\label{iqcyclotomic}
Let $A$ be a finite-dimensional 
commutative $\kk$-algebra and
$f_i(u) \in A[u]$ be monic polynomials 
for all $i \in I$ such that $f_i(u)=1$ for
all but finitely many $i\in I$.
Let $\beta_i\coloneq \deg f_i(u)$, so that $\beta_i = 0$
for all but finitely many $i \in I$, and $\beta \coloneq \sum_{i \in I} \beta_i$.
Let $\lambda \in X^\imath_\0$ be the canonical image of
the even weight
\begin{equation}\label{theweight}
\hat\lambda \coloneq \begin{dcases}
\sum_{i \in I} \beta_i \varpi_i
&\text{if $\beta$ is even}\\
\varpi_h+\sum_{i \in I} \beta_i \varpi_i&\text{if $\beta$ is odd}.
\end{dcases}
\end{equation}
Let $\cI(A,f_i\,|\,i \in I)$ be the sub-2-representation\footnote{We mean an $A$-linear ideal of the category which is closed under the left action of $\UU^\imath$.} of the $A$-linear 2-representation $\big(\one_\mu \UU^\imath \one_\lambda \otimes_\kk A\big)_{\mu \in X^\imath}$ generated by the relations
\begin{align}\label{nextstop}
\begin{tikzpicture}[iQ,baseline = -2mm,scale=1.2]
\draw[-] (-0.2,-.4)\botlabel{i} to (-0.2,.4);
\pin{-.2,0}{.45,0}{f_i(x)};
\region{1,0}{\lambda};
\end{tikzpicture} &= 0,&
\begin{tikzpicture}[iQ,centerzero,scale=.9]
\draw (-0.25,0) arc(180:-180:0.25);
\node at (.57,0) {$(u)$};
\node at (-0.36,0) {\strandlabel{\ibar}};
\region{1,0}{\lambda};
\end{tikzpicture}&= 
\begin{dcases}
u^{\delta_{i,h} -\delta_{i,0}}
\frac{\widetilde{f}_{-i}(u)}{f_i(u)}\id_{\one_\lambda}&\text{if  $\beta$ is even}\\
u^{\delta_{i,-h} -\delta_{i,0}}
\frac{\widetilde{f}_{-i}(u)}{f_i(u)}\id_{\one_\lambda}&\text{if  $\beta$ is odd.}
\end{dcases}
\end{align}
Then the {\em deformed cyclotomic iquantum category}
$\cC(A,f_i\,|\,i\in I)$
is the iquantum categorification with $\mu$th iweight subcategory
$$
\one_\mu \cC(A,f_i\,|\,i\in I)\coloneq
\Kar\big(\one_\mu \UU^\imath \one_\lambda \otimes_\kk A\ \big/\ \one_\mu \cI(A,f_i\,|\,i \in I)\big)
$$
for each $\mu \in X^\imath$. 
By
\cref{lemma10-6} below, it is a Karoubian iquantum categorification.
In the case that $A = \kk$, we denote $\cC(A,f_i\,|\,i\in I)$ simply by $\cC(f_i\,|\,i \in I)$, calling it the 
{\em cyclotomic iquantum category}.
\end{defin}

The following standard lemma referenced in \cref{iqcyclotomic}
extends \cite[Prop.~8.5]{GRS-2026}:

\begin{lem}\label{lemma10-6}
In the setup of \cref{iqcyclotomic},
$\cC(A,f_i\,|\,i\in I)$ is a Karoubian iquantum categorification.
Moreover, if the coefficients of each $f_i(u)$ apart from the leading coefficient 1 belong to the Jacobson radical of $A$,
then $\cC(A,f_i\,|\,i\in I)$ is nilpotent.
\end{lem}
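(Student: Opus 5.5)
The plan has three parts: finiteness of morphism spaces, the finiteness condition of \cref{betterwhopper}, and nilpotency. Karoubianness is automatic because we took $\Kar$. For each $\mu$, the quotient $\one_\mu\UU^\imath\one_\lambda\otimes_\kk A/\one_\mu\cI$ inherits the left action of $\UU^\imath$, since $\cI$ is a sub-2-representation, and this action passes to the Karoubi envelope. So what remains is to show that the 2-morphism spaces
\[
\Hom(B_\bj\one_\lambda,B_\bi\one_\lambda)\otimes A\ \big/\ \cI
\]
are finite-dimensional, and that for each object only finitely many $B_i$ act non-trivially.

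For finiteness, we use the basis of \cref{nondegeneracy}. By the second relation in \cref{nextstop}, each bubble generating function $O_{\lambda;i}(u)$ is identified in the quotient with an element of $A(u)$. Hence every coefficient $z^{(r)}_{\lambda;i}$ maps into $A$, and $\varphi_\lambda(R)$ acts through the finite-dimensional algebra $A$. It then suffices to bound the dot exponents on each string in a reduced shape. For a string with an endpoint at the right-hand boundary labelled $i$ and adjacent to $\one_\lambda$, the first relation in \cref{nextstop} kills $f_i(x)$, of degree $\beta_i$. For strings further to the left, we argue as in the cyclotomic KLR setting, by induction on the number of strings to the right. Using \cref{wednesday} on the object $B_{\bk}\one_\lambda$, viewed in the 2-representation, together with \cref{zerotrick} for $i=0$, we produce a monic polynomial in $B_i$'s dot with coefficients in the (finite) center that vanishes on $B_iB_\bk\one_\lambda$. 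Concretely, we slide the dot through the adjacent strings with \cref{idemazure} and \cref{genidemazure}, and then use the bubble relations and adjunction to obtain a monic annihilating polynomial.

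The main obstacle is doing this cleanly. Instead of redoing the cyclotomic argument, I would first try to reduce to Abelian-style finiteness of $\Hom$ spaces via an explicit inductive statement: every $B_\bi\one_\lambda$ has finite-dimensional endomorphism algebra in the quotient. The induction would go by length of $\bi$, using biadjointness of $B_i$ and $B_{-i}$ to rewrite $\Hom(B_\bj,B_\bi)$ as $\End$ of a longer word applied to $\one_\lambda$. The inductive step needs a monic polynomial relation for the dot on the new leftmost string, with coefficients in bubbles. I would get this from the curl relation \cref{fancyicurl} combined with the bubble identities, exactly as in the proof of \cref{itsreallywednesday}.

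The finiteness condition of \cref{betterwhopper} follows because $B_iB_\bk\one_\lambda$ is zero in the quotient whenever $i$ is far from the finitely many labels with $\beta_i\neq0$. To see this, use the claim in the proof of \cref{conv1}: closed diagrams with labels outside a finite set reduce to bubbles, and for such labels those bubbles are trivial in the quotient. Nilpotency comes from the same construction. Each annihilating polynomial for a dot on an $i$-string is built from $f_i$ and from bubble coefficients. If the lower coefficients of the $f_i$ lie in $J(A)$, then so do the bubble coefficients, since $\widetilde f_{-i}/f_i$ reduces to a monomial modulo $J(A)$. So each annihilating polynomial is congruent to a power of $x$ modulo a nilpotent ideal, and the dot acts nilpotently.
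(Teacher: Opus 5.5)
There is a genuine gap at the central step, the finite-dimensionality of the 2-morphism spaces. The only annihilating polynomial available at the start is the cyclotomic relation $f_i(x)=0$, for a dot on a string adjacent to the 2-cell labelled $\lambda$. Everything hinges on controlling dots on strings that have further strings to their right, and none of the tools you cite does this.

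\begin{itemize}
\item \cref{wednesday} and the curl computation in the proof of \cref{itsreallywednesday} both take as \emph{input} a monic polynomial annihilating the dot on $B_iV$ (resp.\ $BV$). They only output information about the bubbles on $V$, or about the dot on $B_{-i}V$ (resp.\ on the same $BV$). They never pass from $V$ to $B_jV$, so with $V=B_{\bk}\one_\lambda$ there is no seed to start from. Your alternative route through the curl relation fails for the same reason.
\item \cref{zerotrick} is circular here. It is proved from the minimal polynomial $m_{V;0}(u)$ and the finite-dimensional center $Z_V$, so it presupposes a locally finite iquantum categorification, which is exactly what you are proving.
\item A pure KLR-style dot-sliding argument does not go through verbatim in $\UU^\imath$. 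Several defining relations differ from quiver Hecke relations by terms containing cups and caps.
\item Even given monic polynomials with coefficients in the center of $\End(B_{\bk}\one_\lambda)$, you would still need an argument that these cut down the spanning set of \cref{nondegeneracy} to something finite.
\end{itemize}

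So your ``induction on the number of strings to the right'' has no working inductive step. Your nilpotency argument inherits the gap, because it is phrased in terms of the same unconstructed annihilating polynomials.

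The missing idea is to filter by the ideal of morphisms that factor through shorter words. Fix a finite $S=-S\subset I$, and put $T_n\coloneq\bigoplus_{\bi\in S^n}B_{\bi}\one_\lambda$ and $E_n\coloneq\End(T_n)$. Let $J_n$ be the two-sided ideal of morphisms factoring through some $B_{\bk}\one_\lambda$ with $\bk\in S^{n-2}$. By induction on total word length, $\Hom(T_n,B_{\bk}\one_\lambda)$ and $\Hom(B_{\bk}\one_\lambda,T_n)$ are finite-dimensional for these finitely many $\bk$, so $J_n$ is finite-dimensional.

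Now choose reduced shapes with all caps below all crossings and all cups above them. Every diagram containing a cup then lies in $J_n$. Modulo $J_n$ the cup--cap correction terms vanish, and the relations become those of a deformed quiver Hecke algebra for $(a_{i,j})_{i,j\in S}$, together with the cyclotomic relation from \cref{nextstop} on the rightmost string. Hence $E_n/J_n$ is a quotient of a deformed cyclotomic quiver Hecke algebra, which is finitely generated over $A$.

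The same filtration gives nilpotency. Under your hypothesis on the $f_i(u)$, the dots $X_r$ are nilpotent in that cyclotomic quiver Hecke algebra, so $X_r^b\in J_n$ for some $b$. Adjunction and induction on $n$ give $X_r^aJ_n=0$ for some $a$, hence $X_r^{a+b}=0$.

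Your argument for the condition in \cref{betterwhopper} also needs repair.
\begin{itemize}
\item The claim in the proof of \cref{conv1} requires the far label to avoid the boundary words, but here $j$ is itself a boundary letter, so it does not apply.
\item The finite set of allowed labels must depend on $\bk$, not only on where $\beta_i\neq 0$.
\end{itemize}
The direct argument is as follows. If $f_j(u)=1$ and $j\notin\{\pm k,\pm k\pm1\}$ for every letter $k$ of $\bk$, the relevant crossings are invertible, so $B_jB_{\bk}\one_\lambda\cong B_{\bk}B_j\one_\lambda$. The cyclotomic relation with $f_j(u)=1$ says that the identity of $B_j\one_\lambda$ is zero, so $B_j\one_\lambda=0$.
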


\begin{proof}
Let $\cC \coloneq \cC(A,f_i\,|\,i\in I)$,
and $\lambda = \pi(\hat\lambda)$ as in \cref{theweight}, so that
the generating object of $\cC$ is $\one_\lambda$. 
We first show by induction on $n$
that $\Hom_{\cC}(B_\bj \one_\lambda,B_\bi \one_\lambda)$
is finite-dimensional for words $\bi\approx\bj$ 
of total length $2n$.
For the induction base $n=0$, \cref{nondegeneracy}
and the bubble relations in \cref{nextstop} 
show that $\End_{\cC}(\one_\lambda)$ is a quotient of $A$, so it is finite-dimensional.

Now consider the induction step.
By adjunction, we may assume that both of the words
$\bi$ and $\bj$ are of the same length $n > 0$.
Fix a finite subset $S\subset I$ closed under $\pm$ 
and containing the letters of the words $\bi$ and $\bj$.
In the additive envelope, let
\begin{align*}
T_n&\coloneq\bigoplus_{\bi\in S^n}
B_{\bi}\one_\lambda,&
E_n&\coloneq\End_{\cC}(T_n).
\end{align*}
Let $J_n$ be 
the two-sided 
ideal of $E_n$ consisting of sums of morphisms which factor through objects of the form
$B_{\bk}\one_\lambda$ with $\bk\in S^{n-2}$.
In particular, $J_1 = 0$.
For each such $\bk$, both of the spaces
$\Hom_{\cC}(T_n,B_{\bk}\one_\lambda)$
and
$\Hom_{\cC}(B_{\bk}\one_\lambda,T_n)$
are finite-dimensional by induction. 
Since there are only finitely many $\bk$, 
it follows that the ideal $J_n$ is finite-dimensional.

By \cref{nondegeneracy} (the easy spanning part), 
every morphism in $E_n$ is an $R$-linear combination of
string diagrams of reduced shapes decorated by some dots at the distinguished points. Elements of $R$ are bubbles in the rightmost 2-cell, which can be replaced by scalars from the algebra $A$ using
the second of the relations \cref{nextstop}.
We assume that the reduced shapes are chosen so that all caps are below all crossings and all cups are above them.
Then any reduced diagram with one or more cups (and, necessarily, the same number of caps) has $\leq (n-2)$ propagating strings,
so it factors through an 
object $B_\bk \one_\lambda$ for $\bk$ of length $n-2$, i.e.,
it belongs to $J_n$.
Consequently, the quotient 
$E_n/J_n$ is generated by dots, crossings, and idempotents. 
Moreover, 
modulo $J_n$, defining relations of $\UU^\imath$ simplify to ordinary quiver Hecke algebra relations in the deformed quiver Hecke algebras $QH_n \otimes_\kk A$ associated to the finite
Cartan matrix $(a_{i,j})_{i,j \in S}$
with parameters $Q^{\imath}_{i,j}(x,y)\:(i,j \in S)$, 
and the cyclotomic relation that is the first relation from \cref{nextstop} for dots on the rightmost string.
Thus, $E_n/J_n$ is a quotient of the corresponding deformed cyclotomic quiver Hecke algebra, which is well known to be finitely generated as a right $A$-module, an observation which goes back at least to \cite[Cor.~2.2]{BK-iso}.
Since $A$ is finite-dimensional, it follows that $E_n / J_n$ is finite-dimensional, and the induction is complete.

To complete the proof that $\cC$ is a Karoubian iquantum categorification, we still need to show that
$\big|\{j\in I\mid B_jV\neq0\}\big|<\infty$
for every object $V$. 
It suffices to prove this in the case that $V = 
B_{\bi} \one_\lambda$ for
$\bi=i_1\cdots i_n$. Let
$K\coloneq\{j\:|\:f_j(u)\neq 1\}\ \cup\
\bigcup_{r=1}^{n}\{\pm i_r,\ \pm i_r+1,\ \pm i_r-1\}$.
This set is finite. If $j\notin K$, the crossings moving the $j$-string past the letters of $\bi$ are invertible.
Thus
$B_jB_{\bi}\one_\lambda
\cong B_{\bi}B_j\one_\lambda=0$
since $f_j(u)=1$. 

It remains to prove the final assertion about nilpotency. 
This follows by modifying the above inductive argument.
Under the nilpotency assumption on the polynomials $f_i(u)$,
it is well known that dots are nilpotent in the
deformed cyclotomic quiver Hecke algebra mapping onto $E_n/J_n$.
Hence, in $E_n/J_n$, the dots are nilpotent.

In this paragraph, we prove by induction on $n$ that every dot on every object $B_{\bi}\one_\lambda$ with $\bi\in S^n$ is nilpotent. The assertion is vacuous for $n=0$, and follows from nilpotency modulo $J_1=0$ for n=1. Suppose that $n\geq 2$, and write $\bi=i_1\bi'$. For $\bk\in S^{n-2}$, adjunction gives
$$
\Hom_{\cC}(B_{\bk}\one_\lambda,B_{i_1}B_{\bi'}\one_\lambda)
\cong
\Hom_{\cC}(B_{-i_1}B_{\bk}\one_\lambda,B_{\bi'}\one_\lambda).
$$
Under this isomorphism, 
vertical composition on top
by a dot on the first string in the left-hand space
becomes vertical composition on the bottom by minus a dot on the first string in the right-hand space, and
vertical composition on top
by a dot on the $r$th string for $r>1$ becomes vertical composition on top by a dot on the $(r-1)$th string. 
Both words on the right have length $n-1$, so each of these operators is nilpotent by induction.

Let $X_r\in E_n$ be the sum of the dots on the $r$th strings of the summands of $T_n$. Since $S$ is finite, there is an exponent $a$ such that vertical composition on top with $X_r^a$ 
annihilates every $\Hom_{\cC}(B_{\bk}\one_\lambda,T_n)$ 
with $\bk\in S^{n-2}$. Hence $X_r^a J_n=0$. 
Nilpotency modulo $J_n$ gives $X_r^b\in J_n$ for some $b$, whence $X_r^{a+b}=0$. This completes the induction. Every word is contained in $S^n$ for some finite $S=-S$, and nilpotency passes to finite direct sums and direct summands, proving the assertion.
\end{proof}

The following lemma establishes the important universal property of $\cC(A,f_i\,|\,i\in I)$:

\begin{lem}\label{lemma10-5}
Let $\catR$ be any even nilpotent 
iquantum categorification, 
$\lambda \in X^\imath_\0$ and $V \in \catR_\lambda$ be an indecomposable object, so that $Z_V$ is a finite-dimensional local commutative $\kk$-algebra.
Also fix a choice of 
a parity $\sigma \in \{\0,\1\}$.
Then there exists a monic polynomial $f(u) \in Z_V[u]$
of degree $\beta \equiv \sigma \pmod{2}$
with local factors $f_i(u)$ defined as in \cref{hensel}
such that 
\begin{align}\label{somehow}
\begin{tikzpicture}[iQ,baseline = -2mm,scale=1.2]
\draw[-] (-0.2,-.4)\botlabel{i} to (-0.2,.4);
\draw[-,darkg] (0.5,.4) to (0.5,-.4)\botlabel{V};
\objpin{-.2,0}{.5,0}{f_i(x)};
\end{tikzpicture} &= 0,&
O_{V;i}(u)&= 
\begin{dcases}
u^{\delta_{i,h} -\delta_{i,0}}
\frac{\widetilde{f}_{-i}(u)}{f_i(u)}&\text{if $\sigma=\0$}\\
u^{\delta_{i,-h} -\delta_{i,0}}
\frac{\widetilde{f}_{-i}(u)}{f_i(u)}&\text{if $\sigma=\1$.}
\end{dcases}
\end{align}
Moreover, there is a canonical morphism of 2-representations
$\Pi\colon\cC(Z_V,f_i\,|\,i \in I) \rightarrow \catR$
mapping the generating object $\one_\lambda$
of $\cC(Z_V,f_i\,|\,i \in I)$ to $V$.
\end{lem}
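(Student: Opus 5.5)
The plan is to transport the Brauer-side result \cref{lemma10-1} across the equivalence of \cref{passportcor,btoq-cor}, and then to compare generating functions. By \cref{btoq-cor}, $\catR$ is also a Brauer categorification, compatibly: $B=\bigoplus_i B_i$, dots are related by \cref{veryeasy}, and $\psi_\lambda(O(u))=\prod_i O_{\lambda;i}(u-i)$ by \cref{firstthis,psiO}. The argument has three steps.

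\emph{Step 1: choose $f$ and get the cyclotomic relation.} Run the proof of \cref{itsreallywednesday} for $V$, with a chosen $g(u)$ whose degree parity ensures $\deg f\equiv\sigma$. The construction $f=(g^*+(u\pm h)g)/2$ has degree $\deg g+1$, so we pick $g$ of the right parity. Either $g=m_V$ or $g=(u-c)m_V$ works, since any multiple of $m_V$ still kills the dot. This gives a monic $f\in Z_V[u]$ with the magenta pin $f(x)=0$ on $BV$ and $O_V(u)=\chi_f(u)$.

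Nilpotency of iquantum dots means the magenta dot on $B_iV$ has spectrum $\{i\}$. Hence $\bar f$ has roots only among those $i$ with $B_iV\ne0$, apart possibly from the extra root introduced by $c$; I choose $c$ so that $f$ is still killed on each $B_i$. By \cref{hensel}, $f(u)=\prod_i f_i(u-i)$ with $\bar f_i=u^{\beta_i}$. On $B_iV$ every factor $f_j(x-j)$ with $j\neq i$ is invertible, since $x-j$ is invertible there. Therefore $f_i(x-i)$ vanishes on the magenta $i$-string, and this is exactly the black pin $f_i(x)=0$ by \cref{veryeasy}.

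\emph{Step 2: bubbles.} The action gives $O_V(u)=\omega(\prod_i O_{\lambda;i}(u-i))$ evaluated on $V$. We have the factorisation
\[
\chi_f(u)=\tfrac{u\mp h}{u}\prod_i \widetilde f_{-i}(u-i)\big/f_i(u-i).
\]
The factor $u\mp h$ localises at $\pm h$, and $1/u$ localises at $0$. Each $O_{V;i}(u)$ lies in $u^{\del_i-\lambda_i}+u^{\del_i-\lambda_i-1}\mathfrak m[u^{-1}]$ by \cref{ibubblerelnew} and nilpotency (\cref{noodslem}). Applying the uniqueness of local factors in \cref{localfactors} then identifies $O_{V;i}(u)$ with the $i$-th local factor of $\chi_f$, which gives \cref{somehow}. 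The only delicate point is where the factor $u$ lands: at $h$ when $\sigma=\0$ and at $-h$ when $\sigma=\1$.

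\emph{Step 3: the morphism $\Pi$ and the weight check.} Consistency of weights also needs checking. The iweight $\lambda$ of $V$ must equal the image of \cref{theweight} built from $\beta_i=\deg f_i$. This follows from \cref{lasers}(iii), since $\bar O_V(u)=\chi_\lambda(u)=\chi_{\bar f}(u)$ and $\wt^\imath(\bar f)$ is exactly this image.

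With that in place, define $\UU^\imath\one_\lambda\otimes Z_V\to\catR$ by $X\mapsto XV$, letting $Z_V$ act on $V$. The generating relations \cref{nextstop} hold on $V$ by Step 2, so this map kills $\cI(Z_V,f_i\,|\,i\in I)$. It then extends to the Karoubi envelope because $\catR$ is idempotent complete, which gives $\Pi$.

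\emph{Main obstacle.} I expect the hardest part to be controlling the parity of $\deg f$ while keeping $f$ annihilating $BV$. This is the extra-root issue in Step 1, and I would first try $g=u\,m_V$ and then check directly that the resulting $f$ still kills the dot.
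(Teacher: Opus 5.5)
Your proof is correct, but it takes a genuinely different route from the paper's. The paper stays on the iquantum side throughout:
\begin{itemize}
\item it splits $I=I'\sqcup\{0\}\sqcup(-I')$ with $-h\in I'$ if $\sigma=\0$ and $h\in I'$ if $\sigma=\1$;
\item it takes $f_0$ from \cref{zerotrick}, sets $f_i\coloneq m_{V;i}$ for $i\in I'$, and defines $f_{-i}$ by $\widetilde f_{-i}(u)=f_i(u)O_{V;i}(u)$, getting both relations from \cref{wednesday} and the bubble relations;
\item only then does it set $f\coloneq\prod_i f_i(u-i)$, and it reads off $\beta\equiv\sigma$ from $\lambda_i=\beta_i-\beta_{-i}$ (shifted at $\pm h$ when $\sigma=\1$), $\beta_0\equiv t(\lambda)$ and the evenness of $\lambda$.
\end{itemize}
You instead produce $f$ in one go on the Brauer side, via \cref{passportcor} and the proof of \cref{itsreallywednesday}, with the parity fixed by taking $g=m_V$ or $(u-c)m_V$. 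You then split the cyclotomic relation using Hensel's factorization: on $B_iV$ each $f_j(x-j)$ with $j\neq i$ is invertible, since $x-i$ and the lower coefficients of $f_j$ are nilpotent. Finally you identify the iquantum bubbles on $V$ with the local factors of $\chi_f(u)$, using \cref{firstthis} and the uniqueness in \cref{localfactors}. This is the same mechanism as in \cref{subsist} and the proof of \cref{cyciso}, and your weight check via \cref{lasers}(iii) replaces the paper's parity count.

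What your route buys is that $i=0$ needs no special treatment, so there is no appeal to \cref{zerotrick} or to the external theorem behind it. What it costs is dependence on all of \cref{passport}, including \cref{badbunny} and every relation check in its proof. The paper's argument is independent of $\Psi$ and produces a more economical $f$ (for instance $f_i=m_{V;i}$ on $I'$).

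Some small corrections.
\begin{itemize}
\item Your claim that $\bar f$ has roots only in the spectrum of the dot, apart from the root introduced by $c$, is not justified. The $g^*$ step can add roots: for a brick and $g=m_V$ one gets $f=(u+(-1)^{\beta(V)}h)m_V$. Nothing depends on this, though. Every $f$ built from a monic $g$ that kills the dot also kills it, so any $c$ works and your ``main obstacle'' is not an obstacle. The splitting argument on $B_iV$ also does not care where the roots of $\bar f$ lie.
\item Before invoking uniqueness in \cref{localfactors}, say explicitly that $u^{\lambda_i-\del_i}O_{V;i}(u)\in 1+u^{-1}\mathfrak m[u^{-1}]$. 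It is a polynomial in $u^{-1}$ because dots on $B_{\pm i}V$ are nilpotent, and its coefficients lie in $\mathfrak m$ by \cref{noodslem}.
\item Also say that $u^{\lambda_i-\del_i}O_{V;i}(u)$ equals $1$ for all but finitely many $i$, because $B_{\pm i}V=0$ and $\del_i=\lambda_i=0$ for almost all $i$.
\item The relations killed by your map to $\catR$ come from Steps~1 and~2 together, not from Step~2 alone.
\end{itemize}
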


\begin{proof}
Let $\mathfrak{m}$ be the unique maximal ideal of $Z_V$. 
Also let $I = I' \cup \{0\} \cup (-I')$ be a decomposition
chosen so that $-h \in I'$ if $\sigma = \0$ 
or $h \in I'$ if $\sigma = \1$.
Let $f_0(u) \in Z_V[u]$ be a monic polynomial chosen as 
in the final assertion of \cref{zerotrick}.
This lemma implies that \cref{somehow} holds in the case $i=0$.
Next, for each $i \in I'$, we 
let $f_i(u) \coloneq m_{V;i}(u) \in \kk[u] \subseteq Z_V[u]$,
then define $f_{-i}(u)$ so that 
$\widetilde{f}_{-i}(u) = f_i(u) O_{V;i}(u)$.
Both $f_i(u)$ and $f_{-i}(u)$ are then monic polynomials. \cref{wednesday} implies that both properties of \cref{somehow} hold for $i \in I'$, and that the first property in \cref{somehow} holds for $i \in -I'$. The second property in \cref{somehow} is also true for $i \in -I'$ due to 
\cref{iinfgrass}.

We have now constructed monic polynomials $f_i(u) \in Z_V[u]$ so that \cref{somehow} holds for all $i \in I$. 
Moreover, each $f_i(u)$ is monic and the coefficients of its lower degree terms are nilpotent. This is true for $i=0$ by \cref{zerotrick}, and it is clear for $i \in I'$ since those $f_i(u)$ are monomials. It follows for $i \in -I'$ since by the definition the lower degree terms of such $f_i(u)$ are linear combinations of bubbles of positive degree, which are nilpotent in $Z_V$ by \cref{noodslem}. 

We have that
$m_{V;i}(u)=1$ for all but finitely many $i$, 
and $\lambda_i=0$ for all but finitely many $i$. 
For $i\in I'$ avoiding these finite sets and $\{\pm h\}$, \cref{wednesday} 
shows that $f_{-i}(u)$ is monic of degree zero, hence is 1. Consequently, $f_i(u)=1$ for all but finitely many $i \in I$. Hence, it makes sense to 
set $f(u) \coloneq \prod_{i \in I} f_i(u-i)$.
Since the lower degree coefficients of all $f_i(u)$ are in the unique maximal ideal $\mathfrak{m}$,
$f_i(u)$ is the local factor of $f(u)$ at $u=i$.

Let $\beta_i \coloneq \deg f_i(u)$ and
$\beta \coloneq \deg f(u) = \sum_{i \in I} \beta_i$.
By \cref{ibubblerelnew} and \cref{somehow}, for $i \neq 0$, we have that
$\lambda_i = \beta_i-\beta_{-i}$ if $\sigma = \0$
and $\lambda_i = \beta_i-\beta_{-i} + \delta_{i,h}-\delta_{i,-h}$ 
if $\sigma = \1$. Also \cref{zerotrick} gives that $\beta_0 \equiv t(\lambda)\pmod{2}$.
Since $\lambda$ is even, it follows that $\beta \equiv \sigma \pmod{2}$, and that $\lambda = \pi(\hat\lambda)$
for $\hat\lambda$ defined by \cref{theweight}.

It just remains to prove the final assertion of the lemma.
By the universal property of the regular
2-representation $\UU^\imath \one_\lambda \otimes_\kk Z_V$, there is a canonical morphism of 2-representations $\UU^\imath \one_\lambda \otimes_\kk Z_V\rightarrow \catR$ mapping $\one_\lambda$ to $V$, and 
$\id_{\one_\lambda} \otimes 
z \in \End_\UU^\imath(\one_\lambda)\otimes_\kk Z_V$
to $z \in \End_{\catR}(V)$.
This maps the generators of
$\cI(Z_V,f_i\,|\,i \in I)$ to 0, as follows by comparing 
\cref{nextstop,somehow}, so it factors through the quotient
$\cC(Z_V,f_i\,|\,i \in I)$
to induce $\Pi$.
Finally, this extends to the additive Karoubi envelope
because $\catR$
is additive and idempotent complete.
\end{proof}

\begin{theo}\label{cyciso}
Suppose that $A$ is a finite-dimensional local commutative $\kk$-algebra and $f(u) \in A[u]$ is a monic polynomial of degree $\beta$.
Let
$f(u) = \prod_{i \in I} f_i(u-i)$ be
the factorization of $f(u)$ from \cref{hensel}, 
so $f_i(u)$ is the local factor of $f(u)$ at $u=i$.
Then there is an $A$-linear equivalence
\begin{equation*}
\Psi_{A,f}\colon\cB(A,f) \rightarrow \cC(A,f_i\,|\,i\in I).
\end{equation*}
\end{theo}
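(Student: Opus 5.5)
We construct the two functors from the universal properties \cref{lemma10-1,lemma10-5}, using \cref{passportcor,btoq-cor} to move between the Brauer and iquantum sides, and then check that the composites are isomorphic to the identities.

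First, make $\cB(A,f)$ into an iquantum categorification. By \cref{rsfd} it is a Karoubian Brauer categorification, so \cref{btoq-cor} makes it an even nilpotent Karoubian iquantum categorification. The generating object $\one$ is indecomposable with $Z_\one = A$; this uses $\End_{\cB(A,f)}(\one)\cong A$ and the locality of $A$. Since $\one$ is indecomposable, it lies in a single ``block'' $\catR_\mu$. By \cref{moredirect} and \cref{ben}, $\mu$ is the image of \cref{theweight}, i.e.\ $\mu=\lambda$.

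Next we verify the cyclotomic relations \cref{nextstop} on $\one$ with the given local factors $f_i$. For the dot relation, $f(x)$ pinned to the magenta string is zero; restricting to $B_i$ and using \cref{veryeasy}, the factors $f_j(x+i-j)$ with $j\ne i$ become invertible on $B_i$ because $x$ is nilpotent there and the lower coefficients of $f_j$ lie in $\mathfrak m$. Hence $f_i(x)$ pinned to the $i$-string is $0$. For the bubble relation, \cref{firstthis} gives $O(u)=\prod_i O_{\lambda;i}(u-i)$ acting on $\one$, while \cref{cycrels1} says $O(u)=\chi_f(u)$. The local factors of $\chi_f(u)$ at $u=i$ are exactly the right-hand sides of \cref{nextstop}. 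The action of $\hatZ_\lambda$ on $\one$ is the homomorphism $\omega$ of \cref{subsist}, and part (i) of that lemma identifies $O_{\lambda;i}$ with the local factor of $O_V$. By uniqueness of local factors (\cref{localfactors}), this gives \cref{nextstop}. The universal property \cref{lemma10-5}, or more directly the universal property of the regular 2-representation, then yields an $A$-linear morphism $\Phi\colon\cC(A,f_i\,|\,i\in I)\to\cB(A,f)$ of 2-representations with $\one_\lambda\mapsto\one$. Symmetrically, $\cC(A,f_i\,|\,i\in I)$ is an even nilpotent Karoubian iquantum categorification by \cref{lemma10-6}, since the $f_i$ have radical lower coefficients. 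By \cref{passportcor} it is therefore a Brauer categorification. Running the same computation in reverse verifies \cref{cycrels1} on $\one_\lambda$, so \cref{lemma10-1} yields $\Psi_{A,f}\colon\cB(A,f)\to\cC(A,f_i\,|\,i\in I)$ with $\one\mapsto\one_\lambda$.

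Finally, we show that $\Phi\circ\Psi_{A,f}$ and $\Psi_{A,f}\circ\Phi$ are isomorphic to the identities. Each of these is a morphism of $\AB$-module categories (equivalently, by \cref{btoq-cor}, of $\hatAB$-modules) sending the generator to itself. A morphism of module categories out of $\AB\otimes A/\cI$ is determined by the image of the generator, and similarly for the regular 2-representation. Hence both composites fix the generators and commute with the actions, so they are the identity on $B^{\otimes n}\one$ and on $B_\bi\one_\lambda$ and all morphisms between them. They therefore extend to the Karoubi envelopes as functors isomorphic to the identity. The hard part is the Brauer/iquantum compatibility: we must know that the iquantum structure on $\cB(A,f)$ restricted back along $\Psi$ is the original Brauer one, and that morphisms of Brauer module categories are automatically morphisms of 2-representations. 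This should follow from the density statement \cref{btoq-thm} together with continuity of the evaluation maps in \cref{evcontinuous}, which show that any $\AB$-linear functor intertwines the extended $\hatAB$-actions and hence the $\UU^\imath$-actions; this is the step I expect to require the most care.
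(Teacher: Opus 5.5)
Your proposal is correct and follows essentially the same route as the paper: build $\cB(A,f)\to\cC(A,f_i\,|\,i\in I)$ and $\cC(A,f_i\,|\,i\in I)\to\cB(A,f)$ from the factorization arguments of \cref{lemma10-1,lemma10-5}. In each direction you first check the cyclotomic relations via the Hensel local factors of $f$ and $\chi_f$, together with \cref{firstthis} and \cref{subsist}. You then conclude that the composites are isomorphic to the identities because Brauer and iquantum morphisms intertwine the structures of \cref{passportcor,btoq-cor}, which the paper justifies exactly as you propose, by uniqueness of continuous extension (density plus continuity of evaluation). Your treatment of the dot relation, via invertibility of $f_j(i-j+x)$ for $j\neq i$, spells out a step the paper leaves implicit.
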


\begin{proof}
Let $\cB \coloneq \cB(A,f)$ and $\cC\coloneq \cC(A,f_i\,|\,i\in I)$.
Let $\lambda \in X^\imath$ be the image of the weight \cref{theweight}.
The local factors of $\chi_f(u)$ are
$$
R_i(u):=
\begin{cases}
u^{\delta_{i,h}-\delta_{i,0}}\,
\dfrac{\widetilde f_{-i}(u)}{f_i(u)}
&\text{if }\beta\text{ is even},\\
u^{\delta_{i,-h}-\delta_{i,0}}\,
\dfrac{\widetilde f_{-i}(u)}{f_i(u)}
&\text{if }\beta\text{ is odd.}
\end{cases}
$$
In particular, $\chi_f(u) = \prod_{i \in I} R_i(u-i)$.

To avoid ambiguity, let $V$ and $W$ denote the generating objects $\one$ and $\one_\lambda$ of $\cB$ and $\cC$, respectively. By \cref{lemma10-6,passportcor}, $\cC$ is a Karoubian Brauer categorification. On $B_i W$, the Brauer dot is $i+x$, where $x$ is the iquantum dot. Thus $f$ annihilates the Brauer dot on $BW$, since $f_i(x)=0$ on $B_i W$. Moreover, \cref{firstthis} and the defining bubble relations give
$O_W(u)=\prod_{i\in I}R_i(u-i)=\chi_f(u)$.
Consequently, the factorization argument in \cref{lemma10-1} yields an 
$A$-linear morphism of $\AB$-module categories
$F:\cB\rightarrow \cC,\ V\mapsto W$.

Conversely, equip $\cB$ with its even nilpotent iquantum categorification structure from \cref{btoq-cor}. 
Note that $\End_{\cB}(V)$ is a non-zero quotient of the local algebra $A$, so $V$ is indecomposable.
It has iweight $\lambda$ thanks to 
\cref{moredirect,lasers}. 
The factorization $f(x)=\prod_{i \in I} f_i(x-i)$
implies that the
iquantum dot on $B_iV$ is annihilated by $f_i$. Since $O_V(u)=\chi_f(u)$, uniqueness of local factorization and \cref{subsist} give
$O_{V;i}(u)=R_i(u)$.
Thus the factorization argument in \cref{lemma10-5} yields an $A$-linear morphism of 2-representations
$G:\cC\rightarrow \cB,\ W\longmapsto V$.

Under the
mutually inverse constructions of \cref{passportcor} and \cref{btoq-cor},
a morphism of $\AB$ categorifications intertwines the iquantum action constructed in \cref{btoq-cor},
and a morphism of iquantum categorifications 
intertwines the Brauer structures from \cref{passportcor}.
This follows from the uniqueness of continuous extension in the first case, and the
explicit formulae in the other.
Consequently, $G\circ F$ and $F\circ G$ are morphisms of Brauer and iquantum categorifications, respectively.
Both preserve the generating object and its prescribed $A$-action.
The universal properties used in \cref{lemma10-1,lemma10-5}
determine these functors, up to natural isomorphism, from the generating object and its prescribed $A$-action, so they
give natural isomorphisms
$G\circ F\cong\operatorname{id}_{\cB}$ and
$F\circ G\cong\operatorname{id}_{\cC}$.
Thus $F$ is the required $A$-linear equivalence $\Psi_{A,f}$.
\end{proof}

\bibliographystyle{alphaurl}
\bibliography{biq}
\end{document}